\DeclareMathOperator{\out}{Out}
\DeclareMathOperator{\st}{st}
\DeclareMathOperator{\stab}{Stab}
\DeclareMathOperator{\pf}{PF}
\DeclareMathOperator{\per}{Per}
\DeclareMathOperator{\fix}{Fix}
\DeclareMathOperator{\rank}{rank}
\newcommand{\pfmin}{\pf_{\operatorname{min}}}
\newcommand{\rneg}{\mathcal{R}_{\operatorname{NEG}}}
\newtheorem{THM}{Theorem}
\newtheorem{thm}{Theorem}
\numberwithin{thm}{section}
\newtheorem{lem}[thm]{Lemma}
\newtheorem{claim}[thm]{Claim}
\newtheorem{prop}[thm]{Proposition}
\newtheorem{cor}[thm]{Corollary}
\theoremstyle{definition}
\newtheorem{ex}[thm]{Example}
\tikzset{->-/.style = {
		very thick,
		decoration = {
			markings,
			mark = at position .5 with {\arrow{>}}
		},
		postaction = {decorate}
	},
	pt/.style = {
		circle,
		fill = black,
		scale = 0.5,
	},
}
\newcommand{%
	    \def\svgwidth{\columnwidth}
	        \import{./figures/}{.pdf_tex}
	}[1]{%
	    \def\svgwidth{\columnwidth}
	        \import{./figures/}{#1.pdf_tex}
	}
\begin{document}
\title{CTs for free products}
\author{Rylee Alanza Lyman}
\maketitle
\begin{abstract}
    The fundamental group of a finite graph of groups
    with trivial edge groups is a free product.
    We are interested in those outer automorphisms
    of such a free product that permute
    the conjugacy classes of the vertex groups.
    We show that in particular cases of interest, such as 
    where vertex groups are themselves finite free products of finite and cyclic groups,
    given such an outer automorphism,
    after passing to a positive power,
    the outer automorphism is represented
    by a particularly nice kind of relative train track map
    called a CT.
    CTs were first introduced by Feighn and Handel
    for outer automorphisms of free groups.
    We develop the theory of attracting laminations for
    and principal automorphisms of free products.
    We prove that outer automorphisms of free products
    satisfy an index inequality reminiscent of
    a result of Gaboriau, Jaeger, Levitt and Lustig
    and sharpening a result of Martino.
    Finally, we prove a result reminiscent of a theorem of Culler
    on the fixed subgroup of an automorphism of a free product
    whose outer class has finite order.
\end{abstract}

A homotopy equivalence $f\colon G \to G$
of a connected graph $G$ is a \emph{train track map}
when it maps vertices to vertices
and $f^k$ restricts,
on each edge of $G$,
to an immersion for all $k \ge 1$.
Train track maps were introduced
by Bestvina--Handel in \cite{BestvinaHandel},
where they showed that an appropriate generalization,
called a \emph{relative train track map,}
exists representing each $\varphi \in \out(F_n)$,
the outer automorphism group of a free group
of finite rank $n$.

There are by now many proofs of the existence
of relative train track maps for free products:
the original proof by Collins--Turner
\cite{CollinsTurner}
used graphs of spaces 
\`a la Scott and Wall \cite{ScottWall};
Francaviglia and Martino \cite{FrancavigliaMartino}
used the Lipschitz metric on Outer Space
and viewed relative train track maps
as twisted equivariant maps of trees; and
the present author \cite{Myself} worked directly in the graph of groups.

In the intervening time,
the theory of relative train track maps for $\out(F_n)$
has progressed,
first with \emph{improved relative train track maps}
in \cite{BestvinaFeighnHandel},
and more recently with \emph{CTs} in \cite{FeighnHandel}.
Free products have lagged behind.
The purpose of this paper is to rectify this situation.

Let $F$ be a group that decomposes as a free product
of the form
\[ F = A_1*\dotsb*A_n * F_k, \]
where the $A_i$ are countable groups and $F_k$
is a free group of rank $k$.
Let $[[A_i]]$ denote the conjugacy class of $A_i$,
and write $\mathscr{A} = \{[[A_1]],\ldots,[[A_n]]\}$.
Write $\out(F,\mathscr{A})$ for the subgroup of
$\out(F)$ consisting of outer automorphisms
that permute the conjugacy classes in $\mathscr{A}$.

\begin{THM}
    \label{CTtheorem}
    Each rotationless $\varphi \in \out(F,\mathscr{A})$
    is represented by a CT.
\end{THM}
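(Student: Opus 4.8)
The plan is to adapt, step by step, the construction Feighn and Handel use to obtain a CT representing a rotationless outer automorphism of $F_n$, carrying out every move in the category of finite graphs of groups whose nontrivial vertex groups are (representatives of) the conjugacy classes in $\mathscr{A}$, so that membership in $\out(F,\mathscr{A})$ is preserved throughout. The starting point is a relative train track representative $f\colon G \to G$ of $\varphi$ equipped with a suitable filtration, as provided by the relative train track theory for free products. A preliminary normalization, which uses in an essential way the hypothesis that $\varphi$ is rotationless, arranges that $f$ fixes each stratum, fixes each periodic edge and each periodic direction --- including directions determined by the vertex groups --- and in this way we reduce to the case that $f$ is a rotationless relative train track map. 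We then run through a sequence of moves --- refinements of the filtration, subdivisions, folds, and the graph-of-groups analogues of the \emph{sliding} and \emph{collapsing} operations --- each of which preserves the relative train track property together with membership in $\out(F,\mathscr{A})$, and each of which makes progress toward one of the defining properties of a CT. That the process terminates is seen by exhibiting, for each block of moves, an appropriate complexity (lexicographically ordered, built from the numbers and combinatorial sizes of the exponentially growing strata, the zero strata, and the linear edges) that never increases and strictly decreases infinitely often.

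With the filtration made as fine as the relative train track structure allows, the moves are organized to establish the CT axioms in essentially the Feighn--Handel order. First come the conditions governing periodic and fixed edges: every periodic edge is made fixed, $f$ is made the identity on the union of the fixed edges, and, by sliding, each endpoint of a fixed edge and each terminal endpoint of a non-fixed edge in a non-exponentially-growing stratum is made a principal vertex --- where the notion of principal vertex now records the fixed and periodic directions coming both from the edges of $G$ and from the vertex groups. Next the zero strata are made contractible and correctly enveloped by the exponentially growing stratum directly above them, and connecting paths that are not taken are collapsed away. The core of the argument is then to achieve complete splitting: working up the filtration one stratum at a time, one shows that after finitely many further subdivisions and folds the $f$-image of every edge in an irreducible stratum, and of every taken connecting path in a zero stratum, splits completely into single edges, indivisible Nielsen paths, exceptional paths, and taken connecting paths; here the theory of attracting laminations and principal automorphisms developed in this paper supplies the analogues of the structural results that Feighn and Handel use to control indivisible Nielsen paths in an exponentially growing stratum. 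The construction closes with the linear and exceptional strata: linear edges are rechosen so that any two sharing a common axis have distinct twist coordinates, and the form of an indivisible Nielsen path whose top edge is linear is normalized, which completes the verification.

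The step I expect to be the main obstacle is the very first one --- arranging that each geometric move is compatible with the graph-of-groups structure --- and it is most delicate precisely at the vertices whose vertex group is nontrivial. The sliding move, which in the free group setting merely drags the endpoint of an edge across a terminal segment of the $f$-image of a neighbouring edge, must be redefined so that it does not disturb the vertex-group labels nor the identification of germs at a non-free vertex; and after each such move one must re-verify that the relative train track property, the finiteness of the set of indivisible Nielsen paths in each exponentially growing stratum, and the relevant complexity estimate all survive. Correspondingly, one has to pin down the right notions of direction, periodic direction, and principal vertex at a non-free vertex, and to check that the hypothesis that $\varphi$ is rotationless is exactly strong enough to make these behave as in the classical theory. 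Once these foundational adaptations are in hand, the remaining work --- the bookkeeping behind complete splitting and the treatment of the linear and exceptional strata --- runs closely parallel to the free group case.
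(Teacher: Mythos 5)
Your high-level plan --- begin with a relative train track map, reduce to a rotationless one, then achieve the CT axioms one by one via filtration refinements, subdivisions, folds, sliding, tree replacement, and collapses, controlled by a decreasing complexity --- is indeed the skeleton of the paper's argument, and you correctly flag that the subtleties concentrate at vertices with nontrivial vertex group and that the attracting-lamination and principal-automorphism machinery must be rebuilt first. But the proposal overlooks a genuine structural obstruction that forces the \emph{definition} of a CT, not merely the moves used to produce one, to change in the free-product setting, and it asserts a step that cannot be carried out in general.

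The problem arises at periodic components of the filtration with no outward-pointing periodic directions. In the free-group construction, Feighn--Handel handle a periodic circle $C \subset \fix(f)$ with no outward-pointing periodic directions by ``untwisting'': they compose $f$ with an isotopy supported near $C$ so that $f|_C$ becomes the identity, which is exactly the move you describe when you write that ``$f$ is made the identity on the union of the fixed edges.'' In a Grushko $(F,\mathscr{A})$-graph of groups there is a second kind of such component: the quotient of $\mathbb{R}$ by the standard $C_2*C_2$ action, a segment with $C_2$ vertex groups at its endpoints. Upstairs in the Bass--Serre tree this is an $f$-invariant line with $C_2$ vertex stabilizers, and untwisting would have to be performed $C_2*C_2$-equivariantly. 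That is impossible when the induced outer automorphism of the $C_2*C_2$ is nontrivial (\Cref{CT2} gives an explicit example), and it cannot be excluded even under the rotationless hypothesis because $C_2$-vertices need not be principal (\Cref{CT1}). Consequently the paper does \emph{not} remove these components; it axiomatizes them as ``dihedral pairs,'' and every clause of the CT definition --- \hyperlink{Vertices}{(Vertices)}, \hyperlink{AlmostPeriodicEdges}{(Almost Periodic Edges)}, \hyperlink{Filtration}{(Filtration)}, \hyperlink{LinearEdges}{(Linear Edges)}, \hyperlink{NEGAlmostNielsenPaths}{(NEG Almost Nielsen Paths)} --- as well as the notions of linear/dihedral linear edge and exceptional path, acquires a dihedral case. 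The induction through the core filtration in \Cref{preciseCTtheorem} must then bifurcate at each dihedral pair (see Step 4 and Steps 5--6), and a useful feature of free-group CTs, that every filtration element contains a principal point, simply fails. Your plan, which treats periodic strata ``closely parallel to the free group case,'' has no mechanism for this and would hit a wall at exactly the step where untwisting is invoked.

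A secondary but pervasive point: because a vertex group element may appear at the end of $f(E)$, the correct notions throughout are ``almost periodic,'' ``almost fixed,'' ``almost Nielsen path,'' and ``almost linear,'' and one can typically make an edge almost fixed rather than fixed, or an almost Nielsen path rather than a Nielsen path --- you cannot choose representatives simultaneously for all edges incident to a vertex with nontrivial vertex group. Several of your intermediate assertions (``every periodic edge is made fixed,'' ``$f$ is the identity on fixed edges,'' ``indivisible Nielsen paths'' in the complete splitting) should be stated in their ``almost'' forms, and the bookkeeping --- e.g.\ the argument in \Cref{finitelymanynielsenpaths} that there are finitely many indivisible periodic almost Nielsen paths up to multiplication by vertex group elements at the ends, and the derivation of \hyperlink{EGAlmostNielsenPaths}{(EG Almost Nielsen Paths)} from a periodic sequence of proper extended folds --- must be reproved with this relaxation. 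This is not a small refinement: the ``linear attractor'' phenomenon (\Cref{GJLLprop}) which has no free-group analogue is a consequence of it, and it feeds into the definition of principal automorphism and hence of rotationless.
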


See \Cref{CTsection} for the definition of a CT,
and see \Cref{principalsection} for the definition
of rotationless.
The question of whether a given $\varphi\in \out(F,\mathscr{A})$
has a rotationless power seems to require some restrictions on groups in $\mathscr{A}$:
our sufficient condition in \Cref{rotationlesscriterion}
requires finite-order elements of $A \in \mathscr{A}$ to have bounded order
and requires each automorphism $\Phi\colon A \to A$ to have a bound on the period
of $\Phi$-periodic elements.
It would be interesting to know to what extent this sufficient condition is necessary.
We prove in \Cref{freeproductsoffiniteandycycliccor} 
together with \Cref{rotationlessisrotationless}
that if $F$ is a finite free product of finite and cyclic groups
(so that each group in $\mathscr{A}$ is again of this form)
then each $\varphi \in \out(F,\mathscr{A})$ has a rotationless power.

We present one application of \Cref{CTtheorem}.
The original application of relative train track maps in \cite{BestvinaHandel}
was to prove the \emph{Scott conjecture,}
which predicts that for an automorphism
$\Phi\colon F_n \to F_n$,
the rank of the fixed subgroup $\fix(\Phi)$ satisfies
\[  \operatorname{rank}(\fix(\Phi)) \le n. \]
Collins and Turner \cite{CollinsTurner} generalized this result to free  products;
here the \emph{(Kurosh) rank} of a free product $F = A_1*\cdots*A_n*F_k$
is the quantity $n + k$.
A subgroup $H$ of $F$
inherits a free product decomposition from $\mathscr{A}$
and thus there is a notion of the \emph{(Kurosh subgroup) rank} of $H$,
which a priori may be infinite.
Their results imply that if $\Phi\colon (F,\mathscr{A}) \to (F,\mathscr{A})$
is an automorphism of $F$ preserving $\mathscr{A}$, then
\[  \operatorname{rank}(\fix(\Phi)) \le n + k. \]
In actual fact,
they state their results only for the Grushko decomposition
of a finitely generated group,
but their proofs do not use this assumption in any essential way.

Back in the setting of free groups,
Gaboriau, Jaeger, Levitt and Lustig \cite{GaboriauJaegerLevittLustig}
define an \emph{index} $i(\varphi)$ for an outer automorphism
$\varphi \in \out(F_n)$;
it is a sum over automorphisms $\Phi \colon F_n \to F_n$
representing $\varphi$  up to an equivalence relation called \emph{isogredience}
(see \Cref{principalsection} for a definition)
of the quantity
\[  \max \left\{0, 
\operatorname{rank}(\fix(\Phi)) + \frac{1}{2}\alpha(\Phi) - 1 \right\}, \]
where $\alpha(\Phi)$ is the number of $\fix(\Phi)$-orbits
of \emph{attracting fixed points} (see \Cref{laminationsection})
for the action of $\Phi$ on the Gromov boundary $\partial F_n$.
They prove that
\[  i(\varphi) \le n - 1. \]
Martino \cite{Martino} established the analogous result for free products.

Most recently, Feighn and Handel define a new index $j(\varphi)$
that gives weight $1$ rather than $\frac{1}{2}$ to certain classes
of attracting fixed points called \emph{eigenrays}, see \Cref{indexsection} for details.
Using CTs in \cite{FeighnHandelAlg},
they show that their index satisfies
\[  j(\varphi) \le n-1. \]

Following \cite{GuirardelHorbez}, we say
a \emph{Grushko $(F,\mathscr{A})$-tree} is a simplicial tree $T$
equipped with an $F$-action with trivial edge stabilizers
such that $\mathscr{A}$ is the set of conjugacy classes of nontrivial vertex stabilizers.
Rather than consider the Gromov boundary of some (and hence any) Grushko $(F,\mathscr{A})$-tree $T$,
we, like Guirardel--Horbez, consider the \emph{Bowditch boundary,}
which has the advantage of being compact (when the groups in $\mathscr{A}$ are countable)
and which is obtained from the Gromov boundary by adding
the vertices in $T$ of infinite valence, suitably topologized.
An automorphism may fix one of these points of infinite valence
without fixing any elements of its stabilizer group.
In our version of the index theorem 
(see \Cref{preciseindexthm} and \Cref{actuallynoneedtotreatC2differently})
we count the number of $\fix(\Phi)$-orbits of such infinite valence points as well,
and we obtain the following theorem.

\begin{THM}
    \label{indexthm}
    Suppose that each group in $\mathscr{A}$ is finitely generated.
    If $\varphi \in \out(F,\mathscr{A})$ has a rotationless power,
    the index $j(\varphi)$ satisfies
    \[  j(\varphi) \le n + k - 1. \]
\end{THM}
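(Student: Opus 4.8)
The plan is to follow the strategy pioneered by Gaboriau–Jaeger–Levitt–Lustig and refined by Feighn–Handel, using the CT provided by Theorem~\ref{CTtheorem} as the combinatorial backbone of the argument. Let me sketch the main steps.

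First, I'd reduce to the rotationless case. Since $j(\varphi)$ is defined in terms of isogredience classes of automorphisms representing $\varphi$, and passing to a power $\varphi^m$ multiplies the index appropriately (the relevant fact, which I expect is established in \Cref{indexsection}, is that $j(\varphi^m) = m \cdot j(\varphi)$ or at least $j(\varphi) \le j(\varphi^m)$ after rescaling, via the correspondence between principal automorphisms of $\varphi$ and of $\varphi^m$), it suffices to bound $j(\varphi^m)$ where $\varphi^m$ is rotationless. So assume $\varphi$ itself is rotationless and let $f\colon G \to G$ be a CT representing $\varphi$ on some Grushko $(F,\mathscr{A})$-tree / graph of groups $G$.

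Next, the combinatorial heart: I'd enumerate the principal automorphisms $\Phi$ representing $\varphi$ via the CT structure. Each principal $\Phi$ corresponds to a fixed point of $f$ that is "principal" — either a fixed vertex with at least two fixed directions or one fixed direction together with a fixed point of infinite-valence type, or a vertex supporting a twistor/periodic vertex-group element. The CT axioms (rotationless, fixed directions are fixed, the structure of EG and NEG strata, linear edges, etc.) let me read off, for each principal vertex $v$ with fixed automorphism $\Phi_v$, both $\operatorname{rank}(\fix(\Phi_v))$ and the number $\alpha(\Phi_v)$ (respectively the refined count with eigenrays weighted $1$ and infinite-valence fixed points counted) in terms of: the rank of the local fixed subgroup coming from the vertex group, the number of fixed directions at $v$, the number of these directions that lie in EG strata (these generate the eigenrays), and the number of infinite-valence fixed points attached. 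This is essentially a local-to-global count.

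The key inequality is then a Poincaré–Hopf / Euler-characteristic style accounting. I'd show that summing the local contributions $\operatorname{rank}(\fix(\Phi_v)) + \tfrac12\alpha(\Phi_v) - 1$ (with the eigenray upgrade) over all principal vertices is bounded by the total "complexity budget" $n+k-1$, which is the reduced rank of $(F,\mathscr{A})$. The mechanism: the fixed subgraph and fixed subtrees of $f$, together with the fixed vertex-group subgroups, assemble into a forest-like structure inside the Grushko tree, and each principal vertex consumes a portion of the first Betti number plus the vertex-group ranks; eigenrays and infinite-valence fixed points consume ends/valence rather than loops, which is exactly why they can be promoted from weight $\tfrac12$ to weight $1$ without violating the bound. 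Concretely I'd build an $F$-equivariant map from a disjoint union of the fixed trees of the principal $\Phi$'s into $T$ and run a counting argument on images of valence.

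The main obstacle, as in all arguments of this type, is the bookkeeping at infinite-valence vertices and the interaction between vertex-group automorphisms and the ambient tree. In the free-group case one has the luxury that all fixed points live on the boundary and the index is a clean sum over boundary data; here an automorphism can fix an infinite-valence point of $T$ while acting with trivial fixed subgroup on the corresponding vertex group, so the refined index genuinely sees new data and the naive Euler-characteristic count must be adjusted. I'd handle this by treating each vertex orbit $[[A_i]]$ as contributing its own instance of the (already-known, by Collins–Turner or by induction on a within-vertex-group index) inequality, and then gluing: the ambient $F_k$-part of the tree contributes $k-1+(\text{number of vertex orbits})$ worth of budget via a standard graph-of-groups Euler characteristic computation, and the vertex orbits contribute their internal indices, and one checks the total telescopes to $n+k-1$. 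Verifying that the gluing is tight — that no budget is double-counted and that the eigenray promotion is compatible with the vertex-group contributions — is where the real work lies, and I expect it to require a careful case analysis following the stratum type (EG, NEG nonlinear, NEG linear, fixed) of the directions at each principal vertex, mirroring \cite{FeighnHandelAlg} but with the extra vertex-group and infinite-valence cases inserted.
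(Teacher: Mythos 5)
Your outline has the right skeleton—reduce to a rotationless iterate, take a CT, and do an Euler-characteristic accounting over the fixed structure—which is indeed what the paper does. But there are two real gaps.

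The first is in the reduction step, and it is not cosmetic. You propose that $j(\varphi^m) = m\cdot j(\varphi)$, which is false: the index does not scale with the power. The correct statement, and the one the paper proves, is $j(\varphi) \le j(\varphi^m)$, and even this is nontrivial. The contributions from $a(\Phi)$ and $b(\Phi)$ do pass up harmlessly because $\fix_+(\hat\Phi) \subset \fix_+(\hat\Phi^m)$ and $\rneg(\Phi) \subset \rneg(\Phi^m)$ (\Cref{abinequality}). The problem is the fixed-subgroup-rank contribution $\hat r$. The map $\Phi \mapsto \Phi^m$ is many-to-one on isogredience classes of principal automorphisms, so several non-isogredient $\Phi_1,\dots,\Phi_s \in P(\varphi)$ can power up to the same $\Psi \in P(\varphi^m)$, and one must show that $\sum_i \hat r(\Phi_i) \le \hat r(\Psi)$. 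Each $\Phi_i$ restricts to a finite-order outer automorphism of $\mathbb{F} = \fix(\Psi)$, so this reduces to a Culler-type statement (\Cref{Cullerprop}) for finite-order outer automorphisms of free products, which in turn rests on Hensel--Kielak's Nielsen realization. That realization is exactly where the hypothesis that each $A \in \mathscr{A}$ is finitely generated enters; your proposal never invokes that hypothesis, which is a signal that this step has been skipped. Without it the reduction doesn't close.

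The second gap is more forgivable but worth flagging: your ``fixed-tree plus valence count'' argument is essentially a sketch of the graph of groups $\mathcal{S}_N(f)$ and its immersion into $\mathcal{G}$, but the bookkeeping at vertices with nontrivial vertex group requires real care. A single principal vertex $v$ can support several distinct non-exceptional almost Nielsen classes (the $v_{[E]}$ construction), and the $C_2$ vertex groups and dihedral pairs behave exceptionally—neither is principal, yet dihedral pairs contribute to the Euler characteristic. The paper handles this with the convention $\rank(C_2*C_2) = 1$ and a case analysis along the core filtration; your ``gluing'' picture would need to reproduce both the almost-Nielsen-class multiplicity at vertices and the $C_2$ exceptions to get the inequality tight. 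As written it is too coarse to see whether the accounting closes.
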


Because our proof uses CTs, the rotationless power assumption is necessary.
A key step in the proof uses ``Nielsen realization'' for free products,
due to Hensel--Kielak \cite{HenselKielak},
who require groups in $\mathscr{A}$ to be finitely generated.
It would be interesting to know whether
a Nielsen realization theorem holds without the finite generation assumption,
perhaps following Hensel--Kielak's proof
using Grushko $(F,\mathscr{A})$-trees rather than particular Cayley graphs for $F$.

Along the way to proving \Cref{indexthm},
we prove a version of a theorem of Culler \cite[Theorem 3.1]{Culler},
which may be of independent interest.
See \Cref{Cullerprop} for a precise statement.
The realization statement in the theorem relies on \cite{HenselKielak}.
A subgroup of $F$ is non-peripheral if it is not conjugate into any $A \in \mathscr{A}$.
\begin{THM}
    \label{CullerTHM}
    Suppose $\varphi \in \out(F,\mathscr{A})$ has finite order,
    that each $A \in \mathscr{A}$ is finitely generated,
    and that $\Phi$ represents $\varphi$.
    There exists an automorphism of graphs of groups $f\colon \mathcal{G} \to \mathcal{G}$
    representing $\varphi$.
    Either $\fix(\Phi)$ is cyclic and non-peripheral
    or $\fix(\Phi)$ is conjugate to the fundamental group of a graph of groups $\mathcal{H}$
    that has an injective-on-edges immersion into a component of $\fix(f)$.
\end{THM}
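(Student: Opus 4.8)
\emph{Proof plan.} The plan is to realize $\varphi$ by a finite-order automorphism of a graph of groups using Nielsen realization, lift this to the Bass--Serre tree, and analyze the centralizer of the lift; the triviality of edge stabilizers in a Grushko tree will do the decisive work, much as in Culler's argument for free groups. First I would apply the Nielsen realization theorem of Hensel--Kielak \cite{HenselKielak} to the finite cyclic subgroup $\langle\varphi\rangle\le\out(F,\mathscr{A})$ --- this is the step that uses finite generation of the groups in $\mathscr{A}$ --- obtaining a Grushko $(F,\mathscr{A})$-tree $T$ together with a finite extension $\hat F$ of $F$, in which $F$ is normal, acting on $T$ and restricting to the given $F$-action; after subdividing $T$ I may assume $\hat F$ acts without inversions. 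Passing to the quotient graph of groups $\mathcal{G}$ produces the required finite-order automorphism $f\colon\mathcal{G}\to\mathcal{G}$. A preimage $\tilde f\in\hat F$ of a generator of $\langle\varphi\rangle$ under the natural surjection $\hat F\to\langle\varphi\rangle$ is a simplicial isometry of $T$ with $\tilde f g\tilde f^{-1}=\psi(g)$ for some $\psi$ representing $\varphi$; multiplying $\tilde f$ on the left by a suitable element of $F$ arranges $\tilde f g\tilde f^{-1}=\Phi(g)$ for the given $\Phi$, so that $\fix(\Phi)=\{g\in F:g\tilde f=\tilde f g\}$ is the centralizer of $\tilde f$ in $F$. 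The argument then divides according to whether $\tilde f$ is hyperbolic or elliptic on $T$.

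If $\tilde f$ is hyperbolic, with axis $L$, then every $g\in\fix(\Phi)$ preserves $L$. No nontrivial such $g$ fixes a vertex $v$ of $L$: such a $g$ would also fix $\tilde f(v)\ne v$, hence an edge of $L$, hence be trivial by triviality of edge stabilizers; and no nontrivial $g$ inverts an edge of $L$ once inversions have been removed. Hence $\fix(\Phi)$ acts on $L\cong\mathbb{R}$ by translations; since translation lengths in a Grushko tree are bounded below by $1$, the image in $\mathbb{R}$ is discrete, so $\fix(\Phi)$ is trivial or infinite cyclic, and when nontrivial it is non-peripheral because it contains a hyperbolic element whereas a peripheral subgroup acts elliptically. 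This is the first alternative.

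If instead $\tilde f$ is elliptic, set $Y=\fix(\tilde f)$, a nonempty subtree of $T$. Since the elements of $\fix(\Phi)$ commute with $\tilde f$ they preserve $Y$, and I would take $\mathcal{H}$ to be the graph of groups determined by the action of $\fix(\Phi)$ on the tree $Y$, so that Bass--Serre theory gives $\pi_1(\mathcal{H})\cong\fix(\Phi)$. The inclusion $Y\hookrightarrow T$ descends to a morphism $\mathcal{H}\to\mathcal{G}$; for a vertex $v\in Y$ the automorphism $f$ induces on the corresponding vertex group $\stab_F(v)$ is conjugation by $\tilde f$, whose fixed subgroup is $\stab_F(v)\cap\fix(\Phi)$ --- exactly the vertex group of $\mathcal{H}$ at the class of $v$ --- so the morphism factors through $\fix(f)$, landing in a single component because $Y$ is connected. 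The crux is that $\mathcal{H}\to\fix(f)$ is injective on edges, and this is short: if $e_2=he_1$ with $e_1,e_2\in Y$ and $h\in F$, then $\tilde f(e_2)=e_2$ together with $\tilde f(e_1)=e_1$ forces $h^{-1}\Phi(h)\in\stab_F(e_1)=\{1\}$, so $h\in\fix(\Phi)$ and $e_1,e_2$ lie in one $\fix(\Phi)$-orbit; running the same computation with $h$ in a vertex group $\stab_F(v)$ for $v\in Y$ gives local injectivity of the link maps, so the morphism is an injective-on-edges immersion. After conjugating to reconcile basepoints, this is the second alternative.

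The step I expect to be the main obstacle is not the dichotomy itself but the interface with \cite{HenselKielak}: extracting precisely the statement needed --- a single Grushko $(F,\mathscr{A})$-tree carrying a compatible action of the finite extension $\hat F$ --- and then matching it against the graph-of-groups definition of $\fix(f)$ and the notion of immersion used elsewhere in the paper, together with the routine subdivision and inversion bookkeeping. Once $\tilde f$ is in hand, everything reduces to elementary tree geometry, and the injective-on-edges conclusion falls out directly from the triviality of edge stabilizers.
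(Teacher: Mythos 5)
Your proposal is correct, and the core ingredients (Nielsen realization via Hensel--Kielak, triviality of edge stabilizers in a Grushko tree, taking the quotient of $\fix(\tilde f)$ by $\fix(\Phi)$) are the same as in the paper's proof of \Cref{Cullerprop}. Where you genuinely diverge is in how the dichotomy is organized. The paper splits on whether the automorphism $\Phi$ is \emph{principal}, then for a non-principal $\Phi$ analyzes $\fix_N(\hat\Phi)$ directly (obtaining $\hat r(\Phi)=0$ and the special peripheral case), and for a principal $\Phi$ invokes \Cref{principalnonemptyfixed} to obtain a fixed point of $\tilde f$. You instead split on whether the lift $\tilde f\in\hat F$ is hyperbolic or elliptic on $T$, and in both branches the argument is elementary tree geometry: in the hyperbolic branch, commutation with $\tilde f$ forces $\fix(\Phi)$ to act by translations on the axis, giving cyclicity and non-peripherality; in the elliptic branch, $\fix(\tilde f)$ is a nonempty subtree and you quotient. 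Your injectivity-on-edges argument is also cleaner: you deduce $h\in\fix(\Phi)$ directly from twisted equivariance at a fixed edge, where the paper argues by lifting a loop and produces a contradiction with the stabilizer of $\fix(\tilde f)$. The net effect is that your route sidesteps the principal-automorphism machinery of Section~5 entirely and replaces it with the elliptic/hyperbolic trichotomy for tree isometries; what the paper's route buys in exchange is the quantitative estimate $\sum\hat r(\Phi)\le\rank(F)-1$ stated in \Cref{Cullerprop} (which is not part of the theorem you were asked to prove, and which is what forces the paper's detour through $\hat r$ and the $C_2*C_2$ edge case). One small point you should make explicit if you write this up: when $\fix(\tilde f)$ is a single vertex $\tilde v$, every element of $\fix(\Phi)$ fixes $\tilde v$, so $\fix(\Phi)\subset\stab_F(\tilde v)$ and $\mathcal{H}$ is a single vertex with that vertex group; this is needed for the isomorphism $\pi_1(\mathcal{H})\cong\fix(\Phi)$ in the degenerate case.
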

Culler's theorem says that if $\Phi$ represents $\varphi \in \out(F_n)$
and $\fix(\Phi)$ is not cyclic, then it is a free factor
that is a component of $\fix(f)$.
In our theorem, the injective-on-edges assertion is meant to replace the free factor conclusion,
which is too strong in general.
Likewise, because of the presence of almost fixed but not fixed edges,
one should not expect an entire component of $\fix(f)$ to appear in the statement.

\paragraph{}
The broad-strokes strategy of this  paper is to find the correct equivariant
perspective so that arguments scattered across
\cite{BestvinaHandel}, \cite{BestvinaFeighnHandelSolvable}, 
\cite{BestvinaFeighnHandelSolvable}, \cite{FeighnHandel},
\cite{FeighnHandelAlg}, and other papers
can be adapted without too much extra effort.
We have made some effort to synthesize these results,
providing a self-contained exposition
with the exception of the proof of the basic existence of relative train track maps,
for which the reader is referred to \cite{CollinsTurner},
\cite{FrancavigliaMartino} or \cite{Myself}.

Despite the similarities with the case of free groups,
there are also some interesting differences.
We describe a few for the expert reader.
Perhaps the first difference is the presence of the word \emph{almost}
in constructions like \emph{almost periodic, almost Nielsen path,} and \emph{almost linear.}
Here \emph{almost} refers to the presence of vertex group elements.

A reader familiar with relative train track theory for $\out(F_n)$
might like to imagine beginning with a relative train track map of a graph $f\colon G \to G$,
and then collapsing each component of some filtration element $G_{r-1}$
to obtain a map of graphs of groups $f'\colon \mathcal{G} \to \mathcal{G}$.
We show in \Cref{basicssection} that $f'$ is again a relative train track map,
but to define the map of the collapsed graph of groups $f'\colon \mathcal{G} \to \mathcal{G}$,
one needs to make some choices (see \Cref{basicssection} for details).
The failure of $f$ to preserve these choices
results in the presence of vertex group elements at the ends of images of edges
with endpoints in $G_{r-1}$.

Thus a non-exponentially growing edge $E$ of $H_r$ in $G$
becomes an \emph{almost periodic edge} of $H'_r$ in $\mathcal{G}$;
here ``almost'' because in general we have $f'(E) = Eg$ 
for some vertex group element $g$.
By making different choices,
we may transform for instance almost fixed edges into genuinely fixed edges,
but it is typically not possible to do this simultaneously 
for each edge incident to a vertex with nontrivial vertex group.
An \emph{almost Nielsen path} differs from a Nielsen path
in that $f(\sigma)$ is homotopic to $g\sigma h$
for vertex group elements $g$ and $h$.
An \emph{almost linear edge} has a suffix which is an almost Nielsen path.

A second difference, observed by Martino in his thesis \cite{MartinoThesis},
has to do with almost linear edges that are not genuinely linear.
Suppose $f\colon G \to G$ is a relative train track map
and that $E$ is an edge in an exponentially growing stratum $H_r$
such that $f(E) = Eu$ for some path $u$.
If $\tilde E$, $\tilde u$ and $\tilde f \colon \Gamma \to \Gamma$ are lifts to the universal cover
such that the path $\tilde f(\tilde E) = \tilde E\tilde u$,
then
\[
    \tilde E \subset \tilde f_\sharp(\tilde E) \subset \tilde f^2_\sharp(\tilde E) \subset \cdots
\]
is an increasing sequence of tight paths in $\Gamma$ whose union is a ray $\tilde R_{\tilde E}$
converging to a fixed point $P \in \partial\Gamma \cong \partial F_n$.
The number of $H_r$-edges of the path $f^k_\sharp(u)$ grows exponentially
so \cite[Proposition 1.1]{GaboriauJaegerLevittLustig} implies that $P$ is an attracting fixed point.
In fact, one does not need exponential growth of $u$
to show that $P$ is an (``algebraic'') attracting fixed point in this way,
merely that it grows at all.
If it does not grow, then since there are finitely many paths of a given length in a graph,
some $f^k_\sharp(u)$ is a periodic Nielsen path,
and $P$ is instead in the boundary of the fixed subgroup 
of the automorphism corresponding to the lift $\tilde f$.
For free products, it is no longer true in general that there are finitely many graph-of-groups 
paths of a given length,
and in fact if $f^k_\sharp(u)$ is a concatenation of almost Nielsen paths but not a Nielsen path,
then one cannot expect $P$ to be in the boundary of the fixed subgroup.
Fortunately, Martino shows that in this situation
although $P$ is not an ``algebraic'' attracting fixed point,
it is still an attracting fixed point;
see \Cref{GJLLprop} and \Cref{CT3}.

Since we work in the Bowditch boundary $\partial(F,\mathscr{A})$, rather than the Gromov boundary,
there are also fixed points coming from vertices of infinite valence.
It appears possible for these points to be neither in the boundary of the fixed subgroup
nor attractors.
Fortunately the vertices of infinite valence determine a well-defined subspace of $\partial(F,\mathscr{A})$,
so the distinction is less important.
As in \cite[Section 3]{FeighnHandel}, we discuss \emph{principal automorphisms,}
\emph{principal periodic points} and \emph{principal lifts.}
A first guess might be that since vertices with nontrivial vertex group are permuted
by homotopy equivalences of graphs of groups,
these periodic points might be principal.
In fact this is true with one exception, the case of the cyclic group of order 2.
Vertices with $C_2$ vertex group need not be principal
from the perspective of the Bowditch boundary,
and in fact often are not principal, see \Cref{CT1}.

An extreme example of this behavior is the case of \emph{dihedral pairs;}
subgraphs of groups of $\mathcal{G}$ isomorphic to the quotient of $\mathbb{R}$ 
by the standard action of $C_2*C_2$
which are $f$-periodic
with no outward-pointing periodic directions.
Although dihedral pairs are preserved by rotationless $f$,
since $C_2$ points are not principal,
there is no guarantee that $f$ induces the trivial outer automorphism of $C_2*C_2$,
and indeed there are examples where $f$ does not, see \Cref{CT2}.
In Feighn--Handel's construction of CTs,
one key step is to get rid of periodic circles with no outward-pointing periodic directions
by ``untwisting'' them.
Imagine this taking place on a graph equipped with a $C_2$ action,
so that the quotient is a graph of groups with $C_2$ vertex groups.
The problem is that this ``untwisting'' cannot be done $C_2$-equivariantly.
Rather than get rid of dihedral pairs,
we are forced to build them into the theory.
Thus a useful key feature of CTs for free groups:
namely that every filtration element contains a principal point,
fails for free products in general.

\paragraph{}
Here is the organization of the paper.
We give a quick review of Bass--Serre theory and relative train track maps
in \Cref{basicssection} by way of setting the scene.
We expect many readers to be interested in the case of $(F,\mathscr{A})$ where $F$ is a free group,
and we describe collapsing relative train track maps from graphs to graphs of groups.
\Cref{improvingsection} is given to proving the analogue of
\cite[Theorem 2.19]{FeighnHandel},
which provides better relative train track maps for all outer automorphisms
$\varphi \in \out(F,\mathscr{A})$. 
In \Cref{boundarysection},
we study the action of automorphisms of $(F,\mathscr{A})$ on the Bowditch boundary.
We give an account of a result from Martino's thesis,
an analogue of \cite[Proposition 1.1]{GaboriauJaegerLevittLustig} for free products.
We develop the theory of \emph{attracting laminations} for outer automorphisms of free products
in \Cref{laminationsection}.
Attracting laminations are very useful in the study of $\out(F_n)$,
and we have provided a bit more of the theory than we actually have need of in the paper,
for example in \Cref{PFhomomorphism} the existence of a homomorphism
from the stabilizer of an attracting lamination to $\mathbb{Z}$.
In \Cref{principalsection}, we study principal automorphisms of $(F,\mathscr{A})$
and define rotationless outer automorphisms and relative train track maps
and prove their equivalence under the technical assumption
of the existence of a rotationless iterate of a relative train track map.
The construction of CTs for rotationless outer automorphisms
of free products is accomplished in \Cref{CTsection}.
\emph{Geometric strata} play an important role in the definiton of an
improved relative train track map in \cite{BestvinaFeighnHandel}
but do not figure into the definition of a CT.
We have not attempted to develop a theory of geometric strata for free products,
but such a project would surely be interesting.
\Cref{indexthm} and \Cref{CullerTHM} are proven in \Cref{indexsection}.
One useful consequence of Feighn--Handel's index inequality
is the following.
They prove that if $\varphi \in \out(F_n)$ is in the kernel of the action of $\out(F_n)$
on the homology of $F_n$ with $\mathbb{Z}/3\mathbb{Z}$-coefficients
and induces the trivial permutation on a certain finite set whose size is bounded by their index theorem,
then $\varphi$ is rotationless.
Such a result for $\out(F,\mathscr{A})$, say for $F$ a free product of finite and cyclic groups,
would certainly be interesting.
Certainly such $\out(F,\mathscr{A})$ are virtually torsion free,
and our index theorem provides a bound on the relevant finite set,
but we are unaware of a torsion-free subgroup of finite index
with the special properties $\operatorname{IA}_n(\mathbb{Z}/3\mathbb{Z})$ enjoys.

Part of this work originally appeared in the author's thesis \cite{MyThesis}.
The author would like to thank Jean-Pierre Mutanguha and Sami Douba for helpful conversations,
Lee Mosher for many helpful conversations, encouragement and comments on early drafts of this work
and the anonymous referee for many helpful comments which helped
improve the exposition of this article.

\section{Relative train track maps on graphs of groups}
\label{basicssection}

The purpose of this section is to set the scene
for the later sections.
We define graphs of groups with trivial edge groups,
as well as
maps and homotopies thereof.
We recall the definition of a relative train track map
and prove a proposition about
relative train track maps realizing a given
nested sequence of $\varphi$-invariant free factor systems.
We describe how to collapse relative train track maps on graphs
to obtain relative train track maps on graphs of groups
and conversely, how to blow up relative train track maps
on graphs of free groups to relative train track maps on graphs
given relative train track maps representing the maps on vertex groups.

\paragraph{Graphs of groups with trivial edge groups.}
A \emph{graph of groups with trivial edge groups} $\mathcal{G}$
is a graph $G$ together with an assignment of groups
$\mathcal{G}_v$ to the vertices of $G$.
Throughout the paper, unless otherwise specified,
our graphs of groups will be assumed to be connected and finite.

Associated to a graph of groups $\mathcal{G}$ with trivial edge groups
with underlying graph $G$,
we can build a \emph{graph of spaces} $X_{\mathcal{G}}$.
See \cite{ScottWall} for more details.
For each vertex $v$ of $G$,
take a connected CW complex $X_v$
that is a $K(\mathcal{G}_v,1)$
with one vertex $\star_v$ and fix an identification
$\pi_1(X_v,\star_v) = \mathcal{G}_v$.
If $V$ is the set of vertices of $G$
and $E$ is the set of edges 
(which we think of as coming with a choice of orientation),
the graph of spaces $X_{\mathcal{G}}$
is the quotient of the disjoint union
\[  \coprod_{v\in V} X_v \amalg E\times [0,1] \]
by the equivalence relation identifying the point $(e,1)$ with $\star_{\tau(e)}$
and $(e,0)$ with $\star_{\iota(e)}$,
where $\tau(e)$ denotes the terminal vertex of the edge $e$ 
and $\iota(e)$ denotes the initial vertex.
There is a natural ``retraction'' $X_\mathcal{G} \to G$
that collapses each $X_v$ to the point $\star_v$.

The \emph{fundamental group of the graph of groups $\pi_1(\mathcal{G})$}
is the fundamental group of the graph of spaces $X_{\mathcal{G}}$.
For convenience, choose a basepoint $p \in X_{\mathcal{G}}$
in the image of the retraction $X_{\mathcal{G}} \to G$.
Each loop in $\pi_1(X_{\mathcal{G}},p)$ may be represented by an
\emph{edge path} $\gamma$ of the form
\[  \gamma = e'_1g_1e_2g_2\ldots e_kg_ke'_{k+1}. \]
Here $e_2,\ldots,e_k$ are edges of $G$,
$e'_1$ and $e'_{k+1}$ are terminal and initial segments of edges,
respectively,
and the concatenation $e'_1e_2\ldots e_ke'_{k+1}$
is a path in $G$.
The $g_i$ for $1 \le i \le k$ are elements of
$\pi_1(X_{v_i},\star_{v_i}) = \mathcal{G}_{v_i}$,
where $v_i = \tau(e_i) = \iota(e_{i+1})$.
We allow $\gamma$ to begin or end at vertices,
in which case the initial or terminal segments should be dropped
from the notation.
A path is \emph{nontrivial} if it contains a (segment of) an edge $e_i$.

Notice that the notion of an edge path makes sense
without reference to the space $X_\mathcal{G}$.
Homotopy rel endpoints of paths in $X_\mathcal{G}$
yields a corresponding notion of homotopy for edge paths in $\mathcal{G}$;
it is generated by adding or removing segments of the form $e\bar e$
for $e$ an oriented edge of $\mathcal{G}$ and $\bar e$
the edge $e$ in the opposite orientation.
An edge path is \emph{tight} if it cannot be shortened by a homotopy.

\paragraph{The Bass--Serre tree.}
The fundamental group of a finite graph of groups with trivial edge groups
is a free product of the form
\[  F = A_1*\cdots*A_n*F_k, \]
where the $A_i$ are the nontrivial vertex groups of $\mathcal{G}$ and $F_k$,
a free group of rank $k$,
is the ordinary fundamental group of $G$.
Recall from the introduction our notation $\mathscr{A}$
for the set of conjugacy classes of the $A_i$.
We will call such a graph of groups
a \emph{Grushko $(F,\mathscr{A})$-graph of groups.}
We underline that although we use the term Grushko,
this splitting is \emph{not} assumed to be
the Grushko splitting of a finitely generated group.
For example $F$ may be a free group,
and the $A_i$ may be free factors of $F$.

A \emph{Grushko $(F,\mathscr{A})$-tree}
is a simplicial tree $T$ equipped 
with a minimal $F$-action with trivial edge stabilizers
and vertex stabilizers trivial or conjugate
to some $A_i$.
The fundamental theorem of Bass--Serre theory asserts that
associated to each Grushko $(F,\mathscr{A})$-tree,
there is a quotient Grushko $(F,\mathscr{A})$-graph of groups
(which is finite if the tree is \emph{minimal,}
i.e.~there is no proper $F$-invariant subtree)
and conversely associated to any (not necessarily finite)
Grushko $(F,\mathscr{A})$-graph of groups $\mathcal{G}$,
there is a Grushko $(F,\mathscr{A})$-tree $\Gamma$ called the \emph{Bass--Serre tree}
for $\mathcal{G}$.
For the constructions of the quotient graph of groups
and Bass--Serre tree, the reader is referred to
\cite{Trees}, \cite{Bass}, or \cite[Section 1]{Myself}.
Let us remark that to identify a Grushko $(F,\mathscr{A})$-tree $\Gamma$
as the Bass--Serre tree of a Grushko $(F,\mathscr{A})$-graph of groups $\mathcal{G}$,
there is a choice not only of a basepoint $p$ in $\mathcal{G}$
and a lift $\tilde p$ to $\Gamma$,
but also a choice of fundamental domain for the action of $F$ on $\Gamma$
containing $\tilde p$.
However, beginning with the graph of groups and constructing the Bass--Serre tree
requires merely the choice of a basepoint, as in the case of topological spaces.

Given a vertex $v$ of $G$, write $[p,v]$ for the set of homotopy classes rel endpoints
of paths in $\mathcal{G}$ from $p$ to $v$.
There is a natural right action of $\mathcal{G}_v$ on $[p,v]$:
an element $g$ sends the homotopy class of a path $\gamma$ to the homotopy class
of the composite path $\gamma g$.
The vertex set of $\Gamma$ is the set
\[  \coprod [p,v]/\mathcal{G}_v \]
where the disjoint union is over the set of vertices of $G$.
One can extend the above definition,
defining points of $\Gamma$ to be (equivalence classes of) homotopy classes of paths
beginning at $p$.
Alternatively one may say that
two vertices $[\gamma]\mathcal{G}_v$ and $[\gamma']\mathcal{G}_w$
are connected by an edge if $\bar\gamma'\gamma$ is homotopic to a path
of length one, i.e.~of the form $geg'$ for vertex group elements $g$ and $g'$ and $e$ an edge of $G$.

\paragraph{Maps of graphs of groups.}
A \emph{map} of graphs of groups (with trivial edge groups)
$f\colon \mathcal{G} \to \mathcal{G}'$
is a pair of maps $f\colon G \to G'$ and $f_X \colon X_\mathcal{G} \to X_{\mathcal{G}'}$
such that the following diagram commutes
\[  \begin{tikzcd}
    X_\mathcal{G} \ar[r, "f_X"] \ar[d, "r"] & X_{\mathcal{G}'} \ar[d, "r'"] \\
    G \ar[r, "f"] & G',
\end{tikzcd}\]
where $r$ and $r'$ are the retractions.
A \emph{homotopy} between maps of graphs of groups
is a commutative diagram of homotopies.
A map $f\colon \mathcal{G} \to \mathcal{G}'$
is a \emph{homotopy equivalence} if (as one might expect)
there is a map $g\colon \mathcal{G}' \to \mathcal{G}$
such that $gf$ and $fg$ are homotopic to the respective identity maps.
It is not too hard to see that each map
$f\colon \mathcal{G} \to \mathcal{G}'$ is homotopic to a map
that sends the vertices $\star_v$ of $X_\mathcal{G}$ to vertices
$\star_{v'}$ of $X_{\mathcal{G}'}$
and thus sends edges of $G \subset X_\mathcal{G}$
to (possibly trivial) edge paths in $\mathcal{G}'$.
Throughout the rest of the paper,
we will understand a \emph{map} of graphs of groups
to be a map of this kind,
following the convention established in \cite[Section 1]{Myself}.

Like edge paths, a map of graphs of groups $f\colon \mathcal{G} \to \mathcal{G}$
that is a homotopy equivalence
has a combinatorial shadow in $\mathcal{G}$ that does not depend on $X_\mathcal{G}$:
The data of $f$ is
a continuous map $f\colon G \to G$ taking vertices to vertices
along with,
for each edge $e$ of $G$,
an edge path for which we write $f(e)$ in $\mathcal{G}$
(such that the underlying path in $G$ agrees with the image of $e$
under the continuous map $f\colon G \to G$),
and for each vertex $v$ of $G$ with nontrivial vertex group,
an isomorphism $f_v\colon \mathcal{G}_v \to \mathcal{G}_{f(v)}$.
Although such data is sufficient to determine a map $f\colon \mathcal{G} \to \mathcal{G}$,
let us remark that
not every such map is a homotopy equivalence;
one can check whether one has a homotopy equivalence by performing \emph{folds}
as in \cite{BestvinaFeighnGTrees} or \cite{Dunwoody}.
Homotopy for $f$
has a combinatorial shadow in $\mathcal{G}$
generated by the following operations.
\begin{enumerate}
    \item For each edge $e$ of $G$, alter the edge path $f(e)$
        (thought of as an edge path in $\mathcal{G}$)
        by a homotopy.
    \item For a vertex $v$ of $G$ with trivial vertex group
        and an edge path $\gamma$ with endpoints at vertices
        and initial vertex $f(v)$,
        replace $f(v)$ with the terminal vertex of $\gamma$
        and append $\gamma$ to the edge path $f(e)$
        for each oriented edge $e$ with $\tau(e) = v$.
    \item For a vertex $v$ of $G$ with nontrivial vertex group
        and a group element $g \in \mathcal{G}_{f(v)}$,
        replace $f_v \colon \mathcal{G}_v \to \mathcal{G}_{f(v)}$
        with the map $x \mapsto gf_v(x) g^{-1}$,
        and append $g^{-1}$ to the edge path $f(e)$
        for each oriented edge $e$ with $\tau(e) = v$.
\end{enumerate}
A map $f\colon \mathcal{G} \to \mathcal{G}$ acts on edge paths
by the rule that $f$ sends $e$ to the edge path $f(e)$ in $\mathcal{G}$
and sends the vertex group element $g \in \mathcal{G}_v$
to $f_v(g) \in \mathcal{G}_{f(v)}$.
For initial and terminal segments of edges,
there is a small amount of indeterminacy:
suppose we divide the edge $e$ into $e'e''$
at the preimage of a vertex $v$ with nontrivial vertex group
such that the image $f(e)$ contains the vertex group element $g \in \mathcal{G}_v$.
Then we may freely factor $g = g'g''$
and say that $f(e')$ ends with $g'$ and $f(e'')$ begins with $g''$.
We will be careful about choosing factorizations but note that
in practice, there is usually a convenient choice of factorization,
and different choices yield the same result.

\paragraph{Lifting maps to the Bass--Serre tree.}
Given a path $\gamma \in \mathcal{G}$,
let $[\gamma]$ denote its homotopy class rel endpoints.
Given a basepoint $p \in G$
and a path $\sigma$ from $p$ to $f(p)$,
the homotopy equivalence $f$ induces an automorphism
$\Phi \colon \pi_1(\mathcal{G},p) \to \pi_1(\mathcal{G},p)$
defined as
\[  \Phi([\gamma]) = [\sigma f(\gamma) \bar\sigma]. \]
Differing choices of path $\sigma$ change $\Phi$ within its outer class 
$\varphi \in \out(\pi_1(\mathcal{G}))$.
We say $f$ \emph{represents} $\varphi$.
If in addition to sending vertices to vertices,
the map $f$ sends edges to nontrivial tight edge paths,
we say that $f$ is a \emph{topological representative} of $\varphi$.

Let $\Gamma$ be the Bass--Serre tree of $\mathcal{G}$ with basepoint $\tilde p$
lifting $p$.
The choice of path $\sigma$ also defines a lift
$\tilde f \colon \Gamma \to \Gamma$ defined as follows.
If $\tilde v$ is the vertex $[\gamma]\mathcal{G}_v$ of $\Gamma$,
The point $\tilde f(\tilde x)$ is the vertex $[\sigma f(\gamma)]\mathcal{G}_{f(v)}$.
Similarly if $\tilde x$ is a point $[\gamma]$ in the interior of an edge,
then $\tilde f(\tilde x)$ is the point $[\sigma f(\gamma)]$.
One notes that this is well-defined.
The lifted map $\tilde f$ is \emph{$\Phi$-twisted equivariant} in the sense that
\[  \tilde f(g.\tilde x) = \Phi(g).\tilde f(\tilde x) \]
for each point $\tilde x$ in $\Gamma$ and each element $g$ of $\pi_1(\mathcal{G},p)$.
We say that the lifted map $\tilde f$ 
\emph{corresponds to} or \emph{is determined by} $\Phi$
and vice versa.

\paragraph{Directions, the map $Df$.}
Let $v$ be a vertex of $G$,
and let $\st(v)$ denote the set of oriented edges $e$ of $G$
with initial vertex $\iota(e) = v$.
A \emph{direction} at $v$ is an element of the set
\[  \coprod_{e \in \st(v)} \mathcal{G}_{v} \times \{e\}. \]
An identification of $\Gamma$ with the Bass--Serre tree of $\mathcal{G}$
determines, in particular, a bijection between the set of directions at $v$
and those of any lift $\tilde v$ in $\Gamma$.
To wit, if $\tilde e$ is an edge of $\Gamma$ with initial vertex $\tilde v = [\gamma]\mathcal{G}_v$,
where we suppose that $\gamma$ ends with a trivial vertex group element,
then this bijection sends the edge $\tilde e$ with initial vertex $\tilde v$
and terminal vertex $\tilde w$
to the pair $(g,e)$ if $\tilde w = [\gamma ge]\mathcal{G}_w$.
The map $g \mapsto [\gamma g\bar \gamma]$ defines an isomorphism from $\mathcal{G}_v$
to the stabilizer of $\tilde v$ in $\Gamma$
making the bijection above $\mathcal{G}_v$-equivariant.

A topological representative $f\colon \mathcal{G} \to \mathcal{G}$
determines a map $Df$ from the set of directions based at $v$
to the set of directions based at $f(v)$ in the following way.
Let $e$ be an edge in $\st(v)$,
and suppose the edge path $f(e)$ begins with $g_0e_1$.
We define
\[  Df(g,e) = (f_v(g)g_0,e_1). \]
Note that $f(ge)$ begins $f_v(g)g_0e_1$,
so another way of describing this map is as the first vertex group element and edge
of the edge path $f(ge)$.
A direction $(g,e)$ is \emph{almost periodic} if $Df^k(g,e) = (g',e)$ for some $k \ge 1$,
and \emph{almost fixed} if $k =1$.
If we can (possibly by increasing $k$) take $g' = g$,
then the direction is \emph{periodic} or \emph{fixed,} respectively.

A lift $\tilde f$ of $f$ is determined by the image
of the point $\tilde p$
together with (if $\tilde p$ is a vertex with nontrivial vertex group)
the direction $D\tilde f(\tilde e)$ 
for an edge $\tilde e$ with $\iota(\tilde e) = \tilde p$.
Alternatively, if $\tilde p$ is a vertex with nontrivial vertex group
and $\tilde x \ne \tilde p$ is a second point,
then $\tilde f$ is determined by the image of $\tilde p$ and the image of $\tilde x$.
It follows that the correspondence between lifts and automorphisms
defined in the previous titled paragraph is a bijection (for topological representatives,
or more generally, maps that do not collapse edges to vertices).

Suppose $v$ is a vertex with nontrivial vertex group $\mathcal{G}_v$,
and that $\tilde v$ is the point $[\gamma]\mathcal{G}_v$ of $\Gamma$.
Suppose further that $f\colon \mathcal{G} \to \mathcal{G}$ fixes $v$.
Then if $\tilde f$ is the lift of $f$ determined by the path $\sigma$ from $p$ to $f(p)$,
we see that $\tilde f(\tilde v) = \tilde v$ if and only if $\sigma = \gamma h f(\bar\gamma)$
for some element $h \in \mathcal{G}_v$.
In this situation,
if $\tilde e$ is the edge connecting the vertex $\tilde v$ to the vertex 
$\tilde w = [\gamma ge]\mathcal{G}_w$,
we have that
\[  \tilde f(\tilde w) = [\sigma f(\gamma) f_v(g)f(e)]\mathcal{G}_{f(w)}
= [\gamma hf_v(g)f(e)]\mathcal{G}_{f(w)}, \]
and we see that in the notation of the previous paragraph and 
under the $\mathcal{G}_v$-equivariant bijection of directions at $\tilde v$
with directions at $v$,
where $\tilde e$ corresponds to $(g,e)$,
we have that
$D\tilde f(\tilde e)$ corresponds to $(hf_v(g)g_0,e_1)$.

\paragraph{Markings, filtrations, transition matrices.}
Given a group $F$ that splits as a free product of the form
\[  F = A_1*\cdots*A_n*F_k, \]
where as usual the $A_i$ are groups and $F_k$ is a free group of rank $k$,
consider the following Grushko $(F,\mathscr{A})$-graph of groups $\mathbb{G}$,
the \emph{thistle with $n$ prickles and $k$ petals.}
There are $n+1$ vertices; one of which, $\star$, has trivial vertex group,
and the others each have vertex group isomorphic to some $A_i$.
There are $n+k$ edges,
the first $n$ of which connect a vertex with nontrivial vertex group to $\star$,
and the remaining $k$ of which form loops based at $\star$.
A \emph{marked graph of groups} is 
a Grushko $(F,\mathscr{A})$-groups $\mathcal{G}$
together with a homotopy equivalence, the \emph{marking},
$m\colon \mathbb{G} \to \mathcal{G}$.
(Note that in this paper, a marked graph of groups always has trivial edge groups.)
It is mildly convenient to allow the marking $m$
to \emph{not} map the vertex $\star$ to a vertex of $\mathcal{G}$.
Fix once and for all an identification $F = \pi_1(\mathbb{G},\star)$.
Every automorphism $\Phi\colon (F,\mathscr{A}) \to (F,\mathscr{A})$
permuting the conjugacy classes in $\mathscr{A}$
admits a (pointed) topological representative $f\colon(\mathbb{G},\star) \to (\mathbb{G},\star)$.
The marking on a marked graph of groups $(\mathcal{G},m)$ provides an identification
of $\pi_1(\mathcal{G},m(\star))$ with $F$.
Thus it makes sense to say that a topological representative 
$f\colon \mathcal{G} \to \mathcal{G}$
represents $\varphi \in \out(F,\mathscr{A})$.

A \emph{filtration} on a marked graph of groups $\mathcal{G}$
with respect to a topological representative $f\colon \mathcal{G} \to \mathcal{G}$
is a (strictly) increasing sequence
\[  \varnothing = G_0 \subset G_1 \subset \cdots \subset G_m = G \]
of $f$-invariant subgraphs.
We assume that $G_1$ is \emph{nontrivial} in the sense that it contains an edge of $G$.
(Some authors would begin with the vertices of $G$
with nontrivial vertex group as $G_0$.
This makes no difference for our purposes.)
The subgraphs in the filtration are not required to be connected.

The \emph{$r$th stratum} of $\mathcal{G}$ is the subgraph $H_r$
containing those edges of $G_r$ not contained in $G_{r-1}$.
When we consider $G_r$ and $H_r$ as graphs of groups in their own right,
the vertex groups are equal to what they are in $\mathcal{G}$;
in the language of \cite{Bass},
we work primarily with subgraphs of groups, not subgraphs of subgroups.
An edge path has \emph{height $r$} if it is contained in $G_r$
and meets the interior of $H_r$.

A \emph{turn} based at a vertex $v$ of $G$
is an unordered pair of directions in $\mathcal{G}$ based at $v$.
The map of directions $Df$ determines a self-map of the set of turns in $\mathcal{G}$
also called $Df$.
A turn  is \emph{degenerate} if it is of the form
$\{(g,e),(g,e)\}$,
and \emph{illegal} if some iterate of $Df$ maps it to a degenerate turn.
A turn is \emph{legal} if it is not illegal.
A path $\gamma = \ldots e_kg_ke_{k+1}\ldots$ is said to \emph{cross} or \emph{contain}
the turn $\{(1,\bar e_k),(g_k,e_{k+1})\}$.
There is a diagonal action of $\mathcal{G}_v$ on the set of turns based at $v$;
what is primarily important is the $\mathcal{G}_v$-orbit of the turn.
If a path has height $r$ and contains no illegal turns in $H_r$,
then we say that it is \emph{$r$-legal.}

Given a stratum $H_r$, 
the \emph{transition (sub)matrix} $M_r$
is the square matrix whose columns are the edges of $H_r$
and whose $(i,j)$-entry is the number of times the $f$-image of the $j$th edge
crosses the $i$th edge in either direction.
A filtration is \emph{maximal} when each $M_r$ is either
irreducible or the zero matrix.
If $M_r$ is irreducible,
we say that $H_r$ is an \emph{irreducible stratum} and a \emph{zero stratum} otherwise.
All filtrations in this paper will be assumed to be maximal unless otherwise stated;
we will think of a maximal filtration as part of the data
of a topological representative $f\colon \mathcal{G} \to \mathcal{G}$.

Associated to each irreducible stratum $H_r$,
the matrix $M_r$ has a \emph{Perron--Frobenius eigenvalue} $\lambda_r \ge 1$.
If $\lambda_r > 1$, we say that $H_r$ is an \emph{exponentially growing stratum.}
Otherwise we have $\lambda_r = 1$,
$M_r$ is a transitive permutation matrix
and we say that $H_r$ is a \emph{non-exponentially growing stratum.}
In the literature, ``exponentially growing'' and ``non-exponentially growing'' are often
shortened to ``EG'' and ``NEG,'' respectively.

\paragraph{Relative train track maps.}
Let $f\colon \mathcal{G} \to \mathcal{G}$ be a topological representative
with associated filtration $\varnothing = G_0 \subset\cdots\subset G_m = G$.
Given a path $\sigma$ in $\mathcal{G}$,
let $f_\sharp(\sigma)$ denote the (unique) tight path homotopic rel endpoints
to $f(\sigma)$.
The map $f$ is a \emph{relative train track map}
if for every exponentially growing stratum $H_r$, we have
\begin{enumerate}
    \item[\hypertarget{EG-i}{(EG-i)}] Directions in $H_r$ are mapped
        to directions in $H_r$ by $Df$;
        every turn with one edge in $H_r$ and the other in $G_{r-1}$ is legal.
    \item[\hypertarget{EG-ii}{(EG-ii)}] If $\sigma \subset G_{r-1}$
        is a homotopically nontrivial path with endpoints in $H_r \cap G_{r-1}$,
        then $f_\sharp(\sigma)$ is as well.
    \item[\hypertarget{EG-iii}{(EG-iii)}] If $\sigma \subset G_r$
        is a tight $r$-legal path,
        then $f(\sigma)$ is an $r$-legal path.
\end{enumerate}

We have the following theorem,
for which the reader is referred to
\cite{CollinsTurner}, \cite{FrancavigliaMartino} or \cite[Theorem 3.2]{Myself}.
\begin{thm}
    \label{rtttheorem}
    Every $\varphi \in \out(F,\mathscr{A})$ is represented by a relative train track map
    $f\colon \mathcal{G} \to \mathcal{G}$ on a marked graph of groups $\mathcal{G}$.
\end{thm}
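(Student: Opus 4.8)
The plan is to reduce \Cref{rtttheorem} to the already-established existence of relative train track maps in one of the three cited sources, rather than to reprove that machinery from scratch. First I would recall that $\varphi \in \out(F,\mathscr{A})$ permutes the conjugacy classes in $\mathscr{A}$; choosing a Grushko $(F,\mathscr{A})$-graph of groups $\mathcal{G}_0$ (for instance the thistle $\mathbb{G}$) and the identification $F = \pi_1(\mathbb{G},\star)$, one produces a topological representative $f_0 \colon \mathcal{G}_0 \to \mathcal{G}_0$ of $\varphi$, using that every automorphism preserving $\mathscr{A}$ admits a pointed topological representative on $\mathbb{G}$. The content of the theorem is then to \emph{improve} $f_0$ to one satisfying the three conditions \hyperlink{EG-i}{(EG-i)}--\hyperlink{EG-iii}{(EG-iii)} on each exponentially growing stratum of a maximal filtration.

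The main step invokes the existence theorem as proved in one of \cite{CollinsTurner}, \cite{FrancavigliaMartino}, or \cite{Myself}. I would cite \cite[Theorem 3.2]{Myself} directly: it is stated in exactly the language of graphs of groups with trivial edge groups, maps of graphs of groups, filtrations and transition matrices developed above, so that no translation is needed. The other two references require a brief dictionary: Collins--Turner phrase things in terms of graphs of spaces à la Scott--Wall \cite{ScottWall}, so one checks that a relative train track map of graphs of spaces, after homotoping to send vertex spaces to vertices, has the combinatorial shadow described in this section; Francaviglia--Martino phrase things as twisted equivariant maps of the Bass--Serre tree $\Gamma$, and one checks via the lifting discussion above that a $\Phi$-twisted equivariant relative train track map of $\Gamma$ descends to a relative train track map of the quotient graph of groups. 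In all three cases the upshot is a marked graph of groups $\mathcal{G}$ and a topological representative $f \colon \mathcal{G} \to \mathcal{G}$ representing $\varphi$ that is a relative train track map.

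The subtlety to address — and what I expect to be the only real obstacle — is \emph{bookkeeping the vertex-group data} through whichever construction is cited. A topological representative of a graph of groups carries, besides the edge-path images $f(e)$, the isomorphisms $f_v \colon \mathcal{G}_v \to \mathcal{G}_{f(v)}$, and the homotopy moves available include the conjugation move (3) above, which can shift vertex-group elements between $f_v$ and the ends of incident edge paths. One must verify that the moves used in the cited proof (folds, tightening, subdivision, the core operations producing a maximal filtration and the relative train track conditions) are all expressible among moves (1)--(3), so that the improved map is still a bona fide map of graphs of groups representing the same $\varphi$. This is routine but must be done carefully because, as noted in the introduction, the failure to preserve such choices is exactly the source of the ``almost'' phenomena; here, however, since \Cref{rtttheorem} makes no ``almost'' claims and asserts only the plain relative train track property, the verification amounts to observing that none of the cited operations leaves the category of graphs of groups. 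Finally I would remark that the filtration may be taken maximal by the standard refinement argument (reordering strata and splitting reducible transition matrices into irreducible blocks), which is already part of each cited proof, so no additional work is needed.
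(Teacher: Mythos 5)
Your proposal matches the paper exactly: the paper gives no proof of \Cref{rtttheorem} and simply defers to \cite{CollinsTurner}, \cite{FrancavigliaMartino}, and \cite[Theorem 3.2]{Myself}, the last of which is (as you note) stated in precisely the language of this paper and so requires no translation. Your additional remarks about reconciling the graphs-of-spaces and equivariant-tree formulations, and about tracking vertex-group data through the homotopy moves, are sound sanity checks but are not something the paper itself records, so no gap on either side.
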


We extend this theorem by bringing in free factor systems.

\paragraph{Free factor systems.}
A \emph{$(F,\mathscr{A})$-free splitting} is a simplicial tree $T$
equipped with an action of $F$
with trivial edge stabilizers in which subgroups in $\mathscr{A}$ are elliptic
(i.e.~fix points of $T$).
We assume that some element of $F$ is hyperbolic (i.e.~not elliptic) in $T$.
A \emph{proper free factor of $F$ relative to $\mathscr{A}$}
is a vertex stabilizer in some $(F,\mathscr{A})$-free splitting.
It has  \emph{positive complexity} in the sense of \cite[Section 2]{Myself}
if it is not contained in $\mathscr{A}$.
If $F^i$ is a free factor of $F$ relative to $\mathscr{A}$,
let $[[F^i]]$ denote its conjugacy class.
If $F^1,\ldots, F^m$ are free factors of $F$ relative to $\mathscr{A}$
with positive complexity and $F^1*\cdots*F^m$ is a free factor of $F$
(in the sense that there is a subgroup $F^{m+1}$ 
such that $F = (F^1*\cdots*F^m)*F^{m+1}$),
the collection $\{[[F^{1}]],\ldots,[[F^m]]\}$ is a \emph{free factor system.}
We stress that we only  allow free  factors of positive complexity
to be part of a free factor system.

\begin{ex}
    Suppose $f\colon \mathcal{G} \to \mathcal{G}$ is a topological representative
    and $\mathcal{G}$ is a marked graph of groups
    and $G' \subset \mathcal{G}$ is an $f$-invariant subgraph
    with noncontractible (hence nontrivial)
    connected components $C_1,\ldots,C_k$.
    Here we say $C_1$ is \emph{noncontractible}
    if it contains at least two vertices with nontrivial vertex group
    or has nontrivial ordinary fundamental group;
    otherwise we say $C$ is \emph{contractible}.
    The conjugacy classes  $[[\pi_1(\mathcal{G|_{C_i}})]]$
    of the fundamental groups of the $C_i$ are well-defined.
    We define
    \[  \mathcal{F}(G') = \{[[\pi_1(\mathcal{G}|_{C_1})]],\ldots,
    [[\pi_1(\mathcal{G}|_{C_k})]]\}.\]
    Notice that each $\pi_1(\mathcal{G}|_{C_i})$ has positive complexity.
    We say that $G'$ \emph{realizes} $\mathcal{F}(G')$.
\end{ex}

The group $\out(F,\mathscr{A})$ acts on the set of conjugacy classes
of free factors of $F$ relative to $\mathscr{A}$.
If $\varphi \in \out(F,\mathscr{A})$ is an outer automorphism
and $F'$ a free factor of $F$ 
then we say $[[F']]$ is \emph{$\varphi$-invariant}
if $\varphi([[F']]) = [[F']]$.
In this case there is some automorphism $\Phi\colon (F,\mathscr{A}) \to (F,\mathscr{A})$
representing $\varphi$ such that $\Phi(F') = F'$.
The restriction $\Phi|_{F'}$ is well-defined up to an inner automorphism of $F'$,
so it induces an outer automorphism $\varphi|_{F'} \in \out(F')$,
which we will call the \emph{restriction} of $\varphi$ to $F'$.
More generally one can restrict $\varphi$ to any subgroup of $F$ which is its own normalizer.
A free factor system is $\varphi$-invariant if each conjugacy class
of each free  factor contained in it is $\varphi$-invariant.

There is a partial order $\sqsubset$ on free factor systems:
we say that $[[F^1]] \sqsubset [[F^2]]$ if $F^1$ is conjugate to a subgroup of $F^2$,
and we say that $\mathcal{F}_1 \sqsubset \mathcal{F}_2$ for free factor systems
$\mathcal{F}_1$ and $\mathcal{F}_2$ if for each $[[F^i]] \in \mathcal{F}_1$,
there exists $[[F^j]] \in \mathcal{F}_2$ such that $[[F^i]]\sqsubset [[F^j]]$.

\begin{prop}
    \label{rttprop}
    Let $\varphi \in \out(F,\mathscr{A})$ be an outer automorphism.
    If $\mathcal{F}_1 \sqsubset \dotsb \sqsubset \mathcal{F}_d$
    is a nested sequence of $\varphi$-invariant free factor systems
    relative to $\mathscr{A}$,
    then there is a relative train track map
    $f\colon \mathcal{G} \to \mathcal{G}$ representing $\varphi$
    such that each free factor system
    is realized by some element of the filtration
    $\varnothing = G_0 \subset \cdots \subset G_m = G$.
\end{prop}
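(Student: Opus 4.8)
The plan is to bootstrap from the basic existence theorem (\Cref{rtttheorem}) by an induction that "installs" the free factor systems $\mathcal F_1 \sqsubset \cdots \sqsubset \mathcal F_d$ one at a time as filtration elements, using the relative-train-track machinery applied to well-chosen restrictions of $\varphi$. The key observation is that realizing a $\varphi$-invariant free factor system $\mathcal F$ as a filtration element amounts to producing a topological representative $f\colon \mathcal G \to \mathcal G$ together with an $f$-invariant subgraph $G'$ whose noncontractible components carry exactly the conjugacy classes of free factors in $\mathcal F$; the usual way to do this in the free-group setting (Bestvina--Feighn--Handel) is by "restriction and extension": choose a relative train track map for the restriction $\varphi|_{F'}$ on each free factor, and then extend the marked graph of groups and the map to a representative of $\varphi$ on all of $F$ in which the restriction sits as an invariant subgraph. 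I would carry this out in the graph-of-groups setting, leaning on the blow-up/collapse constructions described in \Cref{basicssection}.

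The steps, in order, are as follows. First, I would treat $d = 1$: given a single $\varphi$-invariant free factor system $\mathcal F = \{[[F^1]],\ldots,[[F^\ell]]\}$, pick automorphisms representing $\varphi$ that preserve each $F^j$ (using that $\varphi$-invariance of a free factor conjugacy class gives such a representative, as recalled in the Free factor systems paragraph), obtain by \Cref{rtttheorem} a relative train track map representing $\varphi|_{F^j}$ on a marked graph of groups $\mathcal G^j$, form the disjoint union $\coprod_j \mathcal G^j$, and then glue in finitely many edges (and possibly a trivial-vertex-group vertex) to build a marked graph of groups $\mathcal G$ for $(F,\mathscr A)$ in which $\coprod_j \mathcal G^j = G'$ is a subgraph realizing $\mathcal F$. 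The map is first defined on $G'$ stratum-by-stratum as given, then extended over the new edges by sending them to tight edge paths representing $\varphi$ on the ambient group; after tightening and subdividing, and after refining to a maximal filtration above $G'$, \Cref{rtttheorem} (applied to the quotient/relative situation, i.e. running the existence construction while keeping $G'$ a fixed invariant bottom piece, exactly as in the relative-to-$G_{r-1}$ statements that the cited references already prove) yields a relative train track map $f\colon \mathcal G\to\mathcal G$ with $G' = G_j$ for some $j$. Second, for the general nested sequence, I would induct on $d$: by the $d=1$ case, build a representative realizing $\mathcal F_1$ as a bottom filtration element $G' = G_{j_1}$; then the free factor systems $\mathcal F_2/\mathcal F_1 \sqsubset \cdots \sqsubset \mathcal F_d/\mathcal F_1$ are $\varphi$-invariant free factor systems relative to the larger collection of peripheral-and-$\mathcal F_1$ conjugacy classes encoded by $G'$, so applying the inductive hypothesis to the representative of $\varphi$ on the "quotient" $\mathcal G/G'$ (equivalently, running the relative construction above $G'$) produces the remaining filtration elements, and one checks that collapsing/blowing up as in \Cref{basicssection} keeps everything a relative train track map and keeps $G'$ realizing $\mathcal F_1$.

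The main obstacle I expect is the gluing/extension step: when a free factor $F^j$ has positive complexity but the ambient decomposition forces edges between components of $\coprod_j \mathcal G^j$ and the peripheral vertices, one must check that the resulting $\mathcal G$ is genuinely a Grushko $(F,\mathscr A)$-graph of groups with the correct fundamental group and marking (this is a Bass--Serre/folding bookkeeping issue, cf. the remark about folds in \Cref{basicssection}), and that the extension over the new edges can be tightened to a topological representative without destroying the $f$-invariance of $G'$ or the relative train track conditions already achieved on $G'$ — in particular one must arrange that turns with one edge in $H_r \subset G'$ and one in the new part are legal, which is where the subdivision and the freedom in choosing the edge paths $f(e)$ are used. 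The second delicate point is purely organizational: making precise the "relative" form of \Cref{rtttheorem} — that the existence construction can be run while holding a given invariant subgraph fixed as a bottom filtration element — which is implicit in \cite{CollinsTurner}, \cite{FrancavigliaMartino}, \cite{Myself} and is the free-product analogue of the corresponding step in \cite{BestvinaHandel}; I would state this as the key lemma and note that it follows by the same proofs, since those proofs already proceed stratum-by-stratum relative to $G_{r-1}$.
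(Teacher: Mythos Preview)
Your restriction-and-extension strategy is right in spirit, but you induct from the bottom up (realize $\mathcal F_1$ first, then quotient and recurse), whereas the paper inducts from the top down: it takes $\mathcal F_d=\{[[F^1]],\ldots,[[F^\ell]]\}$, restricts the shorter chain $\mathcal F_1\sqsubset\cdots\sqsubset\mathcal F_{d-1}$ to each $F^i$, applies the inductive hypothesis there to get \emph{topological representatives} $f_i\colon\mathcal G^i\to\mathcal G^i$ with appropriate filtrations, wedges these together with a complementary thistle, and extends across the connecting edges. Crucially the paper makes no attempt to obtain a relative train track map during the induction---only a topological representative realizing all the $\mathcal F_j$. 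Promotion to a relative train track map happens once, at the very end, and the single fact needed is that each move in the proof of \Cref{rtttheorem} (folding, subdivision, valence-one and valence-two homotopies, collapsing pretrivial forests, invariant core subdivision) preserves the property that disjoint noncontractible components of a filtration element remain disjoint and noncontractible, hence that each $\mathcal F_j$ stays realized. This is verified directly (with a short argument for pretrivial forests), following \cite[Lemma~2.6.7]{BestvinaFeighnHandel}.

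Your route is workable but harder, and the two obstacles you flag are genuine. The collapse/blow-up step is not clean: the paper explicitly remarks (end of \Cref{basicssection}) that blowing up a relative train track map need \emph{not} yield one---properties \hyperlink{EG-i}{(EG-i)} and \hyperlink{EG-ii}{(EG-ii)} must be restored by core subdivision and collapsing inessential connecting paths---so your assertion that ``one checks that collapsing/blowing up\ldots keeps everything a relative train track map'' is false as stated. And your ``relative form of \Cref{rtttheorem}'' is exactly what the paper's move-preservation argument replaces: rather than assert that the existence algorithm can be run rel a fixed invariant subgraph, the paper runs the algorithm on the whole graph and checks afterward that the realizations survive. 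Both obstacles you identify dissolve under the paper's organization; what you would need to prove for your approach is, in the end, equivalent to the paper's move-preservation check.
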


\begin{proof}
    The first step is to construct a topological representative
    $f\colon \mathcal{G} \to \mathcal{G}$ and an associated filtration
    such that each $\mathcal{F}_i$ is realized by a filtration element.
    We proceed by induction on $d$,
    the length of the nested sequence of $\varphi$-invariant free factor systems.
    The case $d=0$ is vacuous.
    Let $\mathcal{F}_d = \{[[F^1]],\ldots,[[F^\ell]]\}$,
    and choose automorphisms $\Phi_i \colon (F,\mathscr{A}) \to (F,\mathscr{A})$
    representing $\varphi$ such that $\Phi_i(F^i) = F^i$.
    For each $j$ with $1  \le j \le d-1$
    and each $i$ satisfying $1 \le i \le \ell$,
    write $\mathcal{F}^i_j$  for the set of conjugacy classes
    of free factors in $\mathcal{F}_j$ that are conjugate into $F^i$.
    Then for each $i$,
    \[  \mathcal{F}^i_1 \sqsubset \cdots \sqsubset \mathcal{F}^i_{d-1}\]
    is a nested sequence of $\varphi$-invariant free factor systems.
    By induction, there are graphs of groups $\mathcal{G}^i$
    and topological representatives $f_i\colon \mathcal{G}^i \to \mathcal{G}^i$
    representing the restriction 
    of $\varphi$ to $F^i$
    together with associated (not necessarily maximal!) filtrations
    $G^i_1 \subset \cdots \subset G^i_{d-1}$ 
    such that $G^i_j$ realizes $\mathcal{F}^i_j$.
    Inductively, we may assume that $f_i$ fixes some vertex $v_i$ of $\mathcal{G}^i$,
    that $\mathcal{G}^i$ has no valence-one or valence-two vertices
    with trivial vertex group,
    and that the marking on $\mathcal{G}^i$ identifies 
    $F^i$ with $\pi_1(\mathcal{G}^i,v_i)$
    and $\Phi_i$ with 
    $(f_i)_\sharp\colon \pi_1(\mathcal{G}^i,v_i) \to \pi_1(\mathcal{G}^i,v_i)$.

    Take a complementary free factor $F^{\ell+1}$ so that 
    $F = F^1 * \cdots F^\ell*F^{\ell+1}$
    and an associated thistle $\mathcal{G}^{\ell+1}$.
    (If $F^{\ell+1}$ is trivial, $\mathcal{G}^{\ell+1}$ is a vertex $\star$.)
    The graph of groups $\mathcal{G}$
    is constructed as follows:
    begin with the disjoint union of the $\mathcal{G}^i$.
    Glue $\mathcal{G}^{\ell+1}$ to $\mathcal{G}^1$
    by identifying the vertex $\star$ of $\mathcal{G}^{\ell+1}$ 
    with trivial vertex group
    with  the fixed point $v_1$.
    Then attach an edge $E_i$ connecting $v_1$ to $v_i$ for $2 \le i \le \ell$.
    The resulting graph of groups has no valence-one or valence-two vertices
    with trivial vertex group.
    Collapsing each $E_i$ to $v_1$ gives a homotopy equivalence of $\mathcal{G}$
    onto a graph whose fundamental group is naturally identified with
    $F^1*\cdots F^\ell*F^{\ell+1} = F$.
    This provides the (essential data of) a marking on  $\mathcal{G}$.

    Define $f\colon \mathcal{G} \to \mathcal{G}$ from the $f_i$ as follows.
    For $2 \le i \le \ell$, there is $c_i \in F$ such that
    $\Phi_1(x) = c_i\Phi_i(x)c_i^{-1}$ for all $x \in F$.
    Let $\gamma_i$ be loops based at $v_1$ 
    that are identified under the marking with $c_i$.
    On each $\mathcal{G}^i$ for $2 \le i \le \ell$, set $f = f_i$.
    Define $f(E_i) = \gamma_i E_i$,
    and define $f$ on $\mathcal{G}^{\ell+1}$ according to $\Phi_1$.
    Then $f_\sharp\colon \pi_1(\mathcal{G},v_1) \to \pi_1(\mathcal{G},v_1)$
    induces $\Phi_1 \colon (F,\mathscr{A}) \to (F,\mathscr{A})$.
    For $1 \le j \le d-1$, define $G_j = \bigcup_{j=1}^\ell G^i_j$,
    and define $G_d = \bigcup_{i=1}^\ell \mathcal{G}^i$.
    Then \[\varnothing = G_0 \subset \cdots \subset G_d \subset G_{d+1} = G\]
    is an $f$-invariant filtration,
    and each $\mathcal{F}_i$ for $i$ satisfying $1 \le i \le d$
    is realized by $G_i$.
    Complete this filtration to a maximal filtration
    (by adding in intermediate $f$-invariant subgraphs).
    This completes the first step.

    The next step is to promote our topological representative 
    to a relative train track map.
    Note (cf.~the proof of \cite[Lemma 2.6.7]{BestvinaFeighnHandel})
    that the moves described in \cite[Section 2 and 3]{Myself}
    all preserve the property of realizing free factors.
    More precisely, suppose $C_1$ and $C_2$
    are disjoint noncontractible components of some filtration element $G_r$ 
    of $\mathcal{G}$
    and that $f'\colon \mathcal{G}' \to \mathcal{G}'$
    is obtained from $f\colon \mathcal{G} \to \mathcal{G}$
    by collapsing a pretrivial forest, folding, subdivision,
    invariant core subdivision, valence-one homotopy or valence-two homotopy.
    If $p\colon \mathcal{G} \to \mathcal{G}'$ is the identifying
    homotopy equivalence,
    then $p(C_1)$  and $p(C_2)$  
    are disjoint, noncontractible subgraphs of $\mathcal{G}'$.

    To see this in the case of collapsing a pretrivial forest $X$,
    note that because $C_1$ and $C_2$ are noncontractible,
    there exists $k \ge 1$ such that $f^k(C_1) \subset C_1$
    and $f^k(C_2) \subset C_2$.
    If some component of $X$ intersected $C_1$ and $C_2$,
    then the same must be true of $f^k(X)$,
    thus since $C_1$ and $C_2$ are disjoint, this contradicts the fact that $X$
    is pretrivial.
    
    Thus we may work  freely and apply the proof of \cite[Theorem 3.2]{Myself}
    to produce a relative train track map.
\end{proof}

The proof of \Cref{rtttheorem} in \cite{Myself}
has the following useful corollary.
Let $f\colon \mathcal{G} \to \mathcal{G}$ be a topological representative.
Let $\pf(f)$ be the set of Perron--Frobenius eigenvalues
of the exponentially growing strata of $f$ in nonincreasing order.
The set of $\pf(f)$ for $f$ a topological representative 
of $\varphi \in \out(F,\mathscr{A})$
satisfying a certain condition called \emph{boundedness}
ordered lexicographically has a minimum, $\pfmin$
(see \cite[Section 3]{Myself} 
and note that boundedness depends solely on $\pf(f)$).
The proof of \Cref{rtttheorem} shows that if $f$ is a bounded topological representative
satisfying \hyperlink{EG-i}{(EG-i)} but not \hyperlink{EG-iii}{(EG-iii)},
then $\pf(f)$ can be decreased.
Therefore we have the following corollary.

\begin{cor}[Corollary  3.7 of \cite{Myself}]
    \label{pfcorollary}
    If $f\colon \mathcal{G} \to \mathcal{G}$ is a topological representative
    satisfying \hyperlink{EG-i}{(EG-i)} and with
    $\pf(f)=  \pfmin$,
    then the exponentially growing strata of $f$ satisfy
    \hyperlink{EG-iii}{(EG-iii)}.
\end{cor}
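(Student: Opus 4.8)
The plan is to argue by contradiction, exploiting the minimality of $\pf(f) = \pfmin$ together with the procedure used to prove \Cref{rtttheorem}. Suppose $f\colon \mathcal{G} \to \mathcal{G}$ is a bounded topological representative satisfying \hyperlink{EG-i}{(EG-i)} with $\pf(f) = \pfmin$, but that some exponentially growing stratum $H_r$ fails \hyperlink{EG-iii}{(EG-iii)}: there is a tight $r$-legal path $\sigma \subset G_r$ whose image $f(\sigma)$ is not $r$-legal. First I would recall the key mechanism from \cite{Myself}: whenever \hyperlink{EG-i}{(EG-i)} holds but \hyperlink{EG-iii}{(EG-iii)} fails for $H_r$, one can perform a sequence of elementary moves (folding, subdivision, collapsing pretrivial forests, the various homotopies) to produce a new bounded topological representative $f'\colon \mathcal{G}' \to \mathcal{G}'$, still satisfying \hyperlink{EG-i}{(EG-i)}, with $\pf(f')$ strictly smaller than $\pf(f)$ in the lexicographic order. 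This is precisely the content buried in the proof of \cite[Theorem 3.2]{Myself}, as recalled in the paragraph preceding the corollary.

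The heart of the argument — and the step I expect to be the main obstacle to state cleanly — is isolating from the proof of \Cref{rtttheorem} the precise invariant that decreases. In the free-group case (Bestvina--Handel, and Feighn--Handel's streamlined treatment), the relevant quantity is a weighted count: one subdivides at the illegal turn in $f(\sigma)$, which creates a ``bad'' edge in a stratum of Perron--Frobenius eigenvalue $\le \lambda_r$, then folds to cancel it, and a bookkeeping argument on the total weight $\sum_i w_i \cdot (\text{number of $H_i$-edges})$ of the images of edges shows that either some $\lambda_i$ with $i \ge r$ strictly drops, or the multiplicity of $\lambda_r$ in $\pf(f)$ strictly drops, or a higher stratum's data improves while the strata above $H_r$ are unchanged. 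In all cases $\pf(f)$ decreases lexicographically. I would phrase this as: the moves of \cite[Sections 2--3]{Myself} applied in this situation do not increase $\pf(f')$ and strictly decrease it when \hyperlink{EG-iii}{(EG-iii)} fails, citing the relevant lemmas there rather than reproducing the weight computation.

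Once that is in hand the corollary is immediate: $\pf(f') < \pf(f) = \pfmin$ contradicts the definition of $\pfmin$ as the lexicographic minimum of $\pf$ over bounded topological representatives of $\varphi$ (here one uses that boundedness is preserved by the moves, which, as noted parenthetically in the excerpt, depends solely on $\pf$ and hence is inherited since $\pf(f') \le \pf(f)$ coordinatewise in the appropriate sense). Therefore no exponentially growing stratum of $f$ can fail \hyperlink{EG-iii}{(EG-iii)}, which is the assertion. The one genuine subtlety to flag in writing this up is making sure the moves used to decrease $\pf$ are among those the proof of \Cref{rttprop} has already shown to preserve realization of free factor systems — but for the bare corollary, which makes no mention of a prescribed filtration, this is not needed, and one may simply invoke \cite[Corollary 3.7]{Myself} directly, of which the present statement is a restatement in the graph-of-groups language established in this section.
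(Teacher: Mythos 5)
Your proposal is correct and follows the same contradiction argument the paper gives in prose in the paragraph preceding the corollary: assume $\pf(f) = \pfmin$ but some exponentially growing stratum fails \hyperlink{EG-iii}{(EG-iii)}, invoke the machinery from \cite[Theorem 3.2]{Myself} to produce a bounded topological representative with strictly smaller $\pf$, and derive a contradiction with minimality. You also correctly observe that the statement is really a restatement of \cite[Corollary 3.7]{Myself}, which the paper itself treats as the proof.
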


We also have the following useful characterization of \hyperlink{EG-ii}{(EG-ii)}.

\begin{lem}\label{trivialedgegroupsEG2}
    Let $f\colon \mathcal{G} \to \mathcal{G}$ be a topological representative
    and $H_r$ an exponentially growing stratum that satisfies \hyperlink{EG-i}{(EG-i)}.
    Property \hyperlink{EG-ii}{(EG-ii)} for $H_r$
    is a finite property for any contractible component $C$ of $G_{r-1}$
    which does not contain a vertex with nontrivial vertex group.
    In the contrary case there exists $k \ge 1$ such that $f^{k}(C) \subset C$
    and \hyperlink{EG-ii}{(EG-ii)} for connecting paths in $C$
    is equivalent to the condition that each vertex in $H_r \cap C$ is periodic;
    i.e.~$f^k$ permutes the elements of the set $H_r \cap C$.
\end{lem}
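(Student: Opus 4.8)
The statement is really two assertions about an EG stratum $H_r$ satisfying \hyperlink{EG-i}{(EG-i)} and about a contractible component $C$ of $G_{r-1}$. In the first case, $C$ has no vertex with nontrivial vertex group, hence $\pi_1(\mathcal{G}|_C)$ is trivial, so $C$ is a finite tree in the underlying graph $G$. The homotopically nontrivial paths $\sigma \subset G_{r-1}$ with endpoints in $H_r \cap G_{r-1}$ that one must check are, up to homotopy, exactly the (finitely many) reduced edge paths in $C$ between pairs of distinct vertices of $H_r \cap C$, since $C$ has finitely many edges and trivial fundamental group. Thus \hyperlink{EG-ii}{(EG-ii)} amounts to finitely many conditions, each of the form: $f_\sharp(\sigma)$ is not a trivial path. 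This establishes the ``finite property'' claim. (One should remark that verifying $f_\sharp(\sigma) \ne \mathrm{pt}$ for each such $\sigma$ is decidable since $f_\sharp$ is computable; but the paper may only need the finiteness, not effectivity.)

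For the second part, suppose $C$ is a contractible component of $G_{r-1}$ that does contain a vertex with nontrivial vertex group. First I would observe, exactly as in the proof of \Cref{rttprop}, that since the components of $G_{r-1}$ are permuted by $f$ and there are finitely many of them, there is a $k \ge 1$ with $f^k(C) \subset C$; replacing $f$ by $f^k$ is harmless for the equivalence we want to prove, so assume $f(C) \subset C$. Now $C$ is a finite tree-shaped graph of groups whose fundamental group $\pi_1(\mathcal{G}|_C)$ is a free product of (conjugates of) the nontrivial vertex groups appearing in $C$ — by contractibility there is no free part and the free product has at least one factor. The key point is that the "connecting paths'' $\sigma \subset C$ with endpoints in $H_r \cap C$, considered up to homotopy rel endpoints, now come in infinitely many homotopy classes, because one can insert arbitrary vertex-group elements; so finiteness genuinely fails and one must identify precisely when \hyperlink{EG-ii}{(EG-ii)} holds.

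The heart of the argument is the following claim: with $f(C)\subset C$ and \hyperlink{EG-i}{(EG-i)} in force, \hyperlink{EG-ii}{(EG-ii)} for paths in $C$ holds if and only if $f$ permutes the finite set $H_r \cap C$ of vertices where $H_r$ meets $C$. For the forward direction, I would argue contrapositively: if some $v \in H_r\cap C$ is \emph{not} periodic, i.e.\ the $f$-orbit of $v$ eventually leaves $H_r\cap C$, pick $v$ and $w$ in $H_r\cap C$ and a connecting path $\sigma$ from $v$ to $w$ in $C$ (such $w$ exists, e.g.\ any other vertex of $H_r\cap C$, or one handles the case $|H_r\cap C|=1$ separately, where the endpoints of the nontrivial $\sigma$ forming a loop at $v$ must return). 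Then $f^j(\sigma)$ is a path in $C$ whose endpoints are $f^j(v), f^j(w)$; once these endpoints are no longer in $H_r$, the path $f^j_\sharp(\sigma)$ either must be examined for being trivial — and the mechanism by which \hyperlink{EG-ii}{(EG-ii)} can fail is exactly that a connecting path gets collapsed because the vertices where it attached to $H_r$ get moved off $H_r$, after which iterating $f$ and tightening can kill it. Conversely, if every vertex of $H_r\cap C$ is periodic (so after passing to a further power $f$ \emph{fixes} each such vertex), then the endpoints of any connecting path $\sigma$ stay in $H_r\cap G_{r-1}$ under all iterates; since $C$ is a contractible graph of groups with trivial edge groups, a path between two points that is homotopically nontrivial stays nontrivial under $f$ — because nontriviality of a path in $C$ is detected by its image in the Bass--Serre tree of $\mathcal{G}|_C$ being a nondegenerate segment, and the $\Phi$-twisted-equivariant lift $\tilde f$ restricted to (the relevant subtree covering) $C$ is injective on edges, as $f|_C$ does not collapse edges of the EG stratum adjacent stuff — more carefully, one uses that $f$ is a topological representative so it collapses no edge, and within a contractible $C$ there is no cancellation that can swallow a whole connecting path once its two endpoints are pinned.

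**Expected main obstacle.** The delicate step is the converse direction: ruling out that $f_\sharp(\sigma)$ becomes trivial when the endpoints of $\sigma$ \emph{are} fixed. In the free-group case this is immediate because $C$ is a tree and $f|_C$ lifts to an injective-on-edges map of that tree; with nontrivial vertex groups one must check that inserting and permuting vertex-group elements cannot conspire to collapse $\sigma$. I would handle this by passing to the Bass--Serre tree $\Gamma_C$ of $\mathcal{G}|_C$: lift $\sigma$ to a segment $\tilde\sigma = [\tilde v,\tilde w]$, lift $f|_C$ to the $\Phi|_C$-twisted-equivariant map $\tilde f$, and note that $\tilde f(\tilde v)$ and $\tilde f(\tilde w)$ are determined (when $v,w$ are $f$-fixed) to be in the $\mathcal{G}_v$- and $\mathcal{G}_w$-orbits of $\tilde v,\tilde w$ but generally not equal to them; the tightened image $\tilde f_\sharp(\tilde\sigma)$ is the geodesic between these two translates, which is degenerate only if the two translates coincide — impossible since $\tilde v,\tilde w$ lie over distinct vertices of $C$ (or, if $v=w$, since the path is homotopically nontrivial the two lifts are genuinely distinct vertices of $\Gamma_C$). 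The one subtlety to dispatch is the case $|H_r\cap C| = 1$, where "connecting paths'' are loops at the single vertex and one must use \hyperlink{EG-i}{(EG-i)} (legality of the turn between $H_r$ and $G_{r-1}$) to see that such a loop, once based at a fixed vertex, cannot be collapsed; this should follow since legality of the mixed turn prevents cancellation at the juncture where $\sigma$ meets $H_r$, and internal cancellation in $C$ cannot eliminate the whole of $\sigma$ by the tree argument above.
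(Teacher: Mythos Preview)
Your argument for the direction ``$f^k$ permutes $H_r\cap C$ $\Rightarrow$ (EG-ii) holds'' is essentially fine, though more elaborate than needed: once $f^k$ is injective on $H_r\cap C$, so is $f$, hence a connecting path with distinct endpoints has image with distinct endpoints and is therefore nontrivial; for loops the key subtlety (which your Bass--Serre tree sketch handles correctly, and which the paper treats by an inductive reduction on length) is that ``trivial'' here means ``no edges,'' not ``null-homotopic,'' so one must rule out $f_\sharp(\alpha)$ collapsing to a single vertex-group element.

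There is, however, a genuine gap in your contrapositive direction. You write that if $v\in H_r\cap C$ is not periodic then ``the $f$-orbit of $v$ eventually leaves $H_r\cap C$'' and that (EG-ii) fails ``because the vertices where it attached to $H_r$ get moved off $H_r$.'' This misidentifies the mechanism: by (EG-i), the $f$-orbit of $v$ never leaves $H_r\cap G_{r-1}$, and the $f^k$-orbit never leaves $H_r\cap C$. What ``not a permutation'' means for a self-map of a finite set is \emph{not injective}: there exist distinct $v,w\in H_r\cap C$ with $f^k(v)=f^k(w)=u$. The paper's move is then to use that $f^k$ is a homotopy equivalence: the map on homotopy classes of paths from $v$ to $w$ to loops at $u$ is a bijection, so exactly one such class $\sigma$ satisfies $f^k_\sharp(\sigma)$ trivial. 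This $\sigma$ is a connecting path in $C$ violating (EG-ii) for $f^k$, hence for $f$. Your sketch never produces such a $\sigma$ and the ``endpoints leave $H_r$'' picture does not.

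A minor point: the ``contrary case'' in the statement also includes non-contractible components, not only contractible ones with a nontrivial vertex group; the paper's argument covers both uniformly once one has $f^k(C)\subset C$.
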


\begin{proof}
    The first statement, 
    where $C$ is contractible and contains no vertex with nontrivial vertex group,
    is clear,
    since there are only finitely many tight paths with endpoints at vertices in $C$.

    So suppose that there exists $k\ge 1$ such that $f^k(C) \subset C$.
    Then since $H_r$ satisfies \hyperlink{EG-i}{(EG-i)},
    $f^k$ maps the finite set of vertices $H_r \cap C$ into itself.
    If $f^k$ does not permute the elements of $H_r \cap C$,
    then there are a pair of vertices $v$ and $w$ whose $f^k$-image is equal, say to $u$.
    Then there is a path $\sigma$ connecting $v$ to $w$ whose image under $f^k_\sharp$
    is trivial (consider what a homotopy inverse to $f^{k}$ does to $u$)
    and \hyperlink{EG-ii}{(EG-ii)} fails.

    So suppose that $f^k$ permutes the elements of $H_r \cap C$.
    If $\alpha$ is a tight connecting path in $C$ with distinct endpoints in
    $H_r \cap C$, then $f^k(\alpha)$ also has distinct endpoints
    and therefore $f_\sharp(\alpha)$ is nontrivial.
    If instead $\alpha$ is a tight connecting path in $C$ with identical endpoints, say $v$,
    then it determines a nontrivial loop in $\pi_1(\mathcal{G},v)$.
    If $f_\sharp(\alpha)$ fails to be nontrivial
    then $\alpha$ must have the form $\sigma g \bar\sigma$ for $g \in \mathcal{G}_w$
    an element of a vertex group
    and $f(v) = f(w)$.
    Since $C$ is a component of $G_{r-1}$,
    vertices with nontrivial vertex group are permuted,
    and $f^k(w) = f^k(v) \in H_r \cap C$,
    we conclude that in fact $w \in H_r \cap C$
    and therefore $v = w$.
    Since $\alpha$ was tight, we conclude $\sigma$ is a nontrivial loop in $\pi_1(\mathcal{G},v)$,
    and $f_\sharp(\sigma)$ (and therefore $f_\sharp(\alpha)$)
    is nontrivial provided $\sigma$ is not itself a path of the form
    $\rho g' \bar\rho$ for $g' \in \mathcal{G}_v$.
    On the other hand, if it \emph{is} of that form,
    then we can ask the question of $\rho$ and so on
    until at last we come to a path $\mu$ which is too short to be of the above form.
    We have that $f_\sharp(\mu)$ is nontrivial.
    Recall that nondegenerate turns of the form $\{(g_1,e),(g_2,e)\}$ 
    where $g_1$ and $g_2$ are distinct are legal,
    therefore $f_\sharp(\mu g''\bar \mu) = f_\sharp(\mu)f_v(g'')f_\sharp(\bar\mu)$
    is a tight concatenation of tight paths,
    and we conclude that $f_\sharp(\alpha)$ is nontrivial.
    Therefore \hyperlink{EG-ii}{(EG-ii)} holds for connecting paths in $C$ in this case.
\end{proof}

\paragraph{Collapsing relative train track maps.}
Suppose $f\colon G \to G$
and $\varnothing = G_0  \subset G_1 \subset \cdots \subset G_m = G$
are a relative train track map and filtration in the sense of \cite{BestvinaHandel}
(i.e.~in our sense with all vertex groups trivial)
and fix $r$ satisfying $2 \le r \le m$.
To aid the reader interested in the case $F = F_n$,
we describe how to collapse the filtration element $G_{r-1}$ of $f$
to obtain a new relative train track map
$f'\colon \mathcal{G}' \to \mathcal{G}'$ on a Grushko $(F,\mathcal{F}(G_{r-1}))$-graph of groups
and a collapse map $p\colon G  \to \mathcal{G}'$.

Begin by defining a subgraph $H$ of $G$ inductively as follows:
begin with $H = G_{r-1}$
and then repeatedly add to $H$ all those edges whose $f$-image is entirely contained in $H$.
Each edge of $H \setminus G_{r-1}$ belongs to a zero stratum.
The subgraph $H$ is $f$-invariant;
let $G'$ be the graph obtained by collapsing each component of $H$ to a point,
and let $p\colon G \to G'$ be the collapse map.
For each component $C$ of $H$,
let $T_C$ be a maximal tree in $C$, and $c \in C$ a vertex.
For $v$ a vertex of $C$, let $\eta_v$ be the unique tight path in $T_C$ from $c$ to $v$.

Each component $C$ of $H$ corresponds to a vertex $v_C$ of $G'$;
set $\mathcal{G}'_{v_C} = \pi_1(C,c)$.
Each vertex $v$ of $G$ not contained in $H$ corresponds to a vertex, also called $v$, of $G'$;
let $\mathcal{G}'_v$ be the trivial group.
As a map of graphs of groups,
$p$ is the ``identity'' on edges not contained in $H$
and sends an edge path $\gamma$ in $H$ with initial vertex $v$ and terminal vertex $w$
to the vertex group element $[\eta_v\gamma\bar\eta_w]$.

The map $f\colon G \to G$
descends to a map $f'\colon G' \to G'$.
We would like the following diagram to commute for paths in $G$ with endpoints
either in the complement of $H$ or at the vertices $c \in C$
\[  \begin{tikzcd}
    G \ar[r, "f"] \ar[d, "p"] & G \ar[d, "p"] \\
    \mathcal{G}' \ar[r, "f'"] & \mathcal{G}'.
\end{tikzcd}    \]
Suppose that $f'(v_C) = v_{C'}$.
Choose a path $\sigma_C$ from $c'$ to $f(c)$.
The rule $[\gamma] \mapsto [\sigma_Cf(\gamma)\bar\sigma_C]$ defines a homomorphism
$f'_{v_C}\colon \mathcal{G}_{v_C} \to \mathcal{G}_{f'(v_C)}$.
One checks that if $\mathcal{G}_{v_C}$ is nontrivial, then this map is an isomorphism.
A na\"ive definition of the map $f'\colon \mathcal{G}' \to \mathcal{G}'$
on edges
would be to define $f'(E)$ as $pf(E)$.
In fact this is correct except at the endpoints,
where we need to multiply by some vertex group element.
To see why,
consider a pair of edges $E$ and $E'$ with initial endpoints $v$ and $v'$,
respectively, in a component $C$ of $H$.
Let $\gamma$ be the unique tight path in $T_C$ 
connecting $v$ to $v'$.
Then the path $\bar E\gamma E'$ in $\mathcal{G}$ projects to the path $\bar EE'$ in $\mathcal{G}'$.
The equation $f'p = pf$ implies that $f'(\bar EE') = pf(\bar E\gamma E')$:
it may happen that $pf(\gamma)$ is contained in $T_{f(C)}$,
but in general this will not be the case.
Define the first vertex group element of $f(E)$
to be $p(\sigma_C f(\eta_v)) = [\sigma_C f(\eta_v)\bar\eta_{f(v)}]$.
Then check that the vertex group element $pf(\gamma)$ satisfies
\[
    pf(\gamma) = pf(\bar\eta_v\eta_{v'}) = p(f(\bar\eta_v)\bar\sigma_C\sigma_C f(\eta_{v'}))
    = [\eta_{f(v)}f(\bar\eta_v)\bar\sigma_C\sigma_C f(\eta_{v'})\bar\sigma_{f(v')}]
\]
as required.

\begin{lem}
    The map $f'\colon \mathcal{G}' \to \mathcal{G}'$ is a relative train track map.
\end{lem}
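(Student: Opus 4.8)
The strategy is to transport each of the three defining properties of a relative train track map from $f$ to $f'$ through the collapse map $p$, making essential use of the relation $f'\circ p=p\circ f$ on tight edge paths that the endpoint corrections in the construction of $f'$ were designed to guarantee. First I record the bookkeeping. The filtration of $\mathcal{G}'$ has as its filtration elements the images $p(G_j)$ for $r-1\le j\le m$ (with repeats discarded; here $p(G_{r-1})$ is the discrete set of vertices $\{v_C\}$, which serves as the bottom of the filtration, as the paper permits), and the stratum at level $s$ is $H'_s:=p(H_s\setminus H)$ for the surviving levels $s\ge r$; I keep the index $s$ throughout, and $\mathcal{G}'$ inherits a marking via $p$ making $f'$ a topological representative of $\varphi$ (which lies in $\out(F,\mathcal{F}(G_{r-1}))$ because $G_{r-1}$ is $f$-invariant). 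That $f'$ is a topological representative is almost immediate: $p$ is a homotopy equivalence, since collapsing a component $C$ of $H$ to a vertex carrying $\pi_1(C)$ is a homotopy equivalence of graphs of spaces, so $f'$ is a homotopy equivalence because $f$ is; and for each edge $E$ of $\mathcal{G}'$ we have $E\notin H$, hence $f(E)\not\subseteq H$, hence $f'(E)$ contains an edge, while tightness of $f'(E)$ follows from tightness of $f(E)$ together with the fact that $p$ carries tight edge paths of $G$ with endpoints off the interiors of edges of $H$ bijectively onto tight edge paths of $\mathcal{G}'$, the inverse being the lift $\sigma\mapsto\tilde\sigma$ (after normalizing endpoints lying in $H$ to the chosen vertices $c$). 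The filtration is $f'$-invariant because $f(G_j)\subseteq G_j$ and $f(H)\subseteq H$, and it is maximal: for an irreducible stratum $H_s$ no edge is absorbed into $H$ (the $f$-image of an $H_s$-edge crosses $H_s$, which remains disjoint from $H$), so the transition matrix of $H'_s$ equals that of $H_s$ and is irreducible with the same Perron--Frobenius eigenvalue, whereas for a zero stratum $H_s$ one has $f(H_s)\subseteq G_{s-1}$, so $f'$ sends each surviving edge of $H'_s$ off $H'_s$ and the transition matrix of $H'_s$ is zero. In particular the exponentially growing strata of $f'$ are precisely the $H'_s$ with $H_s$ exponentially growing and $s\ge r$, carrying the same eigenvalues.

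Next I fix an exponentially growing stratum $H'_s$ and verify the three conditions, using the upgrade of $f'\circ p=p\circ f$ to $f'_\sharp(\sigma)=p\bigl(f_\sharp(\tilde\sigma)\bigr)$ for tight paths and to $Df'\circ p=p\circ Df$ on directions. For \hyperlink{EG-i}{(EG-i)}: a direction of $H'_s$ lifts to a direction of an edge $E$ of $H_s$, and \hyperlink{EG-i}{(EG-i)} for $f$ says the first edge of $f(E)$ lies in $H_s$, hence (being in an irreducible stratum) is not absorbed into $H$, hence lies in $H'_s$; so $Df'$ preserves the set of directions in $H'_s$. The legality of a turn with one edge in $H'_s$ and one in $G'_{s-1}$ is then automatic: $Df'$ also preserves the set of directions in $G'_{s-1}$ (that subgraph being $f'$-invariant), and the edge sets $H'_s$ and $G'_{s-1}$ are disjoint, so such a ``mixed'' turn stays mixed under every iterate of $Df'$ and can never become degenerate. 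For \hyperlink{EG-iii}{(EG-iii)}: a tight $s$-legal path of $\mathcal{G}'$ lifts to a tight path of $G$ of height $s$ with no illegal turns in $H_s$ (legality and height being visible through $p$ and its inverse); \hyperlink{EG-iii}{(EG-iii)} for $f$ makes its $f$-image $s$-legal, and applying $p$ yields the conclusion. For \hyperlink{EG-ii}{(EG-ii)}: I invoke \Cref{trivialedgegroupsEG2}, which reduces the property for $H'_s$ to showing that every vertex of $H'_s\cap G'_{s-1}$ lying in a component of $G'_{s-1}$ that is noncontractible or carries a nontrivial vertex group is $f'$-periodic; a collapsed vertex $v_C$ is $f'$-periodic because $f$ permutes the finitely many components of $H$, and a non-collapsed such vertex corresponds to a vertex of $H_s$ whose $f$-periodicity is furnished by \hyperlink{EG-ii}{(EG-ii)} for $f$ together with the same lemma, applied to the $f$-invariant subgraph $\hat G_{s-1}$ obtained from $G_{s-1}$ by re-attaching the components of $H$ meeting it.

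I expect the real work to lie in the points glossed over above: making the lift $\sigma\mapsto\tilde\sigma$ and the identities $f'_\sharp\circ p=p\circ f_\sharp$ and $Df'\circ p=p\circ Df$ genuinely precise at the endpoints, which means tracking the vertex-group elements $p(\sigma_Cf(\eta_v))$ that appear when a path begins or ends inside a collapsed component; and, for \hyperlink{EG-ii}{(EG-ii)}, matching the component-by-component case analysis of \Cref{trivialedgegroupsEG2} for $f'$ with that for $f$, which is complicated by the fact that a component of $H$ can contain zero-stratum edges of height $\ge s$, so that the lift of a path contained in $G'_{s-1}$ need not lie in $G_{s-1}$ but only in the larger invariant subgraph $\hat G_{s-1}$. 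Once the compatibility of $p$ with tightening, with the map $D$ on directions, and with heights and legality of paths has been established, each of \hyperlink{EG-i}{(EG-i)}, \hyperlink{EG-ii}{(EG-ii)}, \hyperlink{EG-iii}{(EG-iii)} for $f'$ reduces to a one-line deduction from its counterpart for $f$.
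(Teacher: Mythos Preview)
Your overall strategy---transport each of (EG-i), (EG-ii), (EG-iii) through $p$ using the compatibility $f'\circ p = p\circ f$---is exactly the paper's approach, and your treatment of topological-representative issues, stratum types, and (EG-i) matches the paper closely.

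There is, however, a genuine error in your (EG-ii) argument: the assertion that ``$f$ permutes the finitely many components of $H$'' is false. Components of $H$ that are contractible with no nontrivial vertex groups (built from zero-stratum edges above $G_{r-1}$) may be sent into other components without ever returning. The paper's argument for the non-wandering case is different: given $v' = v_C \in H'_j \cap D$ with $D$ non-wandering, one locates a vertex $w\in C$ incident to an $H_j$-edge and argues that $w$ is $f$-periodic (via (EG-ii) for $f$ upstairs), whence $v_C = p(w)$ is $f'$-periodic. You also omit the wandering case entirely: when $D$ is contractible with trivial vertex groups, \Cref{trivialedgegroupsEG2} does not reduce (EG-ii) to periodicity, and the paper instead lifts each nontrivial connecting path $\gamma'$ in $D$ to a nontrivial path $\gamma$ in $p^{-1}(D)$, observes $f_\sharp(\gamma)$ is nontrivial and not entirely in $H$, and concludes $f'_\sharp(\gamma') = p(f_\sharp(\gamma))$ is nontrivial.

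For (EG-iii), you are right to flag the height problem with lifting, but your remedy (work in $\hat G_{s-1}$) is not how the paper proceeds. Rather than lift an arbitrary $s$-legal path, the paper works directly with subpaths $EbE'$ of iterated $f$-images of $H_i$-edges (so $E,E'\in H_i$, $b$ a maximal $H$-subpath), computes the middle vertex-group element of $f'(E\,p(b)\,E')$ explicitly as $pf(b)$, and uses nontriviality of $f(b)$ (from (EG-ii) for $f$) to rule out degenerate turns under $Df'$.
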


\begin{proof}
    First we show that $f'$ is a topological representative if $f$ was.
    Indeed, if $f(E)$ is a tight path, then $pf(E)$ is a tight path,
    for if $f(E)$ contains a subpath of the form $E'b\bar E'$,
    where $b$ is in $H$ and $E'$ is not,
    then $b$ determines a nontrivial loop,
    so $p(b)$ is a nontrivial vertex group element.
    Put another way, $f'_\sharp(E)$ is obtained from $f_\sharp(E)$
    by possibly adding vertex group elements at the ends
    and replacing maximal subpaths in $H$ with the corresponding vertex group elements.
    Thus we see that if $H_i$ is an irreducible stratum for $i > r-1$,
    then no edge of $H_i$ belongs to $H$,
    and the edges of $H_i$ determine an irreducible stratum $H'_j$ in $\mathcal{G}'$
    of the same kind.
    Likewise if $H_i$ is a zero stratum but not every edge of $H_i$ belongs to $H$,
    then $H_i$ determines a zero stratum $H'_j$ in $\mathcal{G}'$.

    Property \hyperlink{EG-i}{(EG-i)} is clearly satisfied for each exponentially growing stratum $H'_j$.
    If $H'_j$ is exponentially growing and $D$ is a component of $G_{i-1}$
    such that $f^k(D) \subset D$ for  some $k \ge  1$,
    then each vertex $v'$ of $H'_{j} \cap D$ corresponds 
    either to a vertex $v$ of $G$ or a component $C$ of $H$.
    In either case, the preimage of $v'$ contains an $f$-periodic vertex, 
    so the vertex $v'$ is $f'$-periodic.
    If $D$ is not eventually mapped into itself,
    then $D$ is contractible and contains no vertices with nontrivial vertex group.
    Each homotopically nontrivial path $\gamma'$ in $D$ with endpoints in $H'_j\cap D$
    has lifts to a homotopically nontrivial path $\gamma$ in $H_i\cap p^{-1}(D)$.
    The path $\gamma'$ may have many lifts, but each lift satisfies that $f_\sharp(\gamma)$
    is homotopically nontrivial and not entirely contained in $H$,
    so $f'_\sharp(\gamma')$ is homotopically nontrivial;
    this proves \hyperlink{EG-ii}{(EG-ii)} by \Cref{trivialedgegroupsEG2}.
    Property \hyperlink{EG-iii}{(EG-iii)} for $H'_j$ follows from \hyperlink{EG-ii}{(EG-ii)}
    and \hyperlink{EG-iii}{(EG-iii)} for $H_i$.
    Namely, suppose $E$ and $E'$ are edges of $H_i$ with terminal endpoint $v$
    and initial endpoint $w$ respectively,
    that $b$ is a maximal subpath of the component $C$ of $H$,
    and that $EbE'$ is a subpath of the $f^k$-image of some edge.
    Then the vertex group element at the center of $f'(Ep(b)E')$
    is 
    \[ [\eta_{f(v)}f(\bar\eta_v)\bar\sigma_C\sigma_Cf(\eta_v)f(b)
    f(\bar\eta_w)\bar\sigma_C\sigma_Cf(\eta_w)\bar\eta_{f(w)}] = pf(b). \]
    If $f'(\bar E)$ and $f'(E)$ begin with the same edge of $H'_j$,
    then since $f(b)$ is homotopically nontrivial, 
    it must form a loop and therefore the turn crossed by $p(EbE')$
    is not mapped to a degenerate turn by $Df$.
    Since the subpath and $k$ were arbitrary, we conclude that \hyperlink{EG-iii}{(EG-iii)} holds.
\end{proof}

\begin{ex}
    Consider the graph $G$ in \Cref{collapsing1fig},
    and the relative train track map
    \[  f \begin{dcases}
        \alpha_1 \mapsto \alpha_2 \mapsto \alpha_3 \mapsto \alpha_4 \mapsto \beta_1\gamma_1\alpha_1 \\
        \beta_1 \mapsto  \beta_2 \mapsto  \beta_3 \mapsto \beta_4 \mapsto \alpha_1\beta_1 \\
        \gamma_1 \mapsto \gamma_2 \mapsto \gamma_3 \mapsto \gamma_4 \mapsto \gamma_1\alpha_1 \\
        x_1 \mapsto x_2 \mapsto x_3 \mapsto x_4 \mapsto x_1\beta_1 \\
        A \mapsto BE\bar Ax_1\alpha_1\bar x_1 A \\
        B \mapsto C \\
        C \mapsto D \bar E \\
        D \mapsto A \\
        E \mapsto \bar Bx_2\beta_2\gamma_2\bar x_2 B E.
    \end{dcases} \]
    There are three strata: $H_1$ consists of the edges with Greek letters
    and is exponentially growing,
    $H_2  = \{x_1,x_2,x_3,x_4\}$ is non-exponentially growing,
    and $H_3 = \{A,B,C,D,E\}$ is exponentially growing.
    The illegal turns in the interior of exponentially growing strata are drawn sharp in the figure.
    \begin{figure}[ht!]
		\centering
	    \def\svgwidth{\columnwidth}
	        \import{./figures/}{collapsing_1.pdf_tex}
	
		\caption{The relative train track map $f\colon G \to G$.}\label{collapsing1fig}
    \end{figure}

    We will collapse $G_2$. Since $H_3$ is exponentially growing,
    no edge in $H_3$ maps entirely into $G_2$, so the subgraph $H$ is $G_2$.
    There are four components, $C_1$, $C_2$, $C_3$ and $C_4$
    containing those edges with matching subscripts.
    Choose the maximal trees consisting of the edges $x_i$ and $\gamma_i$,
    and let the basepoints $c_i$ be the terminal endpoints of the $x_i$.
    Take the paths $\sigma_{C_i}$ to be the trivial paths at $c_i$.
    The vertex groups are all isomorphic to $F_2$,
    with generators $a_i$ and $b_i$ corresponding to the loops $\alpha_i$
    and $\beta_i\gamma_i$, respectively.
    The isomorphisms of vertex groups
    are generated by the rule
    \[  \begin{dcases}
        a_1 \mapsto a_2 \mapsto a_3 \mapsto a_4 \mapsto b_1a_1 \\
        b_1 \mapsto b_2 \mapsto b_3 \mapsto b_4 \mapsto a_1b_1a_1.
    \end{dcases}  \]
    The relative train track map $f'\colon \mathcal{G}' \to \mathcal{G}'$ is defined on edges as
    \[  f' \begin{dcases}
        A \mapsto BE\bar Aa_1A \\
        B \mapsto C  \\
        C \mapsto D\bar E \\
        D \mapsto a_1b_1A \\
        E \mapsto \bar B b_2BE.
    \end{dcases}\]
    See \Cref{collapsingfig2}.
    \begin{figure}[ht!]
        \centering
	    \def\svgwidth{\columnwidth}
	        \import{./figures/}{collapsing_2.pdf_tex}
	
        \caption{The relative train track map $f'\colon \mathcal{G}' \to \mathcal{G}'$.}
        \label{collapsingfig2}
    \end{figure}
\end{ex}

There is a dual process of ``blowing up'' a relative train track map
representing $\varphi \in \out(F,\mathscr{A})$ to a ``finer'' free product decomposition
$(F,\mathscr{B})$ where $\mathscr{B} \sqsubset \mathscr{A}$.
In general it is not quite true that relative train track maps blow up to relative train track maps:
one needs to restore properties \hyperlink{EG-i}{(EG-i)} and \hyperlink{EG-ii}{(EG-ii)}
by performing the ``(invariant) core subdivision'' and ``collapsing inessential connecting paths''
moves of \cite{BestvinaHandel} or \cite{Myself}.
Blowing up plays no role in what follows, so we do not pursue the idea further.

\paragraph{Bounded cancellation.}
To finish out this section,
we prove the following bounded cancellation lemma for free products
(cf. \cite{Cooper} or \cite[Lemma 2.3.1]{BestvinaFeighnHandel} for free groups),
which is surely well-known to experts,
but for which we were unable to find a published complete, general proof.

\begin{lem}
    \label{boundedcancellation}
    Let $\mathcal{G}$ and $\mathcal{G}'$ be marked graphs of groups
    with no valence-one vertices with trivial vertex group,
    and let $h\colon \mathcal{G} \to \mathcal{G}'$
    be a homotopy equivalence
    taking vertices to vertices.
    There is a constant $C$ with the following properties.
    \begin{enumerate}
        \item If $\rho = \alpha\beta$ 
            is a tight concatenation of tight paths in $\mathcal{G}$,
            then $h_\sharp(\rho)$ is  obtained from $h_\sharp(\alpha)$
            and $h_\sharp(\beta)$
            by concatenating and cancelling $c \le C$ edges (and vertex group elements)
            from the terminal end of $h_\sharp(\alpha)$ with $c$ edges
            (and vertex group elements) from the initial end of $h_\sharp(\beta)$.
        \item If $\tilde h\colon \Gamma \to \Gamma'$
            is a lift of $h$ to the Bass--Serre trees,
            $\tilde\alpha$ is a line in $\Gamma$ 
            (a proper, linear embedding $\mathbb{R} \to \Gamma$)
            and $\tilde x \in \tilde\alpha$,
            then $\tilde h(\tilde x)$ can be connected to the line
            $\tilde h_\sharp(\tilde\alpha)$ by a path with $c \le C$ edges.
        \item Suppose that $\tilde h\colon \Gamma \to \Gamma'$ 
            is a lift to the Bass--Serre trees and that $\tilde\alpha \subset \Gamma$
            is a (finite) tight path.
            Define $\tilde\beta \subset \Gamma'$ by removing $C$
            initial and $C$ terminal edges from  $\tilde h_\sharp(\tilde\alpha)$.
            If $\tilde\gamma$ is a line in $\Gamma$ that contains $\tilde\alpha$
            as a subpath,
            then $\tilde h_\sharp(\tilde\gamma)$ contains $\tilde\beta$
            as a subpath.
    \end{enumerate}
\end{lem}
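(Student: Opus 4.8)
The plan is to prove the three statements in sequence, deriving each from the previous one after establishing the first. For statement (1), I would first reduce to the universal cover: lift $h$ to a $\Phi$-twisted equivariant map $\tilde h\colon \Gamma \to \Gamma'$ of Bass--Serre trees, and observe that, since $\Gamma$ and $\Gamma'$ are quasi-isometric (both being Grushko trees for the same $F$ acting cocompactly, with the marking providing the quasi-isometry), $\tilde h$ is a quasi-isometric embedding: there are constants $\lambda \ge 1$, $\epsilon \ge 0$ with $\frac{1}{\lambda}d_\Gamma(\tilde x,\tilde y) - \epsilon \le d_{\Gamma'}(\tilde h(\tilde x),\tilde h(\tilde y)) \le \lambda d_\Gamma(\tilde x,\tilde y) + \epsilon$. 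Now if $\rho = \alpha\beta$ is a tight concatenation with lift $\tilde\rho = \tilde\alpha\tilde\beta$ meeting at a point $\tilde z$, then $\tilde h_\sharp(\tilde\alpha)$ and $\tilde h_\sharp(\tilde\beta)$ are the geodesics in $\Gamma'$ between the images of the endpoints, and the amount of cancellation is governed by $d_{\Gamma'}\bigl(\tilde h(\tilde z), [\tilde h_\sharp(\tilde\rho)]\bigr)$ together with how far the geodesic $\tilde h_\sharp(\tilde\alpha)$ fellow-travels $\tilde h_\sharp(\tilde\rho)$. The hard part will be to convert the quasi-isometry estimate into a genuine uniform bound on the \emph{combinatorial} cancellation (number of edges and vertex group elements), rather than merely a bound on distance; the point is that $\Gamma$ and $\Gamma'$ are simplicial trees, so distance and edge-count agree, and by the Morse lemma for trees (in a tree, a $(\lambda,\epsilon)$-quasigeodesic lies within bounded Hausdorff distance $\delta = \delta(\lambda,\epsilon)$ of the geodesic with the same endpoints) the image $\tilde h(\tilde\rho)$ stays within $\delta$ of $\tilde h_\sharp(\tilde\rho)$. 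From this one extracts a constant $C = C(\lambda,\epsilon)$ bounding the cancellation, and equivariance plus cocompactness make $C$ independent of the chosen lift and of $\tilde\rho$.

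For statement (2): given a line $\tilde\alpha$ in $\Gamma$ and a point $\tilde x \in \tilde\alpha$, write $\tilde\alpha$ as a limit of finite subpaths $\tilde\alpha_j$ through $\tilde x$ with endpoints receding to the two ends; then $\tilde h_\sharp(\tilde\alpha_j)$ converges to $\tilde h_\sharp(\tilde\alpha)$, and $\tilde h(\tilde x)$ lies within $\delta$ of each $\tilde h(\tilde\alpha_j)$, hence within $\delta$ of $\tilde h_\sharp(\tilde\alpha_j)$ by the Morse lemma, hence within $C$ of $\tilde h_\sharp(\tilde\alpha)$; again this is a distance bound in a simplicial tree, so it is an edge-count bound. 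Alternatively, and perhaps more cleanly, statement (2) follows directly from statement (1) applied to the two half-lines of $\tilde\alpha$ based at $\tilde x$: the cancellation between $\tilde h_\sharp$ of the two rays is at most $C$, and $\tilde h(\tilde x)$ is within that cancellation distance of the line $\tilde h_\sharp(\tilde\alpha)$. I would take this second route to avoid re-invoking the Morse lemma.

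For statement (3): suppose $\tilde\gamma$ is a line containing the finite tight path $\tilde\alpha$ as a subpath, say $\tilde\gamma = \tilde\mu\,\tilde\alpha\,\tilde\nu$ where $\tilde\mu$ and $\tilde\nu$ are the (possibly infinite) complementary pieces. Then $\tilde h_\sharp(\tilde\gamma)$ is obtained from $\tilde h_\sharp(\tilde\mu)$, $\tilde h_\sharp(\tilde\alpha)$, $\tilde h_\sharp(\tilde\nu)$ by cancelling at most $C$ edges at the $\tilde\mu$--$\tilde\alpha$ junction and at most $C$ edges at the $\tilde\alpha$--$\tilde\nu$ junction, by statement (1) (applied at each junction; the iterated cancellation in a tree does not cascade beyond the initial $C$ edges because geodesics in a tree, once they diverge, never rejoin). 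Hence all but at most the first $C$ and last $C$ edges of $\tilde h_\sharp(\tilde\alpha)$ survive in $\tilde h_\sharp(\tilde\gamma)$, which is exactly the assertion that $\tilde\beta \subset \tilde h_\sharp(\tilde\gamma)$. The one point requiring a little care is that ``cancellation'' at the two junctions could in principle interact if $\tilde h_\sharp(\tilde\alpha)$ is very short (shorter than $2C$), but then $\tilde\beta$ is empty and there is nothing to prove; when $\tilde h_\sharp(\tilde\alpha)$ has more than $2C$ edges, the surviving middle portion is a genuine geodesic segment common to $\tilde h_\sharp(\tilde\alpha)$ and $\tilde h_\sharp(\tilde\gamma)$, and uniqueness of geodesics in trees finishes the argument.
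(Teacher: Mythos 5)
Your proposal is correct in substance but follows a genuinely different route from the paper. The paper's proof is combinatorial: it first observes that the property is additive under composition (constant $C_1 + C_2$), that collapsing a contractible forest satisfies it with $C = 0$, and then invokes Dunwoody's folding theorem for graphs of groups with finitely generated edge groups to write $h$ (after a forest collapse and subdivision) as a finite composition of folds, each of which one checks by hand satisfies the conclusion with $C = 1$. Your proof instead uses the Morse lemma for $0$-hyperbolic spaces: $\tilde h$ is a quasi-isometric embedding, images of geodesics are quasigeodesics lying within uniform Hausdorff distance $\delta$ of the corresponding geodesics, and $\delta$ bounds the cancellation (in a tree, cancellation at a juncture equals the distance from the image of the juncture to the image geodesic). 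Both are standard routes to bounded cancellation; the fold decomposition has the virtue of being entirely elementary and making the constant explicit and small, while the Morse-lemma argument is more uniform and perhaps more transparent conceptually. Your derivations of (2) from (1) (applied to half-lines, via a limiting argument) and of (3) from (1) (applied twice, using uniqueness of geodesics in trees so the two cancellations don't cascade) are both sound and parallel what a reader would supply in either framework; in fact the paper's proof also handles (2) and (3) by ``similar'' arguments without spelling them out.

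One point should be tightened: the sentence asserting that $\tilde h$ is a quasi-isometric embedding ``since $\Gamma$ and $\Gamma'$ are quasi-isometric'' is a non-sequitur as phrased --- the existence of \emph{some} quasi-isometry between two spaces does not make an arbitrary map between them a quasi-isometric embedding. The correct justification, which you clearly have in mind, is that $\tilde h$ is Lipschitz (since $h$ sends each of the finitely many edges to a bounded-length edge path), a homotopy inverse $g$ taking vertices to vertices likewise lifts to a Lipschitz $\tilde g$, and $\tilde g\tilde h$ is within bounded distance of the identity by $F$-equivariance together with cocompactness of the $F$-action on $\Gamma$ (the quotient graph is finite; this does not require the vertex groups to be finitely generated). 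With that repaired, the argument goes through.
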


\begin{proof}
    Note that if $h_1$ and $h_2$ are homotopy equivalences that additionally
    map edges to edges or collapse edges to vertices
    and satisfy the conclusions of this lemma
    with constants $C_1$ and $C_2$,
    then their composition $h_2h_1$ satisfies the conclusions of this lemma
    with constant $C_1 + C_2$.
    This is because $(h_2h_1)_\sharp = (h_2)_\sharp(h_1)_\sharp$.

    If $H \subset \mathcal{G}$ is a (contractible) forest,
    then collapsing each component of $H$ to a point yields a homotopy equivalence
    $h\colon \mathcal{G} \to \mathcal{G}'$.
    It is not hard to see that $h$ satisfies the conclusions of the lemma with $C = 0$.

    Since the trivial group is, in particular, finitely generated,
    we may use \cite[Theorem 2.1]{Dunwoody},
    which says that after collapsing a contractible forest, 
    we may decompose $h$ 
    (or more properly speaking,
    its subdivision into a map sending edges to edges)
    into a finite product of folds,
    each of which is a homotopy equivalence.

    We claim that a homotopy equivalence which is a single fold
    $f\colon \mathcal{G} \to \mathcal{G}'$ of a pair of edges
    $g_1e_1$ and $g_2e_2$ with $\iota(e_1) = \iota(e_2)$
    satisfies the conclusions of the lemma  with constant $C = 1$,
    from which the lemma follows.
    Note that because edge groups of marked graphs of groups are trivial,
    we may assume that $e_1$ and $e_2$ are distinct edges of $\mathcal{G}$
    (in order for $f$ to be a fold factor of $h$),
    and thus we have $\tau(e_1) \ne \tau(e_2)$
    for otherwise the fold would fail to be a homotopy equivalence.
    For the same reason,
    at most one of $\tau(e_1)$ and $\tau(e_2)$ has nontrivial vertex group.

    We prove that the first conclusion holds;
    the arguments for the others are similar.
    Suppose first that $\alpha = \alpha'\bar e_1g_1^{-1}$ is a tight concatenation,
    where $\alpha'$ is some tight, possibly trivial path in $\mathcal{G}$
    and that similarly $\beta = g_2e_2\beta'$ for some  tight path $\beta'$.
    By assumption, the initial edge (if any) of $\bar\alpha'$ 
    is distinct from the initial edge of $\beta'$
    and at least one of these edges (if any) is not $e_1$ or $e_2$.
    Therefore the concatenation $f_\sharp(\alpha')f_\sharp(\beta')$ is tight
    and equal to $f_\sharp(\alpha\beta)$.
    In all other cases, $f_\sharp(\alpha\beta) = f_\sharp(\alpha)f_\sharp(\beta)$,
    so we see that $f$ satisfies the first conclusion of this lemma
    with constant $C=1$.
\end{proof}

\section{Improving Relative Train Track Maps}\label{improvingsection}

As a step towards the existence of CTs,
Feighn and Handel construct in~\cite[Theorem 2.19]{FeighnHandel} 
relative train track maps that satisfy a number of extra properties.
The goal of this section is to prove the existence of such relative train track maps
for outer automorphisms of free products.
To state the theorem we require some more terminology.
We begin by introducing \emph{splittings} of edge paths
in order to introduce a family of paths that cannot be split,
the \emph{indivisible periodic almost Nielsen paths.}

\paragraph{Splittings}
A decomposition of a path, line or circuit $\sigma$ 
into subpaths $\sigma = \ldots \sigma_1\sigma_2\ldots$
is a \emph{splitting,} written with raised dots as
$\sigma = \ldots\cdot\sigma_1\cdot\sigma_2\cdot\ldots$
if $f^k_\sharp(\sigma) = \ldots f^k_\sharp(\sigma_1)f^k_\sharp(\sigma_2)\ldots$.
That is, $f^k_\sharp(\sigma)$ is obtained from $f^k(\sigma)$ 
by tightening each $f^k(\sigma_i)$ and then concatenating.
For our purposes, merely performing multiplication in a vertex group does not count as tightening.

\paragraph{}
The main application of the relative train track properties is the following lemma.
Note that the definition of a relative train track map makes sense
in the event that $f\colon \mathcal{G} \to \mathcal{G}$
is merely a map of graphs of groups that sends vertices to vertices
and edges to nontrivial edge paths.
We work mainly with relative train track maps that are topological representatives
(so $f$ maps edges to \emph{tight} edge paths),
but this more expansive definition allows for an iterate of such a relative train track map
to be a relative train track map.

\begin{lem}[cf. Lemma 2.9(2) of \cite{FeighnHandel}]\label{rttlemma}
    Suppose $f\colon \mathcal{G} \to \mathcal{G}$ is a relative train track map
    with exponentially growing stratum $H_r$,
    and that $\sigma$ is a path
    with endpoints  at vertices of $H_r$
    which is $r$-legal.
    Then the decomposition of $\sigma$ into single edges of $H_r$
    and maximal subpaths in $G_{r-1}$ is a splitting.
    
    In particular if $f$ is also a topological representative
    then if $\sigma = \alpha_1\beta_1 \ldots \alpha_n\beta_n$
    is a decomposition into subpaths where $\alpha_i \subset H_r$
    and $\beta_i \subset G_{r-1}$
    (allow $\alpha_1$ and $\beta_n$ to be trivial, but assume all others are nontrivial),
    then 
    \[  f_\sharp(\sigma) = f(\alpha_1)f_\sharp(\beta_1)\ldots f(\alpha_n)f_\sharp(\beta_n) \]
    is a tight concatenation of tight paths.
\end{lem}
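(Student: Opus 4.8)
The plan is to prove the ``splitting'' statement first, and then derive the displayed formula about topological representatives as a corollary. For the first part, it suffices (by induction on the length of $\sigma$, and since a concatenation of splittings at matching points is a splitting) to show that each adjacent pair in the decomposition is a splitting; that is, if $\alpha$ is a single edge of $H_r$ and $\beta$ is a maximal subpath in $G_{r-1}$, or vice versa, then $f^k_\sharp(\alpha\beta) = f^k_\sharp(\alpha)f^k_\sharp(\beta)$ for all $k \ge 1$. Since an iterate $f^k$ of a relative train track map is again a relative train track map (with the same filtration), and since $f^k_\sharp(\sigma)$ is $r$-legal whenever $\sigma$ is (by \hyperlink{EG-iii}{(EG-iii)} together with \hyperlink{EG-i}{(EG-i)}, which guarantee that the image of an $r$-legal path is $r$-legal), it is enough to handle the case $k=1$: one shows $f_\sharp(\alpha\beta) = f_\sharp(\alpha)f_\sharp(\beta)$ and then iterates.

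So the core claim is: no cancellation occurs at the junction between $f_\sharp(\alpha)$ and $f_\sharp(\beta)$ when $\alpha\beta$ is $r$-legal with the breakpoint at a vertex of $H_r$. First I would reduce to the case where $\alpha$ is a single edge $E$ of $H_r$ and $\beta \subset G_{r-1}$ (the other case, $\alpha \subset G_{r-1}$ and $\beta$ an edge of $H_r$, is symmetric by reversing orientation). Now $f(E) = f_\sharp(E)$ is an $r$-legal path by \hyperlink{EG-iii}{(EG-iii)}, so it ends in an edge of $H_r$ — call its last edge $\bar E_0$ with $E_0 \in H_r$, preceded possibly by a vertex group element — while $f_\sharp(\beta) \subset G_{r-1}$ by \hyperlink{EG-ii}{(EG-ii)} (this is where we use that $\beta$ is homotopically nontrivial with endpoints in $H_r \cap G_{r-1}$, which holds since the breakpoint vertex lies in $H_r$, hence in $H_r \cap G_{r-1}$ as $\beta \subset G_{r-1}$). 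The turn at the junction is the turn crossed by $\alpha\beta$ at the breakpoint; pushing forward, the turn at the junction of $f(E)$ and $f_\sharp(\beta)$ is $Df$ applied to the original turn, which is a turn with one edge ($E_0$, or rather $\bar E_0$) in $H_r$ and the other edge in $G_{r-1}$. By the second half of \hyperlink{EG-i}{(EG-i)}, every such mixed turn is legal, hence in particular nondegenerate; so $f(E)f_\sharp(\beta)$ is already tight at the junction, and no cancellation occurs. Iterating with $f^k$ in place of $f$ (legal in the mixed sense is preserved, and $f^k_\sharp(\beta) \subset G_{r-1}$ by \hyperlink{EG-ii}{(EG-ii)} for $f^k$) gives the splitting.

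The ``in particular'' statement then follows almost immediately. If $f$ is a topological representative, then $f(\alpha_i) = f_\sharp(\alpha_i)$ since $\alpha_i$ is a single edge or a tight subpath of $H_r$ and topological representatives send edges to tight paths; and $f(\sigma) = f(\alpha_1)f(\beta_1)\cdots f(\alpha_n)f(\beta_n)$ by definition of the action on edge paths. Applying the splitting we have just established (with $k=1$), tightening each factor and concatenating yields $f_\sharp(\sigma) = f(\alpha_1)f_\sharp(\beta_1)\cdots f(\alpha_n)f_\sharp(\beta_n)$, a tight concatenation of tight paths. The main obstacle I anticipate is bookkeeping around the vertex group elements that may sit at the ends of $f(E)$ and at the junction — one has to be careful that ``tight concatenation'' is interpreted as in \Cref{basicssection}, where performing multiplication in a vertex group does not count as cancellation, so that a vertex group element appearing between $f(\alpha_i)$ and $f_\sharp(\beta_i)$ does not spoil the splitting — but the legality of mixed turns from \hyperlink{EG-i}{(EG-i)} is exactly what rules out genuine edge cancellation, so the argument goes through.
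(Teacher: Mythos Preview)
Your approach is essentially the paper's, but there is a genuine gap. In the decomposition of $\sigma$ into single edges of $H_r$ and maximal subpaths in $G_{r-1}$, two $H_r$-edges can sit adjacent to one another (with only a vertex group element between them). You only treat the mixed case ``$\alpha$ an $H_r$-edge, $\beta \subset G_{r-1}$ or vice versa,'' and that is precisely the case that does \emph{not} need the hypothesis that $\sigma$ is $r$-legal: mixed turns are legal by \hyperlink{EG-i}{(EG-i)} regardless. The $H_r$--$H_r$ junctions are where $r$-legality enters: the turn there lies entirely in $H_r$, and only the assumption on $\sigma$ guarantees it is legal, hence that $Df^k$ of it is nondegenerate and no cancellation occurs. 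Without this case your argument would prove the lemma for every height-$r$ path with endpoints in $H_r$, which is false. The paper handles this in one line: ``either by assumption or by \hyperlink{EG-i}{(EG-i)}, the turn at the common endpoint of $\bar\sigma_i$ and $\sigma_{i+1}$ is legal.'' The same omission recurs in your derivation of the ``in particular'' clause: for a multi-edge $\alpha_i \subset H_r$, the equality $f(\alpha_i) = f_\sharp(\alpha_i)$ needs \hyperlink{EG-iii}{(EG-iii)} applied to the $r$-legal path $\alpha_i$, not merely that a topological representative sends single edges to tight paths.

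Two smaller points. First, that $f(E)$ ends in an edge of $H_r$ is \hyperlink{EG-i}{(EG-i)} (directions in $H_r$ map to directions in $H_r$), not \hyperlink{EG-iii}{(EG-iii)}. Second, the turn at the junction of $f(E)$ and $f_\sharp(\beta)$ is not literally $Df$ of the original turn on the $G_{r-1}$ side, since tightening $f(\beta)$ to $f_\sharp(\beta)$ can change the initial direction; your conclusion that the junction turn is mixed and hence nondegenerate is still correct, but the reason is simply that $f_\sharp(\beta) \subset G_{r-1}$ while the terminal direction of $f(E)$ is in $H_r$.
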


Note that strictly speaking the filtration for $f$ ought to be enlarged to create the filtration
for $f^k$,
but that \hyperlink{EG-i}{(EG-i)}, \hyperlink{EG-ii}{(EG-ii)} and \hyperlink{EG-iii}{(EG-iii)}
are still satisfied by $f^k$ for any stratum $H_r$ which is an exponentially growing stratum for $f$.

\begin{proof}
    Let $\sigma = \ldots \alpha_1\alpha_2\ldots$
    be a decomposition of the $r$-legal path $\sigma$ into single edges of $H_r$
    and maximal subpaths in $G_{r-1}$, and fix $k \ge 1$.
    Each $f^k_\sharp(\sigma_i)$ is nontrivial, 
    either because $\sigma_i$ is an edge of $H_r$ 
    and thus $f^k_\sharp(\sigma_i)$ is $r$-legal by \hyperlink{EG-iii}{(EG-iii)},
    or by \hyperlink{EG-ii}{(EG-ii)}.
    Either by assumption or by \hyperlink{EG-i}{(EG-i)},
    the turn at the common endpoint of $\bar\sigma_i$ and $\sigma_{i+1}$ is legal,
    so no cancellation occurs at the common endpoint of 
    $f^k_\sharp(\sigma_i)$ and $f^k_\sharp(\bar\sigma_{i+1})$.

    In particular, if $f$ is a topological representative
    with decomposition $\sigma = \alpha_1\beta_1\ldots \alpha_n\beta_n$ as in the statement,
    then $f(\alpha_1)$ is already a tight path by \hyperlink{EG-iii}{(EG-iii)}
    because $\alpha_i$ is legal.
    The path $f_\sharp(\beta_i)$ is nontrivial (provided $\beta_i$ is) by \hyperlink{EG-ii}{(EG-ii)},
    and the turn at the common endpoint of $f(\alpha_i)$ and $f_\sharp(\bar\beta_{i})$
    is nondegenerate by \hyperlink{EG-i}{(EG-i)},
    so \[  f_\sharp(\sigma) = f(\alpha_1)f_\sharp(\beta_1)\ldots f(\alpha_n)f_\sharp(\beta_n) \]
    is a tight concatenation of tight paths.
\end{proof}

An application of the above lemma is the following result.
The proof is identical to the original, using only \Cref{rttlemma}
(which is~\cite[Lemma 5.8]{BestvinaHandel})
and the topology of the underlying graph of $\mathcal{G}$, so we omit it.

\begin{lem}[Lemma 5.10 of \cite{BestvinaHandel}]
    \label{lengthfunction}
    Suppose that $f\colon\mathcal{G} \to \mathcal{G}$ is a relative train track map
    and that $H_r$ is an exponentially growing stratum.
    There is a length function $L_r(\sigma)$ for paths $\sigma$ in $G_r$
    with the property that $L_r(f(\sigma)) = L_r(f_\sharp(\sigma)) = \lambda_rL_r(\sigma)$
    for any $r$-legal path $\sigma$ in $G_r$.
    If we assume that $f$ acts linearly with respect to some metric
    on the underlying graph of $\mathcal{G}$,
    then if $\sigma$ contains an initial or terminal segment of an edge in $H_r$,
    then $L_r(\sigma) > 0$.
\end{lem}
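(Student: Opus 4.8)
The plan is to build the length function $L_r$ from the Perron--Frobenius data of the transition matrix $M_r$ and to control cancellation using \Cref{rttlemma}. Since $H_r$ is exponentially growing and the filtration is maximal, $M_r$ is irreducible, so by the Perron--Frobenius theorem there is a positive row vector $w = (w_E)$, indexed by the edges $E$ of $H_r$, with $wM_r = \lambda_r w$. Fix a metric on the underlying graph of $\mathcal{G}$ in which each edge $E$ of $H_r$ has length $w_E > 0$ (the edges of $G_{r-1}$ may be assigned any convenient lengths); if, as assumed in the last sentence of the statement, $f$ acts linearly in this metric, then for an arbitrary path $\sigma$ in $G_r$ it makes sense to set $L_r(\sigma)$ equal to the total length, measured in this metric, of the part of $\sigma$ lying in the interiors of the edges of $H_r$. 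For an edge path with endpoints at vertices this is just the $w$-weighted count of the (oriented) $H_r$-edges crossed by $\sigma$; vertex group elements, having no length, do not affect $L_r$.

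The first computation I would record is the single-edge identity. If $E$ is an edge of $H_r$, then by definition $f(E)$ crosses the edge $E'$ of $H_r$ exactly $(M_r)_{E'E}$ times, and since the single edge $E$ is $r$-legal, \hyperlink{EG-iii}{(EG-iii)} guarantees that $f(E)$ is $r$-legal and hence that no $H_r$-edge is lost on tightening; thus $L_r(f(E)) = L_r(f_\sharp(E)) = \sum_{E'} w_{E'}(M_r)_{E'E} = (wM_r)_E = \lambda_r w_E = \lambda_r L_r(E)$. Now let $\sigma \subset G_r$ be $r$-legal with endpoints at vertices, and write $\sigma = \alpha_1\beta_1\cdots\alpha_n\beta_n$ with each $\alpha_i$ a single edge of $H_r$ and each $\beta_i \subset G_{r-1}$, as in \Cref{rttlemma}. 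That lemma gives $f_\sharp(\sigma) = f(\alpha_1)f_\sharp(\beta_1)\cdots f(\alpha_n)f_\sharp(\beta_n)$, a tight concatenation of tight paths with each $f(\alpha_i)$ already tight by \hyperlink{EG-iii}{(EG-iii)}; in particular no $H_r$-edge is cancelled at any of the junctions $\alpha_i\beta_i$ or $\beta_i\alpha_{i+1}$, the relevant turns being nondegenerate by \hyperlink{EG-i}{(EG-i)}. Since the pieces $f(\beta_i)$ and $f_\sharp(\beta_i)$ lie in $G_{r-1}$ and contribute nothing, summing the single-edge identity over $i$ gives $L_r(f(\sigma)) = L_r(f_\sharp(\sigma)) = \sum_i \lambda_r w_{\alpha_i} = \lambda_r L_r(\sigma)$. (For an $r$-legal path whose endpoints lie in edge interiors the same bookkeeping applies once linearity of $f$ pins down the images of the terminal partial $H_r$-edges.) Positivity is then immediate: if $\sigma$ contains a nontrivial initial or terminal segment of an edge $E$ of $H_r$, that segment has positive length in the chosen metric, so $L_r(\sigma) > 0$.

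The one point that deserves attention in the graph-of-groups setting---and the reason the proof is genuinely the same as, rather than trivially identical to, \cite[Lemma 5.10]{BestvinaHandel}---is that ``tightening'' now also allows multiplication of adjacent vertex group elements, and that vertex group elements are interspersed among the edges of a path. But $L_r$ depends only on how the underlying path in $G$ meets the edges of $H_r$, so vertex group elements play no role, and \Cref{rttlemma} already rules out any cancellation of $H_r$-edges at the junctions between the $\alpha_i$ and the $\beta_i$. With these two observations in hand the computation is exactly the one in \cite{BestvinaHandel}. The Perron--Frobenius input, and the possibility of re-metrizing (subdividing if necessary) so that $f$ acts linearly, are standard and use nothing special about free products; so I anticipate no real obstacle, the content being entirely contained in the splitting \Cref{rttlemma}, which has already been established.
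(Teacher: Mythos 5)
Your proof is correct and follows the standard Perron--Frobenius construction that the paper intends (the paper explicitly omits this proof, noting it is identical to Bestvina--Handel's argument and relies only on \Cref{rttlemma} and the topology of the underlying graph). The left eigenvector weighting, the single-edge identity via (EG-iii), the extension to $r$-legal paths via the splitting of \Cref{rttlemma}, and the observation that vertex group elements are invisible to $L_r$ are all precisely the intended ingredients.
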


We have an application to splittings of paths that are not necessarily legal.
Suppose that $f\colon\mathcal{G} \to \mathcal{G}$ is a relative train track map,
that $H_r$ is an exponentially growing stratum,
that $\sigma$ is a tight path in $\mathcal{G}$,
and that $\alpha$ is a subpath of $\sigma$ in $G_r$ with endpoints at vertices.
If there are $k$ edges of $H_r$ to the left and to the right of $\alpha$ in $\sigma$,
define $W_k(\alpha)$ to be the subpath of $\sigma$ that begins
with the $k$th edge of $H_r$ to the left of $\alpha$ and ends
with the $k$th edge of $H_r$ to the right of $\alpha$.
We say that $\alpha$ is \emph{$k$-protected in $\sigma$}
if its first and last edges are in $H_r$,
if $W_k(\alpha)$ is in $G_r$ and if $W_k(\alpha)$ is $r$-legal.

\begin{lem}[cf. Lemma 4.2.2 of~\cite{BestvinaFeighnHandel}]
    \label{kprotectedsplitting}
    Suppose that $f\colon \mathcal{G} \to \mathcal{G}$ is a relative train track map
    and that $H_r$ is an exponentially growing stratum.
    There is a constant $K$ such that if
    $\sigma$ is a tight path in $\mathcal{G}$
    and if $\alpha$ in $G_r$ is a $K$-protected subpath of $\sigma$,
    then $\sigma$ can be split at the endpoints of $\alpha$.
\end{lem}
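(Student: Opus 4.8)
The plan is to choose $K$ large relative to the bounded cancellation constant of $f$ and the Perron--Frobenius data of $H_r$, and then show directly that the decomposition $\sigma = \mu \cdot \alpha \cdot \nu$ at the endpoints of $\alpha$ is a splitting. Concretely, let $L_r$ be the length function of \Cref{lengthfunction}, put $\delta = \min_{e \in H_r} L_r(e) > 0$ and $\Delta = \max_{e \in H_r} L_r(e)$, let $C$ be the bounded cancellation constant of $f$ furnished by \Cref{boundedcancellation}, and --- crucially using that $H_r$ is exponentially growing, so $\lambda_r > 1$ --- set $K = 1 + \lfloor C\Delta/(\delta(\lambda_r - 1)) \rfloor$. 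Given a tight path $\sigma$ and a $K$-protected subpath $\alpha$, write $W_K(\alpha) = \lambda\,\alpha\,\rho$, where $\lambda$ (resp.\ $\rho$) is the concatenation of the $K$ edges of $H_r$ immediately to the left (resp.\ right) of $\alpha$ in $\sigma$ together with the intervening maximal subpaths in $G_{r-1}$; thus $\sigma = \mu_0\,\lambda\,\alpha\,\rho\,\nu_0$, and we set $\mu = \mu_0\lambda$ and $\nu = \rho\nu_0$.

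Because $W_K(\alpha)$ is $r$-legal with endpoints at vertices of $H_r$, \Cref{rttlemma} gives for every $j \ge 0$ a splitting $f^j_\sharp(W_K(\alpha)) = f^j_\sharp(\lambda)\cdot f^j_\sharp(\alpha)\cdot f^j_\sharp(\rho)$ as a tight concatenation of $r$-legal paths, and \Cref{lengthfunction} gives $L_r(f^j_\sharp(\lambda)) = \lambda_r^{\,j}\,L_r(\lambda) \ge \lambda_r^{\,j} K\delta$, and likewise for $\rho$. The main step is an induction on $j$ showing that $f^j_\sharp(\sigma)$ has the form $A_j \cdot f^j_\sharp(\alpha) \cdot B_j$, where $A_j$ ends with a nonempty $r$-legal terminal subpath $\lambda^{(j)}$ of $f^j_\sharp(\lambda)$ with $L_r(\lambda^{(j)}) \ge \ell_j$ and $B_j$ begins with a nonempty $r$-legal initial subpath $\rho^{(j)}$ of $f^j_\sharp(\rho)$ with $L_r(\rho^{(j)}) \ge \ell_j$, where $\ell_0 = K\delta$ and $\ell_j \ge \lambda_r\ell_{j-1} - C\Delta$.

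For the inductive step: the segment $\lambda^{(j-1)}\cdot f^{j-1}_\sharp(\alpha)\cdot\rho^{(j-1)}$ is a subpath of the $r$-legal path $f^{j-1}_\sharp(W_K(\alpha))$, hence is itself $r$-legal and lies in $G_r$, so applying $f$ introduces no cancellation at its two internal junctions; meanwhile $A_{j-1}$ is a decomposition of a prefix of the tight path $f^{j-1}_\sharp(\sigma)$ into the piece preceding $\lambda^{(j-1)}$ followed by $\lambda^{(j-1)}$, so \Cref{boundedcancellation} removes at most $C$ edges --- hence at most $C\Delta$ of $L_r$-length --- from the left end of $f_\sharp(\lambda^{(j-1)})$, a path of $L_r$-length $\lambda_r L_r(\lambda^{(j-1)}) \ge \lambda_r\ell_{j-1}$. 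This produces $\lambda^{(j)}$ with $L_r(\lambda^{(j)}) \ge \lambda_r\ell_{j-1} - C\Delta =: \ell_j$; since $\ell_0 = K\delta > C\Delta/(\lambda_r - 1)$ one checks that $(\ell_j)$ is increasing and hence always positive, so the left-hand cancellation never reaches $f^j_\sharp(\alpha)$. The right-hand side is symmetric, and there is no cancellation across $f^j_\sharp(\alpha)$ because its two flanking junctions (with $\lambda^{(j)}$ and $\rho^{(j)}$) are legal. Finally, since those junctions are legal, the cancellation into $\lambda$ from the $\mu_0$-side (resp.\ into $\rho$ from the $\nu_0$-side) is computed the same inside $\sigma$ as inside $\mu = \mu_0\lambda$ (resp.\ $\nu = \rho\nu_0$), so $A_j = f^j_\sharp(\mu)$ and $B_j = f^j_\sharp(\nu)$; thus $f^j_\sharp(\sigma) = f^j_\sharp(\mu)\cdot f^j_\sharp(\alpha)\cdot f^j_\sharp(\nu)$ for all $j$, which is the desired splitting.

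I expect the main obstacle to be the bootstrapping in this induction rather than any single estimate: a single application of $f$ can shave a bounded amount off each buffer, but the buffer regrows by the factor $\lambda_r > 1$, so --- and this is exactly where exponential growth of $H_r$ is essential --- one fixed choice of $K$ forces the surviving buffers to grow without bound rather than erode, and no appeal to a bounded cancellation constant for the iterates $f^j$ (which would grow with $j$) is needed. The supporting bookkeeping --- that each surviving buffer remains $r$-legal so that no cancellation can creep in from the $\alpha$-side, and that cancellation is sufficiently local to identify $A_j$ with $f^j_\sharp(\mu)$ and $B_j$ with $f^j_\sharp(\nu)$ --- is routine but should be carried out with some care.
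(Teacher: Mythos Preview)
Your approach is essentially the same as the paper's: both track an $r$-legal buffer on each side of $\alpha$ that regrows exponentially under $f$ while bounded cancellation erodes only a bounded amount per step. The packaging differs: the paper picks $\ell$ so that each $H_r$-edge doubles after $f^\ell$, sets $K=2\ell C$, verifies the splitting for $1\le i\le\ell$ directly, and then shows $f^\ell_\sharp(\alpha)$ is again $K$-protected in $f^\ell_\sharp(\sigma)$ so that induction proceeds in blocks; you iterate one step at a time, measuring the buffer with the eigen-length $L_r$ and using the recursion $\ell_j \ge \lambda_r\ell_{j-1}-C\Delta$. Both are fine, and your use of $L_r$ is a clean way to express the growth.

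There is one imprecision worth fixing. Your inductive hypothesis asserts that $\lambda^{(j)}$ is a terminal subpath of $f^j_\sharp(\lambda)$, and you use this to say $\lambda^{(j-1)}\cdot f^{j-1}_\sharp(\alpha)\cdot\rho^{(j-1)}$ sits inside $f^{j-1}_\sharp(W_K(\alpha))$. But your construction of $\lambda^{(j)}$ as the surviving terminal piece of $f_\sharp(\lambda^{(j-1)})$ inside $A_j$ does not obviously yield a terminal subpath of $f^j_\sharp(\lambda)$: in the Bass--Serre tree, $\tilde f(\text{start of }\tilde\lambda^{(j-1)})$ can lie off the geodesic $\tilde f^j_\sharp(\tilde\lambda)$ by up to $C$, so the left endpoint of your $\tilde\lambda^{(j)}$ need not lie on $\tilde f^j_\sharp(\tilde\lambda)$. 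The fix is painless: replace that clause of the invariant by ``$\lambda^{(j)}$ is $r$-legal, lies in $G_r$, and its last edge is the last edge of $f^j_\sharp(\lambda)$ (an edge of $H_r$)''. This weaker statement is what you actually need to conclude that the turn between $A_{j-1}$ and $f^{j-1}_\sharp(\alpha)$ is legal (it is the same turn as in $f^{j-1}_\sharp(W_K(\alpha))$), and it is preserved by your inductive step since removing initial edges from $f_\sharp(\lambda^{(j-1)})$ leaves the terminal $H_r$-edge intact whenever $\ell_j>0$. With that adjustment your argument goes through as written.
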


\begin{proof}
    We follow the proof in~\cite[Lemma 4.2.2]{BestvinaFeighnHandel}.
    Choose $\ell$ so that the $f^\ell$-image of an edge in $H_r$
    contains at least two edges in $H_r$.
    Let $K = 2\ell C$, where $C$ is a bounded cancellation constant for $f$
    as in \Cref{boundedcancellation}.

    We show that if $\sigma = \sigma_1\alpha\sigma_2$,
    then $f^i_\sharp(\sigma) = f^i_\sharp(\sigma_1)f^i_\sharp(\alpha)f^i_\sharp(\sigma_2)$
    for $1 \le i \le \ell$
    and that $f^\ell_\sharp(\alpha)$ is $K$-protected in $f^\ell_\sharp(\sigma)$,
    so the result follows by induction.

    So take $i$ satisfying $1 \le i \le \ell$.
    Let $W_K(\alpha) = \tau_1\alpha\tau_2$.
    Since $\alpha$ begins and ends with edges of $H_r$
    and $W_K(\alpha)$ is $r$-legal,
    \Cref{rttlemma} implies that 
    $f^i_\sharp(W_K(\alpha)) = f^i_\sharp(\tau_1)f^i_\sharp(\alpha)f^i_\sharp(\tau_2)$.
    Write $\sigma_1 = \beta_1\tau_1$ and $\sigma_2 = \tau_2\beta_2$.
    By \Cref{boundedcancellation} (applied iteratively),
    when $f^i_\sharp(\beta_1)f^i_\sharp(\tau_1)$
    and $f^i_\sharp(\tau_2)f^i_\sharp(\beta_2)$ are tightened to compute
    $f^i_\sharp(\sigma_1)$ and $f^i_\sharp(\sigma_2)$,
    at most $iC$ edges are canceled.
    But $\tau_i$ contains at least $2\ell C$ edges,
    so we conclude that $f^i_\sharp(\sigma_1)f^i_\sharp(\alpha)f^i_\sharp(\sigma_2)$
    is a tight concatenation of tight paths.
    In fact, the same argument applies with $\alpha$ replaced by $W_{\ell C}(\alpha)$.
    By the choice of $\ell$,
    we have that $f^\ell_\sharp(W_{\ell C}(\alpha))$ 
    contains $W_{K}(f^\ell_\sharp(\alpha))$,
    so $f^\ell_\sharp(\alpha)$ is $K$-protected in $f^\ell_\sharp(\tau)$.
\end{proof}

\paragraph{Almost Nielsen paths.}
A path $\sigma$ is a \emph{periodic almost Nielsen path} 
with respect to a topological representative
$f\colon \mathcal{G} \to \mathcal{G}$ if $\sigma$ is nontrivial
and $f^k_\sharp(\sigma) = g\sigma g'$ for some $k\ge 1$
and vertex group elements $g$ and $g'$.
The minimal such $k$ is the \emph{period} of $\sigma$,
and $\sigma$ is an \emph{almost Nielsen path} if it has period $1$.
If $g$ and $g'$ are trivial,
then we have \emph{periodic Nielsen paths} and \emph{Nielsen paths.}
A periodic almost Nielsen path is \emph{indivisible} if it cannot be written 
as a concatenation of nontrivial periodic almost Nielsen paths.
(Note that we do not allow interior vertex group elements to change.)
We consider an equivalence relation on periodic almost Nielsen paths,
where $\sigma$ is is equivalent to $\rho$ if $\rho = g\sigma g'$,
for vertex group elements $g$ and $g'$

\begin{lem}[cf. Lemma 5.11 \cite{BestvinaHandel}]
    \label{finitelymanynielsenpaths}
    Suppose $f\colon \mathcal{G} \to \mathcal{G}$ is a relative train track map
    and that $H_r$ is an exponentially growing stratum.
    There are only finitely many equivalence classes 
    of indivisible periodic almost Nielsen paths
    $\sigma$ in $G_r$ that meet the interior of $H_r$.
    Each such $\sigma$ has exactly one illegal turn in $H_r$,
    thus $\sigma = \alpha\beta$, where $\alpha$ and $\beta$ are $r$-legal paths
    and the turn in $H_r$ at the common vertex of $\bar\alpha$ and $\beta$ is illegal.

    An indivisible periodic almost Nielsen path of height $r$
    has period 1 if and only if the directions determined by $\alpha$ and $\bar\beta$
    are almost fixed.
\end{lem}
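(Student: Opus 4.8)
The strategy is to follow the proof of Lemma 5.11 in \cite{BestvinaHandel}, replacing the counting-of-finitely-many-paths arguments there with the splitting machinery of \Cref{kprotectedsplitting} and the bounded cancellation of \Cref{boundedcancellation}, and being careful to track vertex group elements throughout. First I would establish the structural claim: if $\sigma$ is an indivisible periodic almost Nielsen path of height $r$, then $\sigma$ has exactly one illegal turn in $H_r$. The key tool is \Cref{kprotectedsplitting}: if $\sigma$ had two illegal turns in $H_r$, the $r$-legal subpath $\alpha$ strictly between them (or more precisely, if it is long enough) would be $K$-protected for $K$ as in that lemma after passing to a power, giving a splitting of $\sigma$ at the endpoints of $\alpha$ into shorter periodic almost Nielsen paths (one checks the splitting pieces are themselves periodic almost Nielsen using that $f^k_\sharp$ respects the splitting and permutes the pieces after a further power), contradicting indivisibility. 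The subtlety over the free group case is that ``between'' illegal turns there may be only vertex group elements and $G_{r-1}$-subpaths, no $H_r$-edges — but in that case the path can be written $\alpha \beta$ with the pieces separated by a single vertex, and one argues directly that such a decomposition is a splitting since legal turns at that vertex are preserved (using \hyperlink{EG-i}{(EG-i)} and the observation, recorded in the proof of \Cref{trivialedgegroupsEG2}, that nondegenerate turns $\{(g_1,e),(g_2,e)\}$ with $g_1 \ne g_2$ are legal).

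Next I would prove finiteness of equivalence classes. Having shown each such $\sigma$ splits as $\sigma = \alpha\beta$ with $\alpha,\beta$ $r$-legal and a single illegal turn in $H_r$ at the middle vertex, I bound the $H_r$-length of $\alpha$ and $\beta$. Since $\sigma$ is a periodic almost Nielsen path, $f^k_\sharp(\sigma) = g\sigma g'$, so the $L_r$-length (\Cref{lengthfunction}) of $\sigma$ is unchanged under $f^k_\sharp$. If $\alpha$ (say) were $L_r$-long, then $f^k(\alpha)$ is $r$-legal of $L_r$-length $\lambda_r^k L_r(\alpha)$, and all but a bounded-cancellation-constant's worth of its $H_r$-edges survive into $f^k_\sharp(\sigma) = g\sigma g'$; since the surviving initial portion of $f^k_\sharp(\alpha)$ must agree with the initial portion of $\sigma$ (up to the leading vertex group element $g$), and $f^k_\sharp(\alpha)$ grows while $\sigma$ is fixed in length, this forces $L_r(\alpha)$ — and symmetrically $L_r(\beta)$ — to be bounded by something like $2C\lambda_r^k/(\lambda_r - 1)$, a constant depending only on $f$. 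Actually one must be slightly careful since $k$ ranges over periods: but the period is bounded by the number of $H_r$-edges of $\sigma$ (as $f^k_\sharp$ permutes the pieces of the splitting into single $H_r$-edges and maximal $G_{r-1}$-subpaths, an argument as in \Cref{trivialedgegroupsEG2}), so a uniform bound results. This bounds the number of $H_r$-edges; combined with the fact that $G_{r-1}$-subpaths occurring are themselves bounded (they are Nielsen-type paths in a lower stratum, or one invokes an inductive hypothesis / the structure of $G_{r-1}$), and since interior vertex group elements are not counted in the equivalence relation, only finitely many equivalence classes remain. The \emph{main obstacle} is exactly this last point: controlling the $G_{r-1}$-subpaths of $\sigma$, since in the free product setting there are infinitely many paths of bounded combinatorial length (because of vertex groups) — but the vertex group elements at the interior of $\sigma$ are precisely what the equivalence relation quotients out only at the \emph{ends}, not the interior, so one needs that the connecting paths in $G_{r-1}$ are forced to be (concatenations of) periodic almost Nielsen paths of lower height, which have finitely many equivalence classes by downward induction on the filtration, and then that the finitely many ``gluing'' vertex group elements between consecutive such pieces are themselves constrained — this is where careful bookkeeping, of the kind foreshadowed by the ``almost'' terminology in the introduction, is required.

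Finally, for the period-1 criterion: if $\sigma = \alpha\beta$ is an indivisible periodic almost Nielsen path of height $r$ with $f^k_\sharp(\sigma) = g\sigma g'$, I want to show $k = 1$ iff the directions $d_\alpha$ (determined by the initial edge of $\bar\alpha$, i.e.\ the last direction of $\alpha$) and $d_\beta$ (the first direction of $\bar\beta$) are almost fixed. One direction is easy: if $k=1$, then $f_\sharp(\sigma) = g\sigma g'$ is a splitting into the same $H_r$-edges, so $Df$ fixes (up to vertex group elements, i.e.\ ``almost fixes'') the directions read off the ends of the $H_r$-edges adjacent to the illegal turn, and by \hyperlink{EG-i}{(EG-i)} and iterating one sees $d_\alpha, d_\beta$ are almost periodic, hence almost fixed after noting $k=1$ forces the period of the direction to divide... — more directly, since $f_\sharp(\sigma) = g\sigma g'$ and the splitting $\sigma = \alpha \cdot \beta$ is respected, $f_\sharp(\alpha) = g\alpha_0$ and $f_\sharp(\beta) = \beta_0 g'$ where $\alpha_0\beta_0$ retightens to $\sigma$; comparing leading/trailing $H_r$-edges shows $Df$ sends $d_\alpha \mapsto d_\alpha$ and $d_\beta \mapsto d_\beta$ up to a vertex group element, i.e.\ they are almost fixed. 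Conversely, if $d_\alpha$ and $d_\beta$ are almost fixed, then the turn $\{d_\alpha, d_\beta\}$ is $Df$-mapped to a turn of the same $\mathcal{G}_v$-orbit type, and one argues that $f_\sharp(\sigma)$ — which a priori equals $g f^{k_0}_\sharp$-image for $k_0$ the period — already has the form $g\sigma g'$ at period 1: the almost-fixedness of the two directions means no new cancellation or folding happens at the illegal turn beyond producing vertex group elements, and combined with \hyperlink{EG-iii}{(EG-iii)} applied to $\alpha$ and $\beta$ separately (they are $r$-legal) plus \Cref{lengthfunction} forcing $L_r(f_\sharp(\sigma)) = L_r(\sigma)$ (which follows once we know $f_\sharp(\sigma)$ is again an indivisible periodic almost Nielsen path representing the same class, which we do by the finiteness already proved), one concludes $f_\sharp(\sigma)$ is itself an almost Nielsen path equivalent to $\sigma$, so the period is $1$. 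This third part is mostly bookkeeping once the first two are in hand; the real work, and the genuine departure from \cite{BestvinaHandel}, is in the finiteness argument and its interaction with vertex group elements.
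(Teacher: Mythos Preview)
There are genuine gaps in both the structural claim and the finiteness argument. For the claim that $\sigma$ has exactly one illegal turn in $H_r$, your $K$-protection argument cannot work: if $\alpha$ is an $r$-legal subpath strictly between two illegal turns of $\sigma$, then by definition $W_K(\alpha)$ contains those illegal turns and so is not $r$-legal, meaning $\alpha$ is never $K$-protected in $\sigma$ regardless of its length. ``Passing to a power'' does not help either, since $f^{kp}_\sharp(\sigma) = g\sigma g'$ has the same underlying shape as $\sigma$. The paper instead argues directly from indivisibility: writing $\sigma = \alpha\beta\ldots$ into maximal $r$-legal pieces, one defines nested initial segments $a_m \subset \alpha$ with $f^{km}_\sharp(a_m) = g''\alpha$ and complementary segments $b_m c_m$ straddling the first illegal turn with $f^{km}_\sharp(b_m c_m)$ trivial; then $\bigcap a_m$ is a point and $\bigcup b_m c_m$ is the interior of $\sigma$, which forces there to be only the two pieces $\alpha$ and $\beta$.

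For finiteness, your proposed bound on $L_r(\alpha)$ via bounded cancellation fails because the bounded cancellation constant for $f^k$ grows with $k$, and your attempt to bound the period $k$ by the number of $H_r$-edges of $\sigma$ is circular---that count is exactly what you are trying to bound. More seriously, your plan to control the $G_{r-1}$-subpaths of $\sigma$ by asserting they are periodic almost Nielsen paths of lower height is false in general (it \emph{does} hold under \hyperlink{EGAlmostNielsenPaths}{(EG Almost Nielsen Paths)}, see \Cref{egalmostnielsenpathsfurtherproperties}, but that is a much later and harder-won property). The paper sidesteps $G_{r-1}$-subpaths entirely. The key observation is that when $f^\ell_\sharp(\alpha)f^\ell_\sharp(\beta)$ is tightened, cancellation proceeds exactly until the first \emph{distinct} $H_r$-edges are reached (nondegenerate turns with the same underlying edge being legal), so the cancellation is determined purely by the ordered lists $f^\ell_\sharp(\alpha)\cap H_r$ and $f^\ell_\sharp(\bar\beta)\cap H_r$, which are in turn determined by $\alpha\cap H_r$ and $\bar\beta\cap H_r$. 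Now \Cref{kprotectedsplitting} is used for its intended purpose: since $\sigma$ admits no splitting, no subpath can be $K$-protected, which bounds the number of $H_r$-edges of $\sigma$ outright. A short argument then pins down the first and last (partial) edges $\alpha_0,\beta_0$ up to finitely many possibilities, and one shows that for some $\ell$ bounded independently of $\sigma$, the path $f^\ell_\sharp(\sigma)$ is obtained from $f^\ell_\sharp(\alpha_0)$ and $f^\ell_\sharp(\beta_0)$ by concatenation and cancellation---so $f^\ell_\sharp(\sigma)$, and hence $\sigma$, takes only finitely many values up to equivalence. Finally, a small but consequential correction for the period-$1$ criterion: the relevant directions are the \emph{initial} directions of $\alpha$ and of $\bar\beta$, i.e.\ the directions at the two endpoints of $\sigma$, not at the illegal turn; your $d_\alpha$ is at the wrong vertex.
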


\begin{proof}
    The proof follows~\cite[Lemma 5.11]{BestvinaHandel}
    and~\cite[Lemma 4.2.5]{BestvinaFeighnHandel}.
    Let $\sigma$ be an indivisible periodic almost Nielsen path
    that meets the interior of $H_r$.
    By \Cref{lengthfunction}, $\sigma$ cannot be $r$-legal.
    So let $\sigma = \alpha\beta\ldots$ be a decomposition of $\sigma$
    into maximal $r$-legal subpaths.
    Let $k$ be the period of $\sigma$.
    For $m \ge 1$, let $a_m$ be the smallest initial segment of $\alpha$
    that satisfies $f^{km}_{\sharp}(a_m) = g''\alpha$
    for some vertex group element $g''$.
    Let $b_m$ be the complementary segment of $\alpha$,
    so $\alpha = a_m b_m$.
    There is a largest initial segment $c_m$ of $\beta$
    such that $f^{km}_\sharp(b_m c_m)$ is trivial.
    Let $d_m$ be the complementary segment of $\beta$,
    so $\beta = c_m d_m$.
    It is not hard to see that we must have $f^k_\sharp(a_{m+1}) = ga_m$
    and $f^k_\sharp(b_{m+1}c_{m+1}) = b_m c_m$.
    Therefore $f^k_\sharp(\bigcap_{m=1}^\infty a_m) = \bigcap_{m=1}^\infty a_m$
    and $f^k_\sharp(\bigcup_{m=1}^\infty b_m c_m) = \bigcup_{m=1}^\infty b_m c_m$.
    Since $\sigma$ is indivisible,
    we must have that $\bigcap_{m=1}^\infty a_m$ is a point
    and that $\bigcup_{m=1}^\infty b_m c_m$ is the interior of $\sigma$.

    We have shown that $\sigma$ satisfies the following conditions:
    \begin{enumerate}
        \item The path $\sigma = \alpha\beta$ has exactly one illegal turn in $H_r$.
        \item The initial and terminal (partial) edges of $\sigma$ are in $H_r$.
        \item The number of $H_r$-edges in $f^\ell_\sharp(\sigma)$ is bounded independently of $\ell$.
    \end{enumerate}
    We will show that there are only finitely many paths $\sigma$
    satisfying these three conditions
    up to the equivalence relation above.

    Let $\sigma$ be a path satisfying these conditions,
    and as above write $\sigma = \alpha\beta$,
    where $\alpha$ and $\beta$ are $r$-legal.
    When $f^\ell(\alpha)$ and $f^\ell(\beta)$ are tightened to
    $f^\ell_\sharp(\alpha)$ and $f^\ell_\sharp(\beta)$,
    no edges of $H_r$ are canceled.
    When $f^\ell_\sharp(\alpha)f^\ell_\sharp(\beta)$ 
    is tightened to $f^\ell_\sharp(\sigma)$,
    an initial segment of $f^\ell_\sharp(\bar\alpha)$ 
    is canceled with an initial segment of $f^\ell_\sharp(\beta)$.
    Since $f^\ell_\sharp(\sigma)$ has an illegal turn in $H_r$,
    (it must have at least one by \Cref{lengthfunction}
    and cannot have more because $\alpha$ and $\beta$ are $r$-legal)
    the first edges which are not canceled are contained in $H_r$.
    Since nondegenerate turns with the same underlying edge are legal,
    the first edges which are not canceled are distinct edges.
    Therefore the cancellation 
    between $f^\ell_\sharp(\alpha)$ and $f^\ell_\sharp(\bar\beta)$
    is determined by the ordered list of oriented edges
    of $f^\ell_\sharp(\alpha)$ and $f^\ell_\sharp(\bar\beta)$ in $H_r$:
    the two paths cancel until the first distinct edges of $H_r$ are reached.
    Call these lists $f^\ell_\sharp(\alpha) \cap H_r$
    and $f^\ell_\sharp(\bar\beta) \cap H_r$,
    and note that they are determined by
    $\alpha\cap H_r$ and $\bar\beta\cap H_r$, respectively.

    We claim that $\sigma \cap H_r$ takes on only finitely many values
    as $\sigma$ varies over paths satisfying the three conditions above.
    Note that if $\sigma$ has a splitting,
    then one of the pieces of the splitting is $r$-legal and meets the interior of $H_r$.
    Then the number of $H_r$ edges of $\sigma$ would grow without bound
    by \Cref{rttlemma} in contradiction to our assumption.
    But \Cref{kprotectedsplitting} then implies that the number of $H_r$ edges of $\sigma$
    is bounded independently of $\sigma$.
    Therefore it follows that $\sigma \cap H_r$ takes on only finitely many values
    except for the possibility that differing amounts of the first and last edges 
    $\alpha_0$ and $\beta_0$ may occur.
    That is, suppose $\sigma'$ is another path satisfying conditions 1 through 3 above,
    and that $\sigma \cap H$ and $\sigma' \cap H_r$ are identical
    except that the lengths of $\alpha_0$ and $\alpha'_0$
    and the lengths of $\beta_0$ and $\beta'_0$ differ.
    For concreteness, suppose $\alpha'_0$ is a proper subset of $\alpha_0$,
    and that $A$ is what is left over.
    Since $A$ is $r$-legal, the number of $H_r$-edges in $f^\ell_\sharp(A)$
    grows without bound.
    This implies that all of $f^\ell_\sharp(\bar\alpha') \cap H_r$ is canceled with 
    a proper initial segment $X$ of $f^\ell_\sharp(\beta) \cap H_r$
    for sufficiently large $\ell$.
    But $X$, like all initial segments of $f^\ell_\sharp(\beta)$,
    either contains or is contained in $f^\ell_\sharp(\beta')\cap H_r$.
    If ``contains,'' then all of $f^\ell_\sharp(\bar\beta') \cap H_r$
    is canceled with part of $f^\ell_\sharp(\alpha')\cap H_r$.
    If ``contained in,'' then all of $f^\ell_\sharp(\alpha')\cap H_r$
    is canceled with part of $f^\ell_\sharp(\bar\beta') \cap H_r$.
    In either case $f^\ell_\sharp(\sigma')$ is legal, a contradiction.

    Property 2, the fact that the number of $H_r$ edges of $\alpha$ and $\beta$
    is bounded independently of $\sigma$,
    and the fact that $\alpha_0$ and $\beta_0$ take on only finitely many values
    (up to multiplication by elements of $\mathcal{G}_{\iota(\alpha_0)}$
    and $\mathcal{G}_{\tau(\beta_0)}$ if $\alpha_0$ and $\beta_0$ are whole edges)
    implies that there exists $\ell > 0$ independent of $\sigma$
    such that $f^\ell_\sharp(\sigma)$ is obtained from
    $f^\ell_\sharp(\alpha_0)$ and $f^\ell_\sharp(\beta_0)$ by 
    concatenating and canceling at the juncture.
    This implies that $f^\ell_\sharp(\sigma)$ itself takes on only finitely many values
    up to multiplication by vertex group elements at the ends,
    and therefore so does $\sigma$.

    We now turn to the second paragraph of the lemma.
    Suppose $\sigma$ is an indivisible periodic almost Nielsen path of height $r$
    with period $p$, and write $\sigma = \alpha\beta$.
    The proof follows \cite[Lemma 2.11]{FeighnHandel}.
    After subdividing at the endpoints of $f^k_\sharp(\sigma)$ for $0 \le k \le p-1$,
    we may assume the endpoints of $\sigma$ are vertices.
    Since $\alpha$ and $\beta$ are $r$-legal,
    the relative train track property implies that $Df$ maps the initial directions
    of $\alpha$ and $\bar\beta$ to the initial directions of $f_\sharp(\alpha)$ and $f_\sharp(\beta)$
    respectively.
    If $\sigma$ has period 1,
    then we see that these directions are almost fixed,
    since $f_\sharp(\sigma) = g\sigma g'$ is obtained from $f_\sharp(\alpha)$ and $f_\sharp(\bar\beta)$
    by cancelling their maximal common terminal segment.

    Now suppose that the initial directions of $\alpha$ and $\bar\beta$ are almost fixed
    and write $f_\sharp(\sigma) = \alpha_1\beta_1$.
    The first edge $E$ of $\alpha$ is also the first edge of $\alpha_1$,
    and the argument above shows that $\alpha$ and $\alpha_1$ are initial segments
    of $f_\sharp^{Np}(E)$ for large $N$.
    Thus either $\alpha$ is an initial segment of $g\alpha_1$ 
    for some vertex group element $g$ or vice versa.
    Assume $\alpha$ is an initial segment of $g\alpha_1$.

    Suppose toward a contradiction that $\alpha_1 = g\alpha\gamma$ for some nontrivial path $\gamma$.
    The path $\alpha_2 = \bar\beta\gamma$ is a subpath of $f_\sharp^{Np}(\bar\beta)$
    for large $N$ and is thus $r$-legal.
    The path $\alpha_2\beta_1 \simeq \beta\bar\alpha\alpha_1\beta_1$
    is a nontrivial periodic almost Nielsen path
    with exactly one illegal turn in $H_r$.
    It is therefore indivisible.
    Notice that $\bar\beta$ and $g'^{-1}\bar\beta_1$ 
    share a common initial subpath in $H_r$ 
    for some vertex group element $g'$
    by assumption,
    therefore the same is true of $\alpha_2$ and $g'^{-1}\bar\beta_1$.
    In fact, the argument above shows that $\alpha_2$ and $g'^{-1}\bar\beta_1$
    are initial subpaths of a common path $\delta$.
    They cannot be equal, so one is a proper initial subpath of the other.
    But note that the difference between the number of $H_r$ edges
    in $f^{Np}_\sharp(\alpha_2)$ and $f^{Np}_\sharp(\bar\beta_1)$ must grow exponentially in $N$,
    contradicting the fact that $\alpha_2\beta_1$ is a periodic almost Nielsen path.
    This contradiction shows that $\alpha_1 = g\alpha$,
    and a symmetric argument shows that $\beta_1 = \beta g'$,
    so $p = 1$.
\end{proof}

Let $P_r$ be the set of equivalence classes of paths
satisfying items 1 through 3 in
the proof of \Cref{finitelymanynielsenpaths};
this is a finite set.
The following lemma will be used in \Cref{CTsection}.

\begin{lem}[cf.~Lemma 4.2.6 of \cite{BestvinaFeighnHandel}]
    \label{egsplitting}
    Suppose that $f\colon \mathcal{G} \to \mathcal{G}$ is a relative train track map
    and that $H_r$ is an exponentially growing stratum.
    If $\sigma$ is a path in $G_r$ with the property that each path
    $f^k_\sharp(\sigma)$ has the same number of illegal turns in $H_r$,
    then $\sigma$ can be split into subpaths that are either $r$-legal or elements of $P_r$.
\end{lem}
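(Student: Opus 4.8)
The plan is to induct on the number $i$ of illegal turns of $\sigma$ in $H_r$, which by hypothesis is unchanged when $\sigma$ is replaced by $f^k_\sharp(\sigma)$. Along the way I would use the routine fact that $f_\sharp$ never increases the number of illegal turns in $H_r$ of a path in $G_r$: decomposing such a path into its maximal $r$-legal subpaths, each maps to an $r$-legal path by \hyperlink{EG-iii}{(EG-iii)}, and tightening the concatenation creates at most one illegal turn of $H_r$ at each of the finitely many junctions. Consequently, whenever $\sigma$ is exhibited as a splitting, each piece again has a constant number of illegal turns under iteration, since these numbers are non-increasing in $k$ and sum to the constant total. The base case $i=0$ is immediate: $\sigma$ is then $r$-legal and is its own splitting.

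For the inductive step, let $K$ be the constant of \Cref{kprotectedsplitting} and write $\sigma = \hat\mu_0\hat\mu_1\cdots\hat\mu_i$ for the decomposition into maximal $r$-legal subpaths, so the $i$ illegal turns occur at the common endpoints of consecutive $\hat\mu_j$. First I would dispose of the case in which some interior piece $\hat\mu_j$ ($1 \le j \le i-1$) contains more than $2K$ edges of $H_r$: it then contains an edge $E$ of $H_r$ with at least $K$ edges of $H_r$ of $\hat\mu_j$ on each side, hence $K$-protected in $\sigma$ since $\hat\mu_j \subset G_r$ is $r$-legal, so \Cref{kprotectedsplitting} splits $\sigma$ at the endpoints of $E$ as $\sigma = \sigma' \cdot E \cdot \sigma''$. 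Because $1 \le j \le i-1$, each of $\sigma'$, $\sigma''$ has between $1$ and $i-1$ illegal turns in $H_r$ and a constant number under iteration, so the inductive hypothesis applies to each, and the splittings concatenate (absorbing $E$ into an adjacent $r$-legal piece).

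In the remaining case every interior piece $\hat\mu_j$ has at most $2K$ edges of $H_r$, and here I would peel off one element of $P_r$ at the first illegal turn $v$, the common endpoint of $\hat\mu_0$ and $\hat\mu_1$. Because the number of illegal turns is constant, $v$ never becomes legal under iteration and the cancellation accumulating at $v$ never eats through a whole $r$-legal piece adjacent to a later illegal turn. I would then run the extraction from the proof of \Cref{finitelymanynielsenpaths} at $v$, with $\hat\mu_0$, $\hat\mu_1$ in the roles of the $r$-legal pieces $\alpha$, $\beta$ there, using bounded cancellation (\Cref{boundedcancellation}) and the length function (\Cref{lengthfunction}) --- and crucially the bound on the number of $H_r$-edges of $\hat\mu_1$ --- to see that the terminal segment of $\hat\mu_0$ and initial segment of $\hat\mu_1$ consumed by cancellation at $v$ stabilize as $k\to\infty$. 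This produces $\alpha' \subset \hat\mu_0$ (a terminal segment beginning with an edge of $H_r$) and $\beta' \subset \hat\mu_1$ (an initial segment ending with an edge of $H_r$) such that $\nu := \alpha'\beta'$ satisfies items (1)--(3) in the proof of \Cref{finitelymanynielsenpaths}, i.e. $\nu \in P_r$. Writing $\sigma = \mu_0 \cdot \nu \cdot \rho$ with $\mu_0$ the complementary initial segment of $\hat\mu_0$ and $\rho$ the rest, both cuts lie in the interiors of the $r$-legal pieces $\hat\mu_0$, $\hat\mu_1$, which split into single edges and maximal subpaths of $G_{r-1}$ by \Cref{rttlemma}; since $\alpha'$, $\beta'$ absorb all cancellation ever occurring at $v$ and distinct illegal turns do not interfere, no cancellation crosses either cut under $f^k_\sharp$, so the decomposition is a splitting. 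Now $\mu_0$ is $r$-legal, $\nu \in P_r$, and $\rho$ has $i-1$ illegal turns in $H_r$ with a constant number under iteration (iterates of $r$-legal paths and of elements of $P_r$ have $0$ and $1$ illegal turns respectively), so the inductive hypothesis applied to $\rho$ completes the proof.

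The hard part will be this last step. The extraction in \Cref{finitelymanynielsenpaths} relies on periodicity for its nested intersections to stabilize, whereas $\sigma$ is arbitrary; one must instead lean on the permanence of the first illegal turn, plus the boundedness of the interior $r$-legal pieces (arranged by the $K$-protected splitting), to bound uniformly in $k$ how much of $\hat\mu_0$ and $\hat\mu_1$ is consumed by cancellation at $v$, to see that the extracted core has boundedly many $H_r$-edges under all iterates and hence lies in the finite set $P_r$, and to verify that $\sigma$ genuinely splits at its endpoints --- the non-interference of cancellations at distinct illegal turns, itself a consequence of constancy, being exactly what makes that last point go through.
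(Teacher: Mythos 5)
Your overall strategy agrees with the paper's --- induction on the number of illegal turns in $H_r$, exploiting that this number is non-increasing under $f_\sharp$ and hence constant on each piece of any splitting --- but your execution of the inductive step is genuinely different, and I don't think it closes the gap you flag at the end.

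The paper's inductive step ($m>1$) is a compactness argument: lift to $\Gamma$, decompose $\tilde\sigma = \tilde\sigma_1\cdots\tilde\sigma_{m+1}$ at the illegal turns, and consider the nested closed sets $\tilde S^2_k = \{\tilde x \in \tilde\sigma_2 : \tilde f^k(\tilde x) \in \tilde f^k_\sharp(\tilde\sigma)\}$. One checks $f^N$ maps $\bigcap_{k\le N}\tilde S^2_k$ onto the second maximal $r$-legal piece of $\tilde f^N_\sharp(\tilde\sigma)$, so the infinite intersection is nonempty, and any point in it is a splitting point, cutting $\sigma$ into two subpaths each with strictly fewer illegal turns. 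Crucially, the \emph{identification} of a non-$r$-legal, unsplittable piece with a $P_r$ element is deferred to the $m=1$ case, where it is immediate: if a one-illegal-turn path cannot be split then \Cref{kprotectedsplitting} forces condition (3) (bounded $H_r$-edge count under iteration) and \Cref{rttlemma} forces condition (2), so the path belongs to $P_r$ by definition.

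Your proposal tries to collapse these two steps into a single ``peel off a $P_r$ element at the first illegal turn'' move, running the extraction of \Cref{finitelymanynielsenpaths} at the junction of $\hat\mu_0$ and $\hat\mu_1$. This is where the gap lives, and it is not merely a detail to fill. The extraction in \Cref{finitelymanynielsenpaths} produces the sequences $a_m$, $b_m$, $c_m$, $d_m$ and shows $\bigcap a_m$ is a point and $\bigcup b_mc_m$ is the interior of $\sigma$ \emph{because} of the recursion $f^k_\sharp(a_{m+1}) = g\,a_m$, $f^k_\sharp(b_{m+1}c_{m+1}) = b_mc_m$, which is available only because $\sigma$ is a periodic almost Nielsen path. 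For an arbitrary $\sigma$ there is no such recursion. You invoke ``permanence of the first illegal turn'' and ``boundedness of the interior pieces,'' but note that the bound on the $H_r$-edge count of $\hat\mu_1$ is at time $k=0$ only --- $f^k_\sharp(\hat\mu_1)$ grows exponentially, so the cancellation at $v$ on the $\hat\mu_1$-side can grow exponentially with $k$ as well. What permanence of the turn \emph{does} give is that the consumed part does not eat through all of $\hat\mu_1$ (that would drop the illegal-turn count), and combined with monotonicity of the consumed set this shows a splitting point exists in $\hat\mu_1$ (which is exactly what the paper's compactness argument is engineered to produce). But that alone does not hand you a subpath $\nu = \alpha'\beta'$ satisfying the defining conditions of $P_r$: conditions (2) and (3) of $P_r$ are additional constraints that do not follow from ``the split points are the boundary of the stabilized cancellation region.'' To get them you still need the unsplittable-implies-$P_r$ argument, i.e.\ the $m=1$ case. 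Once you accept that, the explicit extraction becomes unnecessary: split wherever you can (the compactness argument or your $K$-protected observation both find such points when $i\ge 2$), reduce the illegal-turn count, and handle $m=1$ by the paper's short argument.

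One further issue: your first-case reduction only controls \emph{interior} pieces $\hat\mu_1,\dotsc,\hat\mu_{i-1}$, and for $i=1$ there are none, so the two terminal pieces $\hat\mu_0$, $\hat\mu_1$ remain unbounded and \Cref{kprotectedsplitting} does not automatically apply to an edge near either end of $\sigma$ (the $K$-protected condition requires $K$ edges of $H_r$ on \emph{both} sides). So even the case reduction, as stated, does not uniformly put you in a position where all pieces surrounding the targeted turn are short.
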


\begin{proof}
    The proof is identical to \cite[Lemma 4.2.6]{BestvinaFeighnHandel}.
    We prove the statement by induction on $m$, the number of illegal turns in $H_r$ that $\sigma$ has.
    If $m = 0$, then we are done.
    Suppose that $m = 1$ and that $\sigma$ cannot be split;
    we will show that $\sigma$ belongs to $P_r$.
    The hypothesis that $\sigma$ has exactly one illegal turn in $H_r$ is satisfied.
    If the number of $H_r$-edges in $f^\ell_\sharp(\sigma)$ is not bounded independent of $\ell$,
    then \Cref{kprotectedsplitting} implies that $\sigma$ may be split.
    Similarly if the first and last (possibly partial) edges of $\sigma$ are not contained in $H_r$,
    then \Cref{rttlemma} implies that $\sigma$ 
    may be split at the initial vertex of the first edge of $H_r$ in $\sigma$.
    Therefore if $\sigma$ cannot be split, then $\sigma$ belongs to $P_r$.

    So suppose $m > 1$.
    Choose lifts $\tilde f\colon \Gamma \to \Gamma$ and
    $\tilde\sigma$ in $\Gamma$ and
    decompose $\tilde\sigma = \tilde\sigma_1\ldots \tilde\sigma_{m+1}$
    so that each juncture projects to an illegal turn in $H_r$ and each $\tilde\sigma_i$
    projects to an $r$-legal path.
    By hypothesis, each $\tilde f^k_\sharp(\tilde\sigma)$ has a decomposition
    $\tilde f^k_\sharp(\tilde\sigma) = \tilde\tau^k_1\ldots\tilde\tau^k_{m+1}$ 
    into maximal $r$-legal subpaths.
    The set $\tilde S^2_k = \{\tilde x \in \tilde\sigma_2 
    : \tilde f^k(\tilde x) \in \tilde f^k_\sharp(\tilde\sigma)\}$ is closed,
    and one can argue by induction that $f^N$ maps $\bigcap_{k=1}^N(\tilde S^2_N)$ onto $\tilde\tau^N_2$
    for all $N \ge 1$.
    Therefore $\tilde S^2 = \bigcap_{k=1}^\infty\tilde S^2_k$ is nonempty.
    It is not hard to see that $\sigma$ can be split at a point $x$
    if and only if $\tilde f^k(\tilde x)$ belongs to $\tilde f^k_\sharp(\tilde\sigma)$
    for all $k \ge 0$.
    Therefore $\sigma$ can be split at (the projection of) any point in $\tilde S^2$.
    This splits $\tilde\sigma$ into subpaths that have fewer than $m$ illegal turns in $H_r$,
    and thus induction completes the proof.
\end{proof}

\paragraph{Non-exponentially growing strata.}
A non-exponentially growing stratum $H_r$ is \emph{almost periodic}
if the edges $E_1,\dotsc,E_k$ of $H_r$
satisfy $f(E_i) = g_iE_{i+1}h_i$ 
for vertex group elements $g_i$ and $h_i$ and
with indices taken mod $k$.
If $k = 1$, we say $H_r$ is an \emph{almost fixed} stratum and $E_1$ is an \emph{almost fixed} edge.

If $H_r$ is a non-exponentially growing but not periodic stratum
for a topological representative $f\colon \mathcal{G} \to \mathcal{G}$,
each edge $e$ has a subinterval which is eventually mapped back over $e$,
so the subinterval contains a periodic point.
After declaring all of these periodic points to be vertices,
reordering, reorienting, and possibly replacing $H_r$
with two non-exponentially growing strata,
we may assume the edges $E_1,\dotsc,E_k$ of $H_r$
satisfy $f(E_i) = g_iE_{i+1}u_i$,
where indices are taken mod $k$, $g_i$ is a vertex group element and $u_i$ is a path in $G_{r-1}$.
Henceforth we always adopt this convention.

\paragraph{Dihedral Pairs.}
A \emph{dihedral pair} is a pair of (oriented) edges $E_i$ and $E_j$
with the following properties.
\begin{enumerate}
    \item $E_i$ and $E_j$ have a common initial vertex $v$
        with trivial vertex group.
        We refer to $v$ as the \emph{center vertex of the dihedral pair $E_i$ and $E_j$.}
    \item $E_i$ and $E_j$ have terminal vertices 
        each with valence one in $G$ and $C_2$ vertex group.
    \item The edges $E_i$ and $E_j$ are either fixed or swapped by $f$.
        (Up to homotopy, we may assume that $f(E_i) = E_j$
        and $f(E_j) = E_i$,
        rather than for example $E_j g$, where $g$ is the nontrivial element of $C_2$.)
\end{enumerate}
Dihedral pairs will play an exceptional role in \Cref{CTsection}.
If the edges $E_i$ and $E_j$ are fixed by $f$,
they determine fixed strata $H_i$ and $H_j$ that are forests.
By rearranging strata, we may assume that $H_j = H_{i+1}$.
In this situation, we say that $H_i$ is the 
\emph{bottom half of the dihedral pair.}

\paragraph{}
For a topological representative $f\colon \mathcal{G} \to \mathcal{G}$,
let $\per(f)$ denote the set of $f$-periodic points in $G$
(as a map of topological spaces).
The subset of points with period $1$ is $\fix(f)$.
A subgraph $C \subset G$ is \emph{wandering} if $f^k(C) \subset \overline{G\setminus C}$
for all $k \ge 1$ and is \emph{non-wandering} otherwise.

The \emph{core} of a subgraph $C \subset G$ is the minimal subgraph (of groups) $K$ of $C$
such that the inclusion is a homotopy equivalence.

\paragraph{Enveloped zero strata.}
Suppose that $f\colon\mathcal{G} \to \mathcal{G}$ is a topological representative,
that $u < r$ and that the following hold.
\begin{enumerate}
	\item The stratum $H_u$ is irreducible.
	\item The stratum $H_r$ is exponentially growing. 
        Each component of $G_r$ is noncontractible.
	\item For each $i$ satisfying $u < i < r$, the stratum $H_i$ is a zero stratum
		that is a component of $G_{r-1}$,
		and each vertex of $H_i$ has valence at least two in $G_r$.
\end{enumerate}
Then we say that each $H_i$ is \emph{enveloped by $H_r$,}
and write $H^z_r = \bigcup_{k=u+1}^r H_k$.

The following theorem is the main result of this section;
its proof occupies the remainder of the section.
\begin{thm}[\cite{FeighnHandel} Theorem 2.19]
	\label{improvedrelativetraintrack}
    Given an outer automorphism $\varphi \in \out(F,\mathscr{A})$
	there is a relative train track map $f\colon\mathcal{G} \to \mathcal{G}$
    on a marked Grushko $(F,\mathscr{A})$-graph of groups
	and filtration $\varnothing = G_0 \subset G_1 \subset \dotsb \subset G_m = G$
	representing $\varphi$ satisfying the following properties:
	\begin{enumerate}
		\item[\hypertarget{V}{(V)}] The endpoints of all 
            indivisible periodic almost Nielsen paths are vertices.
		\item[\hypertarget{P}{(P)}] If a periodic or almost periodic stratum $H_m$ 
            is a forest,
            then either 
			there exists a filtration element $G_j$ 
			such that $\mathcal{F}(G_j) \ne \mathcal{F}(G_\ell\cup H_m)$
			for any filtration element $G_\ell$
            or $H_m$ is the bottom half of a dihedral pair.
		\item[\hypertarget{Z}{(Z)}] 
			Each zero stratum $H_i$ is enveloped by an exponentially growing stratum $H_r$.
			Each vertex of $H_i$ is contained in $H_r$ and meets only edges in $H_i\cup H_r$.
		\item[\hypertarget{NEG}{(NEG)}] The terminal endpoint of an edge in a 
			non-exponentially growing stratum $H_i$ is periodic,
            and if the stratum is not almost periodic,
            the terminal endpoint
			is contained in a filtration element $G_j$ with $j < i$ that is its own core.
		\item[\hypertarget{F}{(F)}] The core of a filtration element $G_r$ is a filtration element,
            unless $H_r$ is the bottom half of a dihedral pair,
            in which case $G_{r-1}$ and $G_{r+1}$ are their own core.
	\end{enumerate}

	Moreover, if $\mathcal{F}_1 \sqsubset \dotsb \sqsubset \mathcal{F}_d$ is a nested sequence
	of $\varphi$-invariant free factor systems, we may assume that each free factor system
	is realized by some filtration element.
\end{thm}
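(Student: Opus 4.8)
The plan is to follow Feighn--Handel's proof of \cite[Theorem 2.19]{FeighnHandel} step by step, translating each move into the graph-of-groups setting and checking that the presence of vertex-group elements does not obstruct the arguments. I would begin with the relative train track map $f\colon \mathcal{G} \to \mathcal{G}$ supplied by \Cref{rttprop}, so that each $\mathcal{F}_i$ is already realized by a filtration element, and (after collapsing a pretrivial forest if necessary) assume $\mathcal{G}$ has no valence-one vertices with trivial vertex group and that $\pf(f) = \pfmin$, so that \hyperlink{EG-i}{(EG-i)} through \hyperlink{EG-iii}{(EG-iii)} are restored by \Cref{pfcorollary} after every subsequent move. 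The moves I would use — collapsing a pretrivial forest, folding, subdivision, invariant core subdivision, valence-one and valence-two homotopies, sliding a non-exponentially growing edge along its suffix, and the ``untwisting'' of a forest stratum onto a lower stratum — are all built from the moves analyzed in \cite[Sections 2 and 3]{Myself}, and the argument recorded in the proof of \Cref{rttprop} already shows that each of these preserves the property that a given free factor system is realized by a filtration element; this is what makes the ``moreover'' clause automatic once the rest of the theorem is in place. The construction would be organized as a finite sequence of moves: the subdivisions and homotopies required for \hyperlink{V}{(V)}, \hyperlink{Z}{(Z)} and \hyperlink{NEG}{(NEG)} are applied a bounded number of times, while the collapses used to arrange \hyperlink{F}{(F)} and the ``untwistings'' used to arrange \hyperlink{P}{(P)} each strictly decrease a well-ordered complexity (lexicographically: $\pf(f)$, then the number of strata), so the process terminates.

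I would establish the five properties in the order \hyperlink{Z}{(Z)}, \hyperlink{NEG}{(NEG)}, \hyperlink{V}{(V)}, \hyperlink{F}{(F)}, \hyperlink{P}{(P)}. For \hyperlink{Z}{(Z)}: collapse contractible components of filtration elements and perform valence-one homotopies until every vertex of a zero stratum $H_i$ has valence at least two in the smallest filtration element properly containing it; the standard argument from \cite{BestvinaHandel}, now using \Cref{trivialedgegroupsEG2} to keep \hyperlink{EG-ii}{(EG-ii)} intact, then shows $H_i$ is enveloped by an exponentially growing stratum $H_r$ and that its vertices meet only edges of $H_i \cup H_r$. For \hyperlink{NEG}{(NEG)}: every edge of a non-exponentially growing stratum contains an $f$-periodic point in its interior, so after subdividing at these points, reorienting, and possibly splitting $H_i$ in two (the convention established just before the theorem), I would slide $E_i$ along its suffix $u_i$ until its terminal endpoint is periodic; when $H_i$ is not almost periodic the suffix genuinely lies in $G_{r-1}$, and sliding across lower strata, together with the rearrangement performed for \hyperlink{F}{(F)}, places the terminal endpoint in a lower filtration element that is its own core. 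For \hyperlink{V}{(V)}: \Cref{finitelymanynielsenpaths} gives finitely many equivalence classes of indivisible periodic almost Nielsen paths of each exponentially growing height, and the non-exponentially growing and exceptional almost Nielsen paths are finite in number as well; subdividing at all of their endpoints and iterating — the set is finite and subdivision creates no new ones — yields \hyperlink{V}{(V)}.

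Property \hyperlink{F}{(F)} I would obtain by reordering strata so that the core of each filtration element appears lower in the filtration; the only obstruction is a periodic forest stratum $H_r$ whose removal would cause $G_{r-1}$ to fail to be its own core, and, after the normalizations above, the analysis behind \Cref{finitelymanynielsenpaths} identifies this exactly as the bottom-half-of-a-dihedral-pair case that is the stated exception. The last and most delicate step is \hyperlink{P}{(P)}. Given a periodic or almost periodic forest stratum $H_m$ that neither detects a new free factor system nor is the bottom half of a dihedral pair, I would remove it by the ``untwisting'' move: collapse the forest onto a lower stratum after precomposing with a twist chosen to kill the relevant suffix, viewed equivariantly on a Grushko $(F,\mathscr{A})$-tree. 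The point — and the crux of the proof — is that this move goes through in every case \emph{except} when $H_m$ is a dihedral pair (on the two-fold cover associated to the valence-one $C_2$ vertices the required twist is not $C_2$-equivariant) or when $H_m$ carries a new free factor system (removing it would destroy the realization of that system). In all remaining cases the untwisting strictly decreases the number of strata, so finitely many applications leave every periodic or almost periodic forest stratum satisfying \hyperlink{P}{(P)}. Throughout, because each move is one of those studied for \Cref{rttprop}, properties \hyperlink{EG-i}{(EG-i)}--\hyperlink{EG-iii}{(EG-iii)}, the realization of the $\mathcal{F}_i$, and the already-established properties among \hyperlink{V}{(V)}--\hyperlink{F}{(F)} all survive. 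I expect the main obstacle to be precisely this \hyperlink{P}{(P)}-step bookkeeping: isolating the dihedral-pair obstruction cleanly, and verifying that the untwisting move, when it does apply, preserves every other property simultaneously.
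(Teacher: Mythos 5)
Your plan has the right flavor — follow Feighn--Handel's \cite[Theorem 2.19]{FeighnHandel}, translating each move to graphs of groups — but the proposed ordering $(Z),(NEG),(V),(F),(P)$ does not work, because properties $(Z)$, $(NEG)$ and $(F)$ all turn out to \emph{depend} on $(P)$. The paper's proof establishes $(V)$ first, then a weak form $(NEG^*)$ (terminal vertices of non-almost-periodic NEG edges are periodic), then a weak form of $(Z)$ plus tree replacement, then $(P)$, and only then completes $(Z)$, $(NEG)$, $(F)$ in that order. The dependence is concrete: to finish $(Z)$ one must promote ``non-wandering'' to ``non-contractible,'' which in the almost-periodic-but-not-dihedral-pair case is precisely \Cref{propertyPconsequence} — a consequence of $(P)$. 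Similarly, the full $(NEG)$ requires the terminal endpoint to land in a filtration element that is its own core, and the identification of that filtration element uses $(Z)$ and $(P)$; and the $(F)$ step in the paper explicitly invokes \Cref{propertyPconsequence} to rule out almost-periodic forests below a given stratum. Your proposal tries to derive $(NEG)$ using $(F)$, but you list $(F)$ after $(NEG)$, so even internally the dependencies are tangled. You also attribute the dihedral-pair obstruction in $(F)$ to ``the analysis behind \Cref{finitelymanynielsenpaths},'' which is unrelated — that lemma concerns indivisible periodic almost Nielsen paths of exponentially growing height, not the forest/core question.

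Separately, the ``untwisting'' move you describe for $(P)$ — precomposing with a twist to kill the suffix before collapsing, checked for $C_2$-equivariance — is not part of this theorem's proof. In the paper, the $(P)$ step is the much simpler operation of collapsing the invariant subforest $H_m \cup Y$ (where $Y$ is everything eventually mapped into $H_m$) and re-tightening; the dihedral-pair escape clause in the statement of $(P)$ is exactly what lets one \emph{avoid} untwisting at this stage. The untwisting appears later, in Step 4 of the proof of \Cref{preciseCTtheorem}, where one handles periodic circles and dihedral pairs with no outward-pointing periodic directions; that is where the $C_2$-equivariance obstruction you describe actually bites, and the paper's response is to build dihedral pairs into the theory of CTs rather than untwist them. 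Finally, you underestimate the work needed for $(V)$: achieving it once by subdivision is easy, but one then needs a statement (the unnamed lemma after $(V)$ in the paper, cf.\ \cite[Lemma 2.16]{FeighnHandel}) giving a period-preserving bijection of indivisible almost Nielsen paths under each of the later moves, so that $(V)$ survives to the end. Your claim that everything is already ``recorded in the proof of \Cref{rttprop}'' is too strong: that proof covers collapsing pretrivial forests, folding, subdivision, invariant core subdivision and valence-one/two homotopies, but not sliding or tree replacement, which need their own analysis (and are given one in \Cref{sliding lemma} and the surrounding text).
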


Before we turn to the proof,
we prove a lemma and a simple consequence of \Cref{improvedrelativetraintrack}
that we will need later.

\begin{lem}[\cite{FeighnHandel} Lemma 2.10]
    \label{egvalence}
    Let $H_r$ be an exponentially growing stratum of a relative train track map 
    $f\colon\mathcal{G} \to \mathcal{G}$ and $v$ a vertex of $H_r$.
    Then there is a legal turn in $G_r$ based at $v$.
\end{lem}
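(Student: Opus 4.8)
The plan is to argue by contradiction. Suppose every turn in $G_r$ based at $v$ is illegal; that is, for each pair of directions $d_1, d_2$ at $v$ lying in $G_r$, some iterate of $Df$ identifies them. Since $H_r$ is exponentially growing, by \hyperlink{EG-i}{(EG-i)} the map $Df$ sends directions in $H_r$ to directions in $H_r$, and every turn with one edge in $H_r$ and the other in $G_{r-1}$ is legal. Consequently, if $v$ has any direction in $G_{r-1}$ at all and any direction in $H_r$, we are immediately done; so we may assume $v$ meets only edges of $H_r$ (more precisely, every direction at $v$ lying in $G_r$ actually lies in $H_r$). First I would record that $Df$ restricted to the finite set $D$ of directions at periodic vertices in $H_r$ is eventually periodic, so after passing to a power $f^k$ (which, as the remark following \Cref{rttlemma} notes, still satisfies \hyperlink{EG-i}{(EG-i)}--\hyperlink{EG-iii}{(EG-iii)} on $H_r$) we may assume $v$ is fixed and that $Df^k$ is a "preperiodic" self-map of the directions at $v$; replacing $f$ by this power, assume $v$ is fixed and the set of fixed directions at $v$ in $H_r$ is exactly the image of a high iterate of $Df$ on the directions at $v$.

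The key point is that an exponentially growing stratum cannot have all of its directions at a vertex collapse. Here is the mechanism I would use. Because $H_r$ is irreducible with Perron--Frobenius eigenvalue $\lambda_r > 1$, there is at least one edge $E$ of $H_r$ with $\iota(E) = v$, and by \hyperlink{EG-iii}{(EG-iii)} together with \Cref{lengthfunction}, the number of $H_r$-edges in $f^m_\sharp(E)$ grows like $\lambda_r^m$; in particular $f_\sharp(E)$ eventually contains at least two $H_r$-edges, so there are $H_r$-edges $E'$ with $\iota(E') = v$ and $E'$ appears in the interior of some $f^m_\sharp(E)$. Now suppose, for contradiction, that every turn at $v$ in $H_r$ is illegal. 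Then consider the turn $\{\bar E', E''\}$ taken by $f^m_\sharp(E)$ at an interior occurrence of the vertex $v$: this turn is legal because it is crossed by the tight, and indeed $r$-legal image of a single edge (using \hyperlink{EG-iii}{(EG-iii)}, $f(E)$ is $r$-legal, hence so is $f^m_\sharp(E)$, so every turn it crosses in $H_r$ is legal). But $\{\bar E', E''\}$ is a turn at $v$ lying in $H_r$, contradicting the assumption that all such turns are illegal. This is essentially the argument of \cite[Lemma 2.10]{FeighnHandel}, transported to the graph-of-groups setting; the only extra bookkeeping is that our "turns" carry a $\mathcal{G}_v$-orbit label, but the legality of a turn depends only on the $\mathcal{G}_v$-orbit, so this causes no difficulty.

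The step I expect to be the main obstacle is handling the case where $v$ has valence one or is otherwise degenerate in $H_r$ --- that is, where there is only a single $H_r$-direction at $v$, so that $f_\sharp(E)$ might pass through $v$ only via its endpoints and never create a genuine interior turn at $v$. In that situation one must instead locate the legal turn among the directions pointing into $G_{r-1}$, invoking the second clause of \hyperlink{EG-i}{(EG-i)} (turns with one edge in $H_r$ and one in $G_{r-1}$ are legal), which requires knowing that $v$ actually meets an edge of $G_{r-1}$; if $v$ meets no edge of $G_{r-1}$ and only one edge of $H_r$, then $v$ is a valence-one vertex of $G_r$, and I would argue such a vertex cannot occur in an exponentially growing stratum of a relative train track map whose filtration realizes the relevant core (or can be removed by a valence-one homotopy), so this boundary case is vacuous. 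Assembling these cases --- interior turn present, versus boundary vertex meeting $G_{r-1}$, versus the excluded valence-one configuration --- yields a legal turn in $G_r$ at $v$ in all cases.
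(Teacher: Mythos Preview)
Your second paragraph contains exactly the paper's idea: since $\lambda_r > 1$ and $M_r$ is irreducible, some interior point $w$ of some $H_r$-edge $E$ has $f^j(w) = v$ for some $j > 0$; hence $v$ lies in the interior of the $r$-legal path $f^j_\sharp(E)$, and the turn crossed there is legal---by $r$-legality if both directions lie in $H_r$, or by \hyperlink{EG-i}{(EG-i)} if one lies in $G_{r-1}$. The paper's proof is those three sentences, with no contradiction setup, no reduction to ``$v$ meets only $H_r$,'' no passage to an iterate, and no restriction that $E$ be based at $v$. Dropping that last restriction is exactly what lets the paper avoid your case analysis.

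Your third paragraph, by contrast, is both unnecessary and circular. Unnecessary because the interior-point argument handles every case uniformly: if $v$ has a single incident edge $E_0$ in $G_r$ but nontrivial vertex group, appearing in the interior of a tight path forces a turn $\{(1,E_0),(g,E_0)\}$ with $g \neq 1$, which is nondegenerate and hence (by $r$-legality) legal. Circular because you dispose of the remaining case---valence one in $G_r$ with trivial vertex group---by asserting that such a vertex ``cannot occur in an exponentially growing stratum,'' but this is precisely what the paper deduces \emph{from} the lemma in the sentence immediately following its proof. The correct logic runs the other way: the interior-point argument itself produces a nondegenerate turn at $v$, and \emph{that} is what excludes valence one with trivial vertex group as a byproduct.
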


\begin{proof}
    The proof is identical to \cite[Lemma 2.10]{FeighnHandel}.
    There is a point $w$ in the interior of an edge $E$ of $H_r$
    and $j > 0$ such that $f^j(w) = v$.
    By \hyperlink{EG-iii}{(EG-iii)},
    the point $v$ is in the interior of an $r$-legal path.
    By \hyperlink{EG-i}{(EG-i)} and \hyperlink{EG-iii}{(EG-iii)},
    the turn based at $v$ crossed by this path is legal.
\end{proof}

In particular, the lemma implies that if $v$ has valence one in $G_r$,
then $v$ has nontrivial vertex group.

\begin{lem}
    \label{legalturn}
    Suppose $f\colon \mathcal{G} \to \mathcal{G}$ is a relative train track map
    satisfying properties \hyperlink{Z}{(Z)} and \hyperlink{NEG}{(NEG)}.
    If $v$ is a vertex of a filtration element $G_k$ that is its own core,
    then there is a legal turn based at $v$ in $G_k$.
\end{lem}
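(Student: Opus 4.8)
The plan is to argue by induction on $k$. Since $v$ lies in $G_k$, there is a largest index $r \le k$ such that $H_r$ contains an edge incident to $v$, and I would split into cases according to the type of $H_r$. The exponentially growing case is immediate: \Cref{egvalence} produces a legal turn based at $v$ inside $G_r \subseteq G_k$, and we are done.

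The step I expect to be the main obstacle is showing that $H_r$ cannot be a zero stratum. If it were, then by \hyperlink{Z}{(Z)} the vertex $v$ would lie in the exponentially growing stratum $H_s$ that envelops $H_r$; since $v$ meets an edge of $H_s$ and $s > r$, maximality of $r$ forces $s > k$. By the envelope structure $H_r$ is then a component of $G_{s-1}$, hence a component of $G_k$, so (as $G_k$ is its own core) $H_r$ is its own core. On the other hand, the transition matrix of $H_r$ is zero, so $f(H_r) \subseteq G_{r-1}$, and by $f$-invariance of $G_{r-1}$ we get $f^\ell(H_r) \subseteq G_{r-1}$ for all $\ell \ge 1$; since $G_{r-1}$ contains no edge of $H_r$ while $f^\ell$ sends edges to nontrivial edge paths, $H_r$ is wandering. \Cref{trivialedgegroupsEG2}, applied to $H_s$ and its component $H_r$ of $G_{s-1}$, then forces $H_r$ to be contractible and to contain no vertex with nontrivial vertex group; being its own core, it would have to be a single point, contradicting that a stratum contains an edge. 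The content of this step is exactly the observation that a zero stratum forming a component of a self-core filtration element must be wandering, which disqualifies it via \Cref{trivialedgegroupsEG2}.

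It remains to treat the case that $H_r$ is non-exponentially growing, with edges $E_1,\dotsc,E_p$ and $f(E_i) = g_iE_{i+1}u_i$ (indices mod $p$, $u_i \subseteq G_{r-1}$). If $H_r$ is not almost periodic and $v$ is the terminal endpoint of some $E_i$, then \hyperlink{NEG}{(NEG)} places $v$ in a filtration element $G_j$ with $j < r \le k$ that is its own core, and the inductive hypothesis supplies a legal turn based at $v$ in $G_j \subseteq G_k$ (legality is insensitive to the ambient graph, $G_j$ being $f$-invariant). In every other case there is an $H_r$-edge $e$ at $v$ and a direction $d$ at $v$ along $e$ — the forward direction if $v = \iota(E_i)$, and, using that $H_r$ is then almost periodic, a backward direction if $v$ is only a terminal endpoint — such that $Df^\ell(d)$ always points along an edge of $\{E_1,\dotsc,E_p\}$ with a fixed orientation, the underlying edge cycling through the list. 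If $v$ has valence one in $G_k$, then $e$ is its only $G_k$-edge; since $G_k$ is its own core, $\mathcal{G}_v$ is nontrivial, so for $1 \ne g \in \mathcal{G}_v$ the nondegenerate same-edge turn $\{(1,e),(g,e)\}$ is legal. If $v$ has valence at least two in $G_k$, choose a second $G_k$-direction $d' \ne d$ at $v$. If $d'$ points along a $G_{r-1}$-edge, then $Df^\ell(d')$ stays among $G_{r-1}$-directions while $Df^\ell(d)$ never does, so $\{d,d'\}$ is legal. If $d'$ points along some $E_j$ or $\bar E_j$, then should $d$ and $d'$ point along the same oriented edge the turn $\{d,d'\}$ is a nondegenerate same-edge turn and hence legal, while otherwise they point along distinct oriented edges and, since the cyclic shift and orientation-reversal are injective, $Df^\ell(d)$ and $Df^\ell(d')$ point along distinct oriented edges for every $\ell$, so $\{d,d'\}$ is legal.

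Finally I would record the base case: when $k$ is least, $H_r$ is the bottom nontrivial stratum, so it is neither a zero stratum (by the above) nor a non-almost-periodic stratum admitting a terminal endpoint (there is no smaller filtration element for \hyperlink{NEG}{(NEG)} to name), and the non-recursive cases already apply. The configuration of a dihedral pair is subsumed in the last case: at the center vertex, which carries trivial vertex group, the two incident edges $E_i,E_j$ satisfy $Df^\ell\{(1,E_i),(1,E_j)\} = \{(1,E_i),(1,E_j)\}$ for all $\ell$, a turn that is nondegenerate since $E_i \ne E_j$.
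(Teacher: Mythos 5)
Your proof is correct and follows the same strategy as the paper's: induction on $k$, dispatching exponentially growing strata via \Cref{egvalence}, ruling out zero strata (the paper is terser here, but the intended content is exactly your wandering/contractibility contradiction), and handling NEG strata either by applying \hyperlink{NEG}{(NEG)} and the inductive hypothesis at terminal endpoints of non-periodic edges or by a direct periodicity-of-directions argument. The paper organizes the NEG case by noting that periodic directions never form illegal turns and concluding by elimination that $v$ must be a terminal endpoint of a non-periodic NEG edge, whereas you perform an explicit case analysis on the second direction; this is a difference in presentation, not in substance.
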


\begin{proof}
    The proof is by induction on $k$;
    the statement is vacuously true for $k = 0$.
    If $v$ has nontrivial vertex group,
    then the lemma holds,
    since nondegenerate turns of the form $\{(g_1,e),(g_2,e)\}$ 
    with the same underlying edge $e$ are legal.
    So suppose $v$ has trivial vertex group.
    Since $G_k$ is its own core,
    $v$ has valence at least two in $G_k$,
    and we may assume that there is an illegal turn based at $v$.
    By \Cref{egvalence} and property \hyperlink{Z}{(Z)}
    we may assume that $v$ is not incident to any edges 
    of exponentially growing or zero strata.
    The directions determined by periodic edges
    and the initial endpoints of non-periodic non-exponentially growing edges are periodic,
    so since we assume there is an illegal turn,
    we conclude that $v$ is the terminal endpoint of a non-periodic non-exponentially growing edge,
    and thus by \hyperlink{NEG}{(NEG)} 
    $v$ is contained in a lower stratum that is its own core;
    we conclude that there is a legal turn based at $v$ by induction.
\end{proof}

\begin{proof}[Proof of \Cref{improvedrelativetraintrack}]
	We adapt the proof of~\cite[Theorem 2.19, pp.~56--62]{FeighnHandel}.
	Begin with a relative train track map $f\colon \mathcal{G} \to \mathcal{G}$
	which has no valence-one vertices with trivial vertex group.
    By \Cref{rttprop}, we may assume that the filtration for $f$ realizes our
    nested sequence of $\varphi$-invariant free factor systems.
    There is no loss in assuming $f$ to be a topological representative,
    so we shall.

	\paragraph{Property (V).} To prove \hyperlink{V}{(V)},
    we need to collect more information about almost Nielsen paths.
    We saw in \Cref{finitelymanynielsenpaths} that if
    $f\colon \mathcal{G} \to \mathcal{G}$ is a relative train track map
    and $H_r$ is an exponentially growing stratum,
    then there are only finitely many equivalence classes of
    indivisible periodic almost Nielsen paths of height $r$.

    If $H_r$ is a zero stratum, 
    then there are no indivisible periodic almost Nielsen paths of height $r$.
    If $H_r$ is a periodic or an almost periodic stratum,
    then there are no indivisible periodic almost Nielsen paths of height $r$.
    (Note that if $E$ is an almost periodic edge,
    then it is a periodic almost Nielsen path,
    but it is not indivisible.)

    If $H_r$ is a non-exponentially growing stratum which is not almost periodic,
    then assuming that our relative train track map $f$
    acts linearly with respect to some metric on $G$,
    there are no periodic points in the interior of each edge in $H_r$.
    Thus the endpoints of periodic almost Nielsen paths of height $r$,
    if there are any, are vertices.

    All together, this implies that \hyperlink{V}{(V)} can be accomplished
    by declaring a finite number
    of periodic points in the interior of exponentially growing strata to be vertices.
    The resulting map is still a relative train track map.
	
    Let us remark that for the remainder of our construction,
    if $f\colon \mathcal{G} \to \mathcal{G}$ is our relative train track map
    satisfying \hyperlink{V}{(V)}
    and $f'\colon \mathcal{G}' \to \mathcal{G}'$ is the final relative train track map,
    then there is a bijection $H_r \to H'_s$
    from the exponentially growing strata for $f$ to those for $f'$ such that
    \begin{enumerate}
        \item $H_r$ and $H'_s$ have the same number of edges, and
        \item The number of equivalence classes of indivisible almost Nielsen paths 
            for $f$ of height $r$
            (which is finite by \Cref{finitelymanynielsenpaths})
            is equal to the number of equivalence classes of indivisible almost Nielsen paths
            for $f'$ of height $s$.
    \end{enumerate}
    
    For all of the moves we will use, the first item is obvious.
    For some of the moves we will use, 
    namely valence-two homotopies not involving exponentially growing strata
    and reordering strata, both properties are obvious.
    For \emph{sliding,} which is defined below, the second item is part of \Cref{sliding lemma}.
    For the remaining moves, namely \emph{tree replacements,} which are defined below,
    and collapsing forests away from exponentially growing strata
    we have the following lemma, which demonstrates that the second item holds.

    \begin{lem}[cf.~Lemma 2.16 of \cite{FeighnHandel}]
        Suppose that the topological representatives
        $f\colon \mathcal{G} \to \mathcal{G}$ and $f'\colon \mathcal{G}' \to \mathcal{G}'$
        are relative train track maps with exponentially growing strata $H_r$ and $H'_s$
        respectively
        such that all indivisible periodic almost Nielsen paths of height $r$ or $s$
        respectively
        have endpoints at vertices.
        Suppose further that $p\colon \mathcal{G} \to \mathcal{G}'$ is a homotopy equivalence such that
        the following hold.
        \begin{enumerate}
            \item The map $p$ is such that $p(G_r) = G'_s$, $p(G_{r-1}) = G'_{s-1}$
                and $p$ induces a bijection 
                between the set of edges of $H_r$ and the set of edges of $H'_s$.
            \item We have $p_\sharp f_\sharp(\sigma) = f'_\sharp p_\sharp(\sigma)$
                for all tight paths $\sigma$ in $G_r$ with endpoints at vertices.
        \end{enumerate}
        Then $p_\sharp$ induces a period-preserving bijection 
        between the indivisible periodic almost Nielsen paths of height $r$ for $f$
        and those of height $s$ for $f'$.
    \end{lem}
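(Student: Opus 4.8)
The plan is to adapt the proof of~\cite[Lemma 2.16]{FeighnHandel}, the one genuinely new feature being that all paths, their iterates, and the data matched by $p_\sharp$ must be tracked only up to the equivalence relation $\sigma \sim g\sigma g'$: collapsing or replacing subgraphs of groups inside $G_{r-1}$ (the moves this lemma is meant to cover) deposits vertex group elements at the ends of the images of edges meeting $G_{r-1}$, so ``indivisible'' must be read as ``indivisible up to $\sim$'' throughout.

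First I would record a purely dynamical characterization, valid for any relative train track map: a path $\sigma$ in $G_r$ represents an indivisible periodic almost Nielsen path of height $r$ (up to $\sim$) if and only if $\sigma$ is a periodic almost Nielsen path, its first and last edges lie in $H_r$, and it has exactly one illegal turn in $H_r$. The forward direction is \Cref{finitelymanynielsenpaths}. For the converse, given such a $\sigma$ and a decomposition into nontrivial periodic almost Nielsen paths, \Cref{lengthfunction} forces at most one factor to have height $r$ (an $r$-legal factor of height $r$ grows under $f_\sharp$), the remaining factors lie in $G_{r-1}$, and since the turns adjacent to any $G_{r-1}$-factor are legal by \hyperlink{EG-i}{(EG-i)}, the constraints that the first and last edges of $\sigma$ lie in $H_r$ and that there is only one illegal turn in $H_r$ together force a single factor.

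Next I would show $p_\sharp$ transports each ingredient from height $r$ to height $s$. If $\sigma$ is an indivisible periodic almost Nielsen path of height $r$ with period $k$, then every iterate $f^j_\sharp(\sigma)$ is again such a path, hence by hypothesis a tight path in $G_r$ with endpoints at vertices; iterating condition (2) along the orbit (and using $f_\sharp(g\tau g') = f_v(g)f_\sharp(\tau)f_w(g')$) gives $p_\sharp f^j_\sharp(\sigma) = (f')^j_\sharp p_\sharp(\sigma)$ for all $j\ge 0$, so $(f')^k_\sharp p_\sharp(\sigma)$ equals $p_\sharp(\sigma)$ up to vertex group elements and $p_\sharp(\sigma)$ is a periodic almost Nielsen path of period dividing $k$. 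Condition (1) then shows $p_\sharp(\sigma)$ has height $s$ (one rules out $p_\sharp(\sigma)\subset G'_{s-1}$ using a homotopy inverse $\bar p$, chosen to send vertices to vertices and to satisfy the mirror image of (1) and (2), together with $\bar p_\sharp p_\sharp = \operatorname{id}$ on tight paths with vertex endpoints), that its first and last edges lie in $H'_s$, and that it has exactly one illegal turn in $H'_s$. This last point is the crux, since legality is defined through $Df$ rather than $f_\sharp$: I would reduce it to $f_\sharp$-dynamics, using \Cref{rttlemma} and \Cref{lengthfunction} to detect $r$-legal subpaths and \Cref{kprotectedsplitting} and \Cref{egsplitting} to recover the number of illegal turns in $H_r$ of a path from the growth of its $f_\sharp$-iterates, so that condition (2) transports this count; here the graph-of-groups caveat is that ``degenerate'' and ``illegal'' turns are always read modulo the diagonal vertex group action. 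Granting this, $p_\sharp$ carries indivisible periodic almost Nielsen paths of height $r$ to those of height $s$, and it respects $\sim$ because $p_\sharp(g\sigma g') = p(g)\,p_\sharp(\sigma)\,p(g')$.

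Finally, applying all of the above to $\bar p$ (which satisfies (1) and (2) with the roles of $(\mathcal{G},f,H_r)$ and $(\mathcal{G}',f',H'_s)$ interchanged: the analog of (2) follows by composing condition (2) for $p$ with $\bar p_\sharp$ and using $\bar p_\sharp p_\sharp = \operatorname{id}$) produces a map $\bar p_\sharp$ in the reverse direction. Since $\bar p_\sharp p_\sharp$ and $p_\sharp\bar p_\sharp$ act as the identity on tight paths with vertex endpoints and both maps respect $\sim$, $p_\sharp$ is a bijection on equivalence classes with inverse $\bar p_\sharp$; running the ``period divides'' conclusion in both directions forces the periods to agree, so the bijection is period-preserving. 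The equality of the (finite, by \Cref{finitelymanynielsenpaths}) numbers of equivalence classes needed in the application is then immediate. I expect the real work to be in the legality-transport step: producing a bookkeeping argument, phrased entirely in terms of $f_\sharp$ and $f'_\sharp$ acting on paths with vertex endpoints, that conditions (1) and (2) suffice to match illegal turns of $H_r$ with those of $H'_s$, while correctly accounting for the vertex group elements that $p$ and $\bar p$ attach to the ends of images of edges meeting $G_{r-1}$.
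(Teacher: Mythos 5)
Your proposal correctly identifies the plan (characterize indivisibility dynamically, transport via condition (2), close up with an inverse) but it leaves the central step — matching illegal-turn counts in $H_r$ with those in $H'_s$ — as an acknowledged gap and points to tools (\Cref{kprotectedsplitting}, \Cref{egsplitting}) that do not yield it directly. The paper's proof hinges on an observation you never state: when $p(\sigma)$ is tightened to $p_\sharp(\sigma)$, no edges of $H'_s$ are cancelled, because a cancelled pair would produce a subpath $E\tau\bar E$ of $\sigma$ with $E\subset H_r$ whose $p_\sharp$-image is trivial, contradicting that this closed path is homotopically nontrivial in $\mathcal{G}$. This gives a strong item ``(3)'': the number of $H_r$-edges of any $\sigma$ equals the number of $H'_s$-edges of $p_\sharp(\sigma)$. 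From that, one directly compares the edge count of $f(\sigma)$ (before tightening) with that of $f'(\sigma')$, and of $f_\sharp(\sigma)$ with $f'_\sharp(\sigma') = p_\sharp f_\sharp(\sigma)$ (after tightening), concluding that $\sigma$ is $r$-legal iff $\sigma'$ is $s$-legal and, more generally, that the number of illegal turns in $H_r$ agrees with the number in $H'_s$. The iterate-growth route you gesture at (\Cref{egsplitting} detecting illegal turns from stability of iterate splittings) is circular as stated: you would need to know $p_\sharp$ preserves the illegal-turn count of iterates, which is what you are trying to prove.

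A second, smaller divergence: for the reverse direction you invoke a homotopy inverse $\bar p$ and assert it satisfies the mirror-image of (1). That is plausible in the concrete applications (tree replacement, collapsing forests away from EG strata) where $\bar p$ has an explicit form, but it is not automatic from the hypotheses as stated. The paper avoids the issue entirely: item (3) implies that given a tight path $\sigma'$ in $G'_s$ whose first and last edges are $p(E_i)$ and $p(E_j)$, there is a \emph{unique} tight path $\sigma$ with first edge $E_i$, last edge $E_j$, and $p_\sharp(\sigma)=\sigma'$, and uniqueness transports the period. If you want to retain the $\bar p$ route, you would need to justify why $\bar p$ can be chosen to satisfy (1) and (2); the uniqueness argument is both cleaner and closer to the hypotheses you actually have.

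One point of agreement worth keeping: your emphasis that everything is tracked only up to $\sigma\sim g\sigma g'$ is exactly right and is the genuinely new feature of the free-products version; $p_\sharp$ respects $\sim$ since $p_\sharp(g\sigma g') = p(g)\,p_\sharp(\sigma)\,p(g')$, and the paper's uniqueness claim is likewise up to this equivalence.
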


    \begin{proof}
        The proof is identical to \cite[Lemma 2.16]{FeighnHandel}.
        Let $\sigma$ be a tight path of height $r$ with endpoints at vertices,
        and let $\sigma' = p_\sharp(\sigma)$. It has height $s$.
        
        Observe that no edges in $H'_s$ are cancelled when $p(\sigma)$ is tightened to $p_\sharp(\sigma)$,
        for if there were, then $\sigma$ has a subpath of the form $\sigma_0 = E\tau \bar E$,
        where $E$ is an edge of $H_r$ and $p_\sharp(\sigma_0)$ is trivial.
        But the closed path $\sigma_0$ determines 
        a nontrivial element of the fundamental group of $\mathcal{G}$ by assumption,
        so this is a contradiction,
        from which it follows by item 1 that
        \begin{enumerate}
            \item[3.] The number of edges of $H_r$ in $\sigma$ is equal to the number of $H'_s$
                edges of $\sigma' = p_\sharp(\sigma)$.
        \end{enumerate}

        We claim that $\sigma$ is $r$-legal if and only if $\sigma'$ is $s$-legal.
        If $E$ is an edge of $H_r$, then by item 2 
        and the fact that $f$ and $f'$ are topological representatives,
        we have $f'(p(E)) = f'_\sharp(p_\sharp(E)) = p_\sharp(f_\sharp(E)) = p_\sharp(f(E))$.
        By item 3, this path has the same number of edges in $H'_s$ as $f(E)$ has edges in $H_r$.
        It follows that before tightening,
        $f(\sigma)$ has as many edges in $H_r$ as $f'(\sigma')$ has in $H'_s$.
        After tightening, item 3 implies that $p_\sharp f_\sharp(\sigma) = f'_\sharp(\sigma')$
        has as many edges in $H'_s$ as $f_\sharp(\sigma)$ has in $H_r$,
        so we see that $\sigma$ is $r$-legal if and only if $\sigma'$ is $r$-legal.
        What's more, the number of illegal turns of $\sigma$ in $H_r$ 
        equals the number of illegal turns of $\sigma'$ in $H'_s$.

        Assume that $\sigma$ is an indivisible periodic almost Nielsen path with height $r$
        and period $k$. By item 2, we have
        \[  (f')^k_\sharp(\sigma') = (f')^k_\sharp(p_\sharp(\sigma)) = p_\sharp f^k_\sharp(\sigma) = 
        p_\sharp(g \sigma g') = g'' \sigma' g''' \]
        where $g$, $g'$, $g''$ and $g'''$ are vertex group elements.
        Therefore $\sigma'$ is a periodic almost Nielsen path.
        If $\sigma$ is indivisible, then the first and last edges of $\sigma$ belong to $H_r$
        and $\sigma$ has exactly one illegal turn in $H_r$.
        By the argument above, $\sigma'$ has these properties for $H'_s$ and so is indivisible.
        Let $E_i$ and $E_j$ be the first and last edges of $\sigma$.
        Then by \Cref{finitelymanynielsenpaths}, 
        $k$ is the minimum positive integer such that $E_i$ and $E_j$
        are the first and last edges of $f^k_\sharp(\sigma)$.
        By item 2, it follows that the period of $\sigma'$ equals the period of $\sigma$.
        To prove that if $\sigma'$ is an indivisible periodic almost Nielsen path of period $k$
        then $\sigma$ is as well,
        observe that if $\sigma'$ is a tight path in $G'_s$ whose first edge is $p(E_i)$
        and whose last edge is $p(E_j)$,
        then there is a unique tight path $\sigma$ whose first edge is $E_i$, whose last edge is $E_j$
        and satisfies $p_\sharp(\sigma) = \sigma'$.
        If $\sigma'$ is a periodic almost Nielsen path of period $k$,
        then the uniqueness of $\sigma$ implies that $\sigma$ is a periodic almost Nielsen path
        of period $k$.
        As above, if $\sigma'$ is indivisible, then it has exactly one illegal turn in $H'_s$,
        and $\sigma$ has exactly one illegal turn in $H_r$, so $\sigma$ is indivisible.
    \end{proof}
	\paragraph{Sliding.}
	The move \emph{sliding} was introduced in~\cite[Section 5.4, p.~579]{BestvinaFeighnHandel}.
	Suppose $H_i$ is a non-periodic, 
    non-exponentially growing stratum that satisfies our convention:
	the edges $E_1,\dotsc,E_k$ of $H_i$ satisfy $f(E_j) = g_jE_{j+1}u_j$ where indices are taken mod $k$,
    $g_j$ is a vertex group element
	and $u_j$ is a path in $G_{i-1}$. 
	We will call the edge of $H_i$ we focus on $E_1$.
	Let $\alpha$ be a path in $G_{i-1}$ from the terminal vertex of $E_1$ to some vertex of $G_{i-1}$.
	Define a new graph of groups $\mathcal{G}'$ by removing $E_1$ from $\mathcal{G}$
	and gluing in a new edge $E'_1$ with initial vertex equal to the initial vertex of $E_1$
	and terminal vertex the terminal vertex of the path $\alpha$.
	See \Cref{slidingfig}.
	Define homotopy equivalences $p\colon \mathcal{G} \to \mathcal{G}'$ 
	and $p'\colon\mathcal{G}' \to \mathcal{G}$
	by sending each edge other than $E_1$ and $E'_1$ to itself,
	and defining $p(E_1) = E'_1\bar\alpha$ and $p'(E'_1) = E_1\alpha$.
	Define $f'\colon\mathcal{G}' \to \mathcal{G}'$ by tightening
	$pfp'\colon \mathcal{G}' \to \mathcal{G}'$.
	If $G_r$ is a filtration element of $\mathcal{G}$,
	define $G'_r = p(G_r)$.
	The $G'_r$ form the filtration for $f'\colon\mathcal{G}' \to \mathcal{G}'$.
	
	\begin{figure}[!ht]
		\centering
	    \def\svgwidth{\columnwidth}
	        \import{./figures/}{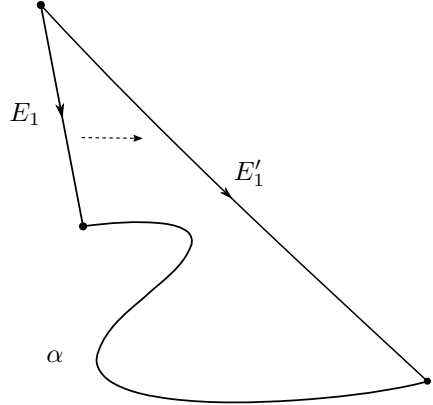}
	
		\caption{Sliding $E_1$ along $\alpha$}\label{slidingfig}
	\end{figure}

	\begin{lem}[\cite{FeighnHandel} Lemma 2.17]
        \label{sliding lemma}
		Suppose $f' \colon\mathcal{G}' \to \mathcal{G}'$
		is obtained from a topological representative
        $f\colon\mathcal{G} \to \mathcal{G}$ 
		by \emph{sliding $E_1$ along $\alpha$} as described above.
		Let $H_i$ be the non-exponentially growing stratum of $\mathcal{G}$
		containing $E_1$, and let $k$ be the number of edges of $H_i$.
		\begin{enumerate}
			\item $f'\colon\mathcal{G}' \to \mathcal{G}'$
				is a relative train track map if $f\colon\mathcal{G} \to \mathcal{G}$ was.
			\item $f'|_{G'_{i-1}} = f|_{G_{i-1}}$.
			\item If $k = 1$, then $f'(E'_1) = g_1E'_1[\bar\alpha u_1f(\alpha)]$,
				where $[\gamma]$ denotes the path obtained from $\gamma$ by tightening.
			\item If $k\ne 1$, then $f'(E_k) = g_kE'_1[\bar\alpha u_k]$,
				$f'(E'_1) = g_1E_2[u_1f(\alpha)]$ and
				$f'(E_j) = g_jE_{j+1}u_j$
				for $2 \le j \le k-1$.
			\item For each exponentially growing stratum $H_r$,
                the map $p_\sharp$ defines a bijection between the set
				of indivisible periodic Nielsen paths in $\mathcal{G}$ of height $r$
				and the indivisible Nielsen paths in $\mathcal{G}'$ of height $r$.
		\end{enumerate}
	\end{lem}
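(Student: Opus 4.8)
The plan is to handle items~(2)--(4) by direct computation, item~(1) by transferring the relative train track axioms along the homotopy equivalence $p$, and item~(5) by appealing to the analogue of \cite[Lemma 2.16]{FeighnHandel} proved just above. The identity to record first is this: $f'$ is by definition the tightening of $pfp'$, so $f'_\sharp = p_\sharp f_\sharp p'_\sharp$; moreover $p'p$ and $pp'$ are homotopic to the identity (the segments $\alpha\bar\alpha$ and $\bar\alpha\alpha$ tighten away), and since a homotopy class of paths has a unique tight representative, $p_\sharp$ and $p'_\sharp$ are mutually inverse bijections on tight edge paths with $f'_\sharp p_\sharp = p_\sharp f_\sharp$. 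As $p$ and $p'$ are the identity on every edge except $E_1$ and $E'_1$, while $f(G_{i-1}) \subseteq G_{i-1}$ and $E_1 \notin G_{i-1}$, we get $G'_{i-1} = G_{i-1}$ and $f'|_{G'_{i-1}} = f|_{G_{i-1}}$, which is~(2), and likewise $f'|_{G'_r} = f|_{G_r}$ for every $r < i$. For~(3) and~(4) I would expand $f'(E'_1)$ as the tightening of $pf(p'(E'_1)) = pf(E_1\alpha) = p\bigl(g_1E_2u_1\,f(\alpha)\bigr)$, reading off the case $k\ne 1$ (where $p$ fixes $E_2$) and the case $k=1$ (where $E_2 = E_1$ and $p(E_1) = E'_1\bar\alpha$) separately, and $f'(E_k)$ as the tightening of $p(g_kE_1u_k) = g_kE'_1\bar\alpha u_k$, while for $2 \le j \le k-1$ the path $f(E_j) = g_jE_{j+1}u_j$ does not involve $E_1$.

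For~(1), I would first observe that $f'$ is a topological representative --- it is homotopic to $pfp'$ hence a homotopy equivalence, it sends vertices to vertices, and, by the formulas above together with $f'(E) = [pf(E)]$ for $E$ outside $H_i$, it sends edges to nontrivial tight edge paths --- that $G'_r := p(G_r)$ is an $f'$-invariant filtration (since $p'(G'_r) = G_r$, $f(G_r) \subseteq G_r$, $p(G_r) = G'_r$), and that the transition matrix of each $H'_r$ equals that of $H_r$, because tightening $pf(E)$ cancels only edges of $\alpha$, which lie in $G_{i-1}$, never edges of $H_r$; hence the filtration stays maximal and each stratum keeps its Perron--Frobenius type. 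It then remains to verify \hyperlink{EG-i}{(EG-i)}, \hyperlink{EG-ii}{(EG-ii)} and \hyperlink{EG-iii}{(EG-iii)} for each exponentially growing stratum $H'_r$. Since $H_i$ is non-exponentially growing, $r \ne i$; for $r < i$ nothing changes, so I may assume $i < r$.

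In this case: the leading edge and vertex group element of $f'(E) = [pf(E)]$ for $E \in H'_r$ coincide with those of $f(E)$ --- since that edge lies in $H_r$, is not $E_1$, and the replacement of $E_1$ by $E'_1\bar\alpha$ together with tightening never disturbs the front --- so $Df'$ agrees with $Df$ on $H'_r$-directions, maps them into $H'_r$, and, as $G'_{r-1}$ is $f'$-invariant with directions disjoint from those of $H'_r$, every turn with one edge in $H'_r$ and the other in $G'_{r-1}$ is legal; this gives \hyperlink{EG-i}{(EG-i)}. For \hyperlink{EG-iii}{(EG-iii)}, I would check that $Dp$ intertwines $Df$ with $Df'$ and, on the directions relevant to $r$-legality, preserves the partition into $H_r$-directions and $G_{r-1}$-directions and is injective --- its only failure of injectivity, the identification of $(g,\bar E_1)$ with the initial direction of $\alpha$, involves two $G_{r-1}$-directions and so is harmless --- so $p'_\sharp$ carries an $r$-legal path $\sigma$ in $G'_r$ to an $r$-legal path in $G_r$, whose $f$-image is $r$-legal by \hyperlink{EG-iii}{(EG-iii)} for $f$, making $f'(\sigma) = [pf(p'_\sharp\sigma)]$ $r$-legal. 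For \hyperlink{EG-ii}{(EG-ii)}: since $G'_{r-1}$ and $G_{r-1}$ have the same vertices and $H'_r$, $H_r$ the same edges, $p'_\sharp$ carries a homotopically nontrivial path in $G'_{r-1}$ with endpoints in $H'_r\cap G'_{r-1}$ to one of the same kind in $G_{r-1}$; \hyperlink{EG-ii}{(EG-ii)} for $f$ applies, and $p_\sharp$ preserves homotopic nontriviality. This proves~(1).

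Finally,~(5) follows from the analogue of \cite[Lemma 2.16]{FeighnHandel} proved above (which even gives a period-preserving bijection of indivisible periodic almost Nielsen paths), whose hypotheses --- $p$ a homotopy equivalence inducing a bijection between the edges of $H_r$ and those of $H'_r$, with $p(G_r) = G'_r$, $p(G_{r-1}) = G'_{r-1}$, and $p_\sharp f_\sharp = f'_\sharp p_\sharp$ on tight paths --- were verified above; alternatively one argues directly, observing that $(f')^k_\sharp(p_\sharp\sigma) = p_\sharp f^k_\sharp(\sigma)$ shows $p_\sharp$ sends periodic almost Nielsen paths of height $r$ to periodic almost Nielsen paths of height $r$, that $p_\sharp$ preserves the $H_r$-edge count and the legal structure (hence indivisibility), and that $p'_\sharp$ is the inverse. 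I expect the main obstacle to be precisely the bookkeeping in~(1) --- tracking how the slide relocates the turns at the endpoint of $E_1$ and checking that this does not affect $r$-legality --- and that, once $f'_\sharp = p_\sharp f_\sharp p'_\sharp$ and the behaviour of $Dp$ on directions are pinned down, the remaining verifications are formal.
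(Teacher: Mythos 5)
Your proof is correct and follows essentially the same route as the paper's. Items (3) and (4) are read off from $f' = (pfp')_\sharp$ exactly as in the paper; for item (1) the paper also transfers (EG-i)--(EG-iii) along the sliding homotopy equivalence (it phrases (EG-iii) slightly more directly --- $Df$ on $H_r$-turns is literally unchanged, since $f'(E) = a_1 p_\sharp(b_1) a_2 \cdots$ preserves the $H_r$-pieces $a_i$ verbatim --- rather than via $p'_\sharp$-preservation of $r$-legality, but these are the same observation); and for item (5) the paper gives the direct computation $p_\sharp f^k_\sharp = f'^k_\sharp p_\sharp$ that you offer as the alternative, while the appeal to the analogue of \cite[Lemma 2.16]{FeighnHandel} is an equally valid (and in-context available, since property \hyperlink{V}{(V)} has already been arranged) shortcut.
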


    \begin{proof}
        The proof is a straightforward adaptation 
        of \cite[Lemma 5.4.1]{BestvinaFeighnHandel}.
        We include a proof for completeness.
        Suppose first that $k =1$.
        Then
        \[
            f'(E'_1) = (pfp')_\sharp(E'_1) = (pf)_\sharp(E_1\alpha)
            = p_\sharp(g_1E_1f(\alpha)) = g_1E'_1[\bar\alpha u_1f(\alpha)].
        \]
        Next suppose that $k \ne 1$.
        Then
        \[
            f'(E_k) = (pfp')_\sharp(E_k) = p_\sharp(g_kE_1[u_k])
            = g_kE'_1[\bar\alpha u_k],
        \]
        and
        \[
            f'(E'_1) = (pfp')_\sharp(E'_1) = (pf)_\sharp(E_1\alpha)
            = g_1E_2[u_1f(\alpha)].
        \]
        This proves items 3 and 4.

        Item 2 is clear from the definition of sliding.
        To prove item 1, it suffices to consider each  stratum $H_j$
        with $j > i$.
        So suppose $f\colon \mathcal{G} \to \mathcal{G}$ was a relative train track map.
        If $H_j$ is a zero stratum and $E$ is an edge of $H_j$,
        then $f(E) \subset G_{j-1}$, so $f'(E) = (pf)_\sharp(E) \subset G'_{j-1}$,
        and $H'_j$ is still a zero stratum.
        Similarly if $H_j$ is a non-exponentially growing stratum,
        then $H'_j$ is still non-exponentially growing.
        For any nontrivial tight paths $\beta \subset \mathcal{G}$ 
        and $\gamma' \subset \mathcal{G}'$
        with endpoints at vertices,
        we have $(p'p)_\sharp(\beta) = \beta$
        and $(pp')_\sharp(\gamma') = \gamma'$,
        so $p_\sharp(\beta)$ and $p'_\sharp(\gamma')$ are nontrivial.

        Suppose $H_r$ is an exponentially growing stratum
        and $E$ is an edge of $H_r$.
        Property \hyperlink{EG-i}{(EG-i)} implies that
        \[f(E) = a_1b_1a_2b_2\ldots a_\ell b_\ell a_{\ell + 1},\]
        where $a_i \subset H_r$ and $b_i \subset G_{r-1}$ are nontrivial tight paths.
        Then 
        \[  f'(E) = (pf)_\sharp(E) = a_1 p_\sharp(b_1) a_2 p_\sharp(b_2)\ldots
        a_\ell p_\sharp(b_\ell) a_{\ell + 1}. \]
        Thus $H'_r$ is exponentially growing, its transition matrix
        is equal to the transition matrix of $H_r$, 
        and \hyperlink{EG-i}{(EG-i)} is satisfied.
        If $\gamma'$ in $G'_{r-1}$ is a nontrivial path with endpoints in
        $H'_r \cap  G'_{r-1}$,
        then $p'_\sharp(\gamma')$ is nontrivial,
        and so are $(fp')_\sharp(\gamma')$ and $(pfp')_\sharp(\gamma') = f'_\sharp(\gamma')$.
        Therefore $H'_r$ satisfies \hyperlink{EG-ii}{(EG-ii)}.
        In fact, \hyperlink{EG-iii}{(EG-iii)} is also satisfied:
        the map $Df$ of turns in $H_r$ is unchanged,
        so the legality or illegality of turns in $H'_r$ is identical to the situation in $H_r$.
        Thus $f'\colon \mathcal{G}' \to \mathcal{G}'$ is a relative train track map.
        
        If $\sigma'$ in $\mathcal{G}'$ satisfies $f'^k_\sharp(\sigma') = g \sigma' g'$
        for vertex group elements $g$ and $g'$,
        then $p'_\sharp(\sigma')$ satisfies
        \[  f^k_\sharp (p'_\sharp(\sigma')) = (p'p)_\sharp f^k_\sharp p'_\sharp (\sigma')
        = p'_\sharp f'^k_\sharp(\sigma') = g'' p'_\sharp(\sigma')g''', \]
        where for example $g'' = p'_\sharp(g)$ is a vertex group element.
        Similarly, if $\sigma \in \mathcal{G}$ satisfies 
        $f^k_\sharp(\sigma) = g\sigma g'$,
        then $p_\sharp(\sigma)$  satisfies
        \[  f'^k_\sharp(p_\sharp(\sigma)) = p_\sharp f^k_\sharp (p'p)_\sharp(\sigma)
        = p_\sharp f^k_\sharp(\sigma) = g'' p_\sharp(\sigma) g''', \]
        where for example $g'' = p_\sharp(\sigma)$ is a vertex group element.
        If $\sigma' \subset G'_r$, then $p'_\sharp(\sigma') \subset G_r$,
        and if $\sigma \subset G_r$, then $p_\sharp(\sigma) \subset G'_r$.
        Thus $p_\sharp$ induces a period-preserving, height-preserving bijection
        from the set of periodic almost Nielsen paths 
        for $f\colon \mathcal{G}  \to \mathcal{G}$ 
        to the set of periodic almost Nielsen paths 
        for $f'\colon \mathcal{G}' \to \mathcal{G}'$.
    \end{proof}

	\paragraph{Property (NEG) part one.}
	We will first show that the terminal vertex of an edge in a
	non-exponentially growing stratum $H_i$ is either periodic or has valence at least three.
    This is automatic if $H_i$ is almost periodic,
    so assume that $H_i$ is not almost periodic.
	Let $E_1,\dotsc,E_k$ be the edges of $H_i$.
	As usual, assume $f(E_j) = g_jE_{j+1}u_j$, where indices are taken mod $k$,
    $g_k$ is a vertex group element
	and $u_j$ is a path in $G_{i-1}$.
	Suppose the terminal vertex $v_1$ of $E_1$ is not periodic and has valence two.
	Then $v_1$ has trivial vertex group.
	If $E$ is the other edge incident to $v_1$, 
    then $E$ does not belong to an exponentially growing stratum by \Cref{egvalence}.

	We perform a valence-two homotopy of $\bar E_1$ over $E$
    as  in \cite[Lemma 2.5]{Myself} or \cite[Lemma 1.13]{BestvinaHandel}.
	If $v$ is a vertex of an exponentially growing stratum, $f(v) \ne v_1$,
	so before collapsing the pretrivial forest, properties \hyperlink{EG-i}{(EG-i)} through
	\hyperlink{EG-iii}{(EG-iii)} are preserved.
	The pretrivial forest is inductively constructed as follows:
	any edge which was mapped to $E$ is added,
	then any edge which is mapped into the pretrivial forest is added.
	Thus the argument above shows that no vertex of an exponentially growing stratum
	is incident to any edge in the pretrivial forest,
	so the property of being a relative train track map is preserved.
	After repeating this process finitely many times,
	the terminal vertex of $E_1$ is either periodic or has valence at least three in $\mathcal{G}$.

	Finally, we arrange that $v_1$ is periodic:
	the component of $G_{i-1}$ containing $v_1$ is non-wandering
	(because $f^{k-1}(u_1)$ is contained in it),
	so contains a periodic vertex $w_1$.
	Choose a path $\alpha$ in $G_{i-1}$ from $v_1$ to $w_1$
	and slide $E_1$ along $\alpha$.
	No valence-one vertices are created
	because $v_1$ was assumed to have valence at least three.
	Repeating this process for each edge in
	a non-almost periodic, non-exponentially growing stratum,
	we establish the first part of \hyperlink{NEG}{(NEG)},
	namely the following.

	\begin{enumerate}
		\item[\hypertarget{NEG*}{(NEG*)}]
			The terminal vertex of each edge in a non-almost periodic,
			non-exponentially growing stratum is periodic.
	\end{enumerate}

	\paragraph{Property (Z) part one.}
	Property \hyperlink{Z}{(Z)} has several parts.
	Let $H_i$ be a zero stratum.
	For $H_i$ to be enveloped, let $H_u$ the first irreducible stratum below $H_i$
	and $H_r$ the first irreducible stratum above.
	One condition we need is that no component of $G_r$ is contractible.
	Following \cite[p. 58]{FeighnHandel},
	we postpone that step and merely show here that each component is non-wandering.

	First we arrange that if a filtration element $G_i$ has a wandering component,
	then $H_i$ is a wandering component.
	Suppose that $G_i$ has wandering components.
	Call their union $W$ and their complement $N$.
	If $N$ is not precisely equal to a union of strata,
	the difference is that $N$ contains part but not all of a zero stratum,
	so we may divide this zero stratum to arrange so that $N$ is a union of strata.
	Thus $W$ is a union of zero strata.
	Since $N$ is $f$-invariant, we may push all strata in $W$ higher than all strata in $N$.
	We define a new filtration.
	Strata in $N$ and higher than $G_i$ remain unchanged.
	The strata that make up $W$ will be the components of $W$.
	If $C$ and $C'$ are such components, 
	$C'$ will be higher than $C$
	if $C' \cap f^k(C) = \varnothing$ for all $k \ge 0$.
	We complete this to an ordering on the components of $W$,
	yielding the new filtration.

	Now we work toward showing that zero strata are enveloped by exponentially growing strata.
	Suppose that $K$ is a component of the union of all zero strata in $\mathcal{G}$,
	that $H_i$ is the highest stratum that contains an edge of $K$
	and that $H_u$ is the highest irreducible stratum below $H_i$.
	We aim to show that $K \cap G_u = \varnothing$.
	So assume $K \cap G_u \ne \varnothing$.
	By the previous paragraph, because $H_u$ is irreducible,
	each component of $G_u$ is non-wandering,
	so $K$ meets $G_u$ in a unique component $C$ of $G_u$.
	If each vertex of $K$ has valence at least two in $C \cup K$,
	then each edge of $K$ belongs to a tight path in $K$ with endpoints in $C$,
	and we may close this path up in $C$ to form a tight loop in $K \cup C$.
	But some iterate of $f\colon \mathcal{G} \to \mathcal{G}$
	maps $K\cup C$ into $C$, so this is a contradiction.
	Therefore some vertex $v \in K$ has valence one in $K \cup C$.
    Because some iterate of $f\colon \mathcal{G} \to \mathcal{G}$
    maps $K \cup C$ into $C$,
    if we can show this vertex has valence one in $G$, it must have trivial vertex group.

	This valence-one vertex $v$ is not periodic, 
	so by \hyperlink{NEG*}{(NEG*)},
    $v$ is not an endpoint of an edge in a non-exponentially growing stratum.
    We saw in \Cref{trivialedgegroupsEG2} that \hyperlink{EG-ii}{(EG-ii)}
	for an exponentially growing stratum $H_r$
	is equivalent to the condition that vertices of $H_r \cap G_{r-1}$ 
	contained in non-wandering components of $G_{r-1}$ are periodic.
    Since $K \cup C$ is non-wandering,
	$v$ is not the endpoint of an edge in an exponentially growing stratum above $H_i$.
	By construction, $v$ is also not the endpoint of an edge of another zero stratum.
	Thus $v$ has valence one in $\mathcal{G}$, 
    but we have not produced any valence-one vertices with trivial 
    vertex group so far.
	This contradiction shows that $G_u \cap K = \varnothing$.

	The lowest edges in $K$ are mapped either to another zero stratum or into $G_u$.
	In any case, by connectivity, $K$ is wandering,
	so we can reorganize zero strata so that $K = H_i$.
	Repeating this for each component of the union of all zero strata,
	we have arranged that if $H_i$ is a zero stratum
	and $H_r$ is the first irreducible stratum above $H_i$,
	then $H_i$ is a component of $G_{r-1}$.

	Let $H_r$ be the first irreducible stratum above $H_i$.
	Because $H_r$ is irreducible, no component of $G_r$ is wandering,
	so the component that contains $H_i$ intersects $H_r$.
	No vertex of $H_i$ is periodic,
	so \hyperlink{NEG*}{(NEG*)} implies $H_r$ is exponentially growing,
	and the argument above shows that vertices of $H_i$
	meet only edges of $H_i$ and $H_r$
	and every vertex of $H_i$ has valence at least two in $G_r$.
	This satisfies every part of the definition of the zero stratum $H_i$
	being enveloped by $H_r$,
	except that we have not shown that all components of $G_r$ are non-contractible,
	only that they are non-wandering.

	Note also that $H_i$ is contained in the core of $G_r$:
	one obtains the core of $G_r$ by repeatedly removing from $G_r$
	any edge incident to a valence-one vertex with trivial vertex group.
	Each vertex of $H_i$ has valence at least two in $G_r$,
	and \Cref{egvalence} says that each valence-one vertex of $H_r$ has nontrivial vertex group.

	\paragraph{Tree replacement.}
	The final part of property \hyperlink{Z}{(Z)} we will establish now
	is that every vertex of $H_i$ is contained in $H_r$.
	We do so by Feighn and Handel's method of \emph{tree replacement.}
	Replace $H_i$ with a tree $H'_i$ whose vertex set is exactly $H_i\cap H_r$.
	We may do so for every zero stratum at once,
	(with \emph{a priori} different exponentially growing strata $H_r$, of course)
	and call the resulting graph of groups $\mathcal{G}'$.
	Let $X$ denote the union of all irreducible strata.
	There is a homotopy equivalence $p'\colon \mathcal{G}' \to \mathcal{G}$
	that is the identity on edges in $X$
	and sends each edge in a zero stratum $H'_i$
	to the unique path in $H_i$ with the same endpoints.
	Choose a homotopy inverse $p\colon \mathcal{G} \to \mathcal{G}'$
	that also restricts to the identity on edges in $X$
	and maps each zero stratum $H_i$ to the corresponding tree $H'_i$.
	Define $f'\colon \mathcal{G}' \to \mathcal{G}'$
	by tightening $pfp'\colon \mathcal{G}' \to \mathcal{G}'$.
	The map $f'$ still satisfies \hyperlink{EG-i}{(EG-i)}.
	Because $p_\sharp$ and $p'_\sharp$ 
	send nontrivial paths with endpoints in $X$ to nontrivial paths with endpoints in $X$,
	\hyperlink{EG-ii}{(EG-ii)} is preserved as well.
	Because $\pf(f) = \pf(f')$, 
    \Cref{pfcorollary} implies $f'$ satisfies \hyperlink{EG-iii}{(EG-iii)} as well.
	Nothing we have done so far changes the realization of free factor systems by filtration elements as well.
	We replace $f\colon \mathcal{G} \to \mathcal{G}$ by $f'\colon \mathcal{G}' \to \mathcal{G}'$ in what follows.

	\paragraph{Property (P).}
	Let $\mathcal{F}_1 \sqsubset \dotsb \sqsubset \mathcal{F}_d$ 
	be our chosen nested sequence of $\varphi$-invariant free factor systems.
	If none was specified, instead define this sequence to be 
	the sequence determined by $\mathcal{F}(G_r)$
	as $G_r$ varies among the filtration elements of $\mathcal{G}$.
	We will show that if $H_m$ is an almost periodic forest
    that is not the bottom half of a dihedral pair,
	then there is some $\mathcal{F}_i$ that is not realized by $H_m \cup G_\ell$
	for any filtration element $G_\ell$.
	Assume that this does not hold for some almost periodic forest $H_m$.
	Then for all $i$ satisfying $1 \le i \le d$, there is some filtration element $G_\ell$
	such that $\mathcal{F}(H_m \cup G_\ell) = \mathcal{F}_i$.
	In this case we will collapse an invariant forest containing $H_m$,
	reducing the number of non-exponentially growing strata.
	Iterating this process establishes \hyperlink{P}{(P)}.

	Let $Y$ be the set of all edges in $G\setminus H_m$ eventually mapped into $H_m$
	by some iterate of $f$.
	Each edge of $Y$ is thus contained in a zero stratum.
	We want to arrange that if $\alpha$ is a tight path in a zero stratum with endpoints at vertices
	that is not contained in $Y$,
	then $f_\sharp(\alpha)$ is not contained in $Y \cup H_m$.
    If there \emph{is} such a path $\alpha$
    such that $f_\sharp(\alpha)$ is contained in $Y \cup H_m$,
    let $E_i$ be an edge crossed by $\alpha$ and not contained in $Y$.
	Perform a tree replacement as above,
	removing $E_i$ and adding in an edge with endpoints at the endpoints of $\alpha$.
	By our preliminary form of property \hyperlink{Z}{(Z)},
	if a vertex incident to $E_i$ has valence two, then it is an endpoint of $\alpha$,
	so this process does not create valence-one vertices with trivial vertex group.
	The image of the new edge is contained in $Y\cup H_m$,
	so we add it to $Y$.
	Because there are only finitely many paths in zero strata with endpoints at vertices,
	we need only repeat this process finitely many times if necessary.

	Let $\mathcal{G}'$ be the graph of groups obtained by collapsing each component of $H_m \cup Y$
	to a point, and let $p\colon \mathcal{G} \to \mathcal{G}'$ be the quotient map.
    For each component $C$ of $H_m \cup Y$, choose a vertex $c \in C$.
    If $C$ contains a vertex with nontrivial vertex group, choose that vertex as $c$.
    If an oriented edge $E$ has initial vertex in $C$, let $\gamma_C$
    be the unique tight path containing no vertex group elements from $c$ to $\iota(E)$;
    otherwise let $\gamma_E$ be the trivial path.
	Identify the edges of $G \setminus (Y \cup H_m)$ with the edges of $G'$
	and define $f\colon\mathcal{G}' \to \mathcal{G}'$ on each edge $E$ of the complement
    by tightening $pf(\gamma_{E}E\gamma_{\bar E})$.
	By construction, 
    $f'\colon \mathcal{G}' \to \mathcal{G}'$ is a topological representative of $\varphi$,
	and $f'(E)$ is obtained from $f(E)$ by removing all occurrences of edges in $Y \cup H_m$
    and possibly multiplying by vertex group elements at the ends.
	If $p(H_r)$ is not collapsed to a point, the stratum $H_r$
	and $p(H_r)$ are thus of the same type,
	and we see that $f'$ has one less non-exponentially growing stratum
	and possibly fewer zero strata.
	The previous properties, \hyperlink{NEG*}{(NEG*)} and our preliminary form
	of \hyperlink{Z}{(Z)} are still satisfied.

	Let $H_r$ be an exponentially growing stratum.
    We will verify \hyperlink{EG-ii}{(EG-ii)}.
    Suppose first that $C$ is a wandering component of $p(G_{r-1})$.
    Then there are only finitely many tight paths with endpoints in $C \cap p(H_r)$.
    For each such path $\alpha'$, there is a unique tight path $\alpha$ in $G_{r-1}$
    such that $p(\alpha) = \alpha'$.
    By assumption, $\alpha$ is not contained in $Y \cup H_m$,
    so $f_\sharp(\alpha)$ is not contained in $Y \cup H_m$ by construction,
    and thus we see that $f'_\sharp(\alpha') = (pf)_\sharp(\alpha)$ is nontrivial.
    By \Cref{trivialedgegroupsEG2}, for each non-wandering component $C$ of $p(G_{r-1})$,
    \hyperlink{EG-ii}{(EG-ii)}
	is equivalent to checking that each vertex of $p(H_r)\cap C$
	is periodic.
    Let $v'$ be a vertex of $p(H_r) \cap C$.
	By assumption, there is a vertex $v \in H_r$ such that $p(v) = v'$.
	If $v$ is periodic, we are done.
    Thus we may assume that $v$ is not in $H_m$ (which is almost periodic).
    If $v \in Y$, then the component of $G_{r-1}$ containing $v$ is a zero stratum (and thus wandering),
    contradicting our assumption that $C$ is non-wandering.
    Therefore $v$ is the only preimage of $v'$.
    By the same reasoning, the component of $G_{r-1}$ containing $v$ must be non-wandering,
    so we conclude that $v$ and thus $v'$ is periodic.
	This verifies \hyperlink{EG-ii}{(EG-ii)}.
	It is easy to see that \hyperlink{EG-i}{(EG-i)}
	is still satisfied, and that $\pf(f) = \pf(f')$,
	so \Cref{pfcorollary} implies \hyperlink{EG-iii}{(EG-iii)} is still satisfied,
    so $f'\colon \mathcal{G}' \to \mathcal{G}'$ is a relative train track map.

	It remains to check that our family of free factor systems is still realized.
	Let $\mathcal{F}_j$ be such a free factor system.
	By assumption on $H_m$, there is $G_\ell$ such that
	$\mathcal{F}(G_\ell\cup H_m) = \mathcal{F}_j$.
	Each non-contractible component of $G_\ell \cup H_m$
	is mapped into itself by some iterate of $f$.
	Since $Y$ is eventually mapped into $H_m$,
	some iterate of $f$ induces a bijection between the non-contractible components
	of $G_\ell \cup H_m \cup Y$ and those of $G_\ell \cup H_m$,
	so $p(G_\ell)$ realizes $\mathcal{F}_j$.
	Repeating this process decreases the number of non-exponentially growing strata,
	so eventually property \hyperlink{P}{(P)} is established.
    
    Here is the main consequence of \hyperlink{P}{(P)} that we use.

    \begin{lem}
        \label{propertyPconsequence}
        Suppose that $f\colon \mathcal{G} \to \mathcal{G}$ satisfies \hyperlink{P}{(P)},
        that $H_m$ is an almost periodic forest
        that is not the bottom half of a dihedral pair,
        and that a vertex $v$ has trivial vertex group and valence one in $H_m$.
        Then $v \in G_j$ for some $j < m$.
    \end{lem}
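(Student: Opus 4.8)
The plan is to argue by contradiction: assume $v \notin G_{m-1}$ and derive a violation of property \hyperlink{P}{(P)}. Since the graphs of groups in this construction have no valence-one vertices with trivial vertex group and $v$ has trivial vertex group, $v$ has valence at least two in $G$; as $v$ has valence one in $H_m$ and lies in no filtration element below $G_m$, this forces $v$ to have valence one in $G_m$, its unique incident edge $E$ lying in $H_m$. Next I would pin down the local picture. Because $H_m$ is almost periodic, its edges $E_1,\dots,E_k$ form a single $f$-orbit with $f(E_i)=g_iE_{i+1}h_i$, and the induced self-map of $H_m$ is a graph automorphism carrying each vertex group isomorphically onto the vertex group of its image; in particular it sends trivial-group leaves to trivial-group leaves. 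After relabelling take $E=E_1$ and $v=\iota(E_1)$. Then $f^i(v)=\iota(E_{1+i})$ is a trivial-group leaf of $H_m$ for each $i$; these vertices are pairwise distinct (two coinciding leaves would be incident to two edges of $H_m$) and none of them is an endpoint of any $E_j$ other than the one it labels. Finally, since $G_{m-1}$ is $f$-invariant and $f^k(v)=v$, the assumption $v\notin G_{m-1}$ propagates around the orbit, so $f^i(v)\notin G_{m-1}$ for all $i$.

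Now I would collapse $H_m$. In $G_m$ the leaf $v$ has valence one and trivial vertex group, so $E_1$ may be pruned; afterwards $f(v)=\iota(E_2)$ is incident only to $E_2$ (it avoids $G_{m-1}$ and is not an endpoint of $E_1$), so $E_2$ may be pruned; iterating, $E_1,\dots,E_k$ are pruned in turn and what remains is $G_{m-1}$ together with finitely many isolated vertices (the non-leaf endpoints of the $E_i$ lying in no lower filtration element), each a single vertex and hence invisible to $\mathcal{F}$. The crucial observation is that this pruning sequence is purely local to $H_m$: at step $i$ the only facts used are that $f^{i-1}(v)$ avoids $G_{m-1}$, has trivial vertex group, and is incident to the single $H_m$-edge $E_i$. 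Hence the same sequence of prunings collapses $G_j\cup H_m$ onto $G_j$ (plus isolated vertices) for every $j<m$, since $G_j\subseteq G_{m-1}$; therefore $\mathcal{F}(G_j\cup H_m)=\mathcal{F}(G_j)$ for $j<m$, while $\mathcal{F}(G_j\cup H_m)=\mathcal{F}(G_j)$ holds trivially for $j\ge m$ because then $H_m\subseteq G_j$. So every filtration element $G_j$ satisfies $\mathcal{F}(G_j)=\mathcal{F}(G_j\cup H_m)$. This contradicts \hyperlink{P}{(P)}: since $H_m$ is not the bottom half of a dihedral pair, \hyperlink{P}{(P)} supplies a filtration element $G_j$ with $\mathcal{F}(G_j)\ne\mathcal{F}(G_\ell\cup H_m)$ for every filtration element $G_\ell$, which is impossible by the previous sentence (take $G_\ell=G_j$). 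The contradiction yields $v\in G_j$ for some $j<m$.

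The step I expect to be the crux — and the reason almost-periodicity is assumed — is the claim that collapsing $H_m$ out of $G_m$ actually descends all the way to $G_{m-1}$, i.e.\ that no edge of $H_m$ survives in the core of $G_m$. For an arbitrary forest stratum this can fail: a component of $H_m$ might have a trivial-group leaf $v$ outside $G_{m-1}$ while also carrying a nontrivial vertex group elsewhere in the same component, so that pruning $v$ exposes an essential arc through $H_m$. It is exactly the fact that $f$ restricts to a vertex-group-preserving automorphism of $H_m$, so that the forward orbit of $v$ sweeps out the whole stratum and no nontrivial group can be hidden inside it, that rules this out. The remaining ingredients are routine: that isolated vertices (even with nontrivial vertex group) do not change $\mathcal{F}$, and, if one prefers to phrase the middle step without an explicit collapse, the monotonicity of $\mathcal{F}$ under inclusion of subgraphs of groups together with the antisymmetry of $\sqsubset$, applied to $\mathcal{F}(G_j)\sqsubset\mathcal{F}(G_j\cup H_m)\sqsubset\mathcal{F}(G_m)=\mathcal{F}(G_{m-1})$.
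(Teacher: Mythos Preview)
Your proof is correct and uses essentially the same idea as the paper's: the key observation is that if no trivial-group leaf of $H_m$ lies in a lower filtration element, then adjoining the forest $H_m$ to any $G_\ell$ does not change $\mathcal{F}(G_\ell)$, which is incompatible with \hyperlink{P}{(P)}.

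The organization differs slightly. The paper argues directly: it invokes \hyperlink{P}{(P)} to obtain a specific $G_j$ with $\mathcal{F}(G_j)\ne\mathcal{F}(G_j\cup H_m)$, deduces $j<m$, and then argues that this inequality forces the trivial-group leaves of $H_m$ to lie in $G_j$. You instead argue the contrapositive, assuming $v\notin G_{m-1}$ and showing $\mathcal{F}(G_\ell\cup H_m)=\mathcal{F}(G_\ell)$ for every $\ell$, contradicting \hyperlink{P}{(P)}. Your pruning argument is more explicit than the paper's; the paper's passage from ``every edge of $H_m$ is incident to a vertex of $G_j$'' to ``some (hence every) trivial-group leaf of $H_m$ lies in $G_j$'' is terse and really does require an argument like the one you give. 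The structural fact you use implicitly---that $f$ restricted to $H_m$ is a graph automorphism, so the orbit of $v$ consists entirely of trivial-group leaves lying outside $G_{m-1}$, one per edge---is exactly what makes the pruning go through, and the paper records the same fact (in the form ``either each component is a single edge, or $f$ transitively permutes the valence-one vertices'').
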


    \begin{proof}
        Suppose $f\colon \mathcal{G} \to \mathcal{G}$ and $H_m$ are as in the statement.
        The restriction of $f$ to $H_m$ acts transitively on the components of $H_m$,
        and either each component is a single edge,
        or $f$ transitively permutes the valence-one vertices of $H_m$
        (necessarily with trivial vertex group in this contrary case, since $H_m$ is a forest).
        Property \hyperlink{P}{(P)} says that there exists a filtration element $G_j$
        such that $\mathcal{F}(G_j) \ne \mathcal{F}(G_\ell \cup H_m)$ for any filtration element $G_\ell$.
        In particular, $\mathcal{F}(G_j) \ne \mathcal{F}(G_j \cup H_m)$,
        so we see that $j < m$.
        Since $H_m$ is a forest, if it is disjoint from $G_j$,
        it contributes nothing to $\mathcal{F}(G_j \cup H_m)$
        (recall that we define free factor systems to be made up of free factors
        with \emph{positive complexity,} ruling out the fundamental group of each component of $H_m$).
        Therefore we conclude that some and hence every edge of $H_m$ is incident to a vertex of $G_j$.
        In fact, some and hence every valence one vertex of $H_m$ with trivial vertex group
        and valence one in $H_m$ must be a vertex of $G_j$.
    \end{proof}

    \paragraph{Property (Z).}
    To complete the proof of property \hyperlink{Z}{(Z)},
    we must show that if $H_r$ is an exponentially growing stratum,
    then all non-wandering components of $G_r$ are in fact non-contractible.
    If $C$ is a non-wandering component,
    the lowest stratum $H_i$ containing an edge of $C$ is either exponentially growing
    or almost periodic.
    If $H_i$ is exponentially growing, \Cref{egvalence}
    shows that each vertex of $H_i$ has valence at least two in $C$ (and thus $H_i$)
    or has nontrivial vertex group, showing that $C$ is noncontractible.
    If $H_i$ is the bottom half of a dihedral pair,
    then $r > i+1$ and $C$ contains the full dihedral pair and is thus non-contractible.
    If $H_i$ is almost periodic but not the bottom half of a dihedral pair,
    then \Cref{propertyPconsequence} shows that $H_i$ is not a forest so $C$ is non-contractible.
    Thus property \hyperlink{Z}{(Z)} follows from the form of \hyperlink{Z}{(Z)} 
    we have already established.

	\paragraph{Property (NEG).}
	Let $E$ be an edge in a non-exponentially growing stratum $H_i$
    which is not almost periodic.
	Let $C$ be the component of $G_{i-1}$ containing the terminal vertex $v$ of $E$;
	it is non-wandering by our work proving \hyperlink{NEG*}{(NEG*)}.
	By the argument in the previous paragraph, if $H_\ell$ is the lowest stratum containing an edge of $C$,
	then $H_\ell$ is either almost periodic or exponentially growing,
	and the argument in the previous paragraph shows that $H_\ell$ is either non-contractible
    or the bottom half of a dihedral pair.
	In fact, in the former case $H_\ell$ is its own core,
    while in the latter $H_\ell \cup H_{\ell+1}$ is its own core.
	In the exponentially growing case this follows
    since \Cref{egvalence} shows that every valence-one vertex of $H_\ell$
	has nontrivial vertex group.
	To see this in the almost periodic case, 
	observe that if some edge of $H_\ell$ is incident to a vertex with trivial vertex group
	and valence one in $H_\ell$, then every edge has this property and $H_\ell$ is a forest,
    in contradiction to \hyperlink{P}{(P)} and \Cref{propertyPconsequence}.
    Write $D$ for $H_\ell$ if $H_\ell$ is its own core,
    or for the dihedral pair $H_\ell \cup H_{\ell+1}$ otherwise.

	Choose a periodic vertex $w$ in $D$ and a path $\gamma$ from $v$ to $w$.
    If  $D$ is a dihedral pair, let $w$ be the center vertex of the dihedral pair.
	Slide $E$ along $\gamma$.
	The result is a relative train track map
	which still realizes our sequence of free factor systems
	and still satisfies \hyperlink{Z}{(Z)}.
	Working up through the filtration repeating this process establishes \hyperlink{NEG}{(NEG)}.
	This time sliding may have introduced valence-one vertices 
    with trivial vertex  group,
	but \hyperlink{NEG}{(NEG)}, \hyperlink{Z}{(Z)} and \Cref{egvalence}
	imply that only valence-one vertices are mapped to the valence-one vertices created.
	We perform valence-one homotopies to remove each of these vertices.
	If property \hyperlink{P}{(P)} is not satisfied, 
	restore it using the process above. Since the number of non-exponentially growing strata
	decreases, this process terminates.

	\paragraph{Property (F).}
	We want to show that the core of each filtration element is a filtration element.
	If $H_\ell$ is a zero stratum, then $\mathcal{F}(G_\ell) = \mathcal{F}(G_{\ell-1})$,
	so assume that $H_\ell$ is irreducible, and thus $G_\ell$ has no contractible components
    unless $H_\ell$ is the bottom half of a dihedral pair.
    So assume $H_\ell$ is not the bottom half of a dihedral pair.
	If a vertex $v$ with trivial vertex group has valence one in $G_\ell$,
	then \Cref{egvalence} and \hyperlink{Z}{(Z)} imply the incident edge $E$ 
	belongs to a non-exponentially growing stratum $H_i$.
	If $H_i$ were almost periodic, it would be a forest, because every edge would be incident
	to a vertex of valence one in $G_\ell$. 
    This contradicts \Cref{propertyPconsequence}.
	This exhausts the possibilities: $v$ must be the initial endpoint of a non-exponentially growing edge
    which is not almost periodic.
	All edges in such a stratum have initial vertex a valence-one vertex of $G_\ell$,
	and no vertex of valence at least two in $G_\ell$ maps to them.
	Thus we may push all such non-exponentially growing strata $H_i$ above $G_\ell \setminus H_i$.
	After repeating this process finitely many times,
	$\mathcal{F}(G_\ell)$ is realized by a filtration element that is its own core.
	Working upwards through the strata, we have arranged that each $\mathcal{F}(G_\ell)$
    is realized by a filtration element that is its own core.
    To complete the proof, rearrange strata by pushing the dihedral pair down the filtration 
    so that if $H_\ell$ is the bottom half of a dihedral pair,
    then $G_{\ell-1}$ is its own core. Then $G_{\ell+1}$ is also its own core and
    \hyperlink{F}{(F)} is satisfied.
\end{proof}

\section{Fixed points at infinity}
\label{boundarysection}
The purpose of this section is to study the action of
automorphisms of $(F,\mathscr{A})$ on the \emph{Bowditch boundary}
(to be defined presently)
of some and hence any Bass--Serre tree associated to a marked graph of groups.
This boundary was previously considered by Guirardel and Horbez in \cite{GuirardelHorbez}.
We define \emph{attractors} and \emph{repellers} for the action of an automorphism
$\Phi$ on the boundary $\partial(F,\mathscr{A})$.
The main result is a proposition from Martino's Thesis \cite{MartinoThesis},
which is an analogue of \cite[Proposition 1.1]{GaboriauJaegerLevittLustig} for free products.

\paragraph{The boundary $\partial(F,\mathscr{A})$.}
We begin this section by collecting some basic information
about our boundary of the free product $F$ 
and the action of $F$ and automorphisms of $F$ on it.

Let $\mathbb{G}$ be the thistle with $n$ prickles and $k$ petals
associated to our standing free product decomposition
\[  F = A_1*\cdots*A_n*F_k, \]
and let $\star$ be the vertex of $\mathbb{G}$ with trivial vertex group.
Let $T$ be the Bass--Serre tree of $\mathbb{G}$,
equipped with a basepoint $\tilde\star$ and fundamental domain,
thus defining an action of $F$ on $T$.

Recall that the Gromov boundary $\partial_\infty T$ of the tree $T$
may be identified with the set of singly infinite tight paths beginning at $\tilde x$.
Given a point $\tilde x \in T$,
a \emph{half-tree} based at $\tilde x$ is a component of $T \setminus \{\tilde x \}$
together with those boundary points $\xi \in \partial_\infty T$
such that the intersection of the tight path $\xi$ with the component
contains infinitely many edges.

The \emph{observer's topology} on $T \cup \partial_\infty T$
is generated by the set of half-trees, which form a sub-basis for the topology.
For more on the observer's topology, see \cite{CoulboisHillionLustig,GuirardelHorbez,Knopf}.
In the case where the groups $A_i$ are countable,
the observer's topology on $T \cup \partial_\infty T$ is second countable and compact.
It is easy to see that the observer's topology is Hausdorff.
A sequence of points $\{\tilde x_n\}$ in $T \cup \partial_\infty T$
converges to a point $\tilde x$ in the observer's topology
if for every $\tilde y \ne \tilde x$ in $T$,
the points $\tilde x_n$ eventually belong to the same half-tree based at $\tilde y$ as $\tilde x$.

The \emph{Bowditch boundary} $\partial T$ of $T$ is,
as a set,
the union of the Gromov boundary $\partial_\infty T$ of $T$
together with the set $V_\infty(T)$ of vertices of $T$ with infinite stabilizer.
If $\tilde x$ is a vertex with finite valence or a point in the interior of an edge of $T$,
it is easy to find a basic open neighborhood of $\tilde x$
(a finite intersection of half-trees)
containing only vertices of finite valence or points in the interior of an edge.
Thus $\partial T$ is closed in $T \cup \partial_\infty T$, so it is compact.
It is not hard to argue that it is a Cantor set.
The subspace $\partial_\infty T$ inherits the usual (visual) topology;
if we fix the basepoint $\tilde\star$,
a sequence $\{\tilde x_n\}$ in $T \cup \partial_\infty T$ converges to $\xi \in \partial_\infty T$
if and only if for every finite subpath $\tilde\gamma$
of the ray $\tilde R_{\tilde\star,\xi}$ determining $\xi$,
the tight path or ray $\tilde R_{\tilde\star,\tilde x_n}$ contains $\tilde\gamma$
for all large $n$.

Let $(\mathcal{G},m)$ be a marked graph of groups,
let $p = m(\star)$,
and let $\Gamma$ be Bass--Serre tree for $\mathcal{G}$.
We always assume a basepoint $\tilde p \in \Gamma$ has  been chosen,
thus defining an action of $F$ on $\Gamma$.
As in \Cref{basicssection} or \cite[Proposition 1.2]{Myself},
there is a lift $\tilde m\colon (T,\tilde\star) \to (\Gamma,\tilde p)$
to the Bass--Serre trees.
This lift is an $F$-equivariant quasi-isometry,
and Guirardel and Horbez prove in \cite[Lemma 2.2]{GuirardelHorbez}
that it has a unique continuous extension to a homeomorphism $\hat m\colon \partial T \to \partial\Gamma$.
They prove that the homeomorphism $\hat m$
does not depend on the $F$-equivariant map $\tilde m$
and furthermore that the map $\hat m$ sends $\partial_\infty T$ to $\partial_\infty\Gamma$
and $V_\infty(T)$ to $V_\infty(\Gamma)$.
This allows us to identify the Bowditch boundaries of any marked graph of groups with $\partial T$.
We shall let $\partial T$ play the role for free products 
that the Gromov boundary of the free group plays for free groups,
denoting it $\partial(F,\mathscr{A})$.
Let us denote the subspace $\partial_\infty T$ as $\partial_\infty(F,\mathscr{A})$
and the subspace $V_\infty(T)$ as $V_\infty(F,\mathscr{A})$.

There is also a convenient algebraic interpretation of $\partial(F,\mathscr{A})$
analogous to that in \cite{Cooper} and \cite{Martino}.
To wit, the orbit map $g \mapsto g.\tilde\star$
yields a well-defined compact topology on $F \cup \partial(F,\mathscr{A})$
that we now describe.
A choice of fundamental domain for $T$ containing $\tilde\star$
corresponds to a choice of orientation for each edge in $\mathbb{G}$
that forms a loop based at $\star$.
Choose the $A_i$ in their conjugacy classes so that elements of each $A_i$
fix a vertex in our fundamental domain,
and choose the free basis $S$ for $F_k$ whose elements correspond to loops
in $\mathbb{G}$ that traverse a single loop edge once in the positive orientation.
A \emph{letter} is an element of the set
$\bigcup_{i=1}^n(A_i\setminus\{1\})\cup S \cup S^{-1}$.
A \emph{word} is a finite or infinite sequence of letters of the form
\[  w = x_1x_2x_3\cdots. \]
A word is \emph{reduced} if adjacent letters neither cancel
nor coalesce into a single letter.
Each element $g$ of $F$ can be written uniquely as a reduced word $w$.
Corresponding to each finite reduced word $w$ there is a tight path in $T$
from $\tilde\star$ to another lift of $\star$;
the path projects to the element of $F = \pi_1(\mathbb{G},\star)$
corresponding to $w$.
(This correspondence is perfect because $\star$ has trivial vertex group.
In general there would be more group elements than paths.)
Points in $\partial_\infty(F,\mathscr{A})$ correspond to infinite reduced words,
while points in $V_\infty(F,\mathscr{A})$ correspond to cosets $gA$,
where $A \in \{A_1,\ldots,A_n\}$ is infinite.
There is a unique reduced word $w$ representing a representative of the coset $gA$
such that the last letter of $w$ does not belong to $A$.

A \emph{prefix} of a reduced word $\xi$ is a finite word $w$
such that there exists $\xi'$ so that $\xi = w\xi'$ is reduced as written.
A sequence of points in $F \cup \partial(F,\mathscr{A})$
converges to a point $\xi \in \partial_\infty(F,\mathscr{A})$
if and only if for each prefix of $\xi$ 
the associated sequence of words (which may be finite or infinite)
eventually shares that prefix.

A sequence of points in $F \cup \partial(F,\mathscr{A})$
converges to a point $wA \in V_\infty(F,\mathscr{A})$
if and only if each associated word eventually has a prefix of the form $wg$ for $g$ a letter of $A$,
but for each such $g$, the prefix $wg$ appears only finitely many times.

\paragraph{}
Each $c \in F$ acts on $\Gamma$ as an automorphism $T_c$
of the natural projection $\Gamma \to \mathcal{G}$
and induces a homeomorphism 
$\hat T_c\colon \partial(F,\mathscr{A}) \to \partial(F,\mathscr{A})$.
If $c$ is \emph{peripheral} (i.e.~conjugate into some $A_i$)
but nontrivial,
then $\hat T_c$ acts with a single fixed point on $\partial(F,\mathscr{A})$
if it is conjugate into an infinite $A_i$
and without fixed point if it is conjugate into a finite $A_i$.
If $c$ is nonperipheral,
then $\hat T_c$ has two fixed points in $\partial(F,\mathscr{A})$,
a sink $\hat T_c^+$ and a source $\hat T_c^-$.
The line (proper, linear embedding of $\mathbb{R}$)
in $\Gamma$ whose endpoints in $\partial_\infty(F,\mathscr{A})$
are $\hat T_c^+$ and $\hat T_c^-$ is the \emph{axis} of $T_c$;
denote it by $A_c$.
The image of $A_c$ in $\mathcal{G}$
is a \emph{circuit} (a tight path in $\mathcal{G}$ that forms a loop
and remains tight however it is cut into a path)
that corresponds under the marking to the conjugacy class of $c$
(or of a root of $c$).

Recall from \Cref{basicssection} that given a map
$f\colon \mathcal{G} \to \mathcal{G}$
representing an outer automorphism $\varphi \in \out(F,\mathscr{A})$,
choosing a path $\sigma$ in $\mathcal{G}$ from the basepoint $p$ to $f(p)$
defines both an automorphism $\Phi\colon (F,\mathscr{A}) \to (F,\mathscr{A})$
and a lift $\tilde f$ of $f$ to the Bass--Serre tree $\Gamma$
which is $\Phi$-twisted equivariant.
This defines a bijection between the set of lifts of 
$f\colon \mathcal{G} \to \mathcal{G}$
to $\Gamma$ and the set of automorphisms 
$\Phi\colon (F,\mathscr{A}) \to (F,\mathscr{A})$
representing $\varphi$.
Following Feighn--Handel \cite{FeighnHandel},
we say that $\tilde f$ \emph{corresponds to} or \emph{is determined by} $\Phi$
and vice versa.

Under the identification of $\partial\Gamma$ with $\partial(F,\mathscr{A})$,
a lift $\tilde f$ also determines a homeomorphism $\hat f$ of $\partial(F,\mathscr{A})$.
An automorphism $\Phi\colon (F,\mathscr{A}) \to (F,\mathscr{A})$
determines a bijection of the set of infinite words and cosets of infinite $A_i$;
it is not hard to see that this bijection is open,
and thus $\Phi$ determines a homeomorphism 
$\hat\Phi\colon \partial(F,\mathscr{A}) \to \partial(F,\mathscr{A})$.

\begin{lem}
    Assume notation as in the previous paragraph.
    We have $\hat f = \hat \Phi$ if and only if $\tilde f$ corresponds to $\Phi$.
\end{lem}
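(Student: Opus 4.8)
The plan is to identify both $\hat f$ and $\hat\Phi$ as $\Phi$-twisted equivariant homeomorphisms of $\partial(F,\mathscr{A})$, to show such a homeomorphism is unique, and then to read off the equivalence from the bijection between lifts of $f$ to $\Gamma$ and automorphisms representing $\varphi$. For this I would first check that when $\tilde f$ corresponds to $\Phi$ the homeomorphism $\hat f$ is $\Phi$-twisted equivariant, i.e.\ $\hat f(g.\xi) = \Phi(g).\hat f(\xi)$: the identity $\tilde f(g.\tilde x) = \Phi(g).\tilde f(\tilde x)$ on $\Gamma$ passes to the boundary because $\xi \mapsto \Phi(g)^{-1}.\hat f(g.\xi)$ is a continuous extension of $\tilde f$, hence equals $\hat f$ by uniqueness of the continuous extension (\cite[Lemma 2.2]{GuirardelHorbez}). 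Likewise $\hat\Phi$ is $\Phi$-twisted equivariant: in the word-and-coset model of $\partial(F,\mathscr{A})$ the $F$-action is ``prepend $g$ and reduce,'' and $\hat\Phi$ is induced by applying $\Phi$ to the prefixes of a word, which is well-defined because $\Phi$ permutes the classes $[[A_i]]$ and so sends each letter to a word of bounded length, so bounded cancellation controls the relevant limits; the identity $\hat\Phi(g.\xi) = \Phi(g).\hat\Phi(\xi)$ is then immediate. This also uses that the $F$-action in the word model agrees with that on $\partial\Gamma$, which holds since the homeomorphism $\hat m$ of \cite{GuirardelHorbez} is $F$-equivariant.

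Granting this, suppose first that $\tilde f$ corresponds to $\Phi$. Then $h := \hat\Phi^{-1}\circ\hat f$ is an $F$-equivariant homeomorphism of $\partial(F,\mathscr{A})$, so for every nonperipheral $c \in F$ it conjugates $\hat T_c$ to itself and hence fixes both $\hat T_c^+$ and $\hat T_c^-$. Because the $F$-action on $\Gamma$ is minimal, the union of the axes of nonperipheral elements is all of $\Gamma$, so the points $\hat T_c^+$ are dense in $\partial_\infty(F,\mathscr{A})$; and $\partial_\infty(F,\mathscr{A})$ is dense in $\partial(F,\mathscr{A})$ since every infinite-valence vertex of $\Gamma$ is a limit of rays. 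By continuity $h = \operatorname{id}$, that is, $\hat f = \hat\Phi$; this is the ``if'' direction.

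For the converse, suppose $\hat f = \hat\Phi$. The lift $\tilde f$ corresponds to some automorphism $\Phi_0$ representing $\varphi$, and by the previous paragraph $\hat f = \hat\Phi_0$, so $\hat\Phi = \hat\Phi_0$. Writing $\Phi_0 = \operatorname{ad}_c \circ \Phi$, this forces $\hat T_c = \operatorname{id}$. But $F$ acts faithfully on $\partial(F,\mathscr{A})$: a nonperipheral $c$ has distinct attracting and repelling fixed points, while a nontrivial peripheral $c$ permutes nontrivially the edges of $\Gamma$ at its fixed vertex and therefore moves a boundary point lying beyond one of them (the only exceptions, $F \cong \mathbb{Z}$ and $F \cong C_2 * C_2$, are degenerate and may be excluded). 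Hence $c = 1$, so $\Phi_0 = \Phi$ and $\tilde f$ corresponds to $\Phi$, completing the proof.

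The step I expect to be the main obstacle is the bookkeeping that identifies, compatibly with the $F$-actions, the two a priori distinct descriptions of $\partial(F,\mathscr{A})$ in play — the word/coset model built from the thistle's Bass--Serre tree $T$, which is used to define $\hat\Phi$, and the Bowditch boundary of the Bass--Serre tree $\Gamma$ of $\mathcal{G}$, on which $\hat f$ acts — for which one leans on the $F$-equivariance of Guirardel--Horbez's homeomorphism $\hat m$. Once that identification is granted the twisted-equivariance computations are routine, and the two remaining points requiring a little care, namely density of the attracting fixed points in the full Bowditch boundary and faithfulness of the boundary action, are standard.
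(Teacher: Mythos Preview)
Your proof is correct and takes a genuinely different route from the paper's. The paper transports $\tilde f$ back to the thistle tree $T$ via a homotopy inverse $\tilde m'$ of the marking, arranges that $\tilde m'\tilde f\tilde m$ fixes the basepoint $\tilde\star$, and then directly computes that $\tilde m'\tilde f\tilde m(g.\tilde\star) = \Phi(g).\tilde\star$ on the $F$-orbit; since this orbit is dense and $\hat\Phi$ is by definition the continuous extension of $g.\tilde\star \mapsto \Phi(g).\tilde\star$, the equality $\hat f = \hat\Phi$ follows. Your argument instead isolates the abstract characterization: both $\hat f$ and $\hat\Phi$ are $\Phi$-twisted equivariant self-homeomorphisms of $\partial(F,\mathscr{A})$, and any two such agree because their ratio is $F$-equivariant and hence fixes the dense set of attracting fixed points $\hat T_c^+$. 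For the converse both proofs reduce to faithfulness of the $F$-action on $\partial(F,\mathscr{A})$ (the paper does so implicitly in its final sentence).

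The paper's approach is shorter and avoids the density argument, while yours makes the underlying uniqueness principle explicit and sidesteps the transport through $\tilde m'$. Two small remarks: your density justification via ``the union of axes is all of $\Gamma$'' is a little quick---the cleanest way to see density of $\{\hat T_c^+\}$ in $\partial_\infty(F,\mathscr{A})$ is in the word model, conjugating a hyperbolic element so that its attracting fixed point acquires any prescribed prefix---and the degenerate cases $F \cong F_1$, $C_2*C_2$ must be excluded already for the bijection between lifts and automorphisms (stated just before the lemma) to hold, so your exclusion is consistent with the paper's standing assumptions.
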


\begin{proof}
    Let $m'\colon \mathcal{G} \to \mathbb{G}$
    be a homotopy inverse for $m$,
    and let $\tilde m'\colon \Gamma \to T$ be the lift such that
    $\tilde m' \tilde m$ is equivariantly homotopic to the identity;
    thus $\hat m'\hat m$ is the identity of $\partial(F,\mathscr{A})$.
    The action of $\hat f$ on $\partial(F,\mathscr{A})$
    is the extension of $\tilde m'\tilde f\tilde m\colon T \to T$
    to the Bowditch boundary of $T$.
    Up to equivariant homotopy,
    we may assume that $\tilde m'\tilde f\tilde m$ fixes $\tilde \star$.
    If $\tilde f$ corresponds to $\Phi$,
    then $\tilde m'\tilde f\tilde m(g.\tilde \star) = \Phi(g).\tilde\star$,
    so it follows that $\hat f(\xi) = \hat\Phi(\xi)$ for all $\xi \in \partial(F,\mathscr{A})$.
    The same argument shows that if instead $\tilde f$ corresponds to $\Phi' \ne \Phi$,
    then $\hat f \ne \hat\Phi$.
\end{proof}

\begin{lem}
    \label{boundarybasics}
    Assume that $\tilde f\colon \Gamma \to \Gamma$ corresponds to
    $\Phi\colon (F,\mathscr{A}) \to (F,\mathscr{A})$.
    The following are equivalent.
    \begin{enumerate}
        \item $c \in \fix(\Phi)$.
        \item $T_c$ commutes with $\tilde f$.
        \item $\hat T_c$ commutes with $\hat f$.
    \end{enumerate}
    The above also imply the following for all $c \in F$
    and any automorphism $\Phi\colon (F,\mathscr{A}) \to (F,\mathscr{A})$.
    If $c$ is not peripheral and $\fix(\hat f)$ is nonempty,
    then the following are also equivalent to  the  above.
    \begin{enumerate}[resume]
        \item $\fix(\hat T_c) \subset \fix(\hat f)$.
        \item $\fix(\hat f)$ is $\hat T_c$-invariant.
    \end{enumerate}
\end{lem}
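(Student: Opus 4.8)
The plan is to run the cycle $(1)\Rightarrow(2)\Rightarrow(3)\Rightarrow(1)$ for the first three items, and then, under the extra hypotheses that $c$ is non-peripheral and $\fix(\hat f)\ne\varnothing$, to fold $(4)$ and $(5)$ into the cycle via $(1)\Rightarrow(4)\Rightarrow(1)$ and $(1)\Rightarrow(5)\Rightarrow(4)$. The only structural inputs needed for the first three are: the $\Phi$-twisted equivariance $\tilde f\circ T_c=T_{\Phi(c)}\circ\tilde f$ from \Cref{basicssection}; the surjectivity of $\tilde f$ (its image is connected and $F$-invariant, since $\tilde f(F\cdot D)=\Phi(F)\cdot\tilde f(D)=F\cdot\tilde f(D)$ for a fundamental domain $D$, using that $\Phi$ is onto, hence the image contains the minimal subtree, which is all of $\Gamma$); and the faithfulness of the $F$-actions on $\Gamma$ and on $\partial(F,\mathscr{A})$ (a nontrivial loxodromic has exactly two boundary fixed points and moves everything else, while a nontrivial peripheral element moves every boundary point other than the $V_\infty$-point it may fix). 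Given $(1)$, $\Phi(c)=c$ yields $\tilde f T_c=T_c\tilde f$, which is $(2)$, and passing to the continuous boundary extension gives $\hat f\hat T_c=\hat T_c\hat f$, which is $(3)$. Conversely, extending the equivariance gives $\hat f\hat T_c=\hat T_{\Phi(c)}\hat f$; comparing with $(3)$ and cancelling the homeomorphism $\hat f$ gives $\hat T_{\Phi(c)c^{-1}}=\mathrm{id}$, so $\Phi(c)=c$ by faithfulness. The implication $(2)\Rightarrow(1)$ is the same computation on $\Gamma$, cancelling the surjection $\tilde f$.

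Now assume $c$ is non-peripheral and $\fix(\hat f)\ne\varnothing$. If $(1)$ holds then $\hat f=\hat\Phi$ fixes $\hat T_c^{+}$ and $\hat T_c^{-}$, so $\fix(\hat T_c)=\{\hat T_c^{+},\hat T_c^{-}\}\subseteq\fix(\hat f)$, which is $(4)$; and by $(3)$, $\hat T_c$ commutes with $\hat f$ and hence preserves $\fix(\hat f)$, which is $(5)$. The content of the converses is the claim $(\star)$: \emph{if $\hat f$ fixes $\hat T_c^{+}$ (or $\hat T_c^{-}$) then $\Phi(c)=c$.} Granting $(\star)$, item $(4)$ immediately gives $\hat f(\hat T_c^{+})=\hat T_c^{+}$, hence $(1)$. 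For $(5)\Rightarrow(4)$: $\fix(\hat f)$ is closed in the Hausdorff space $\partial(F,\mathscr{A})$ and $\hat T_c$-invariant, so if it contains a point $\xi_0\notin\{\hat T_c^{+},\hat T_c^{-}\}$ then, $\hat T_c^{+}$ being a sink and $\hat T_c^{-}$ a source, the orbit $\{\hat T_c^{n}\xi_0\}$ accumulates on both $\hat T_c^{+}$ and $\hat T_c^{-}$, forcing both into $\fix(\hat f)$; while if $\fix(\hat f)\subseteq\{\hat T_c^{+},\hat T_c^{-}\}$, nonemptiness puts one of them into $\fix(\hat f)$, whence $(\star)$ together with $(1)\Rightarrow(4)$ puts in the other. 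Thus $(5)\Rightarrow(4)\Rightarrow(1)\Rightarrow(5)$, completing the equivalence.

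It remains to prove $(\star)$, which is the main obstacle. Suppose $\hat f(\hat T_c^{+})=\hat T_c^{+}=:\xi$; equivalently $\hat T_{\Phi(c)}^{+}=\xi$, so $c$ and $\Phi(c)$ are loxodromics sharing an attracting fixed end. I would analyze $\stab(\xi)\le F$: apart from $1$ it contains only loxodromics (an elliptic fixing $\xi$ would fix a ray pointwise, impossible with trivial edge stabilizers), and the signed-translation-toward-$\xi$ function — a Busemann cocycle for $\xi$ — is an injective homomorphism $\stab(\xi)\to\mathbb{R}$; hence $\stab(\xi)$ is torsion-free abelian, and an abelian subgroup of the free product $F$ containing a non-peripheral element is infinite cyclic. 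Writing $\stab(\xi)=\langle r\rangle$ with $r$ translating toward $\xi$, we get $c=r^{i}$ and $\Phi(c)=r^{j}$ with $i,j\ge 1$. Since $\Phi(r)$ centralizes $\Phi(c)=r^{j}$ and the centralizer of a non-peripheral element of $F$ is cyclic, equal to $\langle r\rangle$, we get $\Phi(r)=r^{m}$; the same applied to $\Phi^{-1}$ forces $m=\pm 1$, and $r^{j}=\Phi(r^{i})=r^{mi}$ with $i,j\ge 1$ forces $m=1$ and $j=i$, so $\Phi(c)=c$; the case $\hat f(\hat T_c^{-})=\hat T_c^{-}$ is identical with $\xi=\hat T_c^{-}$. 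The hard part is precisely this: one cannot read off $\Phi(c)=c$ from $\hat f$ fixing a single boundary endpoint without these tree-theoretic facts (the structure of $\stab(\xi)$ and the cyclicity of centralizers in free products); the remaining implications are routine bookkeeping with equivariance and continuous extension.
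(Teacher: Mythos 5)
Your proof is correct and follows essentially the same overall route as the paper: twisted equivariance passed to the boundary for items $(1)$--$(3)$, then source--sink dynamics of $\hat T_c$ together with the structure of roots and centralizers of non-peripheral elements in a free product for items $(4)$--$(5)$. There are two meaningful differences worth flagging. First, your handling of $(5)\Rightarrow(4)$ is more careful than the paper's: the paper asserts that ``any closed nonempty $\hat T_c$-invariant set contains both of the fixed points of $\hat T_c$,'' which is not literally true (the singleton $\{\hat T_c^+\}$ is a counterexample); your case split --- a point outside $\{\hat T_c^{\pm}\}$ yields both poles by source--sink dynamics, while $\fix(\hat f)\subseteq\{\hat T_c^{\pm}\}$ together with nonemptiness invokes $(\star)$ --- is the right way to close that gap, and the extracted claim $(\star)$ (fixing one pole already forces $\Phi(c)=c$) is exactly what is needed. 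Second, your proof of $(\star)$ goes through the stabilizer $\stab(\xi)$ of a single end via a Busemann-type cocycle into $\mathbb{Z}$, whereas the paper argues more directly that $c$ and $\Phi(c)$ share an axis and hence a common root-free root $a$, then shows $\Phi(a)=a$ by considering $\Phi^{-1}$; both arguments rest on the same underlying fact (non-peripheral elements have unique, infinite-cyclic ``maximal cyclic envelopes''), and yours is a bit more machinery for the same payoff, but it is correct.
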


\begin{proof}
    Assume item 1. 
    For all $\tilde x \in \Gamma$,
    by $\Phi$-twisted equivariance, we have
    \[  \tilde fT_c(\tilde x) = T_{\Phi(c)}\tilde f(\tilde x) = T_c\tilde f(\tilde x),\]
    which proves item 2.
    Item 3 follows from item 2 by extending the equal  maps $\tilde fT_c$ 
    and $T_c\tilde f$ to $\partial(F,\mathscr{A})$.
    Assume item 3.
    We have that $\hat f\hat T_c$ and $\hat T_c\hat f$ are the extensions of 
    $\tilde fT_c$ and $T_c\tilde  f$ to $\partial(F,\mathscr{A})$.
    We have that $\Phi$-twisted equivariance and the effectiveness of the action of $F$
    on $\partial(F,\mathscr{A})$ imply $\Phi(c) = c$, proving item 1.
    
    Assume item 3, and let $\xi \in \fix(\hat f)$.
    We have 
    \[  \hat f(\hat T_c(\xi))  = \hat T_c\hat f(\xi) = \hat T_c(\xi), \]
    which  proves item 5.
    Item 4 is trivially true if $c$ is peripheral and conjugate into a finite $A_i$.
    If $c$ is conjugate into an infinite $A_i$,
    let $\xi$ be the fixed point of $\hat T_c$.
    We have
    \[  \hat f(\xi) = \hat f(\hat T_c(\xi)) = \hat T_c(\hat f(\xi)), \]
    so by the uniqueness of the fixed point $\xi$, we conclude $f(\xi) = \xi$.

    Assume item 3 and suppose $c$ is nonperipheral.
    We have
    \[  \hat T_c\hat f(\hat T_c^+) = \hat f\hat T_c(\hat T_c^+) = \hat f(\hat T_c^+) \]
    and similarly for $\hat T_c^-$.
    Moreover, the above equation implies that $\hat f(\hat T_c^+)$
    is a sink for $\hat T_c$,
    so  we conclude that $\fix(\hat T_c) \subset \fix(\hat f)$.
    Assuming  item 5, note that any closed nonempty $\hat T_c$-invariant set
    contains both of the fixed points of $\hat T_c$.
    If  we assume $\fix(\hat  f)$ is nonempty, item 5 implies item 4.
    So assume item 4.
    Ghen we have
    \[  \hat T_{\Phi(c)}(\hat T_c^+) = \hat T_{\Phi(c)} \hat f(\hat T_c^+)
    = \hat f(\hat T_c(\hat T_c^+))  = \hat f(\hat T_c^+) = \hat T_c^+\]
    and similarly $\hat T_{\Phi(c)}(\hat T_c^-) = \hat T_c^-$,
    so $\Phi(c)$  and $c$ share an  axis and thus a common root.
    That is, $c = a^k$ and $\Phi(c) = a^\ell$ for positive $k$ and $\ell$
    and some nonperipheral, root-free $a \in F$.
    Then $\Phi(c) = \Phi(a)^k = a^\ell$.
    Since nonperipheral elements have unique roots,
    this implies that $\Phi(a) = a^j$ for some positive $j$.
    But then $\Phi^{-1}(a)^j = a$,
    so since $a$ is root-free, we conclude $\Phi(a) = a$
    and thus $k = \ell$ and $c \in \fix(\Phi)$, so item 4 implies item 1.
\end{proof}

Given a subgroup $H$ of $F$,
the action of $H$ on $T$ induces a decomposition of $H$ as a free product.
We say that $H$ has \emph{finite (Kurosh subgroup) rank}
if this free product decomposition is of the form
$H = B_1*\cdots*B_p*F_{\ell}$,
or equivalently if the action of $H$ on its minimal subtree $T_H$ in $T$ is cocompact.
In this situation we define the \emph{(Kurosh subgroup) rank} of $H$ to be $p+\ell$.

\begin{lem}
    The inclusion $T_H \to T$ defines a closed embedding of $\partial T_H$ into $\partial T$
    that is well-defined independent of $T$,
    so we denote $\partial T_H$ as $\partial(H,\mathscr{A}|_{H})$.
\end{lem}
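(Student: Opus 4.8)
The plan is to first identify $T_H$ as a Grushko tree for $H$ in its own right. Write $\mathscr{A}|_H$ for the (necessarily finite, since $H$ acts cocompactly on $T_H$) set of conjugacy classes in $H$ of the nontrivial subgroups $H\cap gA_ig^{-1}$. Then $H$ acts on the minimal subtree $T_H$ with trivial edge stabilizers and with vertex stabilizers exactly the subgroups $H\cap\mathrm{Stab}_F(v)$, which are trivial or conjugate into some $A_i$; so $T_H$ is a Grushko $(H,\mathscr{A}|_H)$-tree. These stabilizers are countable, being subgroups of the countable groups $A_i$, so by the same argument given in the text for $\partial T$, the space $\partial T_H$ is compact (and Hausdorff). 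Once we establish independence of the ambient $T$, the notation $\partial(H,\mathscr{A}|_H):=\partial T_H$ is justified exactly as in the text for $\partial(F,\mathscr{A})$.

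Next I would construct and analyze the map $\iota\colon\partial T_H\to\partial T$. Since $\mathrm{Stab}_H(v)\le\mathrm{Stab}_F(v)$, every vertex of $T_H$ of infinite valence in $T_H$ has infinite valence in $T$, so the inclusion $T_H\hookrightarrow T$ sends $V_\infty(T_H)$ into $V_\infty(T)$; combined with the evident map on Gromov boundaries this gives $\iota$. Injectivity follows from convexity of $T_H$ in $T$: geodesics of $T_H$ are geodesics of $T$, so two distinct ends, or two distinct infinite-valence vertices, of $T_H$ remain distinct in $T$, and a point of $\partial_\infty T$ never coincides with a vertex. For continuity I would use the observer's topology: a sub-basic open set of $\partial T$ is the trace on $\partial T$ of a half-tree of $T$ based at some point $\tilde y$; intersecting such a half-tree with $T_H$, using convexity of $T_H$ together with the nearest-point projection of $\tilde y$ onto $T_H$ when $\tilde y\notin T_H$, one checks that $\iota^{-1}$ of it is either $\varnothing$, all of $\partial T_H$, or the trace of a single half-tree of $T_H$, hence open. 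Since $\partial T_H$ is compact and $\partial T$ is Hausdorff, a continuous injection between them is automatically a closed topological embedding, which yields the first assertion.

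For independence of $T$, let $T'$ be another Grushko $(F,\mathscr{A})$-tree. By \cite[Lemma 2.2]{GuirardelHorbez} the canonical homeomorphism $\partial T\cong\partial T'$ is the continuous extension of any $F$-equivariant quasi-isometry $\phi\colon T\to T'$, independent of the choice of $\phi$. Such a $\phi$ restricts to an $H$-equivariant quasi-isometry of $T_H$ onto a bounded neighborhood of the $H$-minimal subtree $T'_H$; composing with the $H$-equivariant, $1$-Lipschitz nearest-point projection $T'\to T'_H$ produces an $H$-equivariant quasi-isometry $\psi\colon T_H\to T'_H$, whose continuous extension is, again by \cite[Lemma 2.2]{GuirardelHorbez} applied to $H$, the canonical homeomorphism $\partial T_H\cong\partial T'_H$. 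Because this projection moves points of $\phi(T_H)$ only a bounded distance, the maps $\phi|_{T_H}$ and (the composite of $\psi$ with $T'_H\hookrightarrow T'$) are at bounded distance and hence have the same extension to the Bowditch boundary; equivalently, the square relating $\iota$, $\iota'$, and the two canonical homeomorphisms commutes. Therefore the image of $\partial T_H$ in $\partial T$ is carried onto the image of $\partial T'_H$ in $\partial T'$, so it is well defined, and we may write $\partial(H,\mathscr{A}|_H)$ for $\partial T_H$. I expect this last paragraph to be the main obstacle: the delicate point is making precise that an $F$-equivariant quasi-isometry restricts appropriately to the canonical $H$-minimal subtrees and that the induced boundary maps fit into a commuting square with the Guirardel--Horbez identifications, rather than the essentially formal embedding argument of the first two paragraphs.
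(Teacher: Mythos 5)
Your proof is correct and takes essentially the same approach as the paper's: convexity of $T_H$ for injectivity, the observer's topology for continuity of the inclusion, compactness for the topological embedding, and a bounded-distance comparison of the two $H$-minimal subtrees for independence of $T$. Two small remarks: you correctly observe that compactness of $\partial T_H$ together with the Hausdorff property of $\partial T$ already forces the image to be closed, which in fact makes the paper's separate argument that $\partial T_H$ is closed in $\partial T$ unnecessary; and the point you flag at the end as the main obstacle---getting the $F$-equivariant quasi-isometry to carry $T_H$ into a bounded neighborhood of $T'_H$ with a uniform bound---is exactly where the paper invokes bounded cancellation (\Cref{boundedcancellation}).
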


\begin{proof}
    The $H$-minimal subtree $T_H$ of $T$ is convex in $T$,
    so there is a well-defined inclusion of Gromov boundaries
    $\partial_\infty T_H \to \partial_\infty T$.
    If a vertex of $T_H$ has infinite stabilizer in $H$,
    it has infinite stabilizer in $F$,
    so there is also a well-defined inclusion $V_\infty(T_H) \to V_\infty(T)$.
    The argument in the proof of \cite[Lemma 2.1]{GuirardelHorbez} and \cite[Lemma 2.2]{GuirardelHorbez}
    implies that the injective set map $\partial T_H \to \partial T$ is continuous.
    Since $\partial T_H$ is compact, the map is an embedding.
    To show it is closed, it therefore suffices to show that $\partial T_H$ is closed in $\partial T$.

    Suppose that $\{\xi_n\}$ is a sequence in $\partial T$
    converging to some point $\xi \notin T_H$.
    There is a unique closest point $\tilde x$
    of $T_H$ closest to $\xi$ in the sense that $\tilde x$ is the only point of $T_H$
    on the ray $\tilde R_{\tilde x,\xi}$.
    Assume first that $\tilde x \ne \xi$.
    Then because $\xi_n$ is eventually in the same half-tree based at $\tilde x$ as $\xi$,
    we see that $\xi_n \notin \partial T_H$ for $n$ large.
    If $\tilde x = \xi$, then $\tilde x$ has finite valence in $T_H$ but infinite valence in $T$.
    Let $\tilde  e_1,\ldots,\tilde e_m$ be the edges $T_H$ with initial vertex $\tilde x$
    and let $\tilde v_1,\ldots,\tilde v_m$  be the corresponding terminal vertices.
    Again we see that because $\xi_n$ must be in the same half-tree based at $\tilde v_i$ 
    as $\xi$ for $n$ large,
    we see that $\xi_n \notin \partial T_H$ for $n$ large.

    Bounded cancellation implies that if $(\mathcal{G},m)$ is a marked graph of groups,
    then $\tilde m(T_H)$ is in a bounded neighborhood of $\Gamma_H$,
    the $H$-minimal subtree of $\Gamma$.
    Thus the inclusion of Gromov boundaries is well-defined independent of $T$;
    it is clear that the same is true for the inclusion of infinite-stabilizer points,
    so the embedding is independent of $T$.
\end{proof}

A point $\xi \in \partial_\infty(F,\mathscr{A})$ is an \emph{attractor}
for $\hat\Phi$ if the set $U$ of all points $\zeta$
such that the sequence $\{\Phi^n(\zeta)\}$ converges to $\xi$
contains an open neighborhood of $\xi$.
If $\zeta$ is an attractor for $\hat\Phi^{-1}$,
then we say that $\zeta$ is a \emph{repeller} for $\hat\Phi$.

\begin{prop}[cf.~Lemma 2.3 of \cite{FeighnHandel},
    Proposition 1.1 of~\cite{GaboriauJaegerLevittLustig},
    Proposition 5.1.14 of~\cite{MartinoThesis}]\label{GJLLprop}
    Assume that $\tilde f\colon \Gamma \to \Gamma$ corresponds to
    $\Phi\colon (F,\mathscr{A}) \to (F,\mathscr{A})$
    and that $\fix(\hat\Phi)$ contains at least three points 
    in $\partial_\infty(F,\mathscr{A})$.
    Denote $\fix(\Phi) = \mathbb{F}$ and $\mathbb{T} = \{T_c : c \in \mathbb{F}\}$.
    Then
    \begin{enumerate}
        \item If $\partial(\mathbb{F},\mathscr{A}|_\mathbb{F}) 
            \cap \partial_\infty(F,\mathscr{A})$
            is nonempty, it
            is naturally identified with the closure 
            of \[\{\hat T^{\pm}_c : T_c \in \mathbb{T}\}\]
            in $\partial(F,\mathscr{A})$.
            None of these points is isolated in $\fix(\hat\Phi)$.
        \item Every point in 
            $(\fix(\hat\Phi)\setminus\partial(\mathbb{F},\mathscr{A}|_\mathbb{F}))
            \cap \partial_\infty(F,\mathscr{A})$
            is isolated and is either an attractor or a repeller for $\hat \Phi$.
    \end{enumerate}
\end{prop}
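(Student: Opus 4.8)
The plan is to follow Gaboriau--Jaeger--Levitt--Lustig and Martino, with \Cref{boundedcancellation} (bounded cancellation) as the essential tool. Fix a marked graph of groups $\mathcal{G}$ and a lift $\tilde f\colon \Gamma \to \Gamma$ corresponding to $\Phi$, and let $C$ be a bounded cancellation constant for the underlying homotopy equivalence. For the first statement, I would start from \Cref{boundarybasics}: for each $c \in \mathbb{F}$ the homeomorphism $\hat T_c$ commutes with $\hat f = \hat\Phi$, and the dynamical argument in the proof of \Cref{boundarybasics} shows $\hat\Phi$ fixes each point of $\fix(\hat T_c)$ (both endpoints when $c$ is nonperipheral, the unique fixed point when $c$ is peripheral in an infinite $A_i$). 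Since $\fix(\hat\Phi)$ is closed in the Hausdorff space $\partial(F,\mathscr{A})$, it contains the closure $\Lambda$ of $\{\hat T_c^{\pm} : c \in \mathbb{F}\}$. It then remains to identify $\Lambda \cap \partial_\infty(F,\mathscr{A})$ with $\partial(\mathbb{F},\mathscr{A}|_{\mathbb{F}}) \cap \partial_\infty(F,\mathscr{A}) = \partial_\infty \Gamma_{\mathbb{F}}$, where $\Gamma_{\mathbb{F}}$ is the $\mathbb{F}$-minimal subtree; axes lie in $\Gamma_{\mathbb{F}}$, and for the reverse inclusion the hypothesis that $\partial(\mathbb{F},\mathscr{A}|_{\mathbb{F}}) \cap \partial_\infty$ is nonempty forces $\mathbb{F}$ to be non-elliptic, so by a standard fact about group actions on trees $\Gamma_{\mathbb{F}}$ is the union of the axes of its hyperbolic elements, whence every ray in $\Gamma_{\mathbb{F}}$ is exhausted by such axes and its endpoint is a limit of their endpoints. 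The same description shows $\partial_\infty\Gamma_{\mathbb{F}}$ has no isolated points (in the degenerate elementary case — virtually cyclic or infinite dihedral — one examines the two boundary points directly), so none of these points is isolated in $\fix(\hat\Phi)$, which is the last sentence of the first item.

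For the second statement I would set up the dichotomy. Let $\xi \in \fix(\hat\Phi) \cap \partial_\infty(F,\mathscr{A})$ with $\xi \notin \partial(\mathbb{F},\mathscr{A}|_{\mathbb{F}})$, and choose a ray $\tilde R$ in $\Gamma$ converging to $\xi$. Since $\hat f(\xi) = \xi$, the tightened image $\tilde f_\sharp(\tilde R)$ also converges to $\xi$, so $\tilde R$ and $\tilde f_\sharp(\tilde R)$ share an infinite subray $\tilde R'$. By \Cref{boundedcancellation} (part 3, applied to initial segments of $\tilde R'$ inside $\tilde R$), deleting at most $C$ initial edges from $\tilde f_\sharp(\tilde R')$ produces a subray of $\tilde f_\sharp(\tilde R)$; since this subray and $\tilde R'$ are both subrays of $\tilde f_\sharp(\tilde R)$ converging to its end $\xi$, one contains the other. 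Passing to a deeper subray $\tilde\rho \subset \tilde R'$, this comparison gives a well-defined "displacement" of $\tilde f_\sharp$ along the ray to $\xi$, and measuring the lengths of $\tilde f_\sharp^{\,n}(\tilde\rho_0)$ for initial segments $\tilde\rho_0 \subset \tilde\rho$ yields a trichotomy: the displacement pushes \emph{outward} (lengths grow under iteration), is \emph{null} (some $\tilde f_\sharp^{\,k}$ fixes a subray pointwise), or pushes \emph{inward} (lengths shrink).

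In the outward case I would show $\xi$ is an attractor: a basic neighborhood $U$ of $\xi$ corresponds to a long initial segment of $\tilde\rho$, and bounded cancellation shows $\tilde f_\sharp$ carries any ray agreeing with $\tilde\rho$ far enough to a ray agreeing with $\tilde\rho$ strictly further, so $\hat\Phi(\overline U) \subset U$ and $\bigcap_n \hat\Phi^{\,n}(U) = \{\xi\}$; the same estimate forces any fixed point in $U$ to be $\xi$, so $\xi$ is isolated. The inward case is the outward case applied to $\Phi^{-1}$, giving a repeller. The null case produces, as in the classical argument, an element $c$ with $\Phi^k(c)=c$ whose axis ends at $\xi$; because $\hat\Phi$ fixes $\xi$ it cannot interchange the two ends of $A_c$, which (via the root argument used in \Cref{boundarybasics}) forces $c \in \mathbb{F} = \fix(\Phi)$, so $\xi \in \partial(\mathbb{F},\mathscr{A}|_{\mathbb{F}})$, contrary to hypothesis. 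Hence under the standing assumption only the outward and inward cases arise, so $\xi$ is an attractor or a repeller and is isolated; conversely a point of $\partial(\mathbb{F},\mathscr{A}|_{\mathbb{F}})$ is not isolated by the first item, so it is neither an attractor nor a repeller, which is exactly the compatibility implicitly asserted between the two items.

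The main obstacle I anticipate is the free-product subtlety emphasized in the introduction, which lives in the borderline between the outward and null cases. Over a free group, boundedness of the iterated blocks $\tilde f_\sharp^{\,n}(\tilde u)$ (where $\tilde f_\sharp(\tilde E) = \tilde E\tilde u$) forces the ray $\tilde E\,\tilde u\,\tilde f_\sharp(\tilde u)\,\tilde f^2_\sharp(\tilde u)\cdots$ to be eventually a periodic Nielsen ray, hence $\xi \in \partial\fix(\Phi)$; over $F = A_1*\cdots*A_n*F_k$ the blocks may be bounded yet carry nontrivial vertex-group elements, in which case the ray still grows linearly and $\xi$ is an attractor rather than a boundary point of the fixed subgroup. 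Since no relative train track map is available at this point in the paper, one cannot appeal to a stratified structure; instead the separation of the genuinely null (fixed-subgroup) subcase from the bounded-displacement (attractor) subcase must be carried out purely through \Cref{boundedcancellation} and the combinatorics of reduced words in $F$, and getting this cleanly is where I expect the real work to be. A secondary technical point to handle carefully is the passage from $\Phi^k$-fixed data back to $\Phi$-fixed data, which I would manage using that $\hat\Phi$ itself fixes $\xi$ together with uniqueness of roots of nonperipheral elements, as in \Cref{boundarybasics}.
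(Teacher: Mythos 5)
Your first item is essentially the paper's: show each $\hat T_c^\pm$ is in $\fix(\hat\Phi)$ via \Cref{boundarybasics}, then identify the closure with $\partial(\mathbb{F},\mathscr{A}|_\mathbb{F})$ and observe it has no isolated points. The paper phrases the non-isolation slightly differently (when $\partial(\mathbb{F},\mathscr{A}|_\mathbb{F})\cap\partial_\infty$ has exactly two points it finds a third fixed point $\xi$ and shows $\hat T_c^{\pm}$ is a limit of $\hat T_c^{\pm n}(\xi)$ using $\hat T_c$-invariance of $\fix(\hat f)$), but your argument is a legitimate alternative.

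For item 2 there is a genuine gap, and it is in the place you correctly suspect. You partition into outward/null/inward by comparing $\tilde R$ with $\tilde f_\sharp(\tilde R)$, and for the outward case you argue that ``bounded cancellation shows $\tilde f_\sharp$ carries any ray agreeing with $\tilde\rho$ far enough to a ray agreeing with $\tilde\rho$ strictly further,'' from which $\hat\Phi(\overline U)\subset U$. That step is only valid when the forward displacement eventually \emph{exceeds} the bounded cancellation constant $C$: if $\Phi(\xi_i)=\xi_{k(i)}z_i$ with $|z_i|\le C$ and $k(i)-i\to\infty$, one gets $\Phi(U_i)\subset U_{k(i)-C}\subset U_{i+1}$ for $i$ large. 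But the free-product phenomenon you describe is precisely the case $k(i)-i$ bounded (e.g.\ $k(i)=i+1$ with a new vertex-group letter appearing each step, as in Martino's example preceding the proof). There, $k(i)-C$ can be smaller than $i$, so the displayed inclusion fails, and neither ``outward dominates cancellation'' nor ``null'' applies. Your proposal names this as ``the real work'' and then stops; the argument as written thus does not establish that $\xi$ is an attractor in exactly the case the proposition was introduced to handle.

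The paper's proof devotes the bulk of its length to closing this gap. After the superlinear dichotomy (which matches your outward/inward cases) and the finitely-many-letters case (which matches your null case and lands $\xi$ in $\partial(\mathbb{F},\mathscr{A}|_\mathbb{F})$, proving along the way the key dichotomy \Cref{finitelettersclaim}), it treats the remaining case directly: one must show that if $w_i=\xi_i^{-1}\Phi(\xi_i)$ stays bounded but its letters escape every finite set, and $k(i)\ge i$ occurs infinitely often, then a carefully chosen nested sequence of basic neighborhoods $U_{i_r}$ satisfies $\Phi^{2C+2}(U_{i_r})\subset U_{i_{r+1}}$. This requires the Collins--Turner estimate \Cref{CollinsTurnerLemma}, Martino's property P, and the $k$-descendent machinery of \Cref{MartinosPropertyP} and \Cref{finalclaim}; nothing in your proposal supplies a substitute for this combinatorics of reduced words. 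A secondary, smaller issue: in your null case you say ``some $\tilde f_\sharp^k$ fixes a subray pointwise'' and then recover an element of $\mathbb{F}=\fix(\Phi)$ by a root argument. The paper avoids the passage through $\Phi^k$ altogether by observing that $w_i=w_p$ directly yields $\xi_p\xi_i^{-1}\in\fix(\Phi)$; your route via $\Phi^k$ can be made to work but needs more care than ``as in \Cref{boundarybasics},'' since the element produced there a priori lies only in $\fix(\Phi^k)$ and need not be a power of something in $\fix(\Phi)$ unless you use that $\hat\Phi$ fixes $\xi$ and argue about periodic Nielsen subrays being actual Nielsen subrays.
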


We note that in fact item 2 in the proposition holds
without the assumption on the number of fixed points of $\hat\Phi$.

The  proof of this proposition is admittedly more complicated
than the corresponding statement for automorphisms of free groups.
The reason is that for automorphisms of free groups,
if a boundary point $\xi$ is an attractor for an automorphism $\Phi$,
then it is \emph{superlinearly} attracting,
in the sense that if $\xi_i$ is the $i$th prefix of $\xi$
and $k(i)$ is the part of the length of $\Phi(\xi_i)$ which is common with $\xi$,
then $k(i) - i$ goes to infinity with $i$.
Superlinear growth dominates bounded cancellation,
which allows one to more easily argue that one has an attractor.
For automorphisms of free products, there is no such guarantee of superlinear  growth,
and in fact there exist attractors for which $k(i) - i$ remains bounded.
Arguing that one has these kinds of attractors or repellers
when the given  fixed point is not in the boundary of the fixed subgroup 
is more delicate.

Here is an example, due to Martino:
let $A$ be a countably infinite group and $\Theta\colon A \to A$ an automorphism of infinite order.
Assume further that $\Theta$ does not act periodically on any nontrivial element of $A$.
Let $h\colon A \to A'$ be an isomorphism
and $F = A*A'*F_1$; let $t$ be a generator for $F_1$.
Consider the automorphism $\Phi\colon F \to F$
defined as $\Phi(a) = h(\Theta(a))$ for $a \in A$,
$\Phi(a') = h^{-1}(a')$ for $a' \in A'$,
and finally as $\Phi(t) = tg$ for some fixed nontrivial element $g \in A$.
Then $\fix(\Phi)$ is trivial and
\[  \xi = tgh(\Theta(g))\Theta(g)h(\Theta^2(g))\Theta^2(g)\ldots \]
is a linear attractor, in the sense that $k(i) = i+1$.

\begin{proof}
    If $\partial(\mathbb{F},\mathscr{A}|_{\mathbb{F}})
    \cap\partial_\infty(F,\mathscr{A})$ 
    contains at least three points
    then no point of $\partial(\mathbb{F},\mathscr{A}|_\mathbb{F})$
    is isolated in itself, so it is not isolated in $\fix(\hat\Phi)$.
    If instead $\partial(\mathbb{F},\mathscr{A}|_\mathbb{F})
    \cap\partial_\infty(F,\mathscr{A})$
    consists of two points (i.e.~$\mathbb{F}$ is $F_1$ or $C_2*C_2$),
    then $\partial(\mathbb{F},\mathscr{A}|_\mathbb{F}) = \{T_c^{\pm}\}$
    for some $c \in \fix(\Phi)$.
    There is a point $\xi \in \fix(\hat\Phi) = \fix(\hat f)$ 
    contained in $\partial_\infty(F,\mathscr{A})$
    different from $\hat T_c^{\pm}$.
    Since $\hat T_c$ acts with source--sink dynamics on $\partial(F,\mathscr{A})$
    and $\fix(\hat f)$ is $\hat T_c$-invariant by \Cref{boundarybasics},
    we see that $\lim_{n\to\infty}\hat T_c^n(\xi) = \hat T_c^+$
    and similarly $\lim_{n\to\infty} \hat T_c^{-n}(\xi) = \hat T_c^{-}$,
    so neither of these points are isolated in $\fix(\hat\Phi)$.

    Now consider any fixed point $\xi\in\partial_\infty(F,\mathscr{A})$.
    We follow the outline of \cite[Proposition 1.1]{GaboriauJaegerLevittLustig}
    and \cite[Proposition 5.1.14]{MartinoThesis}.
    Think of $\xi$ as an infinite word
    \[  \xi = x_1x_2x_3\cdots\]
    and write $\xi_i = x_1\cdots x_i$.
    Consider the words $w_i = \xi_i^{-1}\Phi(\xi_i)$.

    A few observations are in order.
    First, since $\Phi(\xi) = \xi$, we have $\lim_{i\to\infty}\Phi(\xi_i) = \xi$,
    so we can write each $\Phi(\xi_i)$ as $\xi_{k(i)}z_i$,
    where  $k(i) \to \infty$ with $i$.
    By bounded cancellation,
    which for words says that if $uv$ is reduced as written,
    then at most $2B$ letters of $\Phi(u)\Phi(v)$ cancel to form  the reduced word
    for $\Phi(uv)$, the length of the word $z_i$ is bounded by $B$ independently of $i$.

    \paragraph{Superlinear attractors and repellers.}
    Suppose that the length of the words $w_i$ grows without bound.
    Then the equation $\Phi(\xi_i) = \xi_{k(i)}z_i$ and our bound on the length
    of $z_i$ implies that $|k(i) - i|$ goes to infinity
    and in fact either $k(i) - i$ goes to infinity or $i - k(i)$ goes to infinity.

    Suppose first that $k(i) - i$ goes  to infinity.
    Then there exists $j$ such that if $i \le j$,
    then $k(i) > i + B$.
    Consider the basic open neighborhood $U_i$ of $\xi$ given by
    \[  U_i = \{\zeta : \zeta = \xi_i\zeta' \text{ is reduced as written}\}. \]
    (We do include boundary points of the form $wA$ in $U_i$.)
    Since $k(j) > j  + B$, bounded cancellation implies that 
    $\Phi(U_j) \subset U_{k(j) - B} \subset U_j$.
    Since $k(k(j))  > k(j) + B$,
    we have some sequence $k_n$ such that $k_n \to \infty$ with $n$ such that
    \[  \bigcap_{n=1}^\infty\Phi^n(U_j) \subset \lim_{n\to\infty} U_{k_n} = \{\xi\}. \]
    This shows that $\xi$ is  an attractor for the action $\Phi$.

    Suppose instead that $k(i) - i$ goes to $-\infty$.
    Since $\xi$ is fixed by $\Phi$,
    it is fixed by $\Phi^{-1}$ and as above we may write 
    $\Phi^{-1}(\xi_i) = \xi_{\bar k(i)}\bar z_i$.
    The equation $\xi_i = \Phi^{-1}(\xi_{k(i)})\Phi^{-1}(z_i)$
    implies that $\bar k(k(i)) - i$ is bounded independent of $i$.
    We have
    \[  \bar k(k(i)) - k(i) = (\bar k(k(i)) - i) - (k(i) - i) \]
    and the latter goes to infinity.
    Since $k(i)$ goes to infinity and $k(i+1)  - k(i)$ and $\bar k(i+1) - \bar k(i)$
    are both bounded,
    this implies $\bar k(i) - i$ goes to infinity.
    The argument in the previous paragraph applies to show that in this case
    $\xi$ is a repeller for the action of $\Phi$.

    \paragraph{In the boundary of the fixed subgroup.}
    So suppose that the length of $w_i$ is uniformly bounded,
    and observe that if $w_i = w_p$,
    then $\xi_p\xi_i^{-1}$ is fixed by $\Phi$.
    If we write $v_i = \xi_i^{-1}\Phi^{-1}(\xi_i)$,
    note that the same argument applies:
    if $v_i = v_p$, then $\xi_p\xi_i^{-1}$ is fixed by $\Phi$.
    If infinitely many of the words $w_i$ or infinitely many of the words $v_i$
    are drawn from a set of only finitely many letters,
    then $v_i$ or $w_i$ takes on some value infinitely often.
    If this happens, note that for fixed $i$,
    we have
    \[  \lim_{p\to\infty}\xi_p \xi_i^{-1} = \xi,\]
    so $\xi \in \partial(\mathbb{F},\mathscr{A}|_\mathbb{F})$.

    We claim that this latter event,
    infinitely many words drawn from finitely many letters,
    actually occurs if thre exists $j$ such that for all $i \ge j$,
    we have $k(i) < i$ and $\bar k(i) < i$.
    Since $k(i)$ goes to infinity, we may increase $j$ so that if $i \ge j$,
    then additionally $\bar k(k(i)) < k(i)$.

    \begin{claim}[cf. Lemma 5.1.13 of \cite{MartinoThesis}]
        \label{finitelettersclaim}
        There is a finite set of letters $L$ with the property that
        if $k(i-1) < k(i) < i$ and $\bar k(k(i)) < k(i)$,
        then either every letter of $v_{k(i)}$ belongs to $L$
        or every letter of $w_i$ belongs to $L$.
    \end{claim}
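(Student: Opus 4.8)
The plan is to follow Martino's argument \cite[Lemma 5.1.13]{MartinoThesis} closely, recast in the present notation. I would work throughout with a bounded cancellation constant $B$ for $\Phi$ in the word metric, obtained from \Cref{boundedcancellation} applied to $m$ and a homotopy inverse: for $uv$ reduced as written, reducing $\Phi^{\pm 1}(u)\Phi^{\pm 1}(v)$ cancels at most $2B$ letters from each side, so in particular $|z_i|\le 2B$ and $|\bar z_i|\le 2B$. I would also fix a constant $D$ with $|\Phi^{\pm 1}(x)|\le D$ for every letter $x$. This uses that $\Phi$ permutes the conjugacy classes in $\mathscr{A}$: $\Phi$ sends each infinite factor $A_a$ to a conjugate $c_aA_bc_a^{-1}$ of another factor by a fixed element $c_a$, so a peripheral letter $x\in A_a$ has $\Phi(x)=c_a\hat x c_a^{-1}$ with $\hat x\in A_b$ a single letter, while $\Phi$ carries each free generator to a fixed word (and symmetrically for $\Phi^{-1}$).

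First I would record a boundedness fact. Applying $\Phi^{-1}$ to the equation $\Phi(\xi_i)=\xi_{k(i)}z_i$ and using $\Phi^{-1}(\xi_{k(i)})=\xi_{\bar k(k(i))}\bar z_{k(i)}$ gives $\xi_i=\xi_{\bar k(k(i))}\bar z_{k(i)}\Phi^{-1}(z_i)$ up to at most $2B$ cancellation at the final junction; since $|\bar z_{k(i)}\Phi^{-1}(z_i)|\le 2B+2BD$, this forces $i-\bar k(k(i))\le 4B+2BD=:C$. With the hypotheses $\bar k(k(i))<k(i)<i$ this yields $i-k(i)<C$ and $k(i)-\bar k(k(i))<C$, so that $w_i=x_i^{-1}\cdots x_{k(i)+1}^{-1}z_i$ and $v_{k(i)}=x_{k(i)}^{-1}\cdots x_{\bar k(k(i))+1}^{-1}\bar z_{k(i)}$ (tightened at the junction with $z_i$, resp.\ $\bar z_{k(i)}$) are words of length at most $C+2B$.

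Next I would classify the letters that can occur. Call a letter \emph{thin} if it appears in some $c_a^{\pm 1}$, in $\Phi^{\pm 1}$ of a free generator, or as a coalescence of boundedly many such; there are finitely many thin letters, and I would take $L$ to be this finite set. Reading $z_i$ and $\bar z_{k(i)}$ as the last $\le 2B$ letters of $\Phi(\xi_i)$, resp.\ of $\Phi^{-1}(\xi_{k(i)})$, every non-thin letter they contain is a peripheral letter $\hat x_{i'}$ with $i'\in(i-2B,i]$, resp.\ $\check x_{m'}$ with $m'\in(k(i)-2B,k(i)]$; and using the relation $x_{\bar k(k(i))+1}\cdots x_i=\bar z_{k(i)}\Phi^{-1}(z_i)$ (up to bounded cancellation), every letter $x_m$ with $m\in(k(i),i]$ is either thin or is such a $\check x_{m'}$ or $x_{i'}$ flanked by thin letters. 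Thus the only letters of $w_i$ or $v_{k(i)}$ that can fail to lie in $L$ are \emph{exposed} peripheral letters coming from a bounded window of positions of $\xi$ just above, resp.\ just below, position $k(i)$.

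The main obstacle is the dichotomy itself, and this is where the hypothesis $k(i-1)<k(i)$ is used. It implies that the block of $\xi$ at positions $(k(i-1),k(i)]$ is produced by the single new block $\Phi(x_i)=c_i\hat x_i c_i^{-1}$ together with the bounded overflow $z_{i-1}$, so $\Phi(x_i)$ can contribute at most one exposed peripheral letter, $\hat x_i$, near position $k(i)$. Following Martino, I would use this to show that an exposed non-$L$ letter of $w_i$ cannot be produced from $\hat x_i$ and so must come from the $z_i$-side, i.e.\ must be some $\hat x_{i'}$ with $i'<i$; tracking that letter under $\Phi^{-1}$ reproduces $x_{i'}$ flanked by thin letters at a position below $k(i)$, and the resulting lengthening of the agreement between $\Phi^{-1}(\xi_{k(i)})$ and $\xi$ past position $\bar k(k(i))$ forces every exposed peripheral letter of $v_{k(i)}$ to be consumed — that is, all letters of $v_{k(i)}$ lie in $L$. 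A symmetric argument handles the case where $v_{k(i)}$ has a non-$L$ letter. The delicate point throughout, exactly as in \cite[Lemma 5.1.13]{MartinoThesis}, is to match positions of $\xi$ with the blocks $\Phi(x_{i'})$ and to control coalescences at block junctions; this bookkeeping is the part of the proof I expect to require the most care.
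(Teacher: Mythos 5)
Your proposal correctly identifies the main ingredients — a bounded-cancellation constant, the structure of $\Phi$ on peripheral letters coming from the fact that $\Phi$ permutes $\mathscr{A}$, and the special role of the hypothesis $k(i-1) < k(i)$ — but the crucial step, the dichotomy itself, is only sketched, and the sketch does not amount to an argument. You acknowledge this yourself ("this bookkeeping is the part of the proof I expect to require the most care"), and I would flag the sketch in your last paragraph as more muddled than merely incomplete: you say an exposed non-$L$ letter of $w_i$ "cannot be produced from $\hat x_i$ and so must come from the $z_i$-side," but $\hat x_i$ \emph{is} (potentially) a letter of $z_i$, so "not from $\hat x_i$" and "from $z_i$" are not a dichotomy. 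Also, the length bounds $i - k(i) < C$ and $k(i)-\bar k(k(i)) < C$ in your second paragraph are already established in the surrounding proof before the claim is invoked, and in any case bounding the \emph{lengths} of $w_i$ and $v_{k(i)}$ does nothing towards bounding their \emph{letters}, which is what the claim requires.

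The mechanism in the paper is tighter and cleaner than what you sketch. The identity $\Phi(x_i) = z_{i-1}^{-1}x_{k(i-1)+1}\cdots x_{k(i)}z_i$ together with $k(i-1) < k(i)$ shows that $z_i$ is a terminal subword of $\Phi(x_i)$; since $x_i$ is a \emph{single} letter, $z_i$ has \emph{at most one} letter outside $L'$ (not up to $2B$ of them in a window, as in your step 4). One then writes $\Phi^{-1}(z_i^{-1}) = x_i^{-1}\cdots x_{k(i)+1}^{-1}v_{k(i)}$, checks it is reduced as written, and observes that because $z_i^{-1}$ has at most one non-$L'$ letter, $\Phi^{-1}(z_i^{-1})$ has at most one letter outside the finite set $L^{(4)}$ of short products of $L'''$ letters. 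The dichotomy is then immediate from this displayed equation: the single exceptional letter of $\Phi^{-1}(z_i^{-1})$ lies either among the $x^{-1}_m$ (in which case every letter of $v_{k(i)}$ is in $L^{(4)}$) or within $v_{k(i)}$ (in which case every $x_m$ for $k(i) < m \le i$ is in $L^{(4)}$, and hence $w_i = x_i^{-1}\cdots x_{k(i)+1}^{-1}z_i$ has all letters in a slightly enlarged finite set). This clean reduction to a single exceptional position in a single equation is the idea missing from your proposal, and without it the proof does not close.
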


    \begin{proof}[Proof of \Cref{finitelettersclaim}.]
        Since $\Phi \colon (F,\mathscr{A}) \to (F,\mathscr{A})$
        permutes the conjugacy classes in $\mathscr{A}$, we have that
        $\Phi$ sends each $A_i$ to a conjugate of some $A_j$,
        say $g_iA_j g_i^{-1}$.
        Let $L'$ be the union of the set of letters 
        occuring in the normal form for each $g_i$
        and occuring in each $\Phi(s_i^{\pm1})$ 
        for $s_i$ in our fixed free basis $S$ for $F_k$
        together with their inverses.
        (It follows that each $s_i^{\pm 1}$ occurs in $L'$.)

        We have \[v_{k(i)} = \xi^{-1}_{k(i)}\Phi^{-1}(\xi_{k(i)}) 
            = \xi^{-1}_{k(i)}\xi_{\bar k(k(i))}\bar z_{k(i)}
        = x_{k(i)}^{-1}\dotsb x^{-1}_{\bar k(k(i)) +1}\bar z_{k(i)} \]
        since $\bar k(k(i)) < k(i)$.
        Since $\xi_{k(i)}v_{k(i)} = \Phi^{-1}(\xi_{k(i)})$,
        we must have that the first letter of $v_{k(i)}$
        belongs to the same free factor as $x_{k(i)}$,
        therefore $x_i^{-1}\dotsb x_{k(i)+ i}^{-1}v_{k(i)}$
        is reduced as written.
        Note that
        \[  \xi_i = \Phi^{-1}(\Phi(\xi_i)) = \Phi^{-1}(\xi_{k(i)})\Phi^{-1}(z_i) \]
        so \[\Phi^{-1}(z_i^{-1}) = \xi_i^{-1}\Phi^{-1}(\xi_{k(i)})
        = x_i^{-1}\dotsb x_{k(i) + 1}^{-1}v_{k(i)}.\]

        Notice that $\Phi(x_i) = z_{i-1}^{-1}x_{k(i-1)+ 1}\dotsb x_{k(i)}z_i$.
        Since we assume that $k(i-1) < k(i)$,
        and since by assumption the first letter of $z^{-1}_{i-1}$
        cannot be $x_{k(i-1)+1}$,
        we see that $z_i$ is a terminal subword of $\Phi(x_i)$.
        Since $x_i$ is a single letter,
        at most one letter of $z_i$ and thus $z_i^{-1}$ does not belong to $L'$.
        
        Let $L''$ be the union of the set of letters
        constructed for $\Phi^{-1}$ the way $L'$ was constructed for $\Phi$.
        Let $L'''$ be the union of $L'$ and $L''$.
        Thus every letter of $\Phi^{-1}(z_i^{-1})$ 
        is a product of at most three letters of $L'''$ except at most one
        (consider what happens when a product of elements of $L'''$ is reduced).
        Let $L^{(4)}$ be the set of nonidentity products of at most three elements of $L'''$.
        Therefore either every letter of $v_{k(i)}$ belongs to $L^{(4)}$
        or each of $x_{k(i)+1},\dotsc,x_i$ belongs to $L^{(4)}$.
        Now
        \[w_i = \xi_i^{-1}\Phi(\xi_i) = \xi^{-1}_i\xi_{k(i)}z_i 
        = x_i^{-1}\dotsb x_{k(i)+1}^{-1} z_i \]
        and we see that each letter of $w_i$ is a product
        of at most two elements
        of $L^{(4)}$, so enlarging once more to a set $L$ proves the claim
        (notice that the construction of $L$ is independent of $i$).
    \end{proof}

    Given \Cref{finitelettersclaim},
    notice that since $k(i)$ goes to infinity with $i$,
    we have that $k(i-1) < k(i)$ occurs infinitely often,
    so either infinitely many $v_{k(i)}$
    or infinitely many $w_i$
    are written with a finite set of letters $L$.
    In either case, 
    we see that $\xi \in \partial(\mathbb{F},\mathscr{A}|_{\mathbb{F}})$.

    \paragraph{Linear attractors}
    So we shall suppose to the contrary:
    the word lengths of $w_i$ and $v_i$ remain uniformly bounded,
    but given any finite set of letters $A$,
    there exists $j_0$ so large that if $i > j_0$,
    then $w_i$ and $v_i$ each contain a letter not in $A$.
    \Cref{finitelettersclaim} shows that we have that either
    $k(i) \ge i$ or $\bar k(i) \ge i$ occurs infinitely often.

    We will show that if $k(i) \ge i$ occurs infinitely often,
    then $\xi$ is an attractor for $\Phi$.
    If instead $\bar k(i) \ge i$ occurs infinitely often,
    then the same argument will show that $\xi$ is a repeller for $\Phi$.
    (Of course only one can occur.)
    We do this by considering a subsequence $i_r$ where $k(i_r)$ is strictly increasing
    and showing that if $k(i_r) \ge i_r$ for some $r$, then $k(i_{r+1}) \ge i_{r+1}$.
    We show that $i_{r+1} - i_r$ is bounded by some constant $C$,
    and in fact that $\Phi^{2C+2}(U_{i_r}) \subset U_{i_{r+1}}$.
    It follows that the open neighborhood $U^r$ of $\xi$ given by
    \[  U^r = U_{i_r} \cup \Phi(U_{i_r}) \cup \cdots \cup \Phi^{2C+1}(U_{i_r})\]
    satisfies $\Phi(U^r) \subset U^r$.
    A boundary point
    $\zeta$ is in $\bigcap_{n=1}^\infty \Phi^n(U^r)$ if for all $n$,
    there exists a nonnegative $c_n \le 2C+1$
    such that $\zeta \in \Phi^{n + c_n}(U_{i_r})$.
    Write $n + c_n = 2mC + c'_n$ for some nonnegative $c'_n \le 2C+1$.
    Then $\zeta \in \Phi^{c'_n}(U_{i_{r+m}})$.
    Thus
    \[  \bigcap_{n=1}^\infty \Phi^n(U^r) = \lim_{m\to\infty} U^{r+m} = \{\xi\}\]
    and we have shown that $\xi$ is an attractor for $\Phi$.
    
    Here is the definition of the subsequence $i_r$.
    Define $i_1$ to be the first $i$ such that $k(i) > 0$
    and choose $i_r$ inductively such that $i_r$ is the least integer satisfying
    $k(i_r) > k(i_{r-1})$.
    
    We claim that $i_{r+1} - i_r$ is bounded by a constant $C$ chosen as follows.
    Let $C$ be such that
    if the length of a reduced word $w$ is at least $C$,
    then the length of $\Phi(w)$ is greater than $3B$,
    where $B$ is the bounded cancellation constant.
    To see that $C$ exists, note that the length of $\Phi^{-1}(w)$
    is bounded by some constant multiple of the length of $w$,
    namely the maximum of the length of the image of a single letter under $\Phi^{-1}$.
    
    Suppose $i_{r+1} - i_r > C$.
    Then $\Phi(\xi_{i_{r+1} -1}) = \Phi(\xi_{i_r})\Phi(x_{i_r+1}\cdots x_{i_{r+1}-1})$
    has length greater than $k(i_r) - 2B + 3B = k(i_r) + B$ by bounded cancellation.
    But by assumption $k(i_{r+1} - 1) < k(i_r)$,
    and hence the length of $z_{i_{r+1}-1}$ is greater than $B$, a contradiction.

    Therefore to complete the proof, we show that if
    $k(i_r) \ge i_r$ for sufficiently large $r$, then $k(i_{r+1}) \ge i_{r+1}$
    and $\Phi^{2C+2}(U_{i_r}) \subset U_{i_{r+1}}$.
    To do this, we need to choose a finite set of letters to avoid.
    We now turn to defining that finite set of letters.

    We begin by recalling a result of Collins and Turner
    restated (allowing infinite words) by Martino.
    \begin{lem}[Proposition 2.4 of \cite{CollinsTurnerFixed},
        Lemma 5.1.3 of \cite{MartinoThesis}]
        \label{CollinsTurnerLemma}
        There exists a constant $\ell$ with the following property.
        Let $L'$ be the finite set of letters from the beginning
        of \Cref{finitelettersclaim}.
        Given one of our free factors $A_i$,
        we may write $\Phi(A_i) = g_iA_jg_i^{-1}$.
        Given a possibly infinite word $Y$,
        suppose $\Phi(Y) = g_ixY'$ is reduced as written,
        where $x \in A_j$.
        Then either
        \begin{enumerate}[label={(\alph*)}]
            \item $Y$ begins with a letter from $A_i$, or
            \item $x$ is the product of at most $\ell$ letters from $L'$.
        \end{enumerate}
    \end{lem}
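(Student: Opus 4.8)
The plan is to reduce the assertion to a bounded-length question about the \emph{front} of the reduced word representing $\Phi_\sharp(Y)$, and then to analyse that front one letter of $Y$ at a time, following Collins--Turner's Proposition~2.4 of \cite{CollinsTurnerFixed} (and its restatement in \cite{MartinoThesis} allowing infinite words). First I would fix notation: $\Phi$ permutes the conjugacy classes in $\mathscr{A}$, so writing $\pi$ for the induced permutation of $\{1,\dots,n\}$ we have $\Phi(A_m) = g_m A_{\pi(m)} g_m^{-1}$ for each $m$, and in particular $\pi(i) = j$. Let $G$ bound the lengths of the (reduced words for the) $g_m$ and of their $\Phi^{-1}$-analogues, let $N$ bound the lengths of the $\Phi^{\pm 1}$-images of the free generators in $S$, and put $M = \max(2G+1, N)$, a uniform bound on the displacement of $\Phi$ and of $\Phi^{-1}$ on single letters. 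By the very definition of $L'$, the reduced word for $\Phi(y)$ of a single letter $y$ is spelled using letters of $L'$ \emph{except}, when $y$ is peripheral, for its single middle letter. Writing $Y = y_1 y_2 y_3 \cdots$ in normal form and noting that $g_i$ is a word of at most $G$ letters of $L'$, the letter $x$ is recovered as the leading letter of $g_i^{-1}$ times the first $|g_i| + 1$ letters of $\Phi_\sharp(Y)$; so it suffices to bound, independently of $Y$, the number of $L'$-letters needed to spell each of the first $|g_i| + 1$ letters of $\Phi_\sharp(Y)$, assuming $y_1 \notin A_i$ (the alternative being case (a)).

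The second step is to localise using bounded cancellation. By \Cref{boundedcancellation} there is a constant $B$ with the property that when the concatenation $\Phi(y_1)\Phi(y_2)\cdots$ is tightened, at most $B$ letters cancel across each juncture; hence the first $|g_i| + 1$ letters of $\Phi_\sharp(Y)$ already occur, in order, inside the reduced word for $\Phi_\sharp(y_1\cdots y_r)$ once $|\Phi_\sharp(y_1\cdots y_r)| > |g_i| + 1 + B$. Taking $r$ minimal, the word $y_1\cdots y_{r-1}$ is one of the finitely many reduced words mapped by $\Phi_\sharp$ to a word of length at most $|g_i| + 1 + B$, so $r$ is bounded by an explicit function of $|g_i|$, $B$ and $M$. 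Thus only the boundedly many images $\Phi(y_1), \ldots, \Phi(y_r)$ contribute to the prefix $g_i x$ of $\Phi_\sharp(Y)$, and each of them is a word in $L'$-letters except for at most one peripheral middle letter, so the only obstruction to the desired bound is a peripheral middle letter surfacing among the first $|g_i| + 1$ letters of $\Phi_\sharp(Y)$ and possibly coalescing there with its neighbours.

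Controlling that obstruction, under the hypothesis $y_1 \notin A_i$, is the crux and the main obstacle. Two points drive it. First, the front of $\Phi_\sharp(Y)$ begins with a (possibly empty) prefix of $\Phi(y_1)$, and since $y_1$ is either a free letter or lies in some $A_{m_1}$ with $m_1 \ne i$, that prefix is an initial segment of $\Phi(s^{\pm 1})$ or of $g_{m_1}$, hence spelled in $L'$; the only way the peripheral middle letter $a_1 \in A_{\pi(m_1)}$ of $\Phi(y_1)$ could reach position $|g_i| + 1$ or earlier is if $g_i$ is essentially a prefix of $g_{m_1} a_1 g_{m_1}^{-1}$, and here one uses that $\pi$ is a bijection, so $\pi(m_1) \ne \pi(i) = j$, whence $A_{\pi(m_1)} \cap A_j = \{1\}$ forbids $x = a_1$ and pins any such overlap to a bounded one. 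Second, if the front of $\Phi_\sharp(Y)$ extends past $\Phi(y_1)$, then after a bounded amount of cancellation it reads off the front of $\Phi_\sharp(y_2 y_3 \cdots)$; either $y_2$ lies in $A_i$, in which case only the bounded words $\Phi(y_1)$ and $g_i$ and one coalesced peripheral letter intervene and the bound is immediate, or $y_2 \notin A_i$ and one repeats the analysis on the shorter word, i.e.\ one inducts on the length of the prefix $y_1\cdots y_r$ (that $Y$ itself may be infinite is harmless, since only this bounded prefix ever matters). The resulting $\ell$ can be taken to be an explicit polynomial in $G$, $N$ and $B$. I expect essentially all of the genuine work to lie in the bookkeeping of peripheral-letter coalescences at the seams between successive $\Phi(y_t)$, and in verifying that the ``bad'' overlap cases really are excluded by the conjugacy-class permutation being a bijection; this is exactly where Collins--Turner's original argument is most delicate.
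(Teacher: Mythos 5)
This lemma is quoted in the paper as an external result (Proposition~2.4 of Collins--Turner and Lemma~5.1.3 of Martino's thesis); the paper gives no proof of its own, so there is no ``paper's proof'' to compare your sketch against.

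Your high-level plan is sound as far as it goes: bounded cancellation does localize the question to a bounded prefix $y_1\cdots y_r$ of $Y$, and the observation that $\pi$ is a bijection so that the peripheral middle letter of $\Phi(y_1)$ lies in $A_{\pi(m_1)}\ne A_j$ is indeed the right mechanism for the first block. However, there is a genuine gap exactly where you flag it, and one of your intermediate claims is wrong. You assert that ``either $y_2$ lies in $A_i$, in which case only the bounded words $\Phi(y_1)$ and $g_i$ and one coalesced peripheral letter intervene and the bound is immediate.'' It is not immediate: the one coalesced peripheral letter you mention is (potentially) $a_2=\Phi_i(y_2)\in A_j$, which is an \emph{arbitrary} letter of $A_j$, not a bounded $L'$-product. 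If $a_2$ could surface at position $|g_i|+1$, then $x$ would involve $a_2$ and alternative~(b) would fail while alternative~(a) also fails since $y_1\notin A_i$; so ruling out this surfacing is not bookkeeping but the actual content of the lemma. The missing observation is that for $a_2$ to coalesce into position $|g_i|+1$ one would need the tightened form of $\Phi(y_1)g_i$ to be $g_i b$ with $b\in A_j$, which forces $\Phi(y_1)=g_ibg_i^{-1}\in\Phi(A_i)$ and hence $y_1\in A_i$, contradicting the case hypothesis; and one then has to run a similar pinning argument down the rest of the bounded prefix. Your sketch does not exhibit this pin. Relatedly, the proposed ``induct on the length of the prefix $y_1\cdots y_r$'' does not directly close: when you pass from $Y$ to $y_2y_3\cdots$, the reduced word $\Phi_\sharp(y_2y_3\cdots)$ need not begin with $g_i$, so the inductive hypothesis of the lemma does not apply to the shorter word without modification. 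Finally, the phrasing ``$y_1\cdots y_{r-1}$ is one of the finitely many reduced words mapped by $\Phi_\sharp$ to a word of length at most $|g_i|+1+B$'' is off, since peripheral letters are drawn from infinite alphabets; what is bounded is $r$, not the number of such prefixes. The conclusion you draw from it ($r$ bounded) is still fine, but the justification should be restated in terms of the letter-displacement bound for $\Phi^{-1}$.
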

    
    Before continuing with the construction of our finite set,
    we need a little notation.
    If $x$  is a letter in some $A_i$, we may write $\Phi(x)$ uniquely as
    $\Phi(x) = \mu_xx^\Phi \mu_x^{-1}$,
    where $\mu_x$ is a reduced word, $x^\Phi$ is a letter,
    and this product is reduced as written.
    The map $x \mapsto x^\Phi$ is a bijection of the set of letters
    that belong to $\bigcup_{i=1}^n (A_i\setminus \{1\})$.
    Given a finite set of letters $S$ and a positive integer $m$,
    we write $S^m$ for the finite set of letters
    that may be written as the product of at most $m$ letters in $S$.
    Thus $S \subset S^m$ and if $S$ is closed under taking inverses,
    so is $S^m$.

    We say that a letter $x$ has \emph{Martino's property P}
    if it satisfies the following three conditions:
    \begin{enumerate}
        \item $x \notin L'$.
            (Recall that this implies that $x$ is in some $A_i$.)
        \item $x^\Phi$ is not the  product of at most $3\ell$ letters in $L'$,
            i.e.~$x^\Phi \notin (L')^{3\ell}$.
        \item If $y$ and $y' \in (L')^\ell$ are in the same factor $A_j$
            as $x^\Phi$,
            then $(yx^\Phi y')^\Phi \notin (L')^{3\ell}$.
    \end{enumerate}

    It is clear that there exists a finite set of letters $M$
    such that if $x \notin M$, then $x$ has Martino's property P.
    Since $x \mapsto x^\Phi$ is a bijection,
    there is also a finite set of letters $M'$
    such that if $x^\Phi \notin M'$, then $x$  has Martino's property P.
    We may  suppose this set $M$ is closed under taking inverses.
    As it happens,
    we need the following stronger notion.

    Say that a letter $z$ is a \emph{descendent} of a letter $x$
    if there exist $y$ and $y'$ in $(L')^\ell \cup \{1\}$ such that $z = yx^\Phi y'$.
    More generally, say that $z$ is a \emph{$k$-descendent} of $x$
    for some positive integer $k$
    if there are letters $x = z_0,z_1,\dotsc, z_k = z$
    such that
    $z_i$ is a descendent of $z_{i-1}$
    for $i$ satisfying $1 \le i \le k$.

    \begin{claim}[Corollary 5.1.10 of \cite{MartinoThesis}]
        \label{MartinosPropertyP}
        There is a finite set of letters $N$
        such that if $x^\Phi \notin N$, then $x$ and 
        all $k$-descendents of $x$ have Martino's property P
        for $k$ satisfying $1  \le k \le C$.
    \end{claim}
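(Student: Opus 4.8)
The plan is to construct $N$ by a finite backward induction, peeling off one generation of descendents at a time. Set $N_0 = M'$, the finite set already produced with the property that $x^\Phi \notin M'$ forces $x$ to have Martino's property P. Having defined a finite set $N_{j-1}$, I would set
\[
    N_j = M' \cup \bigcup_{y,y' \in (L')^\ell \cup \{1\}} \bigl\{\, w : ywy' \text{ is a letter and } (ywy')^\Phi \in N_{j-1} \,\bigr\},
\]
and finally take $N = N_C$, where $C$ is the constant fixed above. The point of the inner union is that it records exactly which values of $x^\Phi$ would allow a descendent $z = yx^\Phi y'$ of $x$ to have $z^\Phi \in N_{j-1}$, so that avoiding $N_j$ propagates to the next generation.

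The first thing to check is that each $N_j$ is finite. Pairs $(y,y')$ lying in distinct factors contribute nothing, since then $ywy'$ is never a letter. For a fixed pair $y, y'$ lying in a common factor $A_m$, the map $w \mapsto (ywy')^\Phi$ is finite-to-one: it is the composition of the bijection $w \mapsto ywy'$ of the letters of $A_m$ (left and right translation by fixed elements of $A_m$) with the restriction to $A_m$ of the bijection $u \mapsto u^\Phi$, the latter being a bijection of letters because $\Phi$ permutes the conjugacy classes of the $A_i$. Hence the preimage of the finite set $N_{j-1}$ under this map is finite, and since there are only finitely many pairs $(y,y')$ drawn from $(L')^\ell \cup \{1\}$, the whole union, and therefore $N_j$, is finite.

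Next I would verify, by induction on $j$ from $0$ to $C$, that if $x^\Phi \notin N_j$ then every $i$-descendent of $x$ has Martino's property P for all $i$ with $0 \le i \le j$. The base case $j = 0$ is immediate: $x^\Phi \notin M' = N_0$ forces $x$ (its own unique $0$-descendent) to have property P. For the inductive step, suppose $x^\Phi \notin N_j$. Since $M' \subseteq N_j$, the letter $x$ has property P, handling $i = 0$. Any $i$-descendent of $x$ with $i \ge 1$ is an $(i-1)$-descendent of some descendent $z = yx^\Phi y'$ of $x$; because $x^\Phi \notin N_j$ and $N_j$ contains the $w$ with $(ywy')^\Phi \in N_{j-1}$, we get $z^\Phi = (yx^\Phi y')^\Phi \notin N_{j-1}$, so the inductive hypothesis applied to $z$ (with $i-1 \le j-1$) shows that every $(i-1)$-descendent of $z$, in particular the given $i$-descendent of $x$, has property P. Taking $j = C$ yields the claim.

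The step I expect to require the most care — though it is bookkeeping rather than a genuine difficulty — is keeping straight the distinction between a letter $x$ and its image letter $x^\Phi$: property P of $x$ is controlled by the \emph{location of $x^\Phi$} (via $M'$), while the descendent relation inserts $(L')^\ell$-bounded words \emph{around} $x^\Phi$. One must therefore phrase the inductive hypothesis in terms of $x^\Phi$ lying outside the shrinking family $N_j$ (not in terms of $x$), and confirm that $w \mapsto (ywy')^\Phi$ is finite-to-one so that the sets $N_j$ stay finite through all $C$ generations. With the induction set up this way the argument is a routine finite iteration.
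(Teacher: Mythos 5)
Your proof is correct and takes essentially the same approach as the paper's: a finite induction on the number of generations, using the bijectivity of $u \mapsto u^\Phi$ and the finiteness of $(L')^\ell \cup \{1\}$ to keep each $N_j$ finite. The one place you do better bookkeeping than the paper is in consistently phrasing the avoidance condition in terms of $x^\Phi$ at every stage: the paper's $N_i$ are defined so that $x \notin N_i$ (not $x^\Phi \notin N_i$) guarantees property P for $i$-descendents, and its final $N = M' \cup \bigcup_{i} N_i$ therefore mixes a set to be avoided by $x^\Phi$ (namely $M'$) with sets to be avoided by $x$, a mismatch with the stated claim that is only implicitly repaired by one more application of the bijection $x \mapsto x^\Phi$. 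Your nested definition of $N_j$ tracks $x^\Phi$ throughout and so matches the claim as written with no silent conversion needed.
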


    \begin{proof}[Proof of \Cref{MartinosPropertyP}.]
        The proof is a corrected version of the discussion
        preceding \cite[Corollary 5.1.10]{MartinoThesis}.
        Suppose first that some descendent of $x$ fails to have Martino's property P.
        Then there exist $y$ and $y'$ in $(L')^\ell \cup \{1\}$ such that
        $yx^\Phi y' \in M$.
        Since $x \mapsto x^\Phi$ is a bijection and $y$ and $y'$
        are drawn from finitely many possibilities,
        it is clear that there is a finite set $N_1$
        which we may assume to be closed under taking inverses
        such that if $x \notin N_1$, then all descendents of $x$ have Martino's property P.

        Now suppose that there is a finite set $N_i$ of letters
        such that if $x \notin N_i$, 
        then all $i$-descendents of $x$ have Martino's property P.
        If some $(i+1)$-descendent of $x$ fails to have property $P$,
        then $x$ has a descendent $yx^\Phi y' \in N_i$.
        The argument above shows that $x$ belongs to a finite set $N_{i+1}$.
        Therefore the finite set $N =  M' \cup \bigcup_{k=1}^{C} N_i$
        satisfies the conclusion of the claim.
        We assume that $N$ is closed under taking inverses.
    \end{proof}

    By assumption, there is $j_0$ so large that if $i \ge j_0$,
    then $w_i$ contains a letter not in $N^{C+1}$.
    The final step is the following claim.
    \begin{claim}
        \label{finalclaim}
        Suppose $i \ge j_0$, that $k(i) \ge i$,
        and that $k(i) > k(i-1)$.
        Then there exist integers $i = m_0 < m_1 < \dotsb < m_{C+1}$
        such that
        \[  \Phi(U_{m_k}) \subset U_{m_{k+1}-1} \]
        and
        \[  \Phi^2(U_{m_k}) \subset U_{m_{k+1}} \]
        for $k$ satisfying $0 \le k \le C$.
    \end{claim}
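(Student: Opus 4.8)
The plan is to reduce \Cref{finalclaim} to the construction of a bounded, strictly increasing chain of indices along which $k$ stays ahead of the index, and then to manufacture such a chain from the descendants of the big letter supplied by the hypothesis $i\ge j_0$.

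\emph{The reduction.} Let $B$ be a bounded cancellation constant for $\Phi$ as in \Cref{boundedcancellation}. From $\Phi(\xi_n)=\xi_{k(n)}z_n$ together with $|z_n|\le B$ one gets $\Phi(U_n)\subset U_{k(n)-B}$ for every $n$, and hence $\Phi^2(U_n)\subset U_{k(k(n)-B)-B}$. Consequently, to prove \Cref{finalclaim} it is enough to choose $i=m_0<m_1<\dots<m_{C+1}$ with $m_{k+1}\le k(m_k)-B+1$ and $m_{k+1}\le k(k(m_k)-B)-B$ for $0\le k\le C$: indeed $\Phi(U_{m_k})\subset U_{k(m_k)-B}\subset U_{m_{k+1}-1}$ by the first inequality and $\Phi^2(U_{m_k})\subset U_{k(k(m_k)-B)-B}\subset U_{m_{k+1}}$ by the second. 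One may just take $m_{k+1}$ to be the smaller of the two bounds, \emph{provided} those bounds stay strictly above $m_k$. In other words, the whole claim comes down to showing that, along the finitely many indices $m_0,\ k(m_0)-B,\ m_1,\ k(m_1)-B,\dots$ obtained by iterating this prescription $C+1$ times---all of which lie in a bounded interval above $i$---the value of $k$ exceeds the index by more than $B$. It is exactly this requirement that the set $N$ of \Cref{MartinosPropertyP} and the threshold $j_0$ were chosen to feed.

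\emph{The input and its propagation.} Since $i\ge j_0$ and $k(i)\ge i$, the word $w_i=\xi_i^{-1}\Phi(\xi_i)=x_{i+1}\dotsb x_{k(i)}z_i$ has a letter outside $N^{C+1}$; since $k(i-1)<k(i)$, the suffix $z_i$ is a terminal subword of $\Phi(x_i)$ as in the proof of \Cref{finitelettersclaim}, so a letter of $z_i$ is either $x_i^{\Phi}$ or a letter of $L'$, and an offending letter inside $z_i$ can therefore be chased via the descendant structure to an offending letter among $x_{i+1},\dots,x_{k(i)}$. Either way one extracts a position $p$ with $i<p\le k(i)$ such that, by \Cref{MartinosPropertyP}, $x_p$ and every $j$-descendant of $x_p$ with $j\le C$ has Martino's property P. The engine---adapted from \cite[Proposition 1.1]{GaboriauJaegerLevittLustig} in the form used by Martino \cite{MartinoThesis}---is that such a letter survives reduction: in passing from $\Phi(x_1)\dotsb\Phi(x_n)$ to its reduced form the central letter $x_p^{\Phi}$ of $\Phi(x_p)=\mu_{x_p}x_p^{\Phi}\mu_{x_p}^{-1}$ is not cancelled, because conditions (2)--(3) of property P forbid the descendant of $x_p$ formed by absorbing the $L'$-ends of the neighbouring images from collapsing into a short word, and \Cref{CollinsTurnerLemma} pins down exactly which letters could have participated. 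Thus $x_p^{\Phi}$ reappears, as a descendant of $x_p$, at a position $q$ with $k(p-1)-B<q\le k(p)+B$; this forces $k(p)\ge q-B$, which outruns the index once $q>p$, and $x_q$ is again a big letter carrying one fewer generation of big descendants. Running the process $C+1$ times produces big letters at a forward chain of positions at each of which $k$ beats the index by more than $B$; combined with the already-established bound $i_{r+1}-i_r\le C$, this yields the required $m_0<\dots<m_{C+1}$.

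\emph{The main obstacle.} The delicate step is the propagation. Making ``the central letter survives as a big letter located far enough forward'' precise forces one to track simultaneously the bounded cancellation constant $B$, the Collins--Turner control set $L'$ and its bounded powers $(L')^{3\ell}$, and the three defining clauses of Martino's property P, and to balance $\ell$, $B$, $N$ and $C$ against one another so that the descendant induction closes after exactly the $C+1$ generations the outer argument consumes and no fewer. This is the one point at which the proof genuinely differs in difficulty---though not in shape---from the free-group case: there is no superlinear attraction to fall back on, and the attractor $\xi$ must be built by hand as a merely linear one.
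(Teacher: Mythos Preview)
Your reduction is the fatal step. You argue that $\Phi(U_n)\subset U_{k(n)-B}$ and therefore it suffices to produce indices $m_k$ with $m_{k+1}\le k(m_k)-B+1$; in particular you need $k(m_k)\ge m_k+B$ at each step. But we are precisely in the regime where the $w_i$ have uniformly bounded length, so $|k(i)-i|$ is bounded by a constant that has nothing to do with $B$. In Martino's explicit example quoted just before the proposition one has $k(i)=i+1$ for all $i$, so $k(m_k)-B+1=m_k+2-B\le m_k$ as soon as $B\ge 2$, and your chain never gets off the ground. The crude bounded-cancellation inclusion is exactly what is too weak in the linear-attractor case; that is the whole reason this subcase needs separate work.

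The second difficulty is your choice of position. You locate the big letter at an index $p$ with $i<p\le k(i)$ and then track $x_p^\Phi$. But the claim asks for $\Phi(U_{m_0})\subset U_{m_1-1}$ with $m_0=i$, i.e.\ control over \emph{every} $\zeta\in U_i$. A general $\zeta\in U_i$ need not lie in $U_p$ when $p>i$, so the survival of $x_p^\Phi$ in $\Phi(\xi)$ says nothing about $\Phi(\zeta)$. The paper fixes this by tracing the big letter of $w_i$ \emph{backwards} to find an index $j\le i$ with $x_j^\Phi\notin N$ and with the reduction conditions $\Phi(\xi_{j-1})\mu_{x_j}=\xi_i z$ and $\Phi(\xi_{j-1})\mu_{x_j}x_j^\Phi=\xi_i z'$; since $U_{m_0}=U_i\subset U_j$, the survival of $x_j^\Phi$ (via \Cref{CollinsTurnerLemma} and Martino's property~P) then controls every $\zeta\in U_{m_0}$ and yields $(yx_j^\Phi y''')=x_{m_1}$ with $m_1>i$ directly, with no reference to $B$. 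Your ``engine'' paragraph has the right survival mechanism, but it must be run from a position $j\le i$, and the inclusions $\Phi(U_{m_0})\subset U_{m_1-1}$, $\Phi^2(U_{m_0})\subset U_{m_1}$ must be read off the explicit reduced form of $\Phi(\zeta)$ and $\Phi^2(\zeta)$ rather than deduced from a bounded-cancellation estimate.
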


    \begin{proof}[Proof of \Cref{finalclaim}.]
        Consider $w_i = \xi_i^{-1}\Phi(\xi_i) = x_{i+1}\dotsb x_{k(i)}z_i$.
        This is reduced as written,
        but there may be no $x$ terms if $i = k(i)$.
        Since $i \ge j_0$, some letter of $w_i$ does not belong to $N^{C+1}$.
        We claim that in fact there is some $j \le i$
        such that the following conditions are satisfied.
        \begin{enumerate}
            \item $x_j^\Phi \notin N$.
            \item We may write $\Phi(\xi_{j-1})\mu_{x_j} = \xi_{i}z$
                for some possibly trivial word $z$
                such that this product is reduced as written if $z$ is nontrivial.
            \item We may write $\Phi(\xi_{j-1})\mu_{x_j}x_j^\Phi = \xi_{i}z'$
                for some possibly trivial word $z'$
                such that this product is reduced as written if $z'$ is nontrivial.
        \end{enumerate}
        Recall that since $k(i) > k(i-1)$, we have
        \[  \Phi(x_i) = z_{i-1}^{-1}x_{k(i-1)+1}\dotsb x_{k(i)}z_i. \]
        This product is not necessarily reduced as written,
        but $x_{k(i-1)+ 1}$ is not entirely canceled.
        First note that if the letter of $w_i$ not belonging to $N^{C+1}$ is in $z_i$,
        then in fact that letter is $x_i^\Phi \notin N$
        and items 2 and 3 follow.

        So suppose the letter is $x_k$ for $i+1 \le k \le k(i)$.
        We follow and correct \cite[Lemma 5.1.7]{MartinoThesis}.
        Let $s$ be the least integer such that $k(i_s) \ge k$.
        Note that $i_s \le i$.
        We have $k(i_{s} - 1) \le k(i_{s-1}) < k \le k(i_s)$.
        If
        \[  \Phi(x_{i_s}) = z_{i_s-1}^{-1}x_{k(i_s-1)+1}\dotsb x_{k(i_s)}z_{i_s} \]
        is reduced as written or if $k > k(i_s - 1)+ 1$,
        then $x_k$ occurs in the image of $x_{i_s}$,
        and in fact we have $x_k = x_{i_s}^\Phi$ and items 2 and 3 follow
        with $j = i_s$.
        If neither is the case,
        then we may  write $z_{i_s -1} = xz'$
        reduced as written where $x$ is in the same factor as $x_k = x_{k(i_s - 1)+1}$
        but not equal to it.
        We have that $x_{i_s}^\Phi = x^{-1}x_k$.
        Now, either
        \begin{enumerate}[label={(\alph*)}]
            \item $x^{-1}x_k \notin N^{C}$, or
            \item $x^{-1}x_k \in N^C$, in which case $x^{-1} \notin N^C$,
                since $x_k \notin N^{C+1}$.
        \end{enumerate}
        If item (a) holds, then items 1 through 3 hold with $j = i_s$.
        To see this, note that
        \[  \Phi(\xi_{i_s-1})\mu_{x_{i_s}} 
        = \xi_{k(i_s-1)}z_{i_s-1}z'^{-1} = \xi_{k-1}x \]
        and
        \[  \Phi(\xi_{i_s-1})\mu_{x_{i_s}}x_{i_s}^\Phi
        =  \xi_{k(i_s-1)}z_{i_s-1}z'^{-1}x^{-1}x_k =  \xi_{k}. \]
        (Recall that $k \ge i+1$.)

        Suppose item (b) holds.
        Notice  that if $k-1 = k(i_s-1) > k(i_s-2)$,
        then \[\Phi(x_{i_s -1}) 
        = z_{i_s-2}^{-1}x_{k(i_s-2)+1}\dotsb x_{k-1}xz',\]
        $x_{k(i_s-2)+1}$ is not entirely canceled,
        and we see that $x$ occurs in the image of $x_{i_s-1}$,
        so we conclude that $x = x_{i_s-1}^\Phi$
        and items 1 through 3 hold with $j = i_s-1$.

        So suppose $k(i_s-1) \le k(i_s -  2)$.
        Because $k(i_{s-1}) > k(i_{s-1}-1)$,
        we conclude that $i_s-1 > i_{s-1}$.
        Notice that \[k-1 = k(i_s - 1) \le k(i_s- 2) \le k(i_{s-1}) \le k-1,\]
        so we conclude $k(i_s -1) = k(i_s - 2) = k(i_{s-1}) = k-1$.

        The argument proceeds as above.
        If $x$ occurs in the image of $x_{i_s-1}$, we are done.
        If not, then $z_{i_s-2}= x'z''$, where this product is reduced as written
        and $x$ and $x'$ are in the same factor.
        If $x = x'$, then $x' \notin N^{C-1}$, and we may proceed considering $x_{i_s-2}$.
        If $x \ne x'$, then $x'^{-1}x$ occurs in the image of $x_{i_s-1}$
        since $\Phi(x_{i_s-1}) = z''^{-1}x'^{-1}xz'$.
        If $x'^{-1}x \notin N^{C-1}$, then we conclude with $j = i_s - 1$.
        If not, then $x'^{-1} \notin N^{C-1}$.

        We may repeat this argument,
        reducing our candidate for $j$ by one and our index $q$ of $N^q$.
        We always reach a positive conclusion if $k(j) > k(j-1)$,
        but we know that $k(i_{s-1}) > k(i_{s-1}-1)$,
        so we do reach  a positive conclusion in at most $C$ steps.

        \paragraph{}
        We now follow \cite[Lemma 5.1.8]{MartinoThesis}.
        Notice that since $x_j^\Phi \notin N$, the letter $x_j$
        has Martino's property P.
        Suppose we have $\zeta \in U_j$,  so $\zeta = \xi_j\zeta'$ is reduced as written.
        Then
        \[  \Phi(\zeta) = \Phi(\xi_{j-1})\mu_{x_j}x_j^\Phi\mu_{x_j}^{-1}\Phi(\zeta') \]
        The Collins--Turner result \Cref{CollinsTurnerLemma}
        quoted above implies
        that  since $x_j^\Phi \notin (L')^{3\ell}$ by Martino's property  P,
        the letter $x_j^\Phi$ is not entirely canceled in the above product,
        so there exist $y$ and $y' \in (L')^\ell$ (possibly trivial)
        so that
        \[  \Phi(\xi_j\zeta') = wyx_j^\Phi y'\zeta'' \]
        for some words $w$ and $\zeta''$ such that this product
        is reduced as written if we count $yx_j^\Phi y'$ as a single letter.
        In fact, since $x_j^\Phi$ does not entirely cancel
        we may write $w = \xi_{i}w'$ for some possibly trivial word $w'$
        such that the product is reduced as written if $w'$ is nontrivial.
        In fact, since $j \le i$, we may write $ w = \xi_{j}w''$
        for some possibly trivial word $w''$
        such that the product is reduced as written if $w''$  is nontrivial.
        To sum up, we have
        \[  \Phi(\xi_{j-1})\mu_{x_j}x_j^\Phi = \xi_{j}w'' yx_j^\Phi, \]
        where this product is reduced as written.
        Now if we apply $\Phi$ again, we have
        \begin{gather*}  
            \Phi^2(\xi_j) 
            = \Phi(\xi_{j}w''yx_j^\Phi \mu_{x_j}^{-1})
            = \Phi(\xi_{j-1})\mu_{x_j}x_j^\Phi\mu_{x_j}^{-1}
            \Phi(w'')\Phi(yx_j^\Phi)\Phi(\mu_{x_j}^{-1}) \\
            = \xi_j w''(yx_j^\Phi) \mu_{x_j}^{-1}\Phi(w'')
            \Phi(y x_j^\Phi )\Phi(\mu_{x_j}^{-1})
        \end{gather*}
        Since $w''$ does not begin with a letter in the same factor as $x_j$,
        the Collins--Turner result \Cref{CollinsTurnerLemma}
        implies we may find $y'' \in (L')^\ell \cup\{1\}$
        such that
        \[  \xi_jw''(yx_j^\Phi)\mu_{x_j}^{-1}\Phi(w'') 
        = \xi_j w''(yx_j^\Phi y'')v \]
        for some possibly trivial word $v$
        such that this product is reduced as written
        if we count $(yx_j^\Phi y'')$ as a single letter 
        and if we discard $v$ if it is trivial.
        Thus we have
        \[  \Phi^{2}(\xi_j) = \xi_j w''(y x_j^\Phi y'')v\Phi(y x_j^\Phi)
        \Phi(\mu_{x_j}^{-1})\]
        Notice that if $v = \mu_{yx_j^\Phi}^{-1}$
        and the letters $(yx_j^\Phi y'')$
        and $(y x_j^\Phi)^\Phi$ are in the same factor,
        then \Cref{CollinsTurnerLemma} implies $y x_j^\Phi y'' \in (L')^\ell$,
        which contradicts the second item in Martino's property P.
        Therefore applying \Cref{CollinsTurnerLemma} once more
        to $\Phi(\mu_{x_j}^{-1})$, we may write
        \[  \Phi^2(\xi_j) = \xi_j w''(y x_j^\Phi y''')v' 
        [z(y x_j^\Phi)^\Phi z'] v'' \]
        where either $v' \ne 1$
        or $x_j^\Phi$ and $(x_j^{\Phi})^\Phi$ are in different factors,
        where $z' \in (L')^\ell \cup \{1\}$,
        and where $v''$ is a possibly trivial word.

        Since $\Phi(\xi_i\zeta') = \xi_jw'' (yx_j^\Phi y')\zeta''$
        and $\mu_{y x_j^\Phi y'} = \mu_{y x_j^\Phi}$,
        we have
        \[  \Phi^2(\xi_j\zeta') = \xi_j w''(y x_j^\Phi y''')v'
        [z(yx_j^\Phi y')^\Phi z'']\zeta'''\]
        where $z'' \in (L')^\ell \cup \{1\}$ and thus this product 
        is reduced upon counting $(y x_j^\Phi y''')$
        and $[z(y x_j^\Phi y')^\Phi z'']$ as single letters.
        By Martino's property P, both of these letters are nontrivial.

        We claim that $(yx_j^\Phi y''') = x_{m_1}$ for some $m_1 > i$.
        Since $\xi_j w''(y x_j^\Phi y''')$ is always a subword of
        $\Phi(\xi_j\zeta')$, it is a subword of $\Phi^2(\xi) = \xi$,
        so we do have that $y x_j^\Phi y''' = x_{m_1}$
        for some $m_1$.
        Since we saw that $\xi_j w'' = \xi_i w'$,
        we have $m_1 > i$.

        Note that setting $i = m_0$, this proves that
        \[  \Phi(U_{m_0}) \subset U_{m_1-1}\]
        and
        \[  \Phi^2(U_{m_0})\subset U_{m_1}. \]

        We would like to repeat this argument
        with $m_1$ (and later $m_s$) playing the role of both $j$ and $i$
        to find $m_s$ for $2 \le s \le C+1$.
        To do this, we need to know that $x_{m_s}$ has property P
        provided that $x_{m_{s-1}}$ did
        and that we may write
        \begin{enumerate}
            \item $\Phi(\xi_{m_s - 1})\mu_{x_{m_s}} = \xi_{m_s} z$ and
            \item $\Phi(\xi_{m_s - 1})\mu_{x_{m_s}}x_{m_s}^\Phi = \xi_{m_s}z'$
        \end{enumerate}
        for possibly trivial words $z$ and $z'$ such that the respective
        products are reduced as written.
        First, observe that $x_{m_s}$ is an $s$-descendent of $x_j$,
        so it has Martino's property P if $s \le C$.
        The other property follows inductively once we note that
        \[  \Phi(\xi_{m_1-1})\mu_{x_{m_1}} = \Phi(\xi_jw'')\mu_{x_{m_1}}
        = \xi_j w'' x_{m_1} v' z \]
        where either $v' \ne 1$ or $x_{m_1}$ and $z$ are in different factors
        and is otherwise reduced as written and
        \[  \Phi(\xi_{m_1-1})\mu_{x_{m_1}}x_{m_1}^\Phi 
        = \xi_j w''x_{m_1} v'(zx_{m_1}^\Phi).\qedhere \]
    \end{proof}
    Note that if $i_r \ge j_0$ and $k(i_r) \ge i_r$,
    then $i_r$ satisfies the hypotheses of \Cref{finalclaim}.
    Since $i_{r+1} - i_r \le C$, there is $s$ with $m_s \le i_{r+1} < m_{s+1}$,
    so $k(i_{r+1}) \ge m_{s+1} - 1 \ge i_{r+1}$,
    and we see that $\Phi^{2C+2}(U_{i_r}) \subset U_{m_{C+1}} \subset U_{i_{r+1}}$.
\end{proof}

The following result is implied by the main result of \cite{Martino}.

\begin{lem}
    There are only finitely many $\mathbb{T}$-orbits of points
    in $(\fix(\hat\Phi)\cap\partial_\infty(F,\mathscr{A}))
    \setminus \partial(\mathbb{F},\mathscr{A}|_\mathbb{F})$.
\end{lem}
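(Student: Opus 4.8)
The plan is to reduce the statement to Martino's index inequality for free products, applied both to $\Phi$ and to $\Phi^{-1}$. First I would invoke \Cref{GJLLprop}, together with the remark that item~2 there holds with no hypothesis on the size of $\fix(\hat\Phi)$, to split
\[
    \mathcal{S} := \bigl(\fix(\hat\Phi)\cap\partial_\infty(F,\mathscr{A})\bigr)\setminus\partial(\mathbb{F},\mathscr{A}|_\mathbb{F})
\]
as a disjoint union $\mathcal{S} = \mathcal{S}^+\sqcup\mathcal{S}^-$, where $\mathcal{S}^+$ consists of the attractors and $\mathcal{S}^-$ of the repellers for $\hat\Phi$; every point of $\mathcal{S}$ lies in exactly one of these and is isolated in $\fix(\hat\Phi)$.

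Next I would check that $\mathbb{T}$ preserves $\mathcal{S}^+$ and $\mathcal{S}^-$ separately. For $c\in\mathbb{F}=\fix(\Phi)$, \Cref{boundarybasics} gives that $\hat T_c$ commutes with $\hat f=\hat\Phi$; being a homeomorphism, $\hat T_c$ therefore carries $\fix(\hat\Phi)$ onto itself and sends attractors to attractors and repellers to repellers. Since $T_c$ preserves the $\mathbb{F}$-minimal subtree of $\Gamma$, the homeomorphism $\hat T_c$ preserves $\partial(\mathbb{F},\mathscr{A}|_\mathbb{F})$, and it clearly preserves $\partial_\infty(F,\mathscr{A})$. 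Thus $\mathbb{T}$ acts on $\mathcal{S}^+$ and on $\mathcal{S}^-$, and a $\mathbb{T}$-orbit is exactly a $\fix(\Phi)$-orbit for the natural action.

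Finally, I would identify $\mathcal{S}^+$ with the set of attracting fixed points of $\hat\Phi$ on the Gromov boundary that do not lie in $\partial\fix(\Phi)$, so that $\#(\mathcal{S}^+/\mathbb{T})$ is precisely the number $\alpha(\Phi)$ entering the index of \cite{GaboriauJaegerLevittLustig} and \cite{Martino}. The main result of \cite{Martino} bounds the index $i(\varphi)$ by a finite quantity; that bound is a sum of nonnegative terms, one of which, associated to the isogredience class of $\Phi$, equals $\max\{0,\rank(\fix(\Phi))+\tfrac12\alpha(\Phi)-1\}$, forcing $\alpha(\Phi)<\infty$. Applying the identical argument to $\Phi^{-1}$ --- for which $\fix(\Phi^{-1})=\mathbb{F}$, so $\mathbb{T}$ is unchanged, and whose attractors are exactly the repellers of $\hat\Phi$ --- yields $\#(\mathcal{S}^-/\mathbb{T})=\alpha(\Phi^{-1})<\infty$. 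Since $\mathcal{S}=\mathcal{S}^+\sqcup\mathcal{S}^-$, the lemma follows.

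I expect the only delicate point to be matching conventions: one must verify that the attracting fixed points counted by $\alpha(\Phi)$ in \cite{Martino} are precisely the elements of $\mathcal{S}^+$ --- in particular that this count omits $\partial\fix(\Phi)$ and picks up no additional points --- and that the index bound is a genuine sum of finite nonnegative terms, so that the single term indexed by the isogredience class of $\Phi$, and hence $\alpha(\Phi)$ itself, is finite. This is unwinding definitions, not a real obstacle.
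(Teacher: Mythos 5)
Your argument is correct and amounts to a careful unpacking of what the paper leaves implicit: the paper gives no proof at all, only the remark that the lemma ``is implied by the main result of \cite{Martino},'' and your write-up is exactly the argument behind that citation. Two small points worth noting. First, the identification of $\mathcal{S}^+$ with the set counted by $\alpha(\Phi)$ is immediate once you observe (as you do) that by item~2 of \Cref{GJLLprop} an attractor is isolated and hence cannot lie in $\partial(\mathbb{F},\mathscr{A}|_\mathbb{F})$, so ``attracting fixed point'' and ``attracting fixed point outside $\partial(\mathbb{F},\mathscr{A}|_\mathbb{F})$'' coincide; there is no delicate convention-matching to do beyond this. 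Second, the reduction to finiteness of $\alpha(\Phi)$ from Martino's index bound does not even require the $\max\{0,\cdot\}$ truncation to be nontrivial: since each summand in $i(\varphi)$ is nonnegative, the single term for the isogredience class of $\Phi$ is bounded above by $n+k-1$, and since $\rank(\fix(\Phi))\ge 0$ this forces $\alpha(\Phi)\le 2(n+k)$ (or $\alpha(\Phi)\le 2$ if the truncated term vanishes). Your application of the same bound to $\Phi^{-1}$, using that $\fix(\Phi^{-1})=\mathbb{F}$ so the group $\mathbb{T}$ is unchanged and repellers for $\hat\Phi$ are attractors for $\hat\Phi^{-1}$, is exactly right.
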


We will give an estimate for the number of $\mathbb{T}$-orbits of points
in $\fix(\hat\Phi)\setminus \partial(\mathbb{F},\mathscr{A}|_{\mathbb{F}})$
in \Cref{indexsection}.

\section{Attracting Laminations}
\label{laminationsection}
We now turn to attracting laminations for free products,
prove their existence and develop some of their properties.
We end up with a little more of the theory
than is actually needed for this paper,
(For example, the existence of a homomorphism $\pf_{\Lambda^+}\colon \stab(\Lambda^+) \to \mathbb{Z}$
from the stabilizer of an attracting lamination $\Lambda^+$ to $\mathbb{Z}$)
but we hope that it will prove useful for future work.

\paragraph{The space $\tilde{\mathcal{B}}$.}
In \cite{GuirardelHorbez}, Guirardel and Horbez consider
\emph{algebraic laminations} for free products.
To wit, let $\tilde{\mathcal{B}}$ be the space
\[  \tilde{\mathcal{B}} = (\partial(F,\mathscr{A})\times \partial(F,\mathscr{A})\setminus\Delta)
/ \mathbb{Z}/2\mathbb{Z}\]
where $\Delta$ is the diagonal and $\mathbb{Z}/2\mathbb{Z}$ acts by permuting the factors.
The diagonal action of $F$ on $\partial(F,\mathscr{A}) \times \partial(F,\mathscr{A})$
descends to an action on $\tilde{\mathcal{B}}$.
An element of $\tilde{\mathcal{B}}$ is an \emph{algebraic line,}
an unordered pair $(\alpha,\omega)$ of distinct elements of $\partial(F,\mathscr{A})$.
Let $\mathcal{B}$ be the quotient of $\tilde{\mathcal{B}}$ by the action of $F$.
An \emph{algebraic lamination} is a closed, $F$-invariant subset of $\tilde{\mathcal{B}}$
or equivalently a closed set in $\mathcal{B}$,
and the lines it comprises are called \emph{leaves.}

\paragraph{Lines in $\Gamma$ and in $\mathcal{G}$.}
Let $\mathcal{G}$ be a marked graph of groups with Bass--Serre tree $\Gamma$.
Between any two boundary points $\xi$ and $\zeta$,
there is a unique tight path in $\Gamma$ called the \emph{line} $\tilde L_{\xi,\zeta}$
from $\xi$ to $\zeta$.
Thus to a point $(\alpha,\omega)$ in $\tilde{\mathcal{B}}$
we may associate the unoriented image of the line $\tilde L_{\alpha,\omega}$,
thus yielding a \emph{space of lines in $\Gamma$,}
which we will denote $\tilde{\mathcal{B}}(\Gamma)$.
A line may be finite, singly infinite, or bi-infinite,
according to whether its endpoints lie in $V_\infty(F,\mathscr{A})$ or $\partial_\infty(F,\mathscr{A})$.
In \cite{BestvinaFeighnHandel},
a \emph{space of lines in $\Gamma$,} denoted $\tilde{\mathcal{B}}(\Gamma)$ is considered.
It is equipped with a ``compact--open'' topology:
if $\tilde\gamma \subset \Gamma$ is a finite tight edge path,
(even with endpoints at vertices, say),
define $N(\tilde\gamma)  \subset \tilde{\mathcal{B}}(\Gamma)$ 
to be the set of lines that contain $\tilde\gamma$ as a subpath.
In the free group setting, the sets $N(\tilde\gamma)$ form a basis 
for the topology on $\tilde{\mathcal{B}}(\Gamma)$.
For our purposes, the sets $N(\tilde\gamma)$
will only form a basis for the \emph{bi-infinite} lines.

To make the map $\tilde{\mathcal{B}} \to \tilde{\mathcal{B}}(\Gamma)$ a homeomorphism,
a basic open neighborhood of a line $\tilde\lambda$ in $\Gamma$
must be given by a pair of disjoint basic open neighborhoods of the endpoints
$\alpha$ and $\omega$ of $\tilde\lambda$.
In other words,
a sequence of lines $\{\lambda_n\}$ in $\Gamma$
converges to $\lambda$
if for each $\tilde y \in \Gamma$ distinct from the endpoints $(\alpha,\omega)$ of $\lambda$,
there is a choice of order for the endpoints $(\alpha_n,\omega_n)$ of $\lambda_n$
such that $\alpha_n$ belongs to the same half-tree based at $\tilde y$ as $\alpha$
and $\omega_n$ belongs to the same half-tree based at $\tilde y$ as $\omega$
for $n$ large.

Each line in $\Gamma$ may be projected to a \emph{line in $\mathcal{G}$.}
See \cite[Section 1]{Myself} for more details on projecting from $\Gamma$ to $\mathcal{G}$.
The \emph{space of lines in $\mathcal{G}$} is denoted $\mathcal{B}(\mathcal{G})$.
Projection defines a natural projection map from $\tilde{\mathcal{B}}(\Gamma)$
to $\mathcal{B}(\mathcal{G})$,
and we give $\mathcal{B}(\mathcal{G})$ the quotient topology.
Given a tight  edge path $\gamma$ in $\mathcal{G}$,
define $N(\gamma)$ to be the set of those lines in $\mathcal{G}$
that contain $\gamma$ as a subpath.
The sets $N(\gamma)$ form a basis for any \emph{bi-infinite} line in $\mathcal{B}(\mathcal{G})$.

If $f\colon \mathcal{G} \to \mathcal{G}'$ is a homotopy equivalence,
the homeomorphism $\hat f\colon \partial\Gamma \to \partial\Gamma'$
yields a homeomorphism 
$\tilde f_\sharp\colon \tilde{\mathcal{B}}(\Gamma) \to \tilde{\mathcal{B}}(\Gamma')$
and a homeomorphism
$f_\sharp \colon \mathcal{B}(\mathcal{G}) \to \mathcal{B}(\mathcal{G}')$.
If $\beta \in \mathcal{B}$ corresponds to $\lambda$ in $\mathcal{B}(\mathcal{G})$,
we say that $\lambda$ \emph{realizes} $\beta$ in $\mathcal{G}$.

\paragraph{Laminations.}
Recall that
a \emph{lamination} is a closed set of lines in $\mathcal{G}$
or a closed, $F$-invariant set of lines in $\Gamma$.
The lines it comprises are the \emph{leaves} of the lamination.
We are interested in \emph{attracting laminations,}
for which we need a little more terminology.

Recall that an element of $F$ is \emph{peripheral} if it is conjugate into
a vertex group of some (and hence any) marked graph of groups.
The conjugacy  class of a nonperipheral element of $F$ determines
a periodic bi-infinite line in $\mathcal{B}(\mathcal{G})$
that runs over the tight circuit determined by the conjugacy class.
A line $\beta$ in $\mathcal{B}$ is \emph{carried} by the conjugacy class
of a free factor $[[F^i]]$ if it is in the closure of the periodic lines
in $\mathcal{B}$ determined by the conjugacy classes of nonperipheral elements of $F^i$.
If $\mathcal{G}$ is a marked graph of groups and $K \subset \mathcal{G}$
is a connected subgraph such that $[[\pi_1(\mathcal{G}|_K)]] = [[F^i]]$,
then $\beta$ is carried by $[[F^i]]$ if and only if the realization of $\beta$
in $\mathcal{G}$ is contained in $K$.

Given $\varphi \in \out(F,\mathscr{A})$,
we say that $\beta' \in \mathcal{B}$ is \emph{weakly attracted 
to $\beta \in \mathcal{B}$ under the action of $\varphi$}
if $\varphi^k_\sharp(\beta') \to \beta$ (note that $\mathcal{B}$ is not Hausdorff).
A subset $U \subset \mathcal{B}$ is an 
\emph{attracting neighborhood of $\beta \in \mathcal{B}$
for the action of $\varphi$} if $\varphi_\sharp(U) \subset U$
and if $\{\varphi^k_\sharp(U) : k \ge 0\}$
is a neighborhood basis for $\beta \in \mathcal{B}$.
Finally, a line $\sigma$ in $\mathcal{G}$ 
is \emph{birecurrent} if it is bi-infinite and every subpath of $\sigma$
occurs infinitely often as a subpath of each end of $\sigma$.
Our first lemma on attracting laminations says that birecurrence
is a property of the abstract line.

\begin{lem}[\cite{BestvinaFeighnHandel} Lemma 3.1.4]
    \label{birecurrent}
    If some realization of $\beta \in \mathcal{B}$ in a marked graph of groups
    is birecurrent, then every realization of $\beta$ in a marked graph of groups
    is birecurrent.
    If $\beta$ is birecurrent, then $\varphi_\sharp(\beta)$ is birecurrent
    for every $\varphi\in \out(F,\mathscr{A})$.
\end{lem}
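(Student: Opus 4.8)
The plan is to derive both statements from the bounded cancellation lemma (\Cref{boundedcancellation}) together with two facts already recorded in the excerpt: a lift $\tilde h\colon \Gamma \to \Gamma'$ of a homotopy equivalence between marked graphs of groups is an $F$-equivariant quasi-isometry that extends continuously to $\partial(F,\mathscr{A})$, and its boundary extension $\hat h$ carries $\partial_\infty(F,\mathscr{A})$ to $\partial_\infty(F,\mathscr{A})$. For the first statement, let $\beta \in \mathcal{B}$ have a birecurrent realization $\sigma$ in $\mathcal{G}$ and let $\sigma'$ be its realization in $\mathcal{G}'$; fix a homotopy equivalence $h\colon \mathcal{G} \to \mathcal{G}'$ taking vertices to vertices, a lift $\tilde h\colon \Gamma \to \Gamma'$ with a bounded cancellation constant $C$ as in \Cref{boundedcancellation}, and a lift $\tilde\sigma \subset \Gamma$ of $\sigma$, so that $\tilde\sigma' := \tilde h_\sharp(\tilde\sigma)$ lifts $\sigma'$. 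Since $\sigma$ is bi-infinite its endpoints $\alpha,\omega$ lie in $\partial_\infty(F,\mathscr{A})$, hence so do the endpoints $\hat h(\alpha), \hat h(\omega)$ of $\tilde\sigma'$, so $\sigma'$ is bi-infinite; it remains to verify recurrence.

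The key step is to transport occurrences of subpaths across $\tilde h$. Write $\tilde\sigma$ as an increasing union of finite tight subpaths $\tilde\alpha_1 \subset \tilde\alpha_2 \subset \cdots$. By \Cref{boundedcancellation}(3) the path $\tilde\beta_n$ obtained from $\tilde h_\sharp(\tilde\alpha_n)$ by deleting $C$ initial and $C$ terminal edges is a subpath of $\tilde\sigma'$; since the endpoints of $\tilde\alpha_n$ converge to $\alpha$ and $\omega$ and $\tilde h$ extends continuously to the boundary, the endpoints of $\tilde\beta_n$ converge to $\hat h(\alpha)$ and $\hat h(\omega)$, so the $\tilde\beta_n$ exhaust $\tilde\sigma'$. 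Now given a finite subpath $\mu'$ of $\sigma'$, lift it to $\tilde\mu' \subset \tilde\sigma'$ and choose $n$ with $\tilde\mu' \subset \tilde\beta_n$; set $\tilde\alpha := \tilde\alpha_n$ and let $\sigma_0$ be its projection to $\mathcal{G}$, a finite subpath of $\sigma$.

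By birecurrence of $\sigma$, the path $\sigma_0$ occurs infinitely often in each end of $\sigma$, i.e.\ there are $F$-translates $g\tilde\alpha \subset \tilde\sigma$ whose positions along $\tilde\sigma$ are unbounded toward both $\alpha$ and $\omega$. For each such $g$, $\Psi$-twisted equivariance of $\tilde h$ (where $\Psi$ is the automorphism the lift determines) gives $\tilde h_\sharp(g\tilde\alpha) = \Psi(g)\,\tilde h_\sharp(\tilde\alpha)$, so deleting $C$ edges from each end of $\tilde h_\sharp(g\tilde\alpha)$ yields $\Psi(g)\tilde\beta_n \supseteq \Psi(g)\tilde\mu'$, an isometric copy of $\tilde\mu'$; and by \Cref{boundedcancellation}(3), since $g\tilde\alpha \subset \tilde\sigma$, this copy lies in $\tilde h_\sharp(\tilde\sigma) = \tilde\sigma'$. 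Thus each occurrence of $\sigma_0$ in $\sigma$ produces an occurrence of $\mu'$ in $\sigma'$. Because $\tilde h$ is a quasi-isometry, translates $g\tilde\alpha$ marching off toward $\alpha$ (resp.\ $\omega$) produce copies $\Psi(g)\tilde\mu'$ marching off toward $\hat h(\alpha)$ (resp.\ $\hat h(\omega)$), which are therefore unbounded toward each endpoint of $\tilde\sigma'$; hence $\mu'$ occurs infinitely often in each end of $\sigma'$, and $\sigma'$ is birecurrent. For the second statement, choose a marked graph of groups $\mathcal{G}$, a realization $\sigma$ of a birecurrent $\beta$ in $\mathcal{G}$, and a homotopy equivalence $f\colon \mathcal{G} \to \mathcal{G}$ representing $\varphi$; then $f_\sharp(\sigma)$ realizes $\varphi_\sharp(\beta)$, and the argument above — which used only that $f$ is a homotopy equivalence taking vertices to vertices, together with bounded cancellation and the quasi-isometry property of lifts — shows $f_\sharp(\sigma)$ is birecurrent, so by the first statement $\varphi_\sharp(\beta)$ is birecurrent.

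I expect the main obstacle to be the bookkeeping of the two ends: correctly translating ``every finite subpath occurs infinitely often in each end'' into the statement that $F$-translates of a fixed lift march toward each of the two boundary points, pushing this forward through the merely quasi-isometric (not isometric) lift $\tilde h$, and reading it back as a statement about the ends of the image line. Subsidiary points to check carefully are that the $\tilde\beta_n$ genuinely exhaust $\tilde\sigma'$ (this needs continuity of the boundary extension of $\tilde h$) and that the bounded-cancellation ``core'' $\Psi(g)\tilde\beta_n$ of $\tilde h_\sharp(g\tilde\alpha)$ actually sits inside $\tilde\sigma'$ rather than merely near it, both of which are exactly what \Cref{boundedcancellation}(3) delivers. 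The vertex-group (``almost'') phenomena cause no trouble here, since throughout we work with bi-infinite lines and their finite subpaths, never with directions or with endpoints of finite paths.
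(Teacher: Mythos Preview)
Your proposal is correct and follows essentially the same approach as the paper: both arguments use the bounded cancellation lemma (specifically part~(3)) to show that each occurrence of a suitably chosen finite subpath of $\sigma$ gives rise, after trimming $C$ edges from each end of its $\tilde h_\sharp$-image, to an occurrence of the target subpath in $\sigma'$. The paper is slightly more direct---given $\tilde\sigma'_0 \subset \tilde\sigma'$, it extends by $C$ edges on each side to $\tilde\tau'$ and picks a single $\tilde\tau \subset \tilde\sigma$ with $\tilde h_\sharp(\tilde\tau) \supset \tilde\tau'$, rather than building an exhausting sequence---and it leaves the end-tracking and the $F$-equivariance of the lift implicit, whereas you spell out the quasi-isometry argument and the $\Psi$-twisted equivariance (with $\Psi$ inner when $h$ respects the markings); but these are differences in presentation, not substance.
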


\begin{proof}
    The proof is identical to \cite[Lemma 3.1.4]{BestvinaFeighnHandel}.
    Suppose that $\sigma$ and $\sigma'$ are realizations of $\beta$
    in marked graphs of groups $\mathcal{G}$ and $\mathcal{G}'$ respectively.
    Since the subspaces $V_\infty(F,\mathscr{A})$ and $\partial_\infty(F,\mathscr{A})$
    are well-defined, $\sigma$ is bi-infinite if and only if $\sigma'$ is.
    Suppose that $\sigma$ is birecurrent,
    and let $h\colon \mathcal{G} \to \mathcal{G}$ be a homotopy equivalence
    that respects the markings.
    Let $C$ be the bounded cancellation constant for $h$.
    Choose lifts $\tilde\sigma \subset \Gamma$,
    $\tilde\sigma' \subset \Gamma'$
    and $\tilde h\colon \Gamma \to \Gamma'$ such that
    $\tilde h_\sharp(\tilde\sigma) = \tilde\sigma'$.
    Let $\tilde\sigma'_0$ be a finite subpath of $\tilde\sigma'$.
    Extend it to a finite subpath $\tilde\tau'$ of $\tilde\sigma'$
    by adding $C$ initial and terminal edges.
    Choose a finite subpath $\tilde\tau$ of $\tilde\sigma$ such that
    $\tilde h_\sharp(\tilde\tau)$ contains $\tilde\tau'$.
    Since $\sigma$ is birecurrent, each end of $\tilde\sigma$ contains
    infinitely many copies $\tilde\tau_i$ of $\tilde\tau$.
    Define $\tilde\mu_i'$ by removing $C$ initial and terminal edges of
    $\tilde h_\sharp(\tilde\tau_i)$.
    Bounded cancellation implies that $\tilde\mu_i'$ is a subpath of $\tilde\sigma'$.
    By construction, each $\tilde\mu'_i$ contains a copy of $\tilde\sigma'_0$,
    so we have shown that $\sigma'$ is birecurrent.

    As in the original, replacing  $h$ with a topological representative
    $f\colon\mathcal{G} \to \mathcal{G}$ of $\varphi \in \out(F,\mathscr{A})$
    in the argument above shows that $\varphi_\sharp(\beta)$
    is birecurrent for all $\varphi \in \out(F,\mathscr{A})$
    provided that $\beta$ is birecurrent.
\end{proof}

A closed subset $\Lambda^+ \subset \mathcal{B}$ is an
\emph{attracting lamination for $\varphi$}
if it is the closure of a single line $\beta$ such that:
\begin{enumerate}
    \item The line $\beta$ is birecurrent.
    \item The line $\beta$ has an attracting neighborhood for the action
        of some iterate of $\varphi$.
    \item The line $\beta$ is not carried by any $\varphi$-periodic
        $F_1$ or $C_2*C_2$ free factor.
\end{enumerate}
The line $\beta$ is said to be \emph{generic} for $\Lambda^+$.
The set of attracting laminations for $\varphi$ is denoted $\mathcal{L}(\varphi)$.

\begin{lem}[\cite{BestvinaFeighnHandel} Lemma 3.1.6]
    $\mathcal{L}(\varphi)$ is $\varphi$-invariant.
\end{lem}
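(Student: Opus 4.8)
The plan is to show that $\varphi_\sharp$ permutes the elements of $\mathcal{L}(\varphi)$. Recall that $\varphi_\sharp\colon \mathcal{B} \to \mathcal{B}$ is a homeomorphism and that it commutes with $\varphi^n_\sharp = (\varphi_\sharp)^n$ for every $n$. Given an attracting lamination $\Lambda^+ = \overline{\{\beta\}}$ for $\varphi$, with $\beta$ generic, since homeomorphisms commute with closure we have $\varphi_\sharp(\Lambda^+) = \overline{\{\varphi_\sharp(\beta)\}}$; so it suffices to check that $\varphi_\sharp(\beta)$ again satisfies the three conditions in the definition of a generic line, and then to run the same argument with $\varphi^{-1}$ in place of $\varphi$ to obtain the reverse containment.

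Condition (1), that $\varphi_\sharp(\beta)$ is birecurrent, is immediate from \Cref{birecurrent}. For condition (2), suppose $U$ is an attracting neighborhood of $\beta$ for $\varphi^n$. Then I would verify that $\varphi_\sharp(U)$ is an attracting neighborhood of $\varphi_\sharp(\beta)$ for the same iterate $\varphi^n$: we have $\varphi^n_\sharp(\varphi_\sharp(U)) = \varphi_\sharp(\varphi^n_\sharp(U)) \subset \varphi_\sharp(U)$, and since $\varphi_\sharp$ is a homeomorphism it carries the neighborhood basis $\{\varphi^{nk}_\sharp(U) : k \ge 0\}$ of $\beta$ to the neighborhood basis $\{\varphi^{nk}_\sharp(\varphi_\sharp(U)) : k \ge 0\}$ of $\varphi_\sharp(\beta)$.

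Condition (3) is the step that needs a bit more care. The key observation is that for a free factor $F^i$ and an automorphism $\Phi$ representing $\varphi$, the homeomorphism $\varphi_\sharp$ carries the periodic line determined by the conjugacy class $[c]$ of a nonperipheral element $c \in F^i$ to the periodic line determined by $[\Phi(c)]$; this is because $\hat\Phi(\hat T_c^{\pm}) = \hat T_{\Phi(c)}^{\pm}$, which follows from twisted equivariance together with the source--sink dynamics of $T_c$ on $\partial(F,\mathscr{A})$ as in \Cref{boundarybasics}. Since $c \mapsto \Phi(c)$ is a bijection between the nonperipheral elements of $F^i$ and those of $\Phi(F^i)$, and $\varphi_\sharp$ is a homeomorphism (so commutes with closure), we get that $\beta$ is carried by $[[F^i]]$ if and only if $\varphi_\sharp(\beta)$ is carried by $\varphi([[F^i]])$. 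Thus if $\varphi_\sharp(\beta)$ were carried by a $\varphi$-periodic $F_1$ or $C_2*C_2$ free factor $[[F']]$, then $\beta$ would be carried by $\varphi^{-1}([[F']])$, which is again a free factor, is again $\varphi$-periodic (as $\varphi^m([[F']]) = [[F']]$ forces $\varphi^m(\varphi^{-1}([[F']])) = \varphi^{-1}([[F']])$), and is again of isomorphism type $F_1$ or $C_2*C_2$ since its representative is isomorphic to $F'$ via $\Phi^{-1}$; this contradicts condition (3) for $\beta$.

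This establishes $\varphi_\sharp(\Lambda^+) \in \mathcal{L}(\varphi)$. Since $\varphi$-periodicity and $\varphi^{-1}$-periodicity coincide, and an attracting neighborhood for $\varphi^n$ also pushes forward under $\varphi^{-1}_\sharp$ to an attracting neighborhood for $\varphi^n$, the identical argument applied to $\varphi^{-1}$ yields $\varphi^{-1}_\sharp(\mathcal{L}(\varphi)) \subset \mathcal{L}(\varphi)$, and hence $\varphi_\sharp(\mathcal{L}(\varphi)) = \mathcal{L}(\varphi)$. The main obstacle is not conceptual but bookkeeping: keeping track of the non-Hausdorff topology on $\mathcal{B}$ and of the ``carried by a free factor'' condition (3) under $\varphi_\sharp$; conditions (1) and (2) are essentially formal once \Cref{birecurrent} and the fact that $\varphi_\sharp$ is a homeomorphism are in hand.
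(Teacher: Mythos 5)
Your proposal is correct and follows essentially the same route as the paper's proof: birecurrence via \Cref{birecurrent}, pushing forward the attracting neighborhood, and pulling back the offending $F_1$ or $C_2*C_2$ free factor by an automorphism representing $\varphi^{-1}$. Your extra step showing $\varphi^{-1}_\sharp(\mathcal{L}(\varphi)) \subset \mathcal{L}(\varphi)$ to upgrade containment to equality is sound, though the paper is content with the forward containment.
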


\begin{proof}
    Again the proof is essentially identical 
    to \cite[Lemma 3.1.6]{BestvinaFeighnHandel}.
    Suppose that $\beta$ is generic  for $\Lambda^+ \in \mathcal{L}(\varphi)$.
    \Cref{birecurrent} shows that $\varphi_\sharp(\beta)$ is birecurrent.
    If $V$ is an attracting neighborhood for $\beta$ under the action of
    $\varphi^s$ for some $s \ge 1$,
    then $U = \varphi_\sharp(V)$ is an attracting neighborhood 
    for $\varphi_\sharp(\beta)$
    under the action of $\varphi^{s}$.
    If $[[B]]$ is a $\varphi$-periodic free factor of the form $F_1$ or $C_2*C_2$
    that carries $\varphi_\sharp(\beta)$,
    and if $\Phi^{-1}\colon (F,\mathscr{A}) \to (F,\mathscr{A})$
    represents $\varphi^{-1}$,
    then $[[\Phi^{-1}(B)]]$ is a $\varphi$-periodic
    free factor of the same form that carries $\beta$.
    Therefore we have shown that $\varphi_\sharp(\beta)$ is generic with respect
    to some lamination $\varphi_\sharp(\Lambda^+) \in \mathcal{L}(\varphi)$.
\end{proof}

A nonnegative integral matrix $M$ is \emph{aperiodic}
if there is some positive power $k$ such that $M^k$ has all positive entries.
Aperiodic matrices are irreducible.
Any irreducible matrix with a nonzero diagonal entry is aperiodic.
Suppose $f\colon \mathcal{G} \to \mathcal{G}$
is a relative train track map with filtration
$\varnothing = G_0 \subset G_1 \subset \cdots \subset G_m = G$
and that $H_r$ is an exponentially growing stratum.
We say that $H_r$ is \emph{aperiodic} if its transition matrix is aperiodic,
and that $f\colon \mathcal{G} \to \mathcal{G}$ is \emph{eg-aperiodic}
if each exponentially growing stratum is aperiodic.

\begin{lem}[\cite{BestvinaFeighnHandel} Lemma 3.1.9]
    \label{laminationconstruction}
    Suppose that $f\colon \mathcal{G} \to \mathcal{G}$ is a relative train track map
    and that $H_r$ is an aperiodic exponentially growing stratum.
    There is an attracting lamination $\Lambda^+$
    with generic leaf $\beta$ such that $H_r$ is the highest stratum
    crossed by the realization $\lambda$ of $\beta$ in $\mathcal{G}$.
\end{lem}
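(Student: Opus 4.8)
The plan is to follow the proof of \cite[Lemma 3.1.9]{BestvinaFeighnHandel}, realizing the generic leaf as a nested union of $f_\sharp$-images of a single edge of $H_r$; throughout I may assume $f$ acts linearly with respect to some metric on the underlying graph, so that \Cref{lengthfunction} applies with its positivity clause. The first step is to pass to a positive power of $f$. Since $M_r$ is aperiodic and, by Perron--Frobenius, the entries of $M_r^k = M_r(f^k)$ grow without bound, after replacing $f$ by a suitable power we may fix an edge $E$ of $H_r$ such that $f_\sharp(E)$ crosses $E$ at least twice, with at least one crossing having the same orientation as $E$, and (after a further power) so that some such occurrence of $E$ inside $f_\sharp(E)$ has arbitrarily many $H_r$-edges of $f_\sharp(E)$ on either side of it, an occurrence coming from deep in the iteration being far from both ends. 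Passing to a power is harmless: by the remark following \Cref{rttlemma}, $f^k$ is again a relative train track map with $H_r$ still exponentially growing and aperiodic; the definition of an attracting lamination only asks for an attracting neighborhood for \emph{some} iterate of $\varphi$; and a free factor is $\varphi$-periodic if and only if it is $\varphi^k$-periodic.

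Next I would build the leaf. As $E$ is a single edge of $H_r$ it is $r$-legal, so by \hyperlink{EG-iii}{(EG-iii)} the path $f(E)=f_\sharp(E)$ is $r$-legal, and by \Cref{rttlemma} its decomposition into single $H_r$-edges and maximal subpaths in $G_{r-1}$ is a splitting. Translating the lift of $f$ appropriately, I obtain a lift $\tilde f\colon\Gamma\to\Gamma$ and a lift $\tilde E$ of $E$ with $\tilde E\subset\tilde f_\sharp(\tilde E)$, the copy chosen so that there are $H_r$-edges of $\tilde f_\sharp(\tilde E)$ on each side of $\tilde E$. Because $\tilde f_\sharp(\tilde E)$ is $r$-legal and splits as above, iterating $\tilde f_\sharp$ introduces no cancellation across the embedded copies, so $\tilde E\subset\tilde f_\sharp(\tilde E)\subset\tilde f^2_\sharp(\tilde E)\subset\cdots$ is an increasing sequence of $r$-legal paths, unbounded in both directions by \Cref{lengthfunction}. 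Let $\tilde\lambda$ be its union, an $f_\sharp$-invariant, bi-infinite, $r$-legal line in $\Gamma$; let $\lambda$ be the corresponding line in $\mathcal{G}$ and $\beta\in\mathcal{B}$ the abstract line. Since each $f^k_\sharp(E)\subset G_r$ meets the interior of $H_r$, the realization $\lambda$ has height $r$ and $H_r$ is the highest stratum it crosses, as required.

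It then remains to verify that the closure $\Lambda^+$ of $\beta$ in $\mathcal{B}$ is an attracting lamination with generic leaf $\beta$. For birecurrence: a finite subpath of $\lambda$ meeting $H_r$ lies in some $f^k_\sharp(E)$, and, $H_r$ being aperiodic, every $H_r$-edge occurs in $f^N_\sharp(E')$ for every $H_r$-edge $E'$ and all large $N$; with the expansion in \Cref{rttlemma} this shows every finite subpath of $\lambda$ recurs in both ends, and birecurrence passes to $\beta$ by \Cref{birecurrent}. For the attracting neighborhood: using birecurrence, choose a finite $r$-legal subpath $\gamma$ of the realization of $\lambda$ that begins and ends with $H_r$-edges and is long enough that $f_\sharp(\gamma)$ contains $\gamma$ with more than $C$ $H_r$-edges on each side, where $C$ is a bounded cancellation constant for $f$ from \Cref{boundedcancellation}; then $U=N(\gamma)$ satisfies $f_\sharp(U)\subset U$, and since $f^k_\sharp(\gamma)$ exhausts $\lambda$, the sets $\{f^k_\sharp(U)\}$ form a neighborhood basis for $\beta$. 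Finally, $\beta$ is not carried by any $\varphi$-periodic $F_1$ or $C_2*C_2$ free factor: a line carried by such a factor is the single periodic line through its nonperipheral conjugacy class, so it would follow that $\lambda$ is periodic, and if $\lambda$ were the axis of some $c$ with $\langle c\rangle$ a $\varphi$-periodic free factor then, after passing to an iterate with $\Phi(c)=c$, the map $f_\sharp$ would carry a fundamental domain $\sigma_0$ of the circuit $\lambda$ to a fundamental domain of the same circuit, forcing $L_r(\sigma_0)=L_r(f_\sharp(\sigma_0))=\lambda_r L_r(\sigma_0)$ with $L_r(\sigma_0)>0$ and $\lambda_r>1$, a contradiction.

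I expect the main obstacle to be the bi-infinite construction — arranging, through the choice of power and of the occurrence of $E$, that both ends of the nested union grow — together with the bounded-cancellation bookkeeping behind $N(\gamma)$ being an attracting neighborhood; the vertex-group elements present in edge paths cause no real difficulty, since the $r$-legal splitting of \Cref{rttlemma} absorbs them at the ends of the pieces.
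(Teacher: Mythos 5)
Your proof is correct and follows essentially the same strategy as the paper's: after passing to a power, you realize the generic leaf as an increasing union $\tilde E\subset\tilde f_\sharp(\tilde E)\subset\tilde f^2_\sharp(\tilde E)\subset\cdots$ and then verify birecurrence, the attracting neighborhood, and non-carrying by a periodic $F_1$ or $C_2*C_2$ factor. The only cosmetic differences are that the paper phrases birecurrence via ``$k$-tilings'' (which it sets up in this proof for reuse in \Cref{laminationproperties} and \Cref{laminationunique}) where you argue directly from aperiodicity, and the paper dismisses the periodic-free-factor case by observing $\lambda$ cannot be a circuit because the edge counts of the nested paths grow, where you make the same point explicitly via the length function of \Cref{lengthfunction}.
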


    For $f\colon \mathcal{G} \to \mathcal{G}$ and $H_r$ and $k \ge 0$,
    define a \emph{$k$-tile} to be a path of the form
    $f^k_\sharp(E)$, for $E$ an edge of $H_r$, in either orientation.
    A path in $\mathcal{G}$ is a \emph{tile} if it is a $k$-tile for some $k$.
    A \emph{$k$-tiling} of a path in $G_r$
    is a decomposition of the path into subpaths that are either $k$-tiles,
    vertex group elements,
    or contained in $G_{r-1}$.
    A bi-infinite path $\lambda$ has an \emph{exhaustion by tiles}
    if each of its subpaths occurs as a subpath of a tile in $\lambda$.
    If $\lambda$ has an exhaustion by tiles,
    then \hyperlink{EG-iii}{(EG-iii)} implies that $\lambda$ is $r$-legal.

\begin{proof}
    We follow \cite[Lemma 3.1.9]{BestvinaFeighnHandel}.
    Choose an edge $E$ of $H_r$ and $m > 0$ such that
    $f_\sharp^m(E) = \alpha E \beta$ for some nontrivial tight paths
    $\alpha$ and $\beta$ in $G_r$.
    Write $h = f^m$ and choose lifts $\tilde E$, $\tilde \alpha$, $\tilde\beta$
    and $\tilde h\colon \Gamma \to \Gamma$ so that
    $\tilde h(\tilde E) = \tilde \alpha\tilde  E \tilde\beta$.
    Define $\tilde\tau_j = \tilde h^j_\sharp(\tilde E)$;
    it is the lift of a $jm$-tile.
    We have $\tilde\tau_0 = \tilde E$,
    $\tilde\tau_1 = \tilde\alpha\tilde\tau_0\tilde\beta$
    and more generally $\tilde\tau_{j+1}=\tilde\alpha_j \tilde\tau_j \tilde\beta_j$
    for nontrivial paths $\tilde\alpha_j$ and $\tilde\beta_j$.
    The $\tilde\tau_j$ are an increasing sequence of lifts of tiles
    whose union is a bi-infinite path $\tilde\lambda$ 
    that is fixed by $\tilde h_\sharp$.

    We claim that the projection $\lambda \subset \mathcal{G}$ realizes
    an element $\beta \in \mathcal{B}$ that is generic
    with respect to some element of $\mathcal{L}(\varphi)$.
    After replacing $m$ by a multiple if necessary,
    we may assume that the $h_\sharp$-image of any edge in $H_r$
    contains at least two edges in $H_r$.
    Define $\tilde\lambda_k$ to be the subpath of $\tilde\lambda$
    that begins with the $k$th lift of an edge of $H_r$ to the left of $\tilde E$
    and ends with the $k$th lift of an edge of $H_r$ to the right of $\tilde E$.
    Project to $\lambda_k \subset G_r$ and define $V_k = N(\lambda_k)$.
    \Cref{rttlemma} implies that 
    $\tilde h_\sharp(\tilde\lambda_k) \supset \tilde\lambda_{2k}$.
    Bounded cancellation in the form of \Cref{boundarybasics} part 3
    implies $h_\sharp(V_k) \subset V_{k+1}$ for all sufficiently large $k$.
    The $V_k$ are a neighborhood basis for $\lambda$,
    and so for sufficiently large $k$,
    $V_k$ is an attracting neighborhood of $\lambda$ for the action of $\varphi^m$.

    Since the difference between the number of edges in 
    $\tilde h_\sharp(\tilde\lambda_k)$
    and the number of edges in $\tilde\lambda_k$ grows without bound,
    the $\tilde\lambda_k$ cannot be subpaths of a single
    $\tilde h_\sharp$-invariant axis.
    In other words, $\lambda$ is not a circuit,
    and so cannot be carried by any $F_1$ or $C_2*C_2$ free factor.

    Note that by construction $\lambda$ has an exhaustion by tiles.
    We will show that $\lambda$ has a $k$-tiling for all $k\ge 1$.
    Notice that each $1$-tile has a $0$-tiling,
    and inductively each $(k+1)$-tile has a $k$-tiling
    and thus an $\ell$-tiling for $\ell \le k+1$.
    A $k$-tiling of $\tilde\tau_i$ determines and is determined by
    a finite set of vertices $\tilde V_i$ in $\tilde\tau_i \subset \tilde\lambda$.
    Given a vertex $\tilde v\in \tilde\lambda$,
    we may, after passing to a subsequence,
    assume that either $\tilde v \in \tilde V_i$ for all large $i$
    or $\tilde v\notin \tilde V_i$ for all large $i$.
    We then may consider (with this subsequence) another vertex of $\tilde\lambda$.
    The set of vertices that satisfy $\tilde v \in \tilde V_i$ for all large $i$
    determine a $k$-tiling of $\tilde\lambda$ and thus of $\lambda$.

    We have that \hyperlink{EG-i}{(EG-i)} implies that the first and last edges
    of any tile are contained in $H_r$.
    Thus each end of $\lambda$ must contain infinitely many edges in $H_r$.
    Bounded cancellation and the existence of $k$-tilings for all $k$
    imply that each tile occurs infinitely often in each end of $\lambda$.
    Since every finite subpath of $\lambda$ is contained in a tile,
    $\lambda$ is birecurrent.
    This shows that $\lambda$ is generic for some element of $\mathcal{L}(\varphi)$.
\end{proof}

\begin{lem}[\cite{BestvinaFeighnHandel} Lemma 3.1.10]
    \label{laminationproperties}
    Assume that $\beta \in \mathcal{B}$ is a generic leaf
    of some $\Lambda^+ \in \mathcal{L}(\varphi)$,
    that $f\colon \mathcal{G} \to \mathcal{G}$
    and $\varnothing = G_0 \subset G_1 \subset \dotsb \subset G_m = G$
    are a relative train track map and filtration
    representing $\varphi$ and that $\lambda$ is the realization 
    of $\beta$ in $\mathcal{G}$.
    \begin{enumerate}
        \item The highest stratum $H_r$ crossed by $\lambda$ is exponentially growing.
        \item The bi-infinite path $\lambda$ is $r$-legal.
    \end{enumerate}
    Define \emph{tiles} for $H_r$ as before the proof of \Cref{laminationconstruction}.
    \begin{enumerate}[resume]
        \item The bi-infinite path $\lambda$ has a $k$-tiling for all $k \ge 1$.
        \item The bi-infinite path $\lambda$ has an exhaustion by tiles.
    \end{enumerate}
\end{lem}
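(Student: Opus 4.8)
The plan is to follow the argument of \cite[Lemma 3.1.10]{BestvinaFeighnHandel}, exploiting the attracting-neighborhood clause in the definition of a generic leaf together with birecurrence. Passing to the iterate of $\varphi$ for which $\beta$ has an attracting neighborhood $V$ (and replacing $f$ by the corresponding iterate; its exponentially growing strata satisfy \hyperlink{EG-i}{(EG-i)}--\hyperlink{EG-iii}{(EG-iii)} and sit inside those of $f$ by the remarks following \Cref{rttlemma}, so none of the four conclusions is affected), we may assume $\beta \in V$, $\varphi_\sharp(V)\subseteq V$, and $\{\varphi^n_\sharp(V)\}_{n\ge0}$ is a neighborhood basis at $\beta$. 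Then the $\varphi^n_\sharp(V)$ are nested and cofinal, so $\varphi^n_\sharp(\beta)\to\beta$; realizing in $\mathcal{G}$ gives $f^n_\sharp(\lambda)\to\lambda$ in $\mathcal{B}(\mathcal{G})$. First I would fix a finite subpath $\mu_0$ of $\lambda$ beginning and ending with edges of $H_r$ (possible since $H_r$ is the highest stratum crossed) and large enough that $N(\mu_0)\subseteq V$; the component of $G_r$ meeting $\mu_0$ is noncontractible by \Cref{egvalence}, so $\mu_0$ closes up to a tight circuit $\sigma_c$ contained in $G_r$ and crossing $H_r$, hence realizing a nonperipheral conjugacy class, with $\sigma_c\in N(\mu_0)\subseteq V$ and therefore $f^n_\sharp(\sigma_c)\to\lambda$.

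For conclusion (1): $G_{r-1}$ is $f$-invariant, so if $H_r$ were a zero stratum then $f_\sharp(\lambda)\subseteq G_{r-1}$, and no subpath of $\lambda$ meeting $H_r$ could occur in $f^n_\sharp(\lambda)$ for $n\ge1$, contradicting $f^n_\sharp(\lambda)\to\lambda$. If $H_r$ were non-exponentially growing, then by the standing convention $f(E_i)=g_iE_{i+1}u_i$ with $u_i\subseteq G_{r-1}$, so $f^n(E_i)$ contains exactly one edge of $H_r$; hence the number of occurrences of edges of $H_r$ in $f^n_\sharp(\sigma_c)$ is at most the number $m$ in $\sigma_c$, for every $n$. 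But $\lambda$ is birecurrent and crosses $H_r$, so it has a finite subpath with more than $m$ edges of $H_r$; for large $n$ this subpath lies in $f^n_\sharp(\sigma_c)$, a contradiction. Thus $H_r$ is exponentially growing. Conclusion (2) is the same argument with \emph{edge of $H_r$} replaced by \emph{illegal turn of $H_r$}: $f_\sharp$ does not increase the number of illegal turns of $H_r$ in a path (a consequence of \hyperlink{EG-i}{(EG-i)} and \hyperlink{EG-iii}{(EG-iii)}), so the circuits $f^n_\sharp(\sigma_c)$ all have at most as many such illegal turns as $\sigma_c$, while if $\lambda$ had one, birecurrence would force $f^n_\sharp(\sigma_c)$ to have arbitrarily many for large $n$. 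Hence $\lambda$ is $r$-legal.

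For conclusions (3) and (4): Since $\lambda$ is $r$-legal, \Cref{rttlemma} shows that its decomposition into single edges of $H_r$, maximal subpaths in $G_{r-1}$, and vertex group elements is a splitting --- a $0$-tiling. Applying $f^n_\sharp$ and using \hyperlink{EG-i}{(EG-i)} to see the new junctures stay legal, $f^n_\sharp(\lambda)$ is again $r$-legal and carries an $n$-tiling; and since any $n$-tile equals $f^k_\sharp$ of an $r$-legal path (namely $f^{\,n-k}_\sharp(E)$), whose $0$-tiling is a splitting by \Cref{rttlemma}, an $n$-tiling refines to a $k$-tiling for every $k\le n$. To push this down to $\lambda$ I would argue as in the last part of the proof of \Cref{laminationconstruction}: first establish (4) by combining $f^n_\sharp(\sigma_c)\to\lambda$ with \Cref{egsplitting} (the number of illegal turns of $H_r$ in $f^n_\sharp(\sigma_c)$ is eventually constant, so $f^n_\sharp(\sigma_c)$ is, compatibly with splittings, a concatenation of boundedly many elements of $P_r$ and $r$-legal pieces whose $H_r$-content grows without bound) together with birecurrence of $\lambda$, which supplies enough $H_r$-edges around a given finite subpath of $\lambda$ that the subpath is engulfed, inside $\lambda$ itself, by a tile that is a subpath of $\lambda$; then, choosing an increasing exhaustion of a lift $\tilde\lambda$ by lifts of tiles $\tilde T_i$ (each with a $k$-tiling once $i$ is large), mark the corresponding vertices and pass, by a diagonal argument over the countably many vertices of $\tilde\lambda$, to a subsequence along which each vertex's membership stabilizes; the vertices that stabilize to being marked determine a $k$-tiling of $\tilde\lambda$, hence of $\lambda$.

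The hard part will be conclusion (4) --- transferring an exhaustion by tiles from the approximating circuits to $\lambda$. The difficulty is that $\sigma_c$ need not be $r$-legal, so one must control where a fixed finite subpath of $\lambda$ can sit inside $f^n_\sharp(\sigma_c)$, which is precisely the role of the bound on illegal turns and of \Cref{egsplitting}; moreover the $G_{r-1}$-subpaths occurring in a tiling are not of bounded length, so the bookkeeping must be carried out in terms of the $H_r$-content of paths rather than their total length. Everything else is a routine adaptation of \cite{BestvinaFeighnHandel} and of the proof of \Cref{laminationconstruction} already given.
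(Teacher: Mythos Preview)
Your overall strategy matches the paper's, but there is a real gap in your argument for item (1) in the non-exponentially-growing case, and the same gap recurs in item (2). You correctly bound the number of $H_r$-edges in the \emph{circuit} $f^n_\sharp(\sigma_c)$ by $m$, and you correctly observe that any finite subpath $\lambda_0$ of $\lambda$ with more than $m$ edges of $H_r$ eventually appears in $f^n_\sharp(\sigma_c)$. But ``appears in $f^n_\sharp(\sigma_c)$'' only means that $\lambda_0$ is a subpath of the \emph{periodic line} determined by that circuit, and nothing prevents $\lambda_0$ from spanning many fundamental domains; no contradiction follows. What is missing is the assertion that the length of the circuit $f^n_\sharp(\sigma_c)$ grows without bound (once you have this, $\lambda_0$ is eventually covered by two fundamental domains and you are done). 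In the free-group setting growth is automatic: there are finitely many circuits of each length, so bounded length would force $f^s_\sharp$ to act periodically on the circuit, contradicting weak attraction to $\lambda \ne \sigma_c$. In the free-product setting this fails, since varying vertex-group elements gives infinitely many circuits with the same underlying edge path. The paper fills this gap with an observer's-topology argument: if the underlying edge paths of the $f^{s\ell}_\sharp(\alpha)$ were eventually constant but the vertex-group elements varied aperiodically, the sequence would converge in $\tilde{\mathcal B}(\Gamma)$ to a \emph{finite} line between two points of $V_\infty(F,\mathscr A)$ rather than to the bi-infinite $\lambda$. This is the one genuinely new ingredient beyond \cite{BestvinaFeighnHandel}, and your proposal does not address it.

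Two smaller points. Your appeal to \Cref{egvalence} to conclude that the component of $G_r$ meeting $\mu_0$ is noncontractible is circular: that lemma is stated only for exponentially growing strata, which is precisely what you are trying to establish. The correct justification is simply that the bi-infinite line $\lambda$ lives in that component, so its Bass--Serre tree is infinite. Finally, you propose to prove (4) before (3) via \Cref{egsplitting}; the paper instead proves (3) first (using the approximating circuit and the diagonal argument you mention) and then deduces (4) from (3) together with birecurrence, which is cleaner.
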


\begin{proof}
    We argue by induction on the structure of $F$.
    The base cases of $F = A_1$, $F_1$ and $C_2*C_2$ are trivial,
    so we assume the result holds for outer automorphisms of proper free factors of $F$.
    The rest of the proof follows as in \cite[Lemma 3.1.10]{BestvinaFeighnHandel}.

    Suppose first that $\lambda$ is contained in $G_{m-1}$.
    Let $s$ be the smallest positive integer so that the component $C$ of $G_{m-1}$
    containing the image of $\lambda$ is $f^s$-invariant.
    The inductive hypothesis applied to the restriction of $\varphi$ to $\pi_1(C)$
    completes the proof of item 1.

    Now suppose that $\lambda$ contains edges of $H_m$.
    Since $\lambda$ is birecurrent, it crosses some edge of $H_m$ infinitely many times.
    Now choose $s \ge 1$ so that $\lambda$ has an attracting neighborhood
    for the action of $\varphi^s$,
    and let $\alpha \ne \lambda$ be a periodic line that is weakly attracted to $\lambda$
    under the action of $\varphi^s_\sharp$.
    (Let us remark that it is here that we use the third point in the definition
    of an attracting lamination to ensure
    such a periodic line exists.)
    Since $f^{s\ell}_\sharp(\alpha) \to \lambda$ as $\lambda \to \infty$,
    $f^{\ell}_\sharp$ cannot act with period other than $1$ on $\alpha$.
    Since $\lambda \ne \alpha$, the map $f^{s}_\sharp$ cannot fix $\alpha$.
    Since $\alpha$ is periodic, it corresponds to a circuit in $\mathcal{G}$.
    
    Suppose the number of edges in the circuit $f^{s\ell}_\sharp(\alpha)$
    does not grow without bound but that $f^{s\ell}_\sharp(\alpha)$
    is not periodic.
    Since there are only finitely many underlying paths in $G$ of a given length,
    it follows that there exists $M \ge 1$
    such that $f^{sM\ell}_\sharp(\alpha)$ has the same underlying path
    as $\alpha$ but necessarily the vertex group elements differ.
    Consider a lift $\tilde\alpha$ of $\alpha$ to $\Gamma$,
    and a lift $\tilde h = \tilde f^{sM}$ such that
    $\tilde f^{sM}_\sharp(\tilde\alpha)$ and $\tilde\alpha$ share a subpath
    $\tilde\sigma$ with endpoints at points $\tilde v$ and $\tilde w$ of infinite valence in $\Gamma$
    with the property that the corresponding vertex group elements in
    the circuit $\alpha$ (which may be the same vertex group element)
    are not periodic.
    We claim that the sequence $\tilde h_\sharp^\ell(\tilde\alpha)$
    converges to $\tilde\sigma$ in $\tilde{\mathcal{B}}(\Gamma)$.
    Indeed, consider a point $\tilde y$ distinct from $\tilde v$ and $\tilde w$ in $\Gamma$.
    We will show that the initial endpoint of $\tilde h^\ell_\sharp(\tilde\sigma)$ 
    belongs to the same half-tree at $\tilde y$ as $\tilde v$ for large $\ell$.
    The same argument shows that the terminal endpoint of $\tilde h^\ell_\sharp(\tilde\sigma)$
    belongs to the same half-tree at $\tilde y$ as $\tilde w$ for large $\ell$.
    Consider the direction at $\tilde v$ determined by the path from $\tilde v$ to $\tilde y$.
    A sufficient condition for the initial endpoint of $\tilde h^\ell_\sharp(\tilde\sigma)$
    to be contained in the same half-tree at $\tilde y$ as $\tilde v$ is for this direction
    to be distinct from the direction at $\tilde v$ 
    determined by the ray from $\tilde v$ to the initial endpoint of $h^\ell_\sharp(\tilde\sigma)$.
    Since by assumption this latter direction is not periodic,
    this is indeed the case for large $\ell$.
    This shows that $\tilde h^\ell_\sharp(\tilde\alpha)$ converges to $\tilde\sigma$.
    This is a contradiction,
    as there are neighborhoods of $\lambda$ that do not contain $\sigma$
    (indeed, take some subpath of $\lambda$ longer than $\sigma$).

    Therefore the number of edges of $f^{s\ell}_\sharp(\alpha)$ grows without bound.
    Since $f^{s\ell}_\sharp(\alpha)$ converges to $\lambda$,
    which has infinitely many edges of $H_m$,
    the number of $H_m$-edges of $f^{s\ell}_\sharp(\alpha)$ grows without bound.
    This implies that $H_m$ is exponentially growing.

    Now suppose that $H_r$ is the highest exponentially growing stratum
    crossed by $\lambda$.
    To prove that $\lambda$ is $r$-legal,
    let $j$ be the number of illegal turns of the circuit $\alpha$ in $H_r$.
    Since $f_\sharp$ does not create new illegal turns in $H_r$,
    the number of illegal turns of $f^{s\ell}_\sharp(\alpha)$ in $H_r$
    is bounded by $j$.
    Take a finite subpath $\lambda_0$ of $\lambda$.
    By the definition of weak convergence,
    we have that $\lambda_0$ is a subpath of $f^{s\ell}_\sharp(\alpha)$
    for all $\ell$ sufficiently large.
    Since the length of $f^{s\ell}_\sharp(\alpha)$ increases without bound,
    for $\ell$ sufficiently large, the subpath $\lambda_0$
    is covered by two fundamental domains of the circuit $f^{s\ell}_\sharp(\alpha)$,
    so the number of illegal turns of $\lambda_0$ in $H_r$ is at most $2j$.
    But since $\lambda$ is birecurrent and we may choose $\lambda_0$ arbitrarily,
    this uniform bound actually implies that $\lambda$ is $r$-legal.

    Now consider item 3. Fix $k \ge 1$ and let $\tilde\lambda \subset \Gamma$
    be a lift of $\lambda$.
    Recall from the proof of \Cref{laminationconstruction}
    that a $k$-tiling of $\lambda$ corresponds to a subdivision of $\tilde\lambda$
    and thus the vertices of $\tilde\lambda$ 
    that are the endpoints of the subdivision pieces.
    Let $q$ be the number of edges in $\alpha$.
    Given a finite subpath $\lambda_0 \subset \lambda$,
    let $\lambda_1 \subset \lambda$ be a finite subpath
    that contains $2q+1$ copies of $\lambda_0$.
    (The path $\lambda_1$ exists by birecurrence.)
    As in the argument for item 2,
    if $\ell$ is sufficiently large,
    then $\lambda_1$ occurs as a subpath of the periodic line
    determined by $f^{s\ell}_\sharp(\alpha)$ that is covered by two fundamental domains.
    In particular, at least one copy of $\lambda_0$ occurs
    as a subpath of $f^{s\ell}_\sharp(E)$ for some edge $E$ of $G_r$.
    We conclude that $\lambda$ is an increasing union of finite subpaths
    that have $k$-tilings.
    The $k$-tilings of these subpaths correspond to finite sets $\tilde V_i$
    of vertices of $\tilde \lambda$.
    After passing to subsequences as in the proof of \Cref{laminationconstruction},
    we obtain a $k$-tiling of $\lambda$.

    To complete the proof of item 4, we need only show that each finite subpath
    $\lambda_0 \subset \lambda$ is actually a subpath of a tile.
    Choose a finite subpath $\lambda_0 \subset \lambda$.
    Birecurrence implies that there is a finite subpath $\lambda_1 \subset \lambda$
    that contains two disjoint copies of $\lambda_0$.
    Assume further that $\lambda_0$ contains an edge of $H_r$.
    By item 3, $\lambda$ has a $k$-tiling,
    where $k$ is chosen so large that each $k$-tile is longer than $\lambda_1$.
    In any $k$-tiling of $\lambda$ 
    there are at most two $k$-tiles that intersect $\lambda_1$;
    one of these must contain a copy of $\lambda_0$,
    completing the proof of item 4.
\end{proof}

\begin{cor}[\cite{BestvinaFeighnHandel} Corollary 3.1.11]
    \label{laminationunique}
    Assume that $f\colon \mathcal{G} \to \mathcal{G}$
    and $\varnothing \subset G_0 \subset G_1 \subset \dotsb \subset G_m = G$
    are a relative train track map and filtration representing $\varphi$,
    that $H_r$ is an aperiodic exponentially growing stratum
    and that \emph{tiles} 
    (see the paragraph before the proof of \Cref{laminationconstruction})
    are defined with respect to $H_r$.
    Assume further that $\beta \in \mathcal{B}$ is $\Lambda^+$-generic
    for $\Lambda^+ \in \mathcal{L}(\varphi)$
    and that $H_r$ is the highest stratum crossed by the realization 
    of $\beta \in \mathcal{G}$.
    Then $\{N(\tau) : \tau \text{ is a tile}\}$ is a neighborhood basis
    in $\mathcal{B}$ for $\beta$.
    It follows that all such $\beta$ have the same closure.
\end{cor}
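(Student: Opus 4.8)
The plan is to adapt the proof of \cite[Corollary 3.1.11]{BestvinaFeighnHandel}, using \Cref{laminationproperties} and \Cref{rttlemma}. Write $\lambda$ for the realization of $\beta$ in $\mathcal{G}$. By \Cref{laminationproperties} it is birecurrent (hence bi-infinite), $r$-legal, has a $k$-tiling for every $k\ge 1$, admits an exhaustion by tiles, and $H_r$ is the highest stratum it crosses. Since $\beta$ is bi-infinite, the sets $N(\gamma)$ for $\gamma$ a finite subpath of $\lambda$ form a neighbourhood basis at $\beta$ (identifying $\mathcal{B}(\mathcal{G})$ with $\mathcal{B}$ via the realization). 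Given such a $\gamma$, the exhaustion by tiles produces a tile $\tau$ occurring as a subpath of $\lambda$ with $\gamma$ a subpath of $\tau$; then $\beta\in N(\tau)\subseteq N(\gamma)$. So the sets $N(\tau)$, as $\tau$ ranges over tiles that are subpaths of $\lambda$, already form a neighbourhood basis at $\beta$.

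To upgrade ``subpath of $\lambda$'' to ``tile'' and to extract the uniqueness statement, the crucial point is: \emph{every tile is a subpath of $\lambda$.} I would argue this as follows. Fix $k\ge 0$ and an edge $E$ of $H_r$; the goal is $f^k_\sharp(E)\subseteq\lambda$. Because the transition matrix $M_r$ of $H_r$ is aperiodic, there is $N$ with $M_r^n$ everywhere positive for all $n\ge N$ (once $M_r^N>0$, right-multiplication by $M_r$, which is irreducible and so has no zero column, preserves positivity). Hence for $n\ge N$ the $n$-tile $f^n_\sharp(E')$ crosses $E$, for every edge $E'$ of $H_r$. Now take $n\ge N+k$ and any edge $E'$. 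The path $f^{n-k}_\sharp(E')$ is $r$-legal by \hyperlink{EG-iii}{(EG-iii)}, has endpoints at vertices of $H_r$, and crosses $E$, so by \Cref{rttlemma} its decomposition into single $H_r$-edges and maximal $G_{r-1}$-subpaths is a splitting; applying $f^k_\sharp$ and using $f^n_\sharp(E')=f^k_\sharp(f^{n-k}_\sharp(E'))$ exhibits $f^k_\sharp(E)$ as a subpath of $f^n_\sharp(E')$. Finally, by item~3 of \Cref{laminationproperties} the line $\lambda$ has an $n$-tiling, and since $H_r$ is the highest stratum it crosses, every edge of $H_r$ in $\lambda$ lies inside an $n$-tile, so $\lambda$ contains some $n$-tile $f^n_\sharp(E')$ as a subpath; thus $f^k_\sharp(E)\subseteq f^n_\sharp(E')\subseteq\lambda$.

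Granting this, each $N(\tau)$ with $\tau$ a tile contains $\beta$ and is one of the basic open sets $N(\gamma)$, so $\{N(\tau):\tau\text{ a tile}\}$ is a neighbourhood basis at $\beta$, proving the first assertion. For the last sentence, observe that this collection of open sets depends only on $f$ and $H_r$: if $\beta'$ is another generic leaf whose realization $\lambda'$ crosses $H_r$ but no higher stratum, the same reasoning makes $\{N(\tau):\tau\text{ a tile}\}$ a neighbourhood basis at $\beta'$ as well, and every $N(\tau)$ contains both $\beta$ (tiles are subpaths of $\lambda$) and $\beta'$ (tiles are subpaths of $\lambda'$). Therefore each basic neighbourhood of $\beta'$ contains $\beta$, so $\beta\in\overline{\{\beta'\}}$, and symmetrically $\beta'\in\overline{\{\beta\}}$; hence $\overline{\{\beta\}}=\overline{\{\beta'\}}$ in $\mathcal{B}$. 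I expect the main obstacle to be the displayed claim — in particular, making the application of \Cref{rttlemma} to the finite paths $f^{n-k}_\sharp(E')$ precise and checking that aperiodicity of $M_r$ yields eventual positivity of \emph{all} high powers; the rest is bookkeeping with the definitions of tile and tiling.
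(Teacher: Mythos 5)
Your proof is correct and follows essentially the same route as the paper: use a $k$-tiling of $\lambda$ plus aperiodicity of $M_r$ to show each sufficiently high-power tile contains every lower-power tile (via \Cref{rttlemma} and \hyperlink{EG-iii}{(EG-iii)}), conclude $\lambda$ contains every tile, use the exhaustion by tiles for the converse inclusion of neighbourhoods, and deduce equality of closures since the tile-neighbourhood basis is independent of the chosen generic leaf. You have merely spelled out a few steps the paper leaves implicit (the basis argument, the ``every tile $\subset\lambda$'' step, and the closure comparison), but the underlying argument is the same.
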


\begin{proof}
    The proof is identical to \cite[Corollary 3.1.11]{BestvinaFeighnHandel}
    Let $\lambda \subset \mathcal{G}$ be the realization of $\beta$.
    By \Cref{laminationproperties} item 3,
    $\lambda$ has a $k$-tiling for all $k$.
    Suppose $k_0$ is such that $M^{k_0}$ is positive.
    Then the $f^{k_0}_\sharp$-image of an edge of $H_r$
    contains every edge of $H_r$,
    and thus each $(k_0 + \ell)$-tile contains every $\ell$-tile.
    More generally if $k - \ell \ge k_0$,
    each $k$-tile contains every $\ell$-tile.
    It follows that $\lambda$ contains every tile.
    Conversely, item 4 of \Cref{laminationproperties}
    implies that every subpath of $\lambda$ is contained in a tile in $\lambda$.
\end{proof}

The previous three results imply
that for any relative train track map representing $\varphi$
and any aperiodic exponentially growing stratum $H_r$,
there is a \emph{unique} attracting lamination $\Lambda^+ \in \mathcal{L}(\varphi)$
with the property that $H_r$ is the highest stratum
crossed by the realization $\lambda \subset \mathcal{G}$ of a $\Lambda^+$-generic leaf.
We will say that $H_r$ is \emph{the stratum determined by $\Lambda^+$}
and that $\Lambda^+$ is \emph{the attracting lamination associated to $H_r$.}

\begin{lem}[\cite{BestvinaFeighnHandel} Lemma 3.1.13]
    The set $\mathcal{L}(\varphi)$ is finite.
\end{lem}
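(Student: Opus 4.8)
The plan is to follow the argument of \cite[Lemma 3.1.13]{BestvinaFeighnHandel}, using the structural results on attracting laminations just established. The key observation is that by \Cref{laminationunique} and the paragraph following it, every $\Lambda^+ \in \mathcal{L}(\varphi)$ has an associated aperiodic exponentially growing stratum in \emph{any} fixed relative train track map representing $\varphi$ (after possibly passing to a power so that all exponentially growing strata are eg-aperiodic, which only changes the filtration by subdividing, hence does not change $\mathcal{L}(\varphi)$ since passing to a power does not change the set of attracting laminations — this should be checked, but follows from the fact that weak attraction, birecurrence and the carrying condition are insensitive to replacing $\varphi$ by an iterate). Thus I would first fix a relative train track map $f\colon \mathcal{G} \to \mathcal{G}$ representing a suitable power of $\varphi$, arranged to be eg-aperiodic (this is possible: take a relative train track map by \Cref{rtttheorem}, pass to the power that makes each exponentially growing transition matrix aperiodic, and invoke \Cref{pfcorollary} or the construction directly to see the result is still a relative train track map with the same exponentially growing strata refined).

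The core of the argument is then an injectivity statement: the assignment $\Lambda^+ \mapsto H_r$, sending an attracting lamination to the highest stratum crossed by the realization of a generic leaf, is a well-defined injection from $\mathcal{L}(\varphi)$ into the (finite) set of exponentially growing strata of $f$. Well-definedness and the fact that the target stratum is exponentially growing are exactly \Cref{laminationproperties}(1); injectivity is \Cref{laminationunique}, which says that $\Lambda^+$ is uniquely determined by its associated stratum $H_r$ via the tile neighborhood basis. Since $\mathcal{G}$ is a finite graph of groups, it has only finitely many strata, so $\mathcal{L}(\varphi)$ is finite — with the explicit bound that $|\mathcal{L}(\varphi)|$ is at most the number of exponentially growing strata of $f$.

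The one point requiring care, and the main obstacle, is the passage to a power: I need to confirm that $\mathcal{L}(\varphi) = \mathcal{L}(\varphi^k)$ for all $k \ge 1$, or at least that $|\mathcal{L}(\varphi)| \le |\mathcal{L}(\varphi^k)|$, so that bounding the latter suffices. In the free group case this is routine because the definition of an attracting lamination already quantifies over iterates ($\beta$ has an attracting neighborhood for \emph{some} iterate of $\varphi$), so $\mathcal{L}(\varphi) \subseteq \mathcal{L}(\varphi^k)$ always, and conversely a generic leaf for $\varphi^k$ is birecurrent, not carried by a $\varphi$-periodic (hence $\varphi^k$-periodic) $F_1$ or $C_2*C_2$ free factor, and has an attracting neighborhood for an iterate of $\varphi^k$, hence of $\varphi$ — so $\mathcal{L}(\varphi^k) \subseteq \mathcal{L}(\varphi)$ as well. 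I would spell out that this same reasoning goes through verbatim in the free product setting, using that the carrying condition and birecurrence are properties of the abstract line by \Cref{birecurrent} and the surrounding discussion. Given that, the lemma follows immediately: choose $k$ so that $\varphi^k$ is represented by an eg-aperiodic relative train track map, and apply the injection above.

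\begin{proof}
    By the definition of an attracting lamination, a line $\beta$ is generic for some $\Lambda^+ \in \mathcal{L}(\varphi)$ if and only if it is birecurrent, admits an attracting neighborhood for the action of some iterate of $\varphi$, and is not carried by any $\varphi$-periodic $F_1$ or $C_2*C_2$ free factor. Since birecurrence is a property of the abstract line (\Cref{birecurrent}), since the class of iterates of $\varphi$ cofinally contains the iterates of $\varphi^k$ for any fixed $k\ge 1$, and since a $\varphi$-periodic free factor is $\varphi^k$-periodic and conversely a $\varphi^k$-periodic $F_1$ or $C_2*C_2$ free factor determines, by taking the union of its $\varphi$-orbit of conjugacy classes, a collection whose closure in $\mathcal{B}$ of periodic lines is $\varphi$-invariant, one checks readily that $\beta$ is generic for an element of $\mathcal{L}(\varphi)$ if and only if it is generic for an element of $\mathcal{L}(\varphi^k)$; hence $\mathcal{L}(\varphi) = \mathcal{L}(\varphi^k)$.

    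Choose $k \ge 1$ so that $\varphi^k$ is represented by a relative train track map $f\colon \mathcal{G} \to \mathcal{G}$ with filtration $\varnothing = G_0 \subset \dotsb \subset G_m = G$ each of whose exponentially growing strata is aperiodic; such $k$ and $f$ exist by \Cref{rtttheorem}, since passing to a power makes each exponentially growing transition matrix aperiodic and the resulting map is still a relative train track map with exponentially growing strata refining those of the original. By \Cref{laminationproperties}(1), for each $\Lambda^+ \in \mathcal{L}(\varphi) = \mathcal{L}(\varphi^k)$ with $\Lambda^+$-generic leaf $\beta$ realized by $\lambda \subset \mathcal{G}$, the highest stratum $H_{r(\Lambda^+)}$ crossed by $\lambda$ is exponentially growing; by \Cref{laminationunique} and the remark following it, the stratum $H_{r(\Lambda^+)}$ does not depend on the choice of generic leaf, and $\Lambda^+$ is the unique element of $\mathcal{L}(\varphi^k)$ whose generic leaves are realized by lines with highest stratum $H_{r(\Lambda^+)}$. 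Thus $\Lambda^+ \mapsto H_{r(\Lambda^+)}$ is an injection from $\mathcal{L}(\varphi)$ to the set of exponentially growing strata of $f$. Since $\mathcal{G}$ is a finite graph of groups, this set is finite, so $\mathcal{L}(\varphi)$ is finite.
\end{proof}
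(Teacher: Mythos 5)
Your proof is correct and takes essentially the same approach as the paper: both reduce to the eg-aperiodic case by passing to a power of $\varphi$, then use the bijection between aperiodic exponentially growing strata and attracting laminations established by \Cref{laminationproperties} and \Cref{laminationunique}. The only difference is one of emphasis — the paper spells out the construction of the aperiodic partition $P_1,\ldots,P_s$ and asserts $\mathcal{L}(\varphi^p)=\mathcal{L}(\varphi)$ without proof, whereas you do the reverse; your justification of the latter is slightly more convoluted than necessary (the key point is simply that $\varphi$-periodicity and $\varphi^k$-periodicity of a free factor are equivalent conditions) but is sound.
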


\begin{proof}
    The proof is identical to \cite[Lemma 3.1.13]{BestvinaFeighnHandel}.
    Choose a relative train track map $f\colon \mathcal{G} \to \mathcal{G}$
    and filtration $\varnothing = G_0 \subset G_1 \subset \cdots \subset G_m = G$
    representing $\varphi$.
    If $f\colon \mathcal{G} \to \mathcal{G}$ is eg-aperiodic,
    then the paragraph before the lemma
    implies a one-to-one correspondence between the exponentially growing strata
    of $f$ and the attracting laminations in $\mathcal{L}(\varphi)$, so the lemma holds.

    Suppose on the other hand that some $H_r$ is not aperiodic.
    Then by general theory (see \cite{Seneta})
    there is a partition of the edges of $H_r$ into $s > 1$ sets
    $P_1,\ldots,P_s$ such that for each edge $E$ in $P_i$,
    the edge path $f_\sharp(E)$ only crosses edges in $G_{r-1}$ and $P_{i+1}$,
    where indices are taken mod $s$.
    The $s$th power of the transition matrix $M^s_r$ is not irreducible,
    so the filtration for $f$ must be enlarged to obtain a filtration for $f^s$.
    Replacing $f$ by $f^s$ has the effect of replacing $H_r$
    by $s$ exponentially growing strata.
    If we choose $s$ maximal, the transition matrix of each of these
    $s$ exponentially growing strata is aperiodic.
    Therefore some iterate of $\varphi$ is represented by an
    eg-aperiodic relative train track map.
    Since $\mathcal{L}(\varphi^p) = \mathcal{L}(\varphi)$ for all $p \ge 1$,
    we are reduced to the previous case.
\end{proof}

\begin{lem}[\cite{BestvinaFeighnHandel} Lemma 3.1.14]
    \label{aperiodicallaperiodic}
    The following are equivalent.
    \begin{enumerate}
        \item Each element of $\mathcal{L}(\varphi)$ is $\varphi$-invariant.
        \item Each element of $\mathcal{L}(\varphi)$ has an attracting neighborhood
            for $\varphi_\sharp$.
        \item Every relative train track map
            $f\colon \mathcal{G} \to \mathcal{G}$ representing $\varphi$
            is eg-aperiodic.
        \item Some relative train track map
            $f\colon \mathcal{G} \to \mathcal{G}$ representing $\varphi$
            is eg-aperiodic.
    \end{enumerate}
\end{lem}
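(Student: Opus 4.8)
The plan is to establish the cycle of implications $(3) \Rightarrow (4) \Rightarrow (1) \Rightarrow (3)$ and then close the loop with $(1) \Leftrightarrow (2)$. First, $(3) \Rightarrow (4)$ is immediate from \Cref{rtttheorem}. For $(4) \Rightarrow (1)$, I would use the fact (recorded just before the finiteness lemma) that when $f$ is eg-aperiodic the exponentially growing strata of $f$ are in bijection with $\mathcal{L}(\varphi)$, a stratum $H_r$ corresponding to the attracting lamination $\Lambda^+_r$ whose generic leaf crosses $H_r$ as its highest stratum. Since $H_r$ is $f$-invariant and, by \hyperlink{EG-iii}{(EG-iii)}, the $f_\sharp$-image of a generic leaf still crosses $H_r$ (and nothing higher), $f_\sharp$ sends a $\Lambda^+_r$-generic leaf to a $\Lambda^+_r$-generic leaf, so $\varphi_\sharp(\Lambda^+_r) = \Lambda^+_r$.

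The heart of the argument is $(1) \Rightarrow (3)$. Given a relative train track map $g$ for $\varphi$ with a non-aperiodic exponentially growing stratum $H_r$, I would pass to $g^s$ with $s > 1$ chosen maximal, exactly as in the proof of the finiteness lemma, so that $H_r$ splits into aperiodic exponentially growing strata $H_{r_1}, \dots, H_{r_s}$ with $g_\sharp$ sending edges of $H_{r_i}$ into $G_{r-1}$ together with edges of $H_{r_{i+1}}$ (indices mod $s$). Each $H_{r_i}$ determines a lamination $\Lambda_i \in \mathcal{L}(\varphi^s) = \mathcal{L}(\varphi)$, and by the characterization following \Cref{laminationunique} the (unique) realization of a $\Lambda_i$-generic leaf has $H_{r_i}$ as its highest stratum; since a tight line crosses a single highest stratum and a lamination has a single realization in a given graph of groups, the $\Lambda_i$ are pairwise distinct. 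Because $g_\sharp$ carries a $\Lambda_i$-generic leaf to a line with highest stratum $H_{r_{i+1}}$, hence (using that $\mathcal{L}(\varphi)$ is $\varphi$-invariant and \Cref{laminationunique}) to a $\Lambda_{i+1}$-generic leaf, $\varphi_\sharp$ cyclically permutes the $s > 1$ distinct laminations $\Lambda_1, \dots, \Lambda_s$, so none is $\varphi$-invariant, contradicting $(1)$.

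Finally, for $(2) \Rightarrow (1)$ I would argue that an attracting neighborhood $U$ of the generic leaf $\beta$ of $\Lambda^+$ for $\varphi_\sharp$ forces $\varphi_\sharp(\beta) \in \varphi^k_\sharp(U)$ for all $k \ge 1$, hence $\varphi_\sharp(\beta)$ lies in every neighborhood of $\beta$, so $\varphi_\sharp(\Lambda^+) \subseteq \overline{\{\beta\}} = \Lambda^+$; finiteness of $\mathcal{L}(\varphi)$ (so $\{\varphi^k_\sharp(\Lambda^+)\}$ is eventually constant) then upgrades the inclusion to equality. For $(1) \Rightarrow (2)$, I would invoke the already-established equivalence $(1) \Leftrightarrow (3)$ to fix an eg-aperiodic relative train track map $f$ representing $\varphi$, build for each $\Lambda^+ \in \mathcal{L}(\varphi)$ the generic leaf $\lambda$ as the increasing union of lifted tiles $\tilde\tau_j = \tilde f^{jm}_\sharp(\tilde E)$ as in the proof of \Cref{laminationconstruction} (so $f^m_\sharp$ fixes $\lambda$), and take $U' = \bigcup_{i=0}^{m-1}\varphi^i_\sharp\big(N(\lambda_k)\big)$ for $k$ large, in the notation of that proof. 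Using \Cref{laminationproperties}(4) (every subpath of a generic leaf lies in a tile) and \Cref{laminationunique} (every generic leaf contains every tile) together with bounded cancellation (\Cref{boundedcancellation}), one checks that $\varphi_\sharp(U') \subseteq U'$, that $\beta \in \varphi^j_\sharp(U')$ for all $j$, and that $\{\varphi^j_\sharp(U')\}$ is a neighborhood basis for $\beta$, so $U'$ is an attracting neighborhood for $\varphi_\sharp$.

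The step I expect to be the main obstacle is precisely this last one: promoting an attracting neighborhood for a positive power of $\varphi_\sharp$ (which is all the definition of an attracting lamination gives for free) to one for $\varphi_\sharp$ itself. This is where eg-aperiodicity is essential — it is what makes the tile neighborhood bases of \Cref{laminationunique} genuinely nested under a single application of $f_\sharp$ rather than only under $f^m_\sharp$ — and the cancellation at the ends of tiles when verifying $\varphi_\sharp(U') \subseteq U'$ must be tracked with some care.
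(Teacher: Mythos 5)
Your proposal is essentially correct and uses the same core ingredients as the paper --- the bijection between exponentially growing strata of an eg-aperiodic map and attracting laminations for $(4)\Rightarrow(1)$, and the splitting of a non-aperiodic stratum into $s>1$ aperiodic pieces under a power. What differs is the implication cycle: you run $(3)\Rightarrow(4)\Rightarrow(1)\Rightarrow(3)$ and separately $(1)\Leftrightarrow(2)$, whereas the paper runs the single cycle $(1)\Rightarrow(2)\Rightarrow(3)\Rightarrow(4)\Rightarrow(1)$. Your $(1)\Rightarrow(3)$ is the contrapositive of the paper's $(2)\Rightarrow(3)$ reinterpreted: you observe the $s$ new laminations are cyclically permuted, the paper observes they lack attracting neighborhoods; both arguments are the same at heart. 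One slip in your direct $(2)\Rightarrow(1)$: from ``$\varphi_\sharp(\beta)$ lies in every neighborhood of $\beta$'' you should deduce $\beta\in\overline{\{\varphi_\sharp(\beta)\}}$, hence $\Lambda^+=\overline{\{\beta\}}\subseteq\varphi_\sharp(\Lambda^+)$ --- the inclusion you wrote goes the wrong way --- though your finiteness argument closes the gap regardless of direction.

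The place where the paper genuinely simplifies is exactly the step you flagged as the obstacle, $(1)\Rightarrow(2)$. Rather than rebuilding an attracting neighborhood from scratch out of a union $\bigcup_{i=0}^{m-1}\varphi^i_\sharp(N(\lambda_k))$ --- which can be pushed through, but requires verifying the neighborhood-basis property for each of the $m$ residue classes $\{N_{i+jm}\}_j$ separately, with some fuss --- the paper simply takes $V$ an attracting neighborhood for $\varphi^s_\sharp$, observes via \Cref{laminationunique} that each $\varphi^i_\sharp(V)$ is a neighborhood of $\beta$ (because $\varphi^i_\sharp(\beta)$ is $\Lambda^+$-generic when $\Lambda^+$ is $\varphi$-invariant, and generic leaves share neighborhoods), and sets $U = V\cap\varphi_\sharp(V)\cap\cdots\cap\varphi^{s-1}_\sharp(V)$. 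The containments $\varphi_\sharp(U)\subset U$ and $\varphi^s_\sharp(U)\subset V$ then follow from $\varphi^s_\sharp(V)\subset V$ in one line, and the basis property is inherited from $V$. This also avoids your reliance on the already-established $(1)\Leftrightarrow(3)$ to get an eg-aperiodic representative: the paper's $(1)\Rightarrow(2)$ needs only \Cref{laminationunique} and the definition of attracting neighborhood.
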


\begin{proof}
    The proof is identical to \cite[Lemma 3.1.14]{BestvinaFeighnHandel}.
    It is clear that item 3 implies item 4.

    Suppose that $f\colon \mathcal{G} \to \mathcal{G}$ is an eg-aperiodic
    relative train track map for $\varphi$,
    that $\Lambda^+ \in \mathcal{L}(\varphi)$ is an attracting lamination
    and that $H_r$ is the exponentially growing stratum associated to $\Lambda^+$.
    If $\lambda$ in $G_r$ is $\Lambda^+$-generic,
    then $f_\sharp(\lambda)$ is $\varphi(\Lambda^+)$-generic.
    Since $H_r$ is the highest stratum crossed by $f_\sharp(\lambda)$,
    \Cref{laminationunique} implies that $\varphi(\Lambda^+) = \Lambda^+$,
    so item 4 implies item 1.

    Suppose that $f\colon \mathcal{G} \to \mathcal{G}$ is a relative train track map
    representing $\varphi$ and that $H_r$ is an exponentially growing stratum
    which is not aperiodic.
    As in the proof of the previous lemma,
    there is a partition of the edges of $H_r$ into $s > 1$ sets
    $P_1,\ldots,P_s$ such that for each edge $E$ of $P_i$,
    the edge path $f_\sharp(E)$ crosses only edges in $G_{r-1}$ and $P_{i+1}$
    with indices taken mod $s$.
    When we replace $f$ by $f^s$,
    the exponentially growing stratum $H_r$ divides into $s$
    exponentially growing strata, one for each $P_i$.
    By \Cref{laminationconstruction},
    these contribute elements to $\mathcal{L}(\varphi)$
    that do not have attracting neighborhoods for the action of $\varphi$.
    Therefore item 2 implies item 3.

    Finally suppose that $\Lambda^+$ is $\varphi$-invariant,
    that $\beta$ is a $\Lambda^+$-generic leaf
    and that $V$ is an attracting neighborhood for $\beta$
    with respect to the action of $\varphi^s$.
    Each $\varphi^i_\sharp(\beta) \in \varphi_\sharp^i(V)$
    is generic with respect to $\Lambda^+$.
    \Cref{laminationunique} implies that $\beta \in \varphi_\sharp^i(V)$.
    Therefore
    \[  U = V \cap \varphi_\sharp(V) \cap \cdots \cap \varphi_\sharp^{s-1}(V) \]
    is a neighborhood of $\beta$ that satisfies $\varphi_\sharp(U) \subset U$
    and $\varphi^s_\sharp(U) \subset V$.
    Therefore $U$ is an attracting neighborhood for $\beta$
    and we see that item 1 implies item 2.
\end{proof}

\begin{lem}[\cite{BestvinaFeighnHandel} Lemma 3.1.15]
    \label{laminationgenericcriterion}
    Assume that $H_r$ is an aperiodic exponentially growing stratum
    for a relative train track map $f\colon \mathcal{G} \to \mathcal{G}$,
    that $\Lambda^+ \in \mathcal{L}(\varphi)$ is associated to $H_r$
    and that $\sigma$ is a leaf of $\Lambda^+$ that is not entirely contained 
    in $G_{r-1}$.
    Then the closure of $\delta$ is all of $\Lambda^+$.
    If $\sigma$ is birecurrent, then it is $\Lambda^+$-generic.
\end{lem}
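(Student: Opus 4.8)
The plan is to deduce everything from the single assertion that the realization $\sigma_{\mathcal{G}}$ of $\sigma$ in $\mathcal{G}$ contains every tile for $H_r$ (tiles as in the paragraph before \Cref{laminationconstruction}). Granting this, let $\lambda$ be the realization of a $\Lambda^+$-generic leaf $\beta$. By \Cref{laminationproperties} item 4, $\lambda$ has an exhaustion by tiles, and by \Cref{laminationunique} together with aperiodicity of $H_r$ (if $M_r^{k_0}$ is positive, then a $k$-tile contains every $\ell$-tile whenever $k-\ell\ge k_0$), $\lambda$ itself contains every tile. Hence every finite subpath of $\lambda$ lies in some tile, which by assumption is a subpath of $\sigma_{\mathcal{G}}$; thus $[\sigma]$ lies in $N(\gamma)$ for every finite subpath $\gamma$ of $\lambda$. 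Since the sets $N(\gamma)$ with $\gamma\subset\lambda$ form a neighborhood basis of the bi-infinite line $[\beta]$, we get $[\beta]\in\overline{[\sigma]}$, so $\Lambda^+=\overline{[\beta]}\subset\overline{[\sigma]}$; as $[\sigma]\in\Lambda^+$ and $\Lambda^+$ is closed, $\overline{[\sigma]}=\Lambda^+$.

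To prove the key assertion, first note that since $\sigma$ is a leaf of $\Lambda^+=\overline{[\beta]}$, every finite subpath of $\sigma_{\mathcal{G}}$ occurs as a subpath of $\lambda$; consequently $\sigma_{\mathcal{G}}$ crosses no stratum above $H_r$ and is $r$-legal (an illegal turn in $H_r$, or an edge of a higher stratum, would appear in a finite subpath of $\sigma_{\mathcal{G}}$, hence in $\lambda$, contradicting \Cref{laminationproperties}). Because $\sigma$ is not contained in $G_{r-1}$, the path $\sigma_{\mathcal{G}}$ crosses an edge of $H_r$. The plan is then: fix $k\ge 1$ and an occurrence in $\sigma_{\mathcal{G}}$ of an $H_r$-edge $E$ having at least $B_k:=\max_{E'}\lvert f^k_\sharp(E')\rvert$ edges of $\sigma_{\mathcal{G}}$ on each side, and let $\mu$ be the resulting finite subpath of $\sigma_{\mathcal{G}}$. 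Then $\mu$ is a subpath of $\lambda$; in that occurrence of $\mu$ in $\lambda$, the copy of $E$ sits inside a $k$-tile by the exhaustion by tiles, and this $k$-tile, having at most $B_k$ edges, is contained in the copy of $\mu$; hence $\mu$, and therefore $\sigma_{\mathcal{G}}$, contains a $k$-tile. By aperiodicity a $k$-tile contains every $\ell$-tile once $k\ge\ell+k_0$, so taking $k=\ell+k_0$ for each $\ell$ shows $\sigma_{\mathcal{G}}$ contains every tile.

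The main obstacle is guaranteeing the existence, for every $k$, of an occurrence of an $H_r$-edge lying at least $B_k$ deep inside $\sigma_{\mathcal{G}}$; equivalently, that $\sigma_{\mathcal{G}}$ crosses $H_r$ cofinally (and, as part of the same issue, that a leaf of $\Lambda^+$ crossing $H_r$ is in fact bi-infinite rather than singly infinite or finite with endpoints in $V_\infty(F,\mathscr{A})$). I expect this to follow from the inductive construction of the generic leaf in \Cref{laminationconstruction}: $\lambda$ is built by iterating $f_\sharp$ on a single $H_r$-edge, so every occurrence of an $H_r$-edge in $\lambda$ is adjacent, on at least one side, to a maximal $G_{r-1}$-subpath of length bounded by $\max_{E'}\lvert f_\sharp(E')\rvert$ (such a subpath comes from one application of $f$ to an edge). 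Hence a limit of $F$-translates of $\lambda$ cannot have an $H_r$-edge flanked by infinite $G_{r-1}$-rays on both sides, and an $r$-legal leaf that crosses $H_r$ at all must cross it cofinally. Making this precise, and handling the boundary cases, is the technical heart; I would model it on the proof of \cite[Lemma 3.1.15]{BestvinaFeighnHandel}.

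For the last sentence, suppose $\sigma$ is birecurrent, so in particular bi-infinite. By the two inclusions above, $\sigma_{\mathcal{G}}$ and $\lambda$ have exactly the same finite subpaths, so $[\sigma]$ and $[\beta]$ have the same basic open neighborhoods. Since $H_r$ is aperiodic, $\Lambda^+$ is $\varphi$-invariant: $f_\sharp(\lambda)$ is $\varphi(\Lambda^+)$-generic with highest stratum $H_r$, so \Cref{laminationunique} forces $\varphi(\Lambda^+)=\Lambda^+$. As in the proof of \Cref{aperiodicallaperiodic}, $\Lambda^+$ then has an attracting neighborhood $V$ for $\varphi_\sharp$ with $\{\varphi^k_\sharp(V):k\ge0\}$ a neighborhood basis for $[\beta]$, hence also for $[\sigma]$, so $V$ is an attracting neighborhood for $\sigma$. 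Finally $\sigma$ is not carried by a $\varphi$-periodic $F_1$ or $C_2*C_2$ free factor, since $\sigma_{\mathcal{G}}$ contains tiles $f^k_\sharp(E)$ of unbounded length ($H_r$ being exponentially growing) and so is not a circuit. Together with birecurrence, this gives that $\sigma$ is $\Lambda^+$-generic.
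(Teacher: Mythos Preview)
You correctly identify the crux --- showing that $\sigma$ contains an $H_r$-edge lying at depth at least $B_k$ for every $k$, and relatedly that $\sigma$ is bi-infinite --- but your proposed resolution does not work. The claim that every $H_r$-edge in $\lambda$ is flanked on at least one side by a $G_{r-1}$-subpath of length at most $\max_{E'}|f_\sharp(E')|$ is false: the maximal $G_{r-1}$-subpaths in a $k$-tile are $f^{k-1}_\sharp$-images of those appearing in $1$-tiles and grow with $k$. For instance, if $H_r=\{A,B\}$ with $f(A)=B$ and $f(B)=ABbA$ for $b\subset G_{r-1}$, then $f^3_\sharp(B)$ contains the subpath $f_\sharp(b)\,B\,f^2_\sharp(b)$, and both flanks can be made arbitrarily long by taking $f|_{G_{r-1}}$ to be expanding. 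More to the point, even granting some uniform bound $C$ for ``neighbor on one side,'' this does not force $H_r$-edges to be cofinal in $\sigma$: a leaf of $\Lambda^+$ can have an endpoint in $V_\infty(F,\mathscr{A})$ (such leaves are produced in the proof of \Cref{fixedpointstolaminations}), and nothing in your argument rules out a finite cluster of $H_r$-edges near that endpoint followed by an infinite $G_{r-1}$-tail.

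The paper avoids the obstacle altogether by proving directly that $\sigma$ admits a $k$-tiling for every $k$. Since $\sigma$ lies in the closure of a generic leaf, it is an increasing union of finite subpaths each inheriting a $k$-tiling from some generic leaf; the diagonalization over vertex sets, exactly as in the proof of \Cref{laminationproperties} item 3, then yields a $k$-tiling of $\sigma$ itself. Because $\sigma\not\subset G_{r-1}$, every such tiling must contain at least one genuine $k$-tile --- this single step simultaneously shows $\sigma$ is not a finite path and produces the $k$-tile you were after, without ever needing to locate an $H_r$-edge deep inside $\sigma$. From there your argument is the paper's: aperiodicity gives that $\sigma$ contains every tile, \Cref{laminationunique} gives $\Lambda^+\subset\overline{[\sigma]}$, and your final paragraph verifying the three generic-leaf conditions is essentially correct and matches the paper.
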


\begin{proof}
    The proof is identical to \cite[Lemma 3.1.15]{BestvinaFeighnHandel}.
    Fix $k \ge 1$. By \Cref{laminationproperties} item 3,
    each $\Lambda^+$-generic line has a $k$-tiling.
    Since $\sigma$ is a weak limit of $\Lambda^+$-generic leaves,
    $\sigma$ is an increasing union of finite subpaths that have $k$-tilings.
    (This proves that $\sigma$ is not a finite path between vertices with infinite vertex group)
    The argument in the proof of item 3 of \Cref{laminationproperties}
    shows that $\sigma$ has a $k$-tiling.
    If $\sigma \not\subset G_{r-1}$,
    then $\sigma$ must contain at least one $k$-tile.
    Since $k$ is arbitrary, \Cref{laminationunique}
    and the fact that each $k$-tile contains every $\ell$-tile
    for $k-\ell$ sufficiently large (see again the proof of \Cref{laminationunique})
    implies that the closure of $\sigma$ contains
    each $\Lambda^+$-generic line and thus contains $\Lambda^+$.
    It follows that $\sigma$ is not carried by any $F_1$ or $C_2 * C_2$ free factor.
    The fact that $\sigma$ has an attracting neighborhood for the action
    of some iterate of $\varphi$ follows from the fact
    that every neighborhood of a generic leaf is also a neighborhood of $\sigma$.
    If $\sigma$ is birecurrent,
    then all the items in the definition of an attracting lamination are satisfied
    and we conclude that $\sigma$ is $\Lambda^+$-generic.
\end{proof}

\begin{lem}[Lemma 3.1.16 of \cite{BestvinaFeighnHandel}]
    \label{laminationneveracircuit}
    A generic leaf of $\Lambda^+ \in \mathcal{L}(\varphi)$
    is never a circuit.
\end{lem}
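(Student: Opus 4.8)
The plan is to derive a contradiction by confronting a hypothetical circuit generic leaf with the (non-circuit) generic leaf produced in the proof of \Cref{laminationconstruction}. Suppose then that $\beta$ is a generic leaf of some $\Lambda^+\in\mathcal{L}(\varphi)$ and that the realization $\lambda$ of $\beta$ in a marked graph of groups is a circuit; by the definition of genericity we then have $\Lambda^+=\overline{\{\beta\}}$. First I would pass to an eg-aperiodic picture: by \Cref{aperiodicallaperiodic} some positive power $\varphi^p$ is represented by an eg-aperiodic relative train track map $f\colon\mathcal{G}\to\mathcal{G}$, and since $\mathcal{L}(\varphi^p)=\mathcal{L}(\varphi)$ we have $\Lambda^+\in\mathcal{L}(\varphi^p)$. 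For an eg-aperiodic representative there is a bijection between exponentially growing strata and attracting laminations, so $\Lambda^+$ determines an exponentially growing stratum $H_r$ of $f$, which is aperiodic because $f$ is eg-aperiodic.

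Next I would apply the construction in the proof of \Cref{laminationconstruction} to the stratum $H_r$. Since $H_r$ is the stratum determined by $\Lambda^+$, that construction yields a $\Lambda^+$-generic leaf $\mu$ whose realization $\lambda_\mu\subset\mathcal{G}$ is bi-infinite and, as is shown explicitly there (the difference between the edge counts of $\tilde h_\sharp(\tilde\lambda_k)$ and $\tilde\lambda_k$ grows without bound, so the $\tilde\lambda_k$ cannot all be subpaths of a single $\tilde h_\sharp$-invariant axis), is \emph{not} a circuit. Because $\mu$ is a leaf of $\Lambda^+$, we have $\mu\in\Lambda^+=\overline{\{\beta\}}$.

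It remains to see why this is impossible. Since $\lambda_\mu$ is bi-infinite, the sets $N(\nu)$, as $\nu$ ranges over the finite subpaths of $\lambda_\mu$, form a neighborhood basis for $\mu$ in $\mathcal{B}$; as $\mu\in\overline{\{\beta\}}$, each such $N(\nu)$ contains $\beta$, i.e.\ every finite subpath of $\lambda_\mu$ occurs as a subpath of $\lambda$. Writing $\lambda=\nu_0^{\infty}$ with $\nu_0$ a fundamental domain of some edge length $q$, every subpath of $\lambda$ of edge length greater than $q$ contains a full period and hence determines both of its one-sided continuations inside $\lambda$; consequently a bi-infinite line all of whose finite subpaths are subpaths of $\lambda$ is itself periodic, that is, a circuit. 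This contradicts the fact that $\lambda_\mu$ is not a circuit, so $\beta$ cannot be a circuit. I do not expect a genuine obstacle here: the one substantive input, that the leaf built in \Cref{laminationconstruction} is not a circuit, is already recorded inside that proof, and the rest is the bookkeeping reduction to an eg-aperiodic power together with the elementary point-set argument above about neighborhoods of bi-infinite lines; alternatively one may transcribe \cite[Lemma 3.1.16]{BestvinaFeighnHandel} directly, since the tiling machinery it relies on is available verbatim as \Cref{laminationproperties} and \Cref{laminationunique}.
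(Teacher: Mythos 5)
Your proof is correct, but it takes a genuinely different route from the paper's. The paper argues directly from the dynamics of a relative train track map: since $\Lambda^+ = \{\beta\}$ is a closed singleton and $\mathcal{L}(\varphi)$ is finite and $\varphi$-invariant, the realization $\lambda$ must be $f^k_\sharp$-invariant for some $k$; on the other hand, by \Cref{laminationproperties} $\lambda$ is $r$-legal and crosses $H_r$, so the relative train track property forces its length to grow without bound under iteration -- a one-line contradiction that avoids any topological considerations beyond the observation that a circuit is a closed point of $\mathcal{B}$. Your approach instead produces a second, non-circuit generic leaf via \Cref{laminationconstruction}, places it in $\overline{\{\beta\}}$, and applies a symbolic-dynamics observation (a bi-infinite word all of whose finite factors occur in a fixed periodic word is itself that periodic word up to shift) to force it to be a circuit anyway. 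Both routes work; yours essentially re-derives the closedness of the circuit-singleton rather than citing it. If you did invoke that fact directly -- which the paper records in the first sentence of its proof -- your argument would collapse to: $\Lambda^+ = \{\beta\}$, so the non-circuit leaf $\mu$ from \Cref{laminationconstruction} would have to \emph{equal} $\beta$, which is absurd. That shortcut is worth taking. The paper's argument has the further advantage of not needing the aperiodicity reduction or the existence of a second generic leaf; it uses only the $r$-legality of $\lambda$ and the growth estimate built into \Cref{lengthfunction}.
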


\begin{proof}
    The proof is identical to \cite[Lemma 3.1.16]{BestvinaFeighnHandel}.
    A set in $\mathcal{B}$ consisting of a single circuit is closed.
    Therefore if a $\Lambda^+$-generic leaf $\beta$ is a circuit,
    then $\Lambda^+ = \{\beta\}$.
    Choose a relative train track map $f\colon \mathcal{G} \to \mathcal{G}$
    representing $\varphi$.
    Since $\mathcal{L}(\varphi)$ is finite and $\varphi$-invariant,
    the realization of $\lambda$ in $\mathcal{G}$ for $\beta$ is invariant
    for the action of some iterate of $f_\sharp$.
    But $\lambda$ is contained in $G_r$ for some exponentially growing stratum $H_r$,
    is $r$-legal, and crosses edges in $H_r$.
    Therefore the length of $\lambda$ (as a circuit)
    must both be bounded and grow without bound, a contradiction.
\end{proof}

Given a point $\xi \in \partial_\infty(F,\mathscr{A})$,
Guirardel and Horbez \cite[Definition 4.9]{GuirardelHorbez} define the \emph{limit set} of $\xi$
to be the lamination $\Lambda(\xi)$ defined as follows.
(Here we think of $\Lambda(\xi)$ as an $F$-invariant closed subset of $\tilde{\mathcal{B}}$.)
Given a line $\tilde\beta$ with endpoints $\alpha$ and $\omega$,
we have that $\tilde\beta \in \Lambda(\xi)$
if (up to swapping $\alpha$ and $\omega$),
there exists a sequence $\{g_n\}$ of elements of $F$ converging to $\alpha$
such that $g_n.\xi$ converges to $\omega$.
They remark \cite[Remark 4.10]{GuirardelHorbez}
that if we fix a marked graph of groups $\mathcal{G}$
with Bass--Serre tree $\Gamma$ and a point $\tilde x \in \Gamma$,
then if $\tilde\lambda$ realizes $\tilde\beta$ in $\Gamma$
we have that $\tilde\beta \in \Lambda(\xi)$
if and only if $\tilde\lambda$ is a limit of translates of the ray $\tilde R_{\tilde x,\xi}$.
Bounded cancellation implies that $\varphi_\sharp(\Lambda(\xi)) = \lambda(\hat\Phi(\xi))$.
If $\xi$ belongs to $\fix(\hat\Phi)$, then $\Lambda(\xi)$ is $\varphi_\sharp$-invariant.

We would like to relate the construction of attracting laminations
to the idea of attracting \emph{fixed points} at infinity for the action of
some lift $\tilde f\colon \Gamma \to \Gamma$.
This is accomplished in the following lemma.

\begin{lem}[cf. Lemma 2.13 of \cite{FeighnHandel}]
    \label{fixedpointstolaminations}
   Suppose that $H_r$ is an exponentially growing stratum
   of a relative train track map $f\colon \mathcal{G} \to \mathcal{G}$,
   that $\tilde f\colon \Gamma \to \Gamma$ is a lift
   and that $\tilde v \in \fix(\tilde f)$.
   \begin{enumerate}
       \item If $E$ is an oriented edge in $H_r$ 
           and $\tilde E$ is a lift that determines
           a fixed direction at $\tilde v$,
           then there is a unique ray $\tilde R \subset \Gamma$ that begins with
           $\tilde E$, intersects $\fix(\tilde f)$ only in $\tilde v$,
           converges to an attractor $\xi \in \fix(\hat f)$
           and whose image in $\mathcal{G}$ is $r$-legal and has height $r$.
           The limit set of $\xi$ is the unique attracting lamination associated to $H_r$.
        \item Suppose that $E'$ is another oriented edge in $H_r$,
            that $\tilde E' \ne \tilde E$ determines a fixed direction at $\tilde v$
            and that $\tilde R'$ is the ray associated to $\tilde E'$
            as in item 1.
            Suppose further that the projection of the turn 
            $(\bar {\tilde E},\tilde E')$ is contained in the path
            $f^k_\sharp(E'')$ for some $k \ge 1$ and some edge $E''$ of $H_r$.
            Then the line $\bar {\tilde R}\tilde R'$
            projects to a generic leaf of the attracting lamination $\Lambda^+$
            associated to $H_r$.
   \end{enumerate}
\end{lem}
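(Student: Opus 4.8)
The strategy is to adapt the proof of \cite[Lemma 2.13]{FeighnHandel}, exploiting the relative train track machinery already assembled in this section. For item 1, I would first build the ray $\tilde R$ by iterating: since $\tilde E$ determines a fixed direction at $\tilde v \in \fix(\tilde f)$, the path $\tilde f_\sharp(\tilde E)$ begins with $\tilde E$, so $\tilde E \subset \tilde f_\sharp(\tilde E) \subset \tilde f^2_\sharp(\tilde E) \subset \cdots$ is an increasing sequence of tight paths whose union is the desired ray $\tilde R$. By \hyperlink{EG-iii}{(EG-iii)} the image of each $\tilde f^k_\sharp(\tilde E)$ in $\mathcal{G}$ is $r$-legal of height $r$, hence so is its image of $\tilde R$. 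The ray $\tilde R$ is $\tilde f_\sharp$-invariant, so its endpoint $\xi$ lies in $\fix(\hat f)$. To see that $\xi$ is an \emph{attractor}, I would invoke the tiling technology: $\tilde R$ has an exhaustion by tiles and, because $H_r$ is exponentially growing, the number of $H_r$-edges in $\tilde f^k_\sharp(\tilde E)$ grows without bound; combining this with bounded cancellation (\Cref{boundedcancellation} part 3) one argues exactly as in \Cref{laminationconstruction} that $\hat f$ acts as an attractor at $\xi$, i.e.\ a neighborhood of $\xi$ is pulled toward $\xi$ under $\hat f$. Uniqueness of $\tilde R$ with these properties follows because any such ray must agree with $\tilde E$ initially, be $r$-legal, and be carried forward by $\tilde f_\sharp$, which pins it down edge by edge. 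That the limit set $\Lambda(\xi)$ is the unique attracting lamination associated to $H_r$: by Guirardel--Horbez's characterization, $\Lambda(\xi)$ consists of limits of translates of $\tilde R_{\tilde x,\xi}$; since $\tilde R$ is $r$-legal with an exhaustion by tiles and the $H_r$-edge count grows, these limits realize generic leaves (by \Cref{laminationgenericcriterion}, since a limit leaf not contained in $G_{r-1}$ has closure $\Lambda^+$), and conversely generic leaves, having $k$-tilings for all $k$, arise this way.

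For item 2, I would start by forming the bi-infinite path $\bar{\tilde R}\tilde R'$: since both $\tilde R$ and $\tilde R'$ emanate from $\tilde v$ along distinct edges $\tilde E \ne \tilde E'$ of $H_r$, and since nondegenerate turns with distinct underlying edges are legal when the underlying edges agree — more to the point, the turn $(\bar{\tilde E}, \tilde E')$ at $\tilde v$ projects into $f^k_\sharp(E'')$ which is $r$-legal by \hyperlink{EG-iii}{(EG-iii)}, hence this turn is legal — the concatenation $\bar{\tilde R}\tilde R'$ is a tight, bi-infinite, $r$-legal line of height $r$. Call its projection $\sigma$. To show $\sigma$ is a generic leaf of $\Lambda^+$, I would verify the criterion of \Cref{laminationgenericcriterion}: it suffices to show $\sigma$ is a leaf of $\Lambda^+$, is birecurrent, and is not contained in $G_{r-1}$. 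The last is immediate since $\sigma$ has height $r$. For birecurrence, since the turn $(\bar{\tilde E},\tilde E')$ lies in $f^k_\sharp(E'')$, a neighborhood of $\tilde v$ in $\bar{\tilde R}\tilde R'$ is a subpath of a tile; iterating $\tilde f_\sharp$ and using that tiles exhaust (item 4 of \Cref{laminationproperties} for $\tilde R$ and $\tilde R'$ separately, plus the fact that the central turn is swallowed by larger and larger tiles), every finite subpath of $\sigma$ occurs as a subpath of a tile appearing in $\sigma$; since tiles recur in each end (by birecurrence of the generic leaf realized by $\tilde R$), $\sigma$ is birecurrent. To identify $\sigma$ as a leaf of $\Lambda^+$ rather than merely an $r$-legal line: the central subpath lies in $f^k_\sharp(E'')$, so $\sigma$ is a weak limit of $\tilde f^m_\sharp$-images of $E''$, which converge to a generic leaf by item 1 and \Cref{laminationunique}; alternatively, $\sigma$ contains a $j$-tile for every $j$ (push the central turn forward), so $\sigma \in \overline{\{\text{generic leaves}\}} = \Lambda^+$, and then \Cref{laminationgenericcriterion} upgrades birecurrent leaf to generic leaf.

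\textbf{Main obstacle.} I expect the delicate point to be item 1's assertion that $\xi$ is genuinely an \emph{attractor} for $\hat f$ on $\partial(F,\mathscr{A})$, as opposed to merely a fixed point — this is precisely the phenomenon discussed at length in the introduction and in \Cref{GJLLprop}, where the absence of superlinear growth forces a careful analysis. Here, however, the situation is favorable: $H_r$ is exponentially growing, so the $H_r$-edge count along $\tilde f^k_\sharp(\tilde E)$ genuinely grows exponentially, which dominates the bounded cancellation constant; thus the argument reduces to the "superlinear attractor" case of \Cref{GJLLprop} (or more directly, mimics the attracting-neighborhood argument in \Cref{laminationconstruction}), and no linear-attractor subtleties intervene. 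The remaining care is bookkeeping: ensuring the ray intersects $\fix(\tilde f)$ only in $\tilde v$ (which follows from $r$-legality — a legal ray cannot return to a fixed vertex without creating an illegal turn or contradicting the growth) and making the identification of $\Lambda(\xi)$ with the associated attracting lamination precise using \cite[Remark 4.10]{GuirardelHorbez} together with \Cref{laminationunique} and \Cref{laminationgenericcriterion}.
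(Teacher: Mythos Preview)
Your proposal is correct and follows essentially the same route as the paper: build $\tilde R$ as the increasing union $\tilde E \subset \tilde f_\sharp(\tilde E) \subset \cdots$, use exponential $H_r$-growth plus bounded cancellation to land in the superlinear-attractor case of \Cref{GJLLprop}, and for item 2 verify the hypotheses of \Cref{laminationgenericcriterion} by showing every finite subpath of $\bar{\tilde R}\tilde R'$ sits in a tile.

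One small point of divergence is worth flagging. For the containment $\Lambda(\xi) \subset \Lambda^+$ in item 1, the paper does \emph{not} argue directly from the tile exhaustion of $\tilde R$; instead it first constructs a leaf $\beta \in \Lambda^+$ having $\xi$ as an endpoint (this is where the line $\bar{\tilde R}\tilde R'$ of item 2, or its degenerate variants when the second direction lies in $G_{r-1}$ or $\tilde v$ has infinite valence, enters the proof of item 1) and then invokes \cite[Lemma 4.12]{GuirardelHorbez}. Your direct route---every finite subpath of $\tilde R$ is contained in a tile, hence in a generic leaf, so any limit of translates of $\tilde R$ lies in the closure $\Lambda^+$---is valid and arguably cleaner, but your citation of \Cref{laminationgenericcriterion} at that step is misplaced: that lemma \emph{assumes} membership in $\Lambda^+$, which is what you are proving. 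The justification you want is simply closedness of $\Lambda^+$.
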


\begin{proof}
    We follow \cite[Lemma 2.13]{FeighnHandel}.
    \Cref{rttlemma} and \hyperlink{EG-i}{(EG-i)}
    imply that $\tilde f(\tilde E) = \tilde E\cdot \tilde \mu_1$
    for some non-trivial $r$-legal path $\mu_1$ of height $r$
    that ends with an edge of $H_r$.
    If we apply \Cref{rttlemma} again,
    we have $\tilde f^2_\sharp(\tilde E) = \tilde E \cdot \tilde\mu_1 \cdot\tilde\mu_2$
    where $\mu_2$ is an $r$-legal path of height $r$ that ends with an edge of $H_r$.
    Iterating this produces a nested increasing sequence of paths
    $\tilde E \subset \tilde f(\tilde E) \subset \tilde f_\sharp(\tilde E) \subset \cdots$
    whose union is a ray $\tilde R$ that converges to a point $\xi \in \fix(\hat f)$.
    This point is an attractor:
    bounded cancellation shows that if we think of $\xi$ as an infinite word,
    we are in the ``superlinear attractor'' case of \Cref{GJLLprop}.

    If $\tilde R'$ is another ray whose image in $\mathcal{G}$ is $r$-legal
    and has height $r$ that begins with $\tilde E$
    and converges to some point $\xi' \in \fix(\hat f)$,
    then $\tilde R'$ has a splitting into terms that project to either edges in $H_r$
    or maximal subpaths in $G_{r-1}$.
    The edge $\tilde E$ is a term of this splitting,
    and since $\tilde f_\sharp(\tilde R') = \tilde R'$,
    one can argue by induction that $\tilde f^k_\sharp(\tilde E)$
    is an initial segment of $\tilde R'$ for all $k$,
    which implies that $\tilde R' = \tilde R$.
    This proves the uniqueness in item 1.

    Since $f_\sharp(E)$ contains $E$,
    the stratum $H_r$ is aperiodic,
    so there is a unique attracting lamination $\Lambda^+$ associated to $H_r$.
    Recall the definition of \emph{tiles} 
    from before the proof of \Cref{laminationconstruction}.
    Suppose $\tilde\lambda$ is the realization in $\Gamma$ 
    of a generic leaf of $\Lambda^+$.
    By \Cref{laminationunique}, $\tilde\lambda$ has a neighborhood basis that consists of lifts of tiles.
    Let $\tilde\tau\subset \tilde\lambda$ be such a tile.
    By construction, every tile occurs infinitely often in $\tilde R$,
    so some translate of $\tilde R$ belongs to $N(\tilde\gamma)$.
    It follows that $\lambda \in \Lambda(\xi)$, so $\Lambda^+ \subset \Lambda(\xi)$.
    We will show that $\xi$ is an endpoint
    of a leaf $\tilde\beta$ such that $\beta \in \Lambda^+$.
    By \Cref{laminationgenericcriterion}, since $\beta$ is not entirely contained in $G_{r-1}$,
    the closure of $\beta$ is all of $\Lambda^+$.
    By \cite[Lemma 4.12]{GuirardelHorbez},
    we have that $\Lambda(\xi) \subset \Lambda^+$,
    completing the proof of item 1.

    Arguing as in the proof of \Cref{egvalence},
    one can show that there is a legal turn based at $\tilde v$
    projecting into $G_r$ and
    in the image of $D\tilde f$,
    one of whose directions is $\tilde E$.
    Say the other direction is $\tilde E'$.
    If the direction $\tilde E'$ is periodic,
    we may iterate until it is fixed,
    determining a ray $\tilde R'$.
    If the original edge $\tilde E'$ belonged to $H_r$,
    this new edge, still called $\tilde E'$ belongs to $H_r$,
    and we are in the case of item 2.

    By construction, every subpath of $\bar{\tilde R}\tilde R'$
    is contained in a lift of a tile except possibly those subpaths crossing the turn
    $(\bar{\tilde E},\tilde E')$.
    Since we assume, as in item 2,
    that the projection of the turn $(\bar {\tilde E},\tilde E')$
    is contained in the path $f^k_\sharp(E'')$ for some $k \ge 1$
    and some edge $E''$ of $H_r$,
    then the line $\bar{\tilde R}\tilde R'$ is contained in $\Lambda^+$.
    In this situation, the line
    is birecurrent by construction.
    \Cref{laminationgenericcriterion} implies that it is $\Lambda^+$-generic,
    completing the proof of item 2.

    If $\tilde E'$ instead belongs to $G_{r-1}$,
    then the argument in the previous paragraph applies,
    except the line $\bar{\tilde R}\tilde R'$ is not birecurrent.
    Nonetheless it is contained in $\Lambda^+$.

    If finally the direction $\tilde E'$ is not periodic,
    then $\tilde v$ has infinite valence, and we may take $\tilde R$
    to be the line in question; again it is contained in $\Lambda^+$,
    but it is not birecurrent because it is not bi-infinite.
\end{proof}

For later use, we record the following lemma.

\begin{lem}
    \label{minimalfreefactorcarries}
    There is a unique free factor $F^i$ of minimal rank
    whose conjugacy class $[[F^i]]$ carries every line in $\Lambda^+$.
\end{lem}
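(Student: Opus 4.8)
The plan is to mimic the argument for free groups (the analogous statement is \cite[Corollary 3.2.2]{BestvinaFeighnHandel}, built on the theory of \emph{free factor supports}). First I would establish that for \emph{any} line $\beta \in \mathcal{B}$ there is a well-defined minimal free factor system carrying it: given two free factor systems $\mathcal{F}_1, \mathcal{F}_2$ each carrying $\beta$, their ``meet'' $\mathcal{F}_1 \wedge \mathcal{F}_2$ (realized, via \Cref{rttprop} or directly by taking a common refinement of two $(F,\mathscr{A})$-free splittings and passing to the subgraph realizing the intersection) also carries $\beta$; since the lattice of free factor systems relative to $\mathscr{A}$ has finite height (bounded by $n+k$), there is a unique minimal one, call it $\mathcal{F}(\beta)$. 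I would then check that $\beta$ is \emph{filling} in each free factor it lives in — i.e.\ $\mathcal{F}(\beta)$ is a single conjugacy class $[[F^i]]$ exactly when $\beta$ is not carried by a proper free factor of any one of its components. The key mechanism is: if $K \subset \mathcal{G}$ is a connected subgraph with $[[\pi_1(\mathcal{G}|_K)]] = [[F^i]]$, then $\beta$ is carried by $[[F^i]]$ iff its realization lies in $K$ (this is quoted in the ``Laminations'' paragraph), so ``meet'' corresponds to intersecting subgraphs, which I can arrange to be connected after passing to cores.

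Next I would apply this to $\Lambda^+$. Since $\Lambda^+$ is the closure of a single generic leaf $\beta$ (by definition of attracting lamination and \Cref{laminationunique}), and the set of free factor systems carrying $\Lambda^+$ equals the set carrying $\beta$ (carrying is a closed condition: the set of lines carried by $[[F^i]]$ is closed in $\mathcal{B}$, being the closure of the periodic lines coming from nonperipheral elements of $F^i$), the minimal carrying free factor system for $\Lambda^+$ is $\mathcal{F}(\beta)$. It remains to see that $\mathcal{F}(\beta)$ consists of a \emph{single} conjugacy class $[[F^i]]$, rather than several. For this I would use birecurrence of $\beta$ (part of the definition of an attracting lamination, and stable under passing to realizations by \Cref{birecurrent}): a birecurrent bi-infinite line has a single ``support component'' because it cannot break into pieces living in disjoint subgraphs — every finite subpath recurs in each end, forcing the realization into one connected piece of any subgraph that carries it. Concretely, if $K_1, K_2$ realize the two components and $\beta \subset K_1 \cup K_2$, birecurrence forces $\beta$ into $K_1$ or $K_2$ alone, contradicting minimality unless there is only one. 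Finally, minimal \emph{rank} among free factors carrying $\Lambda^+$: any free factor $F^j$ carrying $\Lambda^+$ gives a free factor system $\{[[F^j]]\}$ (or a piece thereof) above $\mathcal{F}(\beta) = \{[[F^i]]\}$ in the partial order $\sqsubset$, so $F^i$ is conjugate into $F^j$ and hence $\operatorname{rank}(F^i) \le \operatorname{rank}(F^j)$, with equality of conjugacy classes when ranks agree since a proper free factor of positive complexity has strictly smaller rank; this gives uniqueness of the minimal-rank carrier.

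The main obstacle I anticipate is setting up the ``meet'' of two free factor systems carefully in the relative setting: unlike the absolute free group case, one must track the peripheral structure $\mathscr{A}$ and verify that the intersection of two subgraphs realizing free factor systems — after replacing each by its core and the intersection by its core — still realizes a free factor system (in particular that the resulting fundamental groups have \emph{positive complexity}, i.e.\ are not conjugate into $\mathscr{A}$, which is where one uses that $\beta$ is \emph{not} carried by any peripheral subgroup — true since a generic leaf of $\Lambda^+$ is bi-infinite and not a circuit by \Cref{laminationneveracircuit}, and in fact has height equal to an exponentially growing stratum by \Cref{laminationproperties}). A clean way around doing this by hand is to phrase everything via $(F,\mathscr{A})$-free splittings and use the fact that any two such splittings have a common refinement; the vertex group of the refinement sitting ``below'' $\beta$ is the meet. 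I would also double check the finite-height claim for the lattice of free factor systems relative to $\mathscr{A}$, which follows from complexity considerations in \cite[Section 2]{Myself}. Once these lattice-theoretic points are in place, the rest is a routine translation of \cite[Corollary 3.2.2]{BestvinaFeighnHandel}.
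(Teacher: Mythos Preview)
Your lattice-theoretic strategy via meets of free factor systems is sound, and in fact is what the paper carries out in \Cref{minimalcomplexityfreefactorsystem}, the corollary immediately \emph{after} this lemma. The paper's order is the reverse of yours: it proves the lemma directly, then invokes ``the argument in the previous lemma'' to show that $\mathcal{F}_1 \wedge \mathcal{F}_2$ carries every element of $B$. The direct argument runs as follows. Fix a lift $\tilde\beta \in \tilde{\mathcal{B}}$. For $i=1,2$, realize $F^i$ as $\pi_1(K_i,\star_i)$ for a subgraph $K_i$ of a marked thistle $\mathcal{G}_i$. In the Bass--Serre tree $\Gamma_i$, the $F^i$-minimal subtree is disjoint from all its translates by elements of $F \setminus F^i$, hence so is $\partial(F^i,\mathscr{A}|_{F^i}) \subset \partial(F,\mathscr{A})$; thus there is a \emph{unique} translate $c_i.\partial(F^i,\mathscr{A}|_{F^i})$ containing both endpoints of $\tilde\beta$. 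Setting $F^3 = c_1 F^1 c_1^{-1} \cap c_2 F^2 c_2^{-1}$, the $F^3$-minimal subtree in $T$ contains the realization of $\tilde\beta$, so $[[F^3]]$ is a free factor of positive complexity carrying $\beta$, and minimality of rank forces $[[F^1]]=[[F^2]]$. The point is that working with a single lift $\tilde\beta$ and the Bowditch boundary singles out the correct conjugates $c_1, c_2$ immediately, bypassing any abstract construction of a meet or common refinement.

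Two remarks on your version. First, \Cref{rttprop} does not help: it realizes a \emph{nested} sequence of free factor systems in one marked graph of groups, and $\{[[F^1]]\}$, $\{[[F^2]]\}$ need not be nested, so your ``intersect the realizing subgraphs'' picture is not available without further work. Your fallback to a common refinement of $(F,\mathscr{A})$-free splittings does work, but once unpacked it amounts to exactly the paper's step of selecting the right conjugate $c$ so that $c_1 F^1 c_1^{-1} \cap c_2 F^2 c_2^{-1}$ carries $\beta$. Second, the birecurrence argument is unnecessary: a single line carried by a free factor system is carried by one component of it (its realization lies in a connected subgraph), so if $\mathcal{F}(\beta)$ had more than one component you could drop the others and contradict minimality.
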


\begin{proof}
    We are grateful to Lee Mosher for suggesting the following proof.
    Since an attracting lamination $\Lambda^+$ is the closure of a single line $\beta$,
    any free factor that carries $\beta$ carries every line in $\Lambda^+$.
    Since $[[F]]$ carries $\beta$,
    there is at least one free factor of minimal rank that carries $\beta$.
    Suppose that $\beta$ is carried by both $[[F^1]]$ and $[[F^2]]$.
    For $i = 1,2$ choose a marked thistle $\mathcal{G}_i$ with vertex $\star_i$
    and a subgraph $K_i$ so that the marking identifies $\pi_1(K_i,\star_i)$ with $F^i$.
    Then in $\Gamma_i$, the Bass--Serre tree for $\mathcal{G}_i$,
    the $F^i$-minimal subtree is disjoint from all its translates
    by elements of $F \setminus F^i$.
    It follows under the identification of $\partial\Gamma_i$ with $\partial(F,\mathscr{A})$
    that $\partial(F^i,\mathscr{A}|_{F^i}) \subset \partial(F,\mathscr{A})$
    is disjoint from all its translates by elements of $F \setminus F^i$.
    Each translate $c.\partial(F^i,\mathscr{A}|_{F^i})$ is stabilized
    by the corresponding conjugate $cF^ic^{-1}$.
    Let $\tilde\beta$ be a line in $\tilde{\mathcal{B}}$ that lifts $\beta$.
    Since $\beta$ is carried by $F^i$,
    there is a unique translate $c_i.\partial(F^i,\mathscr{A}|_{F^i})$
    that contains the endpoints of $\tilde\beta$.
    Let $F^3 = c_1F^1c_1^{-1} \cap c_2F^2c_2^{-1}$.
    The $F^3$-minimal subtree contains the realization of $\tilde\beta$ in $T$.
    It follows that $F^3$ is a free factor (of positive complexity)
    whose conjugacy class $[[F^3]]$ carries $\beta$.
    This contradicts minimality, so we conclude that $[[F^1]] = [[F^2]]$.
\end{proof}

If $\mathcal{F}$ is a free factor system $\mathcal{F} = \{[[F^1]],\ldots,[[F^\ell]]\}$,
define the \emph{complexity} of $\mathcal{F}$ to be zero if $\mathcal{F}$ is trivial,
and to be the complexities of the $F^i$ rearranged in non-increasing order
if $\mathcal{F}$ is nontrivial.
Order the complexities of free factor systems of $F$ lexicographically.

\begin{cor}
    \label{minimalcomplexityfreefactorsystem}
    For any subset $B \subset \mathcal{B}$,
    there is a unique free factor system $\mathcal{F}(B)$
    of minimal complexity that carries every element of $B$.
\end{cor}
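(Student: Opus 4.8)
The plan is to reduce to \Cref{minimalfreefactorcarries} via the meet-semilattice structure on the poset of free factor systems of $(F,\mathscr{A})$ ordered by $\sqsubset$. Existence is the easy half. If $F$ has positive complexity then $\{[[F]]\}$ is a free factor system, and it carries every element of $\mathcal{B}$ (the realization of any line in any marked graph of groups $\mathcal{G}$ is trivially contained in $\mathcal{G}$); in the only remaining cases, namely $F$ trivial or $F \in \mathscr{A}$, the boundary $\partial(F,\mathscr{A})$ has at most one point and $\tilde{\mathcal{B}} = \varnothing$, so the trivial free factor system works. Thus the set of free factor systems carrying $B$ is nonempty. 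Since a free factor of $F = A_1 * \cdots * A_n * F_k$ has, up to conjugacy, the form $A_{i_1} * \cdots * A_{i_p} * F_\ell$ with $p \le n$ and $\ell \le k$, there are only finitely many possible complexities of free factor systems of $F$; hence the complexities of free factor systems carrying $B$ attain a minimum, and I take $\mathcal{F}(B)$ to be one attaining it.

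For uniqueness, suppose $\mathcal{F}_1$ and $\mathcal{F}_2$ both carry $B$ and both have this minimal complexity. I would form the meet $\mathcal{F}_1 \wedge \mathcal{F}_2$: the set of conjugacy classes $[[\,cF'c^{-1} \cap c'F''c'^{-1}\,]]$ with $[[F']] \in \mathcal{F}_1$, $[[F'']] \in \mathcal{F}_2$ and $c,c' \in F$, keeping only the intersections of positive complexity. Working with an $(F,\mathscr{A})$-free splitting dual to $\mathcal{F}_1$ and one dual to $\mathcal{F}_2$ and passing to a common refinement tree $\Gamma$, one checks (this is the meet-semilattice property of free factor systems of a free product; see \cite{GuirardelHorbez} or \cite[Section 2]{Myself}) that only finitely many such conjugacy classes occur, that together they form a free factor system, and that $\mathcal{F}_1 \wedge \mathcal{F}_2 \sqsubset \mathcal{F}_i$ for $i = 1,2$. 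I would then show $\mathcal{F}_1 \wedge \mathcal{F}_2$ carries $B$: given $\beta \in B$, its realization (a connected path or line) is carried by a single component of $\mathcal{F}_1$ and a single component of $\mathcal{F}_2$, so in $\Gamma$ the line realizing a lift $\tilde\beta$ lies in the minimal subtree of a conjugate $cF'c^{-1}$ and in that of a conjugate $c'F''c'^{-1}$; exactly as in the last lines of the proof of \Cref{minimalfreefactorcarries}, the uniqueness of closest points then forces $\tilde\beta$ into the minimal subtree of $cF'c^{-1} \cap c'F''c'^{-1}$, whence $\beta$ is carried by this intersection, which is conjugate to a component of $\mathcal{F}_1 \wedge \mathcal{F}_2$.

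Finally, complexity is strictly monotone along $\sqsubset$, so from $\mathcal{F}_1 \wedge \mathcal{F}_2 \sqsubset \mathcal{F}_i$ we get $\mathrm{cx}(\mathcal{F}_1 \wedge \mathcal{F}_2) \le \mathrm{cx}(\mathcal{F}_i)$, while minimality of $\mathcal{F}_i$ among free factor systems carrying $B$, together with the fact just proved that $\mathcal{F}_1 \wedge \mathcal{F}_2$ carries $B$, gives the reverse inequality; equality forces $\mathcal{F}_1 \wedge \mathcal{F}_2 = \mathcal{F}_i$ for both $i$, so $\mathcal{F}_1 = \mathcal{F}_2$. The main obstacle is the middle paragraph: pinning down that the naive intersection recipe genuinely yields a free factor system refining both inputs, and that ``carried by $\mathcal{F}_i$'' is witnessed simultaneously in one tree so that the conjugate-intersection argument of \Cref{minimalfreefactorcarries} transfers without change. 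Everything else is formal bookkeeping with $\sqsubset$ and complexity.
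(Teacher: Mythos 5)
Your proposal matches the paper's proof: both form the meet $\mathcal{F}_1 \wedge \mathcal{F}_2$ of two minimal-complexity free factor systems carrying $B$ via conjugate intersections, verify that it is a free factor system (the paper cites the Kurosh subgroup theorem; you sketch the equivalent common-refinement-of-trees argument), show that it carries $B$ by the closest-point argument from \Cref{minimalfreefactorcarries}, and conclude by strict monotonicity of complexity under $\sqsubset$. The only differences are cosmetic --- you spell out the existence half and the tree-theoretic check of the semilattice property, both of which the paper compresses.
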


\begin{proof}
    Since $[[F]]$ carries every element of $B$,
    there is at least one free factor system $\mathcal{F}_1$
    of minimal complexity that carries every element of $B$.
    Suppose $\mathcal{F}_2$ is another free factor system of minimal complexity
    that carries every element of $B$.
    Define $\mathcal{F}_1 \wedge \mathcal{F}_2$
    to be the set of free factors of positive complexity in the set
    $\{[[F^i \cap (F^j)^c]] : [[F^i]] \in \mathcal{F}_1,\ [[F^j]] \in \mathcal{F}_2,\ c \in F\}$.
    The Kurosh subgroup theorem implies that $\mathcal{F}_1 \wedge \mathcal{F}_2$
    is a possibly empty free factor system of $F$.
    The argument in the previous lemma implies that
    $\mathcal{F}_1 \wedge \mathcal{F}_2$ carries every element of $B$.

    Notice that if $\mathcal{F}_1 \wedge \mathcal{F}_2 \ne \mathcal{F}_1$,
    then its complexity is strictly smaller:
    each free factor $F^i \cap (F^j)^c$ of positive complexity is a free factor of $F^i$,
    and thus it either equals $F^i$ or has strictly smaller complexity.
    The minimality assumption therefore shows that $\mathcal{F}_1 = \mathcal{F}_2$,
    proving uniqueness.
\end{proof}

Our final goal in this section is the following proposition.
Suppose that $\Lambda^+$ is an attracting lamination 
for some element of $\out(F,\mathscr{A})$.
Define the \emph{stabilizer} of $\Lambda^+$ to be
\[  \stab(\Lambda^+) = \{\psi \in \out(F,\mathscr{A}) 
: \psi_\sharp(\Lambda^+) = \Lambda^+\}. \]

\begin{prop}[cf. Corollary 3.3.1 of \cite{BestvinaFeighnHandel}]
    \label{PFhomomorphism}
    There is a homomorphism \[\pf_{\Lambda^+}\colon \stab(\Lambda^+) \to \mathbb{Z}\]
    such that we have $\psi \in \ker(\pf_{\Lambda^+})$ if and only if
    $\Lambda^+ \notin \mathcal{L}(\psi)$ and $\Lambda^+ \notin \mathcal{L}(\psi^{-1})$.
\end{prop}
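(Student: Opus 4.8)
The plan is to adapt the proof of \cite[Corollary 3.3.1]{BestvinaFeighnHandel}. Write $G = \stab(\Lambda^+)$, let $S = \{\psi \in G : \Lambda^+ \in \mathcal{L}(\psi)\}$, and let $N = \{\psi \in G : \Lambda^+ \notin \mathcal{L}(\psi) \text{ and } \Lambda^+ \notin \mathcal{L}(\psi^{-1})\}$. By \Cref{minimalfreefactorcarries} there is a well-defined minimal-rank free factor $F^i$ whose conjugacy class carries every line of $\Lambda^+$; since $\psi_\sharp$ preserves $\Lambda^+$ and the rank of a carrying free factor, $[[F^i]]$ is $\psi$-invariant for every $\psi \in G$. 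For $\psi \in S$ I will define an \emph{expansion factor} $\lambda(\psi) > 1$: restricting $\psi$ to $F^i$ and using \Cref{aperiodicallaperiodic}, $\psi|_{F^i}$ has an eg-aperiodic relative train track representative $f\colon\mathcal{G}\to\mathcal{G}$ on a Grushko $(F^i,\mathscr{A}|_{F^i})$-graph of groups with no valence-one or valence-two trivial-vertex vertices; by the discussion after \Cref{laminationunique} there is a unique exponentially growing stratum $H_r$ of $f$ associated to $\Lambda^+$, and I set $\lambda(\psi) = \lambda_r$. By \Cref{lengthfunction}, \Cref{rttlemma} and the $r$-legality of generic leaves (\Cref{laminationproperties}), $\log\lambda(\psi)$ is the exponential growth rate of the number of $H_r$-edges in a long generic-leaf segment under iteration of $f_\sharp$; as in \cite{BestvinaFeighnHandel} this is an invariant of $(\Lambda^+,\psi)$, so $\lambda(\psi)$ is independent of $f$, and $\lambda(\psi^p) = \lambda(\psi)^p$. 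Moreover $\lambda(\psi) > 1$ strictly, since $\Lambda^+$, being a genuine attracting lamination, is not carried by an $F_1$ or $C_2*C_2$ free factor and hence its stratum is exponentially growing.

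Next I would port to the present setting the two weak-attraction facts underlying \cite[\S3.3]{BestvinaFeighnHandel}. First, $S$ is closed under products and $\lambda$ is multiplicative on it: given $\psi_1,\psi_2\in S$, since $\mathcal{L}(\psi_i^p)=\mathcal{L}(\psi_i)$ and every element of $\mathcal{L}(\psi_i)$ is $\psi_i$-invariant (\Cref{aperiodicallaperiodic}), one constructs an open set $U\subset\mathcal{B}$ that is simultaneously an attracting neighborhood of a generic leaf of $\Lambda^+$ for $\psi_{1\sharp}$ and for $\psi_{2\sharp}$; then $(\psi_1\psi_2)_\sharp(U)\subset U$, the iterates $(\psi_1\psi_2)^n_\sharp(U)$ form a neighborhood basis for that leaf, so $\psi_1\psi_2\in S$, and comparing growth rates gives $\lambda(\psi_1\psi_2)=\lambda(\psi_1)\lambda(\psi_2)$. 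Since $\mathcal{L}(\mathrm{id})=\varnothing$ (the identity has a representative with no exponentially growing stratum), we get $\mathrm{id}\notin S$, hence $S\cap S^{-1}=\varnothing$. Second, $N$ is a normal subgroup of $G$: it is conjugation-invariant and closed under inverses, and closure under products follows from the structural description — proved in \cite[\S3.3]{BestvinaFeighnHandel} and to be re-established here — that $\psi\in N$ if and only if, after passing to a power and choosing a relative train track representative as in \Cref{improvedrelativetraintrack}, $\psi_\sharp$ distorts the generic leaves of $\Lambda^+$ only subexponentially (equivalently, on the minimal free factor $F^i$ no stratum meeting the generic-leaf realizations is exponentially growing). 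Granting these, the trichotomy $G = S\sqcup N\sqcup S^{-1}$ is immediate from the definition of $N$, and a routine ordered-group argument shows $G/N$ is a group with positive cone the image of $S$ and that $\log\lambda$ descends to an injective homomorphism $G/N\hookrightarrow(\mathbb{R},+)$ — the subexponential-distortion description being what makes $\log\lambda$ constant on cosets of $N$. Composing with $G\to G/N$ gives a homomorphism $G\to(\mathbb{R},+)$ with kernel exactly $N$, using $\lambda > 1$ strictly on $S$.

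It remains to see that the image of this homomorphism is a \emph{discrete} subgroup of $\mathbb{R}$. By construction $\lambda(\psi)$ is the Perron--Frobenius eigenvalue of a nonnegative integer matrix whose size is bounded in terms of the Kurosh rank of the fixed free factor $F^i$, hence a real algebraic integer of bounded degree all of whose Galois conjugates have modulus at most $\lambda(\psi)$. For any fixed degree bound, the real algebraic integers $\lambda$ of that degree with all conjugates of modulus $\le\lambda\le 2$ form a finite set (their elementary symmetric functions are integers with an a priori bound), so the numbers $\lambda(\psi)$ do not accumulate at $1$. A non-discrete subgroup of $\mathbb{R}$ is dense, hence would contain positive numbers arbitrarily close to $0$, producing $\psi\in S$ with $\lambda(\psi)\in(1,2)$ arbitrarily close to $1$, a contradiction. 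Thus the image is infinite cyclic or trivial; composing $\log\lambda$ with an isomorphism of its image onto $\mathbb{Z}$, or with the zero map, yields $\pf_{\Lambda^+}\colon\stab(\Lambda^+)\to\mathbb{Z}$ with $\ker(\pf_{\Lambda^+})=N$, which is the stated characterization.

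The step I expect to be the main obstacle is the weak-attraction analysis of the second paragraph — in particular that $N$ is closed under composition, together with the attendant subexponential-distortion characterization — carried out over the Bowditch boundary $\partial(F,\mathscr{A})$, where leaves of laminations may be finite or singly infinite and the vertices of infinite valence must be tracked throughout, whereas the corresponding arguments of \cite{BestvinaFeighnHandel} take place in the Gromov boundary of a free group with all leaves bi-infinite.
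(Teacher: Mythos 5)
Your proposal takes a genuinely different route from the paper's, and the point where it differs is precisely where it has a gap that you yourself flag but do not close.

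The paper proves this proposition as an immediate corollary of \Cref{PFexphomomorphism}, which defines an asymptotic expansion factor $\mu(\psi)$ for \emph{every} $\psi \in \stab(\Lambda^+)$ (not only for those $\psi$ with $\Lambda^+ \in \mathcal{L}(\psi)$), proves $\mu(\psi\psi') = \mu(\psi)\mu(\psi')$ by a direct estimate comparing $g_\sharp$- and $g'_\sharp$-images of long generic-leaf segments (using \Cref{laminationestimate} and \Cref{nongenericbound}), and shows $\mu(\psi) > 1 \Leftrightarrow \Lambda^+ \in \mathcal{L}(\psi)$. Once this is in place, $\pf_{\Lambda^+} = \log\mu$ is automatically a homomorphism to $\mathbb{R}$, its kernel is exactly $\{\psi : \mu(\psi) = 1\} = N$ by items 2 and 3, and the only remaining work is discreteness of the image (which you and the paper handle the same way, via bounded-size nonnegative integer matrices). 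Your proposal instead defines $\lambda$ only on $S$, and then needs (i) closure of $S$ under products with multiplicativity of $\lambda$, and (ii) $N$ a normal subgroup, in order to run the ordered-group argument. But (i) and (ii) are not established. Your suggestion that one can build an open set $U$ that is \emph{simultaneously} an attracting neighborhood of a generic leaf for $\psi_{1\sharp}$ and $\psi_{2\sharp}$ is not how this is done in \cite{BestvinaFeighnHandel}, and it is not clear such a $U$ exists: each attracting neighborhood is built from tiles of a relative train track representative of the particular $\psi_i$, and even if one could arrange $\psi_{1\sharp}(U)\subset U$ and $\psi_{2\sharp}(U)\subset U$ for the same $U$, the nesting of $(\psi_1\psi_2)^n_\sharp(U)$ gives an invariant neighborhood but not, without further work, a neighborhood \emph{basis}. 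Likewise closure of $N$ under composition (``the subexponential-distortion characterization'') is the content you say is ``to be re-established here'' and identify as ``the main obstacle'' — it is exactly the multiplicativity of $\mu$ across the $\mu=1$ case, which is what \Cref{PFexphomomorphism} supplies. In short, the scaffolding of your argument is sound, but the load-bearing steps are deferred to a claimed porting of BFH \S3.3 that is not carried out; the paper's own proof makes that porting precise and unconditional by proving \Cref{PFexphomomorphism} first.

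One smaller point: your discreteness paragraph slightly misstates the standard argument. The Galois conjugates of the Perron--Frobenius eigenvalue $\lambda$ are among the eigenvalues of the transition matrix (not all eigenvalues need be conjugates of $\lambda$), and what Perron--Frobenius gives is that all eigenvalues have modulus at most $\lambda$, so the conjugates in particular do. Your conclusion (finitely many candidate $\lambda$'s in any bounded range for a fixed matrix-size bound) is correct, but the cleanest statement is the one the paper cites: for a fixed size, a nonnegative integer irreducible matrix with $\lambda \le B$ has all entries bounded, hence there are only finitely many such matrices.
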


Assume that $f\colon \mathcal{G} \to \mathcal{G}$
and $\varnothing = G_0 \subset G_1 \subset \cdots \subset G_m = G$
are a relative train track map and filtration for an element of $\stab(\Lambda^+)$
and that $\Lambda^+$ is the attracting lamination associated to the
(necessarily aperiodic) exponentially growing stratum $H_r$.
For any path $\sigma \subset \mathcal{G}$,
define $\operatorname{EL}_r(\sigma)$ to be the edge length of $\sigma$
counting only the edges of $H_r$ that are contained in $\sigma$.
We say that $\psi \in \stab(\Lambda^+)$ 
\emph{asymptotically expands $\Lambda^+$ by the factor $\mu$}
if for every choice of $\varnothing = G_0 \subset G_1 \subset \cdots \subset G_m = G$
and $f\colon \mathcal{G} \to \mathcal{G}$ as above,
every topological representative $g\colon \mathcal{G} \to \mathcal{G}$ of $\psi$
and for all $\eta > 0$, we have
\[  \mu - \eta < \frac{\operatorname{EL}_r(g_\sharp(\sigma))}{\operatorname{EL}_r(\sigma)}
< \mu + \eta \]
whenever $\sigma$ is contained in a $\Lambda^+$-generic leaf
and $\operatorname{EL}_r(\sigma)$ is sufficiently large.

For the remainder of this section,
we fix the relative train track map $f\colon \mathcal{G} \to \mathcal{G}$,
the filtration $\varnothing = G_0 \subset G_1 \subset \cdots \subset G_m = G$,
the stratum $H_r$ and the attracting lamination $\Lambda^+$.
The following proposition has the useful corollary that
the Perron--Frobenius eigenvalue associated to an aperiodic exponentially growing stratum
of a relative train track map $f\colon \mathcal{G} \to \mathcal{G}$
depends only on $\varphi$ and the element of $\mathcal{L}(\varphi)$ that is associated
to the stratum.

\begin{prop}[cf. Proposition 3.3.3 of \cite{BestvinaFeighnHandel}]
    \label{PFexphomomorphism}
    We have the following.
    \begin{enumerate}
        \item Every $\psi \in \stab(\Lambda^+)$
            asymptotically expands $\Lambda^+$ by some factor $\mu = \mu(\psi)$.
        \item $\mu(\psi\psi') = \mu(\psi)\mu(\psi')$.
        \item $\mu(\psi) > 1$ if and only if $\Lambda^+ \in \mathcal{L}(\psi)$.
        \item If $\Lambda^+ \in \mathcal{L}(\psi)$
            and $f'\colon \mathcal{G} \to \mathcal{G}$ is a relative train track map
            for $\psi$, then $\mu(\psi) = \mu'_s$
            is the Perron--Frobenius eigenvalue for the transition matrix
            $M'_s$ of the exponentially growing stratum $H'_s$ associated to $\Lambda^+$.
    \end{enumerate}
\end{prop}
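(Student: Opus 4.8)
\textbf{Proof plan for \Cref{PFexphomomorphism}.}
The plan is to follow the structure of the proof of \cite[Proposition 3.3.3]{BestvinaFeighnHandel}, extracting an asymptotic growth rate from the behavior of a generic leaf under iteration. First I would fix, for each $\psi \in \stab(\Lambda^+)$, a topological representative $g\colon \mathcal{G} \to \mathcal{G}$ of $\psi$ (on the same marked graph of groups carrying our chosen $f$ and stratum $H_r$) and study the quantity $\operatorname{EL}_r(g^k_\sharp(\sigma))$ for $\sigma$ a long subpath of a $\Lambda^+$-generic leaf $\lambda$. Since $\Lambda^+$ is $\psi_\sharp$-periodic (replacing $\psi$ by a power fixing $\Lambda^+$ is harmless for computing $\mu$, as long as we take the corresponding root at the end — or rather, $\Lambda^+ \in \mathcal{L}(\psi)$ forces invariance by \Cref{aperiodicallaperiodic} applied to $\langle\psi\rangle$ when relevant, but in general we only know $\psi_\sharp$ permutes $\mathcal{L}(\psi)$ and fixes $\Lambda^+$ by hypothesis), the path $g_\sharp(\lambda)$ is again a realization of a $\Lambda^+$-generic leaf by \Cref{laminationunique}. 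The key input is that $\lambda$ is $r$-legal (\Cref{laminationproperties} item 2) and has a $k$-tiling for all $k$ (item 3). I would use \Cref{lengthfunction}: the length function $L_r$ satisfies $L_r(f_\sharp(\sigma)) = \lambda_r L_r(\sigma)$ on $r$-legal paths, and $\operatorname{EL}_r$ is comparable to $L_r$ up to bounded multiplicative error on tiles (because each tile $f^k_\sharp(E)$ is $r$-legal and its $H_r$-edge count grows like $\lambda_r^k$). Since $\lambda$ is exhausted by tiles and each long subpath lies in a single tile (\Cref{laminationproperties} item 4), the ratio $\operatorname{EL}_r(g_\sharp(\sigma))/\operatorname{EL}_r(\sigma)$ stabilizes as $\operatorname{EL}_r(\sigma) \to \infty$: this gives existence of $\mu(\psi)$, item 1. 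Here I must also check independence of the representative $g$ and of the relative train track map $f$ realizing $\Lambda^+$ — two representatives of $\psi$ differ by an inner automorphism, so their $\sharp$-actions on lines agree, and bounded cancellation (\Cref{boundedcancellation}) bounds the discrepancy in $\operatorname{EL}_r$ by a constant, which washes out in the limit; independence of $f$ follows because any two such $f$'s are related by a homotopy equivalence respecting markings, again using bounded cancellation and the uniqueness of $\Lambda^+$ (\Cref{laminationunique}) to match up the tiles.

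For item 2, multiplicativity $\mu(\psi\psi') = \mu(\psi)\mu(\psi')$ is immediate once existence and representative-independence are established: pick representatives $g$ of $\psi$ and $g'$ of $\psi'$, note $(g g')_\sharp = g_\sharp g'_\sharp$ on lines, apply the defining estimate twice to a long subpath of a generic leaf, and pass to the limit. One subtlety: applying $g'_\sharp$ first produces a path in a generic leaf of $\psi'_\sharp(\Lambda^+) = \Lambda^+$, so $g_\sharp$ then expands it by $\mu(\psi)$; since $\operatorname{EL}_r$ of the intermediate path is itself large (because $\mu(\psi') > 0$), both estimates apply. For item 3 and item 4, I would run the following dichotomy. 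If $f'\colon \mathcal{G}' \to \mathcal{G}'$ is a relative train track map representing $\psi$ with $\Lambda^+ \in \mathcal{L}(\psi)$, let $H'_s$ be the aperiodic exponentially growing stratum associated to $\Lambda^+$, with Perron--Frobenius eigenvalue $\mu'_s > 1$. A $\Lambda^+$-generic leaf realized in $\mathcal{G}'$ is $s$-legal and has the highest stratum $H'_s$, so by \Cref{lengthfunction} applied to $f'$, iterating $f'_\sharp$ multiplies $\operatorname{EL}_s$ (equivalently $L_s$) by $\mu'_s$ asymptotically. But $\operatorname{EL}_s$ of a generic leaf (computed in $\mathcal{G}'$) and $\operatorname{EL}_r$ of the same generic leaf (computed in $\mathcal{G}$) are comparable up to a bounded multiplicative factor — this is the crux — because both count ``the exponentially-growing part'' of the leaf and the leaf has a tiling structure in each graph; a homotopy equivalence between $\mathcal{G}$ and $\mathcal{G}'$ matched with bounded cancellation translates one count into the other with multiplicative error bounded independent of the subpath. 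Hence $\mu(\psi) = \mu'_s > 1$, giving item 4 and the ``only if'' of item 3.

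For the ``if'' direction of item 3, suppose $\mu(\psi) > 1$; I must show $\Lambda^+ \in \mathcal{L}(\psi)$. Take a relative train track map $f'$ for $\psi$; realize a $\Lambda^+$-generic leaf $\lambda'$ in $\mathcal{G}'$ and let $H'_s$ be its highest stratum. If $H'_s$ were non-exponentially growing or a zero stratum, then $\operatorname{EL}$ of $\lambda'$ restricted to the top stratum would grow at most linearly (polynomially) under $f'_\sharp$, contradicting $\mu(\psi) > 1$ once we again invoke comparability of edge-length counts across the two graphs; and if $H'_s$ is exponentially growing but $\Lambda^+$ is carried by $G'_{s-1}$, the same contradiction arises because then $\operatorname{EL}_r(\lambda)$ in $\mathcal{G}$ would track a lower, non-expanding stratum. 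So $H'_s$ is exponentially growing with $\lambda'$ of height $s$ and $s$-legal, and by \Cref{laminationgenericcriterion} together with \Cref{laminationunique}, $\Lambda^+$ is precisely the attracting lamination associated to $H'_s$, so $\Lambda^+ \in \mathcal{L}(\psi)$. Finally, \Cref{PFhomomorphism} follows by setting $\pf_{\Lambda^+}(\psi) = \log_{\mu_0}\mu(\psi)$ for a suitable base (one must check $\mu(\psi)$ always lies in a discrete multiplicative subgroup of $\mathbb{R}_{>0}$, which follows because $\mu(\psi)$ is a Perron--Frobenius eigenvalue of an integer matrix when nontrivial and $\mu(\psi)\mu(\psi^{-1}) = \mu(\mathrm{id}) = 1$ forces $\mu(\psi) = 1$ when $\Lambda^+ \notin \mathcal{L}(\psi) \cup \mathcal{L}(\psi^{-1})$); the kernel is then exactly $\{\psi : \mu(\psi) = 1\}$, which by item 3 is $\{\psi : \Lambda^+ \notin \mathcal{L}(\psi),\ \Lambda^+ \notin \mathcal{L}(\psi^{-1})\}$.

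\textbf{Main obstacle.} The technical heart is the comparability claim: that $\operatorname{EL}_r$ of a generic leaf, measured via our fixed $f$ on $\mathcal{G}$, is bi-Lipschitz-comparable (multiplicatively, with constants independent of the subpath) to $\operatorname{EL}_s$ of the same leaf measured via an arbitrary relative train track map $f'$ on $\mathcal{G}'$. In the free group case this rests on bounded cancellation plus the rigidity of the tiling structure; here one must be careful that vertex group elements and the non-Hausdorff topology on $\mathcal{B}$ do not disrupt the argument, but \Cref{boundedcancellation} (all three parts) and the tiling results \Cref{laminationproperties}, \Cref{laminationunique} are exactly tailored to push this through. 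I expect the verification that no cancellation or vertex-group bookkeeping spoils the multiplicative comparison to be the step requiring the most care.
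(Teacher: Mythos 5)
There is a genuine gap in your argument for item 1, and it is the heart of the proposition. You assert that ``the ratio $\operatorname{EL}_r(g_\sharp(\sigma))/\operatorname{EL}_r(\sigma)$ stabilizes as $\operatorname{EL}_r(\sigma) \to \infty$'' and cite tilings, $r$-legality, and \Cref{lengthfunction} — but none of these by themselves establishes stabilization. The difficulty is that $g$ represents an arbitrary $\psi \in \stab(\Lambda^+)$, not $\varphi$; \Cref{lengthfunction} controls $f_\sharp$ (multiplication of $L_r$ by $\lambda_r$), not $g_\sharp$, so it contributes nothing here. A general $g$ may stretch different edges $E_i$ of $H_r$ by very different amounts, and $\operatorname{EL}_r(g_\sharp(\sigma))/\operatorname{EL}_r(\sigma)$ is essentially a weighted average of those per-edge stretch factors, with weights given by how often each $E_i$ appears in $\sigma$. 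That this weighted average converges as $\sigma$ lengthens is precisely the content of the Perron--Frobenius frequency-vector argument: one needs the frequency vector $A = (a_i)$ (the normalized column of $\lim \mu_r^{-n} M_r^n$) together with the estimate of \Cref{laminationestimate} (the proportion $\alpha_{ik}(\sigma)$ of each $k$-tile $\tau^k_i$ in a long subpath of a generic leaf approaches $a_i$) and \Cref{nongenericbound} (contributions of $G_{r-1}$-subpaths to $\operatorname{EL}_r(g_\sharp(\cdot))$ are uniformly bounded). The paper's proof defines $\mu_k = \bigl(\sum_i a_i \operatorname{EL}_r(g_\sharp(\tau^k_i))\bigr)/\bigl(\sum_i a_i \operatorname{EL}_r(\tau^k_i)\bigr)$ and shows these form a Cauchy sequence that governs the ratio; without that ingredient your ``stabilizes'' claim is unsupported and, in the absence of frequency control, simply false for arbitrary long subpaths.

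Secondarily, your argument for the forward direction of item 3 (from $\mu(\psi) > 1$ to $\Lambda^+ \in \mathcal{L}(\psi)$) goes by contradiction via the ``comparability'' of $\operatorname{EL}$ across two relative train track maps, but those two maps represent different outer automorphisms ($\varphi$ and $\psi$), and \Cref{laminationproperties} is stated for a single $\varphi$; it is not clear that your dichotomy (``if $H'_s$ were NEG... contradiction'') is available before you already know $\Lambda^+ \in \mathcal{L}(\psi)$. The paper instead argues directly: taking a long subpath $\sigma_0$ of a generic leaf, removing bounded segments via \Cref{boundedcancellation} to produce $\sigma_1$ with $g_\sharp(N(\sigma_0)) \subset N(\sigma_1)$, and using $\mu > 1$ to show $\operatorname{EL}_r(\sigma_1) > \operatorname{EL}_r(\sigma_0)$ for $\sigma_0$ long, then iterates and uses $k$-tilings (\Cref{laminationunique}, \Cref{laminationestimate}) to verify $N(\sigma_0)$ is an attracting neighborhood. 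That constructive route is both cleaner and avoids the circularity risk in your contrapositive. The rest of your plan — bounded cancellation for independence of $g$ and of $(f,\text{filtration})$, the multiplicativity argument via composing representatives with bounded error, and the reduction of \Cref{PFhomomorphism} to discreteness of the image — matches the paper's approach in spirit.
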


\begin{proof}[Proof of \Cref{PFhomomorphism}]
    The proof is identical to the proof of \cite[Corollary 3.3.1]{BestvinaFeighnHandel}.
    Define $\pf_{\Lambda^+}(\psi) = \log\mu(\psi)$.
    \Cref{PFexphomomorphism} and the observation that 
    \[\pf_{\Lambda^+}(\psi^{-1}) = -\pf_{\Lambda^+}(\psi)\]
    implies that each $\mu(\psi)$ (or its multiplicative inverse)
    other than $1$ occurs as the Perron--Frobenius eigenvalue
    for an irreducible matrix of uniformly bounded size.
    It follows (\cite[p. 37]{BestvinaHandel} 
    or the argument in the proof of \cite[Theorem 2.2]{Myself})
    that the image of $\pf_{\Lambda^+}$ is an infinite discrete subset of $\mathbb{R}$
    and thus is isomorphic to $\mathbb{Z}$.
    Abusing notation, identify the image with $\mathbb{Z}$
    and call the resulting homomorphism $\pf_{\Lambda^+}$.
    It is clear from \Cref{PFexphomomorphism}
    that $\psi \in \ker(\pf_{\Lambda^+})$
    if and only if $\Lambda^+ \notin \mathcal{L}(\psi)$
    and $\Lambda^+ \notin \mathcal{L}(\psi^{-1})$.
\end{proof}

Let $E_i$ be an edge of $H_r$ 
and let $\mu_r$ be the Perron--Frobenius eigenvalue
for $M_r$, the transition matrix associated to $H_r$.
The Perron--Frobenius theorem \cite{Seneta}
implies that the matrices $\mu_r^{-n}M_r^n$
converge to a matrix $M^\star$ with the property
that all of its columns are multiples of each other.
Let $A = (a_i)$ be the vector obtained from a column of $M^\star$
by multiplying so that the sum $\sum a_i$ is $1$;
call it a \emph{frequency vector.}
Let $\tau^k_i$ be the $k$-tile $f^k_\sharp(E_i)$.

A $0$-tiling of a $\Lambda^+$-generic leaf $\lambda$
is a decomposition into edges of $H_r$,
vertex group elements,
and maximal subpaths in $G_{r-1}$;
the leaf $\lambda$ has a unique $0$-tiling.
This yields a $1$-tiling of $f_\sharp(\lambda)$
and inductively a $k$-tiling of $f_\sharp^k(\lambda)$
called the \emph{standard $k$-tiling} of $f^k_\sharp(\lambda)$.
Notice that $\lambda = f^k_\sharp(\gamma_k)$
for some $\Lambda^+$-generic leaf $\gamma_k$.
It follows that every $\Lambda^+$-generic leaf
has a standard $k$-tiling for $k \ge 0$.

Suppose $\sigma$ is a finite subpath of a $\Lambda^+$-generic leaf $\lambda$.
Let $\alpha_{ik}(\sigma)$ be the proportion,
among those $k$-tiles in the standard $k$-tiling of $\lambda$
that are entirely contained within $\sigma$,
of the tiles that are equal to $\tau^k_i$.

\begin{lem}[cf. Lemma 3.3.5 of \cite{BestvinaFeighnHandel}]
    \label{laminationestimate}
    Given $\epsilon > 0$ and $k \ge 0$
    and a $\Lambda^+$-generic leaf $\lambda$,
    if $\sigma$ is a finite subpath of $\lambda$
    with $\operatorname{EL}_r(\sigma)$ sufficiently large,
    then $a_i - \epsilon < \alpha_{ik}(\sigma) < a_i + \epsilon$.
\end{lem}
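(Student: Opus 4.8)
The plan is to follow the strategy of \cite[Lemma 3.3.5]{BestvinaFeighnHandel}, exploiting the fact that $\lambda$ admits a standard $k$-tiling together with the Perron--Frobenius convergence of $\mu_r^{-n}M_r^n$ to the rank-one matrix $M^\star$. First I would reduce to the case $k = 0$: since every $\Lambda^+$-generic leaf $\lambda$ has the form $\lambda = f^k_\sharp(\gamma_k)$ for some $\Lambda^+$-generic leaf $\gamma_k$, the $k$-tiles appearing in the standard $k$-tiling of $\lambda$ that are entirely contained in a given finite subpath $\sigma = f^k_\sharp(\sigma')$ correspond, up to a bounded discrepancy at the two ends coming from bounded cancellation (\Cref{boundedcancellation}) and from the $0$-tiling edges of $\gamma_k$ that straddle the endpoints of $\sigma'$, to the single $H_r$-edges of $\sigma'$ in its $0$-tiling. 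As $\operatorname{EL}_r(\sigma) \to \infty$ we have $\operatorname{EL}_r(\sigma')\to \infty$ as well (each $H_r$-edge of $\gamma_k$ contributes a bounded, positive number of $H_r$-edges to $\lambda$), so the end-effects contribute a vanishing proportion and it suffices to estimate the frequency of each oriented edge $E_i$ of $H_r$ among the $H_r$-edges of a long subpath of a $\Lambda^+$-generic leaf.

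For the $k=0$ statement I would argue as follows. Fix $n$ large, and consider the standard $n$-tiling of $\lambda$: each $n$-tile is $\tau^n_j = f^n_\sharp(E_j)$ for some edge $E_j$ of $H_r$, and by definition of the transition matrix the number of occurrences of the oriented edge $E_i$ of $H_r$ inside $\tau^n_j$ is the $(i,j)$-entry of $M_r^n$ (here using that $f$ is a relative train track map, so by \hyperlink{EG-iii}{(EG-iii)} and \Cref{rttlemma} no $H_r$-edges are cancelled when $f^n(E_j)$ is tightened, hence the count is exactly the matrix entry rather than merely bounded by it; one must also keep track of orientations, which is handled by passing to the standard doubled index set of oriented $H_r$-edges and the corresponding $2|H_r|\times 2|H_r|$ matrix, still with the same Perron--Frobenius data). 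Given $\epsilon > 0$, choose $n$ so large that every column of $\mu_r^{-n}M_r^n$ is within $\epsilon/2$ (entrywise, after normalizing column sums to $1$) of the frequency vector $A = (a_i)$; this is possible because $M^\star$ has rank one with all columns proportional and the normalization picks out $A$. Now take $\sigma$ a finite subpath of $\lambda$ with $\operatorname{EL}_r(\sigma)$ large. Apply \Cref{kprotectedsplitting} (or directly the $n$-tiling structure of \Cref{laminationproperties}): all but a bounded number — bounded in terms of $n$ and $C$, independent of $\sigma$ — of the $H_r$-edges of $\sigma$ lie inside $n$-tiles of the standard $n$-tiling that are entirely contained in $\sigma$. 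Among those, the proportion equal to $E_i$ is a convex combination of the (normalized) columns of $M_r^n$ indexed by the tiles that occur, hence lies within $\epsilon/2$ of $a_i$. Letting $\operatorname{EL}_r(\sigma) \to \infty$ swamps the bounded number of leftover $H_r$-edges at the ends and in straddling $n$-tiles, so $\alpha_{i0}(\sigma)$ lies within $\epsilon$ of $a_i$, as required.

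The main obstacle I expect is the bookkeeping at the two ends of $\sigma$ and at the interfaces between the $k$-tiling in the statement and the auxiliary $n$-tiling used in the estimate: one must check that all of these boundary corrections are bounded independently of $\sigma$ (so that their proportion vanishes as $\operatorname{EL}_r(\sigma)\to\infty$), and that the reduction $\lambda = f^k_\sharp(\gamma_k)$ really does convert a long $H_r$-subpath of $\sigma$ into a long $H_r$-subpath of $\sigma'$ — this last point uses that the $f_\sharp$-image of an $H_r$-edge contains at least one $H_r$-edge by \hyperlink{EG-i}{(EG-i)}, with a definite lower bound on the number. A secondary technical point is the correct handling of orientations of $H_r$-edges (passing to the doubled matrix), and the use of \Cref{rttlemma}/\Cref{boundedcancellation} to guarantee exact, rather than merely approximate, tile edge-counts; once these are in place the Perron--Frobenius convergence does all the analytic work and the rest is a convexity argument. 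The remaining details are routine and parallel \cite[Lemma 3.3.5]{BestvinaFeighnHandel}, with ``edges'' replaced throughout by ``edges and vertex group elements'' but the latter playing no role since $\operatorname{EL}_r$ ignores them.
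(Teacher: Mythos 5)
Your proposal is correct and follows essentially the same route as the paper: both hinge on the Perron--Frobenius convergence of $\mu_r^{-n}M_r^n$ to the rank-one matrix, convert the $k$-tile count into an $H_r$-edge count, and then absorb the two boundary tiles as a vanishing proportion once $\operatorname{EL}_r(\sigma)$ is large. The paper organizes the reduction a bit differently---it proves the estimate first for individual tiles $\tau^\ell_j$ and handles general $k$ via the internal identity $\alpha_{ik}(\tau^{\ell+k}_j) = \alpha_{i0}(\tau^\ell_j)$ rather than pulling back the whole leaf through $\gamma_k$---and your appeal to bounded cancellation in that reduction is unnecessary: since $\lambda$ and $\gamma_k$ are $r$-legal, \Cref{rttlemma} already guarantees that $f^k_\sharp$ cancels no $H_r$-edges, so the correspondence between $k$-tiles of $\lambda$ and $H_r$-edges of $\gamma_k$ is exact.
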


\begin{proof}
    We follow the proof of \cite[Lemma 3.3.5]{BestvinaFeighnHandel}.
    Suppose first that $\sigma$ is the $\ell$-tile $\tau^\ell_j$
    and that $k = 0$.
    Observe that by \Cref{rttlemma} (cf. \cite[Lemma 3.1.8(4)]{BestvinaFeighnHandel}),
    the $(i,j)$-entry of $M^\ell_r$ is the number of times
    $\tau^\ell_j$ crosses the $i$th edge in either direction,
    and that $\alpha_{i0}(\tau^\ell_j)$
    is the $(i,j)$-entry of $M^\ell_r$ divided by the sum of the entries
    in the $j$th column of $M^\ell_r$.
    By the Perron--Frobenius theorem, we have that $\alpha_{i0}(\tau^{\ell}_j)$
    converges to $a_i$ as $\ell$ increases.
    Since $\operatorfont{EL}_r(\tau^\ell_j)$ increases with $\ell$,
    we see that the lemma holds in this case.
    Observe further that $\alpha_{ik}(\tau^{\ell+k}_j) = \alpha_{i0}(\tau^{\ell}_j)$
    by \Cref{rttlemma}, so for $\ell$ sufficiently large,
    the lemma holds for $\ell$-tiles with arbitrary $k \ge 0$.
    An easy calculation shows that if the results of the lemma hold
    with fixed $k$ and $\epsilon$ for each $\ell$-tile,
    then the lemma holds when $\sigma$ has an $\ell$-tiling
    induced by the standard $\ell$-tiling of $\lambda$.
    (That is, $\sigma$ is a union of $\ell$-tiles, vertex group elements
    and paths in $G_{r-1}$.)
    Finally for general $\sigma$,
    in the standard $\ell$-tiling of $\lambda$,
    at most two $\ell$-tiles intersect $\sigma$ 
    but are not entirely contained in $\sigma$.
    If $\operatorname{EL}_r(\sigma)$ is sufficiently large (compared with $\ell$)
    the contribution of these two tiles is negligible,
    so the lemma follows.
\end{proof}

\begin{lem}[cf. Lemma 3.3.6 of \cite{BestvinaFeighnHandel}]
    \label{nongenericbound}
    Suppose that $g\colon \mathcal{G} \to \mathcal{G}$ 
    is a topological representative of $\psi \in \out(F,\mathscr{A})$
    and that $\Lambda^+$ is $\psi$-invariant.
    Then there exists a constant $C_1$ depending only on $g$
    satisfying $\operatorname{EL}_r(g_\sharp(\delta)) < C_1$
    for any subpath $\delta \subset G_{r-1}$ of a $\Lambda^+$-generic leaf.
\end{lem}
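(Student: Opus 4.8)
The plan is to reduce the statement to a finiteness argument about subpaths of generic leaves that lie in $G_{r-1}$. The key point is that although a generic leaf $\lambda$ of $\Lambda^+$ is infinite, the subpaths $\delta \subset G_{r-1}$ that occur as \emph{maximal} $G_{r-1}$-subpaths of $\lambda$ have bounded length, and in fact come from only finitely many equivalence classes (up to vertex group elements at the ends). This follows because $\lambda$ has height $r$ and is $r$-legal (\Cref{laminationproperties}), so it admits a $0$-tiling; the maximal $G_{r-1}$-subpaths appearing in this tiling are exactly the maximal $G_{r-1}$-subpaths of the edge paths $f(E)$ for $E$ an edge of $H_r$ (recall $f$ satisfies \hyperlink{EG-i}{(EG-i)}, so $f(E) = a_1b_1\cdots a_\ell b_\ell a_{\ell+1}$ with $a_i \subset H_r$ and $b_i \subset G_{r-1}$). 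There are only finitely many such $b_i$, up to the vertex group elements that may appear at their endpoints.

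First I would make precise the claim that any subpath $\delta \subset G_{r-1}$ of $\lambda$ with endpoints at vertices is a subpath of one of these finitely many maximal $G_{r-1}$-pieces. This is immediate: $\delta$ is contained in a maximal $G_{r-1}$-subpath $\delta'$ of $\lambda$, and $\delta'$ is one of the finitely many $b_i$'s (up to equivalence). For subpaths $\delta$ whose endpoints are not at vertices, one enlarges $\delta$ to $\bar\delta$ by adding at most two partial edges to reach vertices; this changes $\operatorname{EL}_r(g_\sharp(\delta))$ by a bounded amount since $g$ maps each edge to a fixed finite edge path. Then I would note that $\operatorname{EL}_r(g_\sharp(\cdot))$ is monotone under taking subpaths only up to bounded cancellation: if $\bar\delta$ is a subpath of $b_i$, then $g_\sharp(\bar\delta)$ is obtained from $g_\sharp(b_i)$ by deleting initial and terminal segments and retightening, so by \Cref{boundedcancellation} (bounded cancellation with constant $C$ for $g$), $\operatorname{EL}_r(g_\sharp(\bar\delta)) \le \operatorname{EL}_r(g_\sharp(b_i)) + 2C$. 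Hence setting $C_1 = \max_i \operatorname{EL}_r(g_\sharp(b_i)) + 2C + (\text{bound for two partial edges})$, which depends only on $g$ and the finitely many $b_i$, gives the desired bound.

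The one subtlety I would be careful about is the role of vertex group elements: the maximal $G_{r-1}$-subpaths $b_i$ of $f(E)$ are only well-defined up to multiplying by vertex group elements at the ends, coming from the indeterminacy in factoring group elements across subdivision points, and a generic leaf may contain $b_i$ with various such vertex group elements inserted. But $g_\sharp$ applied to a vertex group element $g$ is $g_v(g)$, another vertex group element, which contributes $0$ to $\operatorname{EL}_r$; and tightening against a vertex group element does not change the $H_r$-edge count. So the bound $C_1$ is genuinely independent of which representative of the equivalence class of $b_i$ appears. The main obstacle — which is really just bookkeeping — is handling the case where $\delta$ has endpoints in the interiors of edges, since then $\bar\delta$ might poke into $H_r$ and $\operatorname{EL}_r(\bar\delta)$ could pick up one or two extra $H_r$-edges; but since the statement only asks for an upper bound and $g$ is fixed, absorbing this into $C_1$ causes no real difficulty. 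I expect no deep input beyond \Cref{laminationproperties} (for $r$-legality and the $0$-tiling) and \Cref{boundedcancellation}.
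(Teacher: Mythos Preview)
Your central claim---that the maximal $G_{r-1}$-subpaths of a generic leaf $\lambda$ are, up to vertex group elements at the ends, exactly the finitely many $b_i$ appearing in the decompositions $f(E) = a_1 b_1 \cdots b_\ell a_{\ell+1}$---is false. The $0$-tiling of $\lambda$ is simply the decomposition of $\lambda$ into single $H_r$-edges and maximal $G_{r-1}$-subpaths; it makes no reference to $f(E)$. These maximal $G_{r-1}$-subpaths are typically \emph{unbounded} in length. For a concrete example, take $G_{r-1}$ a single loop edge $a$ with $f(a) = a^2$, and $H_r$ a single loop edge $E$ with $f(E) = EaE$. Then $f^k_\sharp(E)$ contains maximal $G_{r-1}$-subpaths $a^{2^j}$ for all $j \le k-1$, and the generic leaf contains $a^{2^j}$ for all $j$. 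So your finiteness reduction collapses, and with it the entire argument.

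A warning sign you could have caught: you never use the hypothesis that $\Lambda^+$ is $\psi$-invariant. That hypothesis is essential, and the paper's proof hinges on it. The paper argues by contradiction: if the bound fails, one extracts (after passing to a subsequence) a line $\delta^\star \subset G_{r-1}$ that is a weak limit of generic leaves and hence a leaf of $\Lambda^+$, with $g_\sharp(\delta^\star) \not\subset G_{r-1}$. Since $\delta^\star \subset G_{r-1}$, its closure is a proper sublamination of $\Lambda^+$. By $\psi$-invariance, $g_\sharp(\delta^\star)$ is again a leaf of $\Lambda^+$; but it meets $H_r$, so by \Cref{laminationgenericcriterion} its closure is all of $\Lambda^+$. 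Since $g_\sharp$ is a homeomorphism of $\mathcal{B}$, this is a contradiction. The point is not that $\delta$ is short, but that $g_\sharp$ cannot push a non-generic leaf onto one whose closure is the whole lamination.
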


\begin{proof}
    We follow the proof of \cite[Lemma 3.3.6]{BestvinaFeighnHandel}.
    Suppose that the lemma fails.
    Then there exist $\Lambda^+$-generic leafs $\lambda_j$
    and finite subpaths $\delta_j \subset G_{r-1}$ of $\lambda_j$
    such that $g_\sharp(\delta_j)$ contains at least one edge in $H_r$,
    even after the first and last $j$ edges have been removed.
    If we pass to a subsequence,
    we may assume that the paths $\delta_j$ form an increasing sequence
    whose union is a line $\delta^\star \subset G_{r-1}$
    with the property that $g_\sharp(\delta^\star) \not\subset G_{r-1}$.
    But note that $\delta^{\star}$ is a leaf of $\Lambda^+$
    whose closure is not all of $\Lambda^+$
    (since it contains no edges of $H_r$),
    so $g_\sharp(\delta^\star)$ is also a line in $\Lambda^+$ 
    whose closure is not all of $\Lambda^+$.
    But this contradicts \Cref{laminationgenericcriterion}
    since $g_\sharp(\delta^\star)$ is not entirely contained in $G_{r-1}$.
\end{proof}

\begin{proof}[Proof of \Cref{PFexphomomorphism}]
    We follow the proof of \cite[Proposition 3.3.3]{BestvinaFeighnHandel}.
    Let us recall our standing assumption:
    $f\colon \mathcal{G} \to \mathcal{G}$ and
    $\varnothing = G_0 \subset G_1 \subset \cdots \subset G_m = G$
    are a relative train track map and associated filtration,
    $H_r$ is an aperiodic exponentially growing stratum
    with associated lamination $\Lambda^+$,
    and $g\colon \mathcal{G} \to \mathcal{G}$
    is a topological representative of $\psi \in \stab(\Lambda^+)$.
    Let $a_i$ be the frequency vector coefficient 
    corresponding to the edge $E_i$ of $H_r$,
    and let $\tau^k_i$ be the $k$-tile $f^k_\sharp(E_i)$.
    Suppose $\sigma$ is a finite subpath of a $\Lambda^+$-generic leaf.
    Define
    \[  \mu_k = \frac{\sum_i a_i\operatorname{EL}_r
    (g_\sharp(\tau^k_i))}{\sum_i a_i\operatorname{EL}_r(\tau^k_i)}.\]
    Fixing $\epsilon > 0$, we will show that if $k$ is sufficiently large
    (depending on $\epsilon$)
    and $\operatorname{EL}_r(\sigma)$ is sufficiently large
    (depending on $\epsilon$ and $k$), then
    \[  (1-\epsilon)\mu_k \le 
        \frac{\operatorname{EL}_r(g_\sharp(\sigma))}{\operatorname{EL}_r(\sigma)} 
    \le (1 + \epsilon)\mu_k. \]
    It follows that the $\mu_k$ form a Cauchy sequence
    and thus converge to a limit $\mu = \lim_{k\to\infty}\mu_k$.
    Recall that in order to show that $\psi$ asymptotically expands
    $\Lambda^+$ by the factor $\mu$, we must show that for every
    choice of relative train track map $f\colon \mathcal{G} \to \mathcal{G}$
    and filtration $\varnothing = G_0 \subset G_1 \subset \cdots \subset G_m = G$
    and every topological representative $g\colon \mathcal{G} \to \mathcal{G}$
    of $\psi$, we have for all $\eta > 0$,
    \[  \mu - \eta 
        < \frac{\operatorname{EL}_r(g_\sharp(\sigma))}{\operatorname{EL}_r(\sigma)} 
    < \mu + \eta \]
    whenever $\sigma$ is contained in a $\Lambda^+$-generic leaf
    and $\operatorname{EL}_r(\sigma)$ is sufficiently large.
    The first equation shows that the second holds
    for $\sigma \subset \lambda$ sufficiently long
    relative to $\eta$ and our particular choice of $f$ and $g$.

    Let $\operatorname{bcc}(g)$ be a bounded cancellation constant 
    for $g\colon \mathcal{G} \to \mathcal{G}$
    and let $C_1$ be the constant from \Cref{nongenericbound}.
    Given $\epsilon > 0$,
    write $x \sim_\epsilon y$ to mean that $|x - y|$ is small relative to $\epsilon$.
    We may choose $k$ so large that for all $i$, we have
    \begin{enumerate}
        \item $\displaystyle\frac{C_1}{\operatorname{EL}_r(\tau^k_i)} \sim_\epsilon 0$
            and
        \item $\displaystyle\frac{\operatorname{bcc}(g)}{\operatorname{EL}_r(\tau^k_i)}
            \sim_\epsilon 0$.
    \end{enumerate}
    We may furthermore choose $\sigma \subset \lambda$ so long that
    \begin{enumerate}[resume]
        \item $\alpha_{ik}(\sigma) \sim_\epsilon a_i$ and
        \item $\displaystyle
            \frac{\operatorname{EL}_r(\tau^k_i)}{\operatorname{EL}_r(\sigma)}
            \sim_\epsilon 0$.
    \end{enumerate}

    As in \cite{BestvinaFeighnHandel},
    we make simplifying assumptions in the calculation
    that introduce small errors.
    First, we assume that $\sigma$ begins and ends with a complete $k$-tile 
    from the standard $k$-tiling of $\lambda$;
    that is, $\sigma$ is a concatenation of $k$-tiles $\gamma_j$,
    vertex group elements
    and maximal subpaths $\delta_\ell \subset G_{r-1}$.
    The error in making this assumption is at most twice the number of $H_r$-edges
    in a $k$-tile, so is controlled by item 4.
    The second assumption is that the approximation in item 3 is exact.
    The third assumption is that $\operatorname{EL}_r(g_\sharp(\delta_\ell)) = 0$
    for each maximal subpath $\delta_\ell \subset G_{r-1}$.
    Here the error is controlled by item 1.
    (The true upper bound is $C_1$,
    and by assuming that $C_1$ is much smaller than $\operatorname{EL}_r(\tau^k_i)$,
    we are in effect able to ignore each $\operatorname{EL}_r(g_\sharp(\delta_\ell))$.)
    The final assumption is that $g_\sharp(\sigma) \subset g_\sharp(\lambda)$
    is a tight concatenation of the paths $g_\sharp(\gamma_j)$, 
    vertex group elements
    and the paths $g_\sharp(\delta_\ell)$
    with no cancellation beyond multiplication in vertex groups.
    This produces an error bounded by $2\operatorname{bcc}(g) N(\sigma)$,
    where $N(\sigma)$ is the number of $k$-tiles $\sigma$ contains.
    This error is controlled by item 2.

    Given these assumptions, we have
    \[  \operatorname{EL}_r(g_\sharp(\sigma)) = 
    \sum_i N(\sigma)a_i \operatorname{EL}_r(g_\sharp(\tau^k_i)). \]
    If we let $g$ be the identity, we see that our assumptions calculate
    \[  \operatorname{EL}_r(\sigma) =
    \sum_i N(\sigma)a_i \operatorname{EL}_r(\tau^k_i), \]
    so we see that
    \[  \frac{\operatorname{EL}_r(g_\sharp(\sigma))}{\operatorname{EL}_r(\sigma)}
    = \mu_k, \]
    verifying the equation above.

    If $g^\star\colon \mathcal{G} \to \mathcal{G}$ is another topological
    representative of $\psi$, then there are lifts
    $\tilde g \colon \Gamma \to \Gamma$ and $\tilde g^\star\colon \Gamma \to \Gamma$
    such that the distance between $\tilde g(\tilde x)$ and $\tilde g^\star(\tilde x)$
    is bounded independently of $\tilde x$,
    from which it follows that 
    $\operatorname{EL}_r(g_\sharp(\sigma)) 
    - \operatorname{EL}_r(g^\star_\sharp(\sigma))$
    is bounded independently of $\sigma$,
    so hence $\mu$ does not depend on the choice of $g$.

    If instead $f^\star \colon \mathcal{G}^\star \to \mathcal{G}^\star$
    and $\varnothing = G_0^\star \subset G_1^\star \subset \cdots \subset G_M^\star 
    = G^\star$
    is another relative train track map and filtration representing $\varphi$,
    $\Lambda^+$ is the attracting lamination associated to $H_s^\star$
    and $\operatorname{EL}_s^\star$ is the edge length function
    that counts edges of $H^\star_s$ in $\mathcal{G}^\star$,
    we will argue that the following holds.

    Choose a homotopy equivalence $h\colon \mathcal{G} \to \mathcal{G}^\star$
    that respects the markings and maps edges to tight edge paths.
    One  argues exactly as above that there exists a positive constant $\nu$
    such that for all $\epsilon > 0$, we have
    \[  (1-\epsilon)\nu < 
    \frac{\operatorname{EL}_s^\star(h_\sharp(\sigma))}{\operatorname{EL}_r(\sigma)}
    < (1+\epsilon)\nu \]
    whenever $\sigma$ is contained in a $\Lambda^+$-generic leaf
    and $\operatorname{EL}_r(\sigma)$ is sufficiently large.
    We leave the details to the reader.

    Suppose that $\hat g\colon \mathcal{G}^\star \to \mathcal{G}^\star$
    represents $\psi$.
    Given any finite path $\sigma \subset G_r$,
    we have that $h_\sharp g_\sharp(\sigma)$ and $\hat g_\sharp h_\sharp(\sigma)$
    differ by initial and terminal segments of uniformly bounded size.
    This, together with the equation above, shows that
    \[  \frac{\operatorname{EL}_s^\star
    (\hat g_\sharp h_\sharp(\sigma))}{\operatorname{EL}_s^\star(h_\sharp(\sigma))}
    \sim \frac{\operatorname{EL}_s^\star
    (h_\sharp g_\sharp(\sigma))}{\operatorname{EL}_s^\star(h_\sharp(\sigma))}
    \sim \frac{\operatorname{EL}_r(g_\sharp(\sigma))}{\operatorname{EL}_r(\sigma)}, \]
    where the error of approximation in each $\sim$ goes to $0$
    as $\operatorname{EL}_r(\sigma)$ or equivalently 
    $\operatorname{EL}_s^\star(h_\sharp(\sigma))$ grows.
    We conclude that $\mu$ is independent of the choice of
    relative train track map and filtration
    and thus that $\psi$ asymptotically expands $\Lambda^+$ by the factor $\mu$,
    proving part 1 of \Cref{PFexphomomorphism}.

    Now suppose that $g\colon \mathcal{G} \to \mathcal{G}$
    and $g'\colon \mathcal{G} \to \mathcal{G}$
    are topological representatives for $\psi$ and $\psi'$ respectively.
    If $\sigma \subset \mathcal{G}$ is contained in a $\Lambda^+$-generic leaf,
    then (by bounded cancellation) there is a subpath $\sigma'$ of $g_\sharp(\sigma)$
    that is contained in a $\Lambda^+$-generic leaf
    and that differs from $g_\sharp(\sigma)$ only
    in an initial and terminal segment of uniformly bounded length.
    Similarly $g'_\sharp g_\sharp(\sigma)$ differs from $g'_\sharp(\sigma')$
    only in an initial and terminal segment of uniformly bounded length.
    Therefore $\mu(\psi\psi') = \mu(\psi)\mu(\psi')$, proving part 2.

    To prove part 3, we will show that $\mu = \mu(\psi) > 1$ holds if and only if we have
    $\Lambda^+ \in \mathcal{L}(\psi)$.
    Suppose first that $\mu > 1$, and that $\sigma_0$ is a subpath of a $\Lambda^+$-generic leaf.
    Let $\sigma_1$ be the finite subpath obtained from $g_\sharp(\sigma_0)$
    by removing initial and terminal segments of length $\operatorname{bcc}(g)$.
    Bounded cancellation implies that $g_\sharp(N(\sigma_0)) \subset N(\sigma_1)$.
    Since $\mu > 1$,
    if $\operatorname{EL}_r(\sigma_0)$ is sufficiently large,
    we have $\operatorname{EL}_r(\sigma_1) > \operatorname{EL}_r(\sigma_0)$.
    Inductively we may produce subpaths $\sigma_k$
    such that $\operatorname{EL}_r(\sigma_k)$ is increasing and such that
    $g_\sharp(N(\sigma_{k-1})) \subset N(\sigma_k)$,
    from which it follows that $g^k_\sharp(N(\sigma_0)) \subset N(\sigma_k)$.
    Since $g^k_\sharp(\lambda)$ is $\Lambda^+$-generic,
    \Cref{laminationunique} implies that $\lambda \in N(\sigma_k)$ for all $k$.

    Since $\lambda$ has an exhaustion by tiles by \Cref{laminationproperties},
    $\sigma_0$ is contained in an $\ell$-tile in $\lambda$ for some $\ell$.
    If $k$ is sufficiently large, since $\operatorname{EL}_r(\sigma_k)$ grows with $k$,
    by \Cref{laminationestimate}, $\sigma_k$ contains every $\ell$-tile at least once.
    Therefore in particular $N(\sigma_k) \subset N(\sigma_0)$.
    Recall from \Cref{laminationunique} that the tiles form a neighborhood basis for $\lambda$.
    The same argument shows that given $\ell'$, there is $k'$ sufficiently large
    so that $\sigma_{k'}$ contains every $\ell'$-tile,
    so the neighborhoods $N(\sigma_j)$ as $j$ varies form a neighborhood basis for $\lambda$.
    It follows that $N(\sigma_0)$ is an attracting neighborhood for the action of $\psi^k_\sharp$.
    This shows that $\Lambda^+ \in \mathcal{L}(\psi)$.

    Finally, if $\Lambda^+ \in \mathcal{L}(\psi)$, we may by the independence of choices in item 1
    assume that the relative train track map $f\colon \mathcal{G} \to \mathcal{G}$
    used to compute $\mu = \mu(\psi)$ represents $\psi$,
    that $g = f$, and that $\gamma$ is a $k$-tile $\tau^k_i$ for some large $k$.
    Under these assumptions, we have that
    $\operatorname{EL}_r(g_\sharp(\sigma))$ is the sum of the $i$th column of
    $M^{k+1}_r$, the $(k+1)$-st power of the transition matrix.
    Meanwhile $\operatorname{EL}_(\sigma)$ is the $i$th column sum of $M^k_r$.
    The Perron--Frobenius theorem implies that 
    \[\frac{\operatorname{EL}_r(g_\sharp(\sigma))}{\operatorname{EL}_r(\sigma)} \to \mu_r, \]
    the Perron--Frobenius eigenvalue of $M_r$,
    as $k \to \infty$.
    Thus $\mu = \mu_r > 1$, completing the proof of item 3 and also proving item 4.
\end{proof}

\section{Principal automorphisms}\label{principalsection}
The purpose of this section is to develop the so-called ``Nielsen theory''
for outer automorphisms of free products.
We define principal automorphisms and rotationless outer automorphisms,
and show that in cases of particular interest every outer automorphism
has a rotationless power.
(Rotationless outer automorphisms were
originally called ``forward rotationless'' in~\cite{FeighnHandel},
but see~\cite{FeighnHandelAlg}.)

\paragraph{Principal automorphisms.}
Given an outer automorphism $\varphi\in \out(F,\mathscr{A})$
and an automorphism $\Phi\colon (F,\mathscr{A}) \to (F,\mathscr{A})$
representing it,
let $\fix_N(\hat\Phi) \subset \fix(\hat\Phi)$
denote the subset of the set of fixed points of $\hat\Phi$ 
containing all fixed points in $V_\infty(F,\mathscr{A})$
along with those fixed points in $\partial_\infty(F,\mathscr{A})$
that are not repellers.
We say that $\Phi$ is a \emph{principal automorphism}
if either of the following conditions holds.
\begin{enumerate}
    \item $\fix_N(\hat\Phi)$ contains at least three points.
    \item $\fix_N(\hat\Phi)$ is a two-point set that is neither
        the endpoints of an axis nor the endpoints of a lift
        of a generic leaf $\beta$ of an attracting lamination 
        $\Lambda^+ \in \mathcal{L}(\varphi)$.
\end{enumerate}
If $f\colon \mathcal{G} \to \mathcal{G}$ is a relative train track map
representing $\varphi$,
then we call the lift $\tilde f\colon \Gamma \to \Gamma$ 
corresponding to a principal automorphism $\Phi$ a \emph{principal lift.}

\paragraph{Nielsen almost equivalence and isogredience.}
In \cite{FeighnHandel}, Feighn and Handel
prove a kind of dictionary between certain equivalence classes
in $\fix(f)$ called \emph{Nielsen classes}
and \emph{isogredience classes} of principal automorphisms.
A pair of automorphisms $\Phi_1$ and $\Phi_2$ are \emph{isogredient}
if there exists $c \in F$
such that $\Phi_2 = i_c\Phi_1i_c^{-1}$,
where $i_c$ is the inner automorphism $x \mapsto cxc^{-1}$.
In the language of lifts,
we say that $\tilde f_1$ is isogredient to $\tilde f_2$
if $\tilde f_2 = T_c\tilde f_1T_{c}^{-1}$.
The idea is that if $\tilde f_1$ and $\tilde f_2$
are lifts of $f$ that have nonempty fixed sets,
then they are isogredient if and only if those fixed sets
project to the same Nielsen class in $\fix(f)$.

For free products, the situation is complicated by the following facts.
Firstly, a path in the Bass--Serre tree $\Gamma$ with endpoints
in $\fix(\tilde f_1)$ need not project to an honest Nielsen path for $f$,
merely an almost Nielsen path.
Second,  if $\tilde v$ is a vertex of $\Gamma$
that projects to a vertex $v$ with nontrivial vertex group,
then two  lifts $\tilde f_1$ and $\tilde f_2$ may fix $\tilde v$
without being isogredient.
What's more, we will show that if $\fix(\tilde f) = \{\tilde v\}$
and $\tilde f$ is principal, then $D\tilde f$ must fix a direction at $\tilde v$,
so not all lifts of $f$ fixing $\tilde v$ are principal.
To get the correct division of $\fix(f)$,
we have to make the following somewhat artificial-seeming definition.

Let $f\colon \mathcal{G} \to \mathcal{G}$ be a relative train track map.
Two points $x$ and $y$ in $\fix(f)$ are \emph{Nielsen almost equivalent}
or belong to the same \emph{almost Nielsen class}
if they are the endpoints of an almost Nielsen path for $f$.
We must warn the reader: Nielsen almost equivalence is \emph{not}
an equivalence relation, since it fails to be transitive in general.
A vertex may belong to \emph{multiple} almost Nielsen classes.

Recall that a direction $(g,e)$ based at a vertex $v$ of $\mathcal{G}$
is \emph{almost fixed} by $f$ if $Df(g,e) = (g',e)$.
It is \emph{almost periodic} if it is almost fixed by some iterate of $Df$.
If we have $g = g'$ in the above equation,
of course,
the direction is \emph{fixed} or \emph{periodic.}

To account for principal lifts satisfying $\fix(\tilde f) = \{\tilde v\}$,
we will add yet more almost Nielsen classes 
associated to a vertex $v$
with nontrivial vertex group:
let $e_1,\ldots,e_m$ be the oriented edges beginning at $v$
determining almost fixed directions for $Df$.
That is, $Df(1,e_i) = (g_i,e_i)$ for some vertex group element $g_i \in \mathcal{G}_v$.
Define an equivalence  relation on the set $\{e_1,\ldots,e_m\}$
where $e_i \sim e_j$ if there exists $g_i$ and $g_j$ in $\mathcal{G}_v$
such that $Df(1,e_i) = (g_i,e_i)$ and $g_i^{-1}Df(g_j,e_j) = (g_j,e_j)$.
The vertex $v$ should belong to an almost Nielsen class for each equivalence class
$[e_i]$.
For some of these $e_i$, there is an almost Nielsen path based at $v$
that begins $e_i\cdots$; for these $e_i$
we do not need to an add an almost Nielsen class;
for the remainder of these $e_i$,
the almost Nielsen class is $\{v\}$.
For technical reasons,
we also need to add an \emph{exceptional} almost Nielsen class $\{v\}$
that does not correspond to any $[e_i]$ above.
Thus a \emph{non-exceptional} almost Nielsen class
either corresponds to a non-trivial almost Nielsen path,
is a single point $w$ (possibly in the interior of an edge)
that has trivial vertex group if $w$ is a vertex,
or is determined by a vertex $v$ with nontrivial vertex group
and at least one almost fixed direction at $v$.

\begin{lem}
    There are finitely many almost Nielsen classes.
\end{lem}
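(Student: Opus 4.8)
The plan is to go through the four kinds of almost Nielsen class in the definition above and bound each in turn. The \emph{exceptional} classes and the classes determined by an equivalence class $[e_i]$ of almost fixed directions are each attached to one of the finitely many vertices $v$ of $\mathcal{G}$ with nontrivial vertex group, and at such a $v$ there are at most $|\st(v)|$ of the latter; so these two kinds contribute only finitely many classes, and it remains to treat the single-point classes and the classes realized by a nontrivial almost Nielsen path.

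For the single-point classes, fix a metric on the underlying graph with respect to which $f$ acts linearly and put each non-exponentially growing stratum in the standard form $f(E_i) = g_iE_{i+1}u_i$. I would first observe that $\fix(f)$, viewed as a subset of the topological space $G$, is a union of finitely many isolated points together with finitely many closed edges: an edge of a zero stratum is carried by every positive power of $f$ into a lower filtration element, so has no periodic points in its interior; inside an edge of an exponentially growing stratum, or of a non-periodic non-exponentially growing stratum, $f$ (or a suitable power) is expanding in the appropriate sense, so there are only finitely many periodic points in the interior; and the remaining periodic points of $f$ lie in periodic or almost periodic strata, where the only interior fixed points of $f$ itself are the interiors of the finitely many edges that $f$ fixes pointwise (almost fixed edges, and edges of fixed dihedral pairs). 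A point $w$ in the interior of such a fixed edge $E$ is the endpoint of the subpath of $E$ from $w$ to an endpoint, which is a nontrivial almost Nielsen path; hence $w$ is not a single-point class. Therefore every single-point almost Nielsen class is one of the finitely many vertices or finitely many isolated fixed points, and there are finitely many of these.

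Finally, for the classes realized by a nontrivial almost Nielsen path, I would first note that there is a finite set $Q \subset \fix(f)$ containing the endpoints of every indivisible periodic almost Nielsen path: by \Cref{finitelymanynielsenpaths} there are finitely many equivalence classes of these of each exponentially growing height, by the argument in the proof of \Cref{improvedrelativetraintrack} there are none of zero, periodic, or almost periodic height, and a bounded-length argument in the spirit of \Cref{finitelymanynielsenpaths}, using \Cref{rttlemma}, handles the non-periodic non-exponentially growing height; since the relation $\sigma \sim g\sigma g'$ does not change endpoints, the collection of all such endpoints is finite. Because Nielsen almost equivalence need not be transitive, an almost Nielsen class is a maximal set of pairwise Nielsen almost equivalent points. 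Splitting any nontrivial almost Nielsen path into indivisible periodic subpaths and possibly an initial and a terminal subpath lying in single edges of periodic or almost periodic strata, one sees that a nontrivial class containing no interior point of an almost fixed edge is contained in the finite set $Q$ together with the vertices and the finitely many isolated fixed points, so there are finitely many such classes; and a nontrivial class $C$ containing an interior point $w$ of an almost fixed edge $E$ must contain the whole closed edge $\overline{E}$ and is contained in the set of points Nielsen almost equivalent to $w$, which by the same splitting description is a finite union of closed almost fixed edges, vertices, isolated fixed points, and points of $Q$; since there are finitely many edges $E$, there are finitely many such $C$. Combining the four bounds proves the lemma.

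The step I expect to be the main obstacle is the bookkeeping forced by the failure of transitivity and by the vertex group elements: one must confirm that a class which threads through a chain of adjacent almost fixed edges is nonetheless determined by finitely much combinatorial data, and that the linearity and standard-form normalizations really do confine the non-isolated part of $\fix(f)$ to finitely many closed almost fixed edges. Establishing the finiteness of indivisible periodic almost Nielsen paths of non-periodic non-exponentially growing height, which is not literally part of \Cref{finitelymanynielsenpaths}, is a further point requiring a short separate argument.
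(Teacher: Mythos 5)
Your proposal is correct in substance, and your four-way decomposition by type of almost Nielsen class is a reasonable organizing principle, but it is considerably more elaborate than the paper's argument and takes a detour through an auxiliary claim that is neither established nor actually needed. The paper's own proof is much shorter: since $f$ acts linearly on the finite graph $G$, the fixed set $\fix(f)$ is a finite graph (a finite union of vertices, pointwise-fixed edges, and isolated interior fixed points) with finitely many connected components; because interior points of the same almost fixed edge are pairwise Nielsen almost equivalent, a class containing an interior point of such an edge contains the whole closed edge, and hence every class is determined by finitely much data (a sub-collection of fixed edges, vertices and isolated interior points, together with finitely many possible direction decorations at each vertex). No splitting of almost Nielsen paths into indivisible pieces, and no finite set $Q$ of endpoints, is required.

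Your introduction of the set $Q$ (endpoints of indivisible periodic almost Nielsen paths) is where the proof picks up unnecessary risk. You yourself flag that the finiteness of indivisible periodic almost Nielsen paths of non-periodic, non-exponentially growing height "is not literally part of \Cref{finitelymanynielsenpaths}" and needs a separate argument. In fact, for a linear edge $E$ with suffix a Nielsen loop $w$, the paths $Ew^k\bar E$ give infinitely many indivisible almost Nielsen paths of that height; their \emph{endpoints} still form a finite set because they are confined to $\fix(f)$, but this is precisely the structural fact about $\fix(f)$ that already makes the whole argument work without $Q$. Likewise, the splitting step — "splitting any nontrivial almost Nielsen path into indivisible periodic subpaths and possibly initial and terminal subpaths in almost periodic edges" — is true but needs more than a citation of \Cref{rttlemma} and \Cref{egsplitting}: one must check the splitting pieces are themselves almost Nielsen, which involves seeing how the decorating vertex group elements propagate through the splitting. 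In short, your outline is correct and all the ingredients you invoke are real, but you can cut the argument to a third of its length by leaning directly on the finiteness and structure of $\fix(f)$ rather than routing through endpoint sets of indivisible almost Nielsen paths.
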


\begin{proof}
    Since $G$ is a finite graph,
    $\fix(f)$ has finitely many connected components.
    Each component of $\fix(f)$ that contains an edge
    splits into finitely many almost Nielsen classes,
    since points in the interior of the same almost fixed edge
    are Nielsen almost equivalent.
    Each vertex belongs to finitely many almost Nielsen classes,
    since there are finitely many almost Nielsen paths for $f$ 
    and finitely many oriented edges incident to that vertex.
    An isolated point in $\fix(f)$ in the interior of an edge
    belongs to a single almost Nielsen class.
\end{proof}

Suppose $\tilde f\colon \Gamma \to \Gamma$
is a lift of $f\colon \mathcal{G} \to \mathcal{G}$.
Any tight path $\tilde\alpha$ in $\Gamma$ with endpoints in $\fix(\tilde f)$
projects to an almost Nielsen path $\alpha$ for $f$.
Conversely, if $\alpha$ is an almost Nielsen path for $f$
and $\tilde f$ fixes an endpoint of $\tilde\alpha$ with trivial stabilizer,
then $\tilde f$ also fixes the other endpoint.
If $\fix(\tilde f)$ is a single vertex $\tilde v$ that projects to a vertex $v$
with nontrivial vertex group,
we say that $\fix(\tilde f)$ projects to the exceptional almost Nielsen class $\{v\}$
if there is no fixed direction for $\tilde f$ at $\tilde v$.
Otherwise, suppose $\tilde e$ determines a fixed direction for $D\tilde f$ based at $\tilde v$
corresponding to the direction $(g,e)$ based at $v$.
Then $(g,e)$ is almost fixed for $Df$;
say $Df(1,e) = (h,e)$, and $D\tilde f(x,e) = gh^{-1}f_v(g^{-1})Df(x,e)$ for $x \in \mathcal{G}_v$.
If $e \sim e'$, let $h'$ be such that $h^{-1}Df(h',e) = (h',e)$.
Then
\[  D\tilde f(gh',e') = gh^{-1}f_v(g^{-1}) Df(gh',e') = gh^{-1} Df(h',e') = (gh',e'). \]
Thus $\fix(\tilde f)$ is either empty or projects to a single almost Nielsen class
in $\fix(f)$.
\begin{lem}[cf. Lemma 3.8 of \cite{FeighnHandel}]
    Suppose that $f\colon \mathcal{G} \to \mathcal{G}$
    represents $\varphi \in \out(F,\mathscr{A})$
    and that $\tilde f_1$ and $\tilde f_2$ are lifts of $f$
    with nonempty fixed point sets
    that project to non-exceptional almost Nielsen classes.
    Then $\tilde f_1$ and $\tilde f_2$ belong to the same isogredience class
    if and only if $\fix(\tilde f_1)$ and $\fix(\tilde f_2)$
    project to the same almost Nielsen class in $\fix(f)$.
\end{lem}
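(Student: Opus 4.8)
The plan is to mimic the proof of \cite[Lemma 3.8]{FeighnHandel}, which in the free group case is essentially a tautology once one unwinds the definition of isogredience; the work here is accounting for vertex group elements and the somewhat delicate definition of non-exceptional almost Nielsen classes. First I would prove the "only if" direction: suppose $\tilde f_2 = T_c\tilde f_1 T_c^{-1}$. If $\tilde x \in \fix(\tilde f_1)$, then $T_c(\tilde x) \in \fix(\tilde f_2)$, and since both trees have the same projection $\Gamma \to \mathcal{G}$, the points $\tilde x$ and $T_c(\tilde x)$ project to the same point $x$ of $\mathcal{G}$. If $x$ is in the interior of an edge or has trivial vertex group, then the tight path from $\tilde x$ to $T_c(\tilde x)$ is already a path in $\fix(\tilde f_1) \cup \fix(\tilde f_2)$ concatenated appropriately — more carefully, take any $\tilde y \in \fix(\tilde f_1)$ and note $T_c(\tilde y) \in \fix(\tilde f_2)$, so the tight path from $\tilde y$ to $T_c(\tilde y)$ in $\Gamma$ projects to an almost Nielsen path connecting (the projection of) $\fix(\tilde f_1)$ to $\fix(\tilde f_2)$; but actually I want them to project to the \emph{same} class, so I should instead observe that $T_c$ carries $\fix(\tilde f_1)$ onto $\fix(\tilde f_2)$ and both project to the same subset of $\fix(f)$, and that being the same "class" is about which endpoints are joined by almost Nielsen paths, which is preserved since $T_c$ is an isometry commuting with the projection. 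When $\fix(\tilde f_1) = \{\tilde v\}$ with $\mathcal{G}_v$ nontrivial, I use the computation already carried out in the paragraph preceding the lemma: a fixed direction of $D\tilde f_1$ at $\tilde v$ determines, via the equivalence relation $e \sim e'$, precisely one almost Nielsen class, and $T_c$ carries the fixed directions of $D\tilde f_1$ at $\tilde v$ to the fixed directions of $D\tilde f_2$ at $T_c(\tilde v)$, so the associated almost Nielsen classes agree.

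Next I would prove the "if" direction, which is where the main content lies. Suppose $\fix(\tilde f_1)$ and $\fix(\tilde f_2)$ project to the same non-exceptional almost Nielsen class. Pick $\tilde x_1 \in \fix(\tilde f_1)$ and $\tilde x_2 \in \fix(\tilde f_2)$ whose projections $x_1, x_2$ are either (a) joined by an almost Nielsen path $\alpha$, or (b) both equal to a vertex $v$ with nontrivial vertex group and at least one almost fixed direction. In case (a), lift $\alpha$ to a path $\tilde\alpha$ in $\Gamma$ starting at $\tilde x_1$; its terminal vertex is some lift of $x_2$, say $T_c(\tilde x_2)$ for a suitable $c \in F$ (well-defined up to the stabilizer of $\tilde x_2$). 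I claim $\tilde f_2' := T_c \tilde f_1 T_c^{-1}$ equals $\tilde f_2$. The key point is that $\tilde f_2'$ fixes $T_c(\tilde x_1)$-images appropriately and, because $\alpha$ is an almost Nielsen path, $\tilde f_2'$ fixes the terminal endpoint of $\tilde\alpha$ (using that $f_\sharp(\alpha) = g\alpha g'$ and tracking the vertex group elements); both $\tilde f_2$ and $\tilde f_2'$ are lifts of $f$ fixing the same point with the same behavior on a direction there, hence are equal by the uniqueness statement for lifts (a lift is determined by the image of a point together with the image of one direction at that point, when the point has nontrivial vertex group, or by the image of two points otherwise). The careful bookkeeping of vertex group elements — making sure that "$\tilde f_1$ fixes $\tilde x_1$" together with "$\alpha$ is an almost Nielsen path" really forces "$T_c\tilde f_1 T_c^{-1}$ fixes the lift $T_c(\tilde x_2)$" with the right twisting — is the main obstacle, and it is exactly the content of the displayed computation $D\tilde f(gh',e') = (gh',e')$ in the paragraph before the lemma, which I would invoke. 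In case (b) the almost Nielsen class is $\{v\}$ arising from an equivalence class $[e_i]$ of almost fixed directions; here I use that $\tilde f_1$, being a lift with $\fix(\tilde f_1) = \{\tilde v_1\}$ projecting to this non-exceptional class, fixes a direction at $\tilde v_1$ lying over $[e_i]$, similarly for $\tilde f_2$, and I choose $c$ so that $T_c(\tilde v_1) = \tilde v_2$ and $T_c$ matches up the fixed directions; then $T_c\tilde f_1 T_c^{-1}$ and $\tilde f_2$ agree on $\tilde v_2$ and on a direction there, hence are equal.

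The one subtlety I would flag explicitly in the writeup is why the exceptional class has to be excluded and why non-exceptionality is exactly the right hypothesis: if $\fix(\tilde f_1) = \{\tilde v\}$ and $D\tilde f_1$ fixes no direction at $\tilde v$ (the exceptional case), then $\tilde f_1$ is \emph{not} determined by its fixed point alone — there is a whole $\mathcal{G}_v$-family of non-isogredient lifts fixing $\tilde v$ — so the correspondence breaks; whereas in every non-exceptional case the fixed set, together with the almost Nielsen class data (an almost Nielsen path direction, or the equivalence class $[e_i]$), pins the lift down up to isogredience. I would state this as the reason the uniqueness-of-lifts principle applies. Finally I would remark that well-definedness of the construction in case (a) — independence of the choice of almost Nielsen path $\alpha$ within the class, and of the choices of $\tilde x_1$, $\tilde x_2$ — follows because different choices change $c$ only by left multiplication by an element of $\fix(\Phi_1)$ (equivalently, an element $d$ with $T_d$ commuting with $\tilde f_1$) and by right multiplication by an element stabilizing $\tilde x_2$, neither of which changes the isogredience class of $T_c\tilde f_1 T_c^{-1}$. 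I do not expect this last point to require more than a sentence or two, citing \Cref{boundarybasics}.
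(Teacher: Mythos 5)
Your outline is essentially the paper's proof, and the argument would go through. The forward direction (isogredience implies same almost Nielsen class) is handled identically. For the converse, your cases (a) and (b) together cover the paper's three subcases, but the paper organizes them slightly more cleanly: after choosing $c$ so that $T_c^{-1}(\tilde x)\in\fix(\tilde f_1)$ for a suitable $\tilde x\in\fix(\tilde f_2)$, it distinguishes (i) $\tilde x$ has trivial stabilizer, where agreement at one point suffices; (ii) $\tilde x$ has nontrivial stabilizer but there is a second fixed point $\tilde y$, where one re-chooses $c$ to send a second point of $\fix(\tilde f_1)$ to $\tilde y$ and uses that two lifts agreeing on two points coincide; and (iii) $\tilde x$ is the unique fixed point and there is a fixed direction, where one matches the point and the direction. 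Your case (a) attempts to treat (i) and (ii) uniformly by lifting the almost Nielsen path $\alpha$ starting at $\tilde x_1\in\fix(\tilde f_1)$; this is fine, but it forces you to verify that the terminal endpoint of $\tilde\alpha$ is again in $\fix(\tilde f_1)$, which is automatic only when $\tilde x_1$ has trivial stabilizer. The paper sidesteps this entirely by choosing $c$ to match points rather than by lifting $\alpha$, and in particular the $e\sim e'$ computation you propose to invoke is not actually needed in the proof (it is used only to \emph{define} the almost Nielsen class of $\fix(\tilde f)$, not to prove this lemma). One small bookkeeping slip: with your choice of $c$ (the terminal vertex of $\tilde\alpha$ equals $T_c(\tilde x_2)$), the conclusion is $T_c^{-1}\tilde f_1 T_c = \tilde f_2$ rather than $T_c\tilde f_1 T_c^{-1} = \tilde f_2$; of course both witness isogredience.
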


\begin{proof}
    If $\tilde f_2 = T_c\tilde f_1T_c^{-1}$,
    then $\fix(\tilde f_2) = T_c\fix(\tilde f_1)$.
    Furthermore if $\tilde E$ determines a fixed direction for $D\tilde f_1$ 
    at $\tilde v \in \fix(\tilde f_1)$,
    then $T_c(\tilde E)$ determines a fixed direction for $D\tilde f_2$ 
    at $T_c(\tilde v) \in \fix(\tilde f_2)$.
    Therefore
    $\fix(\tilde f_2)$ and $\fix(\tilde f_1)$ project to the same
    almost Nielsen class in $\fix(f)$.
    We remark that this holds 
    without assumption on $\fix(\tilde f_1)$ and $\fix(\tilde f_2)$.

    Conversely if $\fix(\tilde f_2)$ and $\fix(\tilde f_1)$
    have the same non-exceptional almost Nielsen class as a projection,
    then there exists $\tilde x \in \fix(\tilde f_2)$ and $T_c$
    such that $T_c^{-1}(\tilde x) \in \fix(\tilde f_1)$.
    If $\tilde x$ projects to a point with trivial associated group,
    then  we are done,
    as $\tilde f_2$ and $T_c\tilde f_1T_c^{-1}$  are lifts of $f$
    that agree on a point and are thus equal.

    If instead $\tilde x$ projects to a vertex with nontrivial vertex group,
    then by the assumption that $\fix(\tilde f_2)$
    projects to a non-exceptional almost Nielsen class,
    there is either a second point $\tilde y$ in $\fix(\tilde f_2)$
    or
    there is an edge $\tilde e$ incident to $\tilde x$ determining a fixed direction
    for $\tilde f_2$.
    In the former case, we may choose $c$ such that $T_c^{-1}(\tilde y) \in \fix(\tilde f_1)$
    and we are done, as $\tilde f_2$ and $T_c\tilde f_1 T_c^{-1}$ agree on two points and are thus equal.
    In the contrary case,
    since $\fix(\tilde f_1)$ projects to the same almost Nielsen class as $\fix(\tilde f_2)$,
    we may choose $c$ such that $T_c^{-1}(\tilde e)$ determines a fixed direction
    for $\tilde f_1$.
    Then $T_c\tilde f_1T_c^{-1}$ and $\tilde f_2$
    agree on a point and a direction at that point and are thus equal.
\end{proof}

We show below that principal lifts $\tilde f$
have nonempty fixed sets which project to non-exceptional almost Nielsen classes.
Since there are finitely many non-exceptional almost Nielsen classes,
it follows that there are finitely many isogredience classes
of principal automorphisms $\Phi$ representing each $\varphi \in \out(F,\mathscr{A})$.

\paragraph{Rotationless automorphisms.}
Let $P(\varphi)$ denote the set of principal automorphisms
$\Phi\colon (F,\mathscr{A}) \to (F,\mathscr{A})$
representing $\varphi \in \out(F,\mathscr{A})$.
Note that if $\Phi \in P(\varphi)$ is  principal and $k \ge 1$,
then $\Phi^k$ is also principal for $\varphi^k$.
Let $\per_N(\hat\Phi)$ denote the set of non-repelling periodic points for the action
of $\hat\Phi$ on $\partial(F,\mathscr{A})$.
(That is, the set of periodic points that are not repellers in $\partial_\infty(F,\mathscr{A})$
for some positive iterate of $\hat\Phi$.)

An outer automorphism $\varphi$ is \emph{rotationless}
if for all $\Phi \in P(\varphi)$, we have $\per_N(\hat\Phi) = \fix_N(\hat\Phi)$
and for all $k \ge 1$, the map $\Phi\mapsto \Phi^k$
defines a bijection $P(\varphi) \to P(\varphi^k)$.

We assume throughout the remainder of the paper that $F \ne F_1$ or $C_2*C_2$,
and that the free product decomposition of $F$ has positive complexity, so $F \ne A_1$.
The Bowditch boundary of each of the former free products is two points
which are the endpoints of an axis,
so there are no principal automorphisms of $F_1$ or $C_2*C_2$.
Nevertheless for notational convenience,
we will say that the identity outer automorphism
and either outer automorphism of these groups are rotationless respectively.

\paragraph{}
We now turn to showing that principal lifts have nonempty fixed point sets
that project to non-exceptional almost Nielsen classes.
Suppose that $f\colon \mathcal{G}\to \mathcal{G}$ 
represents $\varphi \in \out(F,\mathscr{A})$
and that $\tilde f\colon \Gamma \to \Gamma$
is a lift to the Bass--Serre tree.
We say that $\tilde f$ \emph{moves $\tilde z \in \Gamma$ towards $P \in \fix(\hat f)$}
if the tight ray from $\tilde f(\tilde z)$ to $P$ does not contain $\tilde z$.
Similarly we say that $\tilde f$ \emph{moves $\tilde y_1$ and $\tilde y_2$
away from  each other} if the tight path in $\Gamma$
connecting $\tilde f(\tilde y_1)$ to $\tilde f(\tilde y_2)$
contains $\tilde y_1$ and $\tilde y_2$
and if $\tilde f(\tilde y_1) < \tilde y_1 < \tilde y_2 < \tilde f(\tilde y_2)$
in the order induced by the orientation on that path.

Recall that $\partial(F,\mathscr{A})$ 
is identified with the Bowditch boundary of $\Gamma$.
Thus it makes sense to say that points in $\Gamma$
are close  to $P\in \partial(F,\mathscr{A})$
or that $P$ is the limit of points in $\Gamma$.

\begin{lem}[cf. Lemma 3.15 of \cite{FeighnHandel}]
    \label{movingtowardsattractors}
    Suppose that $P \in \partial_\infty(F,\mathscr{A})$ is a point in $\fix(\hat f)$
    and that there does not exist $c \in \Gamma$ such that
    $\fix(\hat f) = \{\hat T_c^{\pm}\}$.
    \begin{enumerate}
        \item If $P$ is an attractor  for $\hat f$
            then $\tilde z$ moves toward $P$ under the action of $\tilde f$
            for all $\tilde z \in \Gamma$ that are sufficiently close to $P$.
        \item If $P$ is the endpoint of an axis $A_c$ or if $P$
            is the limit of points in $\Gamma$
            that are  either fixed by $\tilde f$
            or move toward $P$ under the action of $\tilde f$,
            then $P \in \fix_N(\hat f)$.
    \end{enumerate}
\end{lem}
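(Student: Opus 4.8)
The plan is to adapt Feighn--Handel's proof of their Lemma 3.15, translating the combinatorial arguments about reduced words or edge paths into the language of the Bass--Serre tree $\Gamma$ and its Bowditch boundary $\partial(F,\mathscr{A})$.

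For part (1), I would think of $P$ as an infinite reduced word $\xi = x_1x_2x_3\cdots$, with $\tilde f$ corresponding to some automorphism $\Phi$ fixing $\xi$. Since $P$ is an attractor, I am in one of the ``attractor'' cases of the proof of \Cref{GJLLprop}: writing $\xi_i = x_1\cdots x_i$ and $\Phi(\xi_i) = \xi_{k(i)}z_i$ with $|z_i|$ bounded by the bounded cancellation constant, there is a basic open neighborhood $U_i = \{\zeta : \zeta = \xi_i\zeta'\}$ of $\xi$ with $\Phi(U_j) \subset U_j$ for $j$ large. Translating this back to $\Gamma$: if $\tilde z$ is sufficiently close to $P$, then $\tilde z$ lies in the half-tree $\tilde U$ determined by the vertex $\tilde\star \cdot \xi_j$ pointing toward $P$, and $\Phi$-twisted equivariance together with $\tilde f(\tilde\star) $ being controlled shows $\tilde f(\tilde z)$ lies deeper in this half-tree (closer to $P$) than $\tilde z$ does. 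Concretely, the tight ray from $\tilde f(\tilde z)$ to $P$ shares a longer initial segment with the ray from $\tilde\star$ to $P$ than the ray from $\tilde z$ to $P$ does, so that ray cannot pass back through $\tilde z$; this is exactly the statement that $\tilde f$ moves $\tilde z$ toward $P$.

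For part (2), suppose first $P$ is the endpoint of an axis $A_c$ with $T_c\in\mathbb{T}$, i.e. $c\in\fix(\Phi)$. Then by \Cref{boundarybasics}, $\hat T_c$ commutes with $\hat f$, so $\hat f$ permutes $\{\hat T_c^{\pm}\}$; since $\hat T_c^+$ is characterized as the sink of $\hat T_c$, $\hat f$ must fix it, and similarly $\hat T_c^-$. In particular $P \in \fix(\hat f)$, and it is not a repeller: indeed $\hat T_c$ moves nearby boundary points toward $\hat T_c^+$, and since $\hat f$ commutes with $\hat T_c$ and moves points toward $P=\hat T_c^+$ under iteration (points near $P$ land in half-trees pointing toward $P$), $P$ cannot be a repeller for $\hat f$, so $P\in\fix_N(\hat f)$. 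In the second situation, let $P = \lim \tilde z_n$ where each $\tilde z_n$ is either fixed by $\tilde f$ or moved toward $P$ by $\tilde f$; since the observer's topology on $\Gamma\cup\partial\Gamma$ is compact and $\hat f$ is continuous (it is the extension of $\tilde f$), we get $\hat f(P) = \lim \hat f(\tilde z_n)$. For each basic half-tree neighborhood $\tilde V$ of $P$, eventually $\tilde z_n\in\tilde V$, and (whether $\tilde z_n$ is fixed or moved toward $P$) $\tilde f(\tilde z_n)\in\tilde V$ as well, using part (1)'s style of argument: moving toward $P$ means one does not leave the half-tree pointing at $P$. Hence $\hat f(P)\in\tilde V$ for all such $\tilde V$, so $\hat f(P)=P$ by Hausdorffness. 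Finally $P$ is not a repeller: nearby points $\tilde z_n$ get moved toward $P$ rather than away, so $P$ lies in its own attracting-or-neutral basin, giving $P\in\fix_N(\hat f)$.

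The main obstacle I expect is the careful handling of the hypothesis ``there does not exist $c$ with $\fix(\hat f) = \{\hat T_c^{\pm}\}$'' and the bookkeeping around $\tilde f(\tilde\star)$ when $\tilde\star$ itself is not fixed. The point of excluding the exceptional case is that in it every fixed boundary point lies on a single axis and source--sink dynamics can genuinely produce a repeller endpoint that is a limit of points moving toward it along the axis in the ``wrong'' combinatorial sense; ruling this out is what makes ``moves toward $P$'' imply ``not a repeller.'' I would need to argue that outside this case, the set of boundary points moved toward $P$ has nonempty interior near $P$ (using that $\fix(\hat f)$ has a third point to pin down the half-tree directions), which is essentially a repackaging of the superlinear/linear attractor analysis already carried out in the proof of \Cref{GJLLprop}. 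The rest is routine translation between half-trees in $\Gamma$ and prefixes of reduced words.
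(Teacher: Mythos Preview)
Your outline for part (1) is correct and matches the paper: both invoke the attractor analysis from the proof of \Cref{GJLLprop} together with bounded cancellation.

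Part (2) has real gaps. In the axis case you assume $c \in \fix(\Phi)$ (``$T_c \in \mathbb{T}$'') without justification; this does follow, since $\hat T_{\Phi(c)}^+ = \hat f(\hat T_c^+) = \hat T_c^+$ forces $\Phi(c)$ and $c$ to share an axis and one can then argue as in the proof of \Cref{boundarybasics}, but you do not say so. More seriously, your argument that $P$ is not a repeller is circular---you write that $\hat f$ ``moves points toward $P$ under iteration,'' which is essentially the desired conclusion---and you never actually invoke the hypothesis $\fix(\hat f) \ne \{\hat T_c^\pm\}$. The paper's route is different and does use it: once $c \in \fix(\Phi)$, the hypothesis supplies a third point $\xi \in \fix(\hat f)$; the points $\hat T_c^n(\xi) \in \fix(\hat f)$ converge to $P$, so $P$ is not isolated in $\fix(\hat f)$, and by \Cref{GJLLprop} $P$ is therefore neither an attractor nor a repeller. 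In the limit case you unnecessarily reprove $\hat f(P) = P$ (already a hypothesis) and then assert that a sequence $\tilde z_n \in \Gamma$ moving toward $P$ rules out $P$ being a repeller; but ``$\tilde z$ moves toward $P$'' only says that $\tilde f(\tilde z)$ lies in the half-tree at $\tilde z$ containing $P$, which for $\tilde z$ one edge off the ray to $P$ is almost all of $\Gamma$ and is typically satisfied regardless of whether $P$ is an attractor or a repeller. The paper is also terse here, simply deferring to the proof of \Cref{GJLLprop} and bounded cancellation; the salient point is that the attractor/repeller dichotomy is witnessed by the vertices \emph{on} the ray to $P$, which is exactly how the lemma is applied in \Cref{fixedpointsfromfixeddirections} and \Cref{CTrays}.
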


\begin{proof}
    The proof follows \cite[Lemma 3.15]{FeighnHandel}.
    If $P$ is not the endpoint of an axis $A_c$,
    then the statement follows from the proof of \Cref{GJLLprop}
    and bounded cancellation.

    If $P$ is the endpoint of an axis $A_c$,
    then $\fix(\hat f)$ contains $\hat T_c^{\pm}$ and at least one other point.
    It follows from \Cref{boundarybasics} that 
    $P \in \partial(\mathbb{F},\mathscr{A}|_{\mathbb{F}})$
    and from \Cref{GJLLprop} that this point is not isolated in $\fix(\hat f)$;
    thus it is neither an attractor nor a repeller,
    so $P \in \fix_N(\hat f)$.
\end{proof}

\begin{lem}[cf.~Lemma 3.16 of \cite{FeighnHandel}]
    \label{producingafixedpoint}
    If $\tilde f$ moves $\tilde y_1$ and $\tilde y_2$ away from each other,
    then $\tilde f$ fixes a point 
    in the interval bounded by $\tilde y_1$ and $\tilde y_2$.
\end{lem}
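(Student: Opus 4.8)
The plan is to run the obvious ``closest-point'' or ``midpoint'' argument, exploiting the fact that $\Gamma$ is a tree so that betweenness is well-behaved. Write $I$ for the closed interval in $\Gamma$ with endpoints $\tilde y_1$ and $\tilde y_2$; it is a finite tight segment. By hypothesis the tight path from $\tilde f(\tilde y_1)$ to $\tilde f(\tilde y_2)$ contains $I$ with $\tilde f(\tilde y_1) < \tilde y_1 < \tilde y_2 < \tilde f(\tilde y_2)$. First I would consider the ``displacement'' of points of $I$: for $\tilde x \in I$, compare the position of $\tilde f(\tilde x)$ relative to $\tilde x$ along the line determined by $I$. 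The point $\tilde y_1$ is pushed to the left of itself (toward the $\tilde f(\tilde y_1)$-side), and $\tilde y_2$ is pushed to the right of itself. Since $\Gamma$ is a tree and $\tilde f$ is continuous, there should be a point where this ``sidedness'' changes, and that point must be fixed.

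To make this precise I would use the orientation on $I$ induced by going from $\tilde y_1$ to $\tilde y_2$, and for each $\tilde x \in I$ project $\tilde f(\tilde x)$ to the nearest point $\pi(\tilde f(\tilde x))$ of $I$ (well-defined since $I$ is convex in the tree $\Gamma$). This gives a continuous map $\pi \circ \tilde f|_I \colon I \to I$ of a segment to itself; since $I$ is homeomorphic to a compact interval, it has a fixed point $\tilde x_0$, i.e.\ $\pi(\tilde f(\tilde x_0)) = \tilde x_0$. Now I must upgrade ``$\tilde f(\tilde x_0)$ projects to $\tilde x_0$'' to ``$\tilde f(\tilde x_0) = \tilde x_0$''. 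If $\tilde f(\tilde x_0) \ne \tilde x_0$, then $\tilde f(\tilde x_0)$ lies in a component of $\Gamma \setminus \{\tilde x_0\}$ not meeting the interior of $I$; I would then derive a contradiction by pushing the argument slightly past $\tilde x_0$: for $\tilde x$ just to the right of $\tilde x_0$ in $I$, the path from $\tilde f(\tilde x_0)$ to $\tilde f(\tilde x)$ passes through $\tilde x_0$ and hence (since $\tilde f(\tilde y_2)$ lies strictly to the right of $\tilde y_2$) the hypothesis on how $\tilde f$ moves points forces $\tilde f(\tilde x_0)$ to be on the $\tilde x$-side, contradicting that it projects exactly to $\tilde x_0$. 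So $\tilde f$ fixes $\tilde x_0 \in I$.

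The cleanest packaging is probably to avoid an explicit Brouwer-type appeal and instead argue directly: let $\tilde x_0$ be the supremum (in the order on $I$ from $\tilde y_1$ to $\tilde y_2$) of the set of $\tilde x \in I$ such that $\tilde f(\tilde x)$ lies in the closed half-tree of $\Gamma \setminus \{\tilde x\}$ containing $\tilde y_1$ ``or to the left''; this set is nonempty (it contains $\tilde y_1$) and closed-ish by continuity, and $\tilde y_2$ is not in it. One then checks that at $\tilde x_0$ neither $\tilde f(\tilde x_0) <\tilde x_0$ nor $\tilde f(\tilde x_0) > \tilde x_0$ can hold without contradicting the definition of supremum together with continuity of $\tilde f$ and the hypothesis at the endpoints, so $\tilde f(\tilde x_0) = \tilde x_0$.

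\textbf{Main obstacle.} The real care is in the continuity/topology bookkeeping on the tree: $\tilde f$ need not map $I$ into the line carrying $I$, so ``moving left or right'' must be phrased via half-trees and nearest-point projection, and one must be careful that the nearest-point projection $\pi$ is continuous (it is, because $I$ is a finite convex subtree) and that the ``change of side'' point is genuinely fixed rather than merely projecting to itself. Everything else — that $I$ is a compact interval, that $\tilde f$ is continuous, that the hypotheses pin down the behavior at $\tilde y_1,\tilde y_2$ — is routine, and the proof should be short. This is the free-product/tree analogue of \cite[Lemma 3.16]{FeighnHandel} and the argument there adapts essentially verbatim once the projection language is set up.
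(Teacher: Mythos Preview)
Your overall strategy—project $\tilde f|_I$ to an interval, find a fixed point of the projected map, then upgrade it to a genuine fixed point—is exactly the paper's approach. But your choice of target interval introduces a real problem. You project to $I=[\tilde y_1,\tilde y_2]$ itself; then $\pi(\tilde f(\tilde y_1))=\tilde y_1$ and $\pi(\tilde f(\tilde y_2))=\tilde y_2$ (since $\tilde f(\tilde y_i)$ lies beyond $\tilde y_i$, so its nearest point in $I$ is the endpoint $\tilde y_i$). Thus both endpoints are already fixed points of $\pi\circ\tilde f|_I$, and Brouwer hands you nothing. Your upgrade paragraph does not rule these out: for $\tilde x_0=\tilde y_1$ and $\tilde x$ just to the right, the path from $\tilde f(\tilde y_1)$ to $\tilde f(\tilde x)$ need not pass through $\tilde y_1$ at all—$\tilde f(\tilde x)$ can perfectly well stay in the left half-tree—so the claimed contradiction does not materialize. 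Your alternative supremum formulation is vague precisely where it matters: when $\tilde f(\tilde x)$ lies in a side branch at $\tilde x$, ``to the left'' is undefined, and that is exactly the case you need to control.

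The paper's fix is small but decisive: project to the \emph{larger} interval $\alpha_1=[\tilde f(\tilde y_1),\tilde f(\tilde y_2)]$, which contains $I=\alpha_0$ as a proper subarc. Writing $\tilde g=r\circ\tilde f|_{\alpha_0}\colon\alpha_0\to\alpha_1$, one has $\tilde g(\tilde y_1)=\tilde f(\tilde y_1)<\tilde y_1$ and $\tilde g(\tilde y_2)=\tilde f(\tilde y_2)>\tilde y_2$, so by the intermediate value theorem there is a \emph{first} point $\tilde y\in(\tilde y_1,\tilde y_2)$ with $\tilde g(\tilde y)=\tilde y$. For all $\tilde z<\tilde y$ one then has $\tilde g(\tilde z)<\tilde y$, hence $\tilde f(\tilde z)$ lies in the half-tree at $\tilde y$ containing $\tilde f(\tilde y_1)$; by continuity $\tilde f(\tilde y)$ lies in its closure. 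But any point in that open half-tree has nearest $\alpha_1$-point strictly less than $\tilde y$, so $\tilde g(\tilde y)=\tilde y$ forces $\tilde f(\tilde y)=\tilde y$. The ``first fixed point'' device and the larger target interval together are what make the upgrade go through cleanly.
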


\begin{proof}
    We follow the proof of \cite[Lemma 3.16]{FeighnHandel}.
    Let $\alpha_0$ denote the oriented tight path
    connecting $\tilde y_1$ to $\tilde y_2$
    and let $\alpha_1$ denote the oriented tight path
    connecting $\tilde f(\tilde y_1)$ to $\tilde f(\tilde y_2)$.
    Let $r\colon \Gamma \to \alpha_1$
    be the retraction onto the nearest point in $\alpha_1$
    and let $\tilde g = r\tilde f \colon \alpha_0 \to \alpha_1$.
    Since $\tilde f$ moves $\tilde y_1$ and $\tilde y_2$ away from each other,
    $\tilde\alpha_0$ is a proper subpath of $\tilde\alpha_1$
    and $\tilde g$ is a surjection.
    If $\tilde y$ is  the first point in $\tilde\alpha_0$ 
    such that $\tilde g(\tilde y) = \tilde y$,
    then $\tilde y_1 < \tilde y < \tilde y_2$
    and $\tilde g(\tilde z) < \tilde g(\tilde y)$ 
    for $\tilde y_1 < \tilde z < \tilde y$.
    It follows  that $\tilde f(\tilde y)$ belongs to $\tilde\alpha_1$,
    so $\tilde y$ is fixed by $\tilde f$.
\end{proof}

\begin{cor}[cf.~Corollary 3.17 of \cite{FeighnHandel}]
    \label{principalnonemptyfixed}
    If $\tilde f$ is a principal lift,
    then $\fix(\tilde f)$ is nonempty
    and projects to a non-exceptional almost Nielsen class in $\fix(f)$.
\end{cor}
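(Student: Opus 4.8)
The plan is to follow Feighn--Handel's proof of their Corollary 3.17, adapting it to the graph-of-groups setting. Let $\Phi$ be a principal automorphism corresponding to the lift $\tilde f$. The definition of principal says that $\fix_N(\hat f)$ either contains at least three points, or is a two-point set not realizing an axis or a lift of a generic leaf. First I would dispose of the special case excluded in \Cref{movingtowardsattractors}: if there is $c$ with $\fix(\hat f) = \{\hat T_c^{\pm}\}$, then $\fix_N(\hat f) \subset \{\hat T_c^{\pm}\}$ realizes the axis $A_c$, contradicting principality (here I'd need to observe that neither $\hat T_c^\pm$ is a repeller, so $\fix_N(\hat f)$ is actually all of $\{\hat T_c^\pm\}$, and that an axis is excluded by both clauses of the definition). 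So that case does not occur, and \Cref{movingtowardsattractors} is available.

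Next, the core argument: I would produce two points $\tilde y_1, \tilde y_2 \in \Gamma$ that $\tilde f$ moves away from each other, then invoke \Cref{producingafixedpoint} to get a fixed point in the interval between them. To find such points, pick three distinct points $\xi_1, \xi_2, \xi_3 \in \fix_N(\hat f)$ (at least three are guaranteed by clause 1; if only clause 2 applies, one handles the two-point case separately as below). Each $\xi_i$ is either an attractor, a non-isolated point of $\fix(\hat f)$, a point of $V_\infty(F,\mathscr{A})$, or an endpoint of an axis; in every case \Cref{movingtowardsattractors} tells us that there are points of $\Gamma$ arbitrarily close to $\xi_i$ that are fixed by $\tilde f$ or moved toward $\xi_i$. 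Choosing such points $\tilde z_i$ close to $\xi_i$ for $i = 1,2$ on the two sides of the tripod determined by $\xi_1,\xi_2,\xi_3$, one checks that $\tilde f$ moves $\tilde z_1$ and $\tilde z_2$ away from each other (using that the tight path from $\tilde f(\tilde z_1)$ to $\tilde f(\tilde z_2)$ passes near all three $\xi_i$ hence contains $\tilde z_1$ and $\tilde z_2$ in the correct order). Then \Cref{producingafixedpoint} gives $\tilde v \in \fix(\tilde f)$.

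For the two-point case of clause 2, write $\fix_N(\hat f) = \{\xi_1, \xi_2\}$. If neither point is in $V_\infty$, then \Cref{GJLLprop} and \Cref{boundarybasics} force the pair to be either the endpoints of an axis or the endpoints of a lift of a generic leaf (these being the only ways a finite fixed set of size exactly two at infinity can arise when nothing is attracting/repelling in a more complicated pattern) --- but that is exactly what principality excludes, so at least one $\xi_i \in V_\infty(F,\mathscr{A})$, which is itself a fixed vertex of $\tilde f$, and $\fix(\tilde f)$ is nonempty. More carefully, I expect the argument to be: a fixed point in $V_\infty$ is literally a vertex $\tilde v$ fixed by $\tilde f$; a fixed attractor gives, via the superlinear-attractor analysis inside the proof of \Cref{GJLLprop} together with \Cref{movingtowardsattractors}(1), points moved toward it, and pairing two such (or one such with a fixed vertex or an axis endpoint) again yields a pair moved apart.

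Finally I would verify that $\fix(\tilde f)$ projects to a \emph{non-exceptional} almost Nielsen class. If $\fix(\tilde f)$ contains a point $\tilde x$ projecting to a point with trivial associated group, or contains two distinct points, or contains a vertex with a fixed direction, we are done by the discussion preceding the lemma (these are precisely the non-exceptional cases). The only danger is $\fix(\tilde f) = \{\tilde v\}$ with $\tilde v$ projecting to a vertex with nontrivial vertex group and \emph{no} fixed direction at $\tilde v$ for $D\tilde f$. I would rule this out by showing that in this situation $\fix_N(\hat f)$ would be too small: with no fixed direction at the unique fixed vertex, every point of $\fix(\hat f) \setminus \{$ points at $\tilde v\}$ at infinity would have to be reached by the ``moving toward'' mechanism through $\tilde v$, and one argues — as Feighn--Handel do — that there can be at most one such attractor on each side, so in fact $\fix_N(\hat f)$ reduces to a configuration (at most two points, realizing an axis through $\tilde v$ or the like) that violates principality. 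I expect this last verification --- pinning down exactly why ``no fixed direction at the sole fixed vertex'' is incompatible with $\Phi$ being principal --- to be the main obstacle, since it is where the graph-of-groups bookkeeping (vertex group elements, almost-fixed versus fixed directions, the equivalence relation $e_i \sim e_j$) genuinely differs from the free-group case and must be handled with care rather than quoted.
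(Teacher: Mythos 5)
Your overall strategy --- produce two points of $\Gamma$ moved apart, apply \Cref{producingafixedpoint}, handle $V_\infty$ fixed points directly, then verify the non-exceptional class --- is the paper's. But the key middle step has a gap. You assert that for each $\xi_i \in \fix_N(\hat f)$, ``in every case \Cref{movingtowardsattractors} tells us that there are points of $\Gamma$ arbitrarily close to $\xi_i$ that are fixed by $\tilde f$ or moved toward $\xi_i$.'' That is not what the lemma says. Item 1 of \Cref{movingtowardsattractors} gives the moving-toward property only when $\xi_i$ is an attractor; item 2 is an implication in the opposite direction (from ``axis endpoint, or limit of moved/fixed points'' to ``$\xi_i \in \fix_N(\hat f)$'') and cannot be read backwards to manufacture moved or fixed points near $\xi_i$. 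When $\xi_i$ is not an attractor it lies in $\partial(\mathbb{F},\mathscr{A}|_\mathbb{F})$ by \Cref{GJLLprop}, and then a separate argument is required: the paper observes that if $\{\hat T_c^\pm\} \subset \fix_N(\hat f)$ then $T_c$ commutes with $\tilde f$, so $A_c$ is $\tilde f$-invariant, and either $A_c$ already contains a fixed point or there is a point of $A_c$ moving toward one endpoint, which $T_c$-translates carry arbitrarily close. Even this only yields the property for one of the two endpoints of each axis, which is exactly why the paper organizes its proof not around three symmetrically chosen $\xi_i$ but around whether some nonperipheral $c$ has $\{\hat T_c^\pm\} \subset \fix_N(\hat f)$, whether such an $A_c$ contains a fixed point, and how many points of $\fix_N(\hat f)$ lie in $V_\infty(F,\mathscr{A})$.

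You also correctly identify the dangerous case for the non-exceptional clause ($\fix(\tilde f)$ a single vertex with nontrivial vertex group and no fixed direction for $D\tilde f$) but leave its resolution speculative, calling it the main obstacle. In the paper this is not a separate step appended afterwards: each branch of the casework is arranged so that it outputs a second fixed point, a $T_c$-translate of a fixed point, or a fixed direction at a fixed vertex, and non-exceptionality falls out of whichever was produced (the axis case gives $T_c(\tilde x)$; the $V_\infty$-plus-$\partial_\infty$ case gives either a fixed direction along the ray to the attractor or, via \Cref{producingafixedpoint}, a second fixed point; the two-$V_\infty$-point case gives an almost Nielsen path). Completing your sketch requires carrying that bookkeeping through the casework rather than tacking it on as a final verification.
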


\begin{proof}
    The proof follows \cite[Corollary3.17]{FeighnHandel}.
    Suppose first that there is a nonperipheral element $c$
    such that $A_c$ has its endpoints in $\fix_N(\hat f)$
    and so $T_c$ commutes with $\tilde f$.
    Either some point of the axis $A_c$ is fixed by $\tilde f$ or not.
    If there is a fixed point $\tilde x$, then note that
    \[  \tilde fT_c(\tilde x) = T_c\tilde f(\tilde x) = T_c(\tilde x), \]
    so $\fix(\tilde f)$ projects 
    to a non-exceptional almost Nielsen class in $\fix(f)$.

    If there is no fixed point on $A_c$,
    then there is a point in $A_c$ that moves toward one of the endpoints of $A_c$,
    say to $P$, for
    if we assume the contrary, then $\tilde f$ moves any two points of $A_c$ away from each other,
    contradicting our assumed lack of fixed point.
    Since $\tilde f$ commutes with $T_c$,
    there are points in $\Gamma$ arbitrarily close to $P$ that move toward $P$.
    The same property holds for an attractor $P \in \fix(\hat f)$
    by \Cref{movingtowardsattractors}.

    Therefore if $\fix_N(\hat f)$ contains $\{\hat T_c^\pm\}$ and at least one other point
    in $\partial_\infty(F,\mathscr{A})$,
    but we assume that there is no fixed point on each axis $A_c$
    satisfying $\{\hat T_c^\pm\} \subset \fix_N(\hat f)$,
    then there are distinct $P_1$ and $P_2$  in $\fix_N(\hat f)$
    and $\tilde x_1$ and $\tilde x_2$ in $\Gamma$
    such that $\tilde x_i$ is close to and moves toward $P_i$.
    It follows that $\tilde f$ moves $\tilde x_1$ and $\tilde x_2$
    away from each other.
    \Cref{producingafixedpoint} produces a fixed point $\tilde y$.

    If $\fix_N(\hat f)$ contains $\{\hat T_c^\pm\}$ and at least one other point $\tilde y$
    in $V_\infty(F,\mathscr{A})$, then $\tilde y \in \fix(\tilde f)$.
    The argument in the first paragraph shows that $T_c(\tilde y)$ is also fixed,
    so $\fix(\tilde f)$ projects to a non-exceptional almost Nielsen class in $\fix(f)$.

    If $\tilde f$ is a principal lift 
    corresponding to $\Phi$ but $\fix_N(\hat f)$ contains only one point 
    in $\partial_\infty(F,\mathscr{A})$
    then there is a conjugate of some infinite vertex group $A_i$
    sent to itself by $\Phi$.
    This conjugate fixes a unique point $\tilde x$ in $\Gamma$,
    which must (by $\Phi$-twisted equivariance) be fixed by $\tilde f$.
    Let $P \in \partial_\infty(F,\mathscr{A})$ be in $\fix_N(\hat f)$. 
    It is an attractor, so there exist points on the ray from $\tilde x$ to $P$
    close to $P$ that move toward $P$ under $\tilde f$.
    Either the direction determined by the tight ray from $\tilde x$ to $P$
    is fixed by $\tilde f$,
    or there are points on the tight ray from $\tilde x$ to $P$
    moved away from each other by $\tilde f$.
    In either case, $\fix(\tilde f)$ projects 
    to a non-exceptional almost Nielsen class in $\fix(f)$.

    Finally if $\tilde f$ is a principal lift
    corresponding to $\Phi$ but $\fix(\hat f)$ contains no points in $\partial_\infty(F,\mathscr{A})$
    then there are two conjugates of (not necessarily distinct) $A_i$ sent to themselves by $\Phi$.
    These conjugates fix unique points $\tilde x$ and $\tilde y$ in $\Gamma$,
    which must be fixed by $\tilde f$.
    The tight path from $\tilde x$ to $\tilde y$ projects to an almost Nielsen path,
    so $\fix(\tilde f)$ projects to a non-exceptional almost Nielsen class in $\fix(f)$.
\end{proof}

There are two cases in which a lift $\tilde f$
whose fixed point set projects to a non-exceptional
almost Nielsen class is not a principal lift.
The first is when $\fix(\hat f)$ is the endpoints of
a generic leaf of an attracting lamination $\Lambda^+ \in \mathcal{L}(\varphi)$.
In this case $\fix(\tilde f)$ is a single point $\tilde x$
and there are exactly two periodic directions at $\tilde x$.
The second case is when $\fix(\hat f)$
is the endpoints of an axis $A_c$.
As it happens, this latter case occurs when $\fix(\tilde f) = A_c$.
In this latter case, the axis $A_c$
projects to either a topological circle in $\fix(f)$
or to the quotient of $\mathbb{R}$ by the standard action of $C_2*C_2$,
i.e.~the quotient is a subgraph of subgroups of $\mathcal{G}$
that is topologically an interval with $C_2$ vertex groups at the endpoints.
In both cases, the lift becomes principal after passing to an iterate unless
there are exactly two periodic directions
at each point of this component of $\fix(f)$.
In the case where $\fix(\tilde f)$ projects to the quotient of $\mathbb{R}$
by the standard action of $C_2*C_2$, notice that if a vertex group $\mathcal{G}_v$ in question
is finite but not equal to $C_2$,
then there are more than two periodic directions at any lift $\tilde v$ in $\fix(\tilde f)$.
If $\mathcal{G}_v$ is infinite, then $\fix_N(\hat f)$ is infinite,
so $\tilde f$ is principal.

Points in $\per(f)$ are \emph{Nielsen almost equivalent}
if they are Nielsen almost equivalent as fixed points for some iterate of $f$.
Thus two periodic points $x$ and $y$ are Nielsen almost equivalent
if they are the endpoints of a periodic almost Nielsen path.
We say that a point $x \in \per(f)$ is \emph{principal}
if neither of the following conditions are satisfied.
\begin{enumerate}
    \item The point $x$ has finite vertex group if it is a vertex, 
        is not the endpoint of a periodic almost Nielsen path
        and there are exactly two periodic directions at $x$,
        both of which are contained in the same exponentially growing stratum.
    \item The point $x$ is contained in a component $C$ of $\per(f)$
        which is either a topological circle
        or isomorphic as a graph of groups to the quotient of $\mathbb{R}$
        by the standard action of $C_2 * C_2$
        and each point in $C$ has exactly two periodic directions.
\end{enumerate}

If $x\in \per(f)$ is a vertex with nontrivial vertex group,
then it may be principal in multiple ways,
according to the ways the edges incident to $x$ determine different
non-exceptional almost Nielsen classes.
Note that a vertex with nontrivial vertex group is always principal
unless that vertex group is $C_2$.

(Note that we follow Definition 3.5 in \cite{FeighnHandelAlg},
which is a corrected version of \cite[Definition 3.18]{FeighnHandel}.)
Lifts of principal periodic points in $\mathcal{G}$ to $\Gamma$ are \emph{principal}.

We say that $f\colon \mathcal{G} \to \mathcal{G}$ is \emph{rotationless}
if it satisfies the following conditions.
\begin{enumerate}
    \item Each principal vertex is fixed.
    \item Each almost periodic direction at a principal vertex
        is almost fixed.
    \item Suppose $v$ is a principal vertex with nontrivial vertex group
        and that $(1,e)$ is an almost fixed direction at $v$, say $Df(1,e) = (g,e)$.
        If a direction $d$ is periodic for the map $d \mapsto g^{-1}.Df(d)$, it is fixed.
\end{enumerate}

In practice we apply these definitions to relative train track maps
$f\colon \mathcal{G} \to \mathcal{G}$
that satisfy the conclusions of \Cref{improvedrelativetraintrack}.
Principal periodic points are thus
either contained in almost periodic edges or are vertices.
We have that any endpoint of an indivisible periodic almost Nielsen path
(containing an edge, of course) is principal,
as is the initial endpoint of any non-exponentially growing
edge that is not almost periodic.
This latter property implies that 
for a rotationless relative train track map 
each non-exponentially growing stratum
that is not periodic 
is a single edge.

\begin{lem}
    \label{rotationlessdirections}
    Suppose that $f\colon \mathcal{G} \to \mathcal{G}$ is a rotationless relative train track map,
    that $v \in G$ is a principal vertex with nontrivial vertex group
    and that $g = f^k$ for some $k \ge 1$.
    Let $\tilde v$ be a lift of $v$ fixed by a lift $\tilde g$ of $g$.
    Suppose that $\tilde g$ fixes a direction at $\tilde v$.
    There is a lift $\tilde f$ of $f$ fixing $\tilde v$ and all the same directions
    at $\tilde v$ as $\tilde g$.
\end{lem}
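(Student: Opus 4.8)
The plan is to extract a lift $\tilde f$ of $f$ from the given lift $\tilde g = \tilde f^k$ by working directly with the action on directions at $\tilde v$, using the explicit description of $D\tilde f$ in terms of vertex group elements that was set up in the ``Directions, the map $Df$'' paragraph. Write $v$ for the projection of $\tilde v$; since $v$ is a principal vertex with nontrivial vertex group, and $f$ is rotationless, $v$ is fixed by $f$ and each almost periodic direction at $v$ is almost fixed. Fix a lift $\tilde v$ with $\tilde g(\tilde v) = \tilde v$ and suppose $\tilde g$ fixes the direction $\tilde e$ at $\tilde v$, corresponding under the $\mathcal{G}_v$-equivariant bijection of directions to $(g_0, e)$ with $e \in \st(v)$. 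Because $f$ fixes $v$, there are lifts of $f$ fixing $\tilde v$; these differ from one another by the action of $T_h$ for $h \in \mathcal{G}_v$, i.e.\ by post-composing $D\tilde f$ at $\tilde v$ with the action of a vertex group element. So the first step is to parametrize the lifts of $f$ fixing $\tilde v$ by elements $h \in \mathcal{G}_v$, with the corresponding $D\tilde f_h$ acting on directions at $\tilde v$ by $(x,e') \mapsto h \cdot Df(x,e')$ (in the notation of the last displayed formula of that paragraph, with $\sigma = \gamma h f(\bar\gamma)$).

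Next I would compute the action of $D\tilde g = D\tilde f_h^{\,k}$ on the direction $(g_0,e)$ and impose that it equal $(g_0,e)$. Since $Df(g_0,e)$ has the form $(g', e_1)$ for some edge $e_1$, and since $\tilde g$ fixes $\tilde e$, the underlying edge must return to $e$ after $k$ steps of $Df$; as $(g_0,e)$ is then an almost periodic direction at $v$ it is almost fixed by $f$, so $Df(g_0,e) = (g_1, e)$ for some $g_1 \in \mathcal{G}_v$. Iterating, $D\tilde f_h^{\,k}(g_0,e) = h f_v^{k-1}(\cdot)\cdots$ applied appropriately; unwinding the twisted-equivariant bookkeeping, the fixed-point equation becomes a single equation in $\mathcal{G}_v$ of the form $h\,\Psi(h) = 1$ (or similar), where $\Psi$ is built from $f_v$ and the $g_i$. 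The claim is that the value of $h$ making $D\tilde f_h$ fix $(g_0,e)$ — equivalently, making $\tilde f_h$ fix $\tilde e$ — satisfies $\tilde f_h^{\,k} = \tilde g$. Both $\tilde f_h^{\,k}$ and $\tilde g$ are lifts of $g = f^k$ fixing $\tilde v$ and fixing the direction $\tilde e$; by the uniqueness statement in the ``Directions'' paragraph (a lift is determined by the image of $\tilde p$ together with one direction when $\tilde p$ is a vertex with nontrivial vertex group), $\tilde f_h^{\,k} = \tilde g$. This $\tilde f = \tilde f_h$ is then the desired lift.

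It remains to check that $\tilde f$ fixes \emph{all} directions at $\tilde v$ that $\tilde g$ fixes, not merely $\tilde e$. Here I would invoke the third clause in the definition of rotationless for $f\colon\mathcal{G}\to\mathcal{G}$: with $Df(1,e) = (g,e)$ for the chosen edge, any direction $d$ at $v$ that is periodic under $d \mapsto g^{-1}Df(d)$ is already fixed under that map. Translating through the formula $D\tilde f(x,e') = g h^{-1} f_v(g^{-1}) Df(x,e')$ analogous to the computation displayed just before \Cref{rotationlessdirections} in the discussion of almost Nielsen classes, a direction $\tilde d$ at $\tilde v$ is fixed by $\tilde f$ iff the corresponding $d$ is fixed by the twisted map $d \mapsto g' \cdot Df(d)$ for the appropriate $g'$ determined by $h$; and it is fixed by $\tilde g = \tilde f^k$ iff $d$ is periodic under that same twisted map. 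Rotationlessness (clause 3) forces these two conditions to coincide. Hence $\fix(D\tilde f|_{\tilde v}) = \fix(D\tilde g|_{\tilde v})$, completing the argument.

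The main obstacle I anticipate is the bookkeeping in the second paragraph: correctly solving the fixed-point equation for $h$ in $\mathcal{G}_v$ and verifying that the resulting twisted map $d \mapsto g'\cdot Df(d)$ governing $\tilde f$ is literally the ``$k$-th root'' of the twisted map governing $\tilde g$, so that clause 3 of rotationlessness applies verbatim. Once the right vertex-group element $h$ is pinned down, the uniqueness-of-lifts principle does the rest cheaply, but getting the conjugation conventions (left versus right actions, $f_v$ versus $f_v^{-1}$) consistent with the paper's earlier displayed formulas will require care.
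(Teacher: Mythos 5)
Your proof is correct and follows the same strategy as the paper: pin down the parameter $h$ of a lift $\tilde f$ of $f$ fixing $\tilde v$ so that $D\tilde f$ fixes the given direction (this is a direct formula, $h = xg_i^{-1}f_v(x^{-1})$, not an equation that needs solving), use uniqueness of lifts to conclude $\tilde f^k = \tilde g$, and invoke item (3) of the definition of a rotationless relative train track map to upgrade $D\tilde f$-periodic directions to $D\tilde f$-fixed ones. Your closing caveat about conjugation conventions points at the one step you still need to make precise: $D\tilde f$ acts on directions at $\tilde v$ by $d \mapsto \bigl(xg_i^{-1}f_v(x^{-1})\bigr).Df(d)$, which is not literally of the form $d \mapsto g_i^{-1}.Df(d)$ demanded by item (3); one first replaces $\tilde f$ by the isogredient lift $T_{x^{-1}}\tilde fT_x$ (equivalently, translates directions by $x^{-1}$, sending $(y,e_j)$ to $(x^{-1}y,e_j)$), under which the twisting element simplifies to exactly $g_i^{-1}$ — this is the conjugation the paper carries out explicitly, and with it your argument is complete.
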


\begin{proof}
    Using the $\mathcal{G}_v$-equivariant bijection from \Cref{basicssection},
    we identify the set of directions at $\tilde v$
    with
    \[  \coprod_{e \in \st(v)} \mathcal{G}_v \times \{e\}. \]
    Since $f$ is rotationless and $v$ is principal,
    each $Df$-almost periodic direction $(1,e_i)$ at $v$ is almost fixed,
    say $Df(1,e_i) = (g_i,e_i)$.
    By the discussion in \Cref{basicssection},
    the map $D\tilde f$ takes the form $D\tilde f(x,e_i) = (hf_v(x)g_i,e_i)$
    for some element $h = h(\tilde f) \in \mathcal{G}_v$.
    Suppose $(x,e_i)$ is fixed by $D\tilde g$.
    Take $h = xg_i^{-1}f_v(x^{-1})$, and observe that
    \[D\tilde f(x,e_i) = (xg_i^{-1}f_v(x^{-1})f_v(x)g_i,e_i) = (x,e_i). \]
    Furthermore, by definition there exists $h' = h'(\tilde g) \in \mathcal{G}_v$
    such that
    \[  D\tilde g(x,e_i) = (h'f_v^k(x)f_v^{k-1}(g_i)f_v^{k-2}(g_i)\cdots f_v(g_i)g_i,e_i),\]
    and the condition that $D\tilde g(x,e_i) = (x,e_i)$ says that
    \[  h' = x g_i^{-1}f_v(g_i^{-1})\cdots f_v^{k-2}(g_i^{-1})f_v^{k-1}(g_i^{-1})f_v^k(x^{-1})
     = h f_v(h) \cdots f_v^{k-1}(h). \]
     In other words, $D\tilde g = D\tilde f^k$.

     Therefore to complete the proof, we need only show that if a direction has $D\tilde f$-period $k$,
     it is fixed.
     Observe that if a direction $(y,e_j)$ has period $k$ under $D\tilde f$,
     then $(x^{-1}y,e_j)$ has period $k$ under $D(T_{x^{-1}}\tilde fT_x)$.
     If the direction $(1,e_j)$ satisfies $Df(1,e_j) = (g_j,e_j)$,
     then $D(T_{x^{-1}}\tilde fT_x)(y,e_j) = (g_i^{-1}f_v(y)g_j,e_j)$.
     By the definition of rotationless,
     if a direction is periodic for $D(T_{x^{-1}}\tilde fT_x)$, it is fixed,
     so the same is true for $D\tilde f$.
\end{proof}

The following proposition provides a criterion for showing 
that a relative train track map $f\colon \mathcal{G} \to \mathcal{G}$
has a rotationless iterate.

\begin{prop}
    \label{rotationlesscriterion}
    Let $f\colon \mathcal{G} \to \mathcal{G}$ be a relative train track map.
    Suppose that each infinite vertex group $\mathcal{G}_v$ for $v \in G$
    has a bound on the order of finite order elements.
    If $(1,e_i)$ is an almost periodic direction at $v$ satisfying
    say $Df(1,e_i) = (g_i,e_j)$,
    suppose additionally that there exists $k > 0$ such that
    the automorphism $f_{v,i}\colon \mathcal{G}_v \to \mathcal{G}_v$
    defined as $f_{v,i}(x) = g_i^{-1}f_v(x)g_i$
    has the property that if an element $x$ is $f_{v,i}^k$-periodic, it is fixed.
    If the automorphism $f_v$ also has this property,
    then $f\colon \mathcal{G} \to \mathcal{G}$ has a rotationless iterate.
\end{prop}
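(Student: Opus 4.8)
The plan is to pass to a carefully chosen positive power of $f$, after a sequence of preliminary normalizations each of which is preserved under further iteration. First, by \Cref{aperiodicallaperiodic} we may replace $f$ by an iterate that is eg-aperiodic; since a positive power of an aperiodic nonnegative integral matrix is again aperiodic, eg-aperiodicity — and hence the collection of exponentially growing strata and of periodic directions lying in a common such stratum — is unaffected by further powers. Since iterates of relative train track maps are relative train track maps, and (up to harmless enlargement of the filtration) the conclusions of \Cref{improvedrelativetraintrack} persist, we may also keep those. Because $G$ is finite, the set of periodic vertices of $f$ and the permutation that $Df$ induces on the finite set of periodic oriented edges (recording only the underlying edge; call it $\delta$) both have finite order; replacing $f$ by one more iterate we arrange that $f$ fixes every periodic vertex and that $\delta$ is the identity. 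Finally, passing to a power changes neither $\per(f)$ nor, at a given periodic vertex, the set of periodic directions or of periodic almost Nielsen paths, so the set of principal periodic points (and the data determining which points are principal) is the same for $f$ and for every iterate.

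Granting these normalizations, conditions (1) and (2) in the definition of a rotationless relative train track map hold already: a principal vertex is periodic, hence now fixed, giving (1); and if $(g,e)$ is an almost periodic direction at a principal — hence fixed — vertex $v$, then the underlying edge of $e$ is $\delta$-periodic, hence $\delta$-fixed, so, writing $f(e) = g_0e\cdots$, we get $Df(g,e) = (f_v(g)g_0, e)$, which is almost fixed, giving (2).

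It remains to arrange condition (3), which is the crux. Fix a principal vertex $v$ with nontrivial vertex group and an almost fixed direction $(1,e)$ at $v$, $Df(1,e) = (g,e)$, and consider the twisted self-map $\psi$ of the directions at $v$, $\psi(d) = g^{-1}.Df(d)$, which (as in \Cref{basicssection} and the proof of \Cref{rotationlessdirections}) is $D\tilde f$ for the lift $\tilde f$ fixing $(1,e)$ at a lift $\tilde v$ of $v$. A $\psi$-periodic direction has $\delta$-fixed underlying edge, so it is enough to show that for each periodic oriented edge $e'$ at $v$, with $Df(1,e') = (g_0',e')$, the self-map $\phi$ of $\mathcal{G}_v$ defined — under the equivariant identification of directions over $e'$ with $\mathcal{G}_v$ — by $\phi(x) = g^{-1}f_v(x)g_0'$ acts with finite order on its set of periodic points; one more bounded iteration then makes every $\psi$-periodic direction $\psi$-fixed. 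Writing $L_c$ for left translation by $c$, we have $\phi = L_{g^{-1}g_0'}\circ\sigma$, where $\sigma := i_{(g_0')^{-1}}\circ f_v$ is precisely the automorphism $f_{v,i}$ of the hypothesis attached to the almost periodic direction $(1,e')$, so $\phi^m = L_{c_m}\circ\sigma^m$ for suitable $c_m\in\mathcal{G}_v$. The elementary key point: if $\phi^m$ has a fixed point $x_0$, then $c_m = x_0\sigma^m(x_0)^{-1}$ and $L_{x_0}^{-1}\phi^m L_{x_0} = \sigma^m$, so $\phi^m$ is conjugate, as a self-map of $\mathcal{G}_v$, to $\sigma^m = f_{v,i}^m$. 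Hence if $\phi$ has a periodic point, pick one of period $p$; then $\phi^p$ has a fixed point, so $\phi^{pk}$ is conjugate to $f_{v,i}^{pk}$, and since by hypothesis every $f_{v,i}$-periodic element is $f_{v,i}^k$-fixed, the conjugacy shows $\phi^{pk}$ fixes every $\phi^p$-periodic — equivalently every $\phi$-periodic — element, so $\phi$ has order dividing $pk$ on its periodic set. The directions over edges joining $v$ to trivial-group vertices, and the case of trivial $\mathcal{G}_v$, are handled identically with $f_v$ (respectively $c = 1$) in place of $f_{v,i}$, using the hypothesis on $f_v$. Running this over the finitely many triples $(v,e,e')$ produces a single $N$ with $f^N$ rotationless, the desired iterate.

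I expect the main obstacle to be exactly this condition (3): one must control the genuinely affine (non-homomorphic) maps $x\mapsto g^{-1}f_v(x)g_0'$ attached to directions at principal vertices with nontrivial vertex group, showing their periodic sets have bounded period — which is what forces the twisted-conjugacy bookkeeping above and is precisely what the hypotheses on $f_{v,i}$ and $f_v$ provide. A secondary technical point, where the assumed bound on orders of finite-order elements of infinite vertex groups is used, is the transfer of these vertex-group hypotheses from $f$ to the chosen iterate: a periodic vertex need not be fixed by $f$ itself, so the automorphism that must be controlled is the composite of the maps $f_u$ around the vertex orbit, and the finite-order bound is what keeps periods from escaping to infinity under this composition.
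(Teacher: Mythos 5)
Your proof is essentially correct and follows the paper's broad strategy — pass to a preliminary iterate so that periodic vertices are fixed and periodic directions have fixed underlying edge, then use the vertex-group hypothesis to bound periods of the twisted direction map — but the key technical step differs in an interesting way, so it is worth comparing. The paper shows, for a $Dg$-periodic direction $(x,e_i)$, that the ``cocycle'' $h = g_v(x)g_ix^{-1}$ satisfies $g_v^k(h) = h$ and $h^k = 1$, is (by the hypothesis) $g_v$-fixed, and that the period of $(x,e_i)$ equals the order of $h$; it then invokes the uniform bound $M$ on orders of finite-order elements to bound periods uniformly over $x$. Your route instead conjugates the affine self-map $\phi$ of $\mathcal{G}_v$ by the left translation $L_{x_0}$ at a fixed point of $\phi^m$ to identify $\phi^m$ with $\sigma^m = f_{v,i}^m$, and deduces directly from the hypothesis that $\phi^{pk}$ restricts to the identity on $\per(\phi)$ once a periodic point $x_0$ of period $p$ is chosen. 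This is a clean reformulation: it produces a single uniform period bound $pk$ for all periodic directions over $e'$ in one stroke, rather than bounding each period by the order of a separate element $h(x)$.

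A consequence of your argument worth flagging is that your main bound does not actually appeal to $M$: the conjugacy trick already gives a uniform divisor $pk$ independent of $x$, whereas the paper needs $M$ precisely because the order of $h(x)$ varies with $x$ and must be capped a priori. Your account of where $M$ enters (``transfer of the vertex-group hypotheses to the chosen iterate'') is therefore somewhat speculative and does not match the paper's explicit use of $M$ to bound $\operatorname{ord}(h)$; both proofs in fact gloss over the same transfer issue for vertices periodic but not $f$-fixed, so $M$ is not doing that work in either. Two small points: the aside about preserving the conclusions of \Cref{improvedrelativetraintrack} under iteration is unnecessary, since the proposition does not assume $f$ satisfies them and the argument never uses them; and condition (3) must also be verified at principal vertices with finite nontrivial vertex group, where the proposition's hypothesis on $f_{v,i}$ is not available — this case is trivial (all periods are bounded by $|\mathcal{G}_v|$) but should be mentioned.
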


\begin{proof}
    Since there are finitely many principal vertices
    and finitely many directions at principal vertices with finite vertex group,
    and finitely many almost periodic directions of the form $(1,e_i)$
    at vertices with infinite vertex group,
    there is $N > 0$ such that each principal vertex for $f^N\colon \mathcal{G} \to \mathcal{G}$
    is fixed,
    each almost periodic (and hence periodic) direction at a vertex with finite vertex group is fixed,
    and each almost periodic direction at a vertex with infinite vertex group is almost fixed.
    By increasing $N$ we may additionally assume that the condition in the proposition
    holds for each almost periodic direction at each vertex with infinite vertex group.

    Write $g = f^N$.
    Suppose that $v$ is a vertex with infinite vertex group
    and that a direction $(x,e_i)$ is $Dg$-periodic.
    Then $(1,e_i)$ is $Dg$-almost periodic, hence $Dg(1,e_i) = (g_i,e_i)$
    for some $g_i \in \mathcal{G}_v$.
    Suppose $Dg^k(x,e_i) = (x,e_i)$.
    By definition, we have
    \[  Dg^k(x,e_i) = (g_v^k(x)g_v^{k-1}(g_i)g_v^{k-2}(g_i)\cdots g_v(g_i)g_i, e_i), \]
    so writing $h = g_v(x)g_ix^{-1}$, we see that
    \[  g^k_v(x)g^{k-1}_v(g_i)g_v^{k-2}(g_i)\cdots g_v(g_i)g_i x^{-1} 
    = g_v^{k-1}(h)g_v^{k-2}(h)\cdots g_v(h)h = 1. \]
    Again by definition we have
    \[  Dg^{k+1}(x,e_i) = (g_v^k(h)g_v^{k-1}(h)g_v^{k-2}(h)\cdots g_v(h)hx, e_i) = (hx,e_i), \]
    so it follows that $h$ is $g_v$-periodic and hence fixed,
    since we have
    \[  h = g_v^k(h) g_v^{k-1}(h) \cdots g_v(h) h = g_v^k(h). \]
    But then $Dg^k(x,e_i) = (h^kx,e_i) = (x,e_i)$, so $h$ has finite order.
    In fact, the period of $(x,e_i)$ is the order of $h$.

    Now suppose the direction $d$ is periodic for the map
    $d \mapsto g_i^{-1}.Dg(d)$.
    Write $d = (x,e_j)$ and suppose $(g_i^{-1}.Dg)^k(d) = d$.
    Then the direction $(1,e_j)$ is almost periodic, so satisfies $Dg(1,e_j) = (g_j,e_j)$
    for some $g_j \in \mathcal{G}_v$.
    By definition we have
    \[  g_i^{-1}.Dg(x,e_j) = (g_i^{-1}g_v(x)g_j,e_j) = (g_i^{-1}g_j g_{v,j}(x),e_j), \]
    where $f_{v,j} \colon \mathcal{G}_v \to \mathcal{G}_v$ is, as in the statement,
    the automorphism $x \mapsto g_j^{-1}g_v(x)g_j$.
    Write $h' = x^{-1}g_i^{-1}g_jg_{v,j}(x)$.
    For $\ell \ge 1$, we have
    \[  (g_i^{-1}.Dg)^\ell(x,e_j) = (xhg_{j,v}(h')\cdots g_{j,v}^{\ell-1}(h'),e_j).\]
    The condition that $(g_i^{-1}.Dg)^k(x,e_j) = (x,e_j)$ says that
    $h'g_{v,j}(h')\cdots g_{v,j}^{k-1}(h') = 1$,
    and the argument above shows that $h'$ is $g_{v,j}$-periodic and hence fixed,
    and thus that it must have finite order.
    In fact, the period of $(x,e_j)$ is the order of $h'$.

    By assumption, there exists $M \ge 1$ such that if $x$ is an element of an infinite vertex group
    of finite order, then $x^M = 1$.
    The arguments in the previous paragraphs show that $g^{NM}$ is rotationless.
\end{proof}

\begin{cor}
    \label{freeproductsoffiniteandycycliccor}
    If each infinite vertex group of $\mathcal{G}$ is a free product
    of the form $B_1 * \cdots * B_m * F_\ell$,
    where the $B_i$ are finite groups and $F_\ell$ is free of finite rank $\ell$,
    every relative train track map $f\colon \mathcal{G} \to \mathcal{G}$
    has a rotationless iterate.
\end{cor}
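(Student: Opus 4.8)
The plan is to deduce the corollary from \Cref{rotationlesscriterion} by checking its two hypotheses for any relative train track map $f\colon \mathcal{G}\to\mathcal{G}$.

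First I would dispatch the hypothesis on finite-order elements. Writing an infinite vertex group as $\mathcal{G}_v = B_1*\cdots*B_m*F_\ell$ with the $B_i$ finite and $F_\ell$ free of finite rank, the Kurosh subgroup theorem says every finite subgroup is conjugate into some $B_i$, so torsion elements of $\mathcal{G}_v$ have order at most $\max_i\lvert B_i\rvert$.

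The substantive point is the second hypothesis. After replacing $f$ by the iterate furnished in the proof of \Cref{rotationlesscriterion}, the maps $f_{v,i}$ and $f_v$ attached to a principal vertex $v$ with infinite vertex group become honest automorphisms of $\mathcal{G}_v$, which is again of the form $B_1*\cdots*B_m*F_\ell$. So it suffices to show: \emph{for every automorphism $\Phi$ of such a group $A$ there is $k\ge 1$ such that every $\Phi^k$-periodic element is $\Phi^k$-fixed} (the case of finite $\mathcal{G}_v$ being trivial, as $\aut(\mathcal{G}_v)$ is then finite). To prove this I would consider the $\Phi$-invariant subgroup $P$ of $\Phi$-periodic elements and write it as the ascending union $P=\bigcup_{j\ge 1}\fix(\Phi^{j!})$. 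By the Collins--Turner bound on fixed subgroups of automorphisms of free products (see the introduction, and also \Cref{CullerTHM}), each $\fix(\Phi^{j!})$ has Kurosh rank at most that of $A$; since the $B_i$ are finite of bounded order, this means the terms of the chain are generated by a bounded number of elements. Now $A$ is virtually free of finite rank — the homomorphism $A\to B_1\times\cdots\times B_m$ which is the identity on each $B_i$ and kills $F_\ell$ has, by Kurosh, a free kernel, necessarily of finite index and hence finitely generated. The plan is then to invoke a Takahasi-type stabilization: in a virtually free group, an ascending chain of subgroups generated by a bounded number of elements is eventually constant. (One intersects with a finite-index free subgroup $F'\le A$, uses the Schreier index formula to get an ascending chain in $F'$ of uniformly bounded rank, which stabilizes by Takahasi's theorem for free groups, and observes that only finitely many subgroups of $A$ contain a fixed subgroup of bounded finite index.) Consequently $P=\fix(\Phi^{j_0!})$ for some $j_0$; choosing generators of $P$ with periods $k_1,\dots,k_s$ and taking $k=\operatorname{lcm}(k_i)$, the automorphism $\Phi^k$ fixes $P$ pointwise, while every $\Phi^k$-periodic element lies in $P$, which is exactly what is needed.

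The hard part will be the Takahasi-type stabilization step; once that is in place (proved as sketched or cited from the literature on virtually free groups), the corollary follows by feeding the two verified hypotheses into \Cref{rotationlesscriterion}.
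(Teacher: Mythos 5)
Your argument is sound but diverges from the paper's proof at the crux. The paper shows directly that $\Pi(\Phi)$ (your $P$) has Kurosh rank at most $m+\ell$: if not, one finds $m+\ell+1$ elements lying in distinct free factors of $\Pi(\Phi)$, all of which lie in $\fix(\Phi^N)$ for some $N$, and tracking their contributions to the Kurosh decomposition of $\fix(\Phi^N)$ contradicts the Collins--Turner bound; finite generation of $P$ is then immediate. You instead obtain finite generation via a Takahasi-type ascending chain condition in virtually free groups, which you correctly flag as the step needing care; the sketch you give does work (pass to a finite-index \emph{normal} free $F' \trianglelefteq \mathcal{G}_v$, bound $\rank(\fix(\Phi^{j!})\cap F')$ by multiplicativity of the Euler characteristic since the $\fix(\Phi^{j!})$ have uniformly bounded $|\chi|$, apply Takahasi in $F'$, observe the images in the finite group $\mathcal{G}_v/F'$ also stabilize, and note that $\fix(\Phi^{j!})\cap F'$ together with that image determines $\fix(\Phi^{j!})$ once the chain is ascending). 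The second divergence is that your $k$ depends on $\Phi$; this suffices for \Cref{rotationlesscriterion}, which is applied only to the finitely many automorphisms $f_v$ and $f_{v,i}$, but the paper proves the stronger \emph{uniform} statement that one $k$ works for every automorphism $\Phi$ of $\mathcal{G}_v$. It does so by observing $\Pi(\Phi)$ has boundedly many isomorphism types, so a uniform power of $\Phi$ is periodic of uniformly bounded order on a finite-index free subgroup of bounded rank, and then invoking a lemma (attributed to Sami Douba) that an automorphism of a non-elementary hyperbolic group fixing a finite-index subgroup pointwise is the identity, proved via effectiveness of the action on the Gromov boundary — an argument your route avoids entirely. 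One citation nit: the Collins--Turner rank bound you want is their Scott-conjecture theorem, discussed in the introduction, not \Cref{CullerTHM}, which concerns realizing finite-order automorphisms.
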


In principal, it appears the statement should hold for vertex groups which are
virtually finite-rank free.

\begin{proof}
    Suppose $\mathcal{G}_v$ is a free product of the form in the statement.
    Any finite order element $x$ of $\mathcal{G}_v$ is conjugate into some $B_i$,
    so there is a bound on the order of $x$.
    Therefore by \Cref{rotationlesscriterion} it suffices to show that
    there exists $k > 0$ such that for
    each automorphism $\Phi$ of $\mathcal{G}_v$,
    if an element of $\mathcal{G}_v$ is $\Phi^k$-periodic, it is fixed.
    Let $\Pi(\Phi)$ denote the subgroup of $\mathcal{G}_v$
    consisting of all $\Phi$-periodic elements.
    Then $\Pi(\Phi)$ is again a free product of finite and cyclic groups
    and the restriction $\Phi|_{\Pi(\Phi)}$ is a periodic automorphism.
    Suppose toward a contradiction that $x_0,\ldots,x_{m+\ell}$ 
    are $m+\ell+1$ elements of distinct free factors for $\Pi(\Phi)$.
    There exists $N \ge 1$ such that $x_0,\ldots,x_n$ are all $\Phi$-periodic
    of period dividing $N$.
    Thus they belong to the fixed subgroup of $\Phi^N$,
    which has rank bounded by $m+\ell$ by the main theorem of \cite{CollinsTurner}
    (the Scott conjecture),
    a contradiction.
    Therefore $\Pi(\Phi)$ has a finite free product decomposition with factors
    that are subgroups of the $B_i$.
    It follows that
    the free product $\Pi(\Phi)$ takes on finitely many values as $\Phi$ varies,
    and that the minimum index of a finite-index free subgroup of $\Pi(\Phi)$
    and the rank of this minimum-index free subgroup are both bounded independent of $\Phi$.
    We see that some uniform power of $\Phi$ 
    restricts to a periodic automorphism of a free group of finite rank.
    The order of this restriction is bounded depending only on the rank,
    and we claim that except in the case where $\Pi(\Phi) = C_2*C_2$,
    an automorphism of $\Pi(\Phi)$ fixing this finite-index free subgroup fixes the whole group.
    Since periodic automorphisms of $C_2*C_2$ clearly have bounded order, this will complete the proof.

    We are grateful to Sami Douba for explaining the following argument.
    Write $G = \Pi(\Phi)$.
    The statement is true for $F_1$, so assume that $G \ne F_1$.
    Since $G \ne C_2*C_2$ nor $F_1$ and $G$ is a free product of finite and cyclic groups,
    it is a hyperbolic group that acts effectively on its Gromov boundary $\partial G$.
    Let $H < G$ be a subgroup of finite index.
    We claim that if $\Psi\colon G \to G$ is an automorphism such that $\Psi|_H$ is the identity,
    then $\Psi$ is the identity.
    There exists $M > 0$ such that if
    $g \in G$ has infinite order,
    then $g^M \in H$.
    Each such $g$ has a unique attracting fixed point $\hat T_g^+$ in $\partial G$,
    and $\Psi$ induces a $\Psi$-twisted equivariant homeomorphism 
    $\hat\Psi \colon \partial G \to \partial G$.
    We have
    \[  T_g^+ = T_{g^M}^+ = T_{\Psi(g^M)}^+ = T_{\Psi(g)^M}^+ = T_{\Psi(g)}^+ = \hat\Psi(T_g^+). \]
    Since the set of attracting fixed points of elements of infinite order is dense in $\partial G$,
    we have that $\hat\Psi$ is the identity.
    Therefore for any element $g \in G$ and $\xi \in \partial G$, we have
    \[  g.\xi = \hat\Psi(g.\xi) = \Psi(g).\hat\Psi(\xi) = \Psi(g).\xi. \]
    Because the action of $G$ on $\partial G$ is effective,
    we conclude that $g = \Psi(g)$.
\end{proof}

\begin{lem}[cf.~Lemma 3.19 of \cite{FeighnHandel}]
    \label{principalEG}
    Suppose that $f\colon \mathcal{G} \to \mathcal{G}$
    is a relative train track map satisfying the conclusions of
    \Cref{improvedrelativetraintrack}.
    For every exponentially growing stratum $H_r$,
    there is a principal vertex whose link contains a periodic
    or almost periodic direction in $H_r$.
\end{lem}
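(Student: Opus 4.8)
The plan is to adapt the proof of \cite[Lemma 3.19]{FeighnHandel}, keeping careful track of vertex group elements. First I would reduce to the case that $H_r$ is aperiodic: replacing $f$ by an iterate leaves the notions of principal vertex and of periodic or almost periodic direction in $H_r$ unchanged, is harmless for the conclusions of \Cref{improvedrelativetraintrack} (which are inherited by iterates), and refines $H_r$ into aperiodic exponentially growing strata, a periodic or almost periodic direction in any one of which lies in $H_r$. So I may assume $H_r$ is aperiodic; let $\Lambda^+$ be the attracting lamination associated to $H_r$ by \Cref{laminationconstruction}, with generic leaf $\lambda$ realized in $\mathcal{G}$.

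The easy case is when there is an indivisible periodic almost Nielsen path $\rho$ of height $r$. By \Cref{finitelymanynielsenpaths} we have $\rho = \alpha\beta$ with $\alpha$ and $\beta$ $r$-legal and with first edges in $H_r$, and the direction determined by $\alpha$ is almost periodic; by property \hyperlink{V}{(V)} the endpoints of $\rho$ are vertices. Let $v = \iota(\alpha)$; then $v$ carries the almost periodic direction in $H_r$ determined by the first edge of $\alpha$, so it remains only to check that $v$ is principal. If $\mathcal{G}_v$ is nontrivial and not $C_2$ this is automatic. Otherwise, case (1) of the definition of a principal point fails because $v$ is an endpoint of the periodic almost Nielsen path $\rho$; and case (2) fails because a point carrying a periodic or almost periodic direction in $H_r$ cannot lie in a topological circle or $C_2*C_2$-interval component $C$ of $\per(f)$ each of whose points has exactly two periodic directions — the $C_2$-endpoints of such an interval have valence one in $G$ by \hyperlink{NEG}{(NEG)} and carry no direction in $H_r$, while an interior point of such a $C$ would then have a third periodic direction in $H_r$. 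Hence $v$ is principal.

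In the remaining case there is no indivisible periodic almost Nielsen path of height $r$. By property \hyperlink{EG-i}{(EG-i)}, $Df$ carries the finite set of oriented edges of $H_r$ into itself, so some oriented edge $E_0$ lies in a periodic cycle, whence $v_0 = \iota(E_0)$ is a periodic vertex carrying the almost periodic direction $(1,E_0)$ in $H_r$. If $\mathcal{G}_{v_0}$ is nontrivial and not $C_2$ we are done as above. Otherwise I claim some such $v_0$ can be taken principal: if not, then (the component-of-$\per(f)$ alternative being excluded as in the previous paragraph) every periodic vertex with a periodic direction in $H_r$ is \emph{deficient} — it has exactly two periodic directions, necessarily both in $H_r$, and these span a legal turn, since an illegal turn would have a common power of $Df$ sending both directions to one direction, forcing the two periodic directions to coincide. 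I would then use that $\lambda$ is birecurrent and has an exhaustion by tiles (\Cref{laminationproperties}) to locate a vertex $u$ that $\lambda$ crosses at a legal $H_r$-turn lying in a tile; after a further iterate fixing $u$ and the two directions of $\lambda$ at $u$, I would apply \Cref{fixedpointstolaminations}(2) to the lift of $f$ fixing this data and then argue, via \Cref{principalnonemptyfixed} and the classification of non-principal lifts (endpoints of an axis or of a generic leaf), that $u$ is in fact a principal vertex with a periodic direction in $H_r$, contradicting deficiency.

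The hard part is this last step. Over a free group it is essentially bookkeeping, but here the $C_2$ vertex groups genuinely intervene: a $C_2$-vertex carrying a periodic direction in $H_r$ need not be principal, and the correspondence between $D\tilde f$-fixed directions of a lift and $Df$-periodic directions of $f$ is twisted by the element $h(\tilde f)$ of \Cref{basicssection}, so one must choose the lift with care, in the spirit of \Cref{rotationlessdirections}. I also anticipate that, rather than forcing a contradiction, it may be cleaner to follow a periodic direction of $H_r$ along the ray produced by \Cref{fixedpointstolaminations}(1) until reaching a vertex that the hypotheses force to be principal; I would pursue both routes.
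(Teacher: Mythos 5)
Your treatment of the easy case — where an indivisible periodic almost Nielsen path $\rho$ of height $r$ exists, its endpoints are vertices by (V), the first edge of $\rho$ gives an almost periodic $H_r$-direction by \Cref{finitelymanynielsenpaths}, and the $C_2$/$\per(f)$-component bookkeeping rules out both non-principality conditions — is correct and is essentially what the paper is implicitly invoking when it reduces to the case that $H_r$ contains no periodic almost Nielsen path. The reduction to aperiodic $H_r$, however, is not obviously harmless: passing from $f$ to $f^N$ refines an EG stratum $H_r$ into aperiodic pieces, and condition (1) in the definition of a principal point requires the two periodic directions to lie in ``the same exponentially growing stratum.'' Two periodic directions at $v$ that lie in the same $H_r$ for $f$ can land in distinct refined strata for $f^N$, so a vertex non-principal for $f$ may become principal for $f^N$, and the reduction would then manufacture a vertex that is only principal for the iterate. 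The paper avoids this by not iterating.

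The substantive gap is the hard case. Suppose, as you set up, that every periodic vertex carrying a periodic $H_r$-direction is ``deficient'' (exactly two periodic directions, both in $H_r$, spanning a legal turn). If you locate such a vertex $u$ on a generic leaf $\lambda$ so that $\lambda$ crosses at $u$ the legal turn of the two periodic directions inside a tile, and take the lift $\tilde f$ fixing $\tilde u$ and both directions, then $\fix(\tilde f) = \{\tilde u\}$ (no periodic almost Nielsen paths of height $r$, and the other periodic directions at $u$ are in $H_r$), and \Cref{fixedpointstolaminations}(2) says that $\bar{\tilde R}\tilde R'$ projects to a generic leaf of $\Lambda^+$. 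So $\fix_N(\hat f)$ is exactly the endpoints of a lift of a generic leaf — which is precisely the excluded case in the definition of a principal automorphism. There is no contradiction to extract: \Cref{principalnonemptyfixed} applies only to principal lifts, and the classification of non-principal lifts tells you $\tilde f$ falls into the ``generic leaf'' category, consistent with $u$ being deficient. The ``follow the ray'' alternative has the same problem, since after reducing to $H^z_r = H_r$ the ray never leaves $H_r$. The paper instead takes a different route: after observing that one may assume $H^z_r = H_r$, all nontrivial $H_r$-vertex groups are $C_2$, and there are no periodic almost Nielsen paths of height $r$ (a vertex in a noncontractible component of $G_{r-1}$ is already handled separately using \Cref{trivialedgegroupsEG2} and a periodic direction in $G_{r-1}$, a case your proposal also omits), it takes long $r$-legal paths $\tau_1, \tau_2$ meeting at a nondegenerate illegal turn and uses \Cref{kprotectedsplitting} together with the proof of \Cref{finitelymanynielsenpaths} to show the cancellation when tightening $f^k_\sharp(\tau_1)f^k_\sharp(\tau_2)$ stops partway. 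This produces legal subpaths $\mu_1,\eta,\mu_2$ determining three distinct directions at a common vertex, which after iterating become fixed; a vertex with three fixed $H_r$-directions manifestly fails both non-principality conditions. So the paper manufactures a non-deficient vertex rather than trying to argue that a possibly-deficient vertex is secretly principal, and this is the key idea your sketch is missing.
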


\begin{proof}
    If $v$ has nontrivial vertex group not equal to $C_2$, we are done,
    so suppose all vertices of $H_r$ have vertex group trivial or $C_2$.
    If some vertex $v \in H_r$ belongs to a non-contractible component of $G_{r-1}$
    we know that $v$ is periodic by \Cref{trivialedgegroupsEG2}.
    By \hyperlink{EG-i}{(EG-i)}, some iterate of $Df$ induces a self-map
    of the directions based at $v$ in $H_r$ and a self-map of the directions based
    at $v$ in $G_{r-1}$.
    Therefore there are at least two periodic directions based at $v$,
    at least one in $H_r$ and one out of $H_r$, so $v$ is principal.

    So suppose no vertex of $v$ belongs to a non-contractible component of $G_{r-1}$.
    Then $H^z_r$ is a union of components of $G_r$,
    and in fact $H^z_r = H_r$.
    We assume as above that all nontrivial vertex groups are $C_2$.
    We may also suppose that each periodic point of $H_r$
    is not the endpoint of a periodic almost Nielsen path---that is, 
    $H_r$ contains no periodic almost Nielsen paths.

    The argument is similar to \cite[Lemma 5.2]{BestvinaFeighnHandelSolvable}.
    Let $\tau_1$ and $\tau_2$ be legal paths in $H_r$
    so that the initial ends of $\bar\tau_1$ and $\tau_2$
    determine an illegal but nondegenerate turn.
    By \Cref{kprotectedsplitting} and the proof of \Cref{finitelymanynielsenpaths},
    if $\tau_1$ and $\tau_2$ are sufficiently long
    then for some $k > 0$, we have that
    $f^k_\sharp(\tau_1\tau_2)$ is legal and not all of
    $f^k_\sharp(\bar\tau_1)$ and $f^k_\sharp(\tau_2)$ 
    is canceled when $f^k_\sharp(\tau_1)f^k_\sharp(\tau_2)$
    is tightened to $f^k_\sharp(\tau_1\tau_2)$.
    We may write $f^k_\sharp(\tau_1) = \mu_1\eta$ and 
    $f^k_\sharp(\tau_2) = \bar\eta\mu_2$
    such that $f^k_\sharp(\tau_1\tau_2) = \mu_1\mu_2$ for legal paths
    $\mu_1$, $\mu_2$ and $\eta$.
    By assumption, each of the three turns determined by these paths is a legal turn.
    By assumption there are finitely many directions in $H_r$,
    so we may iterate until each of the directions determined by
    $\bar\mu_1$, $\bar\mu_2$  and $\eta$ are fixed; they remain distinct.
    Each direction determines an attractor for $\hat f$ in a way that we now describe.
    This shows that the common vertex of each of these fixed directions is principal.

    Suppose that an edge $E$ of $H_r$ determines a fixed direction,
    that $\tilde E$ is a lift of $E$
    and that $\tilde f\colon \Gamma \to \Gamma$ is a lift of $f$
    that fixes the direction determined by $\tilde E$.
    Then $f(E) = Eu$.
    Since $f|_{H_r}$ is an irreducible train track map,
    for all $k \ge 1$,
    we have $[f^k(E)] = Euf(u)f^2(u)\ldots f^{k-1}(u)$.
    Let $\tilde u$ denote the lift of $u$ 
    that begins at the terminal endpoint of $\tilde E$
    and let
    \[  \tilde\gamma  = \tilde E\tilde u\tilde f(\tilde u)\tilde f^2(\tilde u)\ldots.\]
    Again by irreducibility of $f|_{H_r}$, the length of $f^k(u)$
    goes to infinity as $k$ goes to infinity,
    so by \Cref{GJLLprop}, the endpoint $\xi$ of $\tilde\gamma$
    is a (superlinear) attractor for $\tilde f$.
\end{proof}

Notice that \Cref{principalEG} implies that the transition matrix
of an exponentially growing stratum
of a rotationless relative train track map $f\colon \mathcal{G}\to \mathcal{G}$
representing $\varphi \in \out(F,\mathscr{A})$
and satisfying the conclusions of \Cref{improvedrelativetraintrack}
has at least one nonzero diagonal entry and so is aperiodic.
This defines a bijection between $\mathcal{L}(\varphi)$ and the set of exponentially
growing strata of $f$.

\paragraph{} Our next goal is to relate principal points in $\Gamma$
to principal automorphisms,
with the eventual goal of showing that a rotationless relative train track map
represents a rotationless outer automorphism.

\begin{lem}[cf. Lemma 3.21 of \cite{FeighnHandel}]
    \label{fixeddirectionsproperties}
    Suppose that $\tilde f\colon \Gamma \to \Gamma$
    is a principal lift of a relative train track map $f\colon \mathcal{G} \to \mathcal{G}$.
    \begin{enumerate}
        \item For each attractor $P \in \fix_N(\hat f) \cap \partial_\infty(F,\mathscr{A})$ 
            there is a
            (not necessarily unique)
            fixed point $\tilde x \in \fix(\tilde f)$
            such that the interior of the ray $\tilde R_{\tilde x,P}$
            that starts at $\tilde x$ and ends at $P$
            is fixed point free.
        \item If $P \in \fix_N(\hat f) \cap \partial_\infty(F,\mathscr{A})$ is an attractor,
            $\tilde x$ is a fixed point
            and if the interior of the ray $\tilde R_{\tilde x,P}$
            is fixed point free,
            then no point in the interior of $\tilde R_{\tilde x,P}$
            is mapped to $\tilde x$ by $\tilde f$.
            It follows that the initial direction
            determined by $\tilde R_{\tilde x,P}$
            is fixed.
        \item If $P$ and $Q$ are distinct attractors in $\fix_N(\hat f)$,
            if $\tilde x$ is a fixed point
            and if the interiors of both rays $\tilde R_{\tilde x,P}$ and $\tilde R_{\tilde x,Q}$
            are fixed point free
            then the directions determined by $\tilde R_{\tilde x,P}$ and $\tilde R_{\tilde x,Q}$
            are distinct.
    \end{enumerate}
\end{lem}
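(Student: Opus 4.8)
The plan is to adapt the proof of \cite[Lemma 3.21]{FeighnHandel}, using \Cref{principalnonemptyfixed}, \Cref{movingtowardsattractors} and \Cref{producingafixedpoint}. The observation underlying all three items is that a principal lift $\tilde f$ satisfies: there is no $c \in F$ with $\fix(\hat f) = \{\hat T_c^{\pm}\}$, since otherwise $\fix_N(\hat f)$ would be contained in the two-point set of endpoints of the axis $A_c$, which the definition of a principal automorphism forbids. Hence \Cref{movingtowardsattractors}(1) is available for every attractor $P \in \fix_N(\hat f) \cap \partial_\infty(F,\mathscr{A})$: every point of $\Gamma$ close enough to $P$ is moved toward $P$ by $\tilde f$. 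For item~1, \Cref{principalnonemptyfixed} supplies $\tilde x_0 \in \fix(\tilde f)$; parametrizing the ray $\tilde R_{\tilde x_0,P}$ by $[0,\infty)$ with $\tilde x_0$ at $0$, the set $S$ of its points fixed by $\tilde f$ is nonempty (it contains $\tilde x_0$), closed (in the metric topology, in which $\tilde f$ is continuous), and disjoint from a terminal sub-ray by the observation above, so $\sup S$ is finite and lies in $S$; the point $\tilde x$ at parameter $\sup S$ then has the property that the interior of $\tilde R_{\tilde x,P}$ is fixed point free.

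For item~2, I would suppose toward a contradiction that $\tilde f(\tilde z) = \tilde x$ for some $\tilde z$ in the interior of $\tilde R := \tilde R_{\tilde x,P}$ and choose $\tilde y \in \tilde R$ strictly beyond $\tilde z$ (toward $P$) and close enough to $P$ to be moved toward $P$. A short computation with the nearest-point projection to $\tilde R$ shows that the tight path $[\tilde f(\tilde z),\tilde f(\tilde y)] = [\tilde x,\tilde f(\tilde y)]$ runs along $\tilde R$ through $\tilde z$ and $\tilde y$, branching off only beyond $\tilde y$, so that $\tilde f(\tilde z) < \tilde z < \tilde y < \tilde f(\tilde y)$ in its induced order; that is, $\tilde f$ moves $\tilde z$ and $\tilde y$ away from each other. \Cref{producingafixedpoint} then yields a fixed point of $\tilde f$ in $[\tilde z,\tilde y]$, hence in the interior of $\tilde R$, a contradiction. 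For the final assertion of item~2, note that $\tilde f_\sharp(\tilde R) = \tilde R$ because $\tilde f(\tilde x) = \tilde x$ and $\hat f(P) = P$; if the initial direction $d$ of $\tilde R$ at $\tilde x$ were not fixed by $D\tilde f$, the image path $\tilde f(\tilde R)$ would leave $\tilde x$ in a direction different from $d$ and, in order to tighten back to $\tilde R$, would have to return to $\tilde x$ at some finite time, producing an interior point of $\tilde R$ sent to $\tilde x$ and contradicting the part just proved.

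For item~3, I would suppose the initial directions $d_P,d_Q$ at $\tilde x$ of $\tilde R_P := \tilde R_{\tilde x,P}$ and $\tilde R_Q := \tilde R_{\tilde x,Q}$ agree; then the two rays share a nondegenerate initial segment and diverge at a point $\tilde y$ interior to both. Choose $\tilde p \in \tilde R_P$ beyond $\tilde y$ and close enough to $P$, and $\tilde q \in \tilde R_Q$ beyond $\tilde y$ and close enough to $Q$, each moved toward its attractor. Computing the median of $\tilde f(\tilde p)$, $\tilde f(\tilde q)$ and $\tilde x$ shows that $[\tilde f(\tilde p),\tilde f(\tilde q)]$ passes through $\tilde y$, and the projection computation from item~2, applied on each of $\tilde R_P$ and $\tilde R_Q$, shows it contains $\tilde p$ and $\tilde q$ with $\tilde f(\tilde p) < \tilde p < \tilde q < \tilde f(\tilde q)$; hence $\tilde f$ moves $\tilde p$ and $\tilde q$ away from each other, and \Cref{producingafixedpoint} gives a fixed point in $[\tilde p,\tilde q]$, which lies in the union of the interiors of $\tilde R_P$ and $\tilde R_Q$, contradicting the hypothesis. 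Therefore $d_P \ne d_Q$.

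I expect the main obstacle to be purely bookkeeping: verifying in items~2 and~3 that the chosen triples of points genuinely realize the ``moved away from each other'' configuration required by \Cref{producingafixedpoint}, which amounts to carefully tracking nearest-point projections onto the relevant rays and medians in the tree $\Gamma$. No genuinely new input beyond \cite{FeighnHandel} appears to be needed, since items~1--3 are statements about the dynamics of $\tilde f$ on $\Gamma$ and its Bowditch boundary, for which the requisite tools are already in place above.
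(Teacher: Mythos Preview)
Your proposal is correct and follows essentially the same approach as the paper's proof, which is a terse adaptation of \cite[Lemma 3.21]{FeighnHandel}: start from a fixed point (via \Cref{principalnonemptyfixed}) and take the last fixed point on the ray for item~1, and for items~2 and~3 use \Cref{movingtowardsattractors} together with \Cref{producingafixedpoint} to derive a contradiction from a hypothetical interior fixed point. The only cosmetic difference is that for item~3 the paper argues directly that the line from $P$ to $Q$ contains a fixed point which must be $\tilde x$, forcing the directions to differ, whereas you argue the contrapositive by assuming the directions coincide and locating the contradiction at the branch point; these are the same argument.
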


\begin{proof}
    To find $\tilde x \in \fix(\tilde f)$ and $\tilde R_{\tilde x,P}$ as in item 1,
    begin with any ray $\tilde R'$
    that begins in $\fix(\tilde f)$ and converges to $P$.
    Since $P$ is not in the boundary of the fixed subgroup of $f_\sharp$,
    there is a last point $\tilde x$ of $\fix(\tilde f)$
    in $\tilde R'$.
    Item 3 follows from \Cref{producingafixedpoint};
    the line from $P$ to $Q$ in $\Gamma$ contains a fixed point.
    That fixed point must be $\tilde x$,
    thus it follows that $\tilde R_{\tilde x,P}$ and $\tilde R_{\tilde x,Q}$
    determine distinct directions at $\tilde x$.
    Similarly if some point $\tilde y$ in the interior of $\tilde R_{\tilde x,P}$
    maps to $\tilde x$ by $\tilde f$,
    then there is a fixed point in the interior of $\tilde R_{\tilde x,P}$,
    so item 2 follows.
\end{proof}

\begin{cor}[cf. Corollary 3.22 of \cite{FeighnHandel}]
    \label{principalliftprincipalfixed}
    Assume that $f\colon \mathcal{G} \to \mathcal{G}$
    is a relative train track map
    satisfying the conclusions of \Cref{improvedrelativetraintrack}.
    If $\tilde f$ is a principal lift of $f$,
    then each element of $\fix(\tilde f)$ is principal.
\end{cor}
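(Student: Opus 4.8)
The plan is to take a principal lift $\tilde f$ corresponding to a principal automorphism $\Phi$ and to show that an arbitrary point $\tilde x \in \fix(\tilde f)$ satisfies neither of the two conditions that would disqualify it from being principal. By \Cref{principalnonemptyfixed} we already know $\fix(\tilde f)$ is nonempty, and since $f$ satisfies the conclusions of \Cref{improvedrelativetraintrack}, every periodic point of $f$ is either a vertex or lies in an almost periodic edge; so it suffices to work with a lift $\tilde x$ projecting to such a point $x$. First I would handle the case where $x$ (or $\tilde x$) has nontrivial vertex group: if that vertex group is not $C_2$ then $x$ is automatically principal, and if it is $C_2$ I will use \Cref{fixeddirectionsproperties} together with the structure of $\fix_N(\hat f)$ to locate enough fixed directions at $\tilde x$, distinguishing whether $\tilde x$ is an endpoint of an almost Nielsen path; the key point is that a $C_2$-vertex fixed by $\tilde f$ and lying on a ray to an attractor, or on an axis, gives the appropriate non-exceptional almost Nielsen class data.

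The heart of the argument is the case where $x$ has trivial vertex group (or lies in the interior of an edge). Here the disqualifying condition (1) is that $x$ is not an endpoint of a periodic almost Nielsen path and has exactly two periodic directions, both in the same exponentially growing stratum; condition (2) is that $x$ lies in a circle or $C_2*C_2$-interval component of $\per(f)$ with exactly two periodic directions everywhere. To rule out (1): if $\tilde x$ had exactly two periodic directions both in an EG stratum $H_r$ and were not an endpoint of an almost Nielsen path, then the two rays $\tilde R_{\tilde x, P}$ and $\tilde R_{\tilde x, Q}$ furnished by \Cref{fixeddirectionsproperties} would, by item 2 of that lemma, have to use those two fixed directions, forcing $\fix_N(\hat f)$ to be exactly $\{P, Q\}$ with $\bar{\tilde R}_{\tilde x, P}\tilde R_{\tilde x, Q}$ the lift of a generic leaf of the attracting lamination associated to $H_r$ (invoking \Cref{fixedpointstolaminations}(2), noting that the turn at $\tilde x$ being illegal—since the two directions are eventually identified—means it lies in some $f^k_\sharp(E'')$). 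But that is precisely the configuration excluded in the definition of a principal automorphism, contradicting principality of $\tilde f$. To rule out (2): if $\tilde x$ lay in such a degenerate component $C$ of $\per(f)$, its lift would be (after passing to an iterate) an invariant line which is either an axis $A_c$ or the $C_2*C_2$-quotient line; in the axis case $\fix_N(\hat f) = \{\hat T_c^\pm\}$ is two points that are the endpoints of an axis, again excluded; in the $C_2*C_2$ case with all vertex groups $C_2$ and exactly two periodic directions, $\fix_N(\hat f)$ is again just those two endpoints, and one checks this is excluded as well (and if some vertex group there is infinite or finite non-$C_2$, the earlier analysis already shows $\tilde x$ principal).

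The main obstacle I anticipate is the bookkeeping in the nontrivial-vertex-group $C_2$ case and in distinguishing, within the trivial-vertex-group case, exactly when the two fixed directions at $\tilde x$ force the generic-leaf or axis configuration versus when $\tilde x$ is legitimately an endpoint of a periodic almost Nielsen path (in which case it is principal by definition). Precisely pinning down that $\fix_N(\hat f)$ can contain no third point—so that \Cref{fixeddirectionsproperties}(1),(3) do not produce a further fixed direction at $\tilde x$—is where the argument must be careful: one uses that a third attractor would give a third fixed direction at $\tilde x$ by items 1 and 3 of \Cref{fixeddirectionsproperties}, contradicting "exactly two periodic directions," and a third point of $V_\infty$ or a non-isolated fixed point would likewise enlarge $\fix_N(\hat f)$ past the excluded two-point configurations. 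Once that is in place, the contradiction with principality of $\tilde f$ closes each case, and since $\tilde x \in \fix(\tilde f)$ was arbitrary we conclude every element of $\fix(\tilde f)$ is principal.
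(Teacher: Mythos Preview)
Your overall strategy—rule out each of the two non-principality conditions for an arbitrary $\tilde x\in\fix(\tilde f)$ by forcing $\fix_N(\hat f)$ into one of the excluded two-point configurations—matches the paper's in spirit. The paper, however, organizes the case analysis by the isomorphism type of $\fix(\Phi)$ rather than by the conditions on $x$. That organization is more efficient: once $\fix(\Phi)$ is known to be contained in a conjugate of some $A_i$, it follows at once that $\fix(\tilde f)$ is not an axis (disposing of condition (2) in one stroke) and that $\fix_N(\hat f)\cap\partial_\infty(F,\mathscr{A})$ consists solely of attractors; then \Cref{fixeddirectionsproperties}(1)--(3) bounds their number by the number of fixed directions at the unique fixed point $\tilde x$. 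Your route forces you to reconstruct these facts piecemeal, and in particular you never make explicit the key observation that condition (1) on $x$ forces $\fix(\tilde f)=\{\tilde x\}$ (since any second fixed point would yield an almost Nielsen path with endpoint $x$).

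There is one concrete error. In ruling out condition (1) you assert that the turn at $\tilde x$ between the two fixed directions is ``illegal---since the two directions are eventually identified.'' This is false: the two directions are \emph{fixed} by $D\tilde f$ and distinct, so they are never identified and the turn is legal. The hypothesis you actually need for \Cref{fixedpointstolaminations}(2) is that the projected turn occurs in $f^k_\sharp(E'')$ for some edge $E''$ of $H_r$ and some $k\ge 1$; that is a separate condition having nothing to do with illegality. It does hold here---since the only periodic directions at $x$ lie in $H_r$ one has $x\notin G_{r-1}$, and iterating the turn produced in the proof of \Cref{egvalence} forces it to the unique pair of periodic directions---but your stated justification is wrong. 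Your treatment of condition (2) is also too loose: ``after passing to an iterate'' does not suffice, because the conclusion concerns $\fix_N(\hat f)$ for the given lift $\tilde f$, not a power. You must argue directly that the full line over $C$ through $\tilde x$ is pointwise fixed by $\tilde f$ and hence equals some axis $A_c$, which the paper obtains for free from its $\fix(\Phi)$ case split.
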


\begin{proof}
    Let $\Phi$ be the automorphism corresponding to $\tilde f$.
    If $\fix(\Phi)$ is not $F_1$, $C_2*C_2$ or some subgroup of an $A_i$,
    then $\fix(\tilde f)$ is neither a single point nor a single axis and we are done.
    If $\fix(\Phi)$ is either $F_1$ or $C_2*C_2$,
    then $\fix(\tilde f)$ is infinite
    and $\fix_N(\hat f)$ contains an attractor or a point in $V_\infty(F,\mathscr{A})$.
    In either case (with the former being a consequence of \Cref{fixeddirectionsproperties})
    some $\tilde x \in \fix(\tilde f)$
    has a fixed direction that does not come from a fixed edge
    or an almost Nielsen path with one endpoint in $V_\infty(F,\mathscr{A})$
    and we are done.
    In the remaining case, 
    $\fix(\Phi)$ is a possibly trivial subgroup of some $A_i$
    and $\fix(\hat f)\cap \partial_\infty(F,\mathscr{A})$
    is a set of attractors and does not contain the endpoints of any axis.
    Clearly $\fix(\tilde f)$ is not an axis.
    If $\fix(\tilde f)$ is a single vertex $\tilde x$ 
    with infinite stabilizer in $\mathbb{T}$,
    \Cref{fixeddirectionsproperties} implies that
    $\tilde x$ has a fixed direction
    and we are done.
    So suppose $\tilde x$ has finite (possibly trivial) stabilizer.
    The only case where $\tilde x$ is \emph{not} principal
    is if there are only two periodic directions at $\tilde x$
    and these two directions are determined by lifts of oriented edges 
    $\tilde E_1$ and $\tilde E_2$
    that belong to the same exponentially growing stratum.
    Then \Cref{fixedpointstolaminations}
    together with \Cref{fixeddirectionsproperties}
    implies that $\fix_N(\hat f)$ 
    is the endpoint set of a lift of a generic leaf of an attracting lamination,
    contradicting the assumption that $\tilde f$ is principal.
    Therefore we conclude that $\tilde x$ is principal.
\end{proof}

\begin{lem}[cf. Lemma 3.23 of \cite{FeighnHandel}]
    \label{emptyfixedsetfixedboundarypoint}
    Suppose that $\fix(\tilde f)$ is empty.
    Then there is a ray $\tilde R \subset \Gamma$
    converging to an element $P \in \fix(\hat f)$
    and there are points in $\tilde R$ arbitrarily close to $P$ that move toward $P$.
\end{lem}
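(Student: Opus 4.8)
The plan is to follow the strategy of \cite[Lemma 3.23]{FeighnHandel}, adapted to the graph-of-groups setting. Since $\tilde f$ is a lift of the relative train track map $f\colon \mathcal{G}\to\mathcal{G}$ and $\fix(\tilde f)=\varnothing$, the idea is to produce the ray $\tilde R$ by iterating $\tilde f$ starting from an arbitrary point and extracting a limit. First I would fix a point $\tilde z_0 \in \Gamma$ and consider the sequence $\tilde z_n = \tilde f^n(\tilde z_0)$. Because $f$ is a homotopy equivalence of a finite graph of groups and $\tilde f$ has no fixed point, I would argue that the tight paths $[\tilde z_n, \tilde z_{n+1}]$ cannot be uniformly bounded in length with their union bounded; more precisely, I would show that for $n$ large the point $\tilde z_n$ moves toward a well-defined direction, so that the nested initial segments of the paths $[\tilde z_n, \tilde z_{n+m}]$ stabilize to give a ray $\tilde R$ converging to some $P \in \partial(F,\mathscr{A})$.

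The key technical point is to check that $P$ is fixed by $\hat f$. For this I would use the $\Phi$-twisted equivariance of $\tilde f$ together with bounded cancellation (\Cref{boundedcancellation}): since $\tilde f(\tilde z_n) = \tilde z_{n+1}$ and $\tilde z_{n+1}$ lies on $\tilde R$ (up to a bounded error controlled by the bounded cancellation constant), the image $\hat f(P) = \lim \tilde f(\tilde z_n) = \lim \tilde z_{n+1} = P$. One must handle separately the cases where $P$ lies in $\partial_\infty(F,\mathscr{A})$ and where $P \in V_\infty(F,\mathscr{A})$; in the latter case the argument is that the sequence $\tilde z_n$ eventually enters and remains near the infinite-valence vertex corresponding to $P$, and equivariance again forces that vertex to be $\tilde f$-invariant. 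In fact, since $\fix(\tilde f)$ is empty, a vertex of infinite valence cannot be fixed by $\tilde f$, so $P$ will necessarily lie in $\partial_\infty(F,\mathscr{A})$; I would incorporate this observation to streamline the argument.

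Finally, to see that there are points on $\tilde R$ arbitrarily close to $P$ that move toward $P$, I would argue by contradiction following the pattern of \Cref{producingafixedpoint}: if past some point $\tilde x$ on $\tilde R$ every point moved \emph{away} from $P$ (equivalently, $\tilde f$ moved consecutive pairs of points on $\tilde R$ away from each other), then applying the nearest-point retraction onto $\tilde R$ and considering the induced self-map would produce a fixed point on $\tilde R$, contradicting $\fix(\tilde f)=\varnothing$. Hence infinitely many points of $\tilde R$, and in particular points arbitrarily close to $P$, move toward $P$ under $\tilde f$. The main obstacle I anticipate is the first step: rigorously extracting the convergent ray and ruling out the degenerate possibility that the orbit $\{\tilde z_n\}$ oscillates without escaping to the boundary — this requires a careful use of the finiteness of $\mathcal{G}$, the structure of $f$ as a relative train track map (in particular that lengths of images grow in a controlled way outside the fixed set), and bounded cancellation to tame the interaction of tightening with iteration.
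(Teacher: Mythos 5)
Your proposal takes a genuinely different route from the paper, but the route has a gap precisely at the point you yourself flag as the main obstacle. You propose to iterate $\tilde f$ from a single point $\tilde z_0$ and extract a convergent ray from the orbit $\tilde z_n=\tilde f^n(\tilde z_0)$ by arguing that the initial segments of the paths $[\tilde z_n,\tilde z_{n+m}]$ stabilize. Nothing in your sketch justifies this stabilization. Fixed-point-free does not by itself imply the orbit escapes toward a single end of $\Gamma$: the tight segments $[\tilde z_n,\tilde z_{n+1}]$ could, a priori, point in different directions at different times, and the appeal to "finiteness of $\mathcal{G}$, the structure of $f$, and bounded cancellation" is not a proof. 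This is the entire content of the lemma, so one cannot defer it.

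The paper avoids the issue with a much more elementary device: for each vertex $\tilde y$, call \emph{preferred} the first edge of $[\tilde y, \tilde f(\tilde y)]$, and build a walk $\tilde y_0, \tilde y_1, \ldots$ by always stepping across the preferred edge. One then checks that no edge can be preferred by both of its endpoints (that would force $\tilde f$ to fold a subinterval of that edge over itself with orientation reversed, producing a fixed point). Hence the walk never backtracks and is automatically a geodesic ray $\tilde R$, and by construction each $\tilde y_i$ moves toward the endpoint $P$ of $\tilde R$; fixing of $P$ and the "arbitrarily close" clause then fall out. Note that this $\tilde R$ need not be the orbit of any single point under $\tilde f$ — it is assembled locally, one vertex at a time — and that is exactly what makes the convergence argument trivial. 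If you want to rescue the orbit-iteration approach, you would have to supply a genuine convergence argument (for instance by first proving something like the preferred-edge dichotomy anyway and then showing the orbit tracks it), at which point you are effectively reproducing the paper's proof with extra overhead.

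Your observation that $P$ must lie in $\partial_\infty(F,\mathscr{A})$ because an infinite-valence vertex fixed by $\tilde f$ would violate $\fix(\tilde f)=\varnothing$ is correct and a nice simplification; the final contradiction argument via a nearest-point retraction is essentially \Cref{producingafixedpoint} and is fine once the ray exists. But as written, the proposal does not establish the existence of the ray.
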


\begin{proof}
    The proof is identical to \cite[Lemma 3.23]{FeighnHandel}.
    Given a vertex $\tilde y$ of $\Gamma$,
    say that the initial edge of the tight path from $\tilde y$ to $\tilde f(\tilde y)$
    is \emph{preferred} by $\tilde y$.
    Begin with some vertex $\tilde y_0$ and inductively define $\tilde y_{i+1}$
    to be the other endpoint of the edge preferred by $\tilde y_i$.
    If some edge $\tilde E$ is preferred by \emph{both} of its endpoints,
    then $\tilde f$ maps some subinterval of $\tilde E$ over all of $\tilde E$, reversing orientation.
    It follows that $\fix(\tilde f) \ne \varnothing$.
    Therefore we conclude that the vertices $\tilde y_i$
    are contained in a ray that converges to some $P \in \fix(\hat f)$,
    and that $\tilde y_i$ moves toward $P$.
\end{proof}

The main consequence of \Cref{emptyfixedsetfixedboundarypoint} 
is the following result,
for which we set notation.

\paragraph{Restricting to $G_{r-1}$ for non-exponentially growing strata.}
Suppose that $f\colon \mathcal{G} \to \mathcal{G}$
is a relative train track map 
satisfying the conclusions of \Cref{improvedrelativetraintrack},
that $H_r$ is a single edge $E_r$,
and that $f(E_r) = E_ru$ for some nontrivial path $u \subset G_{r-1}$.
Fix a lift $\tilde E_r$ of $E_r$
and let $\tilde f\colon \mathcal{G} \to \mathcal{G}$
be the lift of $f$ that fixes the initial endpoint of $\tilde E_r$.
By property \hyperlink{NEG}{(NEG)},
the component $C$ of $G_{r-1}$ containing the terminal endpoint of $E_r$ is non-contractible.
Denote the copy of the Bass--Serre tree for $C$
that contains the terminal endpoint of $\tilde E_r$ by $\Gamma_{r-1}$
and the restriction of $\tilde f$ to $\Gamma_{r-1}$ by $h\colon \Gamma_{r-1} \to \Gamma_{r-1}$.

The elements of $F$ (thought of as automorphisms of the projection $\Gamma \to \mathcal{G}$)
that preserve $\Gamma_{r-1}$ define a free factor $F(C)$ of positive complexity
such that $[[F(C)]] = [[\pi_1(C)]]$.
The closure in $\partial(F,\mathscr{A})$ of $\{\hat T_c^{\pm} : c \in F(C)\}$
is naturally identified with $\partial(F(C),\mathscr{A}|_{F(C)})$
and with the Bowditch boundary of $\Gamma_{r-1}$.
We have $\hat h = \hat f|_{\partial\Gamma_{r-1}} 
\colon \partial(F(C),\mathscr{A}|_{F(C)}) \to \partial(F(C),\mathscr{A}|_{F(C)})$.

The following lemma is an immediate application of \Cref{emptyfixedsetfixedboundarypoint}.

\begin{lem}[cf. Lemma 3.25 of \cite{FeighnHandel}]
    \label{negfixedboundarypoint}
    Assuming notation as in the previous paragraphs,
    if $\fix(h) = \varnothing$
    then there is a ray $\tilde R \subset \Gamma_{r-1}$
    converging to an element $P \in \fix(\hat h)$
    and points in $\tilde R$ arbitrarily close to $P$ that move toward $P$.
\end{lem}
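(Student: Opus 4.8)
The plan is to deduce \Cref{negfixedboundarypoint} directly from \Cref{emptyfixedsetfixedboundarypoint} by applying the latter to the map $h\colon\Gamma_{r-1}\to\Gamma_{r-1}$. The key point is that $h$ is a lift to the Bass--Serre tree $\Gamma_{r-1}$ of the restriction $f|_C\colon\mathcal{G}|_C\to\mathcal{G}|_C$, which is a relative train track map representing the restriction of $\varphi$ to $\pi_1(C) = F(C)$; this is exactly the setting in which \Cref{emptyfixedsetfixedboundarypoint} is stated and proved, with $\Gamma$ there replaced by $\Gamma_{r-1}$ here. The hypothesis $\fix(h)=\varnothing$ is precisely the hypothesis $\fix(\tilde f)=\varnothing$ of \Cref{emptyfixedsetfixedboundarypoint}.

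Concretely, first I would note that since $H_r$ satisfies the conclusions of \Cref{improvedrelativetraintrack} and is a single non-exponentially-growing edge $E_r$ with $f(E_r)=E_ru$, property \hyperlink{NEG}{(NEG)} guarantees that the component $C$ of $G_{r-1}$ containing the terminal endpoint of $E_r$ is its own core, hence non-contractible, so that $\Gamma_{r-1}$ (the copy of the Bass--Serre tree for $C$ through the terminal endpoint of $\tilde E_r$) is a well-defined, $F(C)$-minimal subtree of $\Gamma$ on which $F(C)$ acts with quotient $\mathcal{G}|_C$. Since $\tilde f$ fixes the initial endpoint of $\tilde E_r$ and $f(E_r)=E_ru$, the map $\tilde f$ carries $\Gamma_{r-1}$ into itself (the terminal endpoint of $\tilde E_r$ maps into $\Gamma_{r-1}$, and $\tilde f$ is $\Phi$-twisted equivariant with $\Phi(F(C))=F(C)$ after possibly adjusting by an inner automorphism of $F(C)$), so $h = \tilde f|_{\Gamma_{r-1}}$ is genuinely a lift of $f|_C$ corresponding to the automorphism $\Phi|_{F(C)}$. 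The identification, recorded in the paragraph preceding the lemma, of $\partial\Gamma_{r-1}$ with $\partial(F(C),\mathscr{A}|_{F(C)})$ and of $\hat h$ with $\hat f|_{\partial\Gamma_{r-1}}$ is exactly what lets us transport the conclusion.

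Then I would apply \Cref{emptyfixedsetfixedboundarypoint} verbatim to $h$: since $\fix(h)=\varnothing$, it produces a ray $\tilde R\subset\Gamma_{r-1}$ converging to some $P\in\fix(\hat h)$ together with points of $\tilde R$ arbitrarily close to $P$ that move toward $P$ under $h$. Because $h$ is the restriction of $\tilde f$ and the notion of ``moving toward $P$'' (the tight ray from $\tilde f(\tilde z)$ to $P$ not containing $\tilde z$) is computed inside $\Gamma_{r-1}$, which is convex in $\Gamma$, moving toward $P$ for $h$ is the same as moving toward $P$ for $\tilde f$. This is precisely the statement of \Cref{negfixedboundarypoint}.

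I expect the only real content — and hence the ``main obstacle,'' though it is mild — to be verifying carefully that $h$ is indeed a lift of a relative train track map on $\mathcal{G}|_C$ in the sense required by \Cref{emptyfixedsetfixedboundarypoint}, i.e.\ that the restriction of $f$ to the non-contractible, $f^s$-invariant component $C$ is again a relative train track map (which follows since the relative train track properties \hyperlink{EG-i}{(EG-i)}--\hyperlink{EG-iii}{(EG-iii)} for strata below $H_r$ are inherited by $G_{r-1}$ and its components) and that the passage to $C$ does not disturb the hypothesis $\fix(h)=\varnothing$. Once this bookkeeping is in place, the lemma is, as the text says, ``an immediate application'' of \Cref{emptyfixedsetfixedboundarypoint}, and no further argument is needed.

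\begin{proof}
    As in the paragraph preceding the statement, $\Gamma_{r-1}$ is the copy of the Bass--Serre tree of the component $C$ of $G_{r-1}$ containing the terminal endpoint of $\tilde E_r$, and $h = \tilde f|_{\Gamma_{r-1}}\colon \Gamma_{r-1}\to\Gamma_{r-1}$.
    By property \hyperlink{NEG}{(NEG)}, $C$ is non-contractible and is its own core, so $\Gamma_{r-1}$ is the $F(C)$-minimal subtree of $\Gamma$ and is convex in $\Gamma$.
    The restriction of $f$ to $C$ is a relative train track map representing the restriction of $\varphi$ to $\pi_1(C) = F(C)$: properties \hyperlink{EG-i}{(EG-i)}--\hyperlink{EG-iii}{(EG-iii)} for the strata of $G_{r-1}$ are inherited by the components of $G_{r-1}$, and $C$ is invariant under some iterate of $f$.
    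Since $\tilde f$ fixes the initial endpoint of $\tilde E_r$ and $f(E_r) = E_r u$ with $u\subset G_{r-1}$, the map $\tilde f$ carries $\Gamma_{r-1}$ into itself, so $h$ is a lift of $f|_C$ to $\Gamma_{r-1}$.

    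Assume $\fix(h) = \varnothing$.
    Applying \Cref{emptyfixedsetfixedboundarypoint} to the relative train track map $f|_C$ and its lift $h$ to $\Gamma_{r-1}$, we obtain a ray $\tilde R\subset\Gamma_{r-1}$ converging to a point $P\in\fix(\hat h)$ and points of $\tilde R$ arbitrarily close to $P$ that move toward $P$ under the action of $h$.
    Because $\Gamma_{r-1}$ is convex in $\Gamma$ and $h = \tilde f|_{\Gamma_{r-1}}$, for $\tilde z\in\Gamma_{r-1}$ the tight ray from $h(\tilde z) = \tilde f(\tilde z)$ to $P$ is the same whether computed in $\Gamma_{r-1}$ or in $\Gamma$, so moving toward $P$ for $h$ coincides with moving toward $P$ for $\tilde f$.
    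This gives the conclusion.
\end{proof}
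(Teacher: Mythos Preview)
Your proposal is correct and takes essentially the same approach as the paper, which simply states that the lemma ``is an immediate application of \Cref{emptyfixedsetfixedboundarypoint}'' and gives no further proof. Your additional bookkeeping (that $C$ is $f$-invariant because $f(E_r)=E_ru$ forces the terminal vertex of $E_r$ into $C$, and that $\Gamma_{r-1}$ is convex so ``moving toward $P$'' agrees for $h$ and $\tilde f$) is more detail than the paper supplies but is accurate; note that in fact \Cref{emptyfixedsetfixedboundarypoint} does not actually require the relative train track hypothesis, only that vertices map to vertices and edges to nontrivial edge paths, so your verification that $f|_C$ is a relative train track map, while true, is more than is strictly needed.
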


The following lemma is the key step in proving the correspondence between principal points in $\Gamma$
and principal automorphisms.

\begin{lem}[cf. Lemma 3.26 of \cite{FeighnHandel}]
    \label{fixedpointsfromfixeddirections}
    Suppose that $f\colon \mathcal{G} \to \mathcal{G}$
    is a rotationless relative train track map
    satisfying the conclusions of \Cref{improvedrelativetraintrack},
    that $\tilde f\colon \Gamma \to \Gamma$ is a lift of $f$,
    that $\tilde v$ is a fixed point for $\tilde f$
    and that $D\tilde f$ fixes the direction at $\tilde v$
    determined by an edge $\tilde E$ which is a lift of an edge $E \subset H_r$.
    Suppose that $H_r$ is not the bottom half of a dihedral pair.
    There is a fixed point $P \in \fix(\hat f)$
    such that the ray $\tilde R$ from the initial endpoint of $\tilde E$ to $P$
    contains $\tilde E$, projects into $G_r$ and has the following properties
    provided that $P \in \partial_\infty(F,\mathscr{A})$.
    \begin{enumerate}
        \item There are points in $\tilde R$ arbitrarily close to $P$
            that either move toward $P$ or are fixed by $\tilde f$.
            If $\fix(\hat f) \ne \{\hat T_c^{\pm}\}$ for any nonperipheral $c \in F$,
            then $P$ belongs to $\fix_N(\hat f)$.
        \item If $H_r$ is an exponentially growing stratum,
            then $P$ is an attractor whose limit set is the unique attracting lamination of height $r$,
            the interior of $\tilde R$ is fixed point free and $\tilde R$
            projects to an $r$-legal ray in $G_r$.
        \item If $H_r$ is non-exponentially growing but not almost fixed
            then $\tilde R \setminus \tilde E$
            projects into $G_{r-1}$.
        \item No point in the interior of $\tilde R$ is mapped to $\tilde v$
            by any iterate of $\tilde f$.
            This item holds without the assumption that $P \in \partial_\infty(F,\mathscr{A})$.
    \end{enumerate}
\end{lem}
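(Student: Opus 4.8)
The plan is to follow the structure of the corresponding argument in \cite[Lemma 3.26]{FeighnHandel}, splitting into cases according to the type of the stratum $H_r$. First I would handle the exponentially growing case, which is the cleanest: since $D\tilde f$ fixes the direction at $\tilde v$ determined by $\tilde E$, \Cref{rttlemma} together with \hyperlink{EG-i}{(EG-i)} gives $\tilde f_\sharp(\tilde E) = \tilde E \cdot \tilde\mu_1$ for a nontrivial $r$-legal path $\tilde\mu_1$ of height $r$ ending with an $H_r$-edge, and iterating produces a nested increasing sequence $\tilde E \subset \tilde f_\sharp(\tilde E) \subset \tilde f_\sharp^2(\tilde E) \subset \cdots$ whose union is a ray $\tilde R$ fixed by $\tilde f_\sharp$, converging to a point $P \in \fix(\hat f)$. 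This is precisely the setup of \Cref{fixedpointstolaminations} item 1, which gives that $P$ is an attractor, that the interior of $\tilde R$ is fixed-point free, that $\tilde R$ projects to an $r$-legal ray of height $r$, and that the limit set of $P$ is the unique attracting lamination of height $r$. Item 4 (no interior point maps to $\tilde v$ under any iterate) then follows from the fixed-point-free interior plus the fact that $\tilde f$ fixes the initial direction, exactly as in \Cref{fixeddirectionsproperties} item 2; and item 1 follows since a superlinear attractor has points nearby moving toward it (by the proof of \Cref{GJLLprop} and bounded cancellation), and by \Cref{boundarybasics} and \Cref{GJLLprop} $P$ cannot be one of $\{\hat T_c^{\pm}\}$ for nonperipheral $c$ unless that exceptional possibility holds, so $P \in \fix_N(\hat f)$.

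Next I would treat the non-exponentially growing case. Here $H_r$ is a single edge $E$ (using that $f$ is rotationless and satisfies \Cref{improvedrelativetraintrack}, so a non-almost-periodic NEG stratum is one edge) with $f(E) = Eu$, $u \subset G_{r-1}$ nontrivial. The direction being fixed means the lift $\tilde f$ fixes the initial endpoint $\tilde v$ of $\tilde E$. Restricting to the copy $\Gamma_{r-1}$ of the Bass--Serre tree of the non-contractible component $C = $ component of $G_{r-1}$ containing the terminal endpoint of $E$ (non-contractible by \hyperlink{NEG}{(NEG)}), I get $h = \tilde f|_{\Gamma_{r-1}}$. If $\fix(h) \ne \varnothing$, pick a fixed point $\tilde w$ of $h$; then a suitable ray from $\tilde w$ (or from $\tilde v$, through $\tilde E$ and into $\Gamma_{r-1}$) does the job, and one chases items 1--4 by the same reasoning as the EG case except that $\tilde R \setminus \tilde E$ projects into $G_{r-1}$, giving item 3. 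If $\fix(h) = \varnothing$, then \Cref{negfixedboundarypoint} supplies a ray $\tilde R' \subset \Gamma_{r-1}$ converging to $P \in \fix(\hat h) \subset \fix(\hat f)$ with points arbitrarily close to $P$ moving toward $P$; prepending $\tilde E$ gives the required $\tilde R$ through the initial endpoint of $\tilde E$, and again item 4 follows because $\tilde f$ fixes $\tilde v$ and $\tilde E$ determines a fixed direction, so no interior point can be mapped back to $\tilde v$. The almost-fixed NEG case (where $\tilde R = \tilde E$ itself, with $P$ possibly in $V_\infty$) is degenerate: $P$ is the terminal endpoint of $\tilde E$, which is periodic by \hyperlink{NEG}{(NEG)} hence fixed (here I use that we exclude the bottom half of a dihedral pair, so this is not forced into the $C_2$-untwisting obstruction), and items 1 and 4 are immediate.

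The main obstacle I anticipate is the bookkeeping around the case $\fix(\hat f) = \{\hat T_c^{\pm}\}$ and, relatedly, the hypothesis that $H_r$ is not the bottom half of a dihedral pair. The dihedral-pair exclusion is exactly what prevents the pathological situation described in the introduction, where $f$ may induce a nontrivial automorphism of $C_2 * C_2$ and the fixed direction does not extend to a genuine fixed ray with the desired properties; I would need to verify that whenever $E \subset H_r$ with $H_r$ \emph{not} the bottom half of a dihedral pair, the terminal vertex behavior is controlled — either the relevant component of $G_{r-1}$ is non-contractible and its own core (by \hyperlink{F}{(F)} and \hyperlink{NEG}{(NEG)}), so \Cref{legalturn} and \Cref{negfixedboundarypoint} apply, or we are in the EG case and \Cref{fixedpointstolaminations} applies directly. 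A secondary subtlety is that items 1--3 are only asserted when $P \in \partial_\infty(F,\mathscr{A})$, while item 4 is unconditional; I must be careful that in the $P \in V_\infty(F,\mathscr{A})$ case (which can only arise when $\tilde R$ is short, essentially $\tilde R = \tilde E$ with $H_r$ almost fixed, or when restricting to $\Gamma_{r-1}$ lands at an infinite-valence vertex) the proof of item 4 does not secretly invoke the boundary-at-infinity structure. Both of these are, I expect, handled by careful case analysis rather than new ideas, closely shadowing \cite[Lemma 3.26]{FeighnHandel}.
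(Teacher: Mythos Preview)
Your case analysis mirrors the paper's, but you are missing the central inductive structure. The paper proceeds by induction on $r$, and this induction is not optional decoration---it is how the ray $\tilde R$ is actually produced in the non-exponentially growing cases.

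In the non-almost-fixed NEG case with $\fix(h) \ne \varnothing$, you write ``pick a fixed point $\tilde w$ of $h$; then a suitable ray from $\tilde w$ \ldots\ does the job.'' But what ray? If $\tilde w$ has infinite valence then $\tilde w \in V_\infty(F,\mathscr{A})$ and serves as $P$, with $\tilde R$ the path from $\tilde v$ through $\tilde E$ to $\tilde w$. If $\tilde w$ has finite valence, however, there is no ray yet: the paper observes that $\tilde w$ is then principal, finds a fixed direction at $\tilde w$ in $\Gamma_{r-1}$, and applies the \emph{inductive hypothesis} to produce $P$ and the continuation of $\tilde R$ beyond $\tilde w$. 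Your phrase ``a suitable ray'' hides exactly the step that needs the induction.

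Your almost-fixed case has the same gap more visibly. You claim $\tilde R = \tilde E$ with $P$ the terminal endpoint, but the terminal endpoint $\tilde w$ lies in $\partial(F,\mathscr{A})$ only when the vertex group $\mathcal{G}_w$ is infinite. When $\mathcal{G}_w$ is finite or trivial, $\tilde w$ is just a point of $\Gamma$, not a boundary point, and you cannot stop there. The paper splits further: if $H_r$ is almost fixed but not a forest it argues via the fixed subgroup of $\Phi|_{\pi_1(G_r,v)}$ (as in the base case $r=1$); if $H_r$ is an almost fixed forest with $\mathcal{G}_w$ finite it uses \Cref{propertyPconsequence} and \hyperlink{F}{(F)} to locate a principal vertex in a non-contractible component of $G_{r-1}$, connects to it by a path of fixed edges of length at most two, and then invokes the inductive hypothesis to extend the ray into $G_{r-1}$. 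Without the induction neither of these sub-cases closes.
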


\begin{proof}
    We follow the argument in \cite[Lemma 3.26]{FeighnHandel}.
    The second statement in item 1 follows 
    from the first and \Cref{movingtowardsattractors}.

    We proceed by induction on $r$, beginning with $r= 1$.
    If $G_1$ is not almost fixed,
    then it is exponentially growing.
    \Cref{fixedpointstolaminations} and \Cref{fixeddirectionsproperties}
    imply the existence of $P$ satisfying the conclusions of the lemma.

    If $G_1$ is almost fixed,
    then since we assume that $H_1$ is not the bottom half of a dihedral pair,
    it consists of a single edge,
    and by property \hyperlink{F}{(F)}, we have that $G_1$ is its own core.
    Write $\mathbb{F} = \fix(\Phi|_{\pi_1(G_1,v)})$.
    If the splitting induced by $G_1$ of $\mathbb{F}$ is nontrivial,
    then we may choose $P$ to be the endpoint of any ray $\tilde R$
    that begins with $\tilde E$ and is contained in $\fix(\tilde f)$.
    If the splitting of $\mathbb{F}$ is trivial,
    then since $f$ is rotationless,
    at least one vertex group of $G_1$ is infinite,
    and we may take $P$ to be a fixed point for $\tilde f$
    with infinite valence connected to $\tilde v$ by a path of length at most two.
    This completes the $r=1$ case,
    so assume the lemma holds for edges with height less than $r$.

    If $H_r$ is exponentially growing, then again the existence of $P$ follows
    from \Cref{fixedpointstolaminations} and \Cref{fixeddirectionsproperties}.
    Therefore we may assume that $H_r$ is non-exponentially growing.
    Suppose first that $H_r$ is not almost fixed.
    Let $h\colon \Gamma_{r-1} \to \Gamma_{r-1}$ be as in the paragraph
    ``Restricting to $G_{r-1}$ for non-exponentially growing strata.''
    If $\fix(h)$ is nonempty, then the initial endpoint of $\tilde E$ and some $\tilde x \in \fix(h)$
    cobound an indivisible almost Nielsen path.
    If $\tilde x$ has infinite valence then it is the endpoint $P$.
    If $\tilde x$ has finite valence, then $\tilde x$ is principal,
    there is a fixed direction in $\Gamma_{r-1}$ based at $\tilde x$,
    and the existence of $P$ follows from the inductive hypothesis.
    The case where $\fix(h) = \varnothing$ follows from \Cref{negfixedboundarypoint}.

    If $H_r$ is almost fixed but not a forest, 
    then the argument for the base case applies.
    So suppose $H_r$ is an almost fixed forest,
    that is not the bottom half of a dihedral pair,
    i.e.~a single edge $E$ with distinct endpoints $v$ and $w$
    and at least one incident vertex group trivial.
    If $w$ has infinite vertex group, the endpoint $\tilde w$ of $\tilde E$
    is $P$ and the ray $\tilde R$ is the edge $\tilde E$.
    So suppose $w$ has finite, possibly trivial vertex group.
    By \hyperlink{F}{(F)} and \Cref{propertyPconsequence},
    each vertex with trivial vertex group is contained 
    in a non-contractible component $C$ of $G_{r-1}$ and is thus principal for $f$.
    Since $f$ is rotationless, $\tilde v$ is connected to a lift of this principal vertex
    by a path of fixed edges for $\tilde f$ beginning with $\tilde E$ of length at most two.
    The existence of $\tilde R$ then follows from the inductive hypothesis.
\end{proof}

Suppose $f\colon \mathcal{G} \to \mathcal{G}$ 
is a topological representative
with non-exponentially growing stratum $H_i$
which is not almost periodic and consists of a single edge $E_i$.
A \emph{basic path of height $i$} is a path of the form $E_i\gamma$ or $E_i\gamma\bar E_i$,
where $\gamma \subset G_{i-1}$ is a nontrivial tight path.

\begin{lem}[cf. Lemma 4.1.4 of \cite{BestvinaFeighnHandel}]
    \label{negsplitting}
    Suppose that $f\colon \mathcal{G} \to \mathcal{G}$ and $E_i$ are as above,
    and that $\sigma \subset G_i$ is a tight path
    that meets the interior of $H_i$
    and whose endpoints are not contained in the interior of $E_i$.
    The path $\sigma$ has a splitting whose pieces are basic paths of height $i$
    or are contained in $G_{i-1}$.
\end{lem}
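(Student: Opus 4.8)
The plan is to adapt the proof of \cite[Lemma 4.1.4]{BestvinaFeighnHandel} to the graph-of-groups setting, the main new feature being the bookkeeping of vertex group elements and the need to work with the lift $\tilde f\colon \Gamma \to \Gamma$ when a fixed direction argument is used. First I would reduce to the case where $\sigma$ begins and ends with (a copy of) $E_i$ or $\bar E_i$: since the endpoints of $\sigma$ are not in the interior of $E_i$, the path $\sigma$ decomposes along the vertices where it enters and leaves the interior of $H_i$ into maximal subpaths that are either contained in $G_{i-1}$ or are of the form $E_i\mu$, $\mu\bar E_i$ or $E_i\mu\bar E_i$ with $\mu \subset G_{i-1}$, or a single $E_i$ in either orientation. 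The subpaths lying in $G_{i-1}$ are already pieces of the desired kind, and it suffices to show that each maximal subpath of $\sigma$ that does meet the interior of $H_i$ admits a splitting into basic paths of height $i$ and paths in $G_{i-1}$; so I may as well assume $\sigma$ itself is such a subpath.

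The heart of the argument treats a path of the form $\sigma = E_i\gamma\bar E_i$ with $\gamma \subset G_{i-1}$ nontrivial and tight (the cases $E_i\gamma$ and $\gamma\bar E_i$ are easier, since only one ``dangerous'' end is present, and a lone $E_i$ is itself basic). Recall from the convention adopted before the statement of \Cref{improvedrelativetraintrack} that $f(E_i) = g_iE_iu_i$ with $g_i$ a vertex group element at $\iota(E_i)$ and $u_i \subset G_{i-1}$, and that by property \hyperlink{NEG}{(NEG)} the terminal endpoint of $E_i$ is periodic, hence (after the rotationless or at least iterate-stable setup) the direction $\bar E_i$ points into a non-contractible component of $G_{i-1}$. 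Writing $\tilde\sigma = \tilde E_i\tilde\gamma\tilde E_i'$ for a lift to $\Gamma$, where $\tilde E_i$ and $\tilde E_i'$ are distinct lifts of $E_i$ (they must be distinct, since $\sigma$ is tight and $\gamma$ is nontrivial), the claim that $\tilde\sigma$ splits at the two endpoints of $\tilde\gamma$ is equivalent, by the characterization of splittings in \Cref{improvingsection}, to showing that for all $k \ge 0$ no cancellation occurs in $\tilde f^k(\tilde E_i)\cdot \tilde f^k_\sharp(\tilde\gamma)\cdot \tilde f^k(\tilde E_i')$ beyond tightening within each factor. Since $f^k(E_i) = g_i^{(k)}E_i(u_i f(u_i)\cdots f^{k-1}(u_i))$ for suitable vertex group elements $g_i^{(k)}$, and the trailing segment $u_if(u_i)\cdots$ lies in $G_{i-1}$, the only way cancellation could propagate past $E_i$ is if the entire block $\tilde f^k_\sharp(\tilde\gamma)$ together with the $G_{i-1}$-tails of the two $E_i$-images were to cancel, forcing the two (distinct) lifts $\tilde E_i$, $\tilde E_i'$ to coincide after tightening; but distinct lifts of $E_i$ never become equal, and $E_i$ is not an edge of $G_{i-1}$, so the $E_i$'s survive and bound the block. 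Thus the turns $\{(1,\bar E_i),(g,e)\}$ and its mirror, where $\gamma$ begins $ge\cdots$, are crossed legally-enough (they are of ``height $i$'' in the sense that $E_i$ is the highest-stratum edge) to serve as splitting points, and by induction on $k$ one checks $\tilde f^k_\sharp(\tilde\sigma) = \tilde f^k(\tilde E_i)\cdot\tilde f^k_\sharp(\tilde\gamma)\cdot\tilde f^k(\tilde E_i')$.

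The remaining point is that $\tilde f^k_\sharp(\tilde\gamma)$ might itself, for large $k$, grow to contain edges that are not in $G_{i-1}$ --- but it cannot, because $\gamma \subset G_{i-1}$ and $G_{i-1}$ is $f$-invariant, so $f^k_\sharp(\gamma) \subset G_{i-1}$ for all $k$; hence each interior piece of the putative splitting stays in $G_{i-1}$, and the two outer pieces stay of the form $E_i(\text{path in }G_{i-1})$ or $E_i(\text{path in }G_{i-1})\bar E_i$, i.e.\ basic paths of height $i$, for all $k$. Assembling: $\sigma$ splits as $\cdots \cdot (E_i\mu_1) \cdot \delta_1 \cdot (E_i\mu_2\bar E_i) \cdot \delta_2 \cdots$ with each $\delta_j \subset G_{i-1}$ and each parenthesized piece a basic path of height $i$, which is the assertion. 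I expect the main obstacle to be the careful verification that vertex group elements accumulating at the ends of the $E_i$-images (the $g_i^{(k)}$ and the interior vertex group elements picked up when $f^k_\sharp(\gamma)$ is tightened) never interfere with the splitting --- concretely, ruling out the degenerate scenario where $\gamma$ (or an iterate) has the shape $\rho g \bar\rho$ with $g$ a vertex group element and the ``wrong'' cancellation occurs, exactly the kind of phenomenon handled in \Cref{trivialedgegroupsEG2}; the resolution is the same observation used there, that nondegenerate turns $\{(g_1,e),(g_2,e)\}$ with $g_1 \ne g_2$ are legal, so such a $\gamma$ still maps to a nontrivial path and the $E_i$'s on either side are never exposed to each other.
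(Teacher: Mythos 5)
The core idea here — rule out cancellation of $E_i$-copies using the fact that $f$ is a homotopy equivalence — is the same one the paper uses, but the way you set up the splitting is incorrect, and this is not a cosmetic issue.

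You claim that for $\sigma = E_i\gamma\bar E_i$ the decomposition $\tilde E_i \cdot \tilde\gamma \cdot \tilde E_i'$ is a splitting, i.e.\ that no cancellation occurs between $\tilde f^k(\tilde E_i)$ and $\tilde f^k_\sharp(\tilde\gamma)$. This is false in general. We have $f^k(E_i) = g_i^{(k)}E_i v_k$ with $v_k \subset G_{i-1}$, and there is nothing preventing $v_k$ from cancelling, in whole or in part, against the initial portion of $f^k_\sharp(\gamma)$; when that happens the image of $\tau(E_i)$ does not survive in $f^k_\sharp(\sigma)$, so $\tau(E_i)$ is \emph{not} a splitting point. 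This is precisely why the lemma's conclusion groups $E_i$ with a following subpath of $G_{i-1}$ into a \emph{basic path} $E_i\gamma$ or $E_i\gamma\bar E_i$ as a single indivisible piece, rather than splitting off a lone $E_i$. The only points guaranteed to be splitting points are the initial vertices of copies of $E_i$ (equivalently, terminal vertices of copies of $\bar E_i$), i.e.\ the visits to $\iota(E_i)$. Your argument correctly identifies that the crucial danger is the $E_i$'s themselves cancelling, but then asserts the much stronger (and wrong) conclusion that no cancellation occurs at all across the $E_i$--$\gamma$ junctions.

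There is a second, related gap: the initial reduction treats the decomposition of $\sigma$ into ``maximal subpaths'' of the forms $E_i\mu$, $\mu\bar E_i$, $E_i\mu\bar E_i$, lone $E_i$, or $G_{i-1}$-paths as already being a splitting and then proposes to split each piece further. But the lemma's content \emph{is} that the coarse decomposition along the $\iota(E_i)$-visits is a splitting; this cannot be assumed as a starting point. The paper's proof handles this uniformly and cleanly: for each $k$ it identifies the initial segment $E^k_i$ of $E_i$ with $f^k(E^k_i) = E_i$, observes that $E^k_i$ is the only part of $G_i$ mapping into the interior of $E_i$, and shows that if a copy of $E_i$ cancelled with a copy of $\bar E_i$ in $f^k(\sigma)$, the subpath $\mu$ of $\sigma$ between the corresponding copies of $\partial E^k_i$ would be a nontrivial tight \emph{closed} path with $f^k_\sharp(\mu)$ trivial, contradicting that $f$ is a homotopy equivalence. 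Your ``the two distinct lifts of $E_i$ would have to coincide after tightening'' is aimed at the same contradiction, but lifts of $E_i$ are not preserved or made injective by $\tilde f^k$ in any obvious sense, so that phrasing does not close the argument; the closed-loop reformulation is what makes it precise. If you adopt the paper's identification of the splitting points and its closed-loop contradiction, and drop the false claim that the $E_i$--$\gamma$ junctions are themselves splitting points, your proof will go through.
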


\begin{proof}
    The proof is identical to \cite[Lemma 4.1.4]{BestvinaFeighnHandel}.
    Choose a lift $\tilde f\colon \Gamma \to \Gamma$ and a lift $\tilde\sigma \subset\Gamma$.
    A splitting of $\sigma$ is determined by a set of points in $\tilde\sigma$.
    Fix $k > 0$. 
    Suppose that $\tilde x$ is a point of $\tilde\sigma$ decomposing $\tilde\sigma$
    into $\tilde\sigma_1$ and $\tilde\sigma_2$ and that
    $f^k(\tilde x)$ belongs to $\tilde f^k_\sharp(\tilde\sigma)$.
    Then $\tilde f^k_\sharp(\tilde\sigma) 
    = \tilde f^{k}_\sharp(\tilde\sigma_1)\tilde f^k_\sharp(\tilde\sigma_2)$.
    Observe further that the set of points $\tilde x$ satisfying
    $\tilde f^k(\tilde x) \in \tilde f^k_\sharp(\tilde\sigma)$ is closed.

    Notice that there is an initial segment $E^k_i$ of $E_i$
    such that $f^k(E_i) = E_i$,
    and that no other points of $G_i$ are mapped into the interior of $E_i$ by $f^k$.
    Suppose a copy of $E_i$ cancels with a copy of $\bar E_i$
    when $f^k(\sigma)$ is tightened to $f^k_\sharp(\sigma)$.
    Then there is a subpath $\mu$ of $\sigma$ connecting a copy of $E^k_i$ to a copy of $\bar E^k_i$
    such that $f^k_\sharp(\mu)$ is trivial (with trivial vertex group element).
    But since $\mu$ is a closed path and $f$ is a homotopy equivalence,
    this is impossible.
    Therefore no such cancellation occurs.
    The argument in the previous paragraph shows that if $\tilde x$
    is a point in the interior of a lift of $E^k_i$ or $\bar E^k_i$
    decomposing $\tilde\sigma$ into $\tilde\sigma_1\tilde\sigma_2$,
    then 
    $\tilde f^k_\sharp(\tilde\sigma) 
    = \tilde f^k_\sharp(\tilde\sigma_1)\tilde f^k_\sharp(\tilde\sigma_2)$.
    In fact, the same holds true if $\tilde x$ is the initial vertex of any lift
    of $\tilde E_i$ or the terminal vertex of any lift of $\bar E_i$.
    Since $k$ was arbitrary, $\tilde\sigma$ can be split at these points.
    It follows from the definition of a splitting that $\tilde\sigma$ can be split
    at all of these points simultaneously, and the result follows.
\end{proof}

\begin{lem}
    \label{nielsenpathsfixeddirection}
    Suppose $f\colon \mathcal{G} \to \mathcal{G}$ is rotationless
    and that $\gamma$ is an indivisible almost Nielsen path.
    Then at least one of the initial and terminal directions of $\gamma$ is fixed.
\end{lem}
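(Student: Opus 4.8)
The plan is to reduce the statement to an analysis of the illegal turn of $\gamma$ and then apply Lemma~\ref{fixedpointsfromfixeddirections}. First I would write $\gamma = \alpha\beta$ as a concatenation of maximal almost-periodic subpaths; by \Cref{finitelymanynielsenpaths} (in the exponentially growing case) or \Cref{negsplitting} (in the non-exponentially growing case) the path $\gamma$ has a single ``obstruction'' preventing a splitting, which forces the initial and terminal directions $d_0$ of $\alpha$ and $d_1$ of $\bar\beta$ to be \emph{almost periodic} for $Df$ — otherwise one could split $\gamma$ at a vertex and contradict indivisibility (here ``indivisible'' means $\gamma$ is not a concatenation of nontrivial almost Nielsen paths, and a splitting at either end would produce such a decomposition). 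The height $r$ of $\gamma$ organizes the casework: either $H_r$ is exponentially growing, in which case $\gamma$ has exactly one illegal turn in $H_r$ and $d_0$, $d_1$ are the directions crossing it; or $H_r$ is non-exponentially growing, in which case the initial (partial) edge of $\gamma$ lies in $H_r$ by \Cref{negsplitting} and again $d_0$ is an $H_r$-direction.

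Next I would upgrade ``almost periodic'' to ``almost fixed'' using that $f$ is rotationless. The subtlety is that rotationless is a statement about directions at \emph{principal} vertices, so I first need to know that the initial vertex $v$ of $\gamma$ (and the terminal vertex) is principal. When $v$ has nontrivial vertex group not equal to $C_2$, or when $v$ lies in a non-contractible component of a lower filtration element (so that there are at least two periodic directions at $v$ in the sense needed), this is immediate; the remaining cases are handled by the structure provided by \Cref{improvedrelativetraintrack} — in particular \Cref{principalEG} gives principal vertices with periodic directions in every exponentially growing stratum, and an endpoint of an indivisible almost Nielsen path of height $r$ is principal by the discussion following the definition of principal periodic point. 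With $v$ principal, the ``each almost periodic direction at a principal vertex is almost fixed'' clause of the definition of rotationless gives that $Df(d_0) = (g_0, e_0)$ with the same underlying edge, i.e.\ $d_0$ is almost fixed. Now here is the crux: I want to conclude that $d_0$ is \emph{fixed}, not merely almost fixed. This is false in general for both ends simultaneously — that is precisely the point of ``almost'' throughout the paper — so instead I aim to show that by choosing a suitable lift $\tilde f$ of $f$ and a lift $\tilde\gamma$ with initial vertex $\tilde v\in\fix(\tilde f)$, the direction $D\tilde f(\tilde E_0)$ equals $\tilde E_0$, where $\tilde E_0$ is the lift of the initial edge of $\gamma$. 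This is exactly the content of \Cref{rotationlessdirections}: if some lift of $f^k$ fixes $\tilde v$ and a direction at $\tilde v$, then some lift of $f$ does, fixing the same directions; applied to $k=1$ and the almost fixed direction $d_0$ this produces a lift $\tilde f$ for which $\tilde E_0$ is genuinely fixed.

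The argument then finishes as follows. Since $\gamma$ is an (almost) Nielsen path, the tight path $\tilde\gamma$ in $\Gamma$ from $\tilde v$ to its terminal vertex $\tilde w$ satisfies $\tilde f_\sharp(\tilde\gamma) = \tilde\gamma$ up to vertex group elements at the ends; in particular $\tilde f(\tilde v) = \tilde v$ and, since $\tilde E_0$ is the first edge of $\tilde\gamma$ and $D\tilde f$ fixes its direction, the direction at $\tilde v$ determined by $\tilde\gamma$ — which is exactly the initial direction $d_0$ of $\gamma$, read through the $\mathcal{G}_v$-equivariant identification of directions — is fixed by $Df$ when we use the lift $\tilde f$. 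Thus $d_0$ is a fixed direction for (a lift of) $f$, which is what ``the initial direction of $\gamma$ is fixed'' means. I expect the main obstacle to be the bookkeeping in the non-exponentially growing and dihedral-pair-adjacent cases: one must check that $v$ is genuinely principal (invoking \Cref{improvedrelativetraintrack} properties \hyperlink{NEG}{(NEG)}, \hyperlink{Z}{(Z)}, \hyperlink{F}{(F)} and \Cref{propertyPconsequence}), and one must be careful that the lift produced by \Cref{rotationlessdirections} is compatible with the lift for which $\tilde\gamma$ is an honest (up to endpoint vertex group elements) invariant path — a compatibility that holds because both are pinned down by fixing $\tilde v$ and the single direction $\tilde E_0$. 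If neither $v$ nor $w$ turns out to be principal, then by the classification one is in the degenerate situation where $\gamma$'s endpoints lie in a circle or $C_2*C_2$-interval component of $\per(f)$ with exactly two periodic directions each; but in that situation both endpoint directions of $\gamma$ are periodic hence, by rotationlessness applied after passing to the relevant iterate and using \Cref{rotationlessdirections} once more, fixed, so the conclusion holds a fortiori.
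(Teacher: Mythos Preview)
Your proposal is considerably more elaborate than what is needed, and the crucial step is circular. The paper's proof is a two-line case split on the height $r$ of $\gamma$: if $H_r$ is exponentially growing, the second paragraph of \Cref{finitelymanynielsenpaths} gives the result directly (the initial directions of $\alpha$ and $\bar\beta$ are almost fixed, hence fixed for the lift $\tilde f$ that fixes both endpoints of $\tilde\gamma$, since $\tilde f_\sharp(\tilde\gamma)=\tilde\gamma$ and $r$-legality forces $D\tilde f$ to send the first edge of $\tilde\alpha$ to the first edge of $\tilde f_\sharp(\tilde\alpha)$). If $H_r$ is non-exponentially growing, it is a single edge $E_r$, and \Cref{negsplitting} gives $\gamma=E_r\sigma$ or $E_r\sigma\bar E_r$ up to vertex group elements; since $f(E_r)$ begins with (a vertex group element followed by) $E_r$ and $\sigma\subset G_{r-1}$, the first edge of $\tilde\gamma$ cannot be cancelled and is therefore $D\tilde f$-fixed.

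Your argument instead produces, via \Cref{rotationlessdirections}, a lift $\tilde f'$ fixing $\tilde v$ and the direction $\tilde E_0$, and then claims this coincides with the given lift $\tilde f$ (the one fixing both endpoints of $\tilde\gamma$) ``because both are pinned down by fixing $\tilde v$ and the single direction $\tilde E_0$.'' But you do not know that $\tilde f$ fixes $\tilde E_0$; that is precisely what is to be shown. A lift of $f$ is determined by its value on $\tilde v$ together with a direction at $\tilde v$, so two lifts agreeing on $\tilde v$ need not agree unless you already know they share a fixed direction. Moreover, \Cref{rotationlessdirections} with $k=1$ is vacuous, and \Cref{fixedpointsfromfixeddirections}, announced in your first sentence, plays no role. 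The principality discussion and the final paragraph about non-principal endpoints are unnecessary: endpoints of indivisible almost Nielsen paths are automatically principal, and the structural form of $\gamma$ coming from \Cref{finitelymanynielsenpaths} and \Cref{negsplitting} already forces the initial edge to be fixed without any appeal to the rotationless clauses on directions.
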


\begin{proof}
    Suppose $r$ is the highest stratum such that $\gamma$ meets the interior of $H_r$.
    $H_r$ cannot be a zero stratum,
    nor can it be almost fixed
    since $\gamma$ is assumed to be indivisible.
    If $H_r$ is exponentially growing, the lemma follows from \Cref{finitelymanynielsenpaths}.
    If $H_r$ is non-exponentially growing,
    then it is a single edge $E_r$
    and the lemma follows from \Cref{negsplitting}:
    up to reversing the orientation of $\gamma$ and multiplying by vertex group elements at the ends,
    we have $\gamma = E_r\sigma$ or $\gamma = E_r\sigma \bar E_r$ for a nontrivial path
    $\sigma \subset G_{r-1}$.
    In the former case the initial direction of $\gamma$ is fixed;
    in the latter both the initial and terminal directions are fixed.
\end{proof}

Now, under the assumption that $f\colon \mathcal{G} \to \mathcal{G}$ is rotationless,
we can prove the converse of \Cref{principalliftprincipalfixed}.

\begin{cor}[cf. Corollary 3.27 of \cite{FeighnHandel}]
    \label{principalfixedprincipallift}
    Suppose that $f\colon \mathcal{G} \to \mathcal{G}$ is a relative train track map
    satisfying the conclusions of \Cref{improvedrelativetraintrack}
    and is rotationless.
    If some, and hence every, $\tilde x \in \fix(\tilde f)$ is principal
    and $\fix(\tilde f)$ projects to a non-exceptional almost Nielsen class,
    then $\tilde f$ is principal.
\end{cor}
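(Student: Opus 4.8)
\textbf{Proof plan for \Cref{principalfixedprincipallift}.}
The plan is to show the contrapositive in spirit: I start from the hypothesis that each $\tilde x \in \fix(\tilde f)$ is principal and that $\fix(\tilde f)$ projects to a non-exceptional almost Nielsen class, and I verify that $\fix_N(\hat f)$ either has at least three points or is a two-point set of the excluded form required in the definition of a principal automorphism. First I would dispose of the case $\fix(\tilde f) = \varnothing$: by \Cref{emptyfixedsetfixedboundarypoint} there is a ray converging to a point of $\fix(\hat f)$ with points moving toward it, but since we have assumed $\fix(\tilde f)$ is nonempty (it projects to an almost Nielsen class), this case does not arise; so $\fix(\tilde f) \ne \varnothing$ throughout. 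Then I split according to the structure of $\fix(\tilde f)$ as a subset of $\Gamma$: it is either an axis $A_c$, a single point, or something larger (containing an arc that is not an edge of a fixed circle/dihedral configuration, or two points not cobounding a single edge, etc.).

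The heart of the argument is to manufacture fixed points at infinity from fixed directions, which is exactly what \Cref{fixedpointsfromfixeddirections} does. If $\tilde x \in \fix(\tilde f)$ is principal, then by definition it has more than two periodic directions, or exactly two periodic directions not both in a single exponentially growing stratum, or it has infinite valence, or it cobounds a periodic almost Nielsen path with another fixed point, or (if it is a vertex with nontrivial vertex group $\ne C_2$) it is automatically principal. Using rotationlessness (so periodic directions are fixed, via \Cref{rotationlessdirections} at vertices with nontrivial vertex group), each fixed direction at $\tilde x$ lying in some stratum $H_r$ that is not the bottom half of a dihedral pair yields, via \Cref{fixedpointsfromfixeddirections}, a ray $\tilde R$ to a point $P \in \fix(\hat f)$, and item (1) of that lemma places $P$ in $\fix_N(\hat f)$ provided $\fix(\hat f)$ is not exactly $\{\hat T_c^{\pm}\}$ for some nonperipheral $c$. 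Distinct fixed directions at $\tilde x$ give distinct such rays by item (4) (no interior point maps to $\tilde v$ under any iterate, so the rays cannot share a direction) combined with \Cref{fixeddirectionsproperties}(3). Infinite-valence fixed points of $\tilde f$ contribute points of $\fix_N(\hat f)$ in $V_\infty(F,\mathscr{A})$ directly. Counting: if $\tilde x$ has at least three fixed directions, we get at least three points of $\fix_N(\hat f)$ and we are done; if it has exactly two fixed directions but these are in strata that are not a single exponentially growing stratum, the two resulting $P$'s together with the structure forces us out of the ``generic leaf'' exclusion in condition (2) of the definition of principal; and if $\tilde x$ has infinite vertex group we argue $\fix_N(\hat f)$ is infinite. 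The dihedral-pair possibility must be handled separately: if $\fix(\tilde f)$ projects into a dihedral pair, then the relevant vertex groups control the count — if a vertex group is finite and not $C_2$ there are more than two periodic directions at a lift (so $\tilde x$ is principal and we proceed as above), and if a vertex group is infinite then $\fix_N(\hat f)$ is infinite; this is exactly the case analysis sketched in the paragraph following \Cref{emptyfixedsetfixedboundarypoint} in the excerpt, and I would quote it.

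There remains the case $\fix(\tilde f) = A_c$ for some nonperipheral $c$, where $\hat T_c^{\pm} \in \fix(\hat f)$ and $T_c$ commutes with $\tilde f$. Here principality of points of $\fix(\tilde f)$ means the component of $\fix(f)$ is \emph{not} a topological circle or the $C_2*C_2$-quotient with exactly two periodic directions everywhere; equivalently some point on $A_c$ has a third periodic (hence fixed) direction pointing out of $A_c$, or a vertex with vertex group $\ne C_2$ lies on it, or (as discussed in the excerpt) an infinite vertex group appears. In the first sub-case, that extra fixed direction lies in some stratum $H_r$; if $H_r$ is not the bottom half of a dihedral pair, \Cref{fixedpointsfromfixeddirections} produces a point $P \in \fix(\hat f)$ distinct from $\hat T_c^{\pm}$ (it lies off the axis), and item (1) together with \Cref{boundarybasics} puts $\hat T_c^{\pm}$ into $\partial(\mathbb{F},\mathscr{A}|_{\mathbb{F}})$ and $P$ into $\fix_N(\hat f)$, giving at least three points; if $H_r$ \emph{is} a dihedral-pair bottom half, the vertex-group count again settles it. In the remaining sub-case an infinite vertex group gives infinitely many points of $\fix_N(\hat f)$. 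Finally I must rule out, in all of the above, that $\fix_N(\hat f)$ is a two-point set equal to the endpoints of an axis or of a generic leaf: the first is excluded because $\fix(\tilde f)$ being a single point or a larger set than $A_c$ and the principality hypothesis prevents $\fix_N(\hat f)$ from being precisely $\{\hat T_c^\pm\}$ (whenever that happens, \Cref{fixeddirectionsproperties} and \Cref{fixedpointsfromfixeddirections}'s construction give a \emph{third} point or contradict principality), and the generic-leaf case is excluded precisely by the ``exactly two periodic directions in the same exponentially growing stratum'' clause in the definition of principal point via \Cref{fixedpointstolaminations}, which is why that clause is in the definition. I expect the main obstacle to be the bookkeeping in this last step — carefully matching each way a point can fail to be principal against each way $\fix_N(\hat f)$ can fail to certify $\tilde f$ as a principal lift, including keeping the dihedral-pair exceptions straight — rather than any single hard estimate; all the analytic content is already packaged in \Cref{fixedpointsfromfixeddirections}, \Cref{fixeddirectionsproperties}, \Cref{movingtowardsattractors} and \Cref{boundarybasics}.
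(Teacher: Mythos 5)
Your plan takes the same basic route as the paper's proof — starting from principality of the points of $\fix(\tilde f)$ and using \Cref{fixedpointsfromfixeddirections}, \Cref{fixeddirectionsproperties}, and \Cref{movingtowardsattractors} to produce enough points of $\fix_N(\hat f)$ — and correctly identifies those lemmas as where all the analytic content sits. However, the ``bookkeeping'' you defer is exactly where the argument needs care, and your sketch leaves two genuine gaps.

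First, you never actually use the hypothesis that $\fix(\tilde f)$ projects to a \emph{non-exceptional} almost Nielsen class, and this hypothesis is load-bearing in precisely one subcase: when $\fix(\tilde f)$ contains a vertex $\tilde v$ of infinite valence at which $D\tilde f$ fixes no direction. In that case you cannot invoke \Cref{fixedpointsfromfixeddirections} at $\tilde v$ at all, and the only way out is the hypothesis: since the almost Nielsen class is non-exceptional, there must be a second fixed point $\tilde x$ of finite valence; the tight path $\tilde \gamma$ from $\tilde v$ to $\tilde x$ projects to an almost Nielsen path which, after re-choosing $\tilde x$, can be taken indivisible, and then \Cref{nielsenpathsfixeddirection} gives a fixed direction at $\tilde x$, from which \Cref{legalturn} produces a second fixed direction and \Cref{fixedpointsfromfixeddirections} then gives a second point of $\fix_N(\hat f)$. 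Your sketch does not contain this mechanism, and without it the proof fails for such lifts.

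Second, your ruling-out of the two-point set ``endpoints of a generic leaf'' is too quick. You gesture at the clause in the definition of a principal point and at \Cref{fixedpointstolaminations}, but the actual argument has to extract a third point of $\fix_N(\hat f)$. Concretely, in the remaining case every fixed point of $\tilde f$ has finite valence and at most two fixed directions, and one shows first that $\fix(\tilde f)$ contains no edges (otherwise, a valence-one vertex of $\fix(\tilde f)$ would have three fixed directions by \hyperlink{F}{(F)} and \Cref{legalturn}, done). Fix $\tilde x$ with a fixed direction into $H_r$ and let $\tilde R$ be the ray of \Cref{fixedpointsfromfixeddirections}. If $\fix_N(\hat f)$ were the endpoint set of a lift $\tilde\lambda$ of a generic leaf, then $\lambda$ is $r$-legal, so there is no indivisible almost Nielsen path of height $r$ through $\tilde x$; hence $\tilde x$ is the only fixed point on $\tilde\lambda$. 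But $\tilde x$ has finite vertex group, exactly two fixed directions both in $H_r$, and is on no dihedral/circle component of $\per(f)$ (since $\fix(\tilde f)$ has no edges), so principality forces $\tilde x$ to be the endpoint of a periodic almost Nielsen path — hence $\fix(\tilde f)$ contains a second point $\tilde y$, which carries a fixed direction not pointing back along $\tilde\lambda$, producing a third point of $\fix_N(\hat f)$ and a contradiction. You will also want to note explicitly that $\fix(\hat f) \ne \{\hat T_c^\pm\}$ before citing \Cref{fixedpointsfromfixeddirections}(1); this is established by observing that the endpoint $P$ of $\tilde R$ is not on an axis, via \Cref{laminationneveracircuit} in the exponentially growing case or by $\tilde R \setminus \tilde E \subset G_{r-1}$ in the non-exponentially growing case.
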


\begin{proof}
    We follow the proof in \cite[Corollary 3.27]{FeighnHandel}.
    Assume that each point in $\fix(\tilde f)$ is principal.
    If $\fix(\tilde f)$ contains two points of infinite valence,
    then $\tilde f$ is principal.

    Assume that $\fix(\tilde f)$ contains a vertex $\tilde v$ of infinite valence.
    If there is a fixed direction based at $\tilde v$,
    \Cref{fixedpointsfromfixeddirections} produces a second point in $\fix_N(\hat f)$,
    and we see that $\tilde f$ is principal.
    If there is no fixed direction at $\tilde v$,
    by the assumption that $\fix(\tilde f)$ projects to a non-exceptional almost Nielsen class,
    there is a second point $\tilde x$ in $\fix(\tilde f)$, necessarily of finite valence.
    The tight path $\tilde\gamma$ from $\tilde v$ to $\tilde x$
    projects to an almost Nielsen path $\gamma$ for $f$.
    Since we assume that there is no fixed direction at $\tilde v$,
    by re-choosing $\tilde x$ we may assume that $\gamma$ is indivisible,
    and therefore by \Cref{nielsenpathsfixeddirection} 
    that the terminal direction of $\tilde\gamma$ is fixed by $D\tilde f$.

    Let $x$ be the vertex of $G$ that $\tilde x$ projects to.
    Since $f\colon \mathcal{G} \to \mathcal{G}$ is rotationless,
    the restriction of $Df$ to the set of almost periodic directions based at $x$ is the identity.
    We saw in \Cref{basicssection} that the map $D\tilde f$ is $Df$ followed by left-multiplication
    by some element of $\mathcal{G}_x$;
    in other words, if the edge $\tilde e$ corresponds to an almost periodic direction
    $(x,e)$ based at $x$,
    the direction $D\tilde f(\tilde e)$ corresponds to $(hx,e)$ for some element $h \in \mathcal{G}_x$.
    Since there is a $D\tilde f$-fixed direction based at $\tilde x$,
    we must have $h =1$.
    It follows by \Cref{legalturn} that there is a second periodic and hence fixed direction
    based at $\tilde x$, producing by \Cref{fixedpointsfromfixeddirections} 
    a second point in $\fix_N(\hat f)$, and we are done.

    Suppose every fixed point $\tilde x$ has finite valence.
    The argument in the previous paragraphs produces a fixed point $\tilde x$
    with a fixed direction.
    By \Cref{legalturn}, there is a legal turn based at $\tilde x$,
    so there are at least two periodic and hence fixed directions at $\tilde x$.
    As in \cite{FeighnHandel}, observe that if some $\tilde x \in \fix(\tilde f)$
    has three fixed directions,
    then \Cref{fixedpointsfromfixeddirections} produces at least three points in $\fix_N(\hat f)$,
    and we are done.
    So suppose that there are either zero or 
    exactly two fixed directions at each $\tilde x \in \fix(\tilde f)$.
    By the assumption that each $\tilde x$ is principal,
    if $\fix(\tilde f)$ contains an edge,
    then $\fix(\tilde f)$ may not be a line, so some edge of $\fix(\tilde f)$
    is incident to a valence-one vertex $\tilde w$ of $\fix(\tilde f)$.
    By \Cref{propertyPconsequence}, $\tilde w$
    projects to a valence-one vertex $w$ of some fixed stratum,
    so it is contained in a lower filtration element;
    by property \hyperlink{F}{(F)}, in proving \Cref{propertyPconsequence}
    there is no loss in assuming that this filtration element $G_k$ is its own core.
    But then \Cref{legalturn} implies that there is a legal turn based at $w$ in $G_k$,
    and thus $\tilde w$ has at least three fixed directions,
    since we assume $\tilde w$ has finite valence.
    Thus by this contradiction we may assume that $\fix(\tilde f)$ contains no edges.

    Choose an edge $E$ in $H_r$ and a lift $\tilde E$ whose initial direction is fixed
    and based at some $\tilde x \in \fix(\tilde f)$.
    Let $\tilde R$ be the ray that begins with $\tilde E$ 
    and ends at some fixed point $P \in \fix(\hat f)$
    as in \Cref{fixedpointsfromfixeddirections}.
    Since we assume every fixed point in $\fix(\tilde f)$ has finite valence,
    we have that $P \in \partial_\infty(F,\mathscr{A})$.
    If $H_r$ is exponentially growing,
    then the limit set of $P$ is an attracting lamination,
    which implies by \Cref{laminationneveracircuit} that $P$ is not the endpoint of an axis.
    If $H_r$ is non-exponentially growing,
    then the ray $\tilde R \setminus \tilde E$ is contained in $G_{r-1}$,
    so $P$ is not the endpoint of an axis that contains $\tilde E$.
    It follows that the line composed of $\tilde R$ and the ray determined
    by the second fixed direction at $\tilde x$ is not an axis.
    It follows that $\fix(\hat f)$ is not equal to $\{\hat T_c^{\pm}\}$
    for any nonperipheral $c \in F$
    and that every point in $\fix(\hat f)$ produced by \Cref{fixedpointsfromfixeddirections}
    is contained in $\fix_N(\hat f)$.
    We have shown that $\fix_N(\hat f)$ contains at least two points
    and is not the endpoint set of an axis.

    So assume arguing toward a contradiction that $\fix_N(\hat f)$ is the endpoint set
    of a lift $\tilde\lambda$ of a generic leaf of an attracting lamination.
    Since $\lambda$ is birecurrent and contains $E$, the stratum $H_r$
    is exponentially growing and the second fixed direction based at $\tilde x$
    comes from an edge of $H_r$.
    Item 2 of \Cref{fixedpointsfromfixeddirections} implies that $\lambda$ is $r$-legal
    and hence does not contain any indivisible almost Nielsen paths of height $r$.
    It follows that $\tilde x$ is the only fixed point in $\tilde\lambda$.
    Since $\fix(\tilde f)$ is principal
    it must contain a point other than $\tilde x$;
    that point would have a fixed direction 
    that does not come from the initial edge
    of a ray converging to an endpoint of $\tilde\lambda$.
    This contradiction completes the proof.
\end{proof}

We now turn to proving that rotationless relative train track maps 
satisfying the conclusions of \Cref{improvedrelativetraintrack}
represent rotationless outer automorphisms and vice versa.

\begin{lem}[cf.~Lemma 3.28 of \cite{FeighnHandel}]
    \label{rotationlessnielsenpaths}
    Suppose that $f\colon \mathcal{G} \to \mathcal{G}$ is a rotationless relative train track map
    satisfying the conclusions of \Cref{improvedrelativetraintrack}.
    Every periodic almost Nielsen path with principal endpoints
    is an almost Nielsen path (i.e.~has period one).
\end{lem}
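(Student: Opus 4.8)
The plan is to reduce to the indivisible case and then analyze the highest stratum met by the path. First I would observe that a periodic almost Nielsen path $\sigma$ with principal endpoints decomposes as a concatenation $\sigma = \sigma_1 \cdots \sigma_\ell$ of indivisible periodic almost Nielsen paths; the juncture points are periodic points of $f$, and by property \hyperlink{V}{(V)} together with the discussion of non-exponentially growing strata they are vertices. The key reduction is to argue that these interior junctures are themselves \emph{principal}: a juncture point of an indivisible decomposition is the endpoint of an indivisible periodic almost Nielsen path (a subpath of $\sigma$, or the reverse of one), and if it is a vertex with trivial or $C_2$ vertex group one checks — using \Cref{nielsenpathsfixeddirection} to locate a fixed direction, then \Cref{legalturn} to produce a legal turn and hence a second periodic direction — that it satisfies the definition of principal (it cannot fall into either exceptional case of the definition, since it is the endpoint of a periodic almost Nielsen path). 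Once every $\sigma_i$ has principal endpoints, it suffices to prove the lemma for a single indivisible $\sigma$: if each $f^{p}_\sharp(\sigma_i) = g_i \sigma_i g_i'$ with $p = 1$, then $f_\sharp(\sigma) = f_\sharp(\sigma_1)\cdots f_\sharp(\sigma_\ell)$ is a splitting (the interior turns are fixed, hence nondegenerate), and collecting the boundary vertex group elements shows $\sigma$ has period one.

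So assume $\sigma$ is indivisible with period $p$ and principal endpoints, and let $H_r$ be the highest stratum meeting the interior of $\sigma$. I would case on the type of $H_r$. If $H_r$ is a zero stratum or almost fixed, $\sigma$ cannot be indivisible and meet its interior, so these cases are vacuous. If $H_r$ is exponentially growing, write $\sigma = \alpha\beta$ as in \Cref{finitelymanynielsenpaths}; the final clause of that lemma says $p = 1$ if and only if the directions determined by $\alpha$ and $\bar\beta$ are almost fixed. These directions are based at the principal endpoints of $\sigma$, and by \Cref{nielsenpathsfixeddirection} at least one of them is fixed; I would then argue the other is almost fixed using that $f$ is rotationless (almost periodic directions at principal vertices are almost fixed, condition (2) in the definition of rotationless) together with the observation that $\alpha$ and $\bar\beta$ being the first edges of $f^p_\sharp(\sigma)$ forces their directions to be $Df$-almost periodic. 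If $H_r$ is non-exponentially growing, then by the remarks following the definition of rotationless it is a single edge $E_r$ with $f(E_r) = E_r u$, $u \subset G_{r-1}$ nontrivial, and by \Cref{negsplitting} the indivisible $\sigma$ has, up to orientation and boundary vertex group elements, the form $E_r\tau$ or $E_r\tau\bar E_r$ with $\tau \subset G_{r-1}$. In either form the initial direction $E_r$ is fixed; applying $f_\sharp$ and using that $\tau$ (or $\tau\bar E_r$) lies in $G_{r-1}$ — where the restriction of $f$ has strictly lower complexity — I would finish by induction on the filtration, the inductive hypothesis applied to the indivisible periodic almost Nielsen path $f^j_\sharp(\tau)$ (which still has principal endpoints, one of them a lift of the principal vertex at the terminal end of $E_r$, principal by property \hyperlink{NEG}{(NEG)}) giving period one there and hence for $\sigma$.

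The main obstacle I anticipate is the bookkeeping of vertex group elements at the interior junctures in the reduction to the indivisible case — specifically, verifying that an interior juncture vertex $v$ with $C_2$ vertex group that is \emph{a priori} not principal (the only exceptional possibilities) genuinely cannot occur as a juncture of the indivisible decomposition of a path with principal endpoints, because the two indivisible subpaths meeting at $v$ would pin down enough periodic directions at $v$ (one from each side, plus \Cref{legalturn}) to make $v$ principal. A secondary technical point is ensuring in the exponentially-growing case that ``$\alpha$ and $\bar\beta$ determine almost periodic directions'' really follows: this uses that $f^p_\sharp(\sigma) = g\sigma g'$ has the same first and last $H_r$-edges as $\sigma$, combined with \hyperlink{EG-i}{(EG-i)} (directions in $H_r$ map to directions in $H_r$), so that $Df^p$ fixes these directions up to a vertex group element and the rotationless hypothesis upgrades ``almost periodic'' to ``almost fixed.'' These are not deep, but they are exactly the places where the free-product bookkeeping differs from Feighn--Handel's original argument and must be handled with care.
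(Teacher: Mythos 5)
Your exponentially-growing case is correct and essentially matches the paper's (invoke the last paragraph of \Cref{finitelymanynielsenpaths}, plus the rotationless hypothesis to upgrade almost periodic to almost fixed). The issues are elsewhere.

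\textbf{The non-exponentially-growing case has a genuine gap.} After applying \Cref{negsplitting} to write $\sigma = E_r\mu$ (or $E_r\mu\bar E_r$) with $\mu \subset G_{r-1}$, you propose to ``finish by induction on the filtration, the inductive hypothesis applied to the indivisible periodic almost Nielsen path $f^j_\sharp(\tau)$.'' But $\mu$ (or any $f^j_\sharp(\mu)$) is \emph{not} a periodic almost Nielsen path. From $f^p_\sharp(E_r\mu) = g\sigma g'$ and $f(E_r) = g_rE_ru_r$ one gets something of the form $f^p_\sharp(\mu) = [f^{p-1}_\sharp(\bar u_r)\cdots \bar u_r]\mu g'$ after tightening, which is $g''\mu g'''$ only when the $u_r$-contribution collapses. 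So there is no object in $G_{r-1}$ to which the inductive hypothesis applies. The paper's actual argument in this case is substantially different: it works in the Bass--Serre tree, forms the holonomy path $\tau$ at the principal endpoint $w$ (the projection of the tight path from $\tilde w$ to $h(\tilde w)$, where $h$ is the restriction of $\tilde f$ to $\Gamma_{r-1}$), shows \emph{that} closed path is a periodic almost Nielsen path of strictly lower height, applies induction to it, and then derives a contradiction from the nonperipherality of its conjugacy class if $p > 1$. For $\sigma = E_r\mu\bar E_r$ the argument is more elaborate still, splitting into cases according to whether $\fix(h^p)$ is empty and, in the empty case, using the commuting nonperipheral automorphism $T_c$ together with \Cref{principalnonemptyfixed} to obtain a contradiction. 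Your inductive step does not reach any of this.

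\textbf{Your reduction to the indivisible case also has a problem.} You invoke \Cref{nielsenpathsfixeddirection}, which is both logically downstream of this lemma in the paper and a statement about \emph{almost Nielsen paths} (i.e.\ period one), precisely what has not yet been established for the pieces $\sigma_i$. More to the point, ``it cannot fall into either exceptional case of the definition, since it is the endpoint of a periodic almost Nielsen path'' is only correct for condition (1) of the definition of principal; condition (2) (lying in a circle or $C_2 * C_2$-dihedral component of $\per(f)$ with exactly two periodic directions at every point) is not excluded by being the endpoint of a periodic almost Nielsen path. The paper treats the reduction more lightly (taking the indivisible and single-periodic-edge cases as the only ones to handle), but a careful justification that junctures are principal would need a different argument than the one you sketch, and in any event cannot go through \Cref{nielsenpathsfixeddirection}.
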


\begin{proof}
    The proof follows \cite[Lemma 3.28]{FeighnHandel}.
    We may assume that our periodic almost Nielsen path $\sigma$ 
    is either a single (almost periodic) edge
    or an indivisible periodic almost Nielsen path.
    In the former case, $\sigma$ is an almost periodic edge with a principal endpoint,
    so is almost fixed.
    Therefore we may assume that $\sigma$ is indivisible.

    The proof is by induction on the height $r$ of $\sigma$.
    The base case of $r = 0$ is vacuous.
    The case that $H_r$ is exponentially growing follows from the second paragraph of
    \Cref{finitelymanynielsenpaths}.

    Therefore we may assume that $H_r$ is a single non-exponentially growing edge $E_r$
    which is not almost fixed.
    \Cref{negsplitting} implies that up to reversing the orientation of $\sigma$
    and multiplying by vertex group elements at the ends
    we have $\sigma = E_r\mu$ or $E_r\mu\bar E_r$ for some path $\mu \subset G_{r-1}$.
    Let $\tilde E_r$ be a lift of $E_r$ with initial endpoint $\tilde v$
    and $\tilde f\colon \Gamma \to \Gamma$ the lift that fixes $\tilde v$
    and the initial direction of $\tilde E_r$.
    Let $h\colon \Gamma_{r-1} \to \Gamma_{r-1}$ be as in the paragraph
    ``Restricting to $G_{r-1}$''.
    By \Cref{principalfixedprincipallift}, $\tilde f$ is principal.
    Let $\tilde w$ be the terminal endpoint of the lift of $\tilde\sigma = \tilde E_r\tilde\mu$
    that begins with $\tilde E_r$.

    If $\sigma = E_r\mu$, then $\tilde w $ belongs to $\Gamma_{r-1}$.
    If the period $p$ of $\sigma$ is not 1,
    then the tight path $\tilde\tau = h_\sharp(\tilde\mu)$ connecting $\tilde w$ to $h(\tilde w)$
    projects to a nontrivial periodic almost Nielsen path $\tau \subset G_{r-1}$
    that is closed because $\tilde w$ projects to $w \in \fix(f)$.
    Since $\tilde w$ is principal,
    the inductive hypothesis implies that $\tau$ has period one.
    The projection of the closed path $\tilde\tau h(\tilde\tau)\ldots h^{p-1}(\tilde\tau)$
    is therefore homotopic to $\tau^p$.
    But $\tau$ and therefore $\tau^p$ determine non-peripheral conjugacy classes in $F$,
    while $\tilde\sigma\tilde\tau h(\tilde \tau)\ldots h^{p-1}(\tilde\tau)$
    is homotopic to $\tilde\sigma$,
    so this is a contradiction;
    we conclude that $p = 1$ in the case that $\sigma = E_r\mu$.

    Suppose now that $\sigma = E_r\mu\bar E_r$.
    If $\fix(h^p)$ is nonempty,
    then the tight paths $\tilde\sigma_1$ connecting $\tilde v$ to $\tilde x \in \fix(h^p)$
    and $\tilde \sigma_2$ connecting $\tilde x$ to $\tilde w$ are periodic almost Nielsen paths
    whose concatenation is homotopic to $\tilde\sigma$.
    By the preceding case $\sigma_1$ and $\sigma_2$ and hence $\sigma$ have period one.
    Therefore assume that $\fix(h^p) = \varnothing$.

    Let $T_c\colon \Gamma \to \Gamma$ be the automorphism of the natural projection
    satisfying $T_c(\tilde v) = \tilde w$ and taking
    the initial direction  of $\tilde\sigma$ to the terminal direction  of $\tilde\sigma$.
    Then $T_c$ is a nonperipheral element of $F$,
    it commutes with $\tilde f^p$ and its axis $A_c$ is contained in $\Gamma_{r-1}$.
    \Cref{boundarybasics} implies that $\hat T_c^{\pm} \in \fix(\hat h^p)$.
    If $\Phi$ is the principal automorphism corresponding to $\tilde f$,
    then we have that $\hat T_{\Phi(c)} = \hat f\hat T_c\hat f^{-1}$,
    which implies that $\hat T_{\Phi(c)}^\pm = \hat h(\hat T_c^\pm) \in \fix(\hat h^p)$
    and that $A_{\Phi(c)}$ is contained in $\Gamma_{r-1}$.
    If $\{\hat T_c^\pm\}$ is not $\hat h$-invariant,
    then $\fix_N(\hat h^p)$ contains the four points $\{\hat T_c^\pm\} \cup \{\hat h(\hat T_c^\pm)\}$,
    and $h^p$ is a principal lift of $f|_C$,
    where $C$ is the component of $G_{r-1}$ that contains the terminal endpoint of $E_r$.
    But $\fix(h) = \varnothing$, which contradicts \Cref{principalnonemptyfixed},
    so we conclude $\hat T^\pm_c \in \fix(\hat h)$.
    It follows that $\tilde f$ commutes with $T_c$,
    and hence that $\tilde w \in \fix(\tilde f)$,
    so $p=1$.
    This completes the inductive step.
\end{proof}

The following is the main result of this section. 

\begin{prop}[cf.~Proposition 3.29 of \cite{FeighnHandel}]
    \label{rotationlessisrotationless}
    Suppose that $f\colon \mathcal{G} \to \mathcal{G}$ is a relative train track map
    representing $\varphi \in \out(F,\mathscr{A})$
    and satisfying the conclusions of \Cref{improvedrelativetraintrack}.
    Then if $f\colon \mathcal{G} \to \mathcal{G}$ is rotationless,
    so is $\varphi$.
    If we assume additionally that each such $f$ has a rotationless iterate,
    then if $\varphi$ is rotationless,
    so is $f\colon \mathcal{G} \to \mathcal{G}$.
\end{prop}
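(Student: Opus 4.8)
The plan is to prove the two implications separately, mirroring the structure of \cite[Proposition 3.29]{FeighnHandel} but keeping careful track of vertex group elements and the non-exceptional almost Nielsen classes.

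\textbf{Rotationless $f$ implies rotationless $\varphi$.} First I would fix a principal automorphism $\Phi \in P(\varphi)$ with corresponding lift $\tilde f$. By \Cref{principalnonemptyfixed}, $\fix(\tilde f)$ is nonempty and projects to a non-exceptional almost Nielsen class. I want to show $\per_N(\hat\Phi) = \fix_N(\hat\Phi)$; the inclusion $\supseteq$ is trivial, so take $P \in \per_N(\hat\Phi)$, say of period $p$, and set $g = f^p$, $\tilde g = \tilde f^p$, so that $P \in \fix_N(\hat g)$. Because $P$ is not a repeller, \Cref{GJLLprop} (applied to $\tilde g$) shows $P$ is either an attractor for $\hat g$ or lies in $\partial(\fix(\tilde g),\mathscr{A}|)$. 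In the first case, since $g$ is a relative train track map I would use \Cref{fixedpointstolaminations}/\Cref{fixeddirectionsproperties} to find $\tilde x \in \fix(\tilde g)$ and a fixed-point-free ray $\tilde R_{\tilde x,P}$ whose initial direction $\tilde E$ is $D\tilde g$-fixed; $\tilde x$ is principal for $g$ by \Cref{principalliftprincipalfixed} and hence fixed by $f$ (as $g = f^p$ and $f$ rotationless means each principal vertex of $g$ is $f$-fixed). If the edge $\tilde E$ projects to a principal vertex, rotationlessness of $f$ (together with \Cref{rotationlessdirections} in the nontrivial vertex group case) gives a lift $\tilde f$ of $f$ fixing $\tilde x$ and the direction $\tilde E$; then \Cref{fixedpointsfromfixeddirections} produces $P' \in \fix(\hat f)$ along a ray starting with $\tilde E$, and since rays from $D\tilde f$-fixed directions are forward-iterated to themselves, $P' = P$, so $P \in \fix_N(\hat f) = \fix_N(\hat\Phi)$. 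In the boundary-of-fixed-subgroup case I would instead use \Cref{boundarybasics} to see $P$ is already $\hat f$-fixed. For the second requirement, that $\Phi \mapsto \Phi^k$ is a bijection $P(\varphi) \to P(\varphi^k)$: surjectivity follows because any principal lift of $f^k$ has nonempty fixed set projecting to a non-exceptional class (by \Cref{principalnonemptyfixed} applied to $f^k$), hence by \Cref{principalfixedprincipallift} (using that $f$ is rotationless and the fixed points are principal for $f$ via the same argument) it is the $k$th power of a principal lift of $f$; injectivity is immediate since a lift is determined by its action.

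\textbf{Rotationless $\varphi$ implies rotationless $f$.} Here I would argue by contradiction, checking the three defining conditions of a rotationless relative train track map. If some principal vertex $v$ is not fixed, it is periodic of period $p > 1$; a principal periodic point is principal for $f^p$, and lifting appropriately we get a principal lift $\tilde g$ of $f^p$ with $v$ in its fixed set. By the hypothesis, $f$ has a rotationless iterate $f^m$; then $f^{pm}$ is represented by a rotationless relative train track map (after adjusting the filtration), and by the first half $\varphi^{pm}$ is rotationless, with $P(\varphi) \to P(\varphi^{pm})$ a bijection. But the principal lift corresponding to $\tilde g^m$ (a principal automorphism for $\varphi^{pm}$) would then have to be the $pm$th power of a principal automorphism $\Psi$ for $\varphi$, and the lift for $\Psi$ must fix $v$ — contradicting that $v$ has period $p>1$ unless $p \mid$ (something forcing $p=1$); more precisely, $\Psi^{pm}$ and $\tilde g^m$ fix the same vertex and direction, so $\Psi$ fixes $v$, forcing $p = 1$. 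The second and third conditions (almost periodic directions at principal vertices are almost fixed; the period-one condition for $d \mapsto g^{-1}Df(d)$) are handled the same way: an almost periodic but not almost fixed direction at a principal vertex $v$, or a periodic-but-not-fixed direction for the twisted map, would produce via \Cref{fixedpointsfromfixeddirections} a point of $\per_N(\hat\Psi) \setminus \fix_N(\hat\Psi)$ for an appropriate principal automorphism $\Psi$ representing a power of $\varphi$, contradicting rotationlessness of that power (which holds because $P(\varphi) \to P(\varphi^k)$ is a bijection and each power is rotationless by the first half applied to the rotationless iterate).

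\textbf{Main obstacle.} The delicate point is the bookkeeping with vertex group elements: the map $D\tilde f$ differs from $Df$ by left multiplication in a vertex group, so "periodic direction" in $\Gamma$ is genuinely subtler than in $\mathcal{G}$, and \Cref{rotationlessdirections} and \Cref{rotationlesscriterion}-style computations must be invoked precisely to convert "$f$ rotationless" into "every principal lift behaves rotationlessly." The second obstacle, specific to the free product setting, is ensuring throughout that the fixed sets in question project to \emph{non-exceptional} almost Nielsen classes, since otherwise \Cref{principalfixedprincipallift} and the bijection $P(\varphi) \cong P(\varphi^k)$ fail; handling the $C_2$-vertex and dihedral-pair cases (where principality can genuinely fail even at vertices with nontrivial vertex group) will require the most care, and I expect to lean on \Cref{principalEG}, \Cref{rotationlessnielsenpaths}, and property \hyperlink{F}{(F)} to rule out the bad configurations.
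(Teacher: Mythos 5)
Your proposal mirrors the correct overall structure, but there are two genuine gaps that would need to be filled.

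In the first direction, your treatment of the ``boundary-of-fixed-subgroup'' case — ``use \Cref{boundarybasics} to see $P$ is already $\hat f$-fixed'' — is circular. If $P \in \partial(\fix(\Phi^p),\mathscr{A}|)$ is non-isolated, then to conclude $P \in \fix(\hat\Phi)$ you need to know that $\fix(\Phi) = \fix(\Phi^p)$; \Cref{boundarybasics} only converts $c \in \fix(\Phi)$ into $\hat T_c$ commuting with $\hat f$, so it does not help unless you already have the equality of fixed subgroups. This equality is precisely the content of \Cref{rotationlessnielsenpaths}: every periodic almost Nielsen path with principal endpoints has period one, which (via the correspondence between points of $\fix(\tilde g)$ and almost Nielsen paths) gives $\fix(\tilde f) = \fix(\tilde g)$, hence $\fix(\Phi) = \fix(\Phi^p)$. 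The paper's proof does not argue point-by-point in $\per_N(\hat\Phi)$ as you propose; it instead proves $\fix_N(\hat f) = \fix_N(\hat g)$ outright, which simultaneously handles the non-isolated points, the isolated attractors, and — together with the observation that two distinct principal representatives cannot share an attractor or a non-axis $\fix_N$ set — yields both $\per_N = \fix_N$ and the bijection $\Phi \mapsto \Phi^k$. Your argument also never establishes that the new lift you build (fixing $\tilde x$ and $\tilde E$) equals the original $\tilde f$; this does follow, but it requires exactly that uniqueness observation, which you should make explicit.

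In the second direction, the step ``$\Psi^{pm}$ and $\tilde g^m$ fix the same vertex and direction, so $\Psi$ fixes $v$, forcing $p=1$'' is not justified: $\Psi^{pm}(\tilde v) = \tilde v$ does not by itself give $\Psi(\tilde v) = \tilde v$. The paper gets around this via item 4 of \Cref{fixedpointsfromfixeddirections} (no interior point of the eigenray maps to the initial vertex by any iterate): if $\tilde f(\tilde v) \neq \tilde v$ one locates a point $\tilde y$ on the line through $\tilde v$ with $\tilde f^k(\tilde y) = \tilde v$, a contradiction. A similar item-4 argument handles the fixed-versus-periodic-direction condition. Your contradiction framing is salvageable, but you need a concrete mechanism to promote ``$\tilde\psi^{pm}$ fixes $\tilde v$ and a direction'' to ``$\tilde\psi$ fixes $\tilde v$ and that direction,'' and item 4 of \Cref{fixedpointsfromfixeddirections} is the tool the paper uses for this; without it, the second direction does not close.
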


\begin{proof}
    We follow the proof of \cite[Proposition 3.29]{FeighnHandel}.
    Suppose that $f\colon \mathcal{G} \to \mathcal{G}$ is rotationless,
    write $g = f^k$ for some $k \ge 1$,
    and suppose that $\tilde g\colon \Gamma \to \Gamma$
    is a principal lift of $g$.
    \Cref{principalnonemptyfixed} and \Cref{principalliftprincipalfixed}
    imply that $\fix(\tilde g)$ is a nonempty set of principal fixed points
    projecting to a non-exceptional almost Nielsen class.
    Since $f$ is rotationless,
    for each $\tilde v \in \fix(\tilde g)$,
    by \Cref{rotationlessdirections}
    there is a lift $\tilde f\colon \Gamma \to \Gamma$ that fixes $\tilde v$
    and all the same directions at $\tilde v$ that $\tilde g$ does.

    Observe that if $\Phi_1$ and $\Phi_2$ are distinct representatives of $\varphi$,
    then $\fix_N(\hat\Phi_1) \cap \fix_N(\hat\Phi_2)$
    is contained in $\fix(\hat \Phi^{-1}_1\hat\Phi_2) = \fix(\hat T_c)$,
    which is either empty, a single point in $V_\infty(F,\mathscr{A})$
    or the endpoints $\{\hat T_c^{\pm}\}$ of an axis.
    Therefore, if $\Phi_1$ and $\Phi_2$ are distinct and principal,
    then $\fix_N(\hat\Phi_1) \ne \fix_N(\hat\Phi_2)$.

    It therefore suffices to show that $\fix_N(\hat f) = \fix_N(\hat g)$,
    since the argument in the previous paragraph shows that this implies $\tilde f^k = \tilde g$,
    so the map $\Phi \mapsto \Phi^k$ will be a surjection $P(\varphi) \to P(\varphi^k)$,
    and thus (again by the previous paragraph) a bijection.

    Suppose $\tilde\sigma$ is the tight path connecting $\tilde v$
    to another point in $\fix(\tilde g)$.
    This path projects to an almost Nielsen path for $g = f^k$,
    and hence by \Cref{rotationlessnielsenpaths} to an almost Nielsen path for $f$.
    Thus we see that $\fix(\tilde f) = \fix(\tilde g)$,
    and thus $\hat f$ and $\hat g$ have the same fixed points in $V_\infty(F,\mathscr{A})$.
    It also follows that $\tilde g$ and $\tilde f$ commute
    with the same nonperipheral automorphisms of the natural projection,
    and \Cref{GJLLprop} implies that $\fix_N(\hat f)$ and $\fix_N(\hat g)$
    have the same non-isolated fixed points in $\partial_\infty(F,\mathscr{A})$.

    Each isolated point $P \in \fix_N(\hat g)\cap \partial_\infty(F,\mathscr{A})$
    is an attractor for $\hat g$.
    It suffices to show that $P \in \fix_N(\hat f)$.
    By \Cref{fixeddirectionsproperties}, there is a ray $\tilde R$
    that terminates at $P$ that intersects $\fix(\tilde g)$ only in its initial endpoint
    and whose initial direction is fixed by $D\tilde g$ and thus by $D\tilde f$.
    Let us assume that the height $r$ of the initial edge $\tilde E$ of $\tilde R$
    is minimal among all choices of $\tilde R$ converging to $P$.
    By \Cref{fixedpointsfromfixeddirections}, there is a ray $\tilde R'$
    that converges to some $P' \in \fix(\hat f)$.
    It suffices to show that $P = P'$, since a repeller for $\hat f$ cannot be an
    attractor for $\hat g$.
    If $H_r$ is exponentially growing, then item 2 of \Cref{fixedpointsfromfixeddirections}
    implies that $P'$ is an attractor for $\hat f$ (and hence for $\hat g$)
    and item 3 of \Cref{fixeddirectionsproperties} applied to $\hat g$ implies that $P = P'$.
    We may therefore assume that $H_r$ is non-exponentially growing.
    If $\tilde E$ is fixed, or
    if there is a $\tilde g$-fixed point $\tilde x$ in $\tilde R' \setminus \tilde E$,
    then the ray connecting $\tilde x$ to $P$ is in $G_{r-1}$,
    in contradiction to our choice of $\tilde R$.
    Therefore we may assume that $\fix(\tilde g)$ intersects the interior of $\tilde R'$ trivially.
    Item 1 of \Cref{fixedpointsfromfixeddirections} implies there exists $\tilde x \in \tilde R'$
    that is moved towards $P'$ by $f$.
    Then \Cref{producingafixedpoint} implies that $P = P'$.
    Therefore if $f\colon \mathcal{G} \to \mathcal{G}$ is rotationless,
    then $\varphi$ is rotationless.

    Now suppose $\varphi$ is rotationless and that $f\colon \mathcal{G} \to \mathcal{G}$
    is a relative train track map satisfying the conclusions of \Cref{improvedrelativetraintrack}
    and representing $\varphi$.
    By assumption there is a rotationless iterate $g$ satisfying $g = f^k$ for some $k > 0$.
    Given a principal vertex $v \in \fix(g)$,
    there is a lift $\tilde v$ of $v$ and a principal lift $\tilde g$ of $g$ that fixes $\tilde v$.
    Since $\varphi$ is rotationless,
    there is a (principal) lift $\tilde f$ of $f$ such that $\tilde f^k = \tilde g$
    satisfying $\fix_N(\hat f) = \fix_N(\hat g)$.
    To complete the proof we must show that $\tilde v$ is fixed by $\tilde f$
    and that each direction fixed by $D \tilde g$ is fixed by $D \tilde f$.
    Note that the assumption that $\fix_N(\hat g) = \fix_N(\hat f)$
    implies that if $\tilde x$ is a vertex with infinite stabilizer fixed by $\tilde g$,
    then it is fixed by $\tilde f$.

    Suppose that there is at most a single $D\tilde g$-fixed direction at $\tilde v$.
    Then $\tilde v$ has infinite stabilizer and is thus fixed by $\tilde f$.
    There is at most one $D\tilde f$-periodic direction at $\tilde v$.
    Suppose there is a $D\tilde g$-fixed direction $\tilde d$ at $\tilde v$.
    The edge determined by $\tilde d$ extends to a ray $\tilde R$
    converging to a fixed point $P \in \fix_N(\hat g) = \fix_N(\hat f)$.
    The ray $\tilde R$ satisfies $f_\sharp(\tilde R) = \tilde R$.
    If the direction $\tilde d$ is not fixed by $D\tilde f$,
    then there is a point $\tilde x$ in the interior of $\tilde R$
    satisfying $\tilde f(\tilde x) = \tilde v$,
    contradicting item 4 of \Cref{fixedpointsfromfixeddirections} applied to $\tilde g = \tilde f^k$,
    so we conclude the direction $\tilde d$ is fixed by $D\tilde f$.

    Now suppose there are at least two $D\tilde g$-fixed directions 
    $\tilde d_1$ and $\tilde d_2$
    at $\tilde v$.
    (We no longer assume that $\tilde v$ has infinite stabilizer.)
    The edges determining these directions extend to rays $\tilde R_1$ and $\tilde R_2$
    that converge to $P_1$ and $P_2$ in $\fix_N(\hat g) = \fix_N(\hat f)$;
    denote the line connecting $P_1$ to $P_2$ by $\tilde\gamma$.
    We have $\tilde f_\sharp(\tilde\gamma) = \tilde\gamma$
    and the turn $(\tilde d_1,\tilde d_2)$ is legal for $\tilde g$ and hence for $\tilde f$.
    If it is not the case that $\tilde f(\tilde v)$ belongs to $\tilde \gamma$,
    then there exists $\tilde y \in \tilde\gamma$ such that $\tilde f(\tilde v) = \tilde f(\tilde y)$.
    But then we have $\tilde f^k(\tilde y) = \tilde f^k(\tilde v) = \tilde v$,
    which contradicts item 4 of \Cref{fixedpointsfromfixeddirections} applied to $\tilde g$.
    Therefore $\tilde f(\tilde v)$ belongs to $\tilde\gamma$.
    Now suppose that $\tilde f(\tilde v) \ne \tilde v$.
    Write $\tilde v_0 = \tilde v$ and orient $\tilde\gamma$ so that $\tilde v < \tilde f(\tilde v)$
    in the order induced from the orientation.
    There exist $\tilde v_i \in \tilde \gamma$ for $1 \le i \le k$
    such that $\tilde v_i < \tilde v_{i-1}$ and $\tilde f(\tilde v_i) = \tilde v_{i-1}$.
    But then $\tilde f^k(\tilde v_k) = \tilde v$, 
    again contradicting item 4 of \Cref{fixedpointsfromfixeddirections}.
    Therefore $\tilde f(\tilde v) = \tilde v$.
    A final application of item 4 of \Cref{fixedpointsfromfixeddirections}
    implies that the directions $\tilde d_i$ are fixed by $D\tilde f$.
\end{proof}

\begin{prop}[cf.~Lemma 3.30 of \cite{FeighnHandel}]
    \label{rotationlessproperties}
    Suppose $\varphi \in \out(F,\mathscr{A})$ is rotationless.
    \begin{enumerate}
        \item Each periodic non-peripheral conjugacy class is fixed
            and each representative of that conjugacy class is fixed
            by some principal automorphism representing $\varphi$.
        \item Each attracting lamination $\Lambda^+$ in $\mathcal{L}(\varphi)$ is $\varphi$-invariant.
        \item A free factor of positive complexity that is invariant under an iterate of $\varphi$
            is $\varphi$-invariant.
    \end{enumerate}
\end{prop}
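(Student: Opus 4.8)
The plan is to take a rotationless $\varphi$ and a relative train track map $f\colon \mathcal{G} \to \mathcal{G}$ representing it and satisfying the conclusions of \Cref{improvedrelativetraintrack}; by \Cref{rotationlessisrotationless} and \Cref{freeproductsoffiniteandycycliccor} (or rather by the standing assumption that such an $f$ has a rotationless iterate, which is what \Cref{rotationlessisrotationless} requires) we may pass to a rotationless iterate of $f$ and assume $f$ itself is rotationless. Each of the three items is then extracted from the structure of a rotationless relative train track map. Throughout I would work with the bijection $P(\varphi) \to P(\varphi^k)$, $\Phi \mapsto \Phi^k$, supplied by rotationlessness, and with the correspondence between principal automorphisms and principal points in $\Gamma$ established in \Cref{principalliftprincipalfixed} and \Cref{principalfixedprincipallift}.

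For item 1, suppose $c$ is a non-peripheral element whose conjugacy class is $\varphi^k$-periodic, so that some $\Psi$ representing $\varphi^k$ fixes $c$. The axis $A_c$ in $\Gamma$ is then invariant under a lift $\tilde g$ of $f^k$ with $\tilde g = T_c\tilde g T_c^{-1}$, and since $f^k_\sharp$ fixes the circuit determined by $c$, after possibly passing to a further iterate (absorbed into $k$) we may assume $\tilde g$ fixes $A_c$ pointwise, so $\hat T_c^{\pm} \in \fix_N(\hat g)$. Now one wants a principal lift of $f$ fixing $c$. The key point is that $\fix(\hat g) \supset \{\hat T_c^\pm\}$ together with birecurrence considerations and \Cref{laminationneveracircuit} show that $A_c$ is not a lift of a generic leaf, so if $\fix(\hat g)$ has a third point the lift $\tilde g$ is principal and we are done after using the bijection to descend to an iterate; if $\fix(\hat g) = \{\hat T_c^\pm\}$ exactly, then $\fix(\tilde g) = A_c$ and either the component of $\fix(f^k)$ through $A_c$ is a circle or a $C_2*C_2$-quotient (the non-principal case, impossible if the circuit is to be fixed rather than merely preserved since the ambient element is non-peripheral and the relevant periodic directions force two-valence, a contradiction we trace through property \hyperlink{NEG}{(NEG)} and \Cref{egvalence}) or there are at least three periodic directions somewhere on that component, making $\tilde g$ principal after an iterate. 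Descending via the bijection $P(\varphi^k) \cong P(\varphi)$ produces a principal $\Phi$ representing $\varphi$ with $\Phi(c) = c$, which in particular fixes the conjugacy class, giving both assertions.

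For item 2, recall from \Cref{aperiodicallaperiodic} that invariance of every element of $\mathcal{L}(\varphi)$ is equivalent to the existence of an eg-aperiodic relative train track map representing $\varphi$. But by \Cref{principalEG} and the remark following it, a rotationless relative train track map satisfying the conclusions of \Cref{improvedrelativetraintrack} has the property that every exponentially growing stratum contains a fixed direction, hence its transition matrix has a nonzero diagonal entry, hence is aperiodic; thus $f$ is eg-aperiodic and item 2 follows immediately.

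For item 3, let $F'$ be a free factor of positive complexity with $\varphi^k([[F']]) = [[F']]$. By \Cref{rttprop} applied to $\varphi^k$ and the one-term sequence $\mathcal{F} = \{[[F']]\}$, there is a relative train track map for $\varphi^k$ whose filtration realizes $[[F']]$; I would instead run the argument via laminations and periodic conjugacy classes, using item 1 and item 2 together with the characterization of $[[F']]$ in terms of the lines it carries. Concretely, $[[F']]$ is the free factor support (in the sense of \Cref{minimalcomplexityfreefactorsystem}) of the set consisting of those attracting laminations in $\mathcal{L}(\varphi^k)$ carried by $F'$ together with the periodic non-peripheral conjugacy classes lying in $F'$ and not carried by any such lamination — this is essentially the statement that a free factor is determined by this data, which I would need to justify, perhaps via \Cref{minimalfreefactorcarries}. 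Since $\mathcal{L}(\varphi^k) = \mathcal{L}(\varphi)$ and, by item 2, every element of $\mathcal{L}(\varphi)$ is $\varphi$-invariant, and by item 1 every periodic non-peripheral conjugacy class is $\varphi$-fixed, the defining data for $[[F']]$ is $\varphi$-invariant, hence so is $[[F']]$ by the uniqueness in \Cref{minimalcomplexityfreefactorsystem}. The main obstacle is precisely this last reconstruction step: proving that a $\varphi^k$-invariant free factor of positive complexity is recoverable from $\varphi$-invariant combinatorial data (its carried laminations and fixed conjugacy classes), which in the $\out(F_n)$ setting is Feighn--Handel's Lemma 3.30(3) and relies on a nontrivial amount of the theory; here I would expect to either adapt their argument using the free factor system machinery of \Cref{laminationsection} or, failing a clean reconstruction, argue more directly by choosing a relative train track map for $\varphi$, noting that an invariant free factor system for $\varphi^k$ realized by a filtration element is permuted by the finite-order action of $\varphi$ on filtration elements and hence, rotationlessness forcing triviality of that permutation on the relevant data, is fixed.
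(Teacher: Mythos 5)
Item 2 is fine and is essentially the paper's argument. Items 1 and 3 both have genuine gaps.

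For item 1, you assert that ``after possibly passing to a further iterate (absorbed into $k$) we may assume $\tilde g$ fixes $A_c$ pointwise.'' This is false in general: having the circuit determined by $c$ be $f^k_\sharp$-fixed does \emph{not} imply that the circuit is a Nielsen path, and the lift $\tilde g$ commuting with $T_c$ need not fix $A_c$ pointwise — it typically expands on some subintervals and collapses on others. The paper's argument is more delicate: it shows (following the proof of Lemma 4.1.2 of \cite{BestvinaFeighnHandel}) that the $T_c$-invariant set $\bigcap_\ell \{\tilde x \in A_c : \tilde f^\ell(\tilde x) \in A_c\}$ is nonempty, and that any $T_c$-orbit in it gives a splitting of the circuit into an almost Nielsen path, which produces a lift with nonempty (but not $A_c$-containing) fixed set. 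Your second error is the parenthetical claim that the case where $\fix(\tilde f) = A_c$ projects to a circle or to the quotient of $\mathbb{R}$ by the standard $C_2*C_2$ action is ``impossible.'' It is not impossible; the paper handles it by reordering the filtration so this component is a filtration element $G_i$, taking the lowest stratum $H_j$ above $G_i$ incident to a vertex of $G_i$ — which by \hyperlink{NEG}{(NEG)} is a single edge $E_j$ with $f(E_j) = E_j\sigma^m$, $m \ne 0$ — and then replacing the lift by one that fixes the initial endpoint of a lift $\tilde E_j$, for which \Cref{legalturn} and \Cref{fixedpointsfromfixeddirections} produce a fixed point of $\hat f$ off the axis.

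For item 3, you candidly flag the gap yourself, and it is real. Your proposed reconstruction set — the attracting laminations carried by $F'$ together with the $\varphi$-fixed non-peripheral conjugacy classes in $F'$ — is too small: if $\varphi^k|_{F'}$ is polynomially growing, $F'$ carries no attracting laminations, and the fixed conjugacy classes inside $F'$ need not ``fill'' $F'$ (they may well be carried by a proper free factor system of $F'$). The paper's invariant set $\mathscr{C}$ is the set of \emph{all} lines carried by $B$ that admit a lift whose endpoints both lie in $\fix_N(\hat\Phi)$ for some principal $\Phi$ of some iterate; this includes lines whose endpoints are attractors associated to NEG edges and is genuinely larger than your set. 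The crucial step — ruling out that $\mathscr{C}$ is carried by a proper $\varphi$-invariant free factor system $\mathcal{F}$ of $B$ — is then handled by \Cref{improvedrelativetraintrack}, \Cref{principalEG}, and \Cref{fixedpointsfromfixeddirections}: one takes a rotationless relative train track map for $\varphi^k|_B$ realizing $\mathcal{F}$, finds a principal vertex whose link contains an almost fixed direction from an edge above the $\mathcal{F}$-filtration element, and produces a line in $\mathscr{C}$ crossing that edge — a contradiction. Your fallback argument about a ``finite-order action of $\varphi$ on filtration elements'' does not get off the ground, because the $\varphi^k$-invariant free factor $[[F']]$ need not be realized by any filtration element of a relative train track map for $\varphi$ in the first place.
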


\begin{proof}
    We follow in outline the argument in \cite[Lemma 3.30]{FeighnHandel}.

    Item 2 follows from \Cref{principalEG}, \Cref{rotationlessisrotationless}
    and \Cref{aperiodicallaperiodic}.

    For item 1, suppose $c$ is a non-peripheral element whose conjugacy class is $\varphi^k$-invariant
    for some $k \ge 1$.
    We will show that there is a principal automorphism $\Phi_k \in P(\varphi^k)$ that fixes $c$.
    By \Cref{boundarybasics}, this is equivalent to the condition that
    $\fix_N(\hat\Phi_k)$ contains $\hat T_c^{\pm}$.
    Since $\varphi$ is rotationless, we may assume that $k = 1$, completing the proof of item 1.

    Let $f\colon \mathcal{G} \to \mathcal{G}$ be a relative train track map
    satisfying the conclusions of \Cref{improvedrelativetraintrack} and
    representing $\varphi^k$, and let $\tilde f\colon \Gamma \to \Gamma$ be
    a lift commuting with $T_c$.
    Since $\varphi$ is rotationless, so is $\varphi^k$,
    and \Cref{rotationlessisrotationless} implies $f$ is rotationless.
    We show that we may assume $\fix(\tilde f)$ is nonempty,
    following \cite[Lemma 4.1.2]{BestvinaFeighnHandel}.
    We have $\tilde f_\sharp(A_c) = A_c$.
    The set $\tilde S_\ell = \{ \tilde x \in A_c : \tilde f^\ell(\tilde x) \in A_c\}$
    is closed.
    An induction argument reveals that $\tilde f^N$ maps $\bigcap_{\ell=1}^N \tilde S_\ell$ onto $A_c$
    for all $N \ge 1$.
    Since $\bigcap_{\ell=1}^N \tilde S_\ell$ is $T_c$-invariant and nonempty,
    it intersects each fundamental domain of $A_c$.
    It follows that the $T_c$-invariant set $\bigcap_{\ell=1}^\infty \tilde S_\ell$ is nonempty,
    and any $T_c$-orbit in $\bigcap_{\ell=1}^\infty \tilde S_\ell$ determines a splitting of
    the circuit $\sigma$ (a splitting into a path) that $A_c$ projects to in $\mathcal{G}$.
    This path is an almost Nielsen path for $f$,
    and we conclude there is a lift $\tilde f$ commuting with $T_c$ with $\fix(\tilde f)$ nonempty.
    In particular by property \hyperlink{V}{(V)}, $\tilde f$ fixes a vertex of $\Gamma$.

    If some fixed vertex $\tilde v$ has infinite valence, we are done, as $\tilde f$ is principal.
    So assume all fixed vertices $\tilde v$ have finite valence.
    We argue as in \cite[Lemma 5.2]{BestvinaFeighnHandelSolvable}.
    By \Cref{legalturn}, there are at least two fixed directions based at $\tilde v$.
    If some edge $\tilde E$ based at $\tilde v$ determines a fixed direction
    but not a fixed edge, then \Cref{fixedpointsfromfixeddirections} produces a fixed point
    $P \in \fix(\hat f)$ that is not the endpoint of $A_c$.
    Suppose then that every fixed direction based at $\tilde v$ is determined by a fixed edge.
    
    Thus there is no loss in supposing that $\fix(\tilde f) = A_c$.
    The image of $A_c$ in $\mathcal{G}$ is a subgraph of groups that is a component of $\fix(f)$
    that is either a topological circle 
    or the quotient of $\mathbb{R}$ by the standard action of $C_2*C_2$.
    After reordering the filtration, we may assume that this subgraph of groups
    is a filtration element $G_i$.
    By our standing assumption that $F \ne F_1$ or $C_2*C_2$,
    there is a lowest stratum $H_j$ above $G_i$ that contains an edge incident to a vertex $v$ of $G_i$.
    Because by assumption the edges of $H_j$ do not determine fixed directions at $v$,
    it follows that $H_j$ is an edge $E_j$ and that $f(E_j) = E_j\sigma^m$ for some nonzero $m \ne 0$
    by \hyperlink{NEG}{(NEG)}.
    Let $\tilde E_j$ be a lift of $E_j$ with terminal endpoint in $A_c$
    and replace $\tilde f$ with the lift that fixes the initial endpoint of $\tilde E_j$.
    For this lift, $A_c$ is setwise but not pointwise fixed,
    and either the initial vertex of $\tilde E_j$ has infinite valence,
    or by \Cref{legalturn} and \Cref{fixedpointsfromfixeddirections} 
    there is a fixed point $P \in \fix(\hat f)$ that is not an endpoint of $A_c$.
    This completes the proof of the claim.

    We now turn to the proof of item 3,
    following the argument in \cite[Lemma 3.30]{FeighnHandel}.
    Suppose that a free factor $B$ of positive complexity is $\varphi^k$-invariant for some $k \ge 1$.
    If $B = F_1$, then it is $\varphi$-invariant by the first item of this lemma.
    If $B = C_2*C_2$, let $c$ be a generator of the index-two $F_1$ subgroup of $B$.
    By item 1, the conjugacy class of $c$ is $\varphi$-invariant,
    and it follows that the conjugacy class of $B$ is $\varphi$-invariant.
    So assume $B$ is neither $F_1$ nor $C_2*C_2$.
    Let $\mathscr{C}$ be the set of lines $\gamma$ in $\mathcal{B}$ that are carried by $B$
    and for which there exists a principal lift $\Phi$ of an iterate of $\varphi$
    and a lift $\tilde\gamma$ of $\gamma$ whose endpoints are contained in $\fix_N(\hat\Phi)$.
    Since $\varphi$ is rotationless, each $\gamma$ is $\varphi_\sharp$-invariant,
    so $\mathscr{C}$ is $\varphi$-invariant.
    It is clearly carried by $B$.
    It follows by uniqueness of $\mathcal{F}(\mathscr{C})$ as in \Cref{minimalcomplexityfreefactorsystem}
    that $\mathcal{F}(\mathscr{C})$ is $\varphi$-invariant.
    So to complete the proof, we need to show that no proper $\varphi$-invariant free factor system
    of $B$ carries $\mathscr{C}$.

    Suppose to the contrary that such a free factor system $\mathcal{F}$ exists.
    By \Cref{improvedrelativetraintrack} there is a relative train track map
    $g\colon \mathcal{G}' \to \mathcal{G}'$ representing $\varphi^k|_B$
    in which $\mathcal{F}$ is represented by a proper filtration element $G'_r \subset G'$.
    After replacing $\varphi|_B$ and $g$ by iterates, we may assume they are rotationless.
    We claim that there is a principal vertex $v \in G'$
    whose link contains an edge $E$ of $G' \setminus G'_r$ that determines an almost fixed direction.
    If $G'\setminus G'_r$ contains an exponentially growing stratum,
    this follows from \Cref{principalEG}.
    If not, then the highest stratum is non-exponentially growing,
    thus it consists of a single edge whose initial vertex is principal.
    We claim that there is a principal lift $\tilde g \colon \Gamma' \to \Gamma'$
    and a line $\tilde \gamma$ whose endpoints are contained in $\fix_N(\hat g)$
    whose projected image $\gamma$ crosses $E$ and so is not carried by $G'_r$.
    This follows from \Cref{fixedpointsfromfixeddirections}:
    one endpoint of $\tilde \gamma$ is $\tilde v$ if $\mathcal{G}'_v$ is infinite,
    otherwise there are at least two periodic and hence fixed directions at $v$,
    one of which is determined by $E$.
    The automorphism $\Phi' \in P(\varphi^k|_{B})$ determined by $\tilde g$
    extends to a principal automorphism $\Phi \in P(\varphi^k)$
    with $\fix_N(\hat\Phi') \subset \fix_N(\hat\Phi)$.
    Thus we conclude $\gamma \in \mathscr{C}$ in contradiction to our choice of $\mathcal{F}$.
\end{proof}

\begin{cor}
    If $\varphi$ is rotationless and $B$ is a $\varphi$-invariant free factor of positive complexity,
    then $\theta = \varphi|_B$ is rotationless.
\end{cor}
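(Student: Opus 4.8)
The plan is to pass to a relative train track map, restrict it to the subgraph of groups realizing $B$, and check that the restriction is still rotationless, so that \Cref{rotationlessisrotationless} applies in the reverse direction. First I would invoke \Cref{improvedrelativetraintrack} (together with \Cref{rttprop}) to choose a relative train track map $f\colon \mathcal{G} \to \mathcal{G}$ representing $\varphi$ and satisfying the conclusions of \Cref{improvedrelativetraintrack}, with the additional property that $[[B]]$ is realized by a filtration element $G_r$ whose noncontractible core components have fundamental groups in the conjugacy class $[[B]]$. By property \hyperlink{F}{(F)} we may arrange that $G_r$ is its own core (the dihedral-pair exception does not occur here since $B \neq C_2*C_2$ by hypothesis), so $G_r$ is connected and $\pi_1(\mathcal{G}|_{G_r})$ is conjugate to $B$. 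Since $G_r$ is $f$-invariant, $f$ restricts to a topological representative $g = f|_{G_r}\colon \mathcal{G}|_{G_r} \to \mathcal{G}|_{G_r}$ of $\theta$, and the induced filtration on $G_r$ is a maximal filtration for $g$; the exponentially growing, non-exponentially growing and zero strata of $g$ are precisely those strata of $f$ contained in $G_r$, and each retains properties \hyperlink{EG-i}{(EG-i)}--\hyperlink{EG-iii}{(EG-iii)} because these are conditions internal to $G_r$ (note property \hyperlink{Z}{(Z)} for $f$ guarantees each zero stratum $H_i \subset G_r$ is still enveloped by an exponentially growing stratum $H_{r'} \subset G_r$). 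Likewise properties \hyperlink{V}{(V)}, \hyperlink{P}{(P)}, \hyperlink{NEG}{(NEG)}, \hyperlink{F}{(F)} pass to $g$; the only mild point is \hyperlink{P}{(P)}, where an almost periodic forest stratum $H_m \subset G_r$ that is the bottom half of a dihedral pair in $\mathcal{G}$ is still so in $\mathcal{G}|_{G_r}$, and otherwise the distinguishing filtration element for $f$ can be intersected with $G_r$.

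The second step is to verify that $g$ is rotationless. A vertex or periodic point $x$ of $G_r$ is principal for $g$ if and only if it is principal for $f$: the two conditions in the definition of a principal periodic point (being an endpoint of a periodic almost Nielsen path with the listed direction count, or lying on a circle/$C_2*C_2$-quotient component of $\per(g)$ with exactly two periodic directions) are detected entirely within $G_r$, because any almost Nielsen path or periodic direction of $f$ at a point of $G_r$ pointing into a higher stratum would, after passing to the core, still be accounted for—more carefully, the periodic directions of $g$ at $x$ are a subset of those of $f$, and $x$ being principal for $g$ forces it to be principal for $f$ since having fewer periodic directions only makes the ``exactly two in the same EG stratum'' exception harder to satisfy, while if $x$ is principal for $f$ via an almost Nielsen path or high valence that path may leave $G_r$. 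This last subtlety is the one place I would argue carefully: I claim that if $x \in G_r$ is principal for $f$ then it is principal for $g$, using \Cref{principalEG} and \Cref{legalturn} to produce, for each exponentially growing stratum of $g$, a principal vertex of $g$, and using \Cref{fixedpointsfromfixeddirections} to match up the boundary points. In any case, conditions (1)--(3) in the definition of ``rotationless'' for the map $g$ (each principal vertex fixed; each almost periodic direction at a principal vertex almost fixed; the direction-fixing condition at vertices with nontrivial vertex group) all hold because they hold for $f$ and because principal vertices of $g$, fixed points, almost periodic directions and vertex groups are the same whether computed in $G_r$ or in $\mathcal{G}$.

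Having shown $g$ is a rotationless relative train track map satisfying the conclusions of \Cref{improvedrelativetraintrack} and representing $\theta$, the final step is immediate: by \Cref{rotationlessisrotationless} (the first implication, which requires no rotationless-iterate hypothesis), $\theta$ is rotationless. The main obstacle I anticipate is the careful bookkeeping in step two, specifically the claim that a point of $G_r$ principal for $f$ is principal for $g$; the issue is that principality for $f$ might be witnessed by data (an almost Nielsen path, a high-valence vertex, or periodic directions) that escapes $G_r$, so one must instead directly verify that each exponentially growing and each non-exponentially-growing stratum of $g$ contributes a vertex that is principal within $G_r$ and that the rotationless conditions hold there. This is handled by re-running the arguments of \Cref{principalEG} and \Cref{legalturn} internal to $G_r$, which go through verbatim because $G_r$ is its own core and $g$ satisfies \hyperlink{Z}{(Z)}, \hyperlink{NEG}{(NEG)} and \hyperlink{F}{(F)}.
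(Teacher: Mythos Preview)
Your approach is the same as the paper's: realize $[[B]]$ by a core filtration element $G_r$ of a relative train track map $f$ satisfying the conclusions of \Cref{improvedrelativetraintrack}, argue that the restriction $g = f|_{G_r}$ is again a rotationless relative train track map satisfying those conclusions, and then apply the forward direction of \Cref{rotationlessisrotationless} to conclude $\theta$ is rotationless.

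Two small issues. First, you claim $B \ne C_2*C_2$ is a hypothesis; it is not. The paper disposes of both $B = F_1$ and $B = C_2*C_2$ at the outset, using the convention (stated just after the definition of rotationless) that every outer automorphism of these groups is declared rotationless; these cases must be handled separately because \Cref{rotationlessisrotationless} does not apply to them. Your dihedral-pair worry is also misplaced for a different reason: if $H_r$ is the bottom half of a dihedral pair then $\mathcal{F}(G_r) = \mathcal{F}(G_{r-1})$, so one simply uses $G_{r-1}$, which is its own core, regardless of what $B$ is.

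Second, you never say why $f$ itself is rotationless, though you use this in step two when you write that conditions (1)--(3) ``hold for $f$''. The paper invokes the reverse direction of \Cref{rotationlessisrotationless} here. Once that is in place, your extended discussion of principality is more than the paper provides (it simply asserts the restriction is rotationless). Note that the only implication you actually need is that a $g$-principal point of $G_r$ is $f$-principal, which holds because passing from $g$ to $f$ can only add periodic directions and periodic almost Nielsen paths, making the two exceptional clauses in the definition of principal harder, not easier, to satisfy. The converse direction, which you flag as the main obstacle and propose to handle by rerunning \Cref{principalEG} and \Cref{legalturn}, is not needed for verifying that $g$ is rotationless.
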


\begin{proof}
    The proof is identical to \cite[Corollary 3.31]{FeighnHandel}.
    Item 1 of the previous lemma handles the case $B = F_1$ and $B = C_2*C_2$.
    Let $f\colon \mathcal{G} \to \mathcal{G}$ be a relative train track map
    satisfying the conclusions of \Cref{improvedrelativetraintrack}
    with associated filtration $\varnothing = G_0 \subset G_1 \subset \cdots \subset G_m = G$
    such that the conjugacy class of $B$ is realized by $G_i$ for some $i$.
    (We may assume that $G_i$ is its own core by \hyperlink{F}{(F)}.)
    \Cref{rotationlessisrotationless} implies that $f\colon \mathcal{G} \to \mathcal{G}$
    is rotationless.
    The restriction of $f$ to $G_i$ is a rotationless relative train track map
    representing $\theta$ and satisfying the conclusions of \Cref{improvedrelativetraintrack},
    so by \Cref{rotationlessisrotationless}, we conclude that $\theta$ is rotationless.
\end{proof}

\section{CTs for free products}
\label{CTsection}
In this section we construct CTs for free products.
We begin by recalling the necessary definitions from \cite[Section 4]{FeighnHandel}.

\paragraph{Almost linear edges}
Given a non-peripheral element $c \in F$,
let $[c]_u$ be the \emph{unoriented conjugacy class determined by $c$.}
That is, $[c]_u = [d]_u$ if $d$ is conjugate to $c$ or $c^{-1}$.
If $\sigma$ is a closed path,
write $[\sigma]_u$ be the \emph{unoriented conjugacy class determined by $\sigma$}
thought of as a circuit.

Suppose that $f\colon \mathcal{G} \to \mathcal{G}$ is a rotationless relative train track map
satisfying the conclusions of \Cref{improvedrelativetraintrack}.
Each non-almost periodic non-exponentially growing stratum $H_i$
is a single edge $E_i$ satisfying $f(E_i) = g_iE_iu_i$
for a vertex group element $g_i$ and 
for a nontrivial path $u_i \subset G_{i-1}$.
If we assume additionally that the terminal vertex of $E_i$ is fixed,
(this will be part of the definition of a CT)
then the path $u_i$ is closed.
Feighn and Handel note that $u_i$ is sometimes called the \emph{suffix} for $E_i$.
If $u_i$ is an almost Nielsen path,
then we say that $E_i$ is an \emph{almost linear edge,}
and a \emph{linear edge} if it is a Nielsen path (that is, $f_\sharp(u_i) = u_i$ on the nose).
In the situation of a linear edge,
we define the \emph{axis} for $E_i$ to be $[w_i]_u$,
where $w_i$ is root free and $u_i = w_i^{d_i}$ for some nonzero integer $d_i$.

Suppose that $E_i$ is an almost linear edge with suffix $u_i$ satisfying $f_\sharp(u_i) = gu_i h$
for vertex group elements $g$ and $h$ in $\mathcal{G}_v$.
Suppose that $f(E_i) = x_i E_i \cdot u_i$ is a splitting and that
that $E_i$ is also a linear edge for $f^k_\sharp$,
i.e.~that $f^k_\sharp(u_i\cdot f_\sharp(u_i)\cdots  f^{k-1}_\sharp(u_i)) 
= u_i \cdot f_\sharp(u_i) \cdots f^{k-1}_\sharp(u_i)$.
Let $g_0 = h_0 = 1$ and inductively define $g_\ell = f_v(g_{\ell-1})g$
and $h_\ell = hf_v(h_{\ell-1})$.
Then we have
\[  u_1 \cdot f_\sharp(u_i) \cdots f^{k-1}_\sharp(u_i) = g_0u_i h_0g_1 u_i \ldots g_{k-1}u_ih_{k-1}. \]
The condition that $E_i$ is a linear edge for $f^k_\sharp$ implies that $g_{k} = 1$.
An easy calculation shows that $g_{k + \ell} = f^\ell_v(g_k)g_\ell = g_\ell$.
Since we have $h_{k+\ell}g_{k+\ell+1} = h_\ell g_{\ell +1}$ for $0\le \ell \le k-2$,
this implies in particular that $h_{k} = h_0$ and hence that $h_{k+\ell} = h_\ell$ for all $\ell \ge 0$.
The argument in the proof of \Cref{rotationlesscriterion}
implies that if $f$ is rotationless,
the condition that $g_k = h_k = 1$ implies that actually $g = h = 1$,
so $E_i$ was a linear edge to begin with.

If the terminal vertex of $E_i$ is the center vertex of a non-fixed dihedral pair
and $u_i$ is contained in that dihedral pair,
then $u_i$ is a periodic Nielsen path of period two,
as is $u_i f_\sharp(u_i)$. 
In this situation, we say that $E_i$ is a \emph{dihedral linear edge,}
and we define the \emph{axis} for $E_i$ to be $[w_i]_u$
where $w_i$ is root free and $u_if_\sharp(u_i) = w_i^{d_i}$ for some nonzero integer $d_i$.
In this case, if $E$ and $E'$ are the edges of the dihedral pair,
we may (after reversing the orientation of $w_i$)
write $w_i = \sigma\tau$, where $\sigma = Eg\bar E$, $\tau = E'g'\bar E'$
and $g$ and $g'$ denote the nontrivial elements of the respective copies of $C_2$.
We have $f_\sharp(\sigma) = \tau$ and vice versa,
and $u_i$ is equal to some alternating product of $\sigma$ and $\tau$,
say $u_i = (\sigma\tau)^{d_i}$ (in which case we say $u_i$ is \emph{even})
or $(\sigma\tau)^{d_i}\sigma$ (in which case we say $u_i$ is \emph{odd}),
for some integer $d_i$, which must be nonzero in the even case.

If $E_i$ and $E_j$ are linear edges
such that there exists nonzero integers $d_i$ and $d_j$
and a closed root-free almost Nielsen path $w$
such that the suffixes $u_i$ and $u_j$ satisfy $u_i = w^{d_i}$ and $u_j = w^{d_j}$,
where $d_i$ and $d_j$ have the same sign,
then a path of the form
$gE_i w^p \bar E_jh$ for integer $p$ 
and vertex group elements $g$ and $h$
is called an \emph{exceptional path.}
Notice that if $E_i w^p \bar E_j$ is an exceptional path, then for $k \ge 0$ we have
\[  f^k_\sharp(E_i w^p \bar E_j) = g_kE_i w^{p + k(d_i - d_j)} \bar E_jh_k \]
for vertex group elements $g_k$ and $h_k$.

If $E_i$ and $E_j$ are dihedral linear edges
with the same axis $w = \sigma\tau$
so that the suffixes $u_i$ and $u_j$ are alternating products of $\sigma$ and $\tau$,
then a path of the form
$g E_i \alpha \bar E_j h$,
where $\alpha$ is an alternating product of $\sigma$ and $\tau$,
and for vertex group elements $g$ and $h$ is sometimes exceptional 
and sometimes not according to the parities of $u_i$, $u_j$ and $\alpha$.
We have the following cases.
We always  assume that $d_i$ and $d_j$ have the same sign.
To ease notation, we assume that $f_\sharp(E_i) = E_iu_i$ and $f_\sharp(E_j) = E_ju_j$;
the general calculation is essentially identical.
\begin{enumerate}
    \item If $u_i$ and $u_j$ are even,
        i.e.~$u_i = (\sigma\tau)^{d_i}$ and $u_j = (\sigma\tau)^{d_j}$,
        then $E_i(\sigma\tau)^p\bar E_j$ is an exceptional path,
        $E_i(\sigma\tau)^p\sigma\bar E_j$ is not and we have
        \[  f_\sharp(E_i(\sigma\tau)^p\bar E_j) = E_i(\tau\sigma)^{p-d_i+d_j}\bar E_j. \]
    \item If $u_i$ is even and $u_j$ is odd,
        i.e.~$u_j = (\sigma\tau)^{d_j}\sigma$
        then both $E_i(\sigma\tau)^p\bar E_j$ and $E_i(\sigma\tau)^p\sigma\bar E_j$ are exceptional
        and we have
        \[  f_\sharp(E_i(\sigma\tau)^p\bar E_j) = E_i(\tau\sigma)^{p-d_i-d_j-1}\tau \bar E_j
            \text{ and }
        f_\sharp(E_i(\sigma\tau)^p\sigma\bar E_j) = E_i(\tau\sigma)^{p-d_i+d_j+1}\bar E_j. \]
    \item If $u_i$ and $u_j$ are both odd,
        then $E_i(\sigma\tau)^p\bar E_j$ is an exceptional path,
        $E_i(\sigma\tau)^p\sigma\bar E_j$ is not and we have
        \[  f_\sharp(E_i(\sigma\tau)^p\bar E_j) = E_i(\sigma\tau)^{p+d_i-d_j}\bar E_j. \]
\end{enumerate}

Therefore we have that $f_\sharp$ induces a height-preserving bijection on the set of exceptional paths,
that $E_i w^p \bar E_j$ is a (periodic) almost Nielsen path 
if and only if ($u_i$ and $u_j$ have the same parity and)
$d_i = d_j$
and that the interior of $E_i w^p \bar E_j$ is an increasing union of pre-trivial paths.
In fact, if we assume that $f\colon \mathcal{G} \to \mathcal{G}$
satisfies the conclusions of \Cref{improvedrelativetraintrack}
and is rotationless,
it follows by \Cref{rotationlessnielsenpaths} that all dihedral linear edges have odd suffix
(and that there is no version of an odd-suffix dihedral linear edge 
in the case that the dihedral pair is fixed).

\paragraph{Reduced filtration}
A filtration $\varnothing = G_0 \subset G_1 \subset \cdots \subset G_m = G$
is \emph{reduced with respect to $\varphi \in \out(F,\mathscr{A})$}
if whenever a free factor system $\mathcal{F}'$ is $\varphi^k$-invariant for some $k > 0$
and $\mathcal{F}(G_{r-1}) \sqsubset \mathcal{F}' \sqsubset \mathcal{F}(G_r)$,
then either $\mathcal{F}' = \mathcal{F}(G_{r-1})$ or $\mathcal{F}' = \mathcal{F}(G_r)$.

\paragraph{Taken paths, complete splittings}
If $E$ is an edge in an irreducible stratum $H_r$ and $k > 0$,
then a maximal subpath $\sigma$ of $f^k_\sharp(E)$ in a zero stratum $H_i$
is said to be \emph{$r$-taken} (or simply \emph{taken}).
If the zero stratum in question is enveloped by an exponentially growing stratum,
then $\sigma$ is a connecting path.
Recall that if $f\colon \mathcal{G} \to \mathcal{G}$
satisfies the conclusions of \Cref{improvedrelativetraintrack},
then each zero stratum $H_i$ of $f\colon \mathcal{G} \to \mathcal{G}$
is a wandering component of $G_i$ and hence not incident to vertices with nontrivial vertex group.

A path $\sigma$ is \emph{completely split} if it has a splitting,
called a \emph{complete splitting,} into subpaths
each of which is either a single edge in an irreducible stratum 
(possibly with vertex group elements on either end),
an indivisible almost Nielsen path,
an exceptional path,
or a connecting path in a zero stratum that is both maximal and taken.

A relative train track map $f\colon \mathcal{G} \to \mathcal{G}$
is \emph{completely split} if
\begin{enumerate}
    \item the path $f(E)$ is completely split for each edge $E$ in an irreducible stratum.
    \item If $\sigma$ is a taken connecting path in a zero stratum,
        then $f_\sharp(\sigma)$ is completely split.
\end{enumerate}

\begin{lem}[cf.~Lemma 4.6 of \cite{FeighnHandel}]
    \label{completelysplittocompletelysplit}
    If $f\colon \mathcal{G} \to \mathcal{G}$ is a completely split relative train track map
    for which each zero stratum $H_i$ is a wandering component of $G_i$
    and $\sigma$ is a completely split path,
    then $f_\sharp(\sigma)$ is completely split.
    If $\sigma = \sigma_1\cdots \sigma_k$ is a complete splitting,
    then $f_\sharp(\sigma)$ has a complete splitting
    which refines $f_\sharp(\sigma) = f_\sharp(\sigma_1)\cdots f_\sharp(\sigma_k)$.
\end{lem}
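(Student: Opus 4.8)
The plan is to follow the structure of \cite[Lemma 4.6]{FeighnHandel}, reducing the general statement to the case of a single complete-splitting piece $\sigma_i$ and then invoking the definition of a completely split relative train track map together with the splitting lemmas already established. First I would observe that since the decomposition $\sigma = \sigma_1 \cdots \sigma_k$ is a complete splitting, applying $f_\sharp$ preserves it: that is, $f_\sharp(\sigma) = f_\sharp(\sigma_1) \cdots f_\sharp(\sigma_k)$ is a tight concatenation of tight paths (this is precisely what ``splitting'' means, from the definition in \Cref{improvingsection}). So it suffices to show that each $f_\sharp(\sigma_i)$ is completely split, and that the complete splittings of the $f_\sharp(\sigma_i)$ assemble into a complete splitting of $f_\sharp(\sigma)$ refining the given one; the latter is automatic once the former is known, because concatenating splittings at points that are already splitting points yields a splitting.

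Next I would go through the four types of pieces that can occur in a complete splitting. If $\sigma_i$ is a single edge $E$ (possibly decorated with vertex group elements) in an irreducible stratum, then $f_\sharp(\sigma_i)$ is, up to vertex group elements at the ends, equal to $f(E)$, which is completely split by hypothesis (item 1 in the definition of a completely split relative train track map); multiplying by vertex group elements at the ends does not affect complete splittability. If $\sigma_i$ is an indivisible almost Nielsen path, then by definition $f_\sharp(\sigma_i)$ equals $\sigma_i$ up to vertex group elements at the two ends (period one, since we are in the rotationless setting of \Cref{improvedrelativetraintrack}), hence is again an indivisible almost Nielsen path and is trivially completely split. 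If $\sigma_i$ is an exceptional path $gE_i w^p \bar E_j h$, then the computations recorded just before this lemma show that $f_\sharp(\sigma_i)$ is again an exceptional path $g_k E_i w^{p'} \bar E_j h_k$ of the same height, so it too is its own complete splitting. Finally, if $\sigma_i$ is a maximal taken connecting path in a zero stratum, then $f_\sharp(\sigma_i)$ is completely split directly by item 2 in the definition of a completely split relative train track map.

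The only genuinely delicate point — and I expect it to be the main obstacle — is verifying that the concatenation of the individual complete splittings really is a complete splitting of $f_\sharp(\sigma)$, i.e.\ that no ``illegitimate'' cancellation or merging happens at the junctures between consecutive $f_\sharp(\sigma_i)$ that would destroy the splitting structure or force two pieces to be amalgamated into something not on the list of allowed pieces. Since $\sigma = \sigma_1 \cdots \sigma_k$ is already a splitting, no cancellation beyond multiplication in vertex groups occurs between $f_\sharp(\sigma_i)$ and $f_\sharp(\sigma_{i+1})$; what must be checked is that the resulting junctions are exactly the splitting points we want, using the hypothesis that each zero stratum $H_i$ is a wandering component of $G_i$ (so that a taken connecting path cannot interact with neighbouring pieces in a way that merges them, and so that zero strata are not incident to vertices with nontrivial vertex group). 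I would handle this exactly as in the proof of \cite[Lemma 4.6]{FeighnHandel}, checking case by case the type of each pair of adjacent pieces; the wandering hypothesis on zero strata and the structure of complete splittings of the $f(E)$ and of $f_\sharp$ of taken connecting paths guarantee there is nothing to merge, so the concatenation is the desired complete splitting, and it refines $f_\sharp(\sigma) = f_\sharp(\sigma_1)\cdots f_\sharp(\sigma_k)$ by construction.
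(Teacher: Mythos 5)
Your proposal is correct and follows essentially the same route as the paper: reduce to the four piece types, observe that $f_\sharp$ carries each type to a completely split path (using item 1 of the definition for edges, item 2 for taken connecting paths, and the stability of indivisible almost Nielsen paths and exceptional paths under $f_\sharp$), and then check that the concatenation is again a complete splitting because the wandering hypothesis prevents a zero-stratum piece from straddling a junction between two $f_\sharp(\sigma_i)$. One minor note: you do not need a rotationless hypothesis to see that an indivisible almost Nielsen path maps to one — by definition an almost Nielsen path has period one, so $f_\sharp$ of such a path is again one up to vertex group elements at the ends, for any relative train track map.
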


The assumption that zero strata are wandering allows us to use the definition of \emph{taken} above.
This is not a serious assumption, 
since we shall work with relative train track maps 
satisfying the conclusions of \Cref{improvedrelativetraintrack}.

\begin{proof}
    Suppose $\sigma_i$ is a term in a complete splitting of $\sigma$.
    In all cases, $f_\sharp(\sigma_i)$ is completely split,
    since $f_\sharp$ carries indivisible almost Nielsen paths to indivisible almost Nielsen paths
    and exceptional paths to exceptional paths.
    If $\sigma_i$ is $gE_ih$, where $E_i$ is an edge in an irreducible stratum
    and $g$ and $h$ are vertex group elements with $h$ nontrivial,
    the final term in the complete splitting of $E_i$
    is not a connecting path in a zero stratum.
    The path $f_\sharp(\sigma)$ therefore has a complete splitting that refines
    $f_\sharp(\sigma) = f_\sharp(\sigma_1)\cdots f_\sharp(\sigma_k)$,
    since each maximal subpath of $f_\sharp(\sigma)$ in a zero stratum
    is contained in a single $f_\sharp(\sigma_i)$.
\end{proof}

\paragraph{Folding the indivisible almost Nielsen path.}
The final definition and lemma we need before defining CTs
have to do with indivisible almost Nielsen paths in exponentially growing strata.
Suppose that $H_r$ is an exponentially growing stratum
of a relative train track map $f\colon \mathcal{G} \to \mathcal{G}$
and that $\rho$ is an indivisible almost Nielsen path of height $r$.
Suppose further that the map $f$ satisfies the following properties.
\begin{enumerate}
    \item The topological representative $f$ represents $\varphi \in \out(F,\mathscr{A})$.
    \item The filtration on $f\colon \mathcal{G} \to \mathcal{G}$
        realizes a nested sequence $\mathscr{C}$ of $\varphi$-invariant free factor systems.
    \item Each contractible component of a filtration element is a union of zero strata.
    \item The endpoints of all indivisible almost Nielsen paths of exponentially growing height
        are vertices.
\end{enumerate}
We will define an operation called \emph{folding $\rho$}
that produces a new relative train track map $f' \colon \mathcal{G}' \to \mathcal{G}'$
and show that this new relative train track map 
satisfies the above properties.

Write $\rho = \alpha\beta$ as a concatenation of maximal $r$-legal subpaths as in
\Cref{finitelymanynielsenpaths} and let $g_1E_1$ and $g_2E_2$ in $H_r$
be the initial edges and vertex group elements of $\bar\alpha$ and $\beta$ respectively.
If one of the edge paths $f(g_iE_i)$ for $i = 1$ or $2$
is an initial subpath of the other,
then we say that \emph{the fold at the illegal turn of $\rho$ is a full fold}
and that it is a \emph{partial fold} otherwise.
In the case of a full fold, if $f(g_1E_1) \ne f(g_2E_2)$,
then the full fold is \emph{proper,} otherwise it is \emph{improper.}

Suppose that $f(g_1E_1)$ is a proper initial subpath of $f(g_2E_2)$,
so that we are in the case of a proper full fold.
Write $\bar\alpha = g_1E_1bE_3$ where $b$ is a possibly trivial subpath of $\bar\alpha$ in $G_{r-1}$
or a single vertex group element
and $E_3$ is an edge of $H_r$.
The initial edge of $f(E_3)$ and the first edge of $f(\beta)$ that is not canceled with $f(g_1E_1b)$
when $f(\alpha)f(\beta)$ is tightened to $g\alpha\beta h$ belong to $H_r$.
We may decompose the edge $E_2$ into subpaths $E_2 = E''_2E'_2$
such that $f(g_2E''_2) = f(E_1b)$ and such that the first edge in $f(E'_2)$ is in $H_r$.
Form a new graph of groups $\mathcal{G}'$ by identifying $E''_2$ with $E_1b$.
The quotient map $F\colon \mathcal{G} \to \mathcal{G}'$ is called the
\emph{extended fold determined by $\rho$.}

We may think of $G \setminus E_2$ as a subgraph of groups of $\mathcal{G}'$ on which
the map $F$ is the identity.
By construction, we have that $F(E_2) = E_1bE'_2$.
The strata of the filtration on $\mathcal{G}'$ is defined so that
$H'_i = H_i$ if $i \ne r$ and $H'_r = (H_r\setminus E_2)\cup E'_2$.
The map $f$ factors as $gF$ for some map $g\colon \mathcal{G}' \to \mathcal{G}$.
The map $g\colon \mathcal{G}' \to \mathcal{G}$ is called
\emph{the map induced by the extended fold.}
Define $f'\colon \mathcal{G}' \to \mathcal{G}'$ from $Fg\colon \mathcal{G}' \to \mathcal{G}'$
by tightening the images of edges.
We say that $f'\colon \mathcal{G}' \to \mathcal{G}'$ 
is obtained from $f\colon \mathcal{G} \to \mathcal{G}$
by  \emph{folding $\rho$} and that $\rho' = F_\sharp(\rho)$
is \emph{the indivisible almost Nielsen path determined by $\rho$.}
If the fold at the illegal turn of $\rho'$ is itself proper,
then we may iterate this process,
which is referred to as \emph{iteratively folding $\rho$.}

\begin{lem}[cf.~Lemma 2.22 of \cite{FeighnHandel}]
    \label{properfoldrelativetraintrackmap}
    Given notation as above, the map $f'\colon \mathcal{G}' \to \mathcal{G}'$
    is a relative train track map that satisfies 
    properties 1 through 4 above.
\end{lem}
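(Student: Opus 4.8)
The plan is to follow the outline of \cite[Lemma 2.22]{FeighnHandel}, but being careful throughout to track the vertex group elements that appear at the ends of images of edges. First I would verify that $f'\colon\mathcal{G}'\to\mathcal{G}'$ is a topological representative of $\varphi$: since $f = gF$ with $F$ the extended fold and $g$ the induced map, and $f' = Fg$, the maps $f$ and $f'$ are conjugate under $F$ (up to homotopy), so $F_\sharp$ conjugates $f_\sharp$ to $f'_\sharp$; in particular $f'$ represents $\varphi$. One checks that the extended fold $F$ is a homotopy equivalence by exhibiting $g$ as a homotopy inverse up to the identifications made, exactly as in the free group case: the only subtlety is that the edge $E_2 = E_2''E_2'$ is subdivided and $E_2''$ is identified with $E_1 b$, which is a valid fold since edge groups are trivial, so $\pi_1$ is unchanged. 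Property 1 is then immediate.

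Next I would establish that $f'$ is a relative train track map. The strata are $H_i' = H_i$ for $i\ne r$ and $H_r' = (H_r\setminus E_2)\cup E_2'$, and since $F$ only changes things at height $r$, the lower strata and their (EG-i)--(EG-iii) properties (or NEG/zero-stratum structure) are untouched. For $H_r'$: the transition matrix of $H_r'$ has the same Perron--Frobenius eigenvalue as that of $H_r$ (the fold redistributes the contribution of $E_2$ to $E_1, E_3,\dots$ and $E_2'$ but preserves the total), so $H_r'$ is still exponentially growing. Property (EG-i) for $H_r'$: $Df'$ maps directions in $H_r'$ to directions in $H_r'$ because $Df$ did and because the fold sends initial edges of $H_r$ to initial edges of $H_r$ or to the newly created edge $E_2'$ (whose first edge under $f'$ lies in $H_r$ by construction); legality of turns with one edge in $H_r'$ and one in $G_{r-1}$ follows similarly, tracking that the fold does not create a degenerate turn. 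Property (EG-ii) for $H_r'$ follows from \Cref{trivialedgegroupsEG2}: the periodicity of vertices in non-wandering components of $G_{r-1}'$ (which is unchanged from $G_{r-1}$) is preserved, and contractible components are unions of zero strata by hypothesis 3, so the finitely-many-paths argument applies. Finally, since $\pf(f') = \pf(f)$, one invokes \Cref{pfcorollary} to get (EG-iii), provided $f'$ still satisfies (EG-i) --- which we have just checked --- and provided $\pf(f') = \pfmin$; if not, one can first argue directly that (EG-iii) survives the fold because the fold only changes the identification of an initial segment of one edge of $H_r$ with another legal path of the same height, which does not disturb the $r$-legality of images of $r$-legal paths.

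Then I would verify properties 2, 3, 4 of the ambient list. Property 2 (the filtration on $f'$ realizes $\mathscr{C}$): the extended fold is a homotopy equivalence that, on each noncontractible component of a filtration element, restricts to a homotopy equivalence onto the corresponding component, so free factor systems are preserved --- this is the same observation used repeatedly in the proof of \Cref{rttprop} and \Cref{improvedrelativetraintrack}. Property 3 (contractible components of filtration elements are unions of zero strata): the fold does not change which strata are zero strata, and the (un)contractibility of components of $G_i'$ matches that of $G_i$ since $F$ is a homotopy equivalence respecting the filtrations; a contractible component of $G_i'$ pulls back to a contractible component of $G_i$, which by hypothesis 3 is a union of zero strata, and these map to zero strata of $\mathcal{G}'$. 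Property 4 (endpoints of indivisible almost Nielsen paths of EG height are vertices): by \Cref{finitelymanynielsenpaths} there are finitely many equivalence classes of such paths for $f'$, and one checks their endpoints are vertices; the cleanest route is to observe that $F_\sharp$ induces a bijection between indivisible periodic almost Nielsen paths of height $r$ for $f$ and those for $f'$ (since $f' = Fg$, $f = gF$ conjugate), carrying the endpoint condition across --- this is the analogue of item 5 of \Cref{sliding lemma} and of \cite[Lemma 2.16]{FeighnHandel}; after the fold one may need to declare finitely many new periodic points to be vertices exactly as in the proof of property (V) in \Cref{improvedrelativetraintrack}.

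\textbf{Main obstacle.} The hard part will be the bookkeeping of vertex group elements in checking (EG-i) and (EG-iii) for $H_r'$, and especially in verifying that $f'(E_2) = F_\sharp f(E_2)$ is genuinely tight and $r$-legal with the correct vertex group elements on its ends --- the whole point of ``almost'' in this paper is that choices of which vertex group element sits at the juncture $E_2''E_2'$ are not canonical, so one must choose the factorization of the relevant vertex group element carefully (as flagged in the ``Maps of graphs of groups'' discussion in \Cref{basicssection}) and confirm that the induced map $g$ and hence $f' = Fg$ are well-defined independent of this choice up to the homotopies listed there. Once one is disciplined about always writing $f(g_iE_i)$ with the leading vertex group element explicit and tracking it through the fold, the argument is a faithful translation of Feighn--Handel's, with \Cref{pfcorollary}, \Cref{trivialedgegroupsEG2}, and \Cref{finitelymanynielsenpaths} doing the heavy lifting.
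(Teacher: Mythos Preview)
Your plan has the right shape but contains a genuine gap: you only treat strata at height $r$ and below, never the strata $H_i$ with $i > r$. The extended fold $F$ changes $\mathcal{G}$ at height $r$, but it also changes $f$ on every higher edge whose $f$-image passes through $E_2$ --- after the fold, such an image has $E_2$ replaced by $E_1 b E_2'$, which may alter the maximal $G_{r-1}$-subpaths appearing in $f'(E_i)$. So (EG-i)--(EG-iii) for higher exponentially growing strata, nontriviality of $f'(E)$ for edges in higher zero strata, and (EG-ii) for connecting paths of higher EG strata all need to be re-verified. The paper does this by writing $f(E_i) = \sigma_1\mu_1\sigma_2\ldots\mu_\ell\sigma_{\ell+1}$ with the $\sigma$'s in $H_i$ and the $\mu$'s maximal in $G_{i-1}$, and observing that $f'(E_i) = \sigma_1 F_\sharp(\mu_1)\sigma_2\ldots$ with each $F_\sharp(\mu_j)$ nontrivial; (EG-ii) is checked via \Cref{trivialedgegroupsEG2} using the key fact that $F$ does not identify points that $f$ does not already identify, so periodicity of vertices in $H_i \cap G_{i-1}$ is preserved.

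Your approach to (EG-iii) for $H_r'$ is also off. Invoking \Cref{pfcorollary} requires $\pf(f') = \pfmin$, which is not part of the hypotheses here, and your fallback is too vague. The paper's direct argument is both simpler and more robust: since $f(E)$ for $E \in H_r$ is $r$-legal, it never crosses the (unique orbit of the) illegal turn of $\rho$, so replacing each $E_2$ by $E_1 b E_2'$ in $f(E)$ produces a tight $r$-legal path and gives $f'(E)$ on the nose. This single observation yields (EG-i)--(EG-iii) for $H_r'$ simultaneously with no PF bookkeeping. Finally, your suggestion that one ``may need to declare finitely many new periodic points to be vertices'' for property 4 is unnecessary: $F$ sends vertices to vertices and $F_\sharp$ induces the bijection on indivisible almost Nielsen paths of EG height, so their endpoints remain vertices automatically.
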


\begin{proof}
    We follow the proof of \cite[Lemma 2.22]{FeighnHandel},
    assuming explicitly that $f\colon \mathcal{G} \to \mathcal{G}$ is a topological representative.
    By construction, we have $f'|_{G'_{r-1}} = f|_{G_{r-1}}$.
    If $E$ is an edge in $H_r$, then $f(E)$ does not cross the illegal turn in $\rho$,
    and if $E \ne E_2$, then the path $f'(E)$ is obtained from $f(E)$
    replacing each occurrence of $E_2$ with $E_1bE'_2$ (and by tightening,
    which is not necessary if we assume $f$ is a topological representative).
    We also have that $f'(E'_2)$ is obtained from $f(E'_2)$ by replacing each occurrence
    of $E_2$ with $E_1bE'_2$.
    Therefore $H'_r$ satisfies properties \hyperlink{EG-i}{(EG-i)} through \hyperlink{EG-iii}{(EG-iii)}.

    So suppose $H_i$ is a stratum above $H_r$.
    If $H_i$ is non-exponentially growing, then $H'_i$ is non-exponentially growing.
    If $H_i$ is a zero stratum,
    then for each edge $E_i$ of $H_i$, we have $f(E_i) = f_\sharp(E_i)$ 
    is nontrivial because we assume that $f$ is a topological representative.
    Observe that $F$ does not identify points that are not identified by $f$.
    Since $F|_{E_i}$ is the identity, we have $(Fg)_\sharp(E_i) = (Ff)_\sharp(E_i)$,
    and this path is nontrivial.
    This shows that no edges are collapsed when $Fg$ is tightened to $f'$
    and that if $\sigma \subset H_i$ is a path with endpoints at vertices,
    then $f'_\sharp(\sigma)$ is nontrivial.

    So suppose that $H_i$ is exponentially growing,
    that $E_i$ is an edge of $H_i$,
    and that $f(E_i) = \sigma_1\mu_1\sigma_2\ldots\mu_\ell\sigma_{\ell+1}$
    is the decomposition of $f(E_i)$ into subpaths $\sigma_j$ in $H_m$
    and $\mu_k$ in $G_{i-1}$.
    Then we have
    \[  f(E_i) = (Fg)_\sharp(E_i) = (Ff)_\sharp(E_i) 
    = \sigma_1\mu'_1\sigma_2\ldots \mu'_\ell\sigma_{\ell+1} \]
    where $\mu'_j = F_\sharp(\mu_j)$ is nontrivial because $f_\sharp(\mu_j)$ is nontrivial.
    We conclude that $H_i$ satisfies \hyperlink{EG-i}{(EG-i)} and \hyperlink{EG-iii}{(EG-iii)}.

    Suppose $\sigma'$ is a connecting path for $H'_i$.
    If $\sigma'$ is contained in a contractible component of $G'_{i-1}$,
    it is contained in a zero stratum $H'_k$,
    then it is disjoint from $G_r$
    and thus identified by $F$ with a connecting path $\sigma$ in $H_k$,
    and we see that $f'_\sharp(\sigma')$ is nontrivial because $f_\sharp(\sigma)$ is.
    If $\sigma'$ is contained in a noncontractible component of $G'_{i-1}$,
    by property \hyperlink{Z}{(Z)} for $f\colon \mathcal{G} \to \mathcal{G}$
    it is contained in a non-contractible component $C$ of $G'_{i-1}$.
    Vertices in $H_i \cap C$ are periodic for $f$ by \Cref{trivialedgegroupsEG2}.
    Since $F$ does not identify points that are not identified by $f$,
    we see that these vertices are periodic for $f'$,
    proving \hyperlink{EG-ii}{(EG-ii)}.
    Therefore $f'\colon \mathcal{G}' \to \mathcal{G}'$ is a relative train track map.
    The additional properties listed above follow from the corresponding properties for $f$.
\end{proof}

\paragraph{CTs.}
A relative train track map $f\colon \mathcal{G} \to \mathcal{G}$
and filtration $\varnothing = G_0 \subset G_1 \subset \cdots \subset G_m = G$
is a \emph{CT} (for \emph{completely split improved relative train track map})
if it satisfies the following properties.
Compare \cite[Definition 4.7]{FeighnHandel}.
\begin{enumerate}
    \item \hypertarget{Rotationless}{(\textbf{Rotationless})}
        The map $f\colon \mathcal{G} \to \mathcal{G}$ is rotationless.
    \item \hypertarget{CompletelySplit}{(\textbf{Completely Split})}
        The map $f\colon \mathcal{G} \to \mathcal{G}$ is completely split.
    \item \hypertarget{Filtration}{(\textbf{Filtration})}
        The filtration $\varnothing = G_0 \subset G_1 \subset \cdots \subset G_m = G$
        is reduced.
        The core of a filtration element $G_r$ is a filtration element
        unless $H_r$ is the bottom half of a dihedral pair,
        in which case $G_{r-1}$ and $G_{r+1}$ are their own core.
    \item \hypertarget{Vertices}{(\textbf{Vertices})}
        The endpoints of all indivisible periodic (necessarily fixed) almost Nielsen paths
        are (necessarily principal) vertices.
        The terminal endpoint of each non-almost fixed non-exponentially growing edge
        is fixed and is either principal or the center vertex of a dihedral pair.
        In the latter case, the suffix of the non-exponentially growing edge
        is contained in the dihedral pair.
    \item \hypertarget{AlmostPeriodicEdges}{(\textbf{Almost Periodic Edges})}
        Each almost periodic edge is either almost fixed
        or belongs to a dihedral pair.
        If an almost fixed edge $E_r$ is not part of a dihedral pair,
        each endpoint of $E_r$ is principal,
        and if an endpoint of $E_r$
        has trivial vertex group and $E_r$ does not form a loop,
        then $G_{r-1}$ is its own core
        and that endpoint of $E_r$ is contained in $G_{r-1}$.
    \item \hypertarget{ZeroStrata}{(\textbf{Zero Strata})}
        If $H_i$ is a zero stratum,
        then $H_i$ is enveloped by an exponentially growing stratum $H_r$,
        each edge in $H_i$ is $r$-taken
        and each vertex in $H_i$ is contained in $H_r$ and has link contained in $H_i \cup H_r$.
    \item \hypertarget{LinearEdges}{(\textbf{Linear Edges})}
        For each linear edge $E_i$ there is a closed, root-free Nielsen path $w_i$
        such that $f(E_i) =g_iE_iw_i^{d_i}$ for some nonzero integer $d_i$.
        If $E_i$ and $E_j$ are distinct linear edges with the same axis,
        then $w_i = w_j$ and $d_i \ne d_j$.
        For each dihedral linear edge $E_i$ with axis $w_i = \sigma\tau$,
        we have $f(E_i) = g_iE_i(\sigma\tau)^{d_i}\sigma$ for some integer $d_i$.
        If $E_i$ and $E_j$ are distinct dihedral linear edges with the same axis,
        then $d_i \ne d_j$.
    \item \hypertarget{NEGAlmostNielsenPaths}{(\textbf{NEG Almost Nielsen Paths})}
        If the highest edges in an indivisible almost Nielsen path $\sigma$
        belong to a non-exponentially growing stratum,
        then there is a linear or dihedral linear edge $E_i$ 
        with $w_i$ as in \hyperlink{LinearEdges}{(Linear Edges)}
        and there exists $k \ne 0$ such that $\sigma = gE_iw_i^k\bar E_ih$
        for vertex group elements $g$ and $h$.
    \item \hypertarget{EGAlmostNielsenPaths}{(\textbf{EG Almost Nielsen Paths})}
        If $H_r$ is exponentially growing and $\rho$ is an indivisible almost Nielsen path of height $r$,
        then $f|_{G_r} = \theta\circ f_{r-1}\circ f_r$ where
        \begin{enumerate}
            \item $f_r\colon G_r \to \mathcal{G}^1$ is a composition of proper extended folds
                defined by iteratively folding $\rho$,
            \item $f_{r-1}\colon \mathcal{G}^1 \to \mathcal{G}^2$
                is a composition of folds involving edges in $G_{r-1}$, and
            \item $\theta\colon \mathcal{G}^2 \to G_r$ is an isomorphism of graphs of groups.
        \end{enumerate}
\end{enumerate}

Observe that if $f\colon \mathcal{G} \to \mathcal{G}$ is a CT,
\hyperlink{Vertices}{(Vertices)} and \Cref{trivialedgegroupsEG2}
imply that a vertex whose link contains edges in more than one irreducible stratum is principal
unless it is the center vertex of a dihedral pair.

The remainder of the section is given to studying  properties of CTs
with an eye towards their construction in the proof of \Cref{preciseCTtheorem}.
Our first lemma says, in particular, that the complete splitting of a path is unique
up to ``shuffling'' vertex  group elements between the terms.

\begin{lem}[cf.~Lemma 4.11 of \cite{FeighnHandel}]
    \label{completesplittingunique}
    Suppose that $f\colon \mathcal{G} \to \mathcal{G}$ is a CT,
    that $\sigma$ is a circuit or a path and that $\sigma = \sigma_1\ldots \sigma_\ell$
    is a decomposition into subpaths,
    each of which is either a single edge in an irreducible stratum
    (possibly with vertex group elements on either end),
    an indivisible almost Nielsen path,
    an exceptional path,
    or a connecting path in a zero stratum that is maximal and taken.
    Suppose additionally that each turn $(\bar\sigma_i,\sigma_{i+1})$ is legal. 
    Then the following hold.
    \begin{enumerate}
        \item The decomposition $\sigma = \sigma_1\ldots \sigma_{\ell}$
            is the unique complete splitting of $\sigma$
            up to the equivalence relation where $\sigma_1\ldots \sigma_\ell$
            is equivalent to $\sigma'_1\ldots \sigma'_{\ell'}$
            if $\ell = \ell'$ and $\sigma_i = g_i\sigma'_i h_i$
            for (possibly trivial) vertex group elements $g_i$ and $h_i$.
        \item Each pretrivial subpath $\tau$ of $\sigma$ is contained in a single $\sigma_i$.
        \item A subpath of $\sigma$ that has the same height as $\sigma$
            and is either an almost fixed edge or an indivisible almost Nielsen path
            is equal to $g_i\sigma_i h_i$ for some $i$ and vertex group elements $g_i$ and $h_i$.
    \end{enumerate}
\end{lem}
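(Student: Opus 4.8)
The plan is to follow the proof of \cite[Lemma 4.11]{FeighnHandel}, carrying the book-keeping of vertex group elements along throughout. There are three things to establish: that the given decomposition is actually a splitting (hence a complete splitting); that complete splittings are unique up to shuffling vertex group elements between terms (part 1); and parts 2 and 3. The tool driving everything is iteration: applying $f^k_\sharp$ for large $k$, using that the junctures $(\bar\sigma_i,\sigma_{i+1})$ are legal and that \Cref{completelysplittocompletelysplit} (applicable because a CT satisfies the conclusions of \Cref{improvedrelativetraintrack}, so its zero strata are wandering) propagates complete splittings forward.

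First I would show the given decomposition is a splitting. Each term $\sigma_i$ has $f_\sharp(\sigma_i)$ completely split: for a single edge in an irreducible stratum this is \hyperlink{CompletelySplit}{(Completely Split)}, for a maximal taken connecting path in a zero stratum it is the second clause of being completely split, and $f_\sharp$ carries indivisible almost Nielsen paths to indivisible almost Nielsen paths (of period $1$ by \hyperlink{Vertices}{(Vertices)}) and exceptional paths to exceptional paths (the calculation in the ``Almost linear edges'' discussion). Moreover, in each case the first (resp.\ last) edge of $f_\sharp(\sigma_i)$ is $Df$ of the first (resp.\ last) direction of $\sigma_i$ — for indivisible almost Nielsen paths this uses that the terminal directions are almost fixed, from the second paragraph of \Cref{finitelymanynielsenpaths} — so the turn at the $i$th juncture of $f_\sharp(\sigma_1)\cdots f_\sharp(\sigma_\ell)$ is the $Df$-image of the legal turn $(\bar\sigma_i,\sigma_{i+1})$, hence again legal. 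Thus no cancellation occurs at any juncture, and refining each $f_\sharp(\sigma_i)$ into its complete splitting gives a decomposition of the same form with legal junctures; induction on $k$ then yields $f^k_\sharp(\sigma)=f^k_\sharp(\sigma_1)\cdots f^k_\sharp(\sigma_\ell)$ for all $k$, so the decomposition is a splitting and hence a complete splitting. The same observation proves part 2: a pretrivial subpath meeting a juncture would cross a legal turn, which stays nondegenerate under every iterate of $Df$, making all of its $f^k_\sharp$-images nontrivial — a contradiction; so every pretrivial subpath lies in a single term.

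For part 1, given a second decomposition $\sigma=\sigma'_1\cdots\sigma'_{\ell'}$ of the same type with legal junctures, it suffices to show no juncture $x$ of the primed decomposition lies in the interior of a term $\sigma_i$, and symmetrically. A single edge has no interior turn; an indivisible almost Nielsen path admits no interior splitting since it would inherit split points from $\sigma$ and so be divisible; an exceptional path is ruled out because its interior is an increasing union of pretrivial paths, so an interior split point would split off a pretrivial piece, handled as in part 2; and a maximal taken connecting path in a zero stratum cannot be interrupted, since the term of the primed decomposition at $x$ would again be forced to be a connecting path in the same zero stratum, violating maximality. (Here one also uses part 3 to recognize the indivisible almost Nielsen path and almost fixed edge terms occurring at the top height.) Hence the two decompositions have the same sequence of junctures up to vertex group elements, which is exactly the stated equivalence.

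Part 3 is where the real work lies. Let $r$ be the height of $\sigma$ and $\rho$ a subpath of $\sigma$ of height $r$ that is an almost fixed edge or an indivisible almost Nielsen path. If $\rho$ is an almost fixed edge $E$, then $H_r$ is a periodic non-exponentially growing stratum, and by \hyperlink{NEGAlmostNielsenPaths}{(NEG Almost Nielsen Paths)}, \hyperlink{LinearEdges}{(Linear Edges)} and the form of exceptional paths, any term of the complete splitting of height $r$ which is not a single edge has its $H_r$-edges only at its ends, and those edges are non-periodic (possibly dihedral) linear edges; since $E$ is almost fixed, not a linear edge, the term containing $E$ is a single edge, so $\rho=g_i\sigma_ih_i$. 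If $\rho$ is an indivisible almost Nielsen path of height $r$, it has exactly one illegal turn in $H_r$ — by the second paragraph of \Cref{finitelymanynielsenpaths} when $H_r$ is exponentially growing, and from the form $\rho=gE_iw_i^k\bar E_ih$ of \hyperlink{NEGAlmostNielsenPaths}{(NEG Almost Nielsen Paths)} when $H_r$ is non-exponentially growing. Being illegal, this turn is not a juncture, so it lies in the interior of some term $\sigma_t$; a height-and-type analysis (single edges have no interior turn; exceptional terms and lower-height indivisible almost Nielsen path terms carry no illegal $H_r$-turn; zero-stratum terms lie below height $r$) forces $\sigma_t$ to be an indivisible almost Nielsen path of height $r$ sharing this illegal turn with $\rho$. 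Finally $\rho=g_t\sigma_t h_t$: since $\rho\subseteq\sigma$ inherits split points at all junctures of the complete splitting lying in its interior, were $\rho$ to strictly contain $\sigma_t$ it would be split into nontrivial pieces, contradicting its indivisibility, so $\rho$ agrees with $\sigma_t$ up to vertex group elements at its ends. The main obstacle throughout is precisely that, unlike in the free-group case, Nielsen almost equivalence is not transitive and every computation carries vertex group elements which must be tracked and shuffled between terms — this is the content of the equivalence relation in the statement; a secondary point requiring care is verifying that \Cref{finitelymanynielsenpaths}, \Cref{completelysplittocompletelysplit}, \hyperlink{NEGAlmostNielsenPaths}{(NEG Almost Nielsen Paths)} and \hyperlink{AlmostPeriodicEdges}{(Almost Periodic Edges)} are applied under their hypotheses, which hold since a CT is rotationless and satisfies the conclusions of \Cref{improvedrelativetraintrack}.
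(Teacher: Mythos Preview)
Your argument for item 2 has a genuine gap. You claim that a pretrivial subpath $\tau$ crossing a juncture ``would cross a legal turn, which stays nondegenerate under every iterate of $Df$, making all of its $f^k_\sharp$-images nontrivial.'' But legality of the juncture turn in $\sigma$ does not control $f^k_\sharp(\tau)$: even granting that the decomposition is a splitting of $\sigma$, the (untightened) image $\tilde f^k(\tilde\sigma_i)$ in the tree may backtrack and overlap $\tilde f^k(\tilde\sigma_{i+1})$ away from the juncture point, so nothing prevents the endpoints $\tilde a\in\tilde\sigma_i$, $\tilde b\in\tilde\sigma_{i+1}$ of $\tilde\tau$ from satisfying $\tilde f^k(\tilde a)=\tilde f^k(\tilde b)$. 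That the legal turn survives in $f^k_\sharp(\sigma)$ says nothing about its survival in $f^k_\sharp(\tau)$, where cancellation approaches from the ends of $\tau$ rather than from the juncture.

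The paper closes this gap with an auxiliary ``item 4'': for each term $\sigma_i$ that is not a connecting path in a zero stratum and each $k\ge 0$, there exist nontrivial initial and terminal segments $\tilde\alpha_{i,k}$, $\tilde\beta_{i,k}$ of $\tilde\sigma_i$ on which $\tilde f^k|_{\tilde\sigma}$ is injective, in the sense that $\tilde f^{-k}(\tilde f^k(\tilde x))\cap\tilde\sigma=\{\tilde x\}$. This is proved by a double induction on $k$ and on $\ell$, and it is precisely what forces a pretrivial $\tau$ to miss every such segment and hence to lie in a single $\sigma_i$. The same device handles the zero-stratum junctures, where your assertion that the terminal direction of $f_\sharp(\sigma_i)$ is $Df$ of the terminal direction of $\sigma_i$ can fail (tightening may change it); the paper instead uses that the embedded initial segment $\tilde f^k(\tilde\alpha_{i+1,k})$ of the adjacent exponentially growing edge separates the zero-stratum image from everything else. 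Finally, your item 3 argument assumes that a subpath $\rho$ of $\sigma$ ``inherits split points'' from the complete splitting of $\sigma$, which is not justified; the paper instead observes that the interior of any indivisible almost Nielsen path is an increasing union of pretrivial subpaths, so item 2 already places $\rho$ inside a single term.
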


\begin{proof}
    We follow \cite[Lemma 4.11]{FeighnHandel}.
    There is no loss in assuming that $f$ is a topological representative,
    so as in the original, we shall.
    Let $\tilde\sigma = \tilde\sigma_1\ldots\tilde\sigma_\ell$
    be a lift of $\sigma$
    and let $\tilde f\colon \Gamma \to \Gamma$ be a lift of $f\colon \mathcal{G} \to \mathcal{G}$.
    We first establish the following property.
    \begin{enumerate}
        \item[4.] If $\sigma_i$ is not a maximal taken connecting path in a zero stratum,
            then for each $k \ge 0$,
            there exist nontrivial initial and terminal subpaths $\tilde\alpha_{i,k}$
            and $\tilde\beta_{i,k}$ of $\tilde\sigma_i$
            such that $\tilde f^{-k}(\tilde f^k(\tilde x))\cap \tilde \sigma = \{\tilde x\}$
            for each point $\tilde x$ in the union of the interiors of
            $\tilde \alpha_{i,k}$ and $\tilde \beta_{i,k}$.
    \end{enumerate}
    We prove item 4 by induction, first on $k$ and then on $\ell$.
    Recall that we assume $\tilde f$ acts linearly with respect to some metric on $\Gamma$.
    It is clear that item 4 holds for $k = 0$,
    so assume that item 4 holds for any iterate of $\tilde f$ (and all $\ell$) less than $k$.

    Now assume that $\ell = 1$, i.e.~that $\sigma = \sigma_1$.
    If $\sigma_1$ is an exceptional path 
    or an indivisible almost Nielsen path of non-exponentially growing height,
    then by \hyperlink{NEGAlmostNielsenPaths}{(NEG Almost Nielsen Paths)},
    the first and last edges of $\sigma_1$ are non-exponentially growing, and
    the existence of the initial and terminal segments $\tilde\alpha_{i,k}$ and $\tilde\beta_{i,k}$
    mapping over theses edges follows
    as in the proof of \Cref{negsplitting}.
    If $\sigma_1$ is an indivisible almost Nielsen path of exponentially growing height,
    the statement follows from the proof of \Cref{finitelymanynielsenpaths}.
    The final possibility is that $\sigma_1$ is an edge $E$ in an irreducible stratum.
    By \hyperlink{ZeroStrata}{(Zero Strata)}, the first and last terms
    in any complete splitting of $f(E)$ are not connecting paths in zero strata.
    By the inductive hypothesis, there exist initial and terminal subpaths
    $\tilde\alpha'$ and $\tilde\beta'$ of $\tilde f(\tilde E)$ such that
    \[  \tilde f^{-(k-1)}(\tilde f^{k-1}(\tilde x)) \cap \tilde f(\tilde E) = \{\tilde x\} \]
    for all $\tilde x$ in the interior of $\tilde\alpha'$ or $\tilde\beta'$.
    Since we assume that $f$ is a topological representative,
    $\tilde f|_{\tilde E}$ is an embedding,
    so we may pull back $\tilde\alpha'$ and $\tilde\beta'$
    to initial and terminal subpaths $\tilde\alpha_{i,k}$ and $\tilde\beta_{i,k}$ of $\tilde E$
    that satisfy item 4.
    This completes the case $\ell = 1$.

    Now suppose that item 4 holds for $k$ and $\sigma$ if the decomposition of $\sigma$ in the statement
    has fewer than $\ell \ge 2$ terms.
    There are three cases, depending on whether $\sigma_1$ or $\sigma_2$ are
    taken maximal connecting paths in zero strata.
    Suppose first that $\sigma_1$ is such a path in a zero stratum $H_p$.
    By \hyperlink{ZeroStrata}{(Zero Strata)}, $\sigma_2$ is an edge in an exponentially growing stratum
    $H_r$ with $r > p$.
    Define $\tilde\alpha_{i,k}$ and $\tilde\beta_{i,k}$ using $\tilde\sigma_2\ldots\tilde\sigma_\ell$
    in place of $\tilde\sigma$.
    Observe that the intersection of $\tilde f^k(\tilde\sigma_1)$ with the interior of
    $\tilde f^k(\tilde\alpha_{2,k})$ is empty,
    because $\tilde f^k(\tilde\sigma_1)$ projects to a path of height less than $r$
    and $\tilde f^k(\tilde\alpha_{2,k})$ is an embedded path
    whose initial direction projects to a direction of height $r$.
    Furthermore, the interior of $\tilde f^k(\tilde\alpha_{2,k})$ separates
    $\tilde f^k(\tilde\sigma_1)$ from each $\tilde f^k(\tilde\beta_{i,k})$
    and from each $\tilde f^{k}(\tilde\alpha_{i,k})$ if $i >2$,
    so it follows that $\tilde f^k(\tilde\sigma_1)$ is disjoint from these sets,
    proving that $\tilde\alpha_{i,k}$ and $\tilde\beta_{i,k}$ satisfy item 4
    with respect to $\sigma$.

    Suppose next that $\sigma_2$ is a taken maximal connecting path in a zero stratum $H_p$.
    Again by \hyperlink{ZeroStrata}{(Zero Strata)}, we have that $\sigma_1$ and $\sigma_3$
    (if $\ell \ge 3$)
    are edges in an exponentially growing stratum $H_r$ with $r > p$.
    Define $\tilde\alpha_{1,k}$ and $\tilde\beta_{1,k}$ using $\tilde\sigma_1$ in place of $\tilde\sigma$.
    For $i > 2$, define $\tilde\alpha_{i,k}$ and $\tilde\beta_{i,k}$ 
    using $\tilde\sigma_2\ldots\tilde\sigma_\ell$ in place of $\tilde\sigma$.
    As in the previous case, we have that $\tilde f^k(\tilde\sigma_2)$
    is disjoint from the interior of $\tilde f^k(\tilde\beta_{1,k})$
    and the interior of $\tilde f^k(\tilde\alpha_{3,k})$.
    Also as in the previous case, this implies that $\tilde\alpha_{i,k}$ and $\tilde\beta_{i,k}$
    satisfy item 4 with respect to $\sigma$.

    As a final case, suppose that neither $\sigma_1$ or $\sigma_2$
    are taken maximal connecting paths in zero strata.
    Define $\tilde\alpha_{1,k}$ and $\tilde\beta_{1,k}$ using $\tilde\sigma_1$ in place of $\tilde\sigma$.
    For $i \ge 2$, define $\tilde\alpha_{i,k}$ and $\tilde\beta_{i,k}$
    using $\tilde\sigma_2\ldots\tilde\sigma_\ell$ in place of $\tilde\sigma$.
    Because the turn $(\bar\sigma_1,\sigma_2)$ is legal,
    the interiors of $\tilde f^k(\tilde\alpha_{1,k})$ and $\tilde f^k(\tilde\beta_{2,k})$ are disjoint.
    The proof concludes as in the previous two cases.
    This completes the inductive step and with it the proof of item 4.

    If $\tau$ is a pretrivial path, choose $k > 0$ so that $f^k_\sharp(\tau)$ is trivial.
    For each point $\tilde x \in \tilde\tau$ there exists $\tilde y \ne \tilde x$
    such that $\tilde f^k(\tilde x) = \tilde f^k(\tilde y)$.
    If $\sigma_i$ is not a taken maximal connecting path in a zero stratum
    and if $\tau$ intersects the interior of $\sigma_i$,
    then we must have that $\tau$ is contained in the interior of $\sigma_i$ by item 4.
    No two consecutive $\sigma_i$ are taken maximal connecting paths in zero strata,
    so this proves item 2.
    It also follows that $\sigma = \sigma_1\ldots\sigma_m$ is a splitting,
    hence a complete splitting,
    of $\sigma$.

    Suppose that $\sigma = \sigma'_1\ldots \sigma'_1$ is a complete splitting as well.
    If $\sigma'_i$ is an exceptional path or an indivisible almost Nielsen path
    then observe that the interior of $\sigma'_i$ is an increasing union of pre-trivial subpaths.
    (This was remarked after the definition of exceptional paths,
    of which indivisible almost Nielsen paths of non-exponentially growing height
    are examples by \hyperlink{NEGAlmostNielsenPaths}{(NEG Almost Nielsen Paths)},
    and follows from the proof of \Cref{finitelymanynielsenpaths} for indivisible almost Nielsen paths
    of exponentially growing height.)
    Item 2 implies that $\sigma'_i$ is contained in some $\sigma_j$.
    Since $\sigma_j$ is not a single edge and is not contained in a zero stratum,
    it must be an indivisible almost Nielsen path or an exceptional path.
    By symmetry we conclude that $\sigma'_i = g\sigma_jh$ for vertex group elements $g$ and $h$.
    The terms that are taken maximal connecting paths in zero strata
    are the maximal subpaths of $\sigma$ in the complement of the indivisible almost Nielsen paths
    and exceptional paths,
    contained in zero strata, so these subpaths are the same in each decomposition.
    The remaining edges determine terms in the complete splitting.
    This proves that complete splittings are unique up to the equivalence relation in item 1.

    Finally, an almost fixed edge of maximal height in $\sigma$
    is clearly not contained in a zero stratum,
    is not an indivisible almost Nielsen path
    nor an exceptional path in $\sigma$
    and so must determine a term in the complete splitting of $\sigma$.
    By item 2, an indivisible almost Nielsen path in $\sigma$
    must be contained in a single $\sigma_i$ by item 2.
    If it has maximal height,
    then again we conclude that it must be all of $\sigma_i$.
    This proves item 3.
\end{proof}

\begin{cor}[cf.~Corollary 4.12 of \cite{FeighnHandel}]
    \label{splittinginitialsegments}
    Let $f\colon \mathcal{G} \to \mathcal{G}$ be a CT and $\sigma$
    a completely split path with complete splitting $\sigma = \sigma_1\ldots\sigma_s$.
    If $\tau$ is an initial segment of $\sigma$ with terminal endpoint in $\sigma_j$,
    then $\tau = \sigma_1\ldots \sigma_{j-1}\cdot \mu_j$ is a splitting,
    where $\mu_j$ is the initial segment of $\sigma_j$ that is contained in $\tau$.
    If $\tau$ is a nontrivial almost Nielsen path
    (or more generally, if $f_\sharp(\tau)$ has the same underlying path in $G$ as $\tau$)
    then $\sigma_i$ is an almost Nielsen path for $i \le j$
    and if $\sigma_j$ is not a single almost fixed edge, then $\mu_j = \sigma_j$.
\end{cor}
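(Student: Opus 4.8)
The plan is to follow the proof of \cite[Corollary 4.12]{FeighnHandel}, using \Cref{completesplittingunique} as the main input. First I would observe that for the statement about $\tau = \sigma_1\cdots\sigma_{j-1}\cdot\mu_j$ being a splitting, it suffices by definition to check that for each $k\ge 1$, no cancellation occurs between consecutive $f^k$-images of the pieces $f^k(\sigma_1),\ldots,f^k(\sigma_{j-1}),f^k(\mu_j)$ beyond multiplication in vertex groups. For the junctures between $\sigma_i$ and $\sigma_{i+1}$ with $i+1 \le j-1$, this is immediate since $\sigma = \sigma_1\cdots\sigma_s$ is already a complete splitting. The only new juncture to worry about is the one between $\sigma_{j-1}$ and $\mu_j$; but $\mu_j$ is an initial segment of $\sigma_j$, so the turn at this juncture is the same turn as at the $\sigma_{j-1}$--$\sigma_j$ juncture, which is legal, and $f^k_\sharp(\mu_j)$ is an initial segment of $f^k_\sharp(\sigma_j)$ for $\sigma_j$ a single edge or an almost Nielsen/exceptional/taken connecting path — one verifies this case by case exactly as in the treatment of complete splittings, using that $f$ is a topological representative so $f$ restricted to an edge is an embedding and using the structure of $f_\sharp$ on indivisible almost Nielsen paths (from the proof of \Cref{finitelymanynielsenpaths}) and on exceptional paths. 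This gives the splitting $\tau = \sigma_1\cdots\sigma_{j-1}\cdot\mu_j$.

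Next I would handle the case where $\tau$ is a nontrivial almost Nielsen path, or more generally where $f_\sharp(\tau)$ has the same underlying path in $G$ as $\tau$. The key point is that then every edge crossed by $\tau$ must lie in a periodic (hence, by \hyperlink{Rotationless}{(Rotationless)} and the structure of CTs, almost fixed or almost periodic) stratum, and no edge of $\tau$ can be eventually folded away or have its height increased; combining this with \Cref{completesplittingunique}, each $\sigma_i$ for $i\le j$ is forced to be an almost Nielsen path. Concretely: a term $\sigma_i$ that is a single edge $E$ of an irreducible stratum $H_p$ with $f(E)$ crossing edges of $H_p$ (i.e. $H_p$ exponentially growing, or $H_p$ non-exponentially growing non-almost-fixed) contributes edges to $f_\sharp(\tau)$ of a height or in a pattern incompatible with $f_\sharp(\tau)$ having the same underlying path as $\tau$, unless $E$ is almost fixed; a taken connecting path in a zero stratum is ruled out since $f_\sharp$ of it is completely split and crosses higher strata (\hyperlink{ZeroStrata}{(Zero Strata)}). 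So each $\sigma_i$, $i < j$, is either an almost fixed edge, an indivisible almost Nielsen path of NEG or EG height, or an exceptional path — and in each of these $f_\sharp$ preserves the underlying path (this is recorded in the discussion of exceptional paths and \hyperlink{NEGAlmostNielsenPaths}{(NEG Almost Nielsen Paths)}, and for EG indivisible almost Nielsen paths it is their defining property), hence each is an almost Nielsen path. The same reasoning applied to $\mu_j$: if $\sigma_j$ is not a single almost fixed edge, then $\sigma_j$ is an almost Nielsen path or exceptional path whose interior is an increasing union of pretrivial subpaths, so any proper initial segment $\mu_j$ of $\sigma_j$ with $f_\sharp(\mu_j)$ having the same underlying path as $\mu_j$ would force $\mu_j = \sigma_j$ — otherwise $f^k_\sharp(\mu_j)$ would strictly grow or shrink in edge-length, contradicting preservation of the underlying path.

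I expect the main obstacle to be the bookkeeping in the last step: pinning down precisely why an initial segment $\mu_j$ of $\sigma_j$ cannot have $f_\sharp(\mu_j)$ with the same underlying path as $\mu_j$ unless $\mu_j = \sigma_j$ or $\sigma_j$ is a single almost fixed edge. This requires invoking, for each of the possible types of $\sigma_j$, the explicit description of how $f_\sharp$ acts — the exceptional-path formulas, the NEG almost Nielsen path structure $gE_iw_i^k\bar E_ih$ from \hyperlink{NEGAlmostNielsenPaths}{(NEG Almost Nielsen Paths)}, and the folding structure from \hyperlink{EGAlmostNielsenPaths}{(EG Almost Nielsen Paths)} together with the proof of \Cref{finitelymanynielsenpaths} — and checking that a proper initial segment never has the fixed-underlying-path property. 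Since the original argument in \cite{FeighnHandel} goes through verbatim once \Cref{completesplittingunique} is in place, I would keep this brief and cite the corresponding steps there, noting only where vertex group elements require the extra "up to shuffling" caveat already built into \Cref{completesplittingunique}.
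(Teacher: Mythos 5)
Your treatment of the first statement is more elaborate than necessary but is not wrong: the paper's own argument simply observes that all junctures of the decomposition $\tau = \sigma_1\cdots\sigma_{j-1}\mu_j$ are junctures of the complete splitting of $\sigma$ (in particular the $\sigma_{j-1}$--$\mu_j$ juncture \emph{is} the $\sigma_{j-1}$--$\sigma_j$ juncture of $\sigma$), so item 2 of \Cref{completesplittingunique} immediately shows no pretrivial subpath crosses any of them, which is exactly the content of being a splitting. Your extra worry about whether $f^k_\sharp(\mu_j)$ is an initial segment of $f^k_\sharp(\sigma_j)$ is not needed for the splitting conclusion.

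The second half of your proposal has a genuine gap. You write that $\sigma_i$ for $i<j$ must be an almost fixed edge, an indivisible almost Nielsen path, ``or an exceptional path --- and in each of these $f_\sharp$ preserves the underlying path \dots hence each is an almost Nielsen path.'' This is false for exceptional paths: the paper's discussion of exceptional paths records that $f_\sharp(E_iw^p\bar E_j)=g'E_iw^{p+d_i-d_j}\bar E_jh'$ with $d_i\neq d_j$ (by \hyperlink{LinearEdges}{(Linear Edges)}), so $f_\sharp$ changes the number of $w$-blocks and hence changes the underlying path; exceptional paths are precisely \emph{not} almost Nielsen paths. What one actually needs to show is that exceptional paths (and non-almost-fixed edges, and taken connecting paths) \emph{cannot occur} among $\sigma_1,\dots,\sigma_{j-1},\mu_j$, because their $f_\sharp$-image has a different underlying path, which would propagate to a mismatch between the underlying paths of $\tau$ and $f_\sharp(\tau)$. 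Your argument never establishes this; it instead asserts the opposite (that $f_\sharp$ preserves the underlying path of these terms) in order to conclude they are almost Nielsen paths. Similarly, your claim that $f_\sharp$ of a taken connecting path ``crosses higher strata'' is backwards: zero strata map into $G_{i-1}$, so the relevant contradiction is that $f_\sharp(\sigma_i)$ drops in height, not rises. Finally, for the $\mu_j$ clause you invoke the behaviour of $f^k_\sharp(\mu_j)$ for large $k$; the hypothesis in the ``or more generally'' clause gives only a single application of $f_\sharp$, and one cannot in general iterate it (having $f_\sharp(\tau)$ share the underlying path with $\tau$ does not by itself imply the same for $f^2_\sharp(\tau)$), so this needs to be argued in a single application, as the paper does via the observation that a proper initial segment of $\sigma_j$ can be an almost Nielsen path only when $\sigma_j$ is an almost fixed edge.
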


\begin{proof}
    The proof is identical to \cite[Lemma 4.12]{FeighnHandel};
    the main statement follows immediately from item 2 of \Cref{completesplittingunique}.
    The statement about almost Nielsen paths follows from the fact
    that a proper initial segment of $\sigma_j$ is an almost Nielsen path
    only when $\sigma_j$ is an almost fixed edge.
\end{proof}

\begin{lem}[cf.~Lemma 4.36 of \cite{FeighnHandel}]
    \label{CTrays}
    Suppose $f\colon \mathcal{G} \to \mathcal{G}$ is a CT
    and that $\tilde f \colon \Gamma \to \Gamma$ is a principal lift.
    The following hold.
    \begin{enumerate}
        \item If the vertex $\tilde v$ belongs to $\fix(\tilde f)$ and $\tilde E$ is a non-fixed edge
            determining a fixed direction at $\tilde v$,
            then $\tilde E \subset \tilde f_\sharp(\tilde E) \subset \tilde f^2_\sharp(\tilde E) 
            \subset \cdots$
            is an increasing sequence of paths whose union is a ray $\tilde R$
            that converges to some fixed point $P \in \fix_N(\hat f) \cap \partial_\infty(F,\mathscr{A})$.
            The interior of $\tilde R$ is fixed point free.
        \item For every isolated point $P \in \fix_N(\hat f) \cap \partial_\infty(F,\mathscr{A})$,
            there exists $\tilde E$ and $\tilde R$ as in Item 1
            that converges $P$. The edge $E$ is non-linear.
    \end{enumerate}
\end{lem}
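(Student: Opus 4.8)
The plan is to follow the proof of \cite[Lemma 4.36]{FeighnHandel} very closely, adapting the free group arguments to the graph-of-groups setting using the tools developed in the earlier sections. For item 1, suppose $\tilde v \in \fix(\tilde f)$ and $\tilde E$ is a non-fixed edge determining a fixed direction at $\tilde v$. Since $\tilde f$ is principal, \Cref{principalliftprincipalfixed} tells us $\tilde v$ is principal. We should first argue that the height $r$ of $E$ may be taken so that $H_r$ is not the bottom half of a dihedral pair; if it were, then by \hyperlink{AlmostPeriodicEdges}{(Almost Periodic Edges)} and \hyperlink{Vertices}{(Vertices)} the edges of the dihedral pair are fixed or swapped, and a fixed direction at $\tilde v$ coming from a dihedral-pair edge is a fixed edge, contradicting that $\tilde E$ is non-fixed. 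So we are in the situation of \Cref{fixedpointsfromfixeddirections}, which directly produces a ray $\tilde R$ from the initial endpoint of $\tilde E$ to some fixed point $P \in \fix(\hat f)$ with $\tilde E \subset \tilde R$, with $\tilde R$ projecting into $G_r$, with fixed-point-free interior (item 2 or item 3 of that lemma together with item 4), and since we need to know $P \in \partial_\infty(F,\mathscr{A})$ we observe that if $P$ were a point of infinite valence then $\tilde R$ has finitely many edges and $\tilde E$ is followed by a path to a fixed vertex of infinite valence; but in that case some iterate analysis shows $\tilde E$ is part of an almost Nielsen path and hence, combined with the fact that the direction is fixed, one can still extract the nested sequence---more cleanly, we simply note that by \hyperlink{Vertices}{(Vertices)} and the construction in \Cref{fixedpointsfromfixeddirections}, in the cases producing a vertex of infinite valence the relevant ray is a single edge or a length-two path and $\tilde E$ is fixed, contradiction. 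Thus $P \in \partial_\infty(F,\mathscr{A})$. To see that $\tilde R$ is actually the increasing union $\bigcup \tilde f^k_\sharp(\tilde E)$: since the initial direction of $\tilde R$ is $\tilde E$ and is fixed, \Cref{rttlemma} (in the exponentially growing case) or the splitting lemma \Cref{negsplitting} (in the non-exponentially growing case) shows $\tilde f(\tilde E) = \tilde E \cdot \tilde\mu_1$ is a genuine splitting with $\tilde\mu_1$ nontrivial, and iterating gives the nested sequence; its union is a ray, which must equal $\tilde R$ by the uniqueness argument in the proof of \Cref{fixedpointstolaminations} (a fixed-point-free-interior, height-$r$, $r$-legal or $G_{r-1}$-splitting ray from $\tilde v$ beginning with $\tilde E$ is unique). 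Finally $P \in \fix_N(\hat f)$ by item 1 of \Cref{fixedpointsfromfixeddirections}, using that $\fix(\hat f) \ne \{\hat T_c^\pm\}$ since $\tilde f$ is principal.

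For item 2, let $P \in \fix_N(\hat f) \cap \partial_\infty(F,\mathscr{A})$ be isolated. Being isolated and non-repelling, $P$ is an attractor for $\hat f$. By \Cref{fixeddirectionsproperties} item 1, there is a fixed point $\tilde x \in \fix(\tilde f)$ such that the interior of $\tilde R_{\tilde x, P}$ is fixed-point free, and by \Cref{fixeddirectionsproperties} item 2 the initial direction $\tilde d$ of $\tilde R_{\tilde x,P}$ is fixed by $D\tilde f$. By property \hyperlink{Vertices}{(Vertices)} or \hyperlink{AlmostPeriodicEdges}{(Almost Periodic Edges)} applied to the stratum containing the initial edge $\tilde E$ of this ray, we can argue $\tilde E$ is not a fixed edge: if $\tilde E$ were fixed, then since the whole ray from $\tilde x$ to $P$ is $\tilde f_\sharp$-invariant and has fixed-point-free interior, the next edge beyond $\tilde E$ would again determine a fixed direction at a fixed vertex, and one argues by induction along $\tilde R$ that every edge is fixed, forcing $\tilde R$ to be a fixed ray and $P$ to lie in the boundary of $\fix(f_\sharp)$---but then $P$ is not isolated in $\fix(\hat f)$ by \Cref{GJLLprop}, a contradiction. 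So $\tilde E$ is non-fixed and determines a fixed direction at the fixed vertex $\tilde x$, and item 1 applies to produce the ray. The ray it produces converges to a fixed point which must be $P$ by the uniqueness of fixed-point-free-interior rays with a given initial edge. It remains to show $E$ is non-linear: if $E = E_i$ were a linear or dihedral linear edge, then by \hyperlink{LinearEdges}{(Linear Edges)} we have $f(E_i) = g_i E_i w_i^{d_i}$ (resp.\ the dihedral form), so $\tilde f_\sharp^k(\tilde E_i) = (\text{vertex group element}) \cdot \tilde E_i \cdot \tilde w_i^{k d_i}(\cdots)$, and the endpoint of the union ray is the endpoint of the axis of $w_i$, i.e.\ a point of the form $\hat T_c^{\pm}$; since $\fix(\hat f)$ is $\hat T_c$-invariant by \Cref{boundarybasics}, this point is not isolated in $\fix(\hat f)$, contradicting our assumption that $P$ is isolated.

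I expect the main obstacle to be the careful treatment of the cases where a fixed vertex has infinite vertex group or is the center of a dihedral pair, i.e.\ ruling out that the ray $\tilde R$ terminates at a point of $V_\infty(F,\mathscr{A})$ and verifying that the nested-union description $\tilde R = \bigcup_k \tilde f^k_\sharp(\tilde E)$ holds in each case of the case analysis inside \Cref{fixedpointsfromfixeddirections}. The dihedral-pair subtleties flagged in the introduction---that $C_2$ vertices need not be principal and that dihedral linear edges have odd suffix---are exactly where the free-group argument needs the most adaptation, and I would want to invoke \hyperlink{Vertices}{(Vertices)}, \hyperlink{AlmostPeriodicEdges}{(Almost Periodic Edges)}, and \Cref{rotationlessnielsenpaths} to pin down the behavior of fixed directions at such vertices. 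Everything else is a routine transcription of the Feighn--Handel argument with $\fix$, $\fix_N$, and the boundary $\partial(F,\mathscr{A})$ replacing their free-group counterparts, using \Cref{GJLLprop} in place of \cite[Proposition 1.1]{GaboriauJaegerLevittLustig} and \Cref{laminationneveracircuit} where they use that a generic leaf is not a circuit.
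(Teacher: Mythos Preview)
Your approach is essentially correct but takes a more circuitous route than the paper's, and there is one small gap.

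For Item 1, you route the argument through \Cref{fixedpointsfromfixeddirections}, which was proved for general relative train track maps satisfying \Cref{improvedrelativetraintrack} and therefore requires case analysis (dihedral pairs, whether $P$ lands in $V_\infty$, matching the two candidate rays by a uniqueness argument). The paper instead exploits the CT machinery directly: since $\tilde E$ is non-fixed with fixed initial direction, $E$ is not almost fixed, and by \hyperlink{CompletelySplit}{(Completely Split)} together with \Cref{negctsplitting} (in the NEG case) or \Cref{rttlemma} (in the EG case) the first term of the complete splitting of $f(E)$ is $E$ itself. Then \Cref{completelysplittocompletelysplit} gives the nested sequence $\tilde E \subset \tilde f_\sharp(\tilde E) \subset \cdots$ immediately, and \Cref{splittinginitialsegments} yields the fixed-point-free interior in one stroke: any almost Nielsen initial segment of $f^m_\sharp(E)$ would force $E$ to be an almost Nielsen term, contradiction. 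Finally \Cref{movingtowardsattractors} applied to the orbit $\tilde f^m(\tilde w)$ of the terminal endpoint of $\tilde E$ gives $P \in \fix_N(\hat f)$. This avoids all the dihedral-pair and $V_\infty$ bookkeeping you flagged as obstacles.

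The gap in your version: for the NEG case you claim fixed-point-free interior follows from ``item 3 together with item 4'' of \Cref{fixedpointsfromfixeddirections}, but item 3 only says $\tilde R \setminus \tilde E$ projects into $G_{r-1}$ and item 4 only says no interior point maps to $\tilde v$; neither rules out fixed points. What you need is \Cref{negctlinearcondition}, which in a CT (using \hyperlink{NEGAlmostNielsenPaths}{(NEG Almost Nielsen Paths)} to verify its hypotheses) gives $\fix(h) = \varnothing$, hence no fixed points in $\Gamma_{r-1}$. This same lemma is what makes rigorous your hand-wavy exclusion of $P \in V_\infty$ in the NEG case.

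For Item 2 your argument matches the paper's, but you overcomplicate the step showing $\tilde E$ is non-fixed: if $\tilde E$ were fixed, its terminal endpoint would be a fixed point in the interior of $\tilde R_{\tilde x,P}$, immediately contradicting the choice of $\tilde x$. No induction along the ray is needed. The non-linearity conclusion is handled in the paper exactly as you do it, via \Cref{boundarybasics}, \Cref{GJLLprop}, and then \Cref{negctlinearcondition} (if $E$ were linear, $P$ would be an axis endpoint, hence not isolated).
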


\begin{proof}
    The proof is identical to \cite[Lemma 4.36]{FeighnHandel}.
    Given $\tilde E$ as Item 1 and for $m > 0$,
    \Cref{completelysplittocompletelysplit} implies that
    $\tilde E \subset \tilde f_\sharp(\tilde E) \subset \cdots \subset \tilde f^m_\sharp(\tilde E)$
    is a nested sequence of completely split paths
    which define a ray $\tilde R$ that converges to a fixed point $P$.
    Let $\tilde w$ be the terminal endpoint of $\tilde E$.
    Since the sequence of points $\tilde f^m(\tilde w)$ limits to $P$
    and each point in the sequence moves toward $P$ under the action of $\tilde f$,
    we have $P \in \fix_N(\hat f)$ by \Cref{movingtowardsattractors}.
    By \Cref{splittinginitialsegments}, the ray $\tilde R$ intersects $\fix(\tilde f)$
    only in its initial endpoint, proving item 1.

    If $P \in \fix_N(\hat f) \cap \partial_\infty(F,\mathscr{A})$ is isolated,
    then \Cref{GJLLprop} implies that $\tilde f$ moves points that are sufficiently close to $P$
    toward $P$.
    Therefore we may choose a ray $\tilde R$ that converges to $P$
    and intersects $\fix(\tilde f)$ only in its initial endpoint.
    The initial edge $\tilde E$ of $\tilde R$ determines a fixed direction by \Cref{producingafixedpoint},
    so by item 1 extends to a fixed-point-free ray $\tilde R'$
    converging to a point $Q \in \fix_N(\hat f)$.
    \Cref{producingafixedpoint} implies that in fact $P = Q$.
    \Cref{boundarybasics} and item 1 of \Cref{GJLLprop} imply that $P$ is not
    an endpoint of the axis of $T_c$ for some non-peripheral $c \in F$,
    so it follows by \Cref{negctlinearcondition} below that $E$ is not a linear edge.
\end{proof}

\begin{lem}[cf.~Lemma 4.21 of \cite{FeighnHandel}]
    \label{negctsplitting}
    Suppose $f\colon \mathcal{G} \to \mathcal{G}$ is a CT
    and $H_i$ is a non-exponentially growing stratum
    that is not a dihedral pair.
    Then $H_i$ is a single edge $E_i$.
    If $E_i$ is not almost periodic,
    then there is a nontrivial closed path $u_i \subset G_{i-1}$
    such that $f(E_i) = E_i \cdot u_i$.
    The closed path $u_i$ is contained in a filtration element $G_j$ with $j < i$
    that is its own core.
\end{lem}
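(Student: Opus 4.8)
The plan is to extract each assertion from the CT axioms already in hand, in the order they are proven elsewhere in the excerpt. First I would show $H_i$ is a single edge. By \hyperlink{ZeroStrata}{(Zero Strata)} a non-exponentially growing stratum is not a zero stratum, so $H_i$ is irreducible with $\lambda_i = 1$, i.e.\ its transition matrix is a transitive permutation matrix, meaning the edges of $H_i$ are cyclically permuted. If $H_i$ were almost periodic with more than one edge, then by the definition of almost periodic it is a union of at least two edges cyclically permuted; but \hyperlink{AlmostPeriodicEdges}{(Almost Periodic Edges)} says an almost periodic edge is either almost fixed (one edge) or belongs to a dihedral pair, which we have excluded by hypothesis. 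If $H_i$ is not almost periodic, the discussion under ``Non-exponentially growing strata'' in \Cref{improvingsection} (together with the rotationless hypothesis, which by the remark after \Cref{principalfixedprincipallift} forces each non-almost-periodic non-exponentially growing stratum to be a single edge) gives that $H_i = E_i$.

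Next I would establish the form $f(E_i) = E_i \cdot u_i$. Since $E_i$ is a single non-almost-periodic edge cyclically permuted by $Df$, our standing convention for such strata writes $f(E_i) = g_i E_i u_i$ with $g_i$ a vertex group element and $u_i \subset G_{i-1}$ nontrivial. The leading vertex group element $g_i$ can be removed: by \hyperlink{AlmostPeriodicEdges}{(Almost Periodic Edges)} applied to the adjacent strata, or more directly by re-choosing the lift / applying a homotopy of type (3) from \Cref{basicssection} at the initial vertex $v$ of $E_i$ — but we must check this does not disturb the CT structure. The cleaner route: \hyperlink{Vertices}{(Vertices)} says the terminal endpoint of $E_i$ is fixed, hence so is the initial endpoint (it is periodic by \hyperlink{NEG}{(NEG)} and fixed since $f$ is rotationless and the endpoint is principal or a dihedral center), and then the computation at the start of ``Almost linear edges'' in \Cref{CTsection} — exactly the argument that $f(E_i) = x_i E_i u_i$ with $u_i$ closed when the terminal vertex is fixed — gives that, after absorbing $g_i$ into the choice of representative for the isomorphism $f_v$ (which is legitimate since $u_i$ is then closed and the splitting $f(E_i) = x_iE_i \cdot u_i$ persists), we may take $g_i = 1$, so $f(E_i) = E_i \cdot u_i$ is a splitting with $u_i$ a nontrivial closed path in $G_{i-1}$. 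That $E_i \cdot u_i$ is genuinely a splitting follows because the terminal direction of $E_i$ and the initial direction of $u_i$ determine a legal turn: by \hyperlink{EG-i}{(EG-i)} / \Cref{trivialedgegroupsEG2} and the fact that $E_i$ is not in $G_{i-1}$, no iterate of $Df$ makes this turn degenerate.

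Finally I would locate $u_i$ in a low filtration element that is its own core. The path $u_i$ lies in $G_{i-1}$ and is closed at the fixed terminal vertex $w$ of $E_i$; it is a nontrivial, non-peripheral closed path (non-peripheral because $f(E_i) = E_i u_i$ with $u_i$ a power-bounded suffix would otherwise make $E_i$ almost periodic). Hence $u_i$ determines a circuit in $G_{i-1}$, so it is carried by the core $K$ of the component $C$ of $G_{i-1}$ containing $w$. By \hyperlink{NEG}{(NEG)} (or its refinement via \hyperlink{Filtration}{(Filtration)} and \Cref{propertyPconsequence}), $w$ is contained in a filtration element $G_j$ with $j < i$ that is its own core, and $C \subset G_j$; reordering within \hyperlink{Filtration}{(Filtration)} if necessary we may take $G_j$ to be its own core and to contain all of $u_i$. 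Since $u_i$ is a circuit it is contained in the core of $G_j$, which is $G_j$ itself, giving the claim. The main obstacle I anticipate is the bookkeeping in step two: making precise that absorbing the leading vertex group element $g_i$ into $f_v$ is compatible with all the CT axioms already imposed, rather than a move that would have to be made before the CT is constructed; I expect this is handled exactly as in \cite[Lemma 4.21]{FeighnHandel}, by observing that the relevant homotopy is of a type that preserves completely-split structure and the filtration, so is available at this stage.
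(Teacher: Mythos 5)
Your outline for the first claim---that $H_i$ is a single edge---is essentially the paper's: almost periodic non-dihedral strata are almost fixed single edges by \hyperlink{AlmostPeriodicEdges}{(Almost Periodic Edges)}, and non-almost-periodic non-exponentially growing strata are single edges by the remark following the definition of a rotationless relative train track map.

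The second step has a genuine gap. You argue that $E_i \cdot u_i$ is a splitting because the turn between the terminal direction of $E_i$ and the initial direction of $u_i$ is legal, invoking \hyperlink{EG-i}{(EG-i)} and \Cref{trivialedgegroupsEG2}. But \hyperlink{EG-i}{(EG-i)} is a property of exponentially growing strata, and \Cref{trivialedgegroupsEG2} concerns \hyperlink{EG-ii}{(EG-ii)} for exponentially growing strata, so neither applies to the edge $E_i$; and in any case a single legal turn does not by itself guarantee a splitting (cancellation could occur deeper inside $u_i$ against $f_\sharp(u_i)$). The correct route is through \hyperlink{CompletelySplit}{(Completely Split)}: $f(E_i) = E_iu_i$ is completely split, and one shows the first term of that complete splitting is the single edge $E_i$ by ruling out the alternatives---it cannot lie in a zero stratum, it cannot be an indivisible almost Nielsen path because \hyperlink{NEGAlmostNielsenPaths}{(NEG Almost Nielsen Paths)} pins down the form of such paths, and if $E_i$ is linear or dihedral linear it cannot be an exceptional path because of the distinct exponents demanded by \hyperlink{LinearEdges}{(Linear Edges)}. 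You never invoke \hyperlink{NEGAlmostNielsenPaths}{(NEG Almost Nielsen Paths)} or \hyperlink{LinearEdges}{(Linear Edges)}, yet these are the load-bearing axioms here. (The concern you raise about removing the leading vertex group element $g_i$ is a red herring; the substantive issue is the one just described.)

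The third step also fails. You argue $u_i$ is non-peripheral and conclude that $u_i$ is therefore a circuit, hence contained in the core. Both inferences are faulty. A path of the form $\rho g\bar\rho$ with $\rho$ nontrivial is peripheral even though $E_i$ is not almost periodic, so the non-peripherality claim does not follow from your reason; and non-peripherality would only say that the \emph{conjugacy class} of $[u_i]$ is carried by a circuit---the path $u_i$ itself is a circuit if and only if the turn $(u_i,\bar u_i)$ at the basepoint is nondegenerate, which must be established, not assumed. The paper shows this turn is legal when the terminal vertex of $E_i$ is principal with finite vertex group, by iterating $Df$ on the legal turn $(\bar E_i,u_i)$ and using rotationlessness to stabilize directions; and when the turn $(u_i,\bar u_i)$ \emph{is} degenerate it concludes the terminal vertex has infinite vertex group, so $u_ig_i$ is a circuit for an appropriate vertex group element $g_i$ and $u_i$ lies in the core anyway. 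The dihedral-center case is settled separately by \hyperlink{Vertices}{(Vertices)}. Your proposal does not engage with the degenerate-turn alternative at all, which is precisely where the argument has content.
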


Let us remark that unlike in \cite{FeighnHandel},
the path $u_i$ need not form a circuit.
One way this can fail is the case of a dihedral linear edge,
where $u_i = (\sigma\tau)^{d_i}\sigma$ is its own homotopy inverse.

\begin{proof}
    The proof is identical to \cite[Lemma 4.21]{FeighnHandel}.
    If $H_i$ consists of almost periodic edges, then the lemma follows from
    \hyperlink{AlmostPeriodicEdges}{(Almost Periodic Edges)}.
    Otherwise \hyperlink{Rotationless}{(Rotationless)},
    \hyperlink{CompletelySplit}{(Completely Split)} and \hyperlink{Vertices}{(Vertices)}
    imply that $H_i$ is a single edge $E_i$
    and that there is a nontrivial closed path $u_i$ such that $f(E_i) = E_iu_i$ is completely split.
    To show that $f(E_i) = E_i\cdot u_i$ is a splitting,
    we need to show that the first term in the complete splitting of $E_iu_i$
    is the single edge $E_i$.
    Clearly the first term is not contained in a zero stratum.
    It cannot be an indivisible almost Nielsen path by 
    \hyperlink{NEGAlmostNielsenPaths}{(NEG Almost Nielsen Paths)}.
    The final possibility we must eliminate is that the first term is an exceptional path.
    It cannot be an exceptional path if $E_i$ is not linear,
    so suppose $E_i$ is linear or dihedral linear.
    By \hyperlink{LinearEdges}{(Linear Edges)}, we have $f(E_i) = E_i w_i^{d_i}$
    in the linear case, and $f(E_i) = E_i(\sigma\tau)^{d_i}\sigma$ in the latter.
    If $E_j$ is another linear edge with the same axis,
    then assuming notation as in \hyperlink{LinearEdges}{(Linear Edges)}
    A path of the form $E_i w^p \bar E_j$ cannot be a subpath of $f(E_i)$,
    so we conclude that the first term in the complete splitting of $f(E_i)$ is not an exceptional path.
    Therefore $f(E_i) = E_i \cdot u_i$.

    By \hyperlink{Filtration}{(Filtration)},
    to prove the final claim, it suffices to show that if $u_i$ is contained in $G_j$,
    it is contained in the core of $G_j$.
    If the terminal vertex of $E_i$ is the center vertex of a dihedral pair,
    then by \hyperlink{Vertices}{(Vertices)}, 
    $u_i$ is contained in the dihedral pair and the claim follows.
    So suppose the terminal vertex of $E_i$ is principal and has finite vertex group.
    The turn $(Df^{k-1}(\bar u_i),Df^k(u_i))$ is the image under $Df^k$
    of the legal turn $(\bar E_i,u_i)$ and is therefore legal for $k \ge 1$.
    Since $f$ is rotationless, $v$ is principal and there are finitely many directions based at $v$,
    we have that $Df^k(d)$ is independent of $k$ for all directions $d$ based at $v$
    for sufficiently large $k$.
    Therefore for sufficiently large $k$, we have that $(Df^k(\bar u_i),Df^k(u_i))$ is legal,
    and we conclude that $(u_i,\bar u_i)$ is legal and hence nondegenerate, so $u_i$ forms a circuit.
    It follows that if the terminal vertex of $E_i$ has finite vertex group
    and $u_i$ is contained in $G_j$, then it is contained in the core of $G_j$.
    If the turn $(u_i,\bar u_i)$ is degenerate, then the argument above shows that
    the terminal vertex of $E_i$ has infinite vertex group,
    and in particular there is a circuit of the form $u_ig_i$ for vertex group element $g_i$,
    and we conclude that if $u_i$ is contained in $G_j$, then it is contained in the core of $G_j$.
\end{proof}

If $f\colon \mathcal{G} \to \mathcal{G}$ is a CT,
the lemma implies that $f\colon \mathcal{G} \to \mathcal{G}$ satisfies the conclusions
of \Cref{improvedrelativetraintrack}.

\begin{lem}[cf.~Lemma 4.22 of \cite{FeighnHandel}]
    \label{negctlinearcondition}
    Suppose that $f\colon \mathcal{G} \to \mathcal{G}$ is a rotationless relative train track map
    and that the stratum $H_i$ is a single edge $E_i$
    such that $f(E_i) = E_i \cdot u_i$ for some nontrivial closed path $u_i$ in $G_{i-1}$.
    Suppose that $f|_{G_{i-1}}$ satisfies \hyperlink{Vertices}{(Vertices)}
    and \hyperlink{AlmostPeriodicEdges}{(Almost Periodic Edges)}.
    Suppose either that there are no almost Nielsen paths of height $i$
    or that $E_i$ is a linear or dihedral linear edge
    and all almost Nielsen paths of height $i$ have the form $E_iw_i^k\bar E_i$
    for $k \ne 0$ and where $w_i$ is the axis of $E_i$.
    If $h\colon \Gamma_{i-1} \to \Gamma_{i-1}$ is the lift of $f|_{G_{i-1}}$
    as in the paragraph ``Restricting to $G_{i-1}$ for non-exponentially growing strata,''
    then we have the following.
    \begin{enumerate}
        \item $\fix(h) = \varnothing$.
        \item The edge $E_i$ is linear or dihedral linear if and only if 
            there is some nonperipheral $c\in F$ such that $T_c$ preserves $\Gamma_{i-1}$ 
            and the restriction $T_c|_{\Gamma_{i-1}}\colon \Gamma_{i-1} \to \Gamma_{i-1}$
            commutes with $h$ and whose axis covers $u_i$ if $E_i$ is linear
            and $u_if_\sharp(u_i)$ if $E_i$ is dihedral linear.
    \end{enumerate}
\end{lem}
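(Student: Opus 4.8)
\textbf{Proof proposal for \Cref{negctlinearcondition}.}
The plan is to follow \cite[Lemma 4.22]{FeighnHandel} closely, translating each step into the graph-of-groups setting and using the results developed earlier in this section. First I would prove item 1. Suppose toward a contradiction that $\tilde x \in \fix(h)$. The terminal endpoint of $\tilde E_i$, call it $\tilde w$, lies in $\Gamma_{i-1}$ and is fixed by $\tilde f$ (by the choice of $\tilde f$ in the paragraph ``Restricting to $G_{i-1}$''). The tight path $\tilde\gamma$ from $\tilde w$ to $\tilde x$ is $h$-fixed, hence the path $\tilde E_i\tilde\gamma$ connects a fixed point to a fixed point, so its projection $E_i\gamma$ is an almost Nielsen path of height $i$. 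Running $\tilde\gamma$ in the other direction and using that $f(E_i) = E_i\cdot u_i$ is a splitting (so that $\tilde f^k_\sharp(\tilde E_i\tilde\gamma)$ has $\tilde E_i$ as a subpath), one gets that $E_i\gamma$ is \emph{indivisible}, or at least contains an indivisible almost Nielsen path of height $i$ starting with $E_i$. By hypothesis any such path has the form $E_iw_i^k\bar E_i$, which ends with $\bar E_i$ — but $\bar E_i \not\subset \Gamma_{i-1}$ and $\tilde E_i\tilde\gamma$ crosses $H_i$ only once, a contradiction. (If $E_i$ is not linear or dihedral linear, there are no almost Nielsen paths of height $i$ at all, so $E_i\gamma$ cannot exist, giving the contradiction immediately.) I expect some care is needed here in the dihedral-linear case, because $u_i = (\sigma\tau)^{d_i}\sigma$ is its own homotopy inverse, so $u_i$ itself is a periodic (in fact, after \hyperlink{Rotationless}{(Rotationless)} and \Cref{rotationlessnielsenpaths}, genuine) Nielsen path contained in $\Gamma_{i-1}$; I must check that this does \emph{not} produce an $h$-fixed point, using that the relevant vertex (the center of the dihedral pair) may not be fixed pointwise in the appropriate lift and that the endpoints of $u_i$ as a path are not both in $\fix(h)$.

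Next I would prove item 2. For the forward direction, suppose $E_i$ is linear, so $f(E_i) = g_iE_iw_i^{d_i}$ with $w_i$ a closed root-free Nielsen path by \hyperlink{LinearEdges}{(Linear Edges)}. Let $c$ be the nonperipheral element of $F$ whose conjugacy class is $[w_i]$ realized so that $T_c$ preserves the copy $\Gamma_{i-1}$ and its axis $A_c$ covers $u_i = w_i^{d_i}$. Since $w_i$ is a Nielsen path for $f|_{G_{i-1}}$ and $\tilde f_\sharp$ fixes the lift of $w_i$ issuing from $\tilde w$, one checks that $T_c|_{\Gamma_{i-1}}$ commutes with $h$: this is \Cref{boundarybasics} applied inside $\Gamma_{i-1}$ together with the fact that the suffix $u_i$ being $h_\sharp$-fixed forces $h\circ T_c = T_c\circ h$ on $A_c$ and hence (by the effectiveness of the action and $\Phi$-twisted equivariance) on all of $\Gamma_{i-1}$. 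The dihedral-linear case is identical with $u_if_\sharp(u_i) = w_i^{2d_i+1}$ (or the appropriate power, per the case analysis in ``Almost linear edges''), noting that $u_if_\sharp(u_i)$ is the genuine $f_\sharp$-periodic-of-period-two object whose $f^2_\sharp$ fixes it, so it provides the axis. For the converse, suppose such a $c$ exists. Then $T_c|_{\Gamma_{i-1}}$ commutes with $h$, so $\hat h$ fixes the endpoints $\hat T_c^{\pm}$ of $A_c$. Since $\fix(h) = \varnothing$ by item 1 but $\hat T_c^{\pm}\in\fix(\hat h)$, \Cref{producingafixedpoint} (or rather its contrapositive, combined with the fact that $h$ must move points of $A_c$ toward one of the endpoints as in the proof of \Cref{principalnonemptyfixed}) shows $h$ acts with nontrivial ``translation-like'' behavior along $A_c$; tracking the suffix, the path $u_i$ that $A_c$ covers must satisfy $f_\sharp(u_i) = u_i$ (linear) or $f^2_\sharp(u_i) = u_i$ (dihedral linear), which by \hyperlink{Vertices}{(Vertices)} and \hyperlink{NEGAlmostNielsenPaths}{(NEG Almost Nielsen Paths)} forces $E_i$ to be a linear or dihedral linear edge.

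The main obstacle I anticipate is the bookkeeping in the dihedral-linear case: distinguishing even versus odd suffixes, keeping track of which power of $w_i = \sigma\tau$ the axis $A_c$ actually covers, and confirming — using \hyperlink{Rotationless}{(Rotationless)} and \Cref{rotationlessnielsenpaths} — that for a CT all dihedral linear edges have \emph{odd} suffix (this is asserted at the end of ``Almost linear edges''), so that $u_if_\sharp(u_i)$ rather than $u_i$ is the correct object and there is no period-one ambiguity. A secondary subtlety is that the statement only assumes $f|_{G_{i-1}}$ satisfies \hyperlink{Vertices}{(Vertices)} and \hyperlink{AlmostPeriodicEdges}{(Almost Periodic Edges)} rather than the full CT axioms, so throughout I must be careful to invoke only those two properties below height $i$, exactly as Feighn--Handel do, since this lemma is used inductively in the construction of CTs in the proof of \Cref{preciseCTtheorem}. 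Once the dihedral accounting is pinned down, the rest is a routine transcription of \cite[Lemma 4.22]{FeighnHandel} using \Cref{boundarybasics}, \Cref{producingafixedpoint}, and \Cref{negctsplitting}.
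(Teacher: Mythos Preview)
Your item 1 contains a factual error that propagates into your dihedral worry. In the paragraph ``Restricting to $G_{i-1}$'' the lift $\tilde f$ is chosen to fix the \emph{initial} endpoint of $\tilde E_i$, not the terminal one; since $f(E_i) = E_i\cdot u_i$ with $u_i$ nontrivial, the terminal vertex $\tilde w$ is moved by $h$ to the far end of $\tilde u_i$. So your claim that $\tilde\gamma$ is $h$-fixed is false. The correct argument (which is the paper's) is simply that $\tilde E_i\tilde\gamma$ runs from the fixed initial vertex of $\tilde E_i$ to the fixed point $\tilde x$, hence projects to an almost Nielsen path $E_i\gamma$ of height $i$ that crosses $H_i$ exactly once and so cannot have the form $E_iw_i^k\bar E_i$. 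Once you correct this, your entire dihedral-linear ``obstacle'' disappears: there is no special case to check, because the contradiction comes purely from the edge count in $H_i$.

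For the converse in item 2 your sketch has a real gap. You gesture at \Cref{producingafixedpoint} and ``translation-like behavior,'' but the paper's mechanism is different and sharper: since $\fix(h)=\varnothing$ by item 1, \Cref{principalnonemptyfixed} says $h$ is \emph{not} a principal lift of $f|_{G_{i-1}}$, hence $\fix(\hat h)$ consists only of $\hat T_c^{\pm}$. The ray $\tilde u_i\cdot h_\sharp(\tilde u_i)\cdot h^2_\sharp(\tilde u_i)\cdots$ converges to a point of $\fix(\hat h)$, so it must end on $A_c$; this forces $u_i$ to be a periodic Nielsen path. From there the paper does \emph{not} invoke \hyperlink{NEGAlmostNielsenPaths}{(NEG Almost Nielsen Paths)} (which is not assumed below height $i$), but decomposes $u_i=\sigma_1\cdots\sigma_m$ into almost periodic edges and indivisible periodic almost Nielsen paths and splits into two cases: if some endpoint of some $\sigma_j$ is principal then all are, and \Cref{rotationlessnielsenpaths} plus the discussion after the definition of almost linear edges forces $E_i$ to be linear; if no endpoint is principal then \hyperlink{Vertices}{(Vertices)} and \hyperlink{AlmostPeriodicEdges}{(Almost Periodic Edges)} force each $\sigma_j$ into a dihedral pair, and the hypothesis on almost Nielsen paths of height $i$ finishes it. Your argument should replace the vague ``tracking the suffix'' with this ray-on-axis step and this case analysis.
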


\begin{proof}
    We follow the proof of \cite[Lemma 4.22]{FeighnHandel}.
    Write $\tilde f\colon \Gamma \to \Gamma$ and $\tilde E_i$ as in the paragraph
    ``Restricting to $G_{i-1}$ for non-exponentially growing strata.''
    We have $\tilde f(\tilde E_i) = \tilde E_i \cdot \tilde u_i$,
    where $\tilde u_i \subset \Gamma_{i-1}$ is a lift of $u_i$
    and $h$ maps the terminal vertex $\tilde x_1$ of $\tilde E_i$ 
    to the terminal endpoint $\tilde x_2$ of $\tilde u_i$.
    If $\tilde v$ belongs to $\fix(h)$ and $\tilde \gamma$ is the unique tight path
    from $\tilde x_1$ to $\tilde v$,
    then $\tilde E_i\tilde\gamma$ projects to an almost Nielsen path for $f$
    that is not of the form $E_iw_i^k\bar E_i$,
    so we conclude $\fix(h) = \varnothing$.

    If $E_i$ is a (non-dihedral) linear edge, the assumption that $f(E_i) = E_i \cdot u_i$ is a splitting 
    implies that the closed path $u_i$ represents a nonperipheral conjugacy class in $F$.
    There is a choice of representative $c$ such that $T_c$ preserves $\Gamma_{i-1}$
    and $T_c(\tilde x_1) = \tilde x_2$.
    We have $T_ch(\tilde x_1) = hT_c(\tilde x_1)$ is the terminal endpoint of the lift of $u_i$
    that begins at $\tilde x_2$, and both maps agree on the initial direction of $\tilde u_i$.
    Thus $T_c$ commutes with $h$.
    If $E_i$ is dihedral linear, the closed path $u_if_\sharp(u_i)$ 
    represents a nonperipheral conjugacy class in $F$.
    There is a choice of representative $c$ such that $T_c$ preserves $\Gamma_{i-1}$
    and $T_c(\tilde x_1)$ is the terminal endpoint $\tilde x_3$ of $\tilde u_i h(\tilde u_i)$.
    We have that $T_ch(\tilde x_1) = hT_c(\tilde x_1)$ is the terminal endpoint of the lift of $u_i$
    that begins at $\tilde x_3$.
    Since the central vertex of a dihedral pair has trivial vertex group,
    $T_c$ commutes with $h$.

    For the converse, suppose that there is some nonperipheral element $c \in F$
    such that $T_c$ preserves $\Gamma_{i-1}$ and commutes with $h$.
    \Cref{principalnonemptyfixed} implies that since $\fix(h) = \varnothing$,
    the map $h$ is not a principal lift of $f|_{G_{i-1}}$,
    so the endpoints of the axis of $T_c$ are the only fixed points in $\partial\Gamma_{i-1}$.
    But the ray $\tilde u_i\cdot h_\sharp(\tilde u_i)\cdot h^2_\sharp(\tilde u_i)\cdots $
    converges to a fixed point in $\partial\Gamma_{i-1}$,
    so the endpoint of this ray is contained in the axis of $T_c$.
    Therefore $u_i$ is a periodic Nielsen path.
    Write $u_i = \sigma_1\cdots\sigma_m$, where each $\sigma_j$
    is either an almost periodic edge or an indivisible periodic almost Nielsen path.
    If some endpoint of some $\sigma_j$ is principal, then all endpoints of all $\sigma_j$
    are principal and each $\sigma_j$ and hence $u_i$ has period one as a periodic almost Nielsen path.
    The argument after the definition of almost linear edges implies that $u_i$
    has period one as a periodic Nielsen path, and thus $E_i$ is a linear edge.
    If no endpoint is principal, then by \hyperlink{Vertices}{(Vertices)}
    and \hyperlink{AlmostPeriodicEdges}{(Almost Periodic Edges)},
    each $\sigma_j$ is an edge in a dihedral pair,
    and in fact it follows that $u_i$ is contained in a single dihedral pair.
    The assumption on almost Nielsen paths of height $i$
    implies that $E_i$ is a linear edge if the dihedral pair is fixed
    and a dihedral linear edge otherwise.
    It also follows that the axis of $T_c$ covers $u_i$ or $u_if_\sharp(u_i)$ as appropriate.
\end{proof}

The next several lemmas will be used in verifying 
\hyperlink{EGAlmostNielsenPaths}{(EG Almost Nielsen Paths)}.

\begin{lem}[cf.~Lemma 4.16 of \cite{FeighnHandel}]
    \label{egalmostnielsenpathsproperfold}
    Suppose that $H_r$ is an aperiodic exponentially growing stratum
    of a relative train track map $f\colon \mathcal{G} \to \mathcal{G}$,
    that $\rho$ is an indivisible almost Nielsen path of height $r$
    and that $\rho$ and $H_r$ satisfy the conclusions of 
    \hyperlink{EGAlmostNielsenPaths}{(EG Almost Nielsen Paths)}.
    Then the fold at the illegal turn of each indivisible almost Nielsen path
    obtained by iteratively folding $\rho$ is proper.
\end{lem}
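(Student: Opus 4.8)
The plan is to follow the structure of the proof of \cite[Lemma 4.16]{FeighnHandel}, adapting it to the graph of groups setting where vertex group elements appear at the ends of images of edges. The statement to prove is that iteratively folding $\rho$ only ever involves proper folds, so we must rule out that at any stage of the iteration we encounter an improper full fold or that the process fails to terminate in a way that is not governed by \hyperlink{EGAlmostNielsenPaths}{(EG Almost Nielsen Paths)}.

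First I would recall the key structural input from \hyperlink{EGAlmostNielsenPaths}{(EG Almost Nielsen Paths)}: on $G_r$, the map $f$ factors as $\theta \circ f_{r-1} \circ f_r$, where $f_r$ is a composition of proper extended folds defined by iteratively folding $\rho$, the map $f_{r-1}$ only involves edges in $G_{r-1}$, and $\theta$ is an isomorphism of graphs of groups. Since the hypothesis is that $\rho$ and $H_r$ \emph{satisfy the conclusions} of this property, I may use this factorization directly. The indivisible almost Nielsen path obtained after one proper extended fold is $\rho' = F_\sharp(\rho)$, where $F$ is the extended fold; writing $\rho = \alpha\beta$ as a concatenation of maximal $r$-legal subpaths with initial edges (and vertex group elements) $g_1 E_1$ of $\bar\alpha$ and $g_2 E_2$ of $\beta$, the key point is to track what happens to the turn at the illegal turn of $\rho'$ under $f'$. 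The fold at the illegal turn of $\rho'$ being proper means that $f'(g'_1 E'_1)$ and $f'(g'_2 E'_2)$ (the analogous initial edges and vertex group elements for $\bar\alpha'$ and $\beta'$) are distinct and neither is an initial subpath of the other unless it is a proper initial subpath.

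The main argument would proceed as follows. By \Cref{finitelymanynielsenpaths}, $\rho$ has exactly one illegal turn in $H_r$, and iteratively folding $\rho$ produces a finite decreasing sequence of indivisible almost Nielsen paths of height $r$ (the $\operatorname{EL}_r$-length, or more precisely the number of $H_r$-edges, strictly decreases at each proper fold since a proper extended fold identifies $E''_2$ with $E_1 b$, removing at least one $H_r$-edge-worth of material). I would show by induction down this sequence that each fold is proper: if some fold in the sequence were improper, i.e.\ $f(g_1 E_1) = f(g_2 E_2)$, then because $f$ is a topological representative and $Df$ restricted to directions in $H_r$ is injective (a consequence of \hyperlink{EG-i}{(EG-i)} together with the relative train track properties — an illegal turn is one mapped to a degenerate turn, and $Df$ of the turn $(g_1E_1,g_2E_2)$ being the turn at the common endpoint after cancellation), the directions determined by $g_1 E_1$ and $g_2 E_2$ would have to already be equal, contradicting that $\rho$ has a genuine (nondegenerate) illegal turn. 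The subtlety introduced by vertex groups is that ``equal'' must be interpreted as equal as directions, i.e.\ in $\coprod_{e \in \st(v)} \mathcal{G}_v \times \{e\}$, so the vertex group elements $g_1$ and $g_2$ matter; I would use the factorization $f|_{G_r} = \theta \circ f_{r-1} \circ f_r$ to see that $f_r$ is already a composition of \emph{proper} folds, and that $f_{r-1}$ and $\theta$ do not affect the illegal turn in $H_r$ (since $f_{r-1}$ involves only $G_{r-1}$-edges and $\theta$ is an isomorphism), so the entire $f_r$-part of the fold sequence consists of proper folds, which is exactly the claim.

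The hard part will be handling the bookkeeping of vertex group elements through the extended fold: when $\mathcal{G}'$ is formed by identifying $E''_2$ with $E_1 b$, the path $b$ may include a vertex group element or be a path in $G_{r-1}$, and one must check that the induced map $g\colon \mathcal{G}' \to \mathcal{G}$ and hence $f' = (Fg)$ correctly records these, so that the notion of ``the fold at the illegal turn of $\rho'$ is proper'' is well-posed and matches the conclusion of \hyperlink{EGAlmostNielsenPaths}{(EG Almost Nielsen Paths)}. I expect this to be routine once one is careful — the content is entirely that of \cite[Lemma 4.16]{FeighnHandel}, with \Cref{properfoldrelativetraintrackmap} guaranteeing at each stage that we still have a relative train track map satisfying the four properties needed to iterate, and \Cref{finitelymanynielsenpaths} guaranteeing the single-illegal-turn structure is preserved. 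I would close by noting that the process terminates because the number of $H_r$-edges in the indivisible almost Nielsen path strictly decreases, and each fold in the sequence has been shown to be proper.
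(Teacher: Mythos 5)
Your proposal correctly identifies the factorization $f|_{G_r} = \theta\circ f_{r-1}\circ f_r$ as the key input from \hyperlink{EGAlmostNielsenPaths}{(EG Almost Nielsen Paths)}, and correctly observes that $\theta$ is an isomorphism and that $f_{r-1}$ does not touch the top stratum. But from there the argument has two genuine gaps, both fatal.

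First, you claim that ``iteratively folding $\rho$ produces a finite decreasing sequence,'' using the number of $H_r$-edges as a termination measure. This is wrong: a proper extended fold replaces $E_2$ by $E'_2$ but leaves $H'_r = (H_r\setminus E_2)\cup E'_2$ with exactly the same number of edges as $H_r$ (this is precisely \Cref{properfoldsamenumberofedges}). The $L_r$-length of the indivisible almost Nielsen path does shrink, but it is a real-valued metric quantity, and the total metric $\sum L_r(E)$ shrinks in proportion, so the ratio $L_r(\rho_k)/\sum L_r(E)$ is invariant (that is item 4 of \Cref{egalmostnielsenpathsproperties}). The iteration is infinite, and the lemma's conclusion is that \emph{every} fold in this infinite sequence is proper --- not merely the first $K$ comprising $f_r$. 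Your closing remark that ``the entire $f_r$-part of the fold sequence consists of proper folds, which is exactly the claim'' is therefore a misreading of the statement.

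Second, your proposed contradiction --- that an improper fold $f(g_1E_1)=f(g_2E_2)$ would, by ``injectivity of $Df$ on directions in $H_r$,'' force the turn $\{(g_1,E_1),(g_2,E_2)\}$ to be degenerate --- does not go through, because $Df$ is \emph{not} injective on directions in $H_r$. Property \hyperlink{EG-i}{(EG-i)} says $Df$ maps $H_r$-directions to $H_r$-directions; it says nothing about injectivity. Indeed, the existence of the nondegenerate illegal turn in $\rho$ is exactly witness to the failure of injectivity: some iterate of $Df$ maps that turn to a degenerate one. If $Df$ were injective on $H_r$-directions there would be no illegal turns of height $r$ and hence no indivisible almost Nielsen path $\rho$ in the first place.

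What the proof actually needs is a periodicity argument, which is the content of \cite[Lemma 4.16]{FeighnHandel} and of the paper's own proof. One introduces the combinatorial ``data set'' $S$ consisting of the ordered lists of $H_r$-edges appearing in $\rho$ and in each $f(E)$ for $E\in H_r$. As in the proof of \Cref{finitelymanynielsenpaths}, the data set determines the amount of cancellation at the illegal turn, hence whether the next fold is partial, proper, or improper; and if the fold is proper, $S$ determines the data set $S_1$ after the fold. So there is a well-defined sequence $S_0, S_1, \ldots, S_k, \ldots$ as long as all folds are proper. The factorization guarantees $S_0,\ldots,S_K$ are all defined (these are the $K$ proper folds of $f_r$). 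Because $f_{r-1}$ only involves $G_{r-1}$-edges and $\theta$ is an isomorphism inducing a bijection on the $H_r$-edges, $\theta$ carries $S_K$ back to $S_0$. Hence the sequence $(S_k)$ is defined for all $k$ and is periodic of period $K$, and since each $S_k$ for $0\le k<K$ prescribes a proper fold, every fold in the infinite iteration is proper. This periodicity step is the actual idea of the proof and is absent from your proposal.
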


\begin{proof}
    The proof is identical to \cite[Lemma 4.16]{FeighnHandel}.
    We may assume that $G = G_r$.

    Define the \emph{data set $S$ for $f$ and $\rho$}
    to be the ordered sequences of edges in $H_r$
    in $\rho$ and in $f(E)$ for each edge $E$ of $H_r$.
    As in the proof of \Cref{finitelymanynielsenpaths},
    we see that $S$ determines the type of fold (partial, proper or improper)
    of the fold at the illegal turn of $\rho$:
    if we decompose $\rho$ into a concatenation of $r$-legal paths $\alpha\beta$,
    then edges from $f(\bar\alpha)$ cancel with edges of $f(\beta)$
    until the first distinct $H_r$ edges are reached.
    Assuming that the fold is proper so that the extended fold is defined,
    $S$ also determines the data set for the relative train track map
    and indivisible almost Nielsen path obtained by folding $\rho$.
    So define $S_k$ to be the data set for the relative train track map
    and indivisible almost Nielsen path
    obtained by folding $\rho$ $k$ times, assuming that the folds are defined.

    We adopt the notation of \hyperlink{EGAlmostNielsenPaths}{(EG Almost Nielsen Paths)}.
    We have that $f_r\colon \mathcal{G} \to \mathcal{G}^1$
    is the composition of finitely many, say $K$, proper extended folds
    defined by iteratively folding $\rho$.
    Therefore $S_K$ is defined.
    Since $f|_{G_r} = \theta\circ f_{r-1}\circ f_2$,
    the isomorphism $\theta$ determines a bijection between the edges of the top stratum
    of $\mathcal{G}^2$ (and thus $\mathcal{G}^1$, 
    since $f_{r-1}$ does not affect edges in the top stratum)
    and the edges of $H_r$.
    Furthermore this isomorphism takes $S_K$ to $S_0$.
    Therefore the sequence of $S_k$ for $k \ge 0$ is periodic with period $K$
    so the fold at the illegal turn of each indivisible almost Nielsen path
    obtained by iteratively folding $\rho$ is always proper.
\end{proof}

\begin{prop}[cf.~Lemma 4.17 of \cite{FeighnHandel} and Theorem 5.15 of \cite{BestvinaHandel}]
    \label{egalmostnielsenpathsproperties}
    Suppose $f\colon \mathcal{G} \to \mathcal{G}$ is an eg-aperiodic relative train track map
    and that for each exponentially growing stratum $H_r$
    such that there exists an indivisible almost Nielsen path $\rho$ of height $r$,
    the fold at the illegal turn of each indivisible almost Nielsen path
    obtained by iteratively folding $\rho$ is proper.
    The following hold.
    \begin{enumerate}
        \item For each exponentially growing stratum $H_r$
            there is, up to equivalence and reversal of orientation,
            at most one indivisible almost Nielsen path $\rho$ of height $r$.
    \end{enumerate}
    Supposing $\rho$ exists, we have the following.
    \begin{enumerate}[resume]
        \item Supposing the illegal turn of $\rho$ in $H_r$ is based at the vertex $v$,
            the $\mathcal{G}_v$-orbit of this turn
            are the only nondegenerate illegal turns in $H_r$.
        \item The path $\rho$ crosses every edge in $H_r$.
        \item The length $L_r(\rho)$ (see \Cref{lengthfunction})
            satisfies $L_r(\rho) = 2\sum L_r(E)$,
            where the sum is taken over the edges of $H_r$.
    \end{enumerate}
\end{prop}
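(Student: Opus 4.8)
The statement to prove is \Cref{egalmostnielsenpathsproperties}, which is the free-product analogue of \cite[Theorem 5.15]{BestvinaHandel} and \cite[Lemma 4.17]{FeighnHandel}. The plan is to adapt the proof of \cite[Lemma 4.17]{FeighnHandel} essentially verbatim, keeping careful track of vertex group elements via the ``almost'' prefix.

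First I would reduce to the case $G = G_r$, since all the statements concern only paths and edges of height $r$. I would then fix an exponentially growing stratum $H_r$ admitting an indivisible almost Nielsen path $\rho$. The key construction is the \emph{data set} $S$ from the proof of \Cref{egalmostnielsenpathsproperfold}: the ordered sequences of $H_r$-edges appearing in $\rho$ and in each $f(E)$. As in \Cref{finitelymanynielsenpaths}, this data set determines the cancellation pattern at the illegal turn of $\rho$ and hence, because the fold is proper (by hypothesis), determines the data set $S_1$ of the relative train track map and indivisible almost Nielsen path obtained by folding $\rho$. Iterating, I obtain data sets $S_k$ for all $k \ge 0$. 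The crucial observation, exactly as in \Cref{egalmostnielsenpathsproperfold}, is that folding $\rho$ enough times returns a relative train track map that is related to the original by an isomorphism of graphs of groups, so the sequence $S_k$ is eventually periodic; combined with the fact that each intermediate map has strictly fewer edges of $H_r$ counted with multiplicity in $f(E)$ summed over $E$ (or some similar monovariant), one forces $f|_{H_r}$ itself to already be ``folded'' in a strong sense. From this, uniqueness of $\rho$ up to equivalence and reversal (item 1) follows: any two indivisible almost Nielsen paths of height $r$ would have to have the same illegal turn $\mathcal{G}_v$-orbit and the same data set, hence agree up to vertex group elements at the ends (by \Cref{finitelymanynielsenpaths}).

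For item 2, I would argue that a second nondegenerate illegal turn in $H_r$ not in the $\mathcal{G}_v$-orbit of the illegal turn of $\rho$ would, via \Cref{kprotectedsplitting} and the argument of \Cref{finitelymanynielsenpaths}, produce a second inequivalent indivisible almost Nielsen path of height $r$ (pushing $r$-legal paths whose initial ends determine that turn forward until the directions are fixed, as in the proof of \Cref{principalEG}), contradicting item 1. For item 3, the standard argument: $H_r$ is aperiodic and the illegal turn of $\rho$ meets the interior of $H_r$, so $\rho$ meets a definite edge of $H_r$; by aperiodicity some power of the transition matrix is positive, and combined with the fact that $f_\sharp$ preserves (the equivalence class of) $\rho$ and does not create illegal turns in $H_r$, every edge of $H_r$ must be crossed by $\rho$ — otherwise the edges not crossed by $\rho$ would form a proper invariant subgraph, contradicting irreducibility. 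For item 4, once items 2 and 3 are established, I would use the length function $L_r$ from \Cref{lengthfunction}: writing $\rho = \alpha\beta$ as a concatenation of maximal $r$-legal subpaths, \Cref{rttlemma} gives $L_r(f_\sharp(\rho)) = L_r(\alpha\beta) = \lambda_r L_r(\rho)$ before cancellation, and $f_\sharp(\rho) = g\rho g'$ forces the total cancelled length to be $(\lambda_r - 1)L_r(\rho)$; combined with the combinatorics of the single illegal turn and the fact that $\alpha$ and $\beta$ together cross each edge exactly enough times, a direct computation gives $L_r(\rho) = 2\sum_E L_r(E)$, the factor of $2$ coming from the fact that $\rho$, being a concatenation of two $r$-legal paths folding at a single turn, crosses each edge of $H_r$ exactly twice (once in each of $\alpha$, $\bar\beta$ up to the cancelled portion).

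The main obstacle I anticipate is item 4 and, to a lesser extent, making item 3 fully rigorous in the graph-of-groups setting. In the free group case the length function is just a weighted edge count and the ``factor of $2$'' is transparent from the Whitehead-graph / fold picture; here I must be careful that vertex group elements do not interfere with the length computation (they don't, since $L_r$ ignores them and \Cref{lengthfunction} is already stated for graphs of groups) and that the cancellation at the illegal turn is ``full'' in the appropriate sense — i.e.\ that after the iterative folding analysis, $\rho$ is obtained from a single fold of $f|_{H_r}$, so its two $r$-legal halves $\alpha$ and $\bar\beta$ together traverse the ``doubled'' stratum. I would handle this by running the data-set periodicity argument to its conclusion to pin down the combinatorial structure of $f|_{H_r}$ and $\rho$ exactly, then reading off item 4 from that structure, just as \cite{FeighnHandel} does following \cite[Theorem 5.15]{BestvinaHandel}.
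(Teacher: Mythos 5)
Your proposal inverts the dependency order of the paper's proof, and that inversion is where the gaps open up. The paper proves item 4 \emph{first}, and the length formula $L_r(\rho) = 2\sum L_r(E)$ is the engine driving items 2, 3 and 1. The argument for item 4 is a ratio-finiteness argument: if $L_r(\rho) \ne 2\sum L_r(E)$, then under iterative folding the quantity $\bigl|L^k_r(\rho_k)/\sum L^k_r(E) - 2\bigr|$ strictly increases, so the ratio takes on infinitely many values; but the ratio is determined by the underlying graph map (locations of fixed points and the PF-eigenvector metric), and as in \cite[Lemma 3.7]{BestvinaHandel}, proper folds preserve the edge count and PF eigenvalue, so there are only finitely many such maps up to the relevant coarse equivalence --- contradiction. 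Your proposed route to item 4, ``$\rho$ crosses each edge exactly twice, hence $L_r(\rho) = 2\sum L_r(E)$,'' is not correct: $\rho$ does not in general cross each edge exactly twice, and the factor of 2 is not a combinatorial edge-count statement but a consequence of the weighted-length invariance argument above. You flag this as your ``main obstacle,'' correctly, but the fix you sketch (read item 4 off the structure of $f|_{H_r}$ after the data-set periodicity analysis) does not by itself pin down the weighted length.

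Two further gaps. For item 3, ``the edges not crossed by $\rho$ would form a proper invariant subgraph'' is not right --- that set of edges is not obviously $f$-invariant, so irreducibility does not apply directly. The paper instead derives item 3 from the fact that $\sum L^k_r(E) \to 0$ under iterative folding (a consequence of item 4 and the periodicity of the fold data), which forces each $L^k_r(E) \to 0$ and hence $\rho$ to eventually sweep over every edge. For item 1, ``same illegal turn orbit and same data set'' is not established merely by indivisibility; the paper's uniqueness argument requires a careful analysis of maximum common terminal segments of the two candidate $r$-legal halves under iterated folding, again using the fact that $L^k_r(\rho_k) \to 0$, to rule out the existence of a persistent nontrivial difference. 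Your appeal to the data-set argument from \Cref{egalmostnielsenpathsproperfold} establishes periodicity of the folding process (which the paper also uses), but that alone does not yield uniqueness without the length engine.
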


\begin{proof}
    We follow the argument in \cite[Lemma 3.9]{BestvinaHandel}.
    Fix $r$ and assume that an indivisible almost Nielsen path $\rho$ of height $r$ exists.
    By assumption,
    the fold at the illegal turn of each indivisible almost Nielsen path obtained
    by iteratively folding $\rho$ is always proper,
    so we may always continue folding.

    If $f\colon \mathcal{G}_1 \to \mathcal{G}_1$ is obtained from $f\colon \mathcal{G} \to \mathcal{G}$
    by folding $\rho$,
    the length function $L_r$ in \Cref{lengthfunction} 
    descends to a length function $L^1_r$ on $\mathcal{G}_1$.
    Since $\mathcal{G}_1$ is obtained from $\mathcal{G}$ by identifying a pair of intervals
    of some equal $L_r$-length $x$, we have
    $\sum L^1_r(E) = \sum L_r(E) - x$,
    where in each case the sum is taken over the edges of $H_r$.
    The indivisible almost Nielsen path $\rho_1$ of height $r$ in $\mathcal{G}_1$ determined by $\rho$
    satisfies
    $L^1_r(\rho_1) = L_r(\rho) - 2x$.
    If $L_r(\rho) \ne 2 \sum L_r(E)$, then
    \[  \left| \frac{L^1_r(\rho_1)}{\sum L_r^1(E)} - 2\right| 
    = \left|\frac{L_r(\rho) - 2x}{\sum L_r(E) - x} - 2\right|
    = \left|\frac{L_r(\rho) - 2\sum L_r(E)}{\sum L_r(E) - x}\right|
    > \left|\frac{L_r(\rho) - 2\sum L_r(E)}{\sum L_r(E)}\right|, \]
    and the last expression is equal to $|L_r(\rho)/\sum L_r(E) - 2|$.
    If we iteratively fold $\rho$,
    obtaining relative train track maps $f_i\colon \mathcal{G}_i \to \mathcal{G}_i$
    and indivisible almost Nielsen paths $\rho_i$,
    we see that the ratio $L^i_r(\rho_i)/\sum L^i_r(E)$ takes on infinitely many values.
    But on the other hand,
    as in the proof of \cite[Theorem 2.11, Step 3]{CollinsTurner},
    notice that the ratio $L^i_r(\rho_i)/ \sum L^i_r(E)$
    is determined by the underlying graph map of $f\colon \mathcal{G} \to \mathcal{G}$,
    since the location of indivisible almost Nielsen paths are determined by fixed points of $f$
    and the metric is determined by the transition matrix of $H_r$.
    Therefore the argument in \cite[Lemma 3.7]{BestvinaHandel}
    applies to show  that we have a contradiction;
    we recount it for the reader's convenience.
    If $\mathcal{G}_1$ and $\mathcal{G}_2$ are isomorphic as graphs of groups
    and if after identifying we find that the associated relative train track maps
    $f_1\colon \mathcal{G}_1 \to \mathcal{G}_1$ and $f_2\colon \mathcal{G}_2 \to \mathcal{G}_2$
    have the property that for each edge $E$ of the identified graph,
    the underlying paths of $f_1(E)$ and $f_2(E)$ are the same,
    then it follows that the associated ratios above are equal.
    Since the number of edges in $\mathcal{G}$ remains constant under proper folds
    and there are only finitely many irreducible matrices of a given Perron--Frobenius eigenvalue,
    it follows that there are only finitely many possibilities for the maps
    $f_i\colon \mathcal{G}_i \to \mathcal{G}_i$,
    at least up to this coarse notion of equivalence.
    This contradiction implies that $L_r(\rho) = 2\sum L_r(E)$.

    Now suppose that $T$ is an illegal turn in $\mathcal{G}$ 
    that is distinct from the illegal turn in $\rho$.
    Take $\ell$ to be the smallest positive integer such that $Df^\ell(T)$ is degenerate.
    Suppose first that $\ell = 1$.
    We may fold the edges determining $T$ (after possibly subdividing)
    to obtain a topological representative 
    which may not be a relative train track map because \hyperlink{EG-i}{(EG-i)}
    and \hyperlink{EG-ii}{(EG-ii)} may fail.
    Restore these properties by the moves
    ``(invariant) core subdivision'' and ``collapsing inessential connecting paths''
    as in \cite[Lemmas 3.4 and 3.5]{Myself} or \cite[Lemmas 5.13 and 5.14]{BestvinaHandel}.
    By \Cref{pfcorollary}, \hyperlink{EG-iii}{(EG-iii)} is satisfied,
    resulting in a relative train track map
    $\hat f\colon \hat{\mathcal{G}} \to \hat{\mathcal{G}}$
    and indivisible almost Nielsen path $\hat\rho$ determined by $\rho$. 
    The map $\hat f$ is still eg-aperiodic and 
    the fold at $\hat\rho$ is still proper.
    Invariant core subdivision and collapsing inessential connecting paths
    do not affect the resulting length function $\hat L_r$,
    so we have that $\sum \hat L_r(E) < \sum L_r(E)$ and $\hat L_r(\hat\rho) = L_r(\rho)$,
    so that $\hat L_r(\hat\rho)/(\sum \hat L_r(E)) > 2$.
    This contradicts our previous arguments.
    Therefore we must have that $\ell \ne 1$
    and $Df^{\ell - 1}(T)$ is in the $\mathcal{G}_v$-orbit of the illegal turn in $\rho$.
    Replacing the turn $T$ with $Df^{\ell - 2}(T)$,
    we may assume $\ell = 2$.
    Fold $\rho$ to obtain a relative train track map $f_1\colon \mathcal{G}_1 \to \mathcal{G}_1$,
    let $\rho_1$ be the indivisible almost Nielsen path in $\mathcal{G}_1$ determined by $\rho$
    and let $T_1$ be the turn in $\mathcal{G}_1$ determined by $T$.
    Then $Df_1(T_1)$ is degenerate (since the previously illegal turn at $\rho$ is now degenerate),
    and we reach a contradiction as in the previous case.

    Inductively define $f_k\colon \mathcal{G}_k \to \mathcal{G}_k$ and $\rho$
    by folding $\rho_{k-1}$ in $\mathcal{G}_{k-1}$,
    starting with $f_0 = f\colon \mathcal{G} \to \mathcal{G}$ and $\rho_0 = \rho$.
    For $k \ge 0$, we have $\sum L^{k+1}_r(E) = \sum L^k_r(E) = x_k$.
    By the existence of $\theta$ conjugating $L^K_r$ to a multiple of $L^0_r$,
    we see that $x_k/(\sum L^k_r(E))$ takes on finitely many values, so it is uniformly bounded below.
    It follows that $\sum L^k_r(E)$ goes to zero with $k$,
    so each $L^k_r(E)$ does as well, proving that $\rho$ crosses every edge of $H_r$.

    Now suppose $\rho' = \alpha'\beta'$ is another indivisible almost Nielsen path of height $r$.
    We must show that $\rho$ is equivalent to $\rho'$ (after possibly reversing the orientation).
    Since there is one $\mathcal{G}_v$-orbit of illegal turns in $H_r$,
    $\rho$ and $\rho'$ have the same illegal turn.
    (Recall from \Cref{basicssection} our convention on which turns a path crosses.)
    After reorienting $\rho'$ if necessary,
    we may assume that the initial edge of $\bar \alpha$ equals the initial edge of $\bar\alpha'$
    and the initial edge of $\beta$ equals the initial edge of $\beta'$.
    We factor the vertex group element at the illegal turn so that
    it is equal for $\alpha$ and $\alpha'$ and thus for $\beta$ and $\beta'$.
    Let $\rho'_k$ be the indivisible almost Nielsen path in $\mathcal{G}_k$ determined by $\rho'$.
    Write $\rho_k = \alpha_k\beta_k$ and $\rho'_k = \alpha'_k\beta'_k$ 
    as in \Cref{finitelymanynielsenpaths}.

    Suppose that $\alpha \ne g\alpha'$ for any vertex group element $g$.
    Then if the initial endpoints of $\alpha$ and $\alpha'$ are distinct,
    the same is true of $\alpha_k$ and $\alpha'_k$.
    If the endpoints are equal, then $\alpha\bar\alpha'$ is a homotopically nontrivial loop,
    and the same is true of $\alpha_k\bar\alpha_k'$.
    We see that $\alpha_k \ne \alpha'_k$ for all $k$.
    For each $k \ge 0$, write $\nu_k$ for the maximum common terminal subinterval 
    of $\alpha_k$ and $\alpha'_k$
    and let $\mu_k$ and $\mu'_k$ be the complementary initial segments of $\alpha_k = \mu_k\nu_k$
    and $\alpha'_k = \mu'_k\nu_k$.
    Suppose that the turn $\{\bar\mu_k,\mu'_k\}$ is not in the same $\mathcal{G}_v$-orbit
    as $\{\bar\alpha_k,\beta_k\}$.
    Then we have $L^{k+1}_r(\mu_{k+1}) = L^k_r(\mu_k)$.
    But the fact that the (orbits of) turns above are distinct
    implies that $\{\bar\mu_{k+1},\mu'_{k+1}\}$ is not in the $\mathcal{G}_v$-orbit
    of the turn $\{\bar\alpha_{k+1},\beta_{k+1}\}$,
    and we conclude that $L^{k+i}_r(\mu_{k+i}) = L^k_r(\mu_k)$ for $i > 0$,
    contradicting the fact that $L^k_r(\rho_k) = 2\sum L^k_r(E)$ goes to zero as $k$ increases.

    Therefore for $k \ge 0$ we may assume that the turns $\{\bar\alpha_k,\beta_k\}$
    and $\{\bar\mu_k,\mu'_k\}$ are in the same $\mathcal{G}_v$-orbit.
    Then $\nu_{k+1}$ is obtained from $\nu_k$ by deleting a terminal subinterval
    of length $x_k$ and adding an initial interval of length $x_k$,
    so we have $L^{k+1}_r(\nu_{k+1}) = L^k_r(\nu_k)$, 
    again contradicting the fact that $L^k_r(\rho_k)$ goes to zero.
    This proves that $\rho$ is equivalent to $\rho'$.
\end{proof}

\begin{lem}[cf.~Lemma 4.24 of \cite{FeighnHandel}]
    \label{egalmostnielsenpathsfurtherproperties}
    Suppose that $f\colon\mathcal{G} \to \mathcal{G}$ is a rotationless relative train track map
    satisfying the conclusions of \Cref{improvedrelativetraintrack},
    that $H_r$ is an exponentially growing stratum 
    satisfying \hyperlink{EGAlmostNielsenPaths}{(EG Almost Nielsen Paths)},
    and that $\rho$ is an indivisible almost Nielsen path of height $r$.
    Then the following hold.
    \begin{enumerate}
        \item $H^z_r = H_r$.
        \item If $\rho = a_1b_1\ldots b_{\ell}a_{\ell+1}$ is the decomposition of $\rho$
            into subpaths $a_i$ of height $r$ and maximal subpaths $b_i$
            in $G_{r-1}$, then each $b_i$ is an almost Nielsen path.
        \item If $E$ is an edge of $H_r$, then each maximal subpath of $f(E)$ in $G_{r-1}$
            is a path $b_i$ from item 2. In particular $f(E)$ splits into edges in $H_r$
            (possibly with vertex group elements at either end)
            and almost Nielsen paths in $G_{r-1}$.
    \end{enumerate}
\end{lem}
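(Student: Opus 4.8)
The plan is to follow the proof of \cite[Lemma 4.24]{FeighnHandel}, translating each step to the graph-of-groups setting, where the main new feature is the presence of vertex group elements. For item 1, I would argue as follows: property \hyperlink{EGAlmostNielsenPaths}{(EG Almost Nielsen Paths)} describes $f|_{G_r}$ as a composition $\theta\circ f_{r-1}\circ f_r$, where $f_r$ is built from proper extended folds defined by iteratively folding $\rho$. By \Cref{egalmostnielsenpathsproperties}, $\rho$ crosses every edge of $H_r$, and the proper folds identify initial subpaths of edges of $H_r$. The key observation is that a proper extended fold, by its very construction, does not introduce zero strata envelopped by $H_r$ other than those forced by $\rho$ itself: the subpaths in $G_{r-1}$ appearing in $f(E)$ for $E$ an edge of $H_r$ are precisely the pieces $b_i$ of $\rho$, which meet only $G_{r-1}$, not any zero stratum below $H_r$. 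Since by \Cref{improvedrelativetraintrack} every zero stratum is enveloped by some exponentially growing stratum, and the edges of a hypothetical zero stratum below $H_r$ enveloped by $H_r$ would have to be crossed by some $f^k_\sharp(E)$ with $E$ in $H_r$, I would show that no such crossings occur, so $H^z_r = H_r$.

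For item 2, I would use the structure of $\rho$ as an indivisible almost Nielsen path of exponentially growing height. By \Cref{finitelymanynielsenpaths}, $\rho = \alpha\beta$ where $\alpha$, $\beta$ are $r$-legal with a single illegal turn in $H_r$ at their common vertex. Applying \Cref{rttlemma} to the $r$-legal pieces $\alpha$ and $\beta$, the decomposition $\rho = a_1b_1\cdots b_\ell a_{\ell+1}$ into single edges of $H_r$ and maximal subpaths in $G_{r-1}$ is a splitting — except possibly at the illegal turn, which lies in the interior of some $a_i$. Since $f^k_\sharp(\rho) = g\rho g'$ for all $k$ (here using that $f$ is rotationless, so period one by \Cref{rotationlessnielsenpaths}), the $G_{r-1}$-pieces $b_i$ must each satisfy $f^k_\sharp(b_i)$ having the same underlying path as $b_i$, i.e. each $b_i$ is a periodic and hence (again by rotationlessness) an almost Nielsen path. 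I would make this precise by tracking the splitting through $f^k_\sharp$: if some $b_i$ grew in length under iteration, the number of edges of $f^k_\sharp(\rho)$ would grow, contradicting that it equals $g\rho g'$.

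For item 3, I would combine item 2 with \hyperlink{CompletelySplit}{(Completely Split)}: the path $f(E)$ is completely split for each edge $E$ in an irreducible stratum. Since $H_r$ satisfies \hyperlink{EG-i}{(EG-i)}, the first and last (partial) edges of $f(E)$ are in $H_r$, and the maximal subpaths of $f(E)$ in $G_{r-1}$ appear as terms of the complete splitting. These terms are either indivisible almost Nielsen paths, exceptional paths, or taken connecting paths in zero strata; by item 1 there are no zero strata below $H_r$ enveloped by $H_r$, so the $G_{r-1}$-subpaths are indivisible almost Nielsen or exceptional, hence in particular almost Nielsen paths. I would then identify each such subpath with one of the $b_i$ by observing that every indivisible almost Nielsen path (or exceptional path) of height less than $r$ appearing in $f(E)$ must, under iteration, stay within $G_{r-1}$; since $\rho$ is (up to equivalence and orientation) the unique indivisible almost Nielsen path of height $r$ by \Cref{egalmostnielsenpathsproperties}, and the folding structure forces the $G_{r-1}$-content of $f(E)$ to come from $\rho$, these subpaths are exactly the $b_i$.

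\textbf{Main obstacle.} I expect the hard part to be item 1, specifically ruling out zero strata below $H_r$ that are enveloped by $H_r$: one must argue that the proper-fold description of $f|_{G_r}$ in \hyperlink{EGAlmostNielsenPaths}{(EG Almost Nielsen Paths)} genuinely prevents such strata, which requires understanding how extended folds interact with the filtration and with vertex group elements deposited at fold sites. A subtlety is that folding an indivisible almost Nielsen path involves identifying an edge $E_2''$ with a path $E_1 b$ where $b\subset G_{r-1}$ may carry vertex group elements; one must check this does not create a contractible component of some $G_i$ (and hence, after tree replacement as in the proof of \Cref{improvedrelativetraintrack}, a zero stratum) below $H_r$ but enveloped by it. I would handle this by invoking \Cref{propertyPconsequence} and the fact, recorded after the definition of CTs, that $f$ satisfies the conclusions of \Cref{improvedrelativetraintrack} once it is a CT (via \Cref{negctsplitting}), so that \hyperlink{Z}{(Z)} already constrains the zero strata; the task reduces to showing the only exponentially growing stratum that can envelope a zero stratum meeting $f(E)$ for $E\in H_r$ is $H_r$ itself, which follows since any such zero stratum would be crossed by $\rho$ (by \Cref{egalmostnielsenpathsproperties} item 3 and item 2 just proved, $\rho$'s $G_{r-1}$-pieces are almost Nielsen, hence meet no zero stratum).
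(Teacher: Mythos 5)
Your proposal misses the central mechanism of the paper's proof, which is to exploit the factorization $f|_{G_r} = \theta\circ f_{r-1}\circ f_r$ supplied by \hyperlink{EGAlmostNielsenPaths}{(EG Almost Nielsen Paths)}, and in place of that mechanism you rely on arguments that either are circular or invoke unavailable hypotheses.

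For item 1, the paper's argument is short and clean: each of $f_r$, $f_{r-1}$ and $\theta$ induces a bijection on the set of components of the filtration element of height $r-1$ (proper extended folds, folds in $G_{r-1}$, and isomorphisms all do), so $f|_{G_r}$ does as well; hence every component of $G_{r-1}$ is non-wandering, and by \hyperlink{Z}{(Z)} there are no zero strata enveloped by $H_r$. Your proposed route instead claims that ``the subpaths in $G_{r-1}$ appearing in $f(E)$ for $E$ an edge of $H_r$ are precisely the pieces $b_i$ of $\rho$,'' which is essentially item 3, and your ``main obstacle'' paragraph explicitly appeals to ``item 2 just proved''—both before you have proved items 2 and 3. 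This is circular.

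For item 2, your plan is to note $f^k_\sharp(\rho)=g\rho g'$ and ``track the splitting through $f^k_\sharp$.'' But $\rho = a_1b_1\cdots b_\ell a_{\ell+1}$ is not a splitting—$\rho$ contains its one illegal $H_r$-turn inside some $a_i$—and after the cancellation at that illegal turn the $G_{r-1}$-subpaths of $f_\sharp(\rho)$ are a mix of $f_\sharp(b_j)$'s and $G_{r-1}$-subpaths internal to the $f(a_i)$'s; one cannot read off a permutation just from the equality of underlying paths. The paper resolves this by tracking the $b_i$'s through $f_r$ (proper extended folds identify $E''_2$ with $E_1 b$ and do not create new maximal $G_{r-1}$-subpaths), then through $\theta f_{r-1}$ (folds and isomorphisms in $G_{r-1}$ only modify the $b_i$'s by vertex group elements), and compares with the decomposition of $g\rho g'$ to conclude $f_\sharp$ permutes the $b_i$'s up to vertex group elements. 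Rotationlessness and the observation that the $b_i$ cannot lie in a dihedral pair then upgrade ``permuted'' to ``almost Nielsen.''

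For item 3, you invoke \hyperlink{CompletelySplit}{(Completely Split)}. This is not in the hypotheses: the lemma assumes only that $f$ is a rotationless relative train track map satisfying the conclusions of \Cref{improvedrelativetraintrack} and that $H_r$ satisfies \hyperlink{EGAlmostNielsenPaths}{(EG Almost Nielsen Paths)}. In the construction of CTs, \hyperlink{EGAlmostNielsenPaths}{(EG Almost Nielsen Paths)} is established well before \hyperlink{CompletelySplit}{(Completely Split)}, so using the latter here is not legitimate. The paper instead reads item 3 off directly: by construction the maximal $G_{r-1}$-subpaths of $f_r(E)$ are among the $b_i$, and applying $\theta f_{r-1}$ preserves this up to vertex group elements, so by item 2 these are almost Nielsen paths.

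In summary: the paper's three items are all harvested from the same factorization data, and the proof is quite economical once you use that. Your outline under-uses this structure, substitutes a circular deduction for item 1, leaves a genuine gap in item 2, and relies on an unavailable hypothesis for item 3.
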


\begin{proof}
    We adopt the notation of \hyperlink{EGAlmostNielsenPaths}{(EG Almost Nielsen Paths)};
    the proof is identical to \cite[Lemma 4.24]{FeighnHandel}.
    The maps $f_r$, $f_{r-1}$ and $\theta$ induce bijections on the set of components
    in the filtration element of height $r-1$.
    It follows that $f|_{G_r} = \theta f_{r-1}f_r$ induces a bijection
    on the set of components of $G_{r-1}$ and hence that each such component is non-wandering.
    By \hyperlink{Z}{(Z)} we conclude that item 1 holds.

    For item 2, let $(f_r)_\sharp(\rho) = a'_1b'_1\ldots a'_mb'_ma'_{m+1}$
    be the decomposition into subpaths $a'_j$ of height $r$ and maximal subpaths $b'_j$
    in $G_{r-1}$.
    Since proper folds do not create new paths $b'_j$,
    we see that the set of $b_j$ is contained in the set of $b_i$.
    Now let $(\theta f_{r-1})_\sharp(a'_1b'_1\ldots a'_mb'_{m+1}) = c_1d_1\ldots d_pc_{p+1}$
    be the decomposition into subpaths $c_k$ of height $r$
    and maximal subpaths $d_k$ in $G_{r-1}$.
    For each $k$ there exists $j$ such that $d_k$ differs from $(\theta f_{r-1})_\sharp(b'_j)$
    by multiplying at the ends by vertex group elements.
    Since $a_1b_1\ldots b_\ell a_{\ell+1} =c_1d_1\ldots d_pc_{p+1}$,
    we conclude that up to multiplying at the ends by vertex group elements,
    $f_\sharp$ permutes the paths $b_i$.
    Since $f$ is rotationless and the $b_i$ cannot be contained in a dihedral pair,
    each $b_i$ is an almost Nielsen path.

    If $E$ is an edge of $H_r$ then by construction,
    each maximal subpath of $f_r(E)$ in $G_{r-1}$ is one of the paths $b_i$.
    By item 2, each $b_i$ is an almost Nielsen path for $f$ and hence  for $\theta f_{r-1}$,
    proving item 3.
\end{proof}

\paragraph{Changing the marking on $G_j$.}
To construct CTs, we require a move that plays the role of sliding
for exponentially growing and zero strata.

Suppose that $f\colon \mathcal{G} \to \mathcal{G}$ is a rotationless relative train track map
satisfying the conclusions of \Cref{improvedrelativetraintrack}
with respect to the filtration $\varnothing = G_0 \subset G_1 \subset \cdots \subset G_m = G$,
that $j$ is such that $1 \le j \le m$,
every component of $G_j$ is non-contractible
and that $f$ fixes every vertex in $G_j$ whose link is not contained in $G_j$.
Define a homotopy equivalence $g\colon \mathcal{G} \to \mathcal{G}$
by setting $g|_{G_j} = f|_{G_j}$ and setting $g|_{(G\setminus G_j)}$ to be the identity.
Suppose $\tau\colon \mathbb{G} \to \mathcal{G}$ is the original marked graph of groups.
Let $\mathcal{G}'$ be the marked graph of groups $g\tau\colon \mathbb{G} \to \mathcal{G}$.
Therefore $\mathcal{G}$ and $\mathcal{G}'$ have the same underlying graph of groups,
and there is a natural identification of $\mathcal{G}$ with $\mathcal{G}'$;
we will use this identification when discussing edges and strata.

Define $f' \colon \mathcal{G}' \to \mathcal{G}'$ by setting
$f'|_{G'_j} = f|_{G_j}$
and $f'(E) = (gf)_\sharp(E)$ for all edges $E$ in $H_i$ with $i > j$.
We say that $f'\colon \mathcal{G}' \to \mathcal{G}'$ 
is obtained from $f\colon \mathcal{G} \to \mathcal{G}$
by \emph{changing the marking on $G_j$ via $f$.}
We have the following lemma.

\begin{lem}[cf.~Lemma 4.27 of \cite{FeighnHandel}]
    \label{changingthemarking}
    Suppose that $f'\colon \mathcal{G}' \to \mathcal{G}'$ is obtained from
    $f\colon \mathcal{G} \to \mathcal{G}$
    by changing the marking on $G_j$ via $f$. Then the following hold.
    \begin{enumerate}
        \item We have $f'|_{G_j} = f|_{G_j}$.
        \item For every path $\sigma$ with endpoints at vertices and $k > 0$,
            we have $g_\sharp f_\sharp^k(\sigma) = (f')^k_\sharp g_\sharp(\sigma)$.
        \item If $f\colon \mathcal{G} \to \mathcal{G}$ represents $\varphi \in \out(F,\mathscr{A})$,
            then $f'\colon \mathcal{G}' \to \mathcal{G}'$
            is a homotopy equivalence representing $\varphi \in \out(F,\mathscr{A})$.
        \item There is a one-to-one correspondence between almost Nielsen paths for $f$
            and almost Nielsen paths for $f'$.
        \item The map $f'\colon \mathcal{G}' \to \mathcal{G}'$
            is a rotationless relative train track map satisfying the conclusions
            of \Cref{improvedrelativetraintrack}
            with respect to the original filtration
            $\varnothing = G_0 \subset G_1 \subset \cdots \subset G_m = G$.
    \end{enumerate}
\end{lem}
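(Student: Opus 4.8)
The statement is the analogue of \cite[Lemma 4.27]{FeighnHandel}, and the plan is to adapt that proof to the graph-of-groups setting, keeping careful track of the vertex group elements that arise when composing and tightening. The key observation is that since $g|_{G_j} = f|_{G_j}$ and $g$ is the identity off $G_j$, the map $g_\sharp$ fixes every edge path lying entirely outside the interior of $G_j$ and agrees with $f_\sharp$ on edge paths in $G_j$; moreover since $f$ fixes every vertex of $G_j$ whose link leaves $G_j$, the map $g$ does not move the attaching vertices of the higher strata, so $g$ genuinely descends to a self-map of $G$ taking vertices to vertices.

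\textbf{Step 1 (Items 1--3).} Item 1 is immediate from the definition: $f'|_{G'_j} = f|_{G_j}$ by fiat, and the underlying graph of groups of $\mathcal{G}'$ equals that of $\mathcal{G}$. For item 2, one argues by induction on $k$: for $k = 1$ the identity $g_\sharp f_\sharp(\sigma) = f'_\sharp g_\sharp(\sigma)$ is checked on each edge of $G_j$ (where both sides equal $f^2_\sharp$ restricted suitably, using $g|_{G_j} = f|_{G_j}$) and on each higher edge $E$ (where the left side is $g_\sharp f_\sharp(E)$ and $f'(E)$ was \emph{defined} to be $(gf)_\sharp(E)$); the inductive step then follows by applying the $k=1$ case to $f^{k-1}_\sharp(\sigma)$ and using $g_\sharp f^{k-1}_\sharp(\sigma) = (f')^{k-1}_\sharp g_\sharp(\sigma)$. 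Here one must be mildly careful that ``tightening'' in our setting does not absorb multiplication in vertex groups, but since all the identifications are equalities of honest edge paths (not just up to vertex group elements) this causes no trouble. For item 3, since $g$ is a homotopy equivalence (its restriction to each component of $G_j$ is the homotopy equivalence $f|_{G_j}$, which has a homotopy inverse obtained from a homotopy inverse of $f$, and the identity elsewhere), the marking $g\tau$ is a marking, and by construction $f'_\sharp$ induces the same automorphism of $\pi_1$ as $g f_\sharp g^{-1}$, hence represents $\varphi$.

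\textbf{Step 2 (Items 4--5).} Item 4 follows from item 2 exactly as in \cite{FeighnHandel}: if $\sigma$ is an almost Nielsen path for $f$, so $f_\sharp(\sigma) = h\sigma h'$ for vertex group elements $h, h'$, then $(f')_\sharp(g_\sharp(\sigma)) = g_\sharp f_\sharp(\sigma) = g_\sharp(h\sigma h') = g_\sharp(h)\, g_\sharp(\sigma)\, g_\sharp(h')$, and $g_\sharp(h), g_\sharp(h')$ are again vertex group elements (indeed $h, h'$ live in vertex groups, which $g$ maps isomorphically); the inverse correspondence is given by $g^{-1}$, which exists by item 3. For item 5, one checks each defining property. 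Since the filtration and strata are literally unchanged under the identification $\mathcal{G} \cong \mathcal{G}'$, the transition matrices $M_r$ are unchanged (for higher strata, $f'(E) = (gf)_\sharp(E)$ has the same sequence of edges in each stratum as $f(E)$ does, because $g$ only alters occurrences of edges of $G_j$, replacing $b$ by $f_\sharp(b)$, which for $b$ in an irreducible stratum $H_i \subseteq G_j$ has the same number of $H_i$-edges --- this uses that $f$ is a relative train track map and in fact $\pf(f') = \pf(f)$). Thus \hyperlink{EG-i}{(EG-i)} is preserved since the map $Df'$ on turns of $H_r$ for $r > j$ is unchanged up to the $g_\sharp$-bijection on directions; \hyperlink{EG-ii}{(EG-ii)} is preserved by \Cref{trivialedgegroupsEG2} together with the fact that $g$ does not change which vertices are periodic (it induces a bijection of components of $G_{r-1}$ and fixes the relevant vertices); and \hyperlink{EG-iii}{(EG-iii)} follows from \Cref{pfcorollary} since $\pf(f') = \pf(f)$. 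The properties of \Cref{improvedrelativetraintrack} --- \hyperlink{V}{(V)}, \hyperlink{P}{(P)}, \hyperlink{Z}{(Z)}, \hyperlink{NEG}{(NEG)}, \hyperlink{F}{(F)} --- are all phrased in terms of the filtration, free factor systems realized by filtration elements, and periodicity of vertices/directions, all of which are preserved by item 4 and the unchanged combinatorics; and \emph{rotationless} (for the map) is preserved because principal vertices, periodic directions, and the vertex-group dynamics at a principal vertex are all carried across by the $g_\sharp$-correspondence.

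\textbf{Main obstacle.} The step I expect to require the most care is verifying that the higher strata of $f'$ have the same combinatorial type as those of $f$ --- i.e.\ that replacing a maximal subpath $b \subset G_j$ appearing in $f(E)$ by $f_\sharp(b)$ does not change the number of edges of $H_i$ (for $i > j$) in $f'(E)$ and does not create new illegal turns in higher strata. This is where the relative train track hypotheses on $f$, and in particular the splitting results (\Cref{rttlemma}) applied to the legal higher-stratum structure of $f(E)$, must be invoked; it is also where one sees that $g_\sharp$ conjugates the turn map $Df$ to $Df'$ on higher strata, which is the crux of preserving \hyperlink{EG-i}{(EG-i)}. Everything else is bookkeeping of vertex group elements, which the machinery of \Cref{basicssection} handles uniformly.
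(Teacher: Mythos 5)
Your proposal is correct and follows essentially the same route as the paper: item 2 reduces to $k=1$ on single edges (cases $E\subset G_j$ and $E\subset H_i$ with $i>j$), item 3 follows from item 2, item 4 uses $g_\sharp$ to transport almost Nielsen paths, and item 5 hinges on the observation that in $f'(E) = (gf)_\sharp(E)$ only the maximal subpaths inside $G_j$ are altered (replaced by their $f_\sharp$-images), so the higher-stratum edges and turn structure are unchanged. Two small points worth tidying: first, the assertion that $f_\sharp(b)$ has ``the same number of $H_i$-edges'' as $b$ for irreducible $H_i\subseteq G_j$ is false when $H_i$ is exponentially growing, but this is also irrelevant --- what you need is only that $b$ and $f_\sharp(b)$ both lie in $G_j$ and so contribute nothing to $M_r$ for $r>j$; second, the paper does not route \hyperlink{EG-iii}{(EG-iii)} through \Cref{pfcorollary} (which would require the additional observation that $\pf(f)=\pfmin$) but instead notes that since the maximal higher-stratum subpaths $\mu_\ell$ of $f'(E)$ coincide with those of $f(E)$, the map $Df'$ agrees with $Df$ on higher-stratum directions and \hyperlink{EG-iii}{(EG-iii)} is preserved directly; the paper then devotes the bulk of the verification to \hyperlink{EG-ii}{(EG-ii)}, separating the non-contractible and contractible (zero stratum) components of $G_{i-1}$, which your summary compresses.
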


\begin{proof}
    The proof is identical to \cite[Lemma 4.27]{FeighnHandel}.
    Item 1 is immediate from the definitions,
    as is the fact that $f'$ preserves the original filtration.
    Observe that the following statements also hold.
    \begin{enumerate}
        \item[6.] If a point $x \in G$ satisfies $f(x) \ne f'(x)$,
            then $x \notin G_j$ and $f(x)$ and $f'(x)$ belong to $G_j$.
            In particular, $\fix(f) = \fix(f') \subset \fix(g)$;
            furthermore $\per(f) = \per(f') \subset \per(g)$.
            Under the identification of $\mathcal{G}$ with $\mathcal{G}'$,
            we see that $Df$ and $Df'$ have the same fixed and periodic directions.
        \item[7.] Suppose that $E$ is an edge of $H_i$ for $i > j$
            and that $f(E) = \mu_1\nu_1\mu_2\ldots \nu_{k-1}\mu_k$,
            where the $\nu_\ell$ are maximal subpaths in $G_j$.
            Allow the case where $\mu_1$ and $\mu_k$ are trivial.
            Then we have $f'(E) = \mu_1 f_\sharp(\nu_1)\mu_2 \ldots f_\sharp(\nu_{k-1})\mu_k$.
            Since $f$ fixes the endpoints of each $\nu_\ell$,
            it follows that the $f\sharp(\nu_\ell)$ are nontrivial.
        \item[8.] Therefore each stratum $H_i$ has the same type for $f$ as for $f'$.
    \end{enumerate}

    To prove item 2, it suffices to consider the case where $k = 1$ and $\sigma$ is a single edge $E$.
    If $E \subset G_j$,
    then $g_\sharp f_\sharp(E) = f_\sharp(f_\sharp(E)) = f'_\sharp g_\sharp(E)$.
    If $E \subset G_i$ for $i > j$,
    then $g_\sharp f_\sharp(E) = f'_\sharp(E) = f'_\sharp g_\sharp(E)$.
    Item 2 implies item 3.

    If $\rho'$ is a tight path in $\mathcal{G}$ with endpoints in $\fix(f') = \fix(f)$,
    then there is a unique tight path $\rho$ with the same endpoints such that $g_\sharp(\rho) = \rho'$.
    Condition 2 implies that $\rho'$ is an almost Nielsen path for $f'$
    if and only if it is an almost Nielsen path for $f$, proving item 4.

    By items 1, 6 and 7, to show that $f'\colon \mathcal{G}' \to \mathcal{G}'$
    is a relative train track map, it suffices to show that each exponentially growing stratum
    $H_i$ for $i > j$ satisfies \hyperlink{EG-ii}{(EG-ii)}.
    Suppose first that $\sigma$ is a connecting path contained in a non-contractible component
    $C$ of $G_{i-1}$.
    By \Cref{trivialedgegroupsEG2}, each vertex of $H_i \cap C$ is periodic;
    has valence at least two and is principal for $f$ and hence fixed by $f'$,
    so we conclude by \Cref{trivialedgegroupsEG2} that $f'_\sharp(\sigma)$ is nontrivial.
    If $\sigma$ is instead contained in a contractible component of $G_{i-1}$,
    then it is contained in a zero stratum that has height greater than $j$
    because it is contained in $H^z_i$.
    We have $(f')_\sharp(\sigma) = g_\sharp f_\sharp(\sigma)$.
    If $f_\sharp(\sigma)$ is contained in a non-contractible component of $G_{i-1}$,
    then $g_\sharp f_\sharp(\sigma)$ is nontrivial by the previous argument.
    If not, then $g_\sharp f_\sharp(\sigma) = f_\sharp(\sigma)$ and we are done.
    The remaining properties of \Cref{improvedrelativetraintrack}, and thus item 5, follow from
    the definitions and items 4 and 6.
\end{proof}

\begin{lem}[cf.~Lemma 4.25 of \cite{FeighnHandel}]
    \label{largepowerscompletelysplit}
    If $f\colon \mathcal{G} \to \mathcal{G}$ is a CT and $\sigma$ is a path in $G_r$
    with endpoints at vertices,
    then $f^k_\sharp(\sigma)$ is completely split for all sufficiently large $k$.
\end{lem}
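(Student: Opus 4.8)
The statement mirrors \cite[Lemma~4.25]{FeighnHandel} and \cite[Lemma~4.26]{BestvinaHandel}, so the plan is to adapt that argument, using the complete-splitting machinery (\Cref{completelysplittocompletelysplit}, \Cref{completesplittingunique}, \Cref{splittinginitialsegments}) together with induction on the filtration. First I would reduce to the case where $\sigma$ is a single edge $E$ of $G_r$: by \Cref{completelysplittocompletelysplit}, once we know $f^k(E)$ is completely split for each edge $E$ and all large $k$, the general path follows by splitting $\sigma$ into edges and maximal subpaths in strata and applying the edge result term-by-term (taking the maximum of the finitely many exponents). I then induct on $r$, the highest stratum $E$ belongs to, with the base case $r$ trivial (a fixed edge, or a single almost periodic edge, whose complete splitting is immediate). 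For the inductive step I split into cases according to the type of $H_r$.

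If $H_r$ is a zero stratum, then $E$ is $r'$-taken for the enveloping exponentially growing stratum $H_{r'}$ with $r' > r$ by \hyperlink{ZeroStrata}{(Zero Strata)}, and $f(E) \subset G_{r-1}$, so $f^k_\sharp(E) = f^{k-1}_\sharp(f(E))$ has height $< r$ and the inductive hypothesis applies after one step. If $H_r$ is almost fixed or part of a dihedral pair, $f$ fixes $E$ (or $f^2$ does, swapping the pair) and $E$ itself, or $E$ together with a fixed-edge partner, is already completely split. If $H_r$ is a non-almost-periodic non-exponentially growing edge, then by \hyperlink{Vertices}{(Vertices)} and \Cref{negctsplitting} we have $f(E) = E\cdot u_i$ with $u_i$ a closed path in a lower core filtration element; since $\sum$ splittings are preserved under $f_\sharp$ and $u_i$ has height $< r$, the inductive hypothesis makes $f^{k-1}_\sharp(u_i)$ completely split for large $k$, and the exceptional-path/Nielsen-path behaviour recorded in \hyperlink{LinearEdges}{(Linear Edges)} and \hyperlink{NEGAlmostNielsenPaths}{(NEG Almost Nielsen Paths)} ensures the pieces concatenate into a complete splitting of $f^k_\sharp(E) = E \cdot f^{k-1}_\sharp(u_i) \cdot \dots$ (here one must be careful that linear edges produce $Ew_i^{d_i k}$ and dihedral linear edges alternate, but in both cases each power $w_i^{\pm}$ is a fixed or periodic-period-2 Nielsen path and hence a legal term).

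The main case, and the one I expect to be the principal obstacle, is $H_r$ exponentially growing. Here $f(E)$ is completely split by \hyperlink{CompletelySplit}{(Completely Split)}, and \Cref{completelysplittocompletelysplit} gives that $f^k_\sharp(E)$ is completely split for \emph{every} $k$ — so in fact this case needs no largeness of $k$ at all, and the lemma for exponentially growing $E$ is immediate. The genuine subtlety is therefore confined to making sure the \emph{reduction to single edges} and the \emph{non-exponentially growing inductive step} interact correctly: when $\sigma$ has mixed height, the maximal subpaths of $\sigma$ in $G_{r-1}$ are not themselves edges, and one must invoke \Cref{splittinginitialsegments} and the uniqueness of complete splittings (\Cref{completesplittingunique}) to see that the complete splittings of the $f^k_\sharp$-images of the pieces assemble, without extra cancellation, into a complete splitting of $f^k_\sharp(\sigma)$; the legality of the turns between consecutive pieces is what \hyperlink{EG-i}{(EG-i)} and the relative train track property guarantee. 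The uniform choice of exponent $k$ across the finitely many pieces of $\sigma$ is the last bookkeeping point, handled by taking the maximum. I would close by remarking that this shows, in particular, that $f^k_\sharp$ of any edge is completely split for all large $k$, which is the form used subsequently.
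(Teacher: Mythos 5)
The proposal's opening reduction — "First I would reduce to the case where $\sigma$ is a single edge $E$ of $G_r$" — is where the argument breaks down, and it breaks down in a way that bypasses the actual content of the lemma. The decomposition of a tight path $\sigma$ of height $r$ into single edges (and maximal subpaths in $G_{r-1}$) is \emph{not} a splitting in general: if $\sigma$ crosses an illegal turn in $H_r$ (and it can — for instance any indivisible almost Nielsen path of exponentially growing height does) then the images of consecutive pieces cancel under $f_\sharp$, so one cannot apply the edge result "term-by-term." You appeal to \hyperlink{EG-i}{(EG-i)} and the relative train track property to guarantee legality of turns between consecutive pieces, but \hyperlink{EG-i}{(EG-i)} only forces legality of mixed turns (one direction in $H_r$, one in $G_{r-1}$); it says nothing about turns with both directions in $H_r$, which is exactly where the problem lives. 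Likewise, \Cref{splittinginitialsegments} and \Cref{completesplittingunique} apply to paths that are \emph{already} completely split — they cannot be used to establish the initial splitting.

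The missing ingredients are \Cref{negsplitting} and especially \Cref{egsplitting}, neither of which appears in your proposal. The paper's proof does not reduce to single edges; it inducts on the height $r$ of $\sigma$ and, in the exponentially growing case, replaces $\sigma$ by $f^k_\sharp(\sigma)$ for $k$ large enough that the number of $H_r$-illegal turns has stabilized. Only then does \Cref{egsplitting} furnish a splitting into pieces that are either $r$-legal (and these split into single edges and $G_{r-1}$-subpaths by \Cref{rttlemma}) or elements of the finite set $P_r$; a further iteration turns the $P_r$-pieces into indivisible almost Nielsen paths by property \hyperlink{Vertices}{(Vertices)}. This is the heart of the lemma, and it is precisely the reason one needs "sufficiently large $k$" rather than all $k$: the illegal turn count of $\sigma$ in $H_r$ must first drop to its eventual value. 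In the non-exponentially growing case, \Cref{negsplitting} plays the analogous role, splitting $\sigma$ into basic paths of height $r$ and subpaths in $G_{r-1}$ before \Cref{negctsplitting} is invoked. Your case analysis on the type of $H_r$ is the right shape, and your observations about the zero-stratum, almost periodic, and NEG cases are individually reasonable, but without \Cref{egsplitting} and \Cref{negsplitting} the argument has no way to produce a splitting of the given path in the first place.
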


\begin{proof}
    The proof is identical to \cite[Lemma 4.25]{FeighnHandel};
    we proceed by induction on $r$, the height of $\sigma$.
    There are no paths of height $r = 0$,
    so suppose that $\sigma$ has height $r \ge 1$
    and that the lemma holds for paths of height less than $r$.
    By \Cref{completelysplittocompletelysplit},
    \Cref{completesplittingunique}
    and the inductive hypothesis,
    it suffices to show that some path $f^k_\sharp(\sigma)$
    has a splitting into subpaths that are either themselves completely split
    or contained in $G_{r-1}$.
    This is immediate if $H_r$ is a zero stratum or an almost periodic stratum.
    If $H_r$ is non-exponentially growing but not almost periodic,
    then by \Cref{negsplitting}, 
    $\sigma$ has a splitting into basic paths of height $r$
    and subpaths in $G_{r-1}$,
    and the desired splitting of $f^k_\sharp(\sigma)$ follows by \Cref{negctsplitting}.
    If $H_r$ is exponentially growing,
    \Cref{egsplitting} implies that some $f^k_\sharp(\sigma)$
    splits into pieces, each of which is either $r$-legal
    or part of the finite set $P_r$ of equivalence classes of paths 
    satisfying the conditions in the proof of \Cref{finitelymanynielsenpaths}.
    After iteration, the elements in $P_r$ get mapped to indivisible periodic almost Nielsen paths,
    which are almost Nielsen paths by \hyperlink{Vertices}{(Vertices)}.
    \Cref{rttlemma} implies that the $r$-legal paths in $G_r$ split into single edges in $H_r$
    and subpaths in $G_{r-1}$.
    This completes the inductive step.
\end{proof}

\paragraph{Sliding revisited.}

The last tool we need before turning to \Cref{CTtheorem} is the following proposition.

\begin{prop}[cf.~Proposition 4.35 of \cite{FeighnHandel}]
    \label{neginduction}
    Suppose that $f\colon \mathcal{G} \to \mathcal{G}$ is a relative train track map
    that satisfies \hyperlink{EGAlmostNielsenPaths}{(EG Almost Nielsen Paths)},
    that $f|_{G_{s-1}}$ is a CT,
    and that $H_s$ is a non-exponentially growing stratum with a single edge $E_s$
    and that there does not exist a path $\mu$ in $G_{s-1}$
    such that $E_s\mu$ is an almost Nielsen path.

    There exists a path $\tau$ in $G_{s-1}$ with initial endpoint
    equal to the terminal vertex of $E_s$ such that after sliding along $\tau$,
    the following conditions are satisfied.
    \begin{enumerate}
        \item $f(E_s) = g_sE_s\cdot u_s$ is a nontrivial splitting.
        \item If $\sigma$ is a path or circuit with endpoints at vertices
            and height $s$,
            then there exists $k \ge 0$
            such that $f^k_\sharp(\sigma)$ splits into subpaths of the following type.
            \begin{enumerate}
                \item $E_s$ or $\bar E_s$.
                \item An exceptional path of height $s$.
                \item A subpath of $G_{s-1}$.
            \end{enumerate}
        \item $u_s$ is completely split and its initial vertex is either
            principal or the center vertex of a dihedral pair;
            in the latter case $u_s$ is contained in the dihedral pair
            and $E_s$ is a linear or dihedral linear edge.
        \item $f|_{G_s}$ satisfies \hyperlink{LinearEdges}{(Linear Edges)}.
    \end{enumerate}
\end{prop}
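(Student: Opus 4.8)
The plan is to adapt the proof of \cite[Proposition 4.35]{FeighnHandel}, which proceeds by a careful choice of the sliding path $\tau$ and then by verifying the four conditions in turn, using the tools already assembled in this section. First I would set up the situation: since $f|_{G_{s-1}}$ is a CT, by \Cref{negctsplitting} (applied one stratum lower) and \hyperlink{NEG}{(NEG)} from \Cref{improvedrelativetraintrack}, the terminal vertex $v$ of $E_s$ is fixed, and the component $C$ of $G_{s-1}$ containing $v$ is non-contractible. Because $f|_{G_{s-1}}$ is rotationless and satisfies the conclusions of \Cref{improvedrelativetraintrack}, the restriction $h\colon \Gamma_{s-1}\to\Gamma_{s-1}$ of the relevant lift (as in ``Restricting to $G_{s-1}$ for non-exponentially growing strata'') is available, and I would use \Cref{negfixedboundarypoint} and \Cref{negctlinearcondition} to decide whether $E_s$ will become linear, dihedral linear, or genuinely non-linear. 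The path $\tau$ is chosen so that its terminal vertex is a principal vertex of $G_{s-1}$ (or the center vertex of a dihedral pair, in the dihedral linear case), which is possible because the non-contractible component $C$ contains such a vertex; after sliding $E_s$ along $\tau$ (\Cref{sliding lemma}), the new suffix $u_s$ has its initial vertex principal or a dihedral-pair center, giving the first half of condition 3.

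Next I would establish condition 1, the splitting $f(E_s) = g_sE_s\cdot u_s$: after the slide, $f(E_s) = g_sE_su_s$ with $u_s$ a closed path in $G_{s-1}$; that the first term of a complete splitting is the single edge $E_s$ follows exactly as in \Cref{negctsplitting} — the first term is not in a zero stratum, is not an indivisible almost Nielsen path (using the hypothesis that no $E_s\mu$ is an almost Nielsen path, together with \hyperlink{NEGAlmostNielsenPaths}{(NEG Almost Nielsen Paths)}), and is not an exceptional path (using \hyperlink{LinearEdges}{(Linear Edges)} for $f|_{G_{s-1}}$ and the form of $f(E_s)$ in the linear/dihedral-linear cases). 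For condition 3, that $u_s$ is completely split follows from \Cref{largepowerscompletelysplit} applied to $f|_{G_{s-1}}$ after possibly passing to an iterate and re-sliding, since $u_s$ is a closed path of height $<s$ in a CT; I would need to be slightly careful that sliding does not disturb the complete splitting of lower strata, but \Cref{sliding lemma}(2) guarantees $f'|_{G'_{s-1}} = f|_{G_{s-1}}$. Condition 4, \hyperlink{LinearEdges}{(Linear Edges)} for $f|_{G_s}$, then amounts to invoking \Cref{negctlinearcondition}(2): $E_s$ is linear or dihedral linear precisely when some $T_c$ commutes with $h$, and in that case $u_s$ (or $u_sf_\sharp(u_s)$) is a periodic Nielsen path covered by the axis of $T_c$; the argument in \Cref{negctlinearcondition} together with \Cref{rotationlessnielsenpaths} shows the suffix has the required power-of-$w_i$ form, and the condition that distinct linear edges with the same axis have $d_i\ne d_j$ can be arranged by a further slide along a power of $w_i$ exactly as in \cite{FeighnHandel}.

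Condition 2 is the one I expect to be the main obstacle and the step requiring the most genuine work. I would prove it by the same kind of argument as \Cref{negsplitting} combined with \Cref{negctsplitting}: by \Cref{negsplitting}, a tight path $\sigma$ of height $s$ splits into basic paths of height $s$ (of the form $E_s\gamma$ or $E_s\gamma\bar E_s$ with $\gamma\subset G_{s-1}$) and subpaths in $G_{s-1}$; for the basic paths of the form $E_s\gamma$ the splitting off $E_s$ is immediate from condition 1, and for those of the form $E_s\gamma\bar E_s$ one must iterate $f_\sharp$ and show the result eventually splits into $E_s$'s, exceptional paths of height $s$, and subpaths of $G_{s-1}$. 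The key points are: the interior of an exceptional path is an increasing union of pretrivial subpaths (remarked after the definition of exceptional paths), so such subpaths must be recognized and collected; and an $E_s\gamma\bar E_s$ basic path that is not eventually an exceptional path must, after iteration, have its two $E_s$-ends ``separated'' — this is where one uses that there is no almost Nielsen path $E_s\mu$, so no periodic almost Nielsen path of the form $E_s w^k\bar E_s$ with $k=0$ can appear, forcing the relevant turn at $v$ to remain such that $f_\sharp$ pushes the two copies of $E_s$ apart. I would organize this via a lift to $\Gamma$ and an analysis of which points of $\tilde\sigma$ can fail to be split points, exactly as in the proof of \Cref{completesplittingunique} item 4, which already contains the technology for tracking split points under iteration in the graph-of-groups setting. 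The vertex-group bookkeeping (the $g_s$, and the $g_k,h_k$ in the exceptional-path formulas) introduces only the usual ``almost'' complications and is handled by the calculation in the paragraph ``Almost linear edges'' together with \Cref{rotationlesscriterion}.
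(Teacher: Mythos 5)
Your overall architecture is correct — slide $E_s$ so its terminal vertex is suitably chosen, then verify the four conditions — but there are two genuine gaps that the paper's proof has to work quite hard to fill, and your sketch either misses them or appeals to lemmas that would be circular.

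The first gap is in the choice of $\tau$. You say $\tau$ is chosen so that its terminal vertex is ``a principal vertex of $G_{s-1}$ (or the center vertex of a dihedral pair), which is possible because the non-contractible component $C$ contains such a vertex,'' and you claim this gives both the first half of condition 3 and condition 1. But not every principal vertex works. The splitting $f(E_s)=g_sE_s\cdot u_s$ requires, after lifting, that the orbit $\{\tilde v, h(\tilde v), h^2(\tilde v),\ldots\}$ of the terminal vertex of $\tilde E_s$ march coherently along a single ray converging to a point $P\in\fix(\hat h)$; if the orbit jumps back and forth, the concatenation $u_s\, f_\sharp(u_s)\, f^2_\sharp(u_s)\cdots$ has cancellation and the decomposition fails to be a splitting. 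The paper introduces the set $\tilde X$ of points $\tilde p$ with this forward-coherence property and then proves, by a case analysis on the lowest stratum $H_\ell$ containing the projection of $[\tilde x, h(\tilde x)]$ for some $\tilde x \in \tilde X$, that $\tilde X$ contains a vertex whose projection is principal or a dihedral-pair center. This is where most of the constructive work happens, and your sketch replaces it with an existence claim that is not sufficient. Relatedly, your appeal to \Cref{negctsplitting} for condition 1 is circular: that lemma starts from the hypothesis that $f$ is a CT (in particular that $f(E_i)=E_iu_i$ is already completely split, via \hyperlink{CompletelySplit}{(Completely Split)}), which is exactly what we are trying to establish here.

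The second gap is in condition 2. You propose to handle $E_s\gamma\bar E_s$ by ``a lift to $\Gamma$ and an analysis of which points of $\tilde\sigma$ can fail to be split points, exactly as in the proof of \Cref{completesplittingunique} item 4,'' but that lemma again presupposes a CT and is a uniqueness statement about complete splittings, not a mechanism for producing them. The paper's actual argument for condition 2 is a multi-step cancellation analysis: it introduces the rays $R_s$, $R_j$ and their ``blocks'' $f^k_\sharp(u_s)$, shows (Step 1) that after iterating, the middle of $E_s\gamma\bar E_j$ collapses to the juncture between two block rays; shows (Step 2) that at least three blocks of each ray cancel; and then classifies the situation (\Cref{neginductionclaim} for the NEG case, a separate argument for the EG case) by analyzing when the two restricted lifts $h_s$, $h_j$ ``synchronize'' on a common axis, invoking \Cref{negctlinearcondition} and the commutation criterion of item 6 to conclude that the path is exceptional. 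The key use of the hypothesis that no $E_s\mu$ is an almost Nielsen path enters not as a vague ``separation'' statement but precisely as $\fix(h)=\varnothing$, which rules out the almost-fixed case at each stage. Without this machinery the claim that a non-exceptional $E_s\gamma\bar E_s$ eventually splits is unsupported. The parts of your proposal that go through \Cref{sliding lemma}, \Cref{largepowerscompletelysplit}, and \Cref{negctlinearcondition}(2) for the linear bookkeeping are on the right track, but they sit on top of the two foundations above, both of which are missing.
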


\begin{proof}
    We follow \cite[Proposition 5.4.3]{BestvinaFeighnHandel}.
    We adopt the notation of ``Restricting to $G_{s-1}$ for non-exponentially growing strata''
    from \Cref{principalsection}.
    In particular we have the map $h\colon \Gamma_{s-1} \to \Gamma_{s-1}$.
    If $h$ had a fixed point, 
    the path in $\Gamma$ beginning at $\tilde E_s$ and ending at this fixed point
    would project to an almost Nielsen path of the form $gE_s\mu$.
    The assumption that there is no such path therefore implies that $h$ is fixed-point free.

    By \Cref{emptyfixedsetfixedboundarypoint},
    given any starting vertex $\tilde v$ in $\Gamma_{s-1}$,
    there is a ray $\tilde R$ beginning at $\tilde v$ 
    and converging to a fixed point $P \in \partial_\infty \Gamma_{s-1}$.
    The ray has the property that it shares a nontrivial initial segment
    with the tight path from $\tilde v$ to $h(\tilde v)$.
    There are points along this ray arbitrarily close to $P$ that move towards $P$.
    In fact, the point $P$ is independent of the choice of starting vertex $\tilde v$,
    for if the fixed point produced by choosing $\tilde v'$ were different,
    then $\tilde f$ would move a pair of points away from each other,
    contradicting \Cref{producingafixedpoint}.

    Therefore we have, for any vertex $\tilde v$ in $\Gamma_{s-1}$,
    a ray $\tilde R_{\tilde v}$ beginning at $\tilde v$ and converging to $P$
    with the property that $\tilde R_{\tilde v}$ shares a nontrivial initial segment
    with the tight path from $\tilde v$ to $h(\tilde v)$.
    Given $\tilde v$ and $\tilde w$, the rays $\tilde R_{\tilde v}$ and $\tilde R_{\tilde w}$
    share a common terminal subray.
    Fix a vertex $\tilde v_0$ and inductively define $\tilde v_{i+1}$
    to be the first vertex of the common subray of $\tilde R_{\tilde v_i}$ and $\tilde R_{h(\tilde v_i)}$.
    Given two points $\tilde v$ and $\tilde w$ in $\Gamma_{r-1}$,
    write $[\tilde v,\tilde w]$ for the unique tight path from $\tilde v$ to $\tilde w$.
    We have
    \[  \tilde R_{\tilde v_0} 
    = [\tilde v_0,\tilde v_1][\tilde v_1,\tilde v_2][\tilde v_2,\tilde v_3]\ldots \]
    and we have that $h_\sharp([\tilde v_i,\tilde v_{i+1}]) = [h(\tilde v_i),h(\tilde v_{i+1})]$
    contains the path $[\tilde v_{i+1},\tilde v_{i+2}]$.
    Let $\tilde Y_m$ be the set of points $\tilde y$ in $[\tilde v_0,\tilde v_1]$
    such that $h^i(\tilde y)$ belongs to $[\tilde v_i,\tilde v_{i+1}]$
    for $i$ satisfying $0 \le i \le m$.
    The map $h^m$ sends $\tilde Y_m$ over all of $[\tilde v_m,\tilde v_{m+1}]$,
    so $\tilde Y_m$ is nonempty, and thus the intersection
    $\bigcap_{m=0}^\infty \tilde Y_m$ is nonempty.
    Let $\tilde p$ be a point in the intersection and notice that
    $\tilde p$ has the property that $\{\tilde p, h(\tilde p), h^2(\tilde p), \ldots \}$
    is an ordered subset of a ray converging to $P$.
    Let $\tilde X$ be the set of such points $\tilde p$.
    Observe that $h(\tilde X) \subset \tilde X$.
    Observe as well that given $\tilde x \in \tilde X$
    and a point $\tilde y$ in $[\tilde x,h(\tilde x)]$ such that the decomposition
    $[\tilde x,\tilde y]\cdot [\tilde y,h(\tilde x)]$ is a splitting (for $f$ and hence $h$),
    then $\tilde y$ belongs to $\tilde X$.

    We would like to find a vertex in $\tilde X$.
    Let $\ell$ be the smallest positive integer
    such that there exists $\tilde x \in \tilde X$
    such that the projection of $[\tilde x,h(\tilde x)]$ is contained in $G_\ell$
    and choose such a point $\tilde x$.
    Notice that $H_\ell$ cannot be a zero stratum.
    Suppose first that $H_\ell$ is almost periodic,
    so $H_\ell$ consists of at most two edges, $E_\ell$ and $E_\ell'$.
    Since $h$ is a topological representative,
    we have that the path $[\tilde x,h(\tilde x)]$
    is not contained in a single edge
    and thus contains a vertex $\tilde v$ that projects to the initial or terminal vertex of some edge
    of $H_\ell$, say $E_\ell$.
    \Cref{negsplitting} implies that $[\tilde x,h(\tilde x)]$ can be split at $\tilde v$
    and thus that $\tilde v \in \tilde X$.
    (Let us remark that the hypotheses of that lemma require $H_\ell$ to be a single edge,
    but the proof also works for a dihedral pair.)

    Suppose that $H_\ell$ is non-exponentially growing but not almost periodic,
    so $H_\ell$ consists of a single edge $E_\ell$
    and $f(E_\ell) = g_\ell E_\ell u_\ell$ for some path $u_\ell$ in $G_{\ell-1}$.
    After replacing $\tilde x$ by $h^k(\tilde x)$ for some $k \ge 0$ if necessary,
    we may assume that the path $[\tilde x,h(\tilde x)]$ contains an entire edge $\tilde e$
    projecting to either $E_\ell$ or $\bar E_\ell$
    and that the projection of $h(\tilde x)$ is not contained in the interior of $E_\ell$.
    If $\tilde e$ projects to $E_\ell$, let $\tilde v$ be the initial vertex of $\tilde e$;
    otherwise let it be the terminal vertex of $\tilde e$.
    \Cref{negsplitting} implies that $[\tilde x,h(\tilde x)]$ can be split at $\tilde v$,
    so $\tilde v \in \tilde X$.

    Finally suppose $H_\ell$ is exponentially growing.
    After replacing $\tilde x$ by $h^k(\tilde x)$ for some $k \ge 0$ if necessary,
    we may assume that each path $[h^i(\tilde x),h^{i+1}(\tilde x)]$
    projects to a path with the same number of illegal turns in $H_\ell$.
    \Cref{egsplitting} produces a splitting of $[\tilde x,h(\tilde x)]$.
    If one of the resulting pieces is not $\ell$-legal,
    it is a lift $\tilde\rho$ of one of finitely many equivalence classes of paths $\rho \in P_\ell$.
    Let $\tilde v$ be the initial endpoint of $\tilde\rho$.
    Replacing $\tilde v$ by $h^k(\tilde v)$ if necessary,
    we may assume that the point $\tilde v$ projects to a periodic point in the interior of $H_\ell$.
    In fact, after iterating we may assume that $\tilde\rho$ projects to an indivisible periodic 
    (hence fixed by \hyperlink{Vertices}{(Vertices)}) almost Nielsen path of height $\ell$,
    so we may assume that $\tilde v$ is a vertex.
    If there are no $\tilde \rho$ pieces, then $[\tilde x,h(\tilde x)]$ projects to an $\ell$-legal path.
    After replacing $\tilde x$ by $h^k(\tilde x)$ we may assume that the path
    $[\tilde x,h(\tilde x)]$ contains an entire edge that projects into $H_\ell$.
    \Cref{rttlemma} implies that $[\tilde x,h(\tilde x)]$ may be split at any endpoint $\tilde v$
    of this edge.
    Replacing $\tilde v$ by $h^k(\tilde v)$ if necessary,
    we may assume that $\tilde v$ projects to a periodic vertex.

    The path in $\Gamma_{s-1}$ from the terminal vertex of $\tilde E_s$
    to $\tilde v$ projects to a path $\tau$ in $G_{s-1}$.
    It is immediate from the definition of sliding and \Cref{sliding lemma}
    that after sliding along $\tau$,
    we have $f(E_s) = g_sE_su_s$, where $u_s$ is the projected image of the path $[\tilde v, h(\tilde v)]$.
    Thus by replacing $\tilde v$ with $h^k(\tilde v)$ for some $k \ge 0$,
    we may replace $u_s$ with $f_\sharp^k(u_s)$.
    Since we assume that $f|_{G_{s-1}}$ satisfies \hyperlink{CompletelySplit}{(Completely Split)},
    we may assume by \Cref{largepowerscompletelysplit} that $u_s$ is completely split.
    By replacing $\tilde v$ with another vertex in $\tilde X$,
    we may also alter $u_s$ as follows:
    if $u_s = \alpha\cdot \beta$ is a coarsening of the complete splitting of $u_s$,
    we may alter $u_s$ so that the terminal vertex of $E_s$ is the initial vertex of $\beta$.

    Therefore to arrange item 3,
    it suffices to prove that either some term of the complete splitting of $u_s$
    has a principal endpoint or that $u_s$ is entirely contained in a dihedral pair.
    So suppose $u_s$ is not entirely contained in a dihedral pair.
    Then since all other non-principal vertices are contained in exponentially-growing strata,
    the only way the complete splitting of $u_s$ could fail to contain a principal vertex
    is if $\ell$ is exponentially growing and each height-$\ell$ term of the complete splitting of $u_s$
    is a single edge.
    But then by replacing $u_s$ by $f^k_\sharp(u_s)$ for sufficiently large $k$,
    we may assume that $u_s$ has a long $\ell$-legal segment
    so that every edge in $H_\ell$ appears as a term in the complete splitting of $u_s$.
    \Cref{principalEG} implies the existence of a principal vertex.

    If $E_s$ is a non-dihedral linear edge, 
    choose a root-free almost Nielsen path $w_s$ and $d_s \ne 0$
    so that $u_s = w_s^{d_s}$.
    If $E_t \subset G_{s-1}$ is a linear edge with the same axis,
    then after reversing the orientation on $w_s$ and multiplying $d_s$ by $-1$,
    we may assume that $w_s$ and $w_t$ agree as oriented circuits.
    After sliding to change the order of the edges in $w_s$ we may further assume $w_s = w_t$.
    Since $E_s\bar E_t$ is not an almost Nielsen path by assumption,
    we conclude that $d_s \ne d_t$, proving item 4, \hyperlink{LinearEdges}{(Linear Edges)} in this case.

    If instead $E_s$ is dihedral linear,
    write the axis for $E_s$ as $\sigma\tau$ so that 
    $u_s = (\sigma\tau)^{d_s}$ or $u_s$ is homotopic to $(\sigma\tau)^{d_s}\sigma$
    for some integer $d_s$ (with $d_s \ne 0$ in the first case).
    If $E_t$ is a linear (hence dihedral linear) edge in $G_{s-1}$ with the same axis,
    then $u_t$ also has the above form for some integer $d_t$.
    Since by assumption $E_s\bar E_t$ is not an almost Nielsen path by assumption,
    we conclude that if $u_s$ and $u_t$ are both of the former type or both of the latter type,
    then $d_s \ne d_t$, again proving item 4.

    Following \cite[Proposition 5.4.3]{BestvinaFeighnHandel},
    we will show that if $w_s$ is a nontrivial initial segment of $u_s$,
    then $E_s\cdot w_s$ is a splitting.
    This is equivalent to the claim that $[\tilde v h^i(\tilde v)]$ 
    is contained in $[\tilde v,h^i(\tilde y)]$ for all points $\tilde y$
    in the path $[\tilde v,h(\tilde v)]$.

    Since $\tilde v \in \tilde X$, we have that $f(E_s) = E_s \cdot u_s$ is a nontrivial splitting.
    In order to prove item 2,
    we will prove the stronger statement 
    \begin{enumerate}
        \item[5.] If $\tilde w_s$ is an initial segment of $u_s$,
        then $E_s\cdot w_s$ is a splitting.
    \end{enumerate}
    This is equivalent to showing that for all $\tilde y \in [\tilde v,h(\tilde v)]$,
    we have that $[\tilde v,h^i(\tilde v)]$ is contained in $[\tilde v,h^i(\tilde y)]$ for all $i \ge 0$.
    Write $\tilde u_s = [\tilde v,h(\tilde v)]$.
    There is a splitting $\tilde u_s = \tilde\sigma_1\cdot \tilde\sigma_2\cdots\tilde\sigma_n$
    provided by \Cref{negsplitting} if $H_\ell$ is non-exponentially growing
    and by \Cref{egsplitting} if $H_\ell$ is exponentially growing.
    We have $\tilde R_{\tilde v} = \tilde u_s \cdot h_\sharp(\tilde u_s)\cdot h^2_\sharp(\tilde u_s)\cdots$
    which yields a splitting
    $\tilde R_{\tilde v} = \tilde\sigma_1\cdot \tilde\sigma_2\cdots$,
    where  $\tilde\sigma_{in+j} = h^i_\sharp(\tilde\sigma_j)$.
    To show that the stated claim holds,
    it suffices to show that $h^i(\tilde u_s)$  intersects $h^{i-1}_\sharp(\tilde u_s)$ in a point.
    We will prove the stronger statement 
    that $h^i(\tilde\sigma_j)$, which tightens to $\tilde\sigma_{in + j}$,
    intersects  $\tilde\sigma_{in+j-1}$  in a point.

    The proof breaks into cases, depending on whether $H_\ell$ is exponentially growing
    or non-exponentially growing.
    Suppose first that it is non-exponentially growing.
    If the initial edge of $\tilde\sigma_j$ is a lift of $E_\ell$,
    then \Cref{negsplitting} implies that $h^i(\tilde\sigma_j)$ is a lift of $E_\ell$
    possibly followed by a sequence of edges lifting edges in $G_{\ell - 1}$
    and possibly terminating in a lift of $\bar E_\ell$.
    If $h^i(\tilde\sigma_j)$ \emph{does} terminate in a lift of $\bar E_\ell$,
    then the sequence of edges lifting edges in $G_{\ell-1}$ tightens to a nontrivial path.
    Observe then that the initial lift of $E_\ell$ is disjoint from the rest of $h^i(\tilde\sigma_j)$,
    so prevents $h^i(\tilde\sigma_j)$ from intersecting $\tilde\sigma_{in + j - 1}$ in more than one point.
    On the other hand, if $\tilde\sigma_j$ does not begin with a lift of $E_\ell$,
    then the terminal end of $\tilde\sigma_{in+j-1}$ is a lift of $\bar E_\ell$
    and $h^i(\tilde\sigma_j)$ is a sequence of edges lifting edges in $G_{\ell-1}$
    possibly followed by a lift of $\bar E_\ell$.
    Again, if the last edge of $h^i(\tilde\sigma_j)$ is a lift of $\bar E_\ell$,
    then the sequence of edges lifting edges in $G_{\ell-1}$ tightens to a nontrivial path.
    We see that edges lifting edges in $G_{\ell-1}$ cannot cross the last edge of $\tilde\sigma_{in+j-1}$
    and the final lift of $\bar E_\ell$ cannot because it is separated by the homotopically nontrivial
    path the edges lifting edges in $G_{\ell-1}$ tighten to form.
    This completes the analysis in the case that $H_\ell$ is non-exponentially growing.

    Suppose on the other hand that $H_\ell$ is exponentially growing.
    If  $\tilde\sigma_j = \tilde\rho$ for some path  $\rho \in P_\ell$,
    write $\rho = \alpha\beta$ for a decomposition of $\rho$ into $\ell$-legal subpaths.
    Although the terminal end of $h^i(\tilde\alpha)$ and the initial end of $h^i(\beta)$ 
    agree up to a point,
    there is an initial subpath of $h^i(\tilde\alpha)$ 
    that is disjoint from the rest of $h^i(\tilde\sigma_j)$;
    as above this initial subpath prevents $h^i(\tilde\sigma_j)$ from intersecting $\tilde\sigma_{in+j-1}$
    in  more than a point.
    If $\tilde\sigma_j$ is not a lift of some $\rho \in P_\ell$,
    then $\sigma_j$ is $\ell$-legal.
    If the initial edge of $\sigma_j$ is in $H_\ell$,
    then \Cref{rttlemma} implies that the initial  edge of $h^i(\tilde\sigma_j)$
    is disjoint from the rest of $h^i(\tilde\sigma_j)$, and thus prevents $h^i(\tilde\sigma_j)$
    from intersecting $\tilde\sigma_{in+j-1}$ in more than a point.
    If instead the initial edge of $\sigma_j$ is in $G_{\ell-1}$,
    then the terminal edge of $\tilde\sigma_{in+j-1}$ projects to $H_\ell$
    and prevents $h^i(\tilde\sigma_j)$ from intersecting $\tilde\sigma_{in+j-1}$ in more than a point:
    this is because edges  that project to $G_{\ell-1}$ cannot cross into $H_\ell$
    and edges that project into $H_\ell$ are part of $h^i_\sharp(\tilde\sigma_j)$.
    This completes the analysis  in the exponentially growing case and with it the proof of item 5.

    Before proving item 2, we establish the following.
    \begin{enumerate}
        \item[6.] The path $u_s$ is a periodic Nielsen path if and only if
            there is a non-peripheral element $c \in F$ such that
            $T_c$ preserves $\Gamma_{i-1}$ and $h$ and $T_c|_{\Gamma_{i-1}}$ commute.
            In this case the infinite ray 
            $\tilde R_{\tilde v} = \tilde u_s h_\sharp(\tilde u_s)h^2_\sharp(\tilde u_s)\ldots$
            is contained in the axis of $T_c$.
    \end{enumerate}

    To see this, notice that first if $f^k_\sharp(u_s) = u_s$ for some $k > 0$,
    the fact that $f(E_s) = E_s\cdot u_s$ is a splitting implies that
    the element $c$ of $\pi_1(G_{s-1},v)$ determined by the loop 
    $u_s f_\sharp(u_s)\ldots f^{k-1}_\sharp(u_s)$ is nonperipheral,
    and the ray $\tilde u_s h_\sharp(\tilde u_s)h^2_\sharp(\tilde u_s)\ldots$
    is contained in the axis of $T_c$.
    Since $h_\sharp$ preserves the axis of $T_c$,
    $h$ commutes with $T$.
    
    Conversely, suppose that there is a nonperipheral element $c \in F$
    such that $T_c$ preserves $\Gamma_{s-1}$ and $T_c|_{\Gamma_{s-1}}$ commutes with $h$.
    We have $T_c([\tilde x,h(\tilde x)]) = [T_c(\tilde x),h(T_c(\tilde x))]$ for all $\tilde x$,
    so in particular $\tilde R_{T_c(\tilde v)} = T_c(\tilde R_{\tilde v})$.
    Thus $\tilde R_{\tilde v}$ and $T_c(\tilde R_{\tilde v})$ have an infinite end in common,
    and therefore $\tilde R_{\tilde v}$ and the axis of $T_c$ have an infinite end in common.
    It follows that $h^k(\tilde v)$ is contained in the axis of $T_c$ for all sufficiently large $k$.
    This implies that there is a uniform bound to the length of $[h^k(\tilde v),h^{k+1}(\tilde v)]$
    and thus that $f^k_\sharp(u_s)$ takes on only finitely many values
    up to multiplying by vertex group elements at the ends.
    These vertex group elements themselves take on only finitely many values,
    so by replacing $\tilde v$ with some $h^k(\tilde v)$ if necessary,
    we may assume that $u_s$ is a periodic Nielsen path
    and that $\tilde R_{\tilde v}$ is contained in the axis of $T_c$.
    This verifies item 6.

    We now turn to the proof of item 2,
    for which we follow the argument in \cite[Lemma 5.5.1]{BestvinaFeighnHandel}.
    Note first that by \Cref{negsplitting},
    if $\sigma$ is a path or circuit with endpoints at vertices and height $s$,
    then $\sigma$ has a splitting into subpaths in $G_{s-1}$ and paths of the form
    $E_s\gamma$, $E_s\gamma\bar E_s$ or $\gamma \bar E_s$, where $\gamma$ is a path in $G_{s-1}$.
    Therefore to complete the proof of item 2, 
    we will show that the following hold, assuming $\gamma$ is a nontrivial tight path in $G_{s-1}$.
    \begin{enumerate}[label=(\roman*)]
        \item If $E_s\gamma$ (respectively $E_s\gamma\bar E_s$)
            can be split at a point in the interior of the copy of $E_s$,
            then $f^m_\sharp(E_s\gamma)= E_s\cdot\gamma_1$
            (respectively $f^m_\sharp(E_s\gamma\bar  E_s) = E_s\cdot \gamma_1\bar E_s$)
            for some $m \ge 0$ and tight  path  $\gamma_1$ in $G_{s-1}$.
        \item If $E_s\gamma$ has no splittings, then  $f^m_\sharp(E_s\gamma)$
            is an exceptional path of height $s$ for some $m \ge 0$.
        \item If $E_s\gamma \bar E_s$  has no splittings,
            then $E_s\gamma\bar E_s$ is an exceptional path of height  $s$.
    \end{enumerate}

    Item 5 implies that for any nontrivial initial segment $\sigma_1$ of $E_s$,
    some $f^m_\sharp(\sigma_1) = E_s\cdot\gamma'$, where $\gamma'$ is contained in $G_{s-1}$.
    Thus if a path $\sigma$ of height $s$ splits as $\sigma = \sigma_1\cdot\sigma_2$
    where $\sigma_1$ is a nontrivial initial segment of $E_i$,
    then some $f^m_\sharp(\sigma)$ has a splitting of the form  $E_i\cdot\sigma'$.
    Item (i) follows.

    Therefore write $\sigma$ for a path of the  form $E_s\gamma$ or $E_s\gamma\bar E_s$
    and suppose that $\sigma$ has no splittings.
    \paragraph{Step 1: Cancelling large middle segments.}
    Suppose that $\sigma = \sigma_1'\sigma_2'\sigma_3'$ 
    is a decomposition of $\sigma$ into nontrivial subpaths.
    We will show that there exists $M > 0$ and a decomposition $\sigma = \sigma_1\sigma_2\sigma_3$
    where $\sigma_1$ is an initial subpath of $\sigma'_1$ and
    $\sigma_3$ is a terminal subpath of $\sigma'_3$
    such that $f^M_\sharp(\sigma) = f^M_\sharp(\sigma_1)f^M_\sharp(\sigma_3)$
    and the indicated juncture is a vertex.
    By making $\sigma'_1$ and $\sigma'_3$ smaller,
    we may assume that they are contained in the initial and terminal edges of $\sigma$ respectively.

    Choose lifts $\tilde  f$ and $\tilde\sigma = \tilde\sigma'_1\tilde\sigma'_2\tilde\sigma'_3$.
    Observe that the set
    \[  \tilde S_k = \{\tilde x \in \tilde\sigma 
    : \tilde f^k(\tilde x) \in \tilde f^k_\sharp(\tilde\sigma)\} \]
    is closed and that $\tilde\sigma$ can be split at any point of $\bigcap_{k=1}^\infty \tilde S_k$.
    Since there are no splittings, this infinite intersection contains 
    only the endpoints of $\tilde\sigma$,
    and thus there exists $M > 0$ such that 
    $\bigcap_{k=1}^M  \tilde S_k \subset \tilde\sigma'_1 \cup \tilde\sigma'_3$.
    One can argue by induction that $\tilde f^N$ maps $\bigcap_{k=1}^N \tilde S_k$
    onto $\tilde f^N_\sharp(\tilde\sigma)$ for all $N \ge 1$.
    Since the lift of $\tilde E_s$ that is the initial edge of $\tilde f^k(\tilde\sigma)$
    is not canceled when $\tilde f^k(\tilde\sigma)$ is tightened,
    each $\tilde S_k$ and thus $\bigcap_{k=1}^M  \tilde S_k$ 
    contains an initial segment of a lift of $E_s$.
    Therefore choose a point $\tilde x$ 
    in the intersection of $\bigcap_{k=1}^M \tilde S_k$ with $\tilde\sigma'_1$
    with the property that $\tilde f^M(\tilde x)$ 
    is as close to the terminal end of $\tilde f^M_\sharp(\sigma)$ as possible.
    This condition guarantees that $\tilde f^M(\tilde x)$ is a vertex;
    let $\tilde\sigma_1$ be the initial segment of the lift of $E_s$ terminating at $\tilde x$.
    We have that $\tilde  f^M_\sharp(\tilde\sigma_1)$ 
    is a proper initial subinterval of $\tilde f^M_\sharp(\tilde\sigma)$,
    for if not, then $f^M_\sharp(\sigma) = f^i\sharp(E_sw_s)$ for some $i$
    and some initial segment $w_s$ of $u_s$.
    This contradicts item 5 above and the assumption that $\sigma$ has no splittings.
    Finally observe that there are points of $\bigcap_{k=1}^M\tilde S_k$ in $\tilde\sigma'_3$
    that map arbitrarily close to $\tilde f^M(\tilde x)$,
    and since $\bigcap_{k=1}^M\tilde S_k$ is closed
    there exists $\tilde y$ in the intersection of $\bigcap_{k=1}^M\tilde S_k$ and $\tilde\sigma'_3$
    such that $\tilde f^M(\tilde y) = \tilde f^M(\tilde x)$.
    The subdivision of $\tilde\sigma$ at $\tilde x$ and $\tilde y$
    provides the decomposition $\sigma = \sigma_1\sigma_2\sigma_3$ required.
    
    \paragraph{}
    Suppose for a moment that $\sigma = E_i\gamma$.
    An immediate consequence of Step 1 above is that the last edge of $\sigma$
    is not eventually mapped to an almost periodic stratum.
    By replacing $\sigma$ by some $f^i_\sharp(\sigma)$ if necessary,
    then we may assume that the last edge  of $f^k_\sharp(\sigma)$ is contained in the same stratum
    for all $k \ge 0$.
    Since $f|_{G_{s-1}}$ is a CT, we have that one of the following conditions is satisfied.
    \begin{enumerate}[resume,label=(\roman*)]
        \item The final edge of $\sigma$ is a non-exponentially growing but not almost periodic edge $E_j$.
        \item The final edge of $\sigma$ is contained in an  exponentially growing stratum $H_r$.
    \end{enumerate}
    If $\sigma = E_s\gamma\bar E_s$, then Item (iv) holds with $j  = s$ without replacing $\sigma$.
    We suppose at first that Item (iv) holds.

    \paragraph{Step 2: At least three blocks cancel.}
    Write $\sigma = E_s\gamma'\bar E_j$, where $\gamma = \gamma'$ if $j = s$.
    Define the ray $R_s$ to be the infinite path $u_s\cdot f_\sharp(u_s)\cdot f^2_\sharp(u_s)\cdots$
    and let $R^m_s$ be the initial segment $u_s\cdot f_\sharp(u_s)\cdots f_\sharp^{m-1}(u_s)$.
    We refer to the paths $f^k_\sharp(u_s)$ as \emph{blocks} of $R_s$.
    By \Cref{negctsplitting}, we have may define the ray $R_j$ and $R^m_j$ analogously
    with $u_j$ replacing $u_s$.
    We have that $f^m(\sigma) = gE_sR^m_s f^m(\gamma')\bar R^m_j\bar E_jh$,
    and we claim that if $m$ is sufficiently large,
    then a subpath of $R^m_i$ containing at least three blocks of $R_i$
    cancels with a subpath of $\bar R^m_j$ containing at least three blocks of $\bar R_j$
    when $gE_iR^m_if^m(\gamma')\bar R^m_j\bar E_jh$ is tightened to $f^m_\sharp(\sigma)$.

    By Step 1, there exist a positive integer $M$ and initial subpaths $\mu_M$ of $R^M_s$
    and $\bar\nu_M$ of $\bar R^M_j$ such that
    $f^M_\sharp(\sigma) = g'E_i\mu_M\bar\nu_M\bar E_jh'$.
    Since we have $g'E_s\mu_M = f^i_\sharp(E_sw_s)$ for some initial segment $w_s$ of $u_s$,
    Item 5 above implies that $g'E_s\mu_M = g'E_s\cdot \mu_M$
    and similarly $\bar\nu_M \bar E_jh' = \bar\nu_M \cdot \bar E_jh'$.
    Thus $f^m_\sharp(\sigma)$ is obtained from the concatenation of $g''E_iR_i^{m-M}f_\sharp^{m-M}(\mu_m)$
    and $f_\sharp^{m-M}(\bar\nu_M)\bar R^{m-M}_j\bar E_jh''$ by cancelling at the juncture.
    Step 1 implies that for $m$ sufficiently large,
    long cancellation must occur in both $R^{m-M}_s$ and $\bar R^{m-M}_j$.
    The only way this can happen is if long segments of $R^{m-M}_s$
    and $R^{m-M}_j$ cancel with each other, verifying the claim.

    \paragraph{Step 3: When $H_\ell$ is non-exponentially growing.}
    The third step is the following claim.
    \begin{claim}[cf.~Lemma 5.5.1, Sublemma 1 of \cite{BestvinaFeighnHandel}]
        \label{neginductionclaim}
        If $E_s$ and $E_j$ (with $s \ge j$) have distinct lifts $\tilde E_s$ and $\tilde E_j$
        whose corresponding rays $\tilde R_s$ and $\tilde R_j$
        have a subpath in common that contains at least three blocks in each ray
        and if $H_\ell$ is non-exponentially growing,
        then the path $\tilde \delta$ that connects the initial endpoint of $\tilde E_i$
        to the terminal endpoint of $\tilde E_j$ projects to an exceptional path of height $s$.
    \end{claim}

    \begin{proof}
        We follow the proof of \cite[Lemma 5.5.1, Sublemma 1]{BestvinaFeighnHandel}.
        The set-up is a variant of that in the paragraph
        ``Restricting to $G_{s-1}$ for non-exponentially growing strata.''
        Let $\Gamma_{s-1}$ be the component of the full preimage of $G_{s-1}$
        that contains $\tilde u_s$ and $\tilde u_j$,
        and write $h_s\colon \Gamma_{s-1} \to \Gamma_{s-1}$
        and $h_j\colon \Gamma_{s-1} \to \Gamma_{s-1}$
        for the restricted lifts of $f$ fixing the initial endpoints 
        of $\tilde E_s$ and $\tilde E_j$ respectively.

        Suppose first that $h_s = h_j$.
        By the argument at the beginning of this lemma,
        we have that $h_s$ has no fixed points,
        so in particular the initial endpoint of $\tilde E_j$ does not belong to $\Gamma_{s-1}$
        and we conclude $E_s = E_j$.
        The path $\tilde\delta$ projects to an almost Nielsen path for $f$.
        There is a loop $\gamma \in G_{s-1}$ lifting to the tight path in $\Gamma_{s-1}$
        from the terminal vertex of $\tilde E_s$ to the terminal vertex of $\tilde E_j$
        such that the resulting automorphism $T_{[\gamma]}$ 
        of the natural projection $\Gamma_{s-1} \to G_{s-1}$
        takes $\tilde u_s$ to $\tilde u_j$.
        Indeed, if $\tilde v$ is the initial endpoint of $\tilde u_s$,
        we have $T_{[\gamma]}h_s(\tilde v) = h_sT_{[\gamma]}(\tilde v)$ 
        is the terminal endpoint of $\tilde u_j$.
        On the other hand, $h_sT_{[\gamma]} = T_{(h_s)_\sharp([\gamma])}h_s$,
        so we see that the element
        $[\gamma]^{-1}(h_s)_\sharp([\gamma]) = [\bar\gamma u_s f_\sharp(\gamma)\bar u_s]$
        of $\pi_1(G_{i-1},v)$ stabilizes $h(\tilde v)$,
        and is thus homotopic to a loop of the form $u_s g\bar u_s$
        for some $g \in \mathcal{G}_v$.
        Rewriting, we see that this implies that $f_\sharp(\gamma)$ is homotopic to $\bar u_s\gamma u_s g$.
        Because $E_s\gamma\bar E_s$ is an almost Nielsen path,
        we have $u_s\bar u_s\gamma u_s g \bar u_s$ is homotopic to $\gamma$,
        so we conclude that $g$ is the identity, and thus that $T_{[\gamma]}$ and $h$ commute.
        Note that if $T_{[\gamma]}$ were peripheral,
        its fixed point would be contained in $\Gamma_{s-1}$ and preserved by $h$,
        so since $h$ is fixed-point free, $T_{[\gamma]}$ is non-peripheral.
        By item 6, we have that $u_s$ is a periodic Nielsen path.
        If the initial vertex of $u_s$ is principal, then \Cref{rotationlessnielsenpaths}
        implies that $u_s$ has period one as an almost Nielsen path,
        so $E_s$ is almost linear and the discussion after the definition of almost linear edges
        implies that $E_s$ is linear, say with axis $w_s$.
        If the initial vertex of $u_s$ is not principal,
        then $u_s$ is contained in a dihedral pair and $E_s$ is a linear or dihedral linear edge,
        say with axis $w_s$.
        In this case both $\tilde R_s$ and $\tilde R_j$ are contained in the axis of $T_[\gamma]$
        and $\gamma$ itself takes the form $w_s^p$ for some $p$.
        Thus the projection $\delta$ of $\tilde\delta$ is exceptional.

        Thus we now assume that $h_s \ne h_j$.
        Let $\tilde V$ be the set of vertices in $\tilde R_s$ that are either
        the initial endpoint of a lift of $E_\ell$
        or the terminal endpoint of a lift of $\bar E_\ell$.
        Order the elements of $\tilde V$ so that $\tilde x_p < \tilde x_{p+1}$
        in the orientation on $\tilde R_s$.
        \Cref{negsplitting} implies that $h_s(\tilde x_p) = \tilde x_{p+n_0}$
        for all $p$ and fixed $n_0$.
        What's more, $h_s$ takes the direction of the lift of $E_\ell$
        to the direction of the lift of $E_\ell$.
        Define $\tilde W$ and $m_0$ using $\tilde R_j$ and $h_j$ instead of $\tilde R_s$ and $h_s$.
        The intersection of $\tilde V$ and $\tilde W$ contains by assumption
        at least $n_0 + m_0 + 1$ consecutive elements $\tilde z_0,\ldots,\tilde z_{n_0+m_0}$.
        We have $h_sh_j(\tilde z_0) = h_s(\tilde z_{m_0}) 
        = \tilde z_{n_0+m_0} = h_j(\tilde z_{n_0}) = h_jh_s(\tilde z_0)$,
        and the two maps agree on the direction of $E_\ell$ at $\tilde z_0$,
        so we conclude that as lifts of $f^2$, we have $h_sh_j = h_jh_s$.
        There is a nontrivial element $c \in F$ such that $T_c$ preserves $\Gamma_{s-1}$
        and $T_c h_s = h_j$,
        and there is an element $d \in F$ such that $T_dh_j h_sT_c$.
        We have $h_ih_j = h_i T_c h_s = T_d h_jh_i$, so we conclude that $d = 1$
        and that $h_s$ commutes with $T_c$.
        A symmetric  argument shows that $T_c$ commutes with $h_j$.
        The argument in the case $h_s = h_j$ shows that $T_c$ must be non-peripheral.
        Item 6 above then implies that $u_s$ and $u_j$ are periodic Nielsen paths
        and that $\tilde R_s$ and $\tilde R_j$ are contained in the axis of $T_c$.
        If the initial vertex of $u_s$ is principal, then so is the initial vertex of $u_j$,
        and we conclude that $E_s$ and $E_j$ are linear edges with the same axis $w$.
        If the initial vertex of $u_s$ is not principal, then $u_s$ and hence $u_j$
        are contained in a dihedral pair and $E_s$ and $E_j$ are both linear or both dihedral linear edges
        with the same axis $w$.
        The segment of the axis of $T_c$ that separates the terminal endpoint of $\tilde E_s$
        from the terminal endpoint of $\tilde E_j$ projects to $w^p$ for some integer $p$.
        Supposing  $E_s$ and $E_j$ are linear, since $\tilde R_s$ and  $\tilde R_j$
        have a common subpath containing blocks in both rays,
        we have $u_s = w^{d_s}$ and $u_j = w^{d_j}$, where $d_s$ and $d_j$ have the same sign,
        and we conclude that $\delta = E_sw^p\bar E_j$ is exceptional.
        If $E_s$ and $E_j$ are dihedral linear and $w = (\sigma\tau)$,
        again because $\tilde R_s$ and $\tilde R_j$ share blocks in both rays,
        we have that $u_s$ is homotopic to $(\sigma\tau)^{d_s}\sigma$
        and $u_j$ is homotopic  to $(\sigma\tau)^{d_j}\sigma$, where $d_s$ and $d_j$ have the same sign,
        and we again conclude that  $\delta$ is exceptional.
    \end{proof}

    \paragraph{Step  4: When $H_\ell$ is exponentially growing.}
    Suppose that $m$ is chosen as in Step 2
    and that $H_{\ell}$ is exponentially growing.
    We argue that $f^m_\sharp(\sigma)$ is an exceptional path of height $s$.
    Write $h_s\colon \Gamma_{s-1} \to \Gamma_{s-1}$ and $h_j\colon \Gamma_{s-1}   \to \Gamma_{s-1}$
    as in  Step 3.

    If the splitting of $u_s$ given by \Cref{egsplitting}
    contains a path  $\rho_\ell$ or $\bar\rho_\ell$ in $P_\ell$,
    then the argument in Step 3 goes through with $\tilde V$
    defined to be the set of lifts of $\rho_\ell$ or $\bar \rho_\ell$.
    These are ``translated'' by $h_s$ and $h_j$
    and their initial and terminal directions are mapped to each other,
    so the argument in that step works without further change.
    
    It remains to rule out the possibility that $u_s$ is $\ell$-legal,
    i.e.~that there are no lifts of $\rho_\ell$ or $\bar\rho_\ell$.
    Since blocks of $\tilde R_s$ cancel with segments of $\tilde R_j$,
    we conclude that $u_j$ is also $\ell$-legal.
    By Step 2, we have that $f^m_\sharp(\sigma) = gE_s\mu_m\bar\nu_m\bar E_jh$,
    where $\mu_m$ and $\bar\nu_m$
    are initial subpaths of $R_s$ and $\bar R_j$
    and so are $\ell$-legal.
    \Cref{kprotectedsplitting} implies that $\mu_m$ and $\bar\nu_m$
    take on only finitely many values as $m$ varies,
    so some $f^m_\sharp(\sigma)$ is a periodic Nielsen path, 
    say $f^{m+p}_\sharp(\sigma) = f^m_\sharp(\sigma)$.
    There is a lift $\tilde f\colon\Gamma \to \Gamma$
    whose restriction to $\Gamma_{s-1}$ equals $h_s$;
    we have that $\tilde f$ fixes the initial endpoint of $\tilde E_i$
    and $\tilde f^p$ fixes the initial endpoint $\tilde w$ of $\tilde E_j$.

    If $E_s \ne E_j$, then $\tilde w \in \Gamma_{s-1}$,
    and the fact that $\fix(h_s) = \varnothing$
    implies that $p > 1$.
    Write $\tilde\gamma$ for the path that connects $\tilde w$  to $h_s(\tilde w)$.
    There is a projection $\gamma$ of $\tilde\gamma$ that is a periodic Nielsen path
    with a principal endpoint and hence a Nielsen path.
    We have that $[\gamma^p] = [\gamma f(\gamma)\ldots f^{p-1}(\gamma)]$
    lifts to the trivial path $[\tilde\gamma h_s(\tilde\gamma)\ldots h^{p-1}_s(\tilde\gamma)]$,
    so the closed path $\gamma$ represents a peripheral element of $\pi_1(G_{s-1},w)$,
    so we may write $\gamma = \gamma' g\bar\gamma'$ for some vertex group element $g$
    and path $\gamma'$ in $G_{s-1}$.
    Since $f_\sharp(\gamma) = f_\sharp(\gamma')f_\sharp(g)f_\sharp(\bar\gamma') = \gamma' g\bar\gamma$,
    we conclude that $\gamma'$ is an almost Nielsen path.
    In the notation of \Cref{basicssection},
    with respect to the basepoint $w$,
    the map $h_s$ corresponds to the path $\gamma$,
    i.e.~if $\tilde x \in \Gamma_{s-1}$ corresponds to the homotopy class of the path $\tau$
    starting at $w$, the point $h_s(\tilde x)$ corresponds to the homotopy class of the path
    $\gamma f(\tau)$.
    But if we let $\tilde x$ be the point corresponding to the path $\gamma'$,
    we see that $\tilde x$ is fixed by $h$ since $\gamma'$ is an almost Nielsen path.
    This  contradiction implies that $E_s = E_j$.
    The automorphism $T$ of the natural projection
    that carries the direction of $\tilde E_s$ to the direction of $\tilde E_j$
    commutes with $\tilde f^p$,
    so the restriction of $T$ to $\Gamma_{s-1}$ therefore commutes with $h^p_s$.
    But since $h^p_j = (T|_{\Gamma_{s-1}})h^p_s (T|_{\Gamma_{s-1}})^{-1}$,
    we conclude $h^p_s = h^p_j$.
    Therefore the ray $\tilde R_s$ and $T(\tilde R_s) = \tilde R_j$ have an infinite end in common,
    and therefore $T$ is non-peripheral 
    and $\tilde R_s$ and the axis of $T$ have an infinite end in common.
    We conclude by Item 6 that $E_s$ is a linear edge,
    in contradiction to the assumption that $u_s$ is $\ell$-legal.

    \paragraph{Step 5: Items (ii) and (iii) hold when (iv) does.}
    Suppose Item (iv) holds, so that if we choose $m$ as in Step 2,
    $f^m_\sharp(\sigma)$ begins with $E_s$ and ends with some 
    non-exponentially growing edge $\bar E_j$.
    Choose a lift of $f^m_\sharp(\sigma)$,
    and let $\tilde R_s$ and $\tilde R_j$ be the lifts of the  rays $R_s$ and $R_j$
    that begin at the terminal endpoints of $\tilde E_s$ and $\tilde E_j$ respectively.
    We have chosen $m$ such that $\tilde R_i \cap \tilde R_j$ contains at least three blocks in each ray.
    By Steps 3 and 4, we have that $f^m_\sharp(\sigma)$ is an exceptional path of height $s$.
    If $s = j$, then $f^m_\sharp(\tilde\sigma)$ is fixed by $f_\sharp$.
    Since $\sigma$ and $f^m_\sharp(\sigma)$ have the same endpoints and the same image under $f^m_\sharp$,
    we conclude that $\sigma = f^m_\sharp(\sigma)$ is an exceptional path of height $s$.

    \paragraph{Step 6: Item (v) does not occur.}
    Suppose that the final edge of $\sigma$ belongs to an exponentially growing stratum $H_r$.
    Step 1 implies that for all sufficiently large $m$,
    $f^m_\sharp(E_s\gamma) = gE_s\mu_m\nu_m$,
    where $\mu_m$ belongs to $R_s$ and $\nu_m$ is $r$-legal.
    Furthermore, Step 1 implies that if $(M-m)$ is sufficiently large,
    then edges of $H_r$ in $f^{M-m}_\sharp(\nu_m)$ cancel with edges of $f^{M-m}_\sharp(\mu_m)$,
    so $r \le \ell$,
    and in fact a symmetric argument shows that $\ell = r$ and that $\mu_m$ is $\ell$-legal.
    \Cref{kprotectedsplitting} implies that $\mu_m$ takes on only finitely many values,
    and the same is true for $\nu_m$ up to multiplication by vertex group elements at the end:
    the argument is essentially the same as that given in \Cref{finitelymanynielsenpaths}.
    Therefore we conclude that $f^m_\sharp(\sigma)$ is a periodic almost Nielsen path.
    But the argument in the last paragraph of Step 4 shows that this is impossible.
    Therefore we conclude that Item (v) does not occur, completing the proof.
\end{proof}

We are ready to state and prove \Cref{CTtheorem}.
\begin{thm}
    \label{preciseCTtheorem}
    Suppose $\varphi \in \out(F,\mathscr{A})$ is rotationless
    and $\mathscr{C}$ is a nested sequence of $\varphi$-invariant free factor systems.
    Then $\varphi$ is represented by a CT
    $f\colon \mathcal{G} \to \mathcal{G}$
    and filtration $\varnothing = G_0 \subset G_1 \subset \cdots \subset G_m = G$
    that realizes $\mathscr{C}$.
\end{thm}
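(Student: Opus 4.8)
The plan is to follow the inductive scheme of Feighn--Handel's proof of their Theorem 4.28, building the filtration from the bottom up and establishing the CT properties one stratum at a time, using \Cref{improvedrelativetraintrack} as the starting point. First I would invoke \Cref{improvedrelativetraintrack} to obtain a relative train track map $f\colon \mathcal{G} \to \mathcal{G}$ representing $\varphi$, with a maximal filtration that realizes $\mathscr{C}$ and satisfies \hyperlink{V}{(V)} through \hyperlink{F}{(F)}; by \Cref{freeproductsoffiniteandycycliccor}--style reasoning (more precisely, by the hypothesis that $\varphi$ is rotationless together with \Cref{rotationlessisrotationless}, after passing to a rotationless iterate of the relative train track map and using that $\varphi$ being rotationless forces the map itself to be rotationless) I may assume $f$ is rotationless. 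I would then first arrange that the filtration is \emph{reduced}: if some $\varphi^k$-invariant free factor system $\mathcal{F}'$ is strictly between $\mathcal{F}(G_{r-1})$ and $\mathcal{F}(G_r)$, then by \Cref{rotationlessproperties} item 3 it is in fact $\varphi$-invariant, so by \Cref{rttprop} (or rather \Cref{improvedrelativetraintrack} applied to the refined nested sequence) we may refine the filtration to realize it; iterating terminates because complexity strictly decreases. This gives \hyperlink{Filtration}{(Filtration)}.

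Next I would work stratum by stratum, at each stage assuming $f|_{G_{s-1}}$ is already a CT and promoting $f|_{G_s}$. The zero-stratum case is handled by \hyperlink{Z}{(Z)} from \Cref{improvedrelativetraintrack} together with tree replacement, giving \hyperlink{ZeroStrata}{(Zero Strata)}. For an exponentially growing stratum $H_r$, the key is \hyperlink{EGAlmostNielsenPaths}{(EG Almost Nielsen Paths)}: if an indivisible almost Nielsen path $\rho$ of height $r$ exists, iteratively fold $\rho$ using the extended folds of \Cref{properfoldrelativetraintrackmap}; \Cref{egalmostnielsenpathsproperfold} and \Cref{egalmostnielsenpathsproperties} ensure this process is controlled, that $\rho$ is essentially unique, and that after finitely many folds the map factors through the required composition $\theta \circ f_{r-1}\circ f_r$. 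One must check that folding preserves the lower CT structure and the realization of $\mathscr{C}$, which is exactly the content of \Cref{properfoldrelativetraintrackmap}; \Cref{egalmostnielsenpathsfurtherproperties} then gives the complete splitting of $f(E)$ for edges $E$ in $H_r$, feeding \hyperlink{CompletelySplit}{(Completely Split)}. For a non-exponentially growing stratum that is neither almost periodic nor a dihedral pair, $H_s$ is a single edge $E_s$; if no path $\mu$ in $G_{s-1}$ makes $E_s\mu$ an almost Nielsen path, apply \Cref{neginduction} to slide $E_s$ along a suitable path $\tau$ so that $f(E_s) = g_sE_s\cdot u_s$ is a splitting with $u_s$ completely split and with principal (or dihedral-center) initial vertex, and so that \hyperlink{LinearEdges}{(Linear Edges)} holds; if such a $\mu$ does exist, $E_s$ is (dihedral) linear and \Cref{negctlinearcondition} identifies the axis, again after a slide to normalize. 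Throughout, \Cref{changingthemarking} lets me adjust the marking on lower filtration elements — in particular to merge linear edges with a common axis into the form required by \hyperlink{LinearEdges}{(Linear Edges)} and to arrange that distinct linear edges with the same axis have distinct $d_i$ — without disturbing the lower CT structure or the rotationless hypothesis.

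Having processed every stratum, I would verify the global CT axioms. \hyperlink{Rotationless}{(Rotationless)} is preserved by all the moves (folding by \Cref{properfoldrelativetraintrackmap}, sliding by \Cref{sliding lemma} and the discussion preceding \Cref{neginduction}, changing the marking by \Cref{changingthemarking} item 5). \hyperlink{CompletelySplit}{(Completely Split)} follows because $f(E)$ is completely split for each irreducible edge (from \Cref{egalmostnielsenpathsfurtherproperties} in the EG case, from \Cref{neginduction} item 3 and \Cref{negctsplitting} in the NEG case, trivially in the almost periodic case) and because $f_\sharp$ of a taken connecting path is completely split (using \Cref{completelysplittocompletelysplit} and \Cref{largepowerscompletelysplit}). \hyperlink{Vertices}{(Vertices)} comes from \hyperlink{V}{(V)} together with \hyperlink{NEG}{(NEG)} and \hyperlink{Vertices}{(Vertices)}-normalizations done at each NEG stratum. \hyperlink{AlmostPeriodicEdges}{(Almost Periodic Edges)} comes from \hyperlink{P}{(P)}, \Cref{propertyPconsequence} and \hyperlink{F}{(F)}. \hyperlink{NEGAlmostNielsenPaths}{(NEG Almost Nielsen Paths)} follows from \Cref{negsplitting}, \Cref{negctlinearcondition} and \Cref{neginduction}. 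Finally, every move either preserves or refines the realization of $\mathscr{C}$, and folding/sliding/marking-changes leave the filtration reduced (or one re-reduces, noting complexity only decreases), so $\mathscr{C}$ is still realized by filtration elements. I expect the main obstacle to be the bookkeeping in the non-exponentially growing inductive step — specifically, making \Cref{neginduction} and the marking-change of \Cref{changingthemarking} cooperate so that sliding $E_s$ to normalize $u_s$ does not destroy the CT structure or the linear-edge normalizations already achieved on lower strata, and ensuring the induction genuinely terminates rather than cycling; this is where the uniqueness statements in \Cref{egalmostnielsenpathsproperties} and the period-preserving bijection of almost Nielsen paths under all the moves do the essential work.
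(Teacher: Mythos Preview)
Your outline tracks the Feighn--Handel scheme, but there are two genuine gaps. First, your treatment of \hyperlink{EGAlmostNielsenPaths}{(EG Almost Nielsen Paths)} invokes \Cref{egalmostnielsenpathsproperfold} and \Cref{egalmostnielsenpathsproperties} in the wrong direction: both take as \emph{hypothesis} that all iterated folds are proper; they do not establish it. The paper's Step~1 is a global preprocessing algorithm, carried out \emph{before} the stratum-by-stratum induction: iteratively fold $\rho$, and when a partial fold occurs use \Cref{partialfoldfewernielsenpaths} to decrease $N(f)$, when an improper full fold occurs use \Cref{improperfoldfeweredges} to decrease the edge count of $H_r$; once all folds are proper, \Cref{properfoldsegalmostnielsenpaths} yields the required factorization $\theta\circ f_{r-1}\circ f_r$. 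Then \Cref{improvedrelativetraintrack} is reapplied (Step~2), and one may have to return to Step~1 finitely often. Doing this during the induction, as you propose, is problematic: \Cref{neginduction} requires (EG Almost Nielsen Paths) for the whole map, not just for $f|_{G_{s-1}}$, and folding at height $r$ alters $f$ on strata above $r$.

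Second, \hyperlink{AlmostPeriodicEdges}{(Almost Periodic Edges)} does not follow from (P), \Cref{propertyPconsequence}, and (F) alone. If some component $C$ of $\per(f)$ is a topological circle (or the $C_2*C_2$ quotient of $\mathbb{R}$) with exactly two periodic directions at each point, then its vertices are \emph{not} principal and its edges need not be almost fixed. The paper's Step~4 modifies $f$ explicitly: for a circle it composes with a rotation homotopic to the identity to make $f|_C$ the identity, then untwists an adjacent NEG edge so that a vertex of $C$ becomes principal; for the $C_2*C_2$ case it slides all incident NEG edges to the center vertex to produce a genuine dihedral pair. This must precede the NEG induction, since \Cref{neginduction} needs the terminal vertex of $E_s$ to be principal or the center of a dihedral pair. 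Relatedly, in the EG inductive step \Cref{egalmostnielsenpathsfurtherproperties} only splits $f(E)$ into $H_r$-edges and almost Nielsen paths $\nu_\ell$ in $G_{r-1}$, which are not yet completely split; the paper applies \Cref{changingthemarking} $k$ times with $j=r$ (its real purpose here, not linear-edge normalization) so that each $\nu_\ell$ is replaced by $f^k_\sharp(\nu_\ell)$, which is completely split by \Cref{largepowerscompletelysplit}.
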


\begin{proof}
    We follow the outline of \cite[Theorem 4.28]{FeighnHandel}.
    We may assume that $\mathscr{C}$ is a maximal nested sequence with respect to $\sqsubset$,
    so that any filtration that realizes $\mathscr{C}$ is reduced.
    We claim that by \Cref{improvedrelativetraintrack} we may choose a relative train track map
    $f\colon \mathcal{G} \to \mathcal{G}$ with the following properties.
    \begin{enumerate}
        \item The topological representative $f$ represents $\varphi \in \out(F,\mathscr{A})$.
        \item The filtration on $f\colon \mathcal{G} \to \mathcal{G}$
            realizes $\mathscr{C}$.
        \item Each contractible component of a filtration element is a union of zero strata.
        \item The endpoints of all indivisible almost Nielsen paths of exponentially growing height
            are vertices.
    \end{enumerate}
    Items 2 and 4 follow from \Cref{improvedrelativetraintrack}.
    To see that item 3 does as well, suppose that $C$ is a contractible component of some $G_i$.
    If $C$ contains an edge in an irreducible stratum,
    observe that it is non-wandering
    and by property \hyperlink{NEG}{(NEG)} the lowest stratum $H_j$
    that has an edge in $C$ is either almost periodic or exponentially growing.
    It cannot be almost periodic by \Cref{propertyPconsequence}
    and cannot be exponentially growing by \Cref{egvalence}.
    Therefore every edge in $C$ is contained in a zero stratum.
    There is no loss in dividing up zero strata so that item 3 holds.
    (Notice as well that item 3 implies that contractible components of filtration elements
    are wandering, and thus contain no vertices with nontrivial vertex group.)

    We will assume that all relative train track maps in this proof satisfy the above properties.

    \paragraph{Step 1: (EG Almost Nielsen Paths).}
    Let $N(f)$ be the number of indivisible almost Nielsen paths of exponentially growing height.
    We adapt Feighn--Handel's algorithm for proving 
    \hyperlink{EGAlmostNielsenPaths}{(EG Almost Nielsen Paths)}:
    suppose some exponentially growing stratum does not satisfy 
    \hyperlink{EGAlmostNielsenPaths}{(EG Almost Nielsen Paths)}
    and let $H_r$ be the highest such stratum.
    We will show in \Cref{properfoldsegalmostnielsenpaths}
    that this implies that there is a sequence of proper folds
    at indivisible Nielsen paths of height $r$ leading to a relative train track map
    with either a partial fold or an improper fold.
    If the fold is partial, then $N(f)$ may be decreased (\Cref{partialfoldfewernielsenpaths})
    Since $N(f)$ is finite,
    eventually we may assume that all folds are full.
    We will show that improper folds allow us to decrease the number of edges in $H_r$
    (\Cref{improperfoldfeweredges}),
    while proper folds preserve the number of edges of $H_r$ (\Cref{properfoldsamenumberofedges}).
    Both kinds of folds preserve the number of edges 
    of each exponentially growing stratum $H_s$ for $s > r$
    (\Cref{improperfoldfeweredges} and \Cref{properfoldsamenumberofedges}).
    Since this number is finite, eventually no improper folds occur,
    at which point $H_r$ satisfies \hyperlink{EGAlmostNielsenPaths}{(EG Almost Nielsen Paths)}
    by \Cref{properfoldsegalmostnielsenpaths}.

    \begin{lem}[cf.~Lemma 5.3.6 of \cite{BestvinaFeighnHandel}]
        \label{properfoldsegalmostnielsenpaths}
        Given a relative train track map $f\colon \mathcal{G} \to \mathcal{G}$,
        an exponentially growing stratum $H_r$,
        and an indivisible almost Nielsen path $\rho$ of height $r$,
        suppose that the fold at the illegal turn of 
        each indivisible almost Nielsen path
        obtained by iteratively folding $\rho$ is proper.
        Then $H_r$ satisfies the conclusions of 
        \hyperlink{EGAlmostNielsenPaths}{(EG Almost Nielsen Paths)}.
    \end{lem}

    In fact, by \Cref{egalmostnielsenpathsproperfold},
    \hyperlink{EGAlmostNielsenPaths}{(EG Almost Nielsen Paths)}
    is equivalent to the statement that
    the fold at the illegal turn of each indivisible almost Nielsen path
    obtained by iteratively folding $\rho$ is proper.

    \begin{proof}
        We follow the notation in the definition of folding the indivisible almost Nielsen path $\rho$.
        The proof is essentially identical to \cite[Lemma 5.3.6]{BestvinaFeighnHandel}.
        To ease notation, assume that $G = G_r$.
        Write $\rho'$ for the indivisible almost Nielsen path of height $r$
        for $f'\colon \mathcal{G}' \to \mathcal{G}'$ determined by $\rho$,
        and let $\rho' = \alpha'\beta'$ be a decomposition into $r$-legal subpaths.
        Let $g'_1E'_1$ and $g'_2E'_2$ be the initial edges and vertex group elements of
        $\bar\alpha'$ and $\beta'$ respectively
        and let $F'\colon \mathcal{G}' \to \mathcal{G}''$
        be the extended fold determined by $\rho'$.
        We assume (up to reversing the orientation of $\rho'$)
        that $f'(g'E'_1)$ is a proper subpath of $f'(g'_2E'_2)$,
        and we write $\alpha' = g'_1E'_1 b'g'_3E'_3\ldots$
        where $b'$ is a maximal (possibly trivial) subpath in $G'_{r-1}$ 
        ending with trivial vertex group element and $E'_3$ is an edge of $H'_r$.
        Recall we have a map $g\colon \mathcal{G}' \to \mathcal{G}$
        such that $f = gF$, where $F\colon \mathcal{G} \to \mathcal{G}'$ 
        is the extended fold determined by $\rho$.
        Write $E'_2$ as a concatenation of subintervals $\mu'_1\mu'_2\mu'_3$
        such that $f'(g'_2\mu'_1) = f'(g'_1E'_1)$
        and $f'(\mu'_2) = f'_\sharp(b')$.
        (If $b'$ is trivial, then $\mu'_2$ is trivial.)
        We will show that if $g(E'_1)$ and $g(E'_2)$
        have a nontrivial common initial segment,
        then $g(g'_2\mu'_1) = g(g'_1E'_1)$
        and $g'(\mu'_2) = g'_\sharp(b')$.
        In other words, either the fold $F'$ is a generalized fold factor of $g$,
        or $g$ cannot be folded at the turn $\{(g'_1,E'_1),(g'_2,E'_2)\}$.

        Suppose first that $g(g'_1E'_1)$ and $g(g'_2E'_2)$ have a common initial segment
        but that the fold for $g$ is not full,
        i.e.~that the maximal $g$-fold of $g'_1E'_1$ and $g'_2E'_2$ does not use all of $\mu'_1$.
        Write $\hat F\colon \mathcal{G}' \to \hat{\mathcal{G}}$ for the maximal $g$-fold
        of $g'_1E'_1$ and $g'_2E'_2$,
        let $\hat g\colon \hat{\mathcal{G}} \to \mathcal{G}$ be the induced map satisfying
        $\hat g\hat F = g$ and let $\hat G_i = \hat F(G'_i)$ for $1 \le i \le r$.
        Since the fold is not full, $\hat H_r = \hat F(H'_r)$
        has one more edge than $H'_r$ and $H_r$.

        Since the fold $\hat F$ is maximal, the map $\hat g$ cannot be folded
        at the newly created vertex.
        By \Cref{egalmostnielsenpathsproperties},
        $\{(g'_1,E'_1),(g'_2,E'_2)\}$ is the only orbit of illegal turns for $f'$
        that involves an edge in $H'_r$.
        Note that every fold factor of $g$ is a fold factor of $f' = Fg$,
        so we conclude that $F'$ is the only fold for $g$ that involves an edge of $H'_r$.
        Therefore all folds for $\hat g$ have both edges in $\hat G_{r-1}$.
        We claim that folding edges in $\hat G_{r-1}$ according to $\hat g|_{G_{r-1}}$
        will not identify previously distinct vertices in $\hat H_r \cap \hat G_{r-1}$.
        To see this, observe that if $\hat\gamma$ is a nontrivial tight path in $\hat G_{r-1}$
        with endpoints in $\hat H_r \cap \hat G_{r-1} = H'_r \cap G'_{r-1} = H_r \cap G_{r-1}$,
        then there exists a nontrivial tight path $\gamma$ in $G_r$
        with the same endpoints satisfying $\hat F_\sharp F_\sharp(\gamma) = \hat\gamma$.
        It follows that $\hat g_\sharp(\hat\gamma) = f_\sharp(\gamma)$ is nontrivial.
        (Compare the argument in \Cref{trivialedgegroupsEG2}.)
        It follows that no new illegal turns involving edges of $\hat H_r$
        are created by folding according to $\hat g$.
        Continue to fold edges in $\hat G_{r-1}$ according to $\hat g|_{\hat G_{r-1}}$
        until no more folds are possible,
        and call the resulting composition of folds $f_{r-1}\colon \hat{\mathcal{G}} \to \mathcal{G}^*$.
        (By \cite{Dunwoody}, only finitely many folds are possible.)
        There is an induced immersion $\theta\colon \mathcal{G}^* \to \mathcal{G}$
        satisfying $f = \theta f_{r-1}\hat F F$.
        Since $\mathcal{G}$ has no valence-one vertices with trivial vertex group,
        $\theta$ is an isomorphism of graphs of groups.
        Observe that $\theta(f_{r-1}(\hat G_{r-1})) = f(G_{r-1}) = G_{r-1}$,
        so we have $\theta(f_{r-1}(\hat H_r)) = H_r$.
        This is a contradiction, as $\theta f_{r-1}|_{\hat H_r}$ induces a bijection on the set of edges,
        but $\hat H_r$ has more edges than $H_r$.
        Therefore if $g$ can be folded at the turn $\{(g'_1,E'_1),(g'_2,E'_2)\}$,
        then all of $E'_1$ can be folded with a proper initial segment of $E'_2$.

        We turn now to completing the proof of the claim.
        Let $E$ be the first edge of $f_\sharp(\beta)$
        that is not part of the maximum common initial segment of $f_\sharp(\bar\alpha)$
        and $f_\sharp(\beta)$.
        Then the initial edge $E'$ of $F(E) \subset \mathcal{G}'$
        is the first edge of $g_\sharp(\beta')$ that is not part of the maximum common initial segment
        of $g_\sharp(\bar\alpha')$ and $g_\sharp(\beta')$.
        Since $E$ is contained in $H_r$, the edges $E'$ is contained in $H'_r$.
        In particular, $E'$ is not contained in $b'$, verifying the claim.

        If $g$ cannot be folded at the turn $\{(g'_1,E'_1),(g'_2,E'_2)\}$,
        then define $f_r = F$ and construct $f_{r-1}$ and $\theta$ exactly as above.
        Otherwise let $F'\colon \mathcal{G}' \to \mathcal{G}''$ be the extended fold of $\rho'$
        with respect to $f'\colon \mathcal{G}' \to \mathcal{G}'$
        and let $g'\colon \mathcal{G}'' \to \mathcal{G}$ be the induced map
        satisfying $g'F = g$.
        If $g'$ cannot be folded at the illegal turn of $\rho''$,
        then define $f_r = F'F$ and construct $f_{r-1}$ and $\theta$ as above.
        Otherwise, repeat the argument above to conclude that the extended fold of $\rho''$
        is a fold factor of $g'$.
        We may continue this argument.
        Since there are finitely many folds in any factorization of $f\colon \mathcal{G} \to \mathcal{G}$,
        this process terminates, proving the lemma.
    \end{proof}

    \begin{lem}[cf.~Lemma 5.1.7 of \cite{BestvinaFeighnHandel}]
        \label{egalmostnielsenpathsdistinctendpoints}
        Suppose that $f\colon \mathcal{G} \to \mathcal{G}$ is a relative train track map
        satisfying our standing assumptions.
        Suppose that $H_r$ is an exponentially growing stratum and that $\rho$ is an almost Nielsen path
        of height $r$ that crosses some edge $E$ of $H_r$ exactly once
        and that the first and last edges of $\rho$ are contained in $H_r$.
        Then the following hold.
        \begin{enumerate}
            \item The endpoints of $\rho$ are distinct.
            \item At most one endpoint of $\rho$ has nontrivial vertex group.
            \item If both endpoints are contained in $G_{r-1}$,
                then at least one endpoint is contained in a contractible component of $G_{r-1}$.
            \item If one endpoint has nontrivial vertex group
                and the other is contained in $G_{r-1}$,
                it is contained in a contractible component of $G_{r-1}$.
        \end{enumerate}
    \end{lem}

    \begin{proof}
        We follow the argument in \cite[Lemma 5.1.7]{BestvinaFeighnHandel}.
        Let $\mathcal{G}'$ be the graph of groups obtained from $\mathcal{G}$
        by removing the edge $E$ and adding a new edge $E'$ with endpoints equal
        to the initial and terminal endpoints of $\rho$.
        Write $\rho = \alpha E\beta$.
        Define a homotopy equivalence $h\colon \mathcal{G} \to \mathcal{G}'$
        that is the identity on all edges other than $E$
        and that satisfies $h(E) = \bar\alpha E'\bar\beta$.

        Consider the free factor system $\mathcal{F}(G_{r-1}\cup E')$.
        It is $\varphi$-invariant.
        Clearly we have 
        $\mathcal{F}(G_{r-1}) \sqsubset \mathcal{F}(G_{r-1} \cup E') \sqsubset \mathcal{F}(G_r)$.
        Suppose that the conclusions of the lemma fail. Then one of the following holds.
        \begin{enumerate}
            \item The endpoints of $\rho$ are equal.
            \item The endpoints of $\rho$ are distinct and both have nontrivial vertex group.
            \item The endpoints of $\rho$ are distinct, exactly one has nontrivial vertex group
                and the other endpoint is contained in a noncontractible component of $G_{r-1}$.
            \item Both endpoints are contained in noncontractible components of $G_{r-1}$.
        \end{enumerate}
        In each case, we see that $\mathcal{F}(G_{r-1} \cup E')$ 
        is strictly larger than $\mathcal{F}(G_{r-1})$.
        But since $\rho$ is not $r$-legal,
        $\mathcal{F}(G_{r-1}\cup E')$ does not carry any line that is generic
        for the attracting lamination $\Lambda^+$ associated to $H_r$,
        (recall that $f$ is eg-aperiodic by \Cref{aperiodicallaperiodic})
        so $\mathcal{F}(G_{r-1}\cup E')$ is strictly smaller than $\mathcal{F}(G_r)$.
        This contradicts the assumption that the filtration for $f\colon \mathcal{G} \to \mathcal{G}$
        is reduced.
    \end{proof}

    \begin{lem}[cf.~Lemma 4.29 of \cite{FeighnHandel}]
        \label{partialfoldfewernielsenpaths}
        Suppose that $H_r$ is an exponentially growing stratum of a relative train track map
        $f\colon \mathcal{G} \to \mathcal{G}$
        and that $\rho$ is an indivisible almost Nielsen path of height $r$.
        If the fold at the illegal turn of $\rho$ is partial,
        then there is a relative train track map $f'' \colon \mathcal{G}'' \to \mathcal{G}''$
        satisfying $N(f'') < N(f)$.
    \end{lem}

    \begin{proof}
        We follow the arguments in \cite[Lemmas 5.2.3 and 5.2.4]{BestvinaFeighnHandel}.
        We may write $\rho = \alpha\beta$ as in \Cref{finitelymanynielsenpaths}.
        There exists a tight path $\tau$ in $G_r$ such that 
        $f_\sharp(\alpha) = g\alpha\tau$ and $f_\sharp(\bar\beta) = h\bar\beta\tau$
        for vertex group elements $g$ and $h$.
        Suppose that $\alpha$ is not a single edge of $G$.
        Then the final edge of $\alpha$ is entirely mapped into $\tau$ by $f$.
        The only way this edge is subdivided when folding $\rho$
        is if the first edge of $\beta$ maps entirely into $\tau$
        and in this case we have a full fold, in contradiction to our assumption.

        Therefore suppose $\alpha$ and $\beta$ are single edges.
        Notice that by the previous lemma, 
        the endpoints of $\rho$ are distinct and
        at most one endpoint of $\rho$ has nontrivial vertex group.
        Up to reversing the orientation of $\rho$, suppose it is the initial endpoint,
        so we have $f(\alpha) = g\alpha\tau$ and $f(\bar\beta) = \bar\beta\tau$
        for some vertex group element $g$.
        Change $f$ via a homotopy so that $f(\alpha) = \alpha\tau$, then
        let $\mathcal{G}'$ be the graph of groups obtained from $\mathcal{G}$ 
        by identifying $\alpha$ and $\bar\beta$ to a single edge $E'$.
        There is an induced map $f'\colon \mathcal{G}' \to \mathcal{G}'$;
        for each edge $E$ of $G$ we have $f'(q(E)) = q_\sharp(f(E))$.
        If there are any edges with trivial $f'$-image,
        they form a contractible forest; we collapse each component to a vertex.
        After tightening and collapsing finitely many times,
        we arrive at a topological representative still called $f'\colon \mathcal{G}' \to \mathcal{G}'$
        and a quotient map still called $q\colon \mathcal{G} \to \mathcal{G}'$.
        We have that an edge of $G$ is collapsed by $q$ if and only if
        some iterate of $f_\sharp$ maps it to $\rho$ or $\bar\rho$,
        so only edges in zero strata may be collapsed.
        Therefore $q\colon \mathcal{G} \to \mathcal{G}'$ is a homotopy equivalence.

        The map $f'\colon \mathcal{G}' \to \mathcal{G}'$ is a topological representative
        preserving the filtration whose elements $G'_i$ satisfy $G'_i = q(G_i)$,
        where if each component of a zero stratum $H_j$ is collapsed to a point,
        then $q(G_j)$ is not added to the filtration.
        Notice that $\mathcal{F}(G_i) = \mathcal{F}(G'_i)$,
        so the filtration is reduced and still realizes $\mathscr{C}$.

        If $E$ is an edge of $G_r$, then $q(E)$ is an edge of $G'$.
        We have $f'(q(E)) = qf(E)$: this is clear if $E$ in $G_{r-1}$
        and follows from the fact that an $r$-legal path in $G_r$
        cannot cross the illegal turn $(\bar\alpha,\beta)$ if $E \in H_r$.
        If $E$ is an edge of $G\setminus G_r$ that is not collapsed by $q$,
        then the edge path $f'q(E)$ is obtained from $qf(E)$ by cancelling edges in $H_r$.
        Thus the stratum $H'_i = q(H_i)$ has the same type as $H_i$.
        In particular if $H_i$ is exponentially growing,
        then so is $H'_i$ and the Perron--Frobenius eigenvalues are equal.

        Given a path $\sigma$ in $\mathcal{G}$ and $k > 0$,
        we have $(f')^k_\sharp(q_\sharp(\sigma)) = q_\sharp f^k_\sharp(\sigma)$.
        So if $\sigma$ is a periodic almost Nielsen path for $f$,
        then $\sigma' = q_\sharp(\sigma)$ is a periodic almost Nielsen path for $f'$
        and the period of $\sigma'$ is at most the period of $\sigma$.
        If $\sigma \ne \rho$, then $\sigma'$ is nontrivial.

        Conversely suppose that $\sigma' \subset {G}'_i$ is a periodic Nielsen path
        for $f'\colon \mathcal{G}' \to \mathcal{G}'$,
        we will construct a path $\sigma \subset G_i$ satisfying $q_\sharp(\sigma) = \sigma'$.
        If the endpoints of $\sigma'$ do not lie in $q(\alpha) = q(\bar\beta)$,
        then there is a unique path $\sigma$ satisfying $q_\sharp(\sigma) = \sigma'$.
        If an endpoint of $\sigma'$ lies in $q(\alpha) = q(\bar\beta)$
        but is not the initial endpoint of $q(\alpha) = q(\bar\beta)$,
        then there is a unique path $\sigma$ in $\mathcal{G}$
        that has periodic endpoints such that $q_\sharp(\sigma) = \sigma'$.
        If $\sigma'$ begins or ends at the initial endpoint of $q(\alpha) = q(\bar\beta)$,
        then there is a unique path $\sigma$ that does not begin or end with $\rho$
        or $\bar\rho$ that satisfies $q_\sharp(\sigma) = \sigma'$.
        In all cases we see that $\sigma$ is a periodic Nielsen path
        and that the period of $\sigma'$ equals the period of $\sigma$.
        What's more, if $\sigma'$ is indivisible, then $\sigma$ is indivisible.

        If $f'\colon \mathcal{G}' \to \mathcal{G}'$ is not a relative train track map,
        we may modify it to produce a relative train track map
        by using the moves ``(invariant) core subdivision'' 
        and ``collapsing inessential connecting paths''
        described in \cite[Lemmas 3.4 and 3.5]{Myself}
        or \cite[Lemmas 5.13 and 5.14]{BestvinaHandel}
        to restore properties \hyperlink{EG-i}{(EG-i)} and \hyperlink{EG-ii}{(EG-ii)}
        for each exponentially growing stratum of $f' \colon \mathcal{G}' \to \mathcal{G}'$.
        Since $\pf(f') = \pfmin$, \Cref{pfcorollary} 
        implies that \hyperlink{EG-iii}{(EG-iii)} is satisfied.

        We now have a relative train track map $f''\colon \mathcal{G}'' \to \mathcal{G}''$ and
        an identifying homotopy equivalence $q\colon \mathcal{G} \to \mathcal{G}''$.
        The graph of groups $\mathcal{G}''$ is obtained from $\mathcal{G}$
        by subdivision, folding and collapsing pretrivial forests
        (which, again, are contained in zero strata).
        The resulting relative train track map still realizes $\mathscr{C}$
        and thus its filtration is reduced.
        If $C''$ is a contractible component of some filtration element 
        for $f''\colon \mathcal{G}'' \to \mathcal{G}''$,
        then $q^{-1}(C'')$ is a contractible component of some filtration element
        and is thus a union of zero strata.
        It follows that $C''$ is also a union of zero strata.

        Collapsing inessential connecting paths,
        collapsing pretrivial forests
        and invariant core subdivision
        do not change the period of any indivisible periodic almost Nielsen path,
        nor do they alter the number of indivisible periodic almost Nielsen paths
        of exponentially growing height.
        It follows that $N(f'') = N(f') < N(f)$.
    \end{proof}

    \begin{lem}[cf.~Lemma 4.30 of \cite{FeighnHandel}]
        \label{properfoldsamenumberofedges}
        Suppose that $H_r$ is an exponentially growing stratum of a relative train track map
        $f\colon \mathcal{G} \to \mathcal{G}$,
        that $\rho$ is an indivisible almost Nielsen path of height $r$
        and that the fold at the illegal turn of $\rho$ is proper.
        Let $f'\colon \mathcal{G}' \to \mathcal{G}'$ be the relative train track map
        obtained from $f\colon \mathcal{G} \to \mathcal{G}$ by folding $\rho$.
        Then $N(f') = N(f)$ and there is a bijection $H_s \to H_s'$
        between the exponentially growing strata of $f$ and the exponentially growing strata of $f'$
        such that $H'_s$ and $H_s$ have the same number of edges for all $s$.
    \end{lem}
    
    \begin{proof}
        The proof that $f'$ is a relative train track map 
        satisfying our standing assumptions
        is contained in \Cref{properfoldrelativetraintrackmap}.
        It is clear from the definition of $f'\colon \mathcal{G}' \to \mathcal{G}'$
        and the proof of \Cref{properfoldrelativetraintrackmap}
        that the bijection between exponentially growing strata exists
        and that $H'_s$ and $H_s$ have the same number of edges for all $s$.
        The fact that $N(f') = N(f)$ follows as in the previous lemma.
    \end{proof}

    \begin{lem}[cf.~Lemma 4.31 of \cite{FeighnHandel}]
        \label{improperfoldfeweredges}
        Suppose that $H_r$ is an exponentially growing stratum of a relative train track map
        $f\colon \mathcal{G} \to \mathcal{G}$
        and that $\rho$ is an indivisible almost Nielsen path of height $r$.
        If the fold at the illegal turn of $\rho$ is improper,
        then there is a relative train track map $f''\colon \mathcal{G}'' \to \mathcal{G}''$
        and a bijection $H_s \to H''_s$ between the exponentially growing strata of $f$ and $f''$
        such that the following hold.
        \begin{enumerate}
            \item $N(f'') = N(f)$.
            \item $H''_r$ has fewer edges than $H_r$.
            \item If $s > r$, then $H_s$ and $H''_s$ have the same number of edges.
        \end{enumerate}
    \end{lem}

    \begin{proof}
        Assume notation as in the definition of folding $\rho$;
        i.e.~we have edges $E_1$ and $E_2$ and in $H_r$ and vertex group elements $g_1$ and $g_2$
        such that $f(g_1E_1) = f(g_2E_2)$.
        Let $F\colon \mathcal{G} \to \mathcal{G}''$ be the fold of $E_1$ with $E_2$.
        There is an induced map $g\colon \mathcal{G}'' \to \mathcal{G}$ such that $gF = f$.
        Define $f'\colon \mathcal{G}' \to \mathcal{G}'$ by tightening
        $Fg\colon \mathcal{G}' \to \mathcal{G}'$
        and collapsing a maximal pretrivial forest.
        Observe that edges that are collapsed belong to zero strata.
        There is an identifying homotopy equivalence $q\colon \mathcal{G} \to \mathcal{G}'$.
        As in the proof of \Cref{partialfoldfewernielsenpaths},
        the map $f'\colon \mathcal{G}' \to \mathcal{G}'$ is a topological representative
        preserving the filtration whose elements $G'_i$ satisfy $G'_i = q(G_i)$,
        so the filtration is reduced and still realizes $\mathscr{C}$.
        If $H_i$ is a stratum of $f\colon \mathcal{G} \to \mathcal{G}$
        which is not entirely collapsed to a point,
        then $H'_i$ is a stratum of the same type,
        and the Perron--Frobenius eigenvalues are equal if $H_i$ was exponentially growing.

        If $f' \colon \mathcal{G}' \to \mathcal{G}'$ is not a relative train track map,
        we may again perform invariant core subdivision and collapsing of inessential connecting paths
        to restore properties \hyperlink{EG-i}{(EG-i)} and \hyperlink{EG-ii}{(EG-ii)}.
        As in the proof of \Cref{partialfoldfewernielsenpaths},
        the result is a relative train track map $f''\colon \mathcal{G}'' \to \mathcal{G}''$
        satisfying our standing assumptions.
        When performing invariant core subdivision, the number of edges in the relevant
        exponentially growing stratum is unchanged.
        Thus the existence of the bijection $H_s \to H''_s$ is clear,
        as is the fact that $H''_r$ has one fewer edge than $H_r$.
        If $s > r$, to show that $H''_s$ has the same number of edges as $H_s$,
        we must show that no folding in $H'_s$ occurs when collapsing inessential connecting paths.
        Since $H'_s$ is not a zero stratum,
        the component of any filtration element it belongs to
        is noncontractible.
        (There is only one component by \Cref{aperiodicallaperiodic}.)
        Observe that the map $q\colon \mathcal{G} \to \mathcal{G}'$
        does not identify points that are not identified by some iterate of 
        $f\colon \mathcal{G} \to \mathcal{G}$.
        Thus this component satisfies \hyperlink{EG-ii}{(EG-ii)} by \Cref{trivialedgegroupsEG2}
        for $f'$ because $f$ does,
        and we conclude that no folding in $H'_s$ occurs.
    \end{proof}

    \paragraph{Step 2: \Cref{improvedrelativetraintrack}.}
    The second step is to apply the construction in the proof of \Cref{improvedrelativetraintrack}
    to our relative train track map $f\colon \mathcal{G} \to \mathcal{G}$.
    As we remarked in the proof,
    the number of indivisible almost Nielsen paths of exponentially growing height is preserved
    and the number of edges of each exponentially growing stratum is unchanged.
    If the resulting relative train track map $f\colon \mathcal{G} \to \mathcal{G}$
    does not satisfy \hyperlink{EGAlmostNielsenPaths}{(EG Almost Nielsen Paths)},
    we can return to Step 1 and restore these properties.
    After starting over finitely many times we may assume that 
    \hyperlink{EGAlmostNielsenPaths}{(EG Almost Nielsen Paths)} is still satisfied.

    \paragraph{Step 3: (Rotationless), (Filtration) and (Zero Strata).}
    The properties \hyperlink{Rotationless}{(Rotationless)} and \hyperlink{Filtration}{(Filtration)}
    follow from \Cref{rotationlessisrotationless} 
    and property \hyperlink{F}{(F)} of \Cref{improvedrelativetraintrack}.
    By property \hyperlink{Z}{(Z)} of \Cref{improvedrelativetraintrack},
    to arrange \hyperlink{ZeroStrata}{(Zero Strata)} it suffices to show that each edge
    in a zero stratum $H_i$ is $r$-taken.
    Each edge $E$ in $H_i$ is contained in an $r$-taken path $\sigma \subset H_i$,
    since $f|_{G_r}$ is a homotopy equivalence.
    If $E$ itself is not $r$-taken,
    perform a tree replacement, removing $E$ and insert a new edge $E'$
    that has the same endpoints as $\sigma$ and such that the identifying homotopy equivalence
    $p'\colon \mathcal{G}' \to \mathcal{G}$ sends $E'$ to $\sigma$.
    After finitely many tree replacements, \hyperlink{ZeroStrata}{(Zero Strata)} is satisfied.

    \paragraph{Step 4: (Almost Periodic Edges).}
    Suppose at first that no component of $\per(f)$ is either
    a topological circle nor the quotient of $\mathbb{R}$ by the standard action of $C_2*C_2$
    with exactly two periodic directions at each point.
    Then the endpoints of any almost periodic edge are principal,
    so by \hyperlink{Rotationless}{(Rotationless)} each almost periodic edge is almost fixed
    and each almost periodic stratum $H_r$ is a single edge $E_r$.
    The remainder of \hyperlink{AlmostPeriodicEdges}{(Almost Periodic Edges)}
    is the translation of \hyperlink{P}{(P)} (or more accurately, \Cref{propertyPconsequence})
    to this situation.

    In general, we will suppose that $\per(f)$ contains a component $C$ of the form above
    and modify $f\colon \mathcal{G} \to \mathcal{G}$ to make $C$ no longer problematic,
    either by adding periodic directions in the case of $C$ a circle,
    or by arranging that $C$ is a dihedral pair in the case that $C$
    is the quotient of $\mathbb{R}$ by the standard action of $C_2*C_2$.

    Item 1 of \Cref{rotationlessproperties} implies that $C$ is $f$-invariant,
    and that if $C$ is a circle, then $g = f|_C$ is orientation preserving.
    By \hyperlink{ZeroStrata}{(Zero Strata)} and the fact that there are no periodic directions
    based in $C$ and pointing out of $C$,
    if $E_j$ is an edge not in $C$ that has an endpoint in $C$,
    then $E_j$ is non-almost periodic, non-exponentially growing,
    and meets $C$ only at its terminal vertex.
    Notice that since all non-periodic vertices are contained in exponentially growing strata
    (by \hyperlink{NEG}{(NEG)} and \hyperlink{ZeroStrata}{(Zero Strata)})
    no vertex outside of $C$ maps into $C$.
    By \hyperlink{NEG}{(NEG)} again, $C$ is a component of some filtration element.
    
    Suppose now that $C$ is a circle. First we arrange that $C$ is fixed by $f$.
    Define a map $h\colon \mathcal{G} \to \mathcal{G}$ homotopic to the identity
    by extending the rotation $g^{-1}\colon C \to C$
    such that $h$ has support on a small neighborhood of $C$
    and such that $h(E_j) \subset E_j \cup C$ 
    for each edge $E_j$ with terminal endpoint in $C$.
    Redefine $f$ on each edge $E$ of $G$ to be $h_\sharp f_\sharp(E)$ 
    and observe that edges in $C$ are now fixed.
    For each edge $E_j$ with terminal vertex in $C$,
    the edge path $f(E_j) = E_j u_j$ differs from its original definition
    only by initial and terminal segments in $C$.
    The $f$-images of all other edges is unchanged;
    what's more, if $\sigma$ is a path such that the endpoints of $f(\sigma)$
    are not in the support of $h$,
    then $f_\sharp(\sigma)$ is unchanged.
    The map $f$ remains a relative train track map,
    and all the properties we have previously established remain,
    with the possible exception of property \hyperlink{P}{(P)},
    which will fail if edges (possibly in $C$ or one of the $E_j$)
    is now a fixed edge that should be collapsed.
    If there is such an edge, collapse it;
    all previously established properties continue to hold.
    If after performing these collapses $C$ now has outward-pointing periodic directions,
    then we are finished.

    Suppose that $C$ still does not have outward-pointing periodic directions.
    Then the first non-periodic edge $E_m$ with terminal endpoint in $C$
    satisfies $f(E_m) = E_mC^d$ for some nonzero integer $d$.
    In this second step we modify $f$ so that $f(E_m) = E_m$.
    Take a map $h'\colon \mathcal{G} \to \mathcal{G}$ which is the identity on $C$
    that satisfies $h'(E_j) = E_j C^{-d}$ for all edges $E_j$ with terminal endpoint in $C$
    and that has support in a small neighborhood of $C$.
    Again, this map is homotopic to the identity.
    Redefine $f$ on each edge by tightening $h'f$
    and note that $C \cup E_m$ now belongs to $\fix(f)$,
    so the component of $\per(f)$ containing $C$ is no longer a topological circle.
    If necessary, collapse fixed edges with an endpoint in $C$
    to restore property \hyperlink{P}{(P)} and repeat this step.

    So suppose instead that $C$ is the quotient of $\mathbb{R}$ by the standard action of $C_2*C_2$.
    By \hyperlink{P}{(P)}, $C$ consists of at most two edges,
    and after subdividing at a fixed point in the center of $C$ we may assume there are exactly two.
    To arrange that $C$ is a dihedral pair,
    slide each edge $E_j$ with terminal endpoint in $C$
    so that its endpoint is now the center vertex of $C$.
    
    Let us remark that we have shown that the endpoint of an almost periodic edge
    is either principal (and hence the edge is almost fixed)
    or part of a dihedral pair (and hence \emph{not} principal).

    \paragraph{Step 5: Induction, the NEG case.}
    Let $NI$ be the number of strata $H_i$ such that each component of $G_i$ is non-contractible.
    By properties \hyperlink{P}{(P)}, \hyperlink{Z}{(Z)} and \hyperlink{NEG}{(NEG)},
    this is the count of the number of strata $H_i$ that are irreducible,
    with the possible exception of those strata that are the bottom half of a dihedral pair.
    Given $m$ satisfying $0 \le m \le NI$, let $G_{i(m)}$ be the $m$th filtration element
    satisfying the property that each component of $G_i$ is non-contractible.
    If there are no dihedral pairs, this is the smallest filtration element
    containing the first $m$ irreducible strata.
    We will prove by induction on $m$ that one can modify $f$ so that $f|_{G_{i(m)}}$,
    or more properly speaking the restriction of $f$ to each component of $G_{i(m)}$, is a CT.
    The $m = 0$ case is vacuous,
    so we turn to the inductive step;
    assuming that $f|_{G_r}$ is a CT for $r = i(m)$,
    we will alter $f$ so that $f|_{G_s}$ is a CT for $s = i(m+1)$.
    There are two cases, according to whether $H_s$ is exponentially or non-exponentially growing.
    In this step we will assume that $H_s$ is non-exponentially growing.

    Suppose as a first case that $H_s$ is almost periodic.
    Then either $H_s$ consists of a single almost fixed edge with principal endpoints,
    or it is half or all of a dihedral pair.
    In the former case, \hyperlink{CompletelySplit}{(Completely Split)}
    \hyperlink{Vertices}{(Vertices)}, \hyperlink{LinearEdges}{(Linear Edges)} and 
    \hyperlink{NEGAlmostNielsenPaths}{(NEG Almost Nielsen Paths)}
    follow from the inductive hypothesis.
    In the latter case, by \hyperlink{AlmostPeriodicEdges}{(Almost Periodic Edges)},
    the dihedral pair is a component of $G_s$, and the properties above again
    follow by the inductive hypothesis.

    So suppose that $H_s$ is not almost periodic,
    so it is a single edge $E_s$ satisfying $f(E_s) = g_sE_su_s$ for some vertex group element $g_s$
    and path $u_s$ in $G_{s-1}$.
    By \hyperlink{ZeroStrata}{(Zero Strata)}, $r = s-1$.
    We will modify $E_s$ and $u_s$ by sliding along a path $\tau$ in $G_{s-1}$
    with initial endpoint equal to the terminal vertex of $E_s$.
    As we noted in Step 2,
    we may (after possibly starting over) assume that sliding preserves 
    \hyperlink{EGAlmostNielsenPaths}{(EG Almost Nielsen Paths)}.

    First suppose that after sliding, we have that $E_s$ is almost fixed.
    By \Cref{sliding lemma} this is equivalent to $[\bar\tau u_s f_\sharp(\tau)]$
    being a trivial path and hence equivalent to
    $f_\sharp(E_s\tau) = g_sE_s[u_s f_\sharp(\tau)] = g_sE_s\tau g$ for some vertex group element $g$.
    In other words, this is equivalent to $E_s\tau$ being an almost Nielsen path.

    Continuing to suppose that $E_s$ is now almost fixed,
    observe that if both endpoints of $E_s$ are contained in $G_{s-1}$,
    then \hyperlink{AlmostPeriodicEdges}{(Almost Periodic Edges)} is satisfied,
    as are all the conclusions of \Cref{improvedrelativetraintrack},
    with the possible exception of \hyperlink{P}{(P)}
    which can fail if what was formerly a fixed dihedral pair is now no longer a dihedral pair
    (because one of the $C_2$ vertex groups now has valence two, say),
    in which case we may restore this property by collapsing an edge.
    All the properties from Step 3 are still satisfied,
    and the remaining properties of a CT follow from the inductive hypothesis.

    If some endpoint of $E_s$ is not contained in $G_{s-1}$,
    then collapse $E_s$ to a point as in Step 4.
    None of the previously achieved properties are lost
    and the remainder of the properties of a CT follow by induction.

    Assume now that there is no choice of $\tau$ such that $E_s\tau$ is an almost Nielsen path.
    By \Cref{neginduction}, after sliding, we have $f(E_s) = g_sE_s\cdot u_s$
    where $u_s$ is nontrivial,
    so $f|_{G_s}$ satisfies \hyperlink{AlmostPeriodicEdges}{(Almost Periodic Edges)}
    and all the properties from the first four steps of the proof.
    Items \hyperlink{CompletelySplit}{(Completely Split)},
    \hyperlink{Vertices}{(Vertices)},
    \hyperlink{NEGAlmostNielsenPaths}{(NEG Almost Nielsen Paths)}
    and \hyperlink{LinearEdges}{(Linear Edges)} for $f|_{G_s}$
    follow from those properties for $f|_{G_r}$ and \Cref{neginduction}.
    This completes the inductive step assuming that $H_s$ is non-exponentially growing.

    \paragraph{Step 6: Induction, the EG case.}
    Suppose now that $H_s$ is exponentially growing.
    Items \hyperlink{Vertices}{(Vertices)}, \hyperlink{LinearEdges}{(Linear Edges)}
    and \hyperlink{NEGAlmostNielsenPaths}{(NEG Almost Nielsen Paths)} follow from
    \hyperlink{EGAlmostNielsenPaths}{(EG Almost Nielsen Paths)} and the inductive hypothesis on $f|_{G_r}$.
    It remains to establish \hyperlink{CompletelySplit}{(Completely Split)} for $f|_{G_s}$.

    For each edge $E$ in $H_s$, there is a decomposition $f(E) = \mu_1\cdot\nu_1\cdot \mu_2\cdots\nu_{m-1}\mu_m$,
    where the $\mu_i$ belong to $H_s^z$ and the $\nu_i$ are maximal subpaths in $G_{r}$.
    We let $\{\nu_\ell\}$ denote the collection of all such  paths that occur 
    as $E$ varies over the edges of $H_s$.
    By \hyperlink{EG-ii}{(EG-ii)}, for all $k$ and $\ell$, the path $f^k_\sharp(\nu_\ell)$ is nontrivial.
    By \Cref{largepowerscompletelysplit}, we may choose $k$ so large that
    each  $f^k_\sharp(\nu_\ell)$ is completely split.
    The endpoints of each $\nu_\ell$ are periodic by \Cref{trivialedgegroupsEG2}
    and thus principal by \hyperlink{Vertices}{(Vertices)} and hence fixed.
    There are finitely many connecting paths $\sigma$ in the strata (if any)
    between $G_r$ and $H_s$.
    Each $f_\sharp(\sigma)$ is either again one of these connecting paths
    or a nontrivial tight path in $G_r$ whose endpoints are fixed by the observation above.
    Therefore we may (after increasing $k$)
    assume that each $f^k_\sharp(\sigma)$ is completely split for each such $\sigma$.
    After applying \Cref{changingthemarking} $k$ times with $j = r$,
    we have by item 7 of the proof of that lemma that $f|_{G_s}$ is completely split.
    This completes the inductive step and with it the proof of the theorem.
\end{proof}

\begin{cor}
    \label{powersofCTsareCTs}
    If $f\colon \mathcal{G} \to \mathcal{G}$ is a CT,
    then $f^k$ is a CT for all $k \ge 1$.
\end{cor}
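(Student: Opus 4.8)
\textbf{Proof plan for \Cref{powersofCTsareCTs}.}
The plan is to verify the ten defining properties of a CT for $f^k$ one at a time, using the corresponding property for $f$ together with the structural lemmas about CTs proven earlier in \Cref{CTsection}. The first observation is that the underlying graph of groups $\mathcal{G}$ and the filtration are not changed when passing from $f$ to $f^k$, except that the filtration must be enlarged so that each transition matrix is irreducible or zero: if $H_r$ is exponentially growing and aperiodic for $f$, then by \Cref{principalEG} its transition matrix has a nonzero diagonal entry, so $M_r^k$ is again aperiodic, and $H_r$ remains a single exponentially growing stratum for $f^k$; if $H_r$ is a single non-exponentially growing edge, it stays a single edge; almost periodic strata and dihedral pairs are preserved; and zero strata stay zero strata. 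So the partition into strata and the bijection between exponentially growing strata of $f$ and $f^k$ is transparent.

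Next I would run through the properties. \hyperlink{Rotationless}{(Rotationless)}: by \Cref{rotationlessisrotationless}, $f$ rotationless implies $\varphi$ rotationless, hence $\varphi^k$ is rotationless, hence (again by \Cref{rotationlessisrotationless}, using that $f$ has a rotationless iterate — namely itself) $f^k$ is rotationless. \hyperlink{CompletelySplit}{(Completely Split)}: for each edge $E$ in an irreducible stratum, $f(E)$ is completely split, so by \Cref{completelysplittocompletelysplit} (applied $k-1$ more times) $f^k_\sharp(E) = f^{k-1}_\sharp(f(E))$ is completely split; similarly for taken connecting paths $\sigma$ in zero strata, $f_\sharp(\sigma)$ is completely split so $f^k_\sharp(\sigma)$ is too. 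One must also check that $f^k(E)$, not just $f^k_\sharp(E)$, equals the tightened path, but $f$ being a topological representative makes $f^k$ one as well, and complete splitting is about the tight path anyway. \hyperlink{Filtration}{(Filtration)}: being reduced is a property of the filtration and the set of $\varphi^j$-invariant free factor systems, which coincides with the set of $(\varphi^k)^j$-invariant ones up to passing to multiples — here \Cref{rotationlessproperties} item 3 is the relevant input: a free factor invariant under an iterate of $\varphi$ is $\varphi$-invariant, and dually $\varphi^k$-invariant free factor systems are $\varphi$-invariant, so the reducedness condition for $f^k$ follows from that for $f$. The core-of-filtration-element and dihedral-pair clauses are unchanged.

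For \hyperlink{Vertices}{(Vertices)}, \hyperlink{AlmostPeriodicEdges}{(Almost Periodic Edges)}, \hyperlink{ZeroStrata}{(Zero Strata)} I would use that indivisible periodic almost Nielsen paths for $f^k$ are, by \Cref{rotationlessnielsenpaths} applied to the rotationless map $f$, actually almost Nielsen paths for $f$ (period one), and conversely indivisible almost Nielsen paths for $f$ are indivisible almost Nielsen paths for $f^k$ with the same endpoints; so the two maps have the same indivisible periodic almost Nielsen paths and the vertex conditions transfer verbatim. Almost periodic edges, non-exponentially-growing edges, zero strata, their envelopes, and the $r$-taken condition are all literally the same for $f$ and $f^k$ (a maximal subpath of $f^k_\sharp(E)$ in a zero stratum is a subpath of $f^{k-1}_\sharp$ of a taken subpath of $f(E)$, hence taken, using \Cref{completesplittingunique} to control the splitting). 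For \hyperlink{LinearEdges}{(Linear Edges)} and \hyperlink{NEGAlmostNielsenPaths}{(NEG Almost Nielsen Paths)}: if $f(E_i) = g_iE_iw_i^{d_i}$ with $w_i$ a closed root-free Nielsen path, then an easy induction gives $f^k(E_i) = g_i' E_i w_i^{kd_i}$ (using $f_\sharp(w_i) = w_i$ and the splitting $f(E_i) = E_i\cdot u_i$ from \Cref{negctsplitting}), so $w_i$ works for $f^k$ with exponent $kd_i$; distinct linear edges with the same axis have $d_i\ne d_j$ hence $kd_i \ne kd_j$; the dihedral linear case is the same computation with the formulas recorded before ``Reduced filtration''; and since the set of almost Nielsen paths of non-exponentially-growing height is the same for $f$ and $f^k$, \hyperlink{NEGAlmostNielsenPaths}{(NEG Almost Nielsen Paths)} is immediate. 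The main obstacle is \hyperlink{EGAlmostNielsenPaths}{(EG Almost Nielsen Paths)}: here I would invoke the equivalence, noted just after \Cref{properfoldsegalmostnielsenpaths} (via \Cref{egalmostnielsenpathsproperfold}), that \hyperlink{EGAlmostNielsenPaths}{(EG Almost Nielsen Paths)} for $H_r$ is equivalent to the statement that the fold at the illegal turn of every indivisible almost Nielsen path obtained by iteratively folding the (unique, up to equivalence and orientation, by \Cref{egalmostnielsenpathsproperties}) indivisible almost Nielsen path $\rho$ of height $r$ is proper; since $f$ and $f^k$ have the same $\rho$ and the data set governing the fold type depends only on the underlying graph map and the ordered $H_r$-edge sequences in $\rho$ and in $f(E)$ — and the $H_r$-edge sequence in $f^k(E)$ is determined by that for $f(E)$ via the complete splitting — the iterative folding sequence for $f^k$ is a ``speed-up'' of that for $f$ and all its folds are proper. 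I expect this last verification, making precise that the proper-fold property is inherited, to require the most care, essentially repeating the bookkeeping in the proof of \Cref{egalmostnielsenpathsproperfold}.
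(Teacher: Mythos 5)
Your proposal is correct and takes essentially the same route as the paper, which for this corollary consists of a single sentence: cite \Cref{egalmostnielsenpathsproperfold} and \Cref{properfoldsegalmostnielsenpaths} for \hyperlink{EGAlmostNielsenPaths}{(EG Almost Nielsen Paths)} and declare the remaining properties straightforward. You have done the useful work of actually checking the ``straightforward'' properties, and the checks you describe are sound — in particular the use of \Cref{rotationlessnielsenpaths} to identify the indivisible almost Nielsen paths for $f$ and $f^k$, and the observation that $\mathrm{Rotationless}$ passes through \Cref{rotationlessisrotationless} in both directions.

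One small imprecision worth flagging: for \hyperlink{EGAlmostNielsenPaths}{(EG Almost Nielsen Paths)} you describe the iterative folding sequence for $f^k$ as a ``speed-up'' of that for $f$. It is in fact \emph{literally the same sequence}. The extended fold of $\rho$ identifies a subinterval $E_2''$ of $E_2$ with the fixed target $E_1b$ (determined by $\rho$, not by the map), where $E_2''$ is cut off at the $f$-preimage of the terminal point of $f(E_1b)$ — and that terminal point is an $H_r$-edge boundary point. Since \hyperlink{EG-iii}{(EG-iii)} and \Cref{rttlemma} guarantee that $f^{k-1}_\sharp$ does not cancel $H_r$-edges, this boundary point and its preimage in $E_2$ are preserved under further iteration, so the same interval $E_2''$ satisfies $f^k(g_2E_2'') = f^k(E_1b)$. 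Thus the first extended fold $F_0$ agrees for $f$ and $f^k$, and since $(f^k)_1 = (F_0g_0)^k = (f_1)^k$, an induction shows every subsequent fold agrees too; the periodicity established in the proof of \Cref{egalmostnielsenpathsproperfold} then gives properness. The same-sequence claim is what actually makes the two lemmas applicable; ``speed-up'' would suggest different (fewer, larger) folds, which is not what happens. Also, your displayed formula $f^k(E_i) = g_i'E_iw_i^{kd_i}$ for \hyperlink{LinearEdges}{(Linear Edges)} holds only in the non-dihedral case; in the dihedral case the exponent is $kd_i + \lfloor k/2\rfloor$ and the suffix alternates parity with $k$, as you acknowledge by deferring to the formulas recorded before the ``Reduced filtration'' paragraph, but the formula as written overstates the uniformity. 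Neither of these affects the conclusion.
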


\begin{proof}
    Property \hyperlink{EGAlmostNielsenPaths}{(EG Almost Nielsen Paths)}
    follows from \Cref{properfoldsegalmostnielsenpaths} and \Cref{egalmostnielsenpathsproperfold}.
    The rest of the properties are straightforward to check;
    we leave them to the reader.
\end{proof}

\section{An index inequality}
\label{indexsection}
Gaboriau, Jaeger, Levitt and Lustig define in \cite[Theorem 4]{GaboriauJaegerLevittLustig}
an \emph{index} for (outer) automorphisms of free groups.
Namely, suppose $\Phi$ is an automorphism of a free group $F_n$.
Then, as in \Cref{boundarysection}, 
$\Phi$ yields a homeomorphism $\hat\Phi$ of the Gromov boundary of $F_n$,
which is equal to its Bowditch boundary.
Let $a(\Phi)$ denote the number of $\fix(\Phi)$-orbits
of attracting fixed points for $\hat\Phi$ in $\partial_\infty(F_n)$.
Gaboriau, Jaeger, Levitt and Lustig's \emph{index} of the automorphism $\Phi$ is the quantity
\[  i(\Phi) = \max\left\{0,\ \rank(\fix(\Phi)) + \frac{1}{2}a(\Phi) - 1\right\}. \]
Observe that if $i(\Phi)$ is positive,
then $\Phi$ is a principal automorphism,
so there are only finitely many isogredience classes of automorphisms $\Phi$ such that $i(\Phi) > 0$.
If $\varphi \in \out(F_n)$ is the outer class of $\Phi$,
Gaboriau, Jaeger, Levitt and Lustig define the \emph{index} of $\varphi$ to be the sum
\[  i(\varphi) = \sum i(\Phi), \]
where the sum may be equivalently defined over representatives of \emph{all} isogredience classes
of automorphisms $\Phi$ representing $\varphi$, or merely representatives
of principal isogredience classes.
Gaboriau, Jaeger, Levitt and Lustig prove that for all $\varphi \in \out(F_n)$,
the index $i(\varphi)$ satisfies $i(\varphi) \le n-1$.
The main result of Martino's paper \cite{Martino}
is that if $F_n$ is replaced by a free product $F = A_1 * \cdots * A_n * F_k$,
then the index, defined exactly as above, of any $\varphi \in \out(F,\mathscr{A})$ satisfies
$i(\varphi) \le n + k - 1$.
Let us remark that Martino's result 
is stated for the Grushko decomposition of a finitely generated group,
but that the proof does not use this assumption in any essential way.

The purpose of this section is to improve on Martino's result in two ways.
The first is an innovation of Feighn--Handel:
let $b(\Phi)$ denote the number of $\fix(\Phi)$-orbits of attracting fixed points for $\hat\Phi$
in $\partial_\infty(F,\mathscr{A})$ associated to \emph{eigenrays,}
which are referred to as \emph{NEG rays} in \cite{FeighnHandelAlg};
we follow the naming convention in \cite{HandelMosher}.
We defer a precise definition for now,
but the idea is that these attractors 
are associated in the sense of \Cref{fixedpointsfromfixeddirections}
to fixed directions coming from non-almost periodic non-exponentially growing strata
of some relative train track map representing (a rotationless iterate of) $\hat\Phi$.
The second is to use the data of the Bowditch boundary of $(F,\mathscr{A})$.
Recall that the stabilizer of each point in $V_\infty(F,\mathscr{A})$
is an infinite group $A$ conjugate into $\mathscr{A}$,
and that a point in $V_\infty(F,\mathscr{A})$ is fixed by $\hat\Phi$
if and only if the associated group $A$ is preserved by $\Phi$.
Let $c(\Phi)$ denote the number of $\fix(\Phi)$-orbits of fixed points in $V_\infty(F,\mathscr{A})$
with the property that the intersection of the associated group $A$ with $\fix(\Phi)$ is trivial.
Define the quantity $j(\Phi)$ as
\[  j(\Phi) = \max \left\{ 0,\ \rank(\fix(\Phi)) + \frac{1}{2}a(\Phi) 
+ \frac{1}{2}b(\Phi) + c(\Phi) - 1\right\}. \]
Here and throughout the section, we use a slightly nonstandard definition of $\rank(H)$
for $H$ a subgroup of $F$:
If $(H,\mathscr{A}|_H)$ takes the form $C_2 * C_2$,
define $\rank(H) = 1$; otherwise define it to be the usual Kurosh subgroup rank of $H$.
This definition has the following advantage:
if $j(\Phi)$ is positive, then $\Phi$ is a principal automorphism,
so there are again only finitely many 
isogredience classes of automorphisms $\Phi$ for which $j(\Phi) > 0$,
and we may define
\[  j(\varphi) = \sum j(\Phi) \]
where again the sum may be equivalently defined over the isogredience classes of all automorphisms
$\Phi$ representing $\varphi$, or merely the principal ones.
The main result of this section is the following theorem.
\begin{thm}
    \label{preciseindexthm}
    Suppose $\varphi \in \out(F,\mathscr{A})$ has a rotationless iterate,
    and that each $A \in \mathscr{A}$ is finitely generated.
    Then the index $j(\varphi)$ satisfies
    \[  j(\varphi) \le n + k - 1. \]
\end{thm}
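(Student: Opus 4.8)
The plan is to extract everything from a CT, following the strategy of Feighn and Handel for $\out(F_n)$ in \cite{FeighnHandelAlg}. First I would reduce to the case that $\varphi$ itself is rotationless: if $\varphi^m$ is rotationless, then every principal automorphism $\Phi$ representing $\varphi$ has $\Phi^m$ a principal automorphism representing $\varphi^m$, this operation does not identify isogredience classes, and passing from $\Phi$ to $\Phi^m$ does not decrease the local index $j(\Phi)$ (the fixed subgroup can only grow, attracting fixed points and eigenrays of $\hat\Phi$ remain such for $\hat\Phi^m$, and fixed points of $V_\infty(F,\mathscr{A})$ with trivial-intersection stabilizer persist); hence $j(\varphi)\le j(\varphi^m)$ and it suffices to bound the right-hand side. (This is the only place the rotationless-\emph{power} hypothesis, rather than mere rotationlessness, is used.) So assume $\varphi$ rotationless and fix a CT $f\colon\mathcal{G}\to\mathcal{G}$ representing $\varphi$ together with its filtration, using \Cref{CTtheorem}/\Cref{preciseCTtheorem}. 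Since $j(\Phi)>0$ forces $\Phi$ to be principal, the sum defining $j(\varphi)$ is finite; by \Cref{principalliftprincipalfixed} and \Cref{principalfixedprincipallift} and the discussion of almost Nielsen classes, the principal isogredience classes correspond bijectively to the principal (non-exceptional) almost Nielsen classes in $\fix(f)$, equivalently to the principal lifts $\tilde f\colon\Gamma\to\Gamma$.

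Next I would analyze, for each principal lift $\tilde f$ corresponding to $\Phi$, the subtree $\fix(\tilde f)\subset\Gamma$ and the fixed data it carries. Finiteness of the relevant data here is where finite generation of the groups in $\mathscr{A}$ enters: by Nielsen realization \cite{HenselKielak}, $\fix(\Phi)$ has finite Kurosh rank, so $\fix(\tilde f)/\fix(\Phi)$ is a finite graph of groups. Using \Cref{CTrays} and \Cref{fixedpointsfromfixeddirections} I would classify the fixed directions at vertices of $\fix(\tilde f)$: those pointing into $\fix(\tilde f)$ are internal; those pointing out along an edge of an exponentially growing stratum each determine an isolated attractor in $\fix_N(\hat f)$, hence contribute to $a(\Phi)$; those pointing out along a non-linear non-exponentially-growing (i.e.\ eigenray) edge each determine an eigenray attractor, contributing to $b(\Phi)$; those pointing out along a linear or dihedral-linear edge determine a boundary point that is an endpoint of an axis $A_c$ and hence, by \Cref{GJLLprop} and \Cref{boundarybasics}, is non-isolated in $\fix(\hat\Phi)$ (so it is irrelevant to $j(\Phi)$, and if $c\in\fix(\Phi)$ it is already accounted for in $\rank(\fix\Phi)$); finally, the infinite-valence vertices of $\fix(\tilde f)$ whose stabilizer meets $\fix(\Phi)$ trivially contribute to $c(\Phi)$. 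This is essentially the structural content of the Culler-type theorem \Cref{Cullerprop}/\Cref{CullerTHM}, which I would prove (or quote) at this point: $\fix(\Phi)$ is cyclic and non-peripheral, or is conjugate to the fundamental group of a graph of groups mapping injectively-on-edges into a component of $\fix(f)$.

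Then I would establish a local index inequality: for each principal $\Phi$, the quantity $\rank(\fix\Phi)+\tfrac12 a(\Phi)+\tfrac12 b(\Phi)+c(\Phi)-1$ is dominated by an orbifold-Euler-characteristic-type quantity $\kappa(\Phi)$ measured from the subgraph of groups of $\mathcal{G}$ spanned by $\fix(\tilde f)$ together with its outgoing fixed rays and outgoing directions --- the rank of the fixed subgroup being paid for by the genus and vertex groups of $\fix(\tilde f)/\fix(\Phi)$, the attractor and eigenray counts by the boundary directions, and the $c(\Phi)$ term by the $C_2\ast C_2$-type pieces and infinite-valence boundary vertices (with the nonstandard rank convention $\rank(C_2\ast C_2)=1$ making dihedral pairs balance, cf.\ \Cref{actuallynoneedtotreatC2differently}). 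The CT axioms are exactly what make this sharp: by \hyperlink{Vertices}{(Vertices)}, \hyperlink{NEG}{(NEG)} and \hyperlink{ZeroStrata}{(Zero Strata)} an eigenray edge has its terminal vertex in a strictly lower filtration element that is its own core, so its ``cost'' in complexity is genuinely available; by \hyperlink{LinearEdges}{(Linear Edges)} and \hyperlink{NEGAlmostNielsenPaths}{(NEG Almost Nielsen Paths)} linear and dihedral-linear edges are prevented from contributing spuriously; and by \hyperlink{AlmostPeriodicEdges}{(Almost Periodic Edges)} the $V_\infty$/$C_2$ bookkeeping is controlled.

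Finally I would sum globally. Non-isogredient principal lifts have ``almost disjoint'' fixed sets --- the argument in the proof of \Cref{rotationlessisrotationless}, via \Cref{boundarybasics}, shows $\fix_N(\hat\Phi_1)\cap\fix_N(\hat\Phi_2)\subset\fix(\hat T_c)$ is empty, a single $V_\infty$ point, or the two endpoints of an axis --- and their outgoing rays and directions are genuinely disjoint in $\Gamma$. Projecting all this data to $\mathcal{G}$, the local contributions $\kappa(\Phi)$ use disjoint (or only harmlessly overlapping) portions of $\mathcal{G}$, and summing them is bounded by the ``relative rank minus one'' of the Grushko graph of groups, namely $(n+k)-1$ --- this is the free-product analogue of the counting of \cite{GaboriauJaegerLevittLustig} and \cite{Martino}, now sharpened by the finer weights. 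The main obstacle is the sharp local bound of the third paragraph: unlike Martino's statement, which uses weight $\tfrac12$ everywhere and ignores $V_\infty$, here one must show that upgrading eigenrays to weight $1$ and adding the $c(\Phi)$ term does not break the bound, and this requires the full force of the CT structure to pin down precisely where eigenrays and trivial-stabilizer $V_\infty$ fixed points can occur and to exhibit the compensating complexity; the remaining delicate point is the dihedral-pair case, where $C_2$ vertices are not principal and the nonstandard rank convention must be used to make the arithmetic come out right.
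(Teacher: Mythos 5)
Your overall plan (reduce to rotationless, fix a CT, do an Euler-characteristic count over $\mathcal{G}$) is the right shape, and you have correctly identified the delicate places: eigenrays get weight $1$ rather than $\frac{1}{2}$, $V_\infty$ points contribute, dihedral pairs need the $\rank(C_2\ast C_2)=1$ convention. But there are two genuine gaps.

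First, your reduction from $\varphi$ to its rotationless power $\psi=\varphi^m$ does not go through as stated. You claim that passing from $\Phi$ to $\Phi^m$ does not decrease $j(\Phi)$, in part because ``fixed points of $V_\infty(F,\mathscr{A})$ with trivial-intersection stabilizer persist.'' That is false: $\fix(\Phi^m)$ can be strictly larger than $\fix(\Phi)$, so a point $P\in V_\infty$ contributing to $c(\Phi)$ may have $\stab(P)\cap\fix(\Phi^m)$ nontrivial and therefore stop contributing to $c(\Phi^m)$. The compensation (the growth of $\rank(\fix\Phi^m)$) is real, but it is not per-automorphism monotonicity; it must be argued at the level of the \emph{sum} over isogredience classes, and proving $\hat r(\psi)\ge\hat r(\varphi)$ is exactly where the paper uses the finite-order theorem \Cref{Cullerprop}, applied to the restriction of $\varphi$ to $\mathbb{F}=\fix(\Psi_j)$ after showing $\mathbb{F}$ is its own normalizer. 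So the Culler-type result is not, as you place it, a description of $\fix(\tilde f)$ for the CT (that role is played by \Cref{CTrays} and \Cref{fixedpointsfromfixeddirections}); it is the engine of the reduction step, and without it the reduction has a hole.

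Second, your main counting step is organized per isogredience class (``local index $j(\Phi)\le\kappa(\Phi)$, then sum disjointly''), whereas the paper builds the finite-type graph of groups $\mathcal{S}_N(f)$, shows $j(\mathcal{S}_N(f))=j(\psi)$, and bounds $j(\mathcal{S}_N(f))$ by inducting up the \emph{core filtration}, comparing $\Delta_kj$ against $\Delta_k\chi^-$ stratum by stratum. These are genuinely different decompositions, and the per-stratum route is not a cosmetic choice: in an exponentially growing stratum $H_{\ell_k}$ carrying an indivisible almost Nielsen path, the two rays $R_E$ and $R_{E'}$ merge into one end, costing $-\tfrac12$, and what pays for it is the observation that $H_{\ell_k}$ necessarily has at least one illegal turn, i.e.\ a vertex $v$ with $\kappa(v)\ge1$ among the edges of $H_{\ell_k}$ that do \emph{not} determine almost fixed directions. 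Those edges belong to no $\fix(\tilde f)$ and no outgoing fixed ray, so they are invisible to your ``subgraph spanned by $\fix(\tilde f)$ and its outgoing fixed data''; you would have to import them into your local $\kappa(\Phi)$ by hand and then argue their disjointness across lifts, which is precisely the arithmetic that your ``disjoint (or only harmlessly overlapping)'' phrase waves at without doing. The per-stratum induction makes this compensation automatic, and also cleanly handles the shared-vertex bookkeeping (one vertex of $\mathcal{G}$ with infinite vertex group can feed several non-exceptional almost Nielsen classes, hence several components of $\mathcal{S}_N(f)$), whereas your per-lift decomposition would need an explicit Mayer--Vietoris-style correction for these overlaps.
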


Let us remark that although the conclusions of \Cref{preciseindexthm} 
are stronger than Martino's result,
the assumption that $\varphi$ has a rotationless iterate appears to be a nontrivial restriction.
The strategy of the proof of \Cref{preciseindexthm} 
is to follow the strategy in \cite[Section 15]{FeighnHandelAlg}.
We firstly show that if $\psi = \varphi^K$ is a rotationless iterate of $\varphi$,
then $j(\varphi) \le j(\psi)$.
Then, using a CT $f\colon \mathcal{G} \to \mathcal{G}$ representing $\psi$,
we construct a graph of groups $\mathscr{S}_N(f)$,
invariants of which calculate $j(\psi)$,
and argue by induction up through the filtration that $j(\psi) \le n + k - 1$.

\paragraph{Rays and attracting fixed points.}
Let $f\colon \mathcal{G} \to \mathcal{G}$ be a CT representing $\varphi \in \out(F,\mathscr{A})$.
Write $\mathcal{E}$ for the set of oriented
non-almost periodic, non-linear edges $E$ of $G$
with the property that the initial vertex of $E$ is principal
and the property that the direction of $E$ is almost fixed by $Df$.
By \Cref{negctsplitting} if $E$ is non-exponentially growing
and by \Cref{rttlemma} if $E$ is exponentially growing,
there is a path $u$ such that $f_\sharp(E) = gE\cdot u$ is a splitting.
If the length of $f^k_\sharp(u)$ does not go to infinity with $k$,
then $E$ is a non-linear almost linear edge.
Define the ray $R_E$ in $\mathcal{G}$ as
\[  R_E = E \cdot u \cdot f_\sharp(u) \cdot f^2_\sharp(u) \cdots. \]
Each lift $\tilde R_E$ of $R_E$ to $\Gamma$
determines a point in $\partial_\infty(F,\mathscr{A})$,
so $R_E$ determines an $F$-orbit $\partial R_E$ in $\partial(F,\mathscr{A})$.

Suppose $\Phi \colon (F,\mathscr{A}) \to (F,\mathscr{A})$ is an automorphism.
Write $\fix_+(\hat \Phi)$ for the subset of $\fix_N(\hat\Phi)$
comprising the attracting fixed points for $\hat\Phi$ in $\partial_\infty(F,\mathscr{A})$.
Recall from \Cref{principalsection} that for $\varphi \in \out(F,\mathscr{A})$,
we write $P(\varphi)$ for the set of principal automorphisms representing $\varphi$.

\begin{lem}[cf.~Lemma 3.10 of \cite{FeighnHandelAlg}]
    \label{CTrays2}
    Suppose that $f\colon \mathcal{G} \to \mathcal{G}$ is a CT
    and that $E$ is an edge of $\mathcal{E}$.
    If $\tilde E$ is a lift of $E$ and $\tilde f$ is the lift of $f$
    that fixes the initial direction of $\tilde E$,
    then the lift $\tilde R_{\tilde E}$ of $R_E$ that begins with $\tilde E$
    converges to an attracting fixed point in $\fix_+(\hat f)$.
    The map $E \mapsto \partial R_E$
    defines a surjection 
    $\mathcal{E} \to \left( \bigcup_{\Phi\in P(\varphi)} \fix_+(\hat\Phi) \right)/F$.
\end{lem}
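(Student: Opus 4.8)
The plan is to prove the two assertions of \Cref{CTrays2} in turn: first that $\tilde R_{\tilde E}$ converges to an attracting fixed point of $\hat f$, and second that $E \mapsto \partial R_E$ surjects onto the $F$-orbits of attracting fixed points ranging over all principal lifts.

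For the first assertion, I would argue almost exactly as in the proof of \Cref{CTrays} (and \Cref{fixedpointstolaminations} / \Cref{fixedpointsfromfixeddirections}). Since $E \in \mathcal{E}$, its initial direction is fixed by $D\tilde f$, so by \Cref{completelysplittocompletelysplit} the paths $\tilde E \subset \tilde f_\sharp(\tilde E) \subset \tilde f^2_\sharp(\tilde E) \subset \cdots$ form a nested increasing sequence of completely split paths whose union is precisely $\tilde R_{\tilde E}$; let $P$ be its endpoint in $\partial_\infty(F,\mathscr{A})$. The terminal endpoint $\tilde w$ of $\tilde E$ satisfies $\tilde f^k(\tilde w) \to P$ with each iterate moving toward $P$, so $P \in \fix_N(\hat f)$ by \Cref{movingtowardsattractors}. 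To upgrade this to $P \in \fix_+(\hat f)$, I would split into two cases according to whether $H_r$ (the stratum of $E$) is exponentially growing or non-exponentially growing. If $H_r$ is exponentially growing, item 2 of \Cref{fixedpointsfromfixeddirections} (or \Cref{fixedpointstolaminations}) gives directly that $P$ is an attractor whose limit set is an attracting lamination. If $H_r$ is non-exponentially growing, then $E$ is a non-almost-periodic, non-linear NEG edge, so by \Cref{negctsplitting} we have $f_\sharp(E) = gE \cdot u_i$ with $u_i$ a nontrivial closed path in $G_{i-1}$, and non-linearity combined with the discussion after the definition of almost linear edges (and \Cref{largepowerscompletelysplit}) forces the number of edges of $f^k_\sharp(u_i)$ to grow without bound; bounded cancellation then places us in the ``superlinear attractor'' case of \Cref{GJLLprop}, so $P$ is an attractor. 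In both cases $\partial R_E$ is a well-defined $F$-orbit in $\fix_+(\hat f) \subset \bigcup_{\Phi} \fix_+(\hat\Phi)$.

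For surjectivity, let $\Phi \in P(\varphi)$, let $\tilde f$ be the corresponding principal lift, and let $P \in \fix_+(\hat f)$ be an attracting fixed point. By \Cref{CTrays} item 2 (using that $P$, being an attractor, is isolated in $\fix_N(\hat f)\cap\partial_\infty(F,\mathscr{A})$ by \Cref{GJLLprop}), there is an edge $\tilde E$ and a ray $\tilde R$ with fixed-point-free interior that converges to $P$, with the initial edge $E$ non-linear and its initial direction fixed by $D\tilde f$; moreover its initial vertex $\tilde v \in \fix(\tilde f)$ is principal by \Cref{principalliftprincipalfixed}, so $E$ projects to an edge whose initial vertex is a principal vertex. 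It remains to observe that $E$ is non-almost-periodic: if $E$ were almost fixed, then (by \hyperlink{AlmostPeriodicEdges}{(Almost Periodic Edges)} and \hyperlink{Vertices}{(Vertices)}, since its initial vertex is principal and it determines a fixed direction) $E$ would be a genuinely fixed edge, contradicting that the interior of $\tilde R$ is fixed-point-free and $\tilde R \ne \tilde E$ (as $P \in \partial_\infty(F,\mathscr{A})$). Hence $E \in \mathcal{E}$. Finally, the ray $\tilde R$ agrees with $\tilde R_{\tilde E}$: since $\tilde f_\sharp(\tilde R) = \tilde R$ and $\tilde R$ has a complete splitting beginning with $\tilde E$, an induction using \Cref{splittinginitialsegments} shows $\tilde f^k_\sharp(\tilde E)$ is an initial segment of $\tilde R$ for all $k$, so $\tilde R = \tilde R_{\tilde E}$ and thus $\partial R_E = F\cdot P$.

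The main obstacle I expect is the non-exponentially growing half of the first assertion: one must rule out that $P$ is merely in the boundary of the fixed subgroup rather than a genuine attractor, which is exactly the subtlety Martino's analysis in \Cref{GJLLprop} was designed to handle, and it requires knowing that the suffix $u_i$ of a non-linear NEG edge actually grows — a point that uses non-linearity together with \Cref{negctlinearcondition} to exclude the periodic-Nielsen-path possibility. Once that growth is established, the ``superlinear'' vs.\ ``linear'' attractor dichotomy of \Cref{GJLLprop} finishes the job, and everything else is a matter of assembling results already proved about CTs and principal lifts.
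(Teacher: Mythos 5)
Your outline of the exponentially growing case and of the surjectivity argument is sound and matches the paper. The genuine gap is in the non-exponentially growing case. You assert that ``non-linearity combined with the discussion after the definition of almost linear edges (and \Cref{largepowerscompletelysplit}) forces the number of edges of $f^k_\sharp(u_i)$ to grow without bound,'' and your closing paragraph repeats that non-linearity plus \Cref{negctlinearcondition} ``excludes the periodic-Nielsen-path possibility'' and hence gives growth. This is false in the free-product setting, and it is exactly the point where Martino's analysis departs from the free-group case. An edge $E \in \mathcal{E}$ can be a \emph{non-linear almost linear} edge: the suffix $u$ satisfies $f_\sharp(u) = g u h$ for nontrivial vertex group elements $g, h$, so the underlying path of $f^k_\sharp(u)$ is the underlying path of $u$ for every $k$ and the edge-length never grows. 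Excluding periodic Nielsen paths does \emph{not} exclude this, because in a graph of groups the vertex group elements can change forever without the underlying path repeating (in a free group, constant length would force periodicity since there are finitely many paths of a given length, but that is precisely what fails here). Example \Cref{CT3}, where $u = Eg\bar E g'$ with $f_A$, $f_{A'}$ having no periodic elements, is of this kind: the suffix has constant edge-length, $E$ is not linear, and $R_E$ is nonetheless an eigenray converging to an attractor. The text right after the definition of almost linear edges only shows that an almost linear edge that becomes linear for some power of $f$ was already linear — it does not convert non-linearity into growth. Likewise \Cref{largepowerscompletelysplit} says $f^k_\sharp(u)$ is eventually completely split, which has no bearing on its length.

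The paper's route for this case is different and you should adopt it: rather than trying to force superlinearity, accept that $f^k_\sharp(u)$ may have bounded length (so $E$ is a non-linear almost linear edge), and instead show that $P$ does not lie in $\partial(\mathbb{F},\mathscr{A}|_{\mathbb{F}})$. Since $P$ lies in $\partial \Gamma_{s-1}$, one uses $\fix(h) = \varnothing$ and part~2 of \Cref{negctlinearcondition} (non-linearity of $E$ means there is no nonperipheral $T_c$ preserving $\Gamma_{s-1}$ and commuting with $h$) to conclude $P$ is not a limit of fixed points of $\tilde f$, and \Cref{CTrays} shows $P$ is not the endpoint of an axis. Therefore $P \notin \partial(\mathbb{F},\mathscr{A}|_{\mathbb{F}})$, and item 2 of \Cref{GJLLprop} says $P$ is isolated and either an attractor or a repeller; combined with $P \in \fix_N(\hat f)$ from \Cref{CTrays} item 1, it is an attractor. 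Note this appeal to \Cref{GJLLprop} uses the full ``linear attractors'' branch of that proposition, not the superlinear branch.
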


\begin{proof}
    Suppose that $\tilde E$ is a lift of $E \in \mathcal{E}$,
    and that $\tilde f \colon \Gamma \to \Gamma$ is a lift of $f$
    that fixes the direction determined by $\tilde E$.
    By \Cref{principalfixedprincipallift}, since $\fix(\tilde f)$ is principal
    and projects to a non-exceptional almost Nielsen class,
    we conclude that $\tilde f$ is principal.
    Item 1 of \Cref{CTrays} implies that $\tilde R_{\tilde E}$ 
    converges to a point $P \in \fix_N(\hat f)$.
    Either the length of $f^k_\sharp(u)$ goes to infinity with $k$,
    in which case the ``superlinear attractors'' piece of \Cref{GJLLprop} 
    implies that $P \in \fix_+(\hat f)$,
    or $E$ is a non-linear almost linear edge.
    By \Cref{negctlinearcondition}, $P$ is not a limit of points that are fixed by $\tilde f$.
    Since $P$ is not the endpoint of an axis by the proof of \Cref{CTrays},
    it follows that $P$ is not in the boundary of the fixed subgroup,
    so \Cref{GJLLprop} implies that $P \in \fix_+(\hat f)$.
    Item 2 of \Cref{CTrays} implies that the map $E \mapsto \partial R_E$ is surjective.
\end{proof}

Observe that because attracting fixed points of principal automorphisms are not the endpoints of axes
by \Cref{GJLLprop} and \Cref{boundarybasics}
and so are not fixed by $T_c$ for any $c \in F$,
it follows that the union $\bigcup_{\Phi\in P(\varphi)} \fix_+(\hat\Phi)$ is a disjoint union.

\begin{lem}
    \label{rayswithcommonendpoints}
    Suppose that $E \ne E'$ are distinct edges of $\mathcal{E}$
    and that $\partial R_E = \partial R_{E'}$.
    Then $E$ and $E'$ belong to the same exponentially growing stratum $H_r$
    and are the initial and terminal edges of the unique equivalence class of
    indivisible almost Nielsen paths of height $r$.
\end{lem}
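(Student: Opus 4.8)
The plan is to work with lifts to the Bass--Serre tree $\Gamma$ and exploit \Cref{CTrays2} together with the splitting results for CTs. Suppose $E \ne E'$ lie in $\mathcal{E}$ and $\partial R_E = \partial R_{E'}$. First I would fix a lift $\tilde E$ of $E$ and the lift $\tilde f$ of $f$ fixing the initial direction of $\tilde E$, producing the ray $\tilde R_{\tilde E}$ converging to an attracting fixed point $P \in \fix_+(\hat f)$ as in \Cref{CTrays2}. Since $\partial R_E = \partial R_{E'}$, after translating by an element of $F$ there is a lift $\tilde E'$ of $E'$ whose associated ray $\tilde R_{\tilde E'}$ converges to the \emph{same} point $P$. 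Let $\tilde f'$ be the lift of $f$ fixing the initial direction of $\tilde E'$; then $\tilde f'$ is also a principal lift with $P \in \fix_+(\hat{f'})$. Because attracting fixed points of distinct principal automorphisms form a disjoint union (noted just after \Cref{CTrays2}, via \Cref{GJLLprop} and \Cref{boundarybasics}), we must have $\tilde f = \tilde f'$. Thus both rays $\tilde R_{\tilde E}$ and $\tilde R_{\tilde E'}$ begin at vertices of $\fix(\tilde f)$, are fixed by $\tilde f_\sharp$, and converge to the common point $P$.

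Next I would analyze the two rays using complete splittings. By \Cref{CTrays} item 1 (or directly from the construction), each $\tilde R_{\tilde E}$ and $\tilde R_{\tilde E'}$ has interior disjoint from $\fix(\tilde f)$, and by \Cref{splittinginitialsegments} and \Cref{completelysplittocompletelysplit} each is an increasing union of completely split initial segments with $\tilde E$ (resp.\ $\tilde E'$) as the first term of the splitting. Since the two rays converge to the same boundary point $P$, they share a common infinite terminal subray $\tilde S$; let $\tilde x$ and $\tilde x'$ be the initial vertices of $\tilde E$, $\tilde E'$. The tight path $\tilde\gamma$ from $\tilde x$ to $\tilde x'$ lies in $\fix(\tilde f)$ at its endpoints, hence projects to an almost Nielsen path $\gamma$ for $f$; moreover $\tilde\gamma \cdot \tilde R_{\tilde E'}$ and $\tilde R_{\tilde E}$ have the same infinite end, so $\bar{\tilde E} \,\tilde\gamma\, \tilde E'$ extends to a line through $P$ in both directions whose projection is an almost Nielsen path $\rho$ (it is $\bar{\tilde R}_{\tilde E}$ concatenated with $\tilde\gamma \tilde R_{\tilde E'}$, with the common terminal end cancelled). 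I would then argue $\rho$ is indivisible: its only possible higher-stratum behavior is governed by the strata containing $E$ and $E'$, and if $\rho$ split it would split at a vertex of $\fix(\tilde f)$ by \Cref{completesplittingunique} item 2, contradicting that the interiors of the rays avoid $\fix(\tilde f)$.

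The remaining point is to rule out that $E$ or $E'$ is non-exponentially growing and to force $E$ and $E'$ into the same exponentially growing stratum. If $E$ were non-exponentially growing, then $E \in \mathcal{E}$ means $E$ is non-almost-periodic and non-linear, so by \Cref{negctsplitting} and \Cref{negctlinearcondition} the ray $\tilde R_{\tilde E}$ after its first edge lies in $G_{r-1}$ where $H_r = \{E\}$, and by \Cref{negctlinearcondition} item 2 the endpoint $P$ is not the endpoint of an axis and $h = \tilde f|_{\Gamma_{r-1}}$ is fixed-point free; but then the line $\rho$ constructed above would be an almost Nielsen path of the form $gE_r \mu$ or $g E_r \mu \bar E_r$, and \hyperlink{NEGAlmostNielsenPaths}{(NEG Almost Nielsen Paths)} forces $\rho = g E_i w_i^k \bar E_i h$ for a \emph{linear} or \emph{dihedral linear} edge $E_i$ — contradicting that $E, E'$ are non-linear (after checking $E_i$ must be one of $E$, $E'$ by height considerations). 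Hence $E$ and $E'$ are exponentially growing; since $\rho$ has height equal to the maximum of the heights of $E$ and $E'$ and is an indivisible almost Nielsen path crossing edges of both strata, \Cref{finitelymanynielsenpaths} (one illegal turn, first and last edges in the top stratum $H_r$) forces $E$ and $E'$ both to lie in $H_r$ and to be precisely the initial and terminal edges of $\rho$; uniqueness up to equivalence and reversal is \Cref{egalmostnielsenpathsproperties} item 1. The main obstacle I expect is the bookkeeping in the non-exponentially-growing case: carefully matching the abstract ray-endpoint coincidence with the rigid form of NEG almost Nielsen paths dictated by \hyperlink{NEGAlmostNielsenPaths}{(NEG Almost Nielsen Paths)} and \Cref{negctlinearcondition}, and confirming that the linear edge produced there cannot be anything other than one of $E$, $E'$, which is where the non-linearity hypothesis on $\mathcal{E}$ is essential.
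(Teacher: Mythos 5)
Your proof is correct and follows essentially the same route as the paper's: lift to the Bass--Serre tree, use the disjointness of $\fix_+(\hat\Phi)$ over principal $\Phi$ to force a single lift $\tilde f$ to fix both directions, observe that the tight path between the initial vertices projects to an almost Nielsen path $\rho$, and then invoke the rigid structure of almost Nielsen paths in a CT (\hyperlink{NEGAlmostNielsenPaths}{(NEG Almost Nielsen Paths)} to rule out NEG height, \Cref{egalmostnielsenpathsproperties} for uniqueness). The only differences are cosmetic---you establish indivisibility before excluding the NEG case while the paper does the reverse, and the sentence about the bi-infinite line $\bar{\tilde R}_{\tilde E}\tilde\gamma\tilde R_{\tilde E'}$ is garbled (that concatenation tightens to nothing; what matters is only $\tilde\gamma$ itself) and the appeal to \Cref{completesplittingunique} item 2 should really be item 1 together with the observation that terms of a complete splitting of an almost Nielsen path are themselves almost Nielsen paths.
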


\begin{proof}
    The argument is identical to \cite[12.1]{FeighnHandelAlg}.
    Suppose that $E$ and $E'$ are as in the statement.
    Choose a lift $\tilde E$ of $E$ to $\Gamma$ and let $\tilde R_{\tilde  E}$
    be the lift of $R_E$ that begins with $\tilde E$.
    Let $P \in \partial(F,\mathscr{A})$ be the endpoint of $\tilde R_{\tilde E}$.
    By assumption there is a lift $\tilde R_{\tilde E'}$ of $R_{E'}$
    that ends at $P$ and begins at a lift $\tilde E'$ of $E'$.
    The lift $\tilde f$ that fixes the direction of $\tilde E$ fixes $P$.
    By the remark in the previous paragraph, $\tilde f$ is the only lift of $f$
    that fixes $P$, so it follows that $\tilde f$ fixes the direction of $\tilde E'$ as well,
    and thus the unique tight path beginning with $\tilde E$ and ending with $\tilde E'$
    projects to an almost Nielsen path $\rho$ in $\mathcal{G}$.
    Since there are no almost Nielsen paths of non-linear non-exponentially growing height,
    we conclude that $E$ and $E'$ are of exponentially growing height.
    In fact they each have exponentially growing height $r$,
    since $R_E$ and $R_{E'}$ have height $r$ and $\tilde R_{\tilde E}$ and $\tilde  R_{\tilde E'}$
    have a common terminal subray $\tilde R_{\tilde E,\tilde E'}$.
    By construction, since $R_E$ and $R_{E'}$ are $r$-legal, the path $\rho$ has one illegal turn in $H_r$,
    so we conclude that $\rho$ is indivisible,
    and is thus the unique (by \Cref{egalmostnielsenpathsproperties})
    equivalence class of indivisible almost Nielsen paths of height $r$.
    If we write $\rho = \alpha\beta$ as in \Cref{finitelymanynielsenpaths},
    then there is a terminal subray  $R_{E,E'}$ of $R_E$ and $R_{E'}$
    such that we have $R_E = \alpha R_{E,E'}$ and $R_{E'} = \bar\beta R_{E,E'}$.
\end{proof}

If $P$ is an attracting fixed point for an automorphism $\Phi$
represented by the lifted ray $\tilde R_{\tilde E}$ as in \Cref{CTrays2},
we say that $P$ or $\tilde R_{\tilde E}$ is an \emph{eigenray} for $\Phi$
if the stratum of $f\colon \mathcal{G} \to \mathcal{G}$ containing $E$ is non-exponentially growing.
The following lemma says that this definition is independent of the CT 
$f\colon \mathcal{G} \to \mathcal{G}$.

\begin{lem}[cf.~Definitions 2.9 and 2.10 and Lemma 2.11 of Part II of \cite{HandelMosher}]
    Suppose that $\varphi \in \out(F,\mathscr{A})$ is rotationless,
    $\Phi \in P(\varphi)$ and that $P \in \fix_+(\hat\Phi)$.
    The following are equivalent.
    \begin{enumerate}
        \item For some CT $f\colon \mathcal{G} \to \mathcal{G}$ representing $\varphi$
            there is a non-linear non-exponentially growing edge $E$
            with a lift $\tilde E$ in $\Gamma$ such that $\tilde R_{\tilde E}$ converges to $P$.
        \item For \emph{every} CT $f\colon \mathcal{G} \to \mathcal{G}$ representing $\varphi$
            there is a non-linear non-exponentially growing edge $E$
            with a lift $\tilde E$ in $\Gamma$ such that $\tilde R_{\tilde E}$ converges to $P$.
    \end{enumerate}
\end{lem}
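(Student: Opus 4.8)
The plan is to prove the two implications; since (2) trivially implies (1), the work is in showing (1) implies (2). So suppose $f\colon \mathcal{G} \to \mathcal{G}$ is a CT representing $\varphi$ and $E$ is a non-linear non-exponentially growing edge with lift $\tilde E$ such that $\tilde R_{\tilde E}$ converges to $P \in \fix_+(\hat\Phi)$, and let $f'\colon \mathcal{G}' \to \mathcal{G}'$ be any other CT representing $\varphi$. By \Cref{CTrays2} applied to $f'$, there is some edge $E' \in \mathcal{E}'$ (the analogue of $\mathcal{E}$ for $f'$) and a lift $\tilde E'$ with $\tilde R_{\tilde E'}$ converging to $P$; here we use that $P$ is an attracting fixed point of a \emph{principal} automorphism, so it is not the endpoint of an axis (by \Cref{GJLLprop} and \Cref{boundarybasics}) and hence is fixed by exactly one lift of $f'$, namely the lift determined by $\Phi$, which then must fix the initial direction of some such $\tilde E'$. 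The task is to show $E'$ is non-exponentially growing.

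First I would record the following characterization of eigenrays coming from (1): the stratum containing $E$ is non-exponentially growing and not almost periodic, so by \hyperlink{NEGAlmostNielsenPaths}{(NEG Almost Nielsen Paths)} and \Cref{negctlinearcondition}, $P$ is \emph{not} a limit of points of $\fix(\tilde f)$ and $P$ is isolated in $\fix_N(\hat f)$; moreover $R_E$ has a complete splitting whose terms are $E$ together with indivisible almost Nielsen paths and single edges of irreducible strata, all of height at most that of $E$, and $E$ occurs exactly once in $R_E$ (this is the content of \Cref{negctsplitting} and the splitting $f_\sharp(E) = gE \cdot u$ with $u \subset G_{i-1}$). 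The key structural dichotomy to exploit is \Cref{fixedpointstolaminations} versus \Cref{fixedpointsfromfixeddirections}: if instead $E'$ were exponentially growing, then item 1 of \Cref{fixedpointsfromfixeddirections} (or \Cref{CTrays} item 2 together with \Cref{fixedpointstolaminations}) would force $P$ to be an attractor whose limit set $\Lambda(P)$ is the attracting lamination $\Lambda^+$ associated to the exponentially growing stratum of $f'$ containing $E'$; in particular $\Lambda(P)$ would contain a generic leaf of $\Lambda^+$, which is birecurrent and crosses every edge of that stratum (by \Cref{laminationproperties} and \Cref{laminationunique}). So the proof reduces to showing $\Lambda(P)$ — which is a $\varphi$-invariant property of $P$ alone, independent of which graph of groups realizes it (recall $\Lambda(P)$ is defined via limits of translates of the ray $\tilde R_{\tilde x, P}$, and bounded cancellation makes this independent of the marked graph of groups) — cannot be an attracting lamination when (1) holds.

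To see that, I would compute $\Lambda(P)$ from the side of $f$: since $R_E$ is completely split with $E$ occurring exactly once and all other terms of height strictly below $E$'s stratum, every finite subpath of a translate of $\tilde R_{\tilde E}$ occurring infinitely often must be a subpath of $R_E \setminus E$, which is a ray in $G_{i-1}$. Hence every leaf of $\Lambda(P)$ is carried by $G_{i-1}$, so in particular no leaf of $\Lambda(P)$ is birecurrent and crosses an edge outside $G_{i-1}$; comparing with \Cref{laminationgenericcriterion}, $\Lambda(P)$ contains no generic leaf of any attracting lamination of $\varphi$ whose associated stratum is not contained in $G_{i-1}$, and since $E$ itself is non-exponentially growing, the stratum of $E$ is not the stratum of any $\Lambda^+ \in \mathcal{L}(\varphi)$. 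This contradicts the conclusion of the previous paragraph that, if $E'$ were exponentially growing, $\Lambda(P)$ would contain a generic leaf of an attracting lamination crossing all edges of an exponentially growing stratum of $f'$. Therefore $E'$ is non-exponentially growing, establishing (2). The symmetric argument starting from (2) and specializing to a single CT gives (1), but that direction is immediate.

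The main obstacle I anticipate is making precise and airtight the claim that $\Lambda(P)$ is genuinely independent of the chosen CT and that the two computations — one via the complete splitting of $R_E$ in $\mathcal{G}$, the other via the tile structure forced on $R_{E'}$ when $E'$ is exponentially growing — are describing the \emph{same} lamination. This requires invoking \cite[Lemma 4.12]{GuirardelHorbez} (cited in the proof of \Cref{fixedpointstolaminations}) to pin down $\Lambda(P)$ as $\Lambda^+$ in the exponentially growing case, together with bounded cancellation (\Cref{boundedcancellation}) to transport birecurrent subpaths between $\Gamma$ and $\Gamma'$; one must be careful that the height filtration on $f$ has no direct analogue on $f'$, so the argument should be phrased purely in terms of which abstract lines lie in $\Lambda(P)$ and whether $\Lambda(P)$ is the closure of a single birecurrent line with an attracting neighborhood — i.e.\ whether $\Lambda(P) \in \mathcal{L}(\varphi)$ — rather than in terms of strata.
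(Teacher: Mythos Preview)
Your overall strategy---reduce to showing $E'$ cannot be exponentially growing by analyzing the limit set $\Lambda(P)$---is the same as the paper's, and your opening moves (invoking \Cref{CTrays2} for $f'$, noting $P$ is fixed by a unique lift) are correct. But the contradiction you derive does not go through.

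You argue that from the $f$ side every leaf of $\Lambda(P)$ is carried by $G_{i-1}$, while from the $f'$ side (assuming $E'$ is EG) $\Lambda(P)$ is an attracting lamination $\Lambda^+\in\mathcal{L}(\varphi)$ whose generic leaves cross every edge of the EG stratum of $f'$ containing $E'$. These two facts are \emph{not} in conflict: an attracting lamination can perfectly well be carried by a proper $\varphi$-invariant free factor system. Concretely, the EG stratum of $f'$ associated to $\Lambda^+$ corresponds, under the bijection between $\mathcal{L}(\varphi)$ and EG strata, to some EG stratum $H_r$ of $f$ with $r<i$, sitting entirely inside $G_{i-1}$. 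Your sentence ``$\Lambda(P)$ contains no generic leaf of any attracting lamination of $\varphi$ whose associated stratum is not contained in $G_{i-1}$'' is true but vacuous here, because the relevant stratum \emph{is} contained in $G_{i-1}$. Nothing you have said rules out $\Lambda(P)\in\mathcal{L}(\varphi)$.

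The paper closes this gap with a sharper invariant of $P$ extracted from the NEG side. Using \Cref{negctlinearcondition} (so $\fix(h)=\varnothing$) and the uniqueness established in \Cref{neginduction}, one gets that $\fix_N(\hat\Phi)\cap\partial(B,\mathscr{A}|_B)=\{P\}$, where $B$ is the free factor with $P\in\partial B$ and $[[B]]\sqsubset\mathcal{F}(G_{i-1})$ carrying $\Lambda(P)$. Now if $E'$ were EG, the proof of \Cref{fixedpointstolaminations} produces a leaf of $\Lambda^+=\Lambda(P)$ with $P$ as one endpoint and the other endpoint $Q$ also in $\fix_N(\hat\Phi)$; since this leaf is carried by $B$ and $P\in\partial B$, we get $Q\in\partial B$ as well, forcing $Q=P$, which is absurd. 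The key idea you are missing is this \emph{uniqueness of $P$ among fixed boundary points of $\hat\Phi$ lying over the lower free factor}---merely knowing that $\Lambda(P)$ is carried by $G_{i-1}$ is not enough.
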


\begin{proof}
    It is clear that item 2 implies item 1.
    Suppose item 1 holds.
    Let $\Lambda(P)$ be the limit set of $P$ 
    (see \Cref{laminationsection} before \Cref{fixedpointstolaminations}).
    The proof of \Cref{minimalfreefactorcarries} shows that $\Lambda(P)$
    is carried by the conjugacy class of a free factor $B$ of $F$ of positive complexity.
    In fact, if $E$ is the unique edge of $H_r$,
    then we have that $[[B]] \sqsubset \mathcal{F}(G_{r-1})$, 
    so $B$ is a proper free factor of $F$.
    Choose $B$ within its conjugacy class so that $P \in \partial(B,\mathscr{A}|_B)$.
    By \Cref{negctlinearcondition} and \Cref{negfixedboundarypoint},
    the set $\fix_N(\hat\Phi) \cap \partial(A,\mathscr{A}|_A)$
    contains only $P$ (uniqueness is established in \Cref{neginduction}).

    Let $f'\colon \mathcal{G}' \to \mathcal{G}'$ be a CT.
    By \Cref{CTrays2}, there is a non-almost fixed, non-linear edge $E' \in \mathcal{E}'$
    such that $\tilde R_{\tilde E'}$ converges to $P$.
    Suppose that $E'$ belonged to an exponentially growing stratum.
    In the proof of \Cref{fixedpointstolaminations},
    we showed that there is a leaf of the attracting lamination $\Lambda^+ = \Lambda(P)$
    with $P$ as one of its endpoints.
    Both endpoints belong to $\fix_N(\hat\Phi)$,
    contradicting what we established in the previous paragraph.
    Therefore we conclude that item 2 holds.
\end{proof}

If $\varphi$ is not rotationless, and so not represented by a CT,
then we say that $P$ is an \emph{eigenray} for $\Phi$ if it is an eigenray
for $\phi^K$, where $\varphi^K$ is a rotationless iterate of $\varphi$.
Notice that if $\varphi$ is rotationless and $f\colon \mathcal{G} \to \mathcal{G}$
is a CT representing $\varphi$,
then by \Cref{powersofCTsareCTs}, $f^k$ is a CT representing $\varphi^k$ for all $k \ge 1$,
so it follows that $P$ is an eigenray for $\Phi$
if and only if it is an eigenray for each $\Phi^k$ for $k \ge 1$.
If $\varphi$ is not rotationless but has rotationless iterates $K$ and $L$,
then $P$ is an eigenray for $\Phi^K$ if and only if it is an eigenray for $\Phi^{KL}$
if and only if it is an eigenray for $\Phi^L$,
so this definition is independent of the rotationless iterate.

Let $\mathcal{R}(\varphi) = \bigcup_{\Phi\in P(\varphi)} \fix_+(\Phi)$.
Let $\rneg(\Phi)$ denote the set of eigenrays for $\Phi$,
and let $\rneg(\varphi) = \bigcup_{\Phi\in P(\varphi)}\rneg(\Phi)$.

\begin{lem}[cf.~Lemma 15.8 of \cite{FeighnHandelAlg}]
    Suppose $\Phi$ and $\Psi$ are principal automorphisms representing $\varphi$.
    The following hold.
    \begin{enumerate}
        \item If $\fix_+(\Phi) \cap \fix_+(\Psi) \ne \varnothing$,
            then $\Phi = \Psi$.
            We have
            \[  \mathcal{R}(\varphi) = \coprod_{\Phi\in P(\varphi)}\fix_+(\Phi) \]
            and
            \[  \rneg(\varphi) = \coprod_{\Phi\in P(\varphi)}\rneg(\Phi).\]
        \item The stabilizers of $\fix_+(\Phi)$ and
            $\rneg(\Phi)$ respectively
            under the action of $F$ on $\mathcal{R}(\varphi)$
            and $\rneg(\varphi)$ respectively
            are each $\fix(\Phi)$.
        \item If $\{\Phi_1,\ldots,\Phi_N\}$ is a set of representatives of isogredience classes
            in $P(\varphi)$,
            the natural maps
            \[  \coprod_{i=1}^N \fix_+(\Phi_i)/\fix(\Phi_i) \to \mathcal{R}(\varphi)/F \]
            and
            \[  \coprod_{i=1}^N \rneg(\Phi_i)/\fix(\Phi_i) \to \rneg(\varphi)/F \]
            are bijections.
    \end{enumerate}
\end{lem}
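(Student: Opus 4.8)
The plan is to mirror the proof of \cite[Lemma 15.8]{FeighnHandelAlg}, relying on the structural results about principal automorphisms and attracting fixed points already developed. For item~1, suppose $P \in \fix_+(\Phi) \cap \fix_+(\Psi)$. Both $\Phi$ and $\Psi$ are principal, hence correspond to principal lifts $\tilde\Phi$ and $\tilde\Psi$ of any chosen CT $f\colon \mathcal{G} \to \mathcal{G}$ representing $\varphi$; writing $\tilde\Psi = T_c \tilde\Phi T_c^{-1}$ for some $c \in F$, the condition $\hat\Phi(P) = P = \hat\Psi(P)$ forces $\hat T_c^{-1}(P)$ to be a fixed point of $\hat\Phi$, and applying $\hat T_c$ shows $P = \hat T_c(P)$ unless $c$ moves $P$. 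But $P$ is an attracting fixed point for a principal automorphism, so by \Cref{GJLLprop} it is isolated in $\fix_N(\hat\Phi)$ and is not the endpoint of any axis, hence by \Cref{boundarybasics} it is not fixed by $\hat T_c$ for any nontrivial $c$. Therefore $c = 1$ and $\Phi = \Psi$. The disjointness of $\mathcal{R}(\varphi)$ and of $\rneg(\varphi)$ (as the eigenray set is a subset of the attracting fixed point set, already observed after \Cref{CTrays2}) follows immediately. This step also uses that the property of being an eigenray for $\Phi$ is intrinsic to the point $P$ and $\Phi$, which we established is independent of the choice of CT and of rotationless iterate.

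For item~2, first note that $\fix(\Phi)$ clearly stabilizes $\fix_+(\Phi)$: if $c \in \fix(\Phi)$ then by \Cref{boundarybasics} $\hat T_c$ commutes with $\hat\Phi$, so it permutes $\fix(\hat\Phi)$ and carries attractors to attractors, hence preserves $\fix_+(\Phi)$; it also carries eigenrays to eigenrays since it commutes with a lift $\tilde f$ and so carries rays of the form $\tilde R_{\tilde E}$ to rays of the same form (the underlying edge $E$ and its stratum are unchanged). Conversely, if $c \in F$ satisfies $\hat T_c(\fix_+(\Phi)) = \fix_+(\Phi)$, pick $P \in \fix_+(\Phi)$; then $\hat T_c(P) \in \fix_+(\Phi)$, and by item~1 the automorphism $T_c \Phi T_c^{-1}$, which fixes $\hat T_c(P)$ and is principal, must equal $\Phi$. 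Hence $c \in \fix(\Phi)$ (using effectiveness of the $F$-action and the computation in \Cref{boundarybasics}). The same argument works verbatim for $\rneg(\Phi)$ once we know $\rneg(\Phi)$ is nonempty on the orbit being stabilized; if $\rneg(\Phi) = \varnothing$ the stabilizer statement is vacuous or follows from the corresponding statement for $\fix_+(\Phi)$.

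For item~3, this is a formal consequence of items~1 and~2 together with the fact (established just before \Cref{rotationlessdirections} and in \Cref{principalsection}) that there are finitely many isogredience classes of principal automorphisms representing $\varphi$ and that every $F$-translate of a principal automorphism is again principal. Surjectivity: any $F$-orbit in $\mathcal{R}(\varphi)$ contains a point $P$, which lies in $\fix_+(\Phi)$ for a unique $\Phi \in P(\varphi)$ by item~1; conjugating $\Phi$ by a suitable $T_c$ we may assume $\Phi$ is one of the chosen representatives $\Phi_i$, and then $P$ maps to that orbit. Injectivity: if two points $P \in \fix_+(\Phi_i)$ and $Q \in \fix_+(\Phi_j)$ are in the same $F$-orbit, say $Q = \hat T_c(P)$, then $\Phi_j$ and $T_c\Phi_i T_c^{-1}$ both have $Q$ as an attracting fixed point, so by item~1 they are equal; since $\Phi_i$ and $\Phi_j$ are representatives of distinct isogredience classes this forces $i = j$, and then $c \in \fix(\Phi_i)$ by item~2, so $P$ and $Q$ represent the same class in $\fix_+(\Phi_i)/\fix(\Phi_i)$. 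The identical argument, replacing $\fix_+$ by $\rneg$ throughout, handles the eigenray statement.

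The main obstacle is the careful handling of item~1 in the case where the two attractors $P$ might coincide as points but the automorphisms differ by an inner automorphism that fixes $P$: one must invoke precisely the fact from \Cref{GJLLprop} that attracting fixed points of principal automorphisms are isolated in $\fix_N$ and hence (via \Cref{boundarybasics}) never the endpoint of an axis, so no nontrivial $\hat T_c$ can fix them. Everything else is bookkeeping with the $F$-action, but this isolation/non-axis-endpoint input is the load-bearing geometric fact and must be cited explicitly rather than assumed.
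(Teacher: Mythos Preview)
Your item~1 contains a genuine error in the setup. You write $\tilde\Psi = T_c\tilde\Phi T_c^{-1}$, but this is the \emph{isogredience} relation, which you are not entitled to assume. Two lifts of the same $f$ differ by a covering transformation, so the correct relation is $\tilde\Psi = T_c\tilde\Phi$ (equivalently $\Psi = i_c\Phi$, since both represent $\varphi$). With your conjugation relation the sentence ``applying $\hat T_c$ shows $P = \hat T_c(P)$ unless $c$ moves $P$'' does not follow: all you obtain is $\hat T_c^{-1}(P) \in \fix(\hat\Phi)$, and this does not force $\hat T_c(P) = P$. With the correct relation the argument is one line, exactly as in the paper: $\hat T_c(P) = \hat\Psi\hat\Phi^{-1}(P) = P$, and since an attracting fixed point lies in $\partial_\infty(F,\mathscr{A})$ and is not the endpoint of any axis (the fact you correctly identify as load-bearing), $c = 1$.

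Your items~2 and~3 are correct. For item~2 your route differs slightly from the paper's: rather than invoking twisted equivariance directly to get $\hat T_c(R) = \hat T_{\Phi(c)}(R)$ and conclude $c^{-1}\Phi(c) = 1$, you apply item~1 to $\Phi$ and $i_c\Phi i_c^{-1}$ (both having $\hat T_c(P)$ as an attractor) to deduce $i_c\Phi = \Phi i_c$, whence $i_{\Phi(c)} = i_c$ and $\Phi(c) = c$ by triviality of the center. Both approaches work and rest on the same geometric input. Item~3 is the same formal deduction as in the paper, just spelled out in more detail.
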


\begin{proof}
    The proof is identical to \cite[Lemma 15.8]{FeighnHandelAlg}.
    For item 1, since $\Phi$ and $\Psi$ both represent $\varphi$,
    we have that $\Phi\Psi^{-1} = i_c$ for some $c \in F$,
    where we remind the reader that $i_c$ denotes the inner automorphism $x \mapsto cxc^{-1}$.
    If $R$ belongs to $\fix_+(\Phi) \cap \fix_+(\Psi)$,
    then $\hat T_c(R) = R$.
    But since $R \in \partial_\infty(F,\mathscr{A})$ is not the endpoint of an axis,
    we conclude that $c = 1$.

    For item 2, suppose first that $c \in \fix(\Phi)$ and $R \in \rneg(\Phi)$.
    Then
    \[  \hat\Phi\hat T_c(R) = T_{\Phi(c)}\hat\Phi(R) = \hat T_c(R), \]
    so $\hat T_c(R) \in \rneg(\Phi)$.
    Conversely, if we have $c \in F$ and $R \in \rneg(\Phi)$ such that
    $\hat\Phi(\hat T_c(R)) = \hat T_c(R)$, then we have
    \[  \hat T_c(R) = \hat\Phi \hat T_c(R) = T_{\Phi(c)}\hat\Phi(R) = T_{\Phi(c)}(R). \]
    As in the proof of item 1, we conclude that $\Phi(c) = c$.
    The same argument applies with $\rneg(\Phi)$ replaced by $\fix_+(\Phi)$.

    Item 3 is a consequence of item 1 and item 2,
    combined with the observation that the action of $F$
    on $\mathcal{R}(\varphi)$ by permuting the terms of the disjoint union in item 1.
    The same holds for $\rneg(\varphi)$.
\end{proof}

\begin{lem}[cf.~Lemma 15.9 of \cite{FeighnHandelAlg}]
    \label{abinequality}
    For $k \ge 1$ and $\varphi \in \out(F,\mathscr{A})$, we have
    $b(\varphi^k) \ge b(\varphi)$ and $a(\varphi^k) \ge a(\varphi)$.
\end{lem}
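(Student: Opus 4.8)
The plan is to produce, for every $k\ge 1$, an $F$-equivariant inclusion $\mathcal{R}(\varphi)\subseteq\mathcal{R}(\varphi^k)$ and an $F$-equivariant inclusion $\rneg(\varphi)\subseteq\rneg(\varphi^k)$ of $F$-invariant subsets of $\partial_\infty(F,\mathscr{A})$. An inclusion of $F$-sets descends to an injection of orbit sets, and the preceding lemma identifies $a(\psi)$ with $\lvert\mathcal{R}(\psi)/F\rvert$ and $b(\psi)$ with $\lvert\rneg(\psi)/F\rvert$; so these two inclusions immediately give $a(\varphi)\le a(\varphi^k)$ and $b(\varphi)\le b(\varphi^k)$, with no finiteness hypothesis needed. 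The case $k=1$ is trivial, so I would take $k\ge 2$.

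For the first inclusion I would argue as follows. If $\Phi\in P(\varphi)$ then $\Phi^k\in P(\varphi^k)$, since a power of a principal automorphism is principal. If $P\in\fix_+(\hat\Phi)$, with attracting set $U=\{\zeta:\hat\Phi^n(\zeta)\to P\}$, then for $\zeta\in U$ the subsequence $\hat\Phi^{kn}(\zeta)$ also converges to $P$, so $U$ is contained in the attracting set of $P$ for $\hat\Phi^k$; since $U$ contains an open neighbourhood of $P$ and $\fix(\hat\Phi)\subseteq\fix(\hat\Phi^k)$, this shows $P\in\fix_+(\hat\Phi^k)$ (it is not a repeller for $\hat\Phi^k$, being an attractor). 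Thus $\fix_+(\hat\Phi)\subseteq\fix_+(\hat\Phi^k)\subseteq\mathcal{R}(\varphi^k)$, and taking the union over $\Phi\in P(\varphi)$ yields $\mathcal{R}(\varphi)\subseteq\mathcal{R}(\varphi^k)$; both sides are $F$-invariant by construction.

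For the second inclusion the key point is that $\rneg(\Phi)=\rneg(\Phi^m)$ for every principal automorphism $\Phi$ and every $m\ge 1$. When $\varphi$ is rotationless this is exactly the content of the remarks accompanying the definition of eigenrays together with \Cref{powersofCTsareCTs}: a CT $f$ representing $\varphi$ has powers $f^k$ which are CTs representing $\varphi^k$, whence $P$ is an eigenray for $\Phi$ if and only if it is an eigenray for $\Phi^k$. In general I would fix a rotationless iterate $\varphi^K$ and first note that $\varphi^{kK}=(\varphi^K)^k$ is again rotationless: by \Cref{preciseCTtheorem} the rotationless $\varphi^K$ is represented by a CT, whose $k$th power is a CT by \Cref{powersofCTsareCTs} and represents $\varphi^{kK}$, so $\varphi^{kK}$ is rotationless by \Cref{rotationlessisrotationless}. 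Then I would chain the defining equivalences: $P\in\rneg(\Phi)$ iff $P\in\rneg(\Phi^K)$, iff $P\in\rneg((\Phi^K)^k)=\rneg(\Phi^{kK})=\rneg((\Phi^k)^K)$ by the rotationless case applied to $\varphi^K$, iff $P\in\rneg(\Phi^k)$. Taking the union over $\Phi\in P(\varphi)$ and using $\Phi^k\in P(\varphi^k)$ gives $\rneg(\varphi)=\bigcup_{\Phi\in P(\varphi)}\rneg(\Phi)=\bigcup_{\Phi\in P(\varphi)}\rneg(\Phi^k)\subseteq\rneg(\varphi^k)$, again $F$-equivariantly.

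I expect the only delicate point to be the bookkeeping in the last paragraph: one must carefully invoke \Cref{preciseCTtheorem}, \Cref{powersofCTsareCTs} and \Cref{rotationlessisrotationless} so that $\varphi^{kK}$ really is a rotationless power, ensuring that the two equivalences ``$P$ is an eigenray for $\Phi^K$ iff for $(\Phi^K)^k$'' and ``$P$ is an eigenray for $\Phi^k$ iff for $(\Phi^k)^K$'' are both legitimate and point to the same set $\rneg(\Phi^{kK})$. Everything else is formal; the passage from attractors of $\hat\Phi$ to attractors of $\hat\Phi^k$ uses only the definition of an attractor and \Cref{GJLLprop} (to know an attractor is never a repeller).
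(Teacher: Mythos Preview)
Your proposal is correct and follows essentially the same approach as the paper: identify $a(\varphi)=\lvert\mathcal{R}(\varphi)/F\rvert$ and $b(\varphi)=\lvert\rneg(\varphi)/F\rvert$ via the preceding lemma, then exhibit $F$-equivariant inclusions $\mathcal{R}(\varphi)\subseteq\mathcal{R}(\varphi^k)$ and $\rneg(\varphi)\subseteq\rneg(\varphi^k)$. Your careful verification that attractors persist under powers and your chaining through $\varphi^{kK}$ to handle the non-rotationless case are exactly the details the paper either treats as obvious or has already established in the paragraph preceding the definition of $\mathcal{R}(\varphi)$.
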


\begin{proof}
    The proof is identical to \cite[Lemma 15.9]{FeighnHandelAlg}.

    By the definition of $b(\varphi)$ and item 3 of the previous lemma, we have
    \[  b(\varphi) = \left|\coprod_{i=1}^N \rneg(\Phi_i)/\fix(\Phi_i)\right|
    = \left|\rneg(\varphi)/F\right|. \]
    By definition, if $R$ is an eigenray for $\Psi \in P(\varphi)$,
    then $R$ is also an eigenray for $\Psi^k \in P(\varphi^k)$ for $k \ge 1$,
    so we have $\rneg(\varphi^k) \supset \rneg(\varphi)$. Hence we conclude that
    \[ b(\varphi^k) = \left|\rneg(\varphi^k)/F\right| \ge \left|\rneg(\varphi)/F\right| = b(\varphi).\]
    The above argument works for $a(\varphi)$
    by replacing $\rneg(\varphi)$ with $\mathcal{R}(\varphi)$
    and $\rneg(\Phi)$ with $\fix_+(\Phi)$.
\end{proof}

\paragraph{Fixed subgroups.}
Given an automorphism $\Phi \colon (F,\mathscr{A}) \to (F,\mathscr{A})$,
define 
\[  \hat r(\Phi) = \max\{0,\ \rank(\fix(\Phi)) + c(\Phi) - 1\}, \]
where we remind the reader that we use the convention that $\rank(C_2*C_2) = 1$.
Given $\varphi \in \out(F,\mathscr{A})$, suppose $\{\Phi_1,\ldots,\Phi_N\}$ 
is a set of representatives of the isogredience classes 
of principal automorphisms representing $\varphi$.
Define
\[   \hat r(\varphi) = \sum_{i=1}^N\hat r(\Phi). \]
Our goal is the prove the following lemma.
\begin{lem}[cf.~Lemma 15.11 of \cite{FeighnHandelAlg}]
    \label{hatrinequality}
    Let $\varphi \in \out(F,\mathscr{A})$ and suppose $\varphi$
    has a rotationless iterate $\psi = \varphi^k$ for some $k \ge 1$.
    If each $A \in \mathscr{A}$ is finitely generated,
    then $\hat r(\psi) \ge \hat r(\varphi)$.
\end{lem}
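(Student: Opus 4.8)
The plan is to follow the strategy of \cite[Lemma 15.11]{FeighnHandelAlg} and reduce the statement to a comparison of contributions, isogredience class by isogredience class. Let $\psi = \varphi^k$ be a rotationless iterate, and let $\{\Phi_1, \dots, \Phi_N\}$ be a set of representatives of the isogredience classes of principal automorphisms representing $\varphi$. For each $i$, the power $\Phi_i^k$ is a principal automorphism representing $\psi$ (this was noted in \Cref{principalsection}), and distinct isogredience classes in $P(\varphi)$ give distinct isogredience classes in $P(\psi)$, because an equation $\Phi_j^k = T_c \Phi_i^k T_c^{-1}$ forces $T_c$ to commute with $\Phi_i^k$ and hence—using \Cref{boundarybasics} together with the fact that $\psi$ is rotationless so $P(\varphi) \to P(\psi)$, $\Phi \mapsto \Phi^k$, is a bijection—forces $\Phi_i$ and $\Phi_j$ to already be isogredient. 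Thus it suffices to prove, for each fixed $i$, the inequality
\[
    \hat r(\Phi_i^k) \ge \hat r(\Phi_i),
\]
and then sum over $i$ (noting that $\hat r(\psi)$ is a sum over \emph{all} principal isogredience classes for $\psi$, a set that contains the image of the map above, and that each term $\hat r(\Phi)$ is nonnegative by definition).

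Fixing $\Phi = \Phi_i$ and writing $\Phi^k$ for its $k$-th power, the two ingredients of $\hat r$ are $\rank(\fix(\cdot))$ and $c(\cdot)$. First I would observe that $\fix(\Phi) \subset \fix(\Phi^k)$, so $\partial(\fix(\Phi), \mathscr{A}|_{\fix(\Phi)}) \subset \partial(\fix(\Phi^k), \mathscr{A}|_{\fix(\Phi^k)})$ as a closed embedding (by the lemma in \Cref{boundarysection} identifying $\partial T_H$ independently of $T$), and that this inclusion respects the decomposition into $\partial_\infty$ and $V_\infty$ parts. For the rank comparison I would use Culler's theorem for finite-order outer automorphisms of free products, \Cref{CullerTHM}: applied to the action of $\Phi^{k}$ restricted appropriately—more precisely, one realizes $\fix(\Phi)$ as (conjugate to) the fundamental group of a graph of groups with an injective-on-edges immersion into a component of the fixed subcomplex of an automorphism of graphs of groups representing the finite-order outer automorphism induced by $\Phi^k$ on $\fix(\Phi^k)$—to conclude $\rank(\fix(\Phi)) \le \rank(\fix(\Phi^k))$, with the $C_2*C_2$ convention absorbed because rank is set to $1$ in that case and the immersion cannot increase Kurosh rank. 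This is where the finite-generation hypothesis on the groups in $\mathscr{A}$ enters, through the Nielsen realization input to \Cref{CullerTHM}.

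For the $V_\infty$ comparison, $c(\Phi)$ counts $\fix(\Phi)$-orbits of points $wA \in V_\infty(F,\mathscr{A})$ fixed by $\hat\Phi$ with $\fix(\Phi) \cap wAw^{-1}$ trivial; I would show that each such orbit injects into the set of $\fix(\Phi^k)$-orbits of points fixed by $\hat{\Phi^k}$ with trivial intersection with $\fix(\Phi^k)$. The point $wA$ fixed by $\hat\Phi$ is fixed by $\hat{\Phi^k}$; if $\fix(\Phi^k) \cap wAw^{-1}$ were nontrivial, say containing $g \ne 1$, then $g$ is $\Phi^k$-fixed and peripheral, so $\Phi$ acts with finite period on $\langle g \rangle$-type data, and—using that the stabilizer $wAw^{-1}$ is preserved by $\Phi$ and $\Phi^k$ fixes $g$—one argues that $\fix(\Phi) \cap wAw^{-1}$ is already nontrivial, contradicting the choice of $wA$; here one can quote the fact, used in \Cref{rotationlesscriterion} and \Cref{freeproductsoffiniteandycycliccor}, that a finite-order automorphism of a subgroup of an infinite $A \in \mathscr{A}$ has controlled fixed data, but really what is needed is just that $\fix(\Phi) \cap A_w = \fix(\Phi^k) \cap A_w$ whenever $\Phi$ preserves the subgroup $A_w = wAw^{-1}$ and $\Phi|_{A_w}$ has finite order—which follows because on the finite-order piece, $\Phi^k$-fixed implies $\Phi$-periodic implies (after noting periods are bounded and one may pass to a suitable further power, but $k$ is already a period-killing exponent since $\psi$ is rotationless hence acts trivially on periodic conjugacy classes and, by \Cref{rotationlessdirections}-type arguments on vertex groups, trivially enough) $\Phi$-fixed. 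I would also check that distinct $\fix(\Phi)$-orbits remain distinct as $\fix(\Phi^k)$-orbits, which is automatic from $\fix(\Phi) \subset \fix(\Phi^k)$ together with the orbit-counting being a quotient. Combining, $\rank(\fix(\Phi)) + c(\Phi) \le \rank(\fix(\Phi^k)) + c(\Phi^k)$, hence $\hat r(\Phi) \le \hat r(\Phi^k)$, and summing finishes the proof.

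The main obstacle I anticipate is the careful bookkeeping around the peripheral fixed subgroups: ensuring that when $\Phi$ preserves a conjugate $A_w$ of some infinite $A \in \mathscr{A}$, the equality $\fix(\Phi) \cap A_w = \fix(\Phi^k) \cap A_w$ genuinely holds rather than merely an inclusion. This requires knowing that $\Phi|_{A_w}$ has finite order (which uses that $\psi = \varphi^k$ is rotationless, so the induced permutation of conjugacy classes in $\mathscr{A}$ is trivial and $\Phi^k$ preserves $A_w$ with $\Phi^k|_{A_w}$ in the kernel of the relevant finite quotient) and then a period-vs-exponent argument inside $A_w$; the finite-generation hypothesis and the structure results of \Cref{principalsection} are exactly what make this tractable. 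The rank inequality via \Cref{CullerTHM} should be comparatively routine once the realization is set up, since injective-on-edges immersions of graphs of groups cannot raise Kurosh rank.
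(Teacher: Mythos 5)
Your reduction to a term-by-term inequality $\hat r(\Phi_i^k)\ge \hat r(\Phi_i)$ rests on the claim that the map on isogredience classes $P(\varphi)\to P(\psi)$, $\Phi\mapsto\Phi^k$, is injective. This claim is not justified, and in fact is exactly the wrong assumption: $\varphi$ is \emph{not} rotationless, so there is no reason that $\Phi_i^k$ and $\Phi_j^k$ being isogredient should force $\Phi_i$ and $\Phi_j$ to be isogredient. The sentence ``$\Phi_j^k=T_c\Phi_i^kT_c^{-1}$ forces $T_c$ to commute with $\Phi_i^k$'' is simply false (isogredience is conjugation, not commutation), and the invocation of ``$\psi$ rotationless implies $P(\varphi)\to P(\psi)$ is a bijection'' misreads the definition of rotationless, which only controls $P(\psi)\to P(\psi^m)$, not $P(\varphi)\to P(\varphi^k)$. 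If two distinct classes $[\Phi_1]$ and $[\Phi_2]$ both power to the same $[\Psi_1]$, your argument only produces $\hat r(\Psi_1)\ge\hat r(\Phi_1)$ and $\hat r(\Psi_1)\ge\hat r(\Phi_2)$, whereas the lemma requires $\hat r(\Psi_1)\ge\hat r(\Phi_1)+\hat r(\Phi_2)$. Summing your term-by-term bounds would then double-count $\hat r(\Psi_1)$ and not yield $\hat r(\psi)\ge\hat r(\varphi)$.

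The paper's proof addresses exactly this collision. It introduces the function $p\colon\{1,\dots,s\}\to\{1,\dots,t\}$ recording which $\Psi_j$ each $\Phi_i^k$ falls into, and proves the stronger fiberwise inequality $\hat r(\Psi_j)\ge\sum_{i\in p^{-1}(j)}\hat r(\Phi_i)$. The mechanism is: all $\Phi_i$ with $i\in p^{-1}(j)$ preserve $\mathbb F=\fix(\Psi_j)$ and restrict to finite-order automorphisms of $(\mathbb F,\mathscr A|_{\mathbb F})$, representing the same finite-order outer class of $\mathbb F$ (when $\rank\mathbb F>1$, $\mathbb F$ is its own normalizer so this restriction is well-defined). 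The final assertion of \Cref{Cullerprop} — that $\sum\hat r(\Phi)$ over \emph{all} isogredience classes of automorphisms representing a finite-order outer class of $\mathbb F$ is at most $\rank(\mathbb F)-1$ — is what bounds the whole sum over the fiber, together with a claim that distinct $i,i'$ give distinct $\mathbb F$-orbits of $V_\infty$-fixed-point data. Your invocation of \Cref{CullerTHM} is in the right spirit, but you apply it only to a single $\Phi_i$ inside $\fix(\Phi_i^k)$, which cannot produce the needed summed bound over a potentially multi-element fiber. You would need to rebuild the argument around the fiber $p^{-1}(j)$ and the summed form of Culler's inequality, together with the distinctness claim, rather than the term-by-term comparison.
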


Before proceeding to the proof,
we need a version of \cite[Theorem 3.1]{Culler}.

\begin{prop}
    \label{Cullerprop}
    Suppose $\varphi \in \out(F,\mathscr{A})$ has finite order,
    that each $A \in \mathscr{A}$ is finitely generated
    and that $\Phi$ represents $\varphi$.
    There exists a topological representative $f\colon \mathcal{G} \to \mathcal{G}$
    representing $\varphi$ which is an automorphism of graphs of groups,
    in the sense that the underlying graph map of $f$ is an isomorphism.
    Either $\fix(\Phi)$ is cyclic and non-peripheral,
    or $\fix(\Phi)$ is conjugate to the fundamental group of
    a graph of groups that has an injective-on-edges immersion 
    (in the sense of \cite{Bass})
    into a component of $\fix(f)$.
    Moreover $\sum \hat r(\Phi) \le \rank(F) - 1$,
    where the sum is taken over isogredience classes of \emph{all} automorphisms representing $\varphi$.
\end{prop}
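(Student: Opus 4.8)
The plan is to use a ``Nielsen realization'' theorem to produce the graph of groups $\mathcal{G}$ and the automorphism $f$, and then to analyze $\fix(\Phi)$ via the fixed point set $\fix(f)$ and the action of $\Phi$ on the Bowditch boundary. First I would invoke Hensel--Kielak's Nielsen realization result for free products (which, as noted in the introduction, requires the groups in $\mathscr{A}$ to be finitely generated): since $\varphi$ has finite order, there is a finite group $Q \le \out(F,\mathscr{A})$ containing $\varphi$, and this $Q$ is realized by a finite group of automorphisms of some Grushko $(F,\mathscr{A})$-graph of groups $\mathcal{G}$. In particular $\varphi$ itself is realized by an automorphism of graphs of groups $f\colon \mathcal{G}\to\mathcal{G}$, whose underlying graph map is a genuine simplicial isomorphism. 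Since $f$ has finite order, $\fix(f)$ is a genuine subgraph of groups of $\mathcal{G}$ (more precisely a subgraph of subgroups: at each fixed vertex $v$ one takes the fixed subgroup of the induced finite-order automorphism $f_v$ of $\mathcal{G}_v$, which is again a free product of the appropriate form), and each component of $\fix(f)$ carries a well-defined conjugacy class of subgroups of $F$.

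Next I would relate $\fix(\Phi)$ to $\fix(f)$ through the lift $\tilde f\colon\Gamma\to\Gamma$ corresponding to $\Phi$. Because $f$ has finite order, so does $\tilde f^{\,q}$ modulo the deck group for appropriate $q$; concretely, since $\fix(\Phi)$ consists of the elements $c$ with $T_c$ commuting with $\tilde f$ (by \Cref{boundarybasics}), and since a finite-order automorphism of a tree fixes a point, $\fix(\tilde f)$ is nonempty exactly when $\Phi$ is not (properly) a ``rotation about an axis.'' If $\fix(\tilde f)$ contains no vertex of infinite valence and is bounded, one checks that $\fix(\Phi)$ is cyclic and non-peripheral --- this is the degenerate alternative, coming from the case where $\tilde f$ acts on its minimal invariant subtree (a line) as a nontrivial translation composed with a finite-order symmetry, whose fixed subgroup in $F$ is then infinite cyclic and hyperbolic. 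Otherwise $\fix(\tilde f)$ is an $\fix(\Phi)$-invariant subtree of $\Gamma$, and the quotient $\fix(\tilde f)/\fix(\Phi)$ is a graph of groups $\mathcal{H}$ whose fundamental group is (a conjugate of) $\fix(\Phi)$. The $\fix(\Phi)$-equivariant inclusion $\fix(\tilde f)\hookrightarrow\Gamma$ descends to a map $\mathcal{H}\to\fix(f)$ which lands in a single component (connectedness of $\fix(\tilde f)$) and is injective on edges since the inclusion of trees is; this gives the injective-on-edges immersion in the sense of \cite{Bass}. Here the word ``immersion'' rather than ``embedding'' is forced, exactly as in the introduction's discussion of \Cref{CullerTHM}: distinct orbits of edges in $\fix(\tilde f)$ can project to the same edge of $\fix(f)$, and vertex groups of $\mathcal{H}$ map injectively into those of $\fix(f)$ but need not be onto.

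For the index inequality $\sum_{[\Phi]}\hat r(\Phi)\le \rank(F)-1$, I would sum over isogredience classes of all automorphisms representing $\varphi$. Each nontrivial contribution $\hat r(\Phi) = \rank(\fix(\Phi)) + c(\Phi) - 1$ corresponds to a distinct component of $\fix(f)$ together with a count $c(\Phi)$ of fixed points in $V_\infty(F,\mathscr{A})$ whose stabilizer meets $\fix(\Phi)$ trivially --- these are precisely vertices of $\fix(f)$ at which $f_v$ acts with trivial fixed subgroup. A disjoint-union-of-subgraphs argument then bounds the total: if $\fix(f)$ has components $C_1,\dots,C_s$ realizing free factors (of positive complexity or peripheral) $F^1,\dots,F^s$, with the ``bad'' infinite-valence vertices contributing extra peripheral factors, a Kurosh-rank count of the ambient free product $F = A_1*\cdots*A_n*F_k$ (i.e. the Euler-characteristic-type inequality $\sum(\rank F^i - 1) \le \rank F - 1$ that holds because the $[[F^i]]$ together with the extra peripheral factors form, after a collapse, a free factor system of $F$) yields the bound. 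The cyclic non-peripheral alternative contributes $\hat r(\Phi)=\rank(\langle c\rangle)+0-1 = 0$, so it costs nothing.

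The main obstacle I expect is two-fold. First, extracting the \emph{injective-on-edges} conclusion (rather than a weaker statement) requires care with how the finite-order action folds orbits of edges and how vertex-group fixed subgroups sit inside one another; the right framework is Bass's theory of morphisms of graphs of groups, and one must check that ``inclusion of a fixed subtree'' descends correctly. Second, and more seriously, the inequality $\sum\hat r(\Phi)\le\rank(F)-1$ requires knowing that the conjugacy classes $[[\pi_1(\mathcal{G}|_{C_i})]]$ of the components of $\fix(f)$, \emph{together with} the peripheral subgroups recording the $c(\Phi)$-contributions, assemble into (a subset of, after collapsing) a free factor system of $F$ --- this is the analogue of the classical fact from \cite{Culler} that the fixed subgraph of a finite-order automorphism realizes a free factor system, and it is here that one genuinely needs $f$ to be an \emph{automorphism} of graphs of groups (so that $\fix(f)$ is an honest subgraph) rather than a mere homotopy equivalence. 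Everything else --- the boundary bookkeeping, the dichotomy, the Kurosh count --- should be routine once these two points are secured.
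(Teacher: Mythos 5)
Your overall approach tracks the paper's: Hensel–Kielak realization gives an automorphism of graphs of groups, the lift $\tilde f$ has a fixed subtree, and $\mathcal{H} = \fix(\tilde f)/\fix(\Phi)$ is the desired graph of groups. But there are genuine gaps in two places.

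First, the injective-on-edges claim. You write that the map $\mathcal{H}\to\fix(f)$ ``is injective on edges since the inclusion of trees is'' --- this does not follow. The inclusion $\fix(\tilde f)\hookrightarrow\Gamma$ is injective, but the quotients identify different group orbits: $\mathcal{H}$ collapses $\fix(\Phi)$-orbits while $\mathcal{G}$ collapses $F$-orbits, so two distinct $\fix(\Phi)$-orbits of edges in $\fix(\tilde f)$ could a priori lie in a single $F$-orbit and get identified downstairs. Indeed your very next sentence asserts exactly that: ``distinct orbits of edges in $\fix(\tilde f)$ can project to the same edge of $\fix(f)$'' --- which directly contradicts injectivity on edges. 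The actual proof requires showing that $\fix(\Phi)$ is \emph{precisely} the set of $c\in F$ with $T_c(\fix(\tilde f)) = \fix(\tilde f)$: one direction is clear, and for the other, if $T_c$ carries one fixed edge to another then twisted equivariance applied to a point in that edge (using trivial edge stabilizers) forces $\Phi(c)=c$. Only with this in hand does edge-injectivity follow. To establish the ``precisely'' claim one also needs to know $\fix(\tilde f)$ contains at least two points; this is where the principal/non-principal dichotomy matters, and your ``$\fix(\tilde f)$ bounded and without infinite-valence vertices $\Rightarrow$ $\fix(\Phi)$ cyclic nonperipheral'' characterization is not right (a bounded fixed set with a single finite-valence vertex of trivial stabilizer gives $\fix(\Phi)$ trivial, not cyclic nonperipheral). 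The paper instead uses that $\tilde f$, being a simplicial automorphism, satisfies $\tilde f = \tilde f_\sharp$, so any $\tilde f$-invariant tight ray or path is pointwise fixed; combined with \Cref{principalnonemptyfixed} this gives connectedness and the desired size of $\fix(\tilde f)$ whenever $\Phi$ is principal.

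Second, the index count. Your appeal to ``$\sum(\rank F^i - 1)\le \rank F - 1$ because the $[[F^i]]$ together with the extra peripheral factors form, after a collapse, a free factor system'' is underspecified: it is not clear how the vertices contributing to $c(\Phi)$ are to be incorporated as ``extra peripheral factors'' in a way that preserves the free-factor-system Euler characteristic inequality, and making this precise is nontrivial. The paper avoids this entirely by a local edge-and-vertex count: setting $e(\mathcal{H})$ to be the number of edges and $v_0(\mathcal{H})$ the number of trivial-vertex-group vertices whose image has finite vertex group, one has $\hat r(\Phi) = e(\mathcal{H}) - v_0(\mathcal{H})$ (outside the $C_2*C_2$ case, where it is $\le$), the $\mathcal{H}_i$ for distinct isogredience classes map to disjoint edge sets of $\mathcal{G}$, $\rank F - 1 = e(\mathcal{G}) - v_0(\mathcal{G})$, and preimages of $v_0(\mathcal{G})$-vertices contribute to $v_0(\mathcal{H}_i)$. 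Summing the inequality $\tfrac{1}{2}E_w - N_w \le \tfrac{1}{2}\deg(w) - [w\text{ trivial}]$ over vertices $w$ of $\mathcal{G}$ (where $E_w$ and $N_w$ are the local edge-end and $v_0$-preimage counts from all the $\mathcal{H}_i$) then gives the bound directly, with no need to exhibit a free factor system. You should replace your free-factor-system argument with this direct accounting.
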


\begin{proof}
    Let us remark that the proposition is true if $F = C_2 * C_2$,
    so suppose that $F \ne C_2 * C_2$.

    By \cite[Theorem 4.1]{HenselKielak},
    the automorphism $\varphi$ fixes a point in the \emph{Outer Space} of $(F,\mathscr{A})$
    as defined by Guirardel--Levitt \cite{GuirardelLevitt}.
    This means there is a Grushko $(F,\mathscr{A})$-tree $T$,
    an automorphism $\Phi \colon (F,\mathscr{A}) \to (F,\mathscr{A})$ representing $\varphi$
    and a $\Phi$-twisted equivariant homeomorphism $\tilde f \colon T \to T$.
    Let $\mathcal{G}$ be the marked graph of groups whose Bass--Serre tree is $T$.
    The resulting map of graphs of groups $f\colon \mathcal{G} \to  \mathcal{G}$
    (See \cite[Proposition 1.2]{Myself} or \cite[4.1--4.5]{Bass})
    is an automorphism of graphs of groups which represents $\varphi$.

    Let us remark that \cite[Theorem 4.1]{HenselKielak}
    is stated only for the Grushko decomposition of a finitely generated group,
    essentially because Guirardel--Levitt discuss only this case.
    The proof, like the definition of the Outer Space, does not use this assumption in any essential way.

    Now fix $\Phi$ representing $\varphi$, and consider the corresponding lift $\tilde f\colon T \to T$.
    Suppose first that $\Phi$ is not principal,
    so that $\fix_N(\hat\Phi)$ contains either a single point
    or the endpoints of an axis $A_c$.
    Then $c(\Phi) \le 1$ and $c(\Phi) = 1$ implies that $\fix(\Phi)$ is trivial,
    for if it were not,
    then $\fix_N(\hat\Phi)$ would contain more than one point in $V_\infty(F,\mathscr{A})$.
    Similarly $\rank(\fix(\Phi)) \le 1$,
    (where again we have $\rank(C_2*C_2) = 1$)
    and the previous argument shows that $\rank(\fix(\Phi)) = 1$ implies that $c(\Phi) = 0$.
    To see the first assertion, note that if $\rank(\fix(\Phi)) > 1$,
    then $\fix_N(\hat\Phi)$ must contain more than the endpoints of a single axis.
    Thus we see that $\hat r(\Phi) = 0$.
    If $\fix(\Phi)$ is nontrivial and contained in a single $A \in \mathscr{A}$,
    then $\Phi$-twisted equivariance implies that $\tilde f$ fixes the unique point in $T$ fixed by $A$.
    This point projects to a fixed point for $f$ in $\mathcal{G}$, so the assertion holds in this case.

    Now suppose that $\Phi$ is principal.
    Then by \Cref{principalnonemptyfixed}, $\tilde f$ fixes a point $\tilde v$,
    and in fact $\fix(\tilde f)$ projects to a non-exceptional almost Nielsen class containing $v$.
    This also holds when $\Phi$ is not principal and $\fix(\Phi) = C_2 * C_2$.
    Since $\tilde f$ is an isomorphism, we have $\tilde f = \tilde f_\sharp$.
    The fact that $\tilde f_\sharp$ preserves any ray $\tilde R$ beginning at $\tilde v$
    and converging to some point $P \in \fix_N(\hat\Phi)$
    therefore implies that $\tilde R$ is pointwise fixed.
    The same is true of the tight path between any pair of fixed points for $\tilde f$.
    This latter fact implies that $\fix(\tilde f)$ is connected and contains at least two points,
    while the former implies that it contains the $\fix(\Phi)$-minimal subtree.
    Conversely, suppose $T_c$ preserves $\fix(\tilde f)$.
    For $\tilde x \in \fix(\tilde f)$, we have $T_c\tilde f(\tilde x) = \tilde f T_c(\tilde x)$,
    so since $\fix(\tilde f)$ contains at least two points,
    we conclude by \Cref{boundarybasics} that $c \in \fix(\Phi)$.
    It follows that the quotient of $\fix(\tilde f)$ by $\fix(\Phi)$,
    call it $\mathcal{H}$,
    thought of as a graph of groups in its own right, is compact
    and immerses (in the sense of \cite{Bass}) into $\mathcal{G}$.
    This immersion is injective on edges,
    for if there were two edges of $H$ that map to the same edge of $\mathcal{G}$,
    there would be a loop in $\mathcal{G}$ based at a point in the interior of an edge
    that lifts to a non-closed path in $\mathcal{H}$.
    This loop corresponds to an element of $\pi_1(\mathcal{G})$ that preserves $\fix(\tilde f)$
    but does not belong to $\fix(\Phi)$, a contradiction.

    Let $e(\mathcal{H})$ be the number of edges of $H$ and $v_0(\mathcal{H})$
    the number of vertices of $H$ with trivial vertex group
    and whose image in $G$ has finite vertex group.
    An easy argument shows that $\hat r(\Phi)$ is bounded above by $e(\mathcal{H}) - v_0(\mathcal{H})$.
    Indeed, the only case where this computation overestimates $\hat r(\Phi)$
    is the case where $\fix(\Phi) = C_2 * C_2$.

    Suppose that $\Phi$ and $\Phi'$ are not isogredient
    and correspond to the lifts $\tilde f$ and $\tilde f'$
    such that $\fix(\tilde f)$ and $\fix(\tilde f')$ contain at least two points.
    Then $\fix(\tilde f)$ and $\fix(\tilde f')$ project to distinct non-exceptional almost Nielsen classes
    in $\fix(f)$,
    and the images of $\mathcal{H}$ and $\mathcal{H}'$ in $\mathcal{G}$ share no edges.
    We have $\rank(F) - 1 = e(\mathcal{G}) - v_0(\mathcal{G})$.
    If a vertex $v$ contributes to $v_0(\mathcal{G})$,
    then every vertex mapping to $v$ in every $\mathcal{H}$ contributes to $v_0(\mathcal{H})$.
    The final assertion follows.
\end{proof}

\begin{proof}[Proof of \Cref{hatrinequality}]
    We follow the argument in \cite[Lemma 15.11]{FeighnHandelAlg}.
    Assume that $\hat r(\varphi) > 0$,
    let $\Phi_1,\ldots,\Phi_s,\ldots,\Phi_N$ be a set of representatives
    of isogredience classes of principal automorphisms in $P(\varphi)$,
    where $\hat r(\Phi_i) > 0$ if and only if $i \le s$.
    Let $\Psi_1,\ldots,\Psi_t,\ldots,\Psi_M$ be a set of representatives
    of isogredience classes of principal automorphisms in $P(\psi)$.
    By definition, up to reordering the representatives of $P(\psi)$
    there is a function
    \[
        p\colon \{1,\ldots,s\} \to \{1,\ldots,t\}
    \]
    such that if $j = p(i)$, then $\Phi_i^k$ is isogredient to $\Psi_j$.
    By replacing $\Phi$ within its isogredience class, we may assume that $\Phi^k_i = \Psi_j$.
    It suffices to show that
    \[  \sum_{j=1}^t\hat r(\Psi_j) \ge \sum_{i=1}^s\hat r(\Phi_i), \]
    for which it suffices to show that
    \[  \hat r(\Psi_j) \ge \sum_{i \in p^{-1}(j)} \hat r(\Phi_i) \]
    for each $j$ satisfying $1 \le j \le t$.

    So fix $j$ and write $\mathbb{F} = \fix(\Psi_j)$ 
    and $C$ for the set of fixed points for $\Psi_j$ in $V_\infty(F,\mathscr{A})$
    whose $\mathbb{F}$-orbits contribute to $c(\Psi_j)$.
    For $i \in p^{-1}(j)$, we have $\Phi^k_i = \Psi_j$,
    so $\mathbb{F} = \fix(\Phi^k_i)$.
    Thus $\Phi_i$ preserves $\mathbb{F}$,
    $\fix(\Phi_i)$ is contained in $\mathbb{F}$,
    and the restriction of $\Phi_i$ to $\mathbb{F}$ is a finite order automorphism of $\mathbb{F}$.
    Similarly $\Phi_i$ permutes the elements of the set $C$.
    We claim that if $i$ and $i' \in p^{-1}(j)$ are distinct (i.e.~not isogredient)
    then $\fix_N(\Phi_{i}) \cap C$ and $\fix_N(\Phi_{i'}) \cap C$
    project to distinct $\mathbb{F}$-orbits.
    Assuming the claim for now,
    let us use it to show that the displayed inequality holds.
    If $\rank(\mathbb{F}) = 1$ (where again $\rank(C_2*C_2)= 1$,
    then every subgroup of $\mathbb{F}$ has rank at most 1,
    and by the claim, each $\mathbb{F}$-orbit in $C$
    is counted in at most one $c(\Phi_i)$, so the displayed inequality holds.
    If instead $\rank(\mathbb{F}) > 1$,
    then $\mathbb{F}$ is its own normalizer in $F$:
    to see this, take $\xi \in \partial_\infty(\mathbb{F},\mathscr{A}|_\mathbb{F})$
    that is not the endpoint of an axis.
    Then $\xi$ is not fixed by any automorphism $\Psi' \ne \Psi_j$ representing $\psi$.
    Suppose that $c \in F$ normalizes $\mathbb{F}$.
    Then $T_c(\xi) \in \mathbb{F}$ and so $\xi$ is fixed by
    $i_{c^{-1}}\Psi_j i_c = i_{c^{-1}\Psi-j(c)}\Psi_j$,
    so we conclude $c \Psi_j(c)$ and therefore $c \in \mathbb{F}$.
    Therefore the restriction of $\varphi$ to $\mathbb{F}$
    is well-defined and has finite order.
    \Cref{Cullerprop} together with the claim proves the displayed inequality.

    We now turn to the proof of the claim.
    Suppose that $\fix_N(\Phi_i)$ and  $\fix(\Phi_{i'})$
    contain $\xi$ and $\xi'$ in the same $\mathbb{F}$-orbit of $C$,
    say that there exists $c \in \mathbb{F}$ such that $\hat T_c(\xi) = \xi'$.
    We will show that $i_c\Phi_ii_c^{-1} = \Phi_{i'}$.
    Indeed, $i_c\Phi_ii_c^{-1}$ fixes $\xi'$
    and equals $i_g\Phi_{i'}$ for some $g$ in $F$.
    By twisted equivariance, we have $g \in \stab(\xi')$.
    Since $c \in \mathbb{F}$, we have
    \[  \Psi_j = i_c\Psi_ji_c^{-1} = i_c\Phi^k_ii_c^{-1} = (i_g\Phi_{i'})^k 
    = i_{g\Phi_{i'}(g)\cdots\Phi^{k-1}_{i'}(g)}\Psi_j, \]
    from which it follows that 
    \[  \Psi_j(g) = \Phi_{i'}^k(g) = g\Phi_{i'}(g)\cdots \Phi^{k}_{i'}(g) = g.\]
    But since $\stab(\xi') \cap \fix(\Psi_j)$ is trivial,
    we conclude that $g = 1$, proving the claim.
\end{proof}

The following lemma is an immediate consequence of \Cref{abinequality} and \Cref{hatrinequality}.

\begin{lem}
    Suppose that $\varphi \in \out(F,\mathscr{A})$ has a rotationless iterate $\psi = \varphi^k$
    and that each $A \in \mathscr{A}$ is finitely generated.
    Then $j(\varphi) \le j(\psi)$.\hfill\qedsymbol
\end{lem}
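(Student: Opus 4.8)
The plan is to assemble the bound $j(\varphi) \le j(\psi)$ from the three component inequalities already established, organized by the decomposition of $j(\Phi)$ into the ``fixed subgroup plus infinite-valence'' part $\hat r(\Phi)$ and the two ``attracting ray'' counts $\tfrac{1}{2}a(\Phi)$ and $\tfrac{1}{2}b(\Phi)$. First I would recall the definition $j(\Phi) = \max\{0, \rank(\fix(\Phi)) + \tfrac{1}{2}a(\Phi) + \tfrac{1}{2}b(\Phi) + c(\Phi) - 1\}$ and observe that since $\max\{0,x\} + \max\{0,y\} \ge \max\{0, x+y\}$ whenever the two terms in question share the common ``$-1$'' only once, one should instead argue at the level of the summed index $j(\varphi) = \sum_i j(\Phi_i)$ over isogredience classes of principal automorphisms. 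The cleanest route is to write, for a set of representatives $\Phi_1,\dots,\Phi_N$ of principal isogredience classes representing $\varphi$,
\[
    j(\varphi) = \sum_{i=1}^N j(\Phi_i) \le \hat r(\varphi) + \tfrac{1}{2}a(\varphi) + \tfrac{1}{2}b(\varphi),
\]
using that $\rank(\fix(\Phi_i)) + c(\Phi_i) - 1 \le \hat r(\Phi_i)$ termwise (this is literally the definition of $\hat r$, with the $\max$ only helping) and that $a(\varphi)$ and $b(\varphi)$ are by definition the sums over the same representatives of $a(\Phi_i)$ and $b(\Phi_i)$, since $j(\Phi_i) > 0$ forces $\Phi_i$ principal.

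Then I would invoke the three inequalities in the excerpt: $\hat r(\psi) \ge \hat r(\varphi)$ from \Cref{hatrinequality} (which requires the finite generation hypothesis and the existence of a rotationless iterate $\psi = \varphi^k$, and rests on \Cref{Cullerprop}), together with $a(\varphi^k) \ge a(\varphi)$ and $b(\varphi^k) \ge b(\varphi)$ from \Cref{abinequality}. Combining,
\[
    j(\varphi) \le \hat r(\varphi) + \tfrac{1}{2}a(\varphi) + \tfrac{1}{2}b(\varphi)
    \le \hat r(\psi) + \tfrac{1}{2}a(\psi) + \tfrac{1}{2}b(\psi).
\]
To close the gap I would then need the reverse-direction inequality $\hat r(\psi) + \tfrac{1}{2}a(\psi) + \tfrac{1}{2}b(\psi) \le j(\psi)$; since $\psi$ is rotationless, for each principal $\Psi_j$ representing $\psi$ one has $\per_N(\hat\Psi_j) = \fix_N(\hat\Psi_j)$, and more importantly the quantities $a(\Psi_j)$, $b(\Psi_j)$, $\rank(\fix(\Psi_j))$, $c(\Psi_j)$ in the \emph{definition} of $j(\Psi_j)$ already witness $j(\Psi_j) = \max\{0, \hat r'(\Psi_j) + \tfrac12 a(\Psi_j) + \tfrac12 b(\Psi_j)\}$ where $\hat r'$ absorbs the $-1$; because each summand $\hat r(\Psi_j) + \tfrac12 a(\Psi_j) + \tfrac12 b(\Psi_j)$ with $\hat r(\Psi_j) = \max\{0, \rank(\fix(\Psi_j)) + c(\Psi_j) - 1\}$ is $\le j(\Psi_j)$ once $j(\Psi_j) > 0$ — and contributes zero to both sides otherwise — summing over $j$ yields the claim. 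Care is needed about isogredience classes $\Psi_j$ that are principal but not of the form $\Phi_i^k$: rotationlessness guarantees $\Phi \mapsto \Phi^k$ is a bijection $P(\varphi) \to P(\varphi^k)$, so these extra classes of $P(\psi)$ contribute nonnegatively on the right and are harmless.

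The main obstacle, and the place where the argument is most delicate, is the bookkeeping of the floor/ceiling-free ``$\max\{0,\cdot\}$'' in passing between the per-automorphism indices and the summed indices: one must be careful that the inequality $\sum_i \max\{0, x_i + y_i\} \le \sum_i \max\{0, x_i\} + \tfrac12\sum_i y_i$ and its near-converse both hold in the relevant ranges, which is why reorganizing everything as a sum of nonnegative per-class contributions (rather than manipulating a single $\max$) is essential. The comparison $a(\Phi_i) \le a(\Phi_i^k)$ and $b(\Phi_i) \le b(\Phi_i^k)$ per isogredience class — rather than just in aggregate — follows from the orbit-counting bijections in the lemma preceding \Cref{abinequality}, so I would spell out that the per-class comparison is what is actually being used. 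Once this matching is in place the proof is a two-line concatenation of \Cref{abinequality} and \Cref{hatrinequality}, exactly as the excerpt's closing sentence indicates.
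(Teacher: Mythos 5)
Your chain
\[
j(\varphi) \;\le\; \hat r(\varphi) + \tfrac12 a(\varphi) + \tfrac12 b(\varphi)
\;\le\; \hat r(\psi) + \tfrac12 a(\psi) + \tfrac12 b(\psi)
\]
is fine as far as it goes: the first inequality holds termwise because $j(\Phi) = \max\{0, \delta(\Phi) + \tfrac12 a(\Phi) + \tfrac12 b(\Phi)\} \le \max\{0,\delta(\Phi)\} + \tfrac12 a(\Phi) + \tfrac12 b(\Phi)$ (writing $\delta(\Phi) = \rank(\fix(\Phi)) + c(\Phi) - 1$), and the second is exactly \Cref{abinequality} together with \Cref{hatrinequality}. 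The problem is your final link. The inequality $\hat r(\Psi_j) + \tfrac12 a(\Psi_j) + \tfrac12 b(\Psi_j) \le j(\Psi_j)$ you invoke to close the chain is \emph{backwards}: for any principal $\Psi$ with $\delta(\Psi) = -1$ (i.e.\ $\fix(\Psi)$ trivial and $c(\Psi) = 0$) one has $\hat r(\Psi) = 0$, so the left side is $\tfrac12(a(\Psi)+b(\Psi))$ while $j(\Psi) = \max\{0, -1 + \tfrac12(a(\Psi)+b(\Psi))\}$, which is strictly smaller whenever $a(\Psi)+b(\Psi) > 0$. Concretely, $a(\Psi)+b(\Psi)=4$ gives $2$ on the left and $1$ on the right. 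Your fall-back remark that the two sides both vanish when $j(\Psi_j)=0$ also fails (take $a+b=2$: $j(\Psi_j)=0$ but the left side is $1$). So the aggregate chain does not close, and one cannot deduce $j(\varphi)\le j(\psi)$ purely formally from the three aggregate inequalities.

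The repair is to argue per isogredience class of $P(\psi)$ rather than in aggregate. The proof of \Cref{hatrinequality} literally establishes the per-class inequality $\hat r(\Psi_j) \ge \sum_{i\in p^{-1}(j)}\hat r(\Phi_i)$; and the orbit-set decompositions $\mathcal{R}(\varphi)/F \cong \coprod_i \fix_+(\Phi_i)/\fix(\Phi_i)$ and $\rneg(\varphi)/F \cong \coprod_i \rneg(\Phi_i)/\fix(\Phi_i)$ from the lemma preceding \Cref{abinequality}, together with the block-preserving injective inclusion $\mathcal{R}(\varphi) \subset \mathcal{R}(\psi)$ (and likewise for $\rneg$), yield $a(\Psi_j) \ge \sum_{i\in p^{-1}(j)}a(\Phi_i)$ and $b(\Psi_j) \ge \sum_{i\in p^{-1}(j)}b(\Phi_i)$. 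From these three per-class bounds one deduces $j(\Psi_j) \ge \sum_{i\in p^{-1}(j)} j(\Phi_i)$ by a short case analysis on $\delta(\Psi_j)$: if $\delta(\Psi_j) \ge 0$ then $j(\Psi_j) = \hat r(\Psi_j) + \tfrac12 a(\Psi_j) + \tfrac12 b(\Psi_j)$ and one argues exactly as in your first inequality; if $\delta(\Psi_j) \le 0$ then $\hat r(\Psi_j)=0$ forces $\delta(\Phi_i) \le 0$ for all $i\in p^{-1}(j)$ (and when $\delta(\Psi_j)=-1$, containment of fixed subgroups and of $V_\infty$-fixed points forces $\delta(\Phi_i)=-1$ too), and one uses the elementary estimate $\max\{0,-1+S\} \ge \sum_i\max\{0,-1+s_i\}$ valid for $S \ge \sum_i s_i \ge 0$. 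Summing over $j$ gives $j(\varphi)\le j(\psi)$.
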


\paragraph{An Euler characteristic for graphs of groups.}
If $\mathcal{G}$ is a finite graph of groups with trivial edge groups,
we define the \emph{(negative) Euler characteristic} $\chi^-(\mathcal{G})$ of $\mathcal{G}$
to be the number of edges of $G$ minus the number of vertices with trivial vertex group.
Note that $\mathcal{G}$ defines a splitting of its fundamental group
as a free product $B_1*\cdots *B_m*F_\ell$.
The ordinary negative Euler characteristic of $G$ calculates the quantity $\ell - 1$.
In our definition, the $m$ vertices which have nontrivial vertex group are not counted,
so we see that $\chi^-(\mathcal{G}) = m + \ell - 1$.

\paragraph{The core filtration.}
If $f\colon \mathcal{G} \to \mathcal{G}$ is a CT,
property \hyperlink{Filtration}{(Filtration)} says that the core of a filtration element $G_r$
is a filtration element unless $H_r$ is the bottom half of a dihedral pair.
The \emph{core filtration}
\[  \varnothing = G_0 = G_{\ell_0} \subset G_{\ell_1} \subset \ldots \subset G_{\ell_M} = G_m = G \]
is the coarsening of the filtration for $f\colon \mathcal{G} \to \mathcal{G}$
defined by only including those elements that are their own core.
By \hyperlink{AlmostPeriodicEdges}{(Almost Periodic Edges)} and \hyperlink{Filtration}{(Filtration)},
we have $\ell_1 = 1$ or $2$.
The $i$th stratum of the core filtration, $H^c_{\ell_i}$ is
\[  H^c_{\ell_i} = \bigcup_{j=\ell_{i-1}+1}^{\ell_i} H_j. \]
The change in negative Euler characteristic is 
$\Delta_i\chi^- = \chi^{-}(G_{\ell_i}) - \chi^-(G_{\ell_{i-1}})$.

\paragraph{The index of a finite-type graph of groups.}
Let $f\colon \mathcal{G} \to \mathcal{G}$ be a CT,
and suppose that $\mathcal{H}$ is a graph of groups equipped with an immersion 
$\mathcal{H} \to \mathcal{G}$ (so edge groups of $\mathcal{H}$ are trivial).
We say that $\mathcal{H}$ is of \emph{finite type}
if it has finitely many connected components,
each of which is a finite graph of groups with finitely many vertices marked
and finitely many infinite rays attached;
in particular there are only finitely many nontrivial vertex groups in each component.
Let $\mathcal{H}_1,\ldots,\mathcal{H}_\ell$ be the components of $\mathcal{H}$.
Let $\alpha(\mathcal{H}_i)$ denote the number of ends (infinite rays) of $\mathcal{H}_i$,
and let $b(\mathcal{H}_i)$ denote the number of rays mapping to eigenrays in $\mathcal{G}$.
Let $c(\mathcal{H}_i)$ be the number of marked vertices, which we assume to have trivial vertex group.
Define the \emph{index} of $\mathcal{H}_i$ to be the quantity
\[  j(\mathcal{H}_i) = \rank(\pi_1(\mathcal{H}_i)) + \frac{1}{2}a(\mathcal{H}_i) 
+ \frac{1}{2}b(\mathcal{H}_i) + c(\mathcal{H}_i) - 1. \]
Again, we use the convention that $\rank(C_2*C_2) = 1$.
Define $j(\mathcal{H}) = \sum_{i=1}^\ell j(\mathcal{H}_i)$.

\paragraph{Almost Nielsen paths in CTs.}
Let $f\colon \mathcal{G} \to \mathcal{G}$ be a CT representing $\psi \in \out(F,\mathscr{A})$.
Each almost Nielsen path with endpoints at vertices
has a complete splitting into almost fixed edges and indivisible almost Nielsen paths.
There are two kinds of indivisible almost Nielsen paths $\rho$.
The first possibility, by \hyperlink{NEGAlmostNielsenPaths}{(NEG Almost Nielsen Paths)},
is that $E$ is a linear or dihedral linear edge with axis $w_E$,
and $\rho = gEw_E^k\bar Eh$ for some $k \ne 0$ and vertex group elements $g$ and $h$.
The other possibility is that $\rho$ is an indivisible almost Nielsen path 
of exponentially growing height $r$.
By \Cref{egalmostnielsenpathsproperties}, up to equivalence $\rho$ and $\bar\rho$
are the only indivisible almost Nielsen paths of height $r$
and the initial edges of $\rho$ and $\bar\rho$ are distinct.
Observe that by exhausting the possibilities, 
every almost Nielsen path has the property that its initial and terminal directions are almost fixed.
It follows that a non-exceptional almost Nielsen class based at a vertex $v$ with
nontrivial vertex group corresponds to at least one almost fixed direction based at $v$.

\paragraph{The graph of groups $\mathcal{S}_N(f)$.}
Let $f\colon \mathcal{G} \to \mathcal{G}$ be a CT representing $\psi \in \out(F,\mathscr{A})$.
In the proof of \Cref{preciseindexthm}, we build a finite-type graph of groups $\mathcal{S}_N(f)$,
analogous to that considered by Feighn--Handel in \cite[Section 12]{FeighnHandelAlg}
equipped with an immersion $s\colon \mathcal{S}_N(f) \to \mathcal{G}$
such that $j(\mathcal{S}_N(f)) = j(\psi)$.
Each nontrivial almost Nielsen path with principal endpoints
is equivalent to an almost Nielsen path lifting to $\mathcal{S}_N(f)$,
and each ray $R_E$ for $E \in \mathcal{E}$ lifts to $\mathcal{S}_N(f)$.
In the proof we build up $\mathcal{S}_N(f)$ in stages by induction up through the core filtration.
Here we describe its construction without reference to the filtration.

Since each non-exceptional almost Nielsen class containing $v$ corresponds
to at least one oriented edge $E$ beginning at $v$ and determining an almost fixed direction at $v$,
the set of such oriented edges---modulo the equivalence relation that says $E \sim E'$
if there exists $g$ and $h \in \mathcal{G}_v$ such that $Df(1,E) = (g,E)$
and $g^{-1}Df(h,E') = (h,E')$---is in bijection
with the set of non-exceptional almost Nielsen classes containing $v$.
Let $E_v$ be the set of equivalence classes of edges $E$ as above
and begin with $\mathcal{S}_N(f)$ equal to the following set of vertices
\[  \{v_{[E]} : v \text{ is a principal vertex of $G$ and } [E] \in E_v\}. \]
Call these vertices the \emph{principal vertices} of $\mathcal{S}_N(f)$
Since $f$ is rotationless, if $\mathcal{G}_v$ is finite, then there is a single equivalence class.
For each $[E] \in E_v$, choose a preferred representative edge $E$.
Then $Df(1,E) = (g,E)$ for some $g \in \mathcal{G}_v$,
and set the vertex group of $v_{[E]}$ equal to $\fix(i_{g^{-1}} f_v)$.
If $\mathcal{G}_v$ is infinite and this subgroup is trivial, 
then mark $v_{[E]}$.
Define $Df_{[E]}$ to be the map $d \mapsto g^{-1}Df(d)$ for directions $d$ based at $v$.
By assumption, if $E' \sim E$, then there is a fixed direction for $Df_{[E]}$ 
in the $\mathcal{G}_v$-orbit $E'$.
Observe as well that if the direction $d$ is fixed for $Df_{[E]}$
and $x$ belongs to $\fix(i_{g^{-1}} f_v)$,
then
\[
    Df_{[E]}(x.d) = g^{-1}f_v(x) .Df(d) = x. g^{-1}Df(d) = x.d.
\]
Conversely if $d = (x,E')$ and $(h,E')$ are both fixed by $Df_{[E]}$
and $Df(1,E') = (g_{E'},E')$,
then we have
\[  g^{-1}f_v(xh^{-1}) g =  g^{-1}f_v(x)g_{E'}g_{E'}^{-1}f_v(h^{-1}) g = xh^{-1}. \]

If $E$ is an almost fixed edge with principal endpoints $v$ and $w$,
we attach an edge (abusing notation, call it $E$)
with initial endpoint $v_{[E]}$ and terminal endpoint $w_{[\bar E]}$.
As a map of graphs, the immersion $s\colon \mathcal{S}_N(f) \to \mathcal{G}$ sends this new edge to $E$;
as a map of graphs of groups, define the map on the directions determined by $E$
so that they are sent to fixed directions for the map $Df_{[E]}$.

Suppose now that $E$ is a linear or dihedral linear edge with axis $w$ and initial vertex $v$.
To $v_{[E]}$ we attach a \emph{lollipop:} a graph consisting of two edges sharing a vertex,
exactly one of which forms a loop.
The loop maps to $w$, and the other edge to $E$.
Map the initial direction determined by the edge mapping to $E$ to a fixed direction for the map $Df_{[E]}$.

Suppose next that $\rho$ is an indivisible almost Nielsen path of exponentially growing height
with initial edge $E$, initial vertex $v$, terminal edge $\bar E'$ and terminal vertex $w$.
We attach an edge mapping to $\mu$ with initial vertex $v_{[E]}$ and terminal vertex $w_{[E']}$.
Map the initial direction of $\mu$ to a fixed direction for the map $Df_{[E]}$
and the initial direction of $\bar\mu$ to a fixed direction for the map $Df_{[E']}$.

Finally, suppose that $E \in \mathcal{E}$ with initial vertex $v$.
If $E$ belongs to a non-exponentially growing stratum,
or $E$ belongs to an exponentially growing stratum and is not the initial edge 
of an indivisible almost Nielsen path,
attach an infinite ray mapping to $R_E$ to $v_{[E]}$
so that the initial direction of this ray is mapped to a fixed direction for the map $Df_{[E]}$.
If $E$ is the initial edge of an indivisible almost Nielsen path $\rho$
and $E'$ is the initial edge of $\bar\rho$,
then by \Cref{rayswithcommonendpoints},
$R_E$ and $R_{E'}$ have a common terminal subray $R_{E,E'}$.
If we write $\rho = \alpha\beta$ as in \Cref{finitelymanynielsenpaths},
we have $R_E = \alpha R_{E,E'}$ and $R_{E'} = \bar\beta R_{E,E'}$.
In this case subdivide the edge mapping to $\rho$ into two edges,
one mapping to $\alpha$ and the other to $\beta$,
and attach $R_{E,E'}$ at the newly added vertex.
This completes the construction of $\mathcal{S}_N(f)$;
one checks directly that the map $s\colon \mathcal{S}_N(f) \to \mathcal{G}$ 
is an immersion of graphs of groups, 
and that if a vertex of $\mathcal{S}_N(f)$
has valence one and trivial vertex group,
then that vertex is marked.
Marked vertices of $\mathcal{S}_N(f)$ map to vertices of $\mathcal{G}$ with infinite vertex group.

\begin{lem}
    Every tight path in $\mathcal{S}_N(f)$ with endpoints at principal vertices of $\mathcal{S}_N(f)$
    projects to an almost Nielsen path in $\mathcal{G}$.
    Conversely, if $\sigma$ is an almost Nielsen path in $\mathcal{G}$ 
    with endpoints at principal vertices,
    then a path equivalent to $\sigma$ lifts to $\mathcal{S}_N(f)$.
    The lift $\tilde\sigma$ is closed if and only if 
    $\sigma$ is closed and a path $\sigma'$ equivalent to $\sigma$
    satisfies that $\sigma'\sigma'$ is an almost Nielsen  path.
\end{lem}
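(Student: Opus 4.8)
The plan is to exploit the explicit construction of $\mathcal{S}_N(f)$ together with the structure theory of almost Nielsen paths in a CT developed in the previous paragraphs. First I would handle the forward direction. A tight path $\tilde\sigma$ in $\mathcal{S}_N(f)$ with endpoints at principal vertices decomposes at the principal vertices it passes through into subpaths, each of which is one of the building blocks used in the construction: an edge mapping to an almost fixed edge, a ``lollipop'' loop-plus-edge contributing a subpath $E w_E^k \bar E$, or the edge(s) mapping to an indivisible almost Nielsen path of exponentially growing height (possibly with the subdivision and attached ray, but a tight path between principal vertices cannot enter the attached ray and return). Each such block projects to an indivisible almost Nielsen path or an almost fixed edge in $\mathcal{G}$; the key point is that the directions at the interior principal vertices were chosen (in the construction) to be fixed for the relevant maps $Df_{[E]}$, so the turns at the junctures of the projected path are legal and nondegenerate. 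Hence the projection is a concatenation of almost Nielsen paths with legal turns at the seams, which by \Cref{completelysplittocompletelysplit} (applied to $f_\sharp$ acting on a completely split path) and \Cref{completesplittingunique} is again an almost Nielsen path.

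For the converse, let $\sigma$ be an almost Nielsen path in $\mathcal{G}$ with principal endpoints. By \Cref{completesplittingunique} and the discussion of almost Nielsen paths in CTs, $\sigma$ has a complete splitting into almost fixed edges and indivisible almost Nielsen paths; since $f$ is rotationless, each of these subpaths has almost fixed initial and terminal directions, and each indivisible almost Nielsen path of exponentially growing height has distinct initial edges of $\rho$ and $\bar\rho$, while each of non-exponentially growing height has the form $gEw_E^k\bar Eh$ by \hyperlink{NEGAlmostNielsenPaths}{(NEG Almost Nielsen Paths)}. Now one reads off from $\sigma$, block by block, the corresponding sequence of blocks of $\mathcal{S}_N(f)$: the first block determines an equivalence class $[E]$ at the initial vertex $v$, hence a principal vertex $v_{[E]}$ of $\mathcal{S}_N(f)$; the block lifts to the edge (or lollipop edge, or the $\mu$-edge) attached there. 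After possibly multiplying $\sigma$ by vertex group elements at the two ends and shuffling vertex group elements between consecutive blocks — which only changes $\sigma$ within its equivalence class — the interior vertex group elements lie in the fixed subgroups $\fix(i_{g^{-1}}f_v)$, which are exactly the vertex groups of the principal vertices of $\mathcal{S}_N(f)$, so the blocks concatenate to a genuine path $\tilde\sigma$ in $\mathcal{S}_N(f)$. That $\tilde\sigma$ is tight follows because the computation in the construction shows that fixed directions for $Df_{[E']}$ at a shared interior vertex are permuted among themselves by the vertex group $\fix(i_{g^{-1}}f_v)$, so distinct blocks meeting at an interior principal vertex meet along a legal (hence nondegenerate) turn.

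For the last assertion, if $\tilde\sigma$ is closed then its initial and terminal principal vertices coincide, say at $v_{[E]}$, and concatenating $\tilde\sigma$ with itself is again a tight path in $\mathcal{S}_N(f)$ between principal vertices provided the turn at the seam is legal; but that turn is the turn between the terminal direction of $\tilde\sigma$ and its initial direction at $v_{[E]}$, both of which are fixed for $Df_{[E]}$, and fixed directions at a vertex form a legal (nondegenerate) turn, so $\tilde\sigma\tilde\sigma$ is tight, and projecting gives that $\sigma\sigma$ (after the same normalization) is an almost Nielsen path. Conversely if $\sigma$ is closed and $\sigma'\sigma'$ is an almost Nielsen path for some $\sigma'$ equivalent to $\sigma$, then the initial and terminal directions of $\sigma'$ are both almost fixed and lie in the same $\mathcal{G}_v$-orbit-class (otherwise the turn between them in $\sigma'\sigma'$ would be illegal and $\sigma'\sigma'$ could not be an almost Nielsen path, by the characterization of the one illegal-turn orbit in each exponentially growing stratum and the basic-path analysis for non-exponentially growing height), so the two ends of the lift $\tilde\sigma'$ land at the same principal vertex $v_{[E]}$ and $\tilde\sigma'$ is closed. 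The main obstacle I anticipate is bookkeeping the equivalence relation on almost Nielsen paths against the equivalence relation defining $E_v$ and the vertex groups $\fix(i_{g^{-1}}f_v)$ — i.e.\ checking carefully that the ``shuffling'' of vertex group elements needed to make the lift well-defined is exactly the indeterminacy built into the construction of $\mathcal{S}_N(f)$, and that no genuinely new almost Nielsen class is created or lost by this normalization.
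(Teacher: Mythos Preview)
Your overall approach matches the paper's: decompose at principal vertices, use the $Df_{[E]}$-fixed-direction property baked into the construction, and lift block by block. Two points, however, need correction.

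\textbf{Forward direction.} Your appeal to \Cref{completelysplittocompletelysplit} and \Cref{completesplittingunique} shows only that the projected decomposition is a (complete) splitting; it does \emph{not} show the projection is an almost Nielsen path. If $\sigma_i$ and $\sigma_{i+1}$ are almost Nielsen with $f_\sharp(\sigma_i)=k_i\sigma_i k'_i$, then $f_\sharp(\sigma_i\sigma_{i+1})=k_i\sigma_i(k'_ik_{i+1})\sigma_{i+1}k'_{i+1}$, and legality of the seam turn alone does not force $k'_ik_{i+1}=1$. The paper uses exactly the ingredient you name but in a sharper way: since both the terminal direction of $\sigma_i$ and the initial direction of $\sigma_{i+1}$ are fixed by the \emph{same} map $Df_{[E]}=g^{-1}Df$, one computes $f_\sharp(\sigma_i\sigma_{i+1})=h\sigma_i\,g^{-1}g\,\sigma_{i+1}h'=h\sigma_i\sigma_{i+1}h'$. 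Replace your invocation of the splitting lemmas with this one-line cancellation.

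\textbf{Closed converse.} Your argument that the two end-directions of $\sigma'$ must lie in the same class because otherwise the seam turn in $\sigma'\sigma'$ would be illegal, and an illegal turn would obstruct being an almost Nielsen path, is not valid: almost Nielsen paths can and do contain illegal turns (every indivisible almost Nielsen path of exponentially growing height has one), and conversely directions in distinct $[E]$-classes can form a legal turn. The paper avoids this entirely: since $\sigma'\sigma'$ is an almost Nielsen path with principal endpoints, the converse already proven lifts (a path equivalent to) $\sigma'\sigma'$ to $\mathcal{S}_N(f)$, and uniqueness-up-to-equivalence of the lift of $\sigma'$ forces $\tilde\sigma'$ to be closed. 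Use that instead.
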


\begin{proof}
    Let $\sigma$ be a tight path in $\mathcal{S}_N(f)$ with endpoints at principal vertices
    and let $\sigma = \sigma_1\ldots \sigma_m$ be a subdivision of $\sigma$ 
    at the principal vertices of $\mathcal{S}_N(f)$.
    Then each $\sigma_i$ projects to an almost fixed edge or an indivisible almost Nielsen path,
    so we need only check that the concatenation of two subpaths projects to an almost Nielsen path.
    By assumption, the initial directions of $\bar\sigma_i$ and $\sigma_{i+1}$,
    which are based at a principal vertex $v_{[E]}$
    are mapped to fixed directions for the map $Df_{[E]} = g^{-1}Df$.
    It follows that $f_\sharp(\sigma_1\sigma_2) = h\sigma_1 g^{-1}g \sigma_2 h'$
    for vertex group elements $h$ and $h'$,
    and we conclude that the concatenation is an almost Nielsen path.

    Now suppose that $\sigma$ is an almost Nielsen path in $\mathcal{G}$
    with endpoints at principal vertices,
    and let $\sigma = \sigma_1\ldots \sigma_m$ be the complete splitting of $\sigma$
    into almost fixed edges and indivisible almost Nielsen paths.
    It is clear that if $m = 1$, then a path equivalent to $\sigma = \sigma_1$
    lifts to $\mathcal{S}_N(f)$, and that this lift is unique up to equivalence.
    We only need check that if the concatenation $\sigma_i\sigma_{i+1}$ is an almost Nielsen path,
    and $\tilde\sigma_i$ and $\tilde\sigma_{i+1}$ are lifts of $\sigma_i$ and $\sigma_{i+1}$
    with common vertex $v_{[E]}$ and associated map $Df_{[E]} = g^{-1}Df$,
    then there is some vertex group element $h\in \fix(i_{g^{-1}}f_v)$ 
    such that $\tilde\sigma_i h\tilde\sigma_{i+1}$
    lifts $\sigma_i\sigma_{i+1}$.
    Indeed, if $\sigma'_i$  and $\sigma'_{i+1}$ are the projections
    of $\tilde\sigma_i$ and $\tilde\sigma_{i+1}$ respectively,
    then we have $\sigma_i\sigma_{i+1} = g'\sigma'_ig''\sigma'_{i+1}g'''$,
    and 
    \begin{align*}  
        h'\sigma_i\sigma_{i+1}h'' = f_\sharp(\sigma_i\sigma_{i+1}) =
        f_\sharp(g')f_\sharp(\sigma'_i)f_v(g'')f_\sharp(\sigma'_{i+1})f_\sharp(g''') \\
        = f_\sharp(g')k f_\sharp(\sigma'_i)g^{-1}f_v(g'')g f_\sharp(\sigma'_{i+1})k'f_\sharp(g''')
        = h'g'\sigma'_ig''\sigma'_{i+1}g'''h'' 
    \end{align*}
    since the directions of $\bar\sigma'_i$ and $\sigma'_{i+1}$ are fixed by $Df_{[E]}$.
    From this we conclude that $g'' \in \fix(i_{g^{-1}}f_v)$, as required.    
    It is clear that if $\sigma$ is not closed, then its lift is not closed.
    Suppose that $\sigma$ is closed and that $\sigma\sigma$ is an almost Nielsen path.
    The argument in the previous paragraph shows that a path equivalent to $\sigma\sigma$ 
    has a lift to $\mathcal{S}_N(f)$, and the uniqueness of the lift of $\sigma$
    implies that the lift of $\sigma$ must be closed.
    Conversely, if $\sigma$  has a closed lift $\tilde\sigma$,
    then $\tilde\sigma\tilde\sigma$ is a tight path with endpoints at principal vertices,
    so it projects to an almost Nielsen path of the form $\sigma'\sigma'$
    for some path $\sigma'$ equivalent to $\sigma$.
\end{proof}

\begin{lem}[cf.~Lemma 12.4 of \cite{FeighnHandelAlg}]
    Let $\tilde f\colon \Gamma \to \Gamma$ be a principal lift fixing a vertex $\tilde v$
    and the direction of an oriented edge $\tilde E$ based at $\tilde v$.
    Let $S$ be the component of $\mathcal{S}_N(f)$ containing $v_{[E]}$.
    There is a lift of the immersion $s|_S \colon S \to \mathcal{G}$
    to an embedding of Bass--Serre trees $\tilde s\colon \tilde S \to \Gamma$
    such that $\tilde  E$ is in the image of $\tilde s$.
    The image of $\tilde s$ is the smallest subtree of $\Gamma$ whose limit set contains $\fix_N(\hat f)$.
\end{lem}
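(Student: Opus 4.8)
The plan is to build the tree map $\tilde s$ directly from the construction of $S$, tracking how each edge and vertex of $S$ is built and choosing lifts compatibly. First I would set up the lift: let $\tilde v_{[E]}$ be the lift of $v_{[E]}$ whose stabilizer is the image under the $\mathcal{G}_v$-equivariant bijection of \Cref{basicssection} of the subgroup $\fix(i_{g^{-1}}f_v) \subset \mathcal{G}_v$ sitting inside $\stab(\tilde v)$; here $g$ is determined by $Df(1,E)=(g,E)$ and the lift $\tilde f$ was chosen to fix the direction of $\tilde E$, so by the computation in \Cref{basicssection} the map $D\tilde f$ on directions at $\tilde v$ is exactly $Df_{[E]}$, and hence $D\tilde f$ fixes the direction $\tilde E$. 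Map $\tilde v_{[E]}$ to $\tilde v$ and send the edge of $\tilde S$ over $\tilde v_{[E]}$ lying above $E$ to $\tilde E$. Because $S$ is connected and was built by attaching edges (almost fixed edges, lollipops, edges over indivisible almost Nielsen paths, rays) each of which maps to a path whose initial and terminal directions are fixed for the relevant $Df_{[E]}$, I would propagate the lift edge-by-edge: each newly attached edge of $S$ projects to a path in $\mathcal{G}$, lift that path starting at the already-chosen lift of its initial vertex, and check using the previous lemma that the terminal vertex lands on the correct $\mathcal{G}_w$-coset, i.e.\ on the lift of $w_{[\bar E]}$ whose stabilizer is $\fix(i_{g_{\bar E}^{-1}}f_w)$ sitting inside $\stab(\tilde w)$. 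This uses the fact, proved in \Cref{CTrays} and \Cref{fixedpointsfromfixeddirections}, that each ray $R_E$ lifts to a ray $\tilde R_{\tilde E}$ fixed by $\tilde f$ and converging to a point of $\fix_N(\hat f)$, and that each indivisible almost Nielsen path of height $r$ is a concatenation $\alpha\beta$ of two legal rays meeting at the illegal turn.

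Next I would verify that $\tilde s$ is an embedding. Since $s|_S$ is an injective-on-edges immersion of graphs of groups, $\tilde s$ is locally injective; the key point is that it is globally injective, which follows because the image is contained in $\fix(\tilde f)$ together with rays fixed setwise by $\tilde f$: any failure of injectivity would produce two distinct points of $\tilde S$ with the same image, hence (by equivariance and the path-lifting) a nontrivial element of $F$ carrying one lift of a vertex or edge of $S$ to another while commuting appropriately with $\tilde f$, but such an element would lie in $\fix(\Phi)$ by \Cref{boundarybasics} and thus already be realized inside $\tilde S = \widetilde{S}$ as a deck transformation, contradicting distinctness. More concretely, I would argue that the preimage in $\tilde S$ of a vertex or edge of $\mathcal{G}$ is a single $\stab(\cdot)$-orbit, which is exactly the statement that $s|_S$ is an immersion, and then use that distinct lifts in $\tilde S$ of the same element of $S$ are in distinct $\pi_1(S)$-orbits and map to distinct $\pi_1(\mathcal{G})$-translates because $\pi_1(S) \hookrightarrow \fix(\Phi) \hookrightarrow F$ injectively.

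Finally, to identify the image of $\tilde s$ as the smallest subtree of $\Gamma$ whose limit set contains $\fix_N(\hat f)$: on one hand, every point of $\fix_N(\hat f)$ is accounted for by a vertex of infinite valence of $\tilde S$ (a marked vertex or a vertex whose $\mathcal{G}_v$-stabilizer meets $\fix(\Phi)$ nontrivially) or is the endpoint of a lifted ray $\tilde R_{\tilde E}$ or lies in $\partial(\mathbb{F},\mathscr{A}|_{\mathbb{F}})$ where $\mathbb{F}=\fix(\Phi)$ — this is precisely the content of \Cref{fixeddirectionsproperties}, \Cref{CTrays}, \Cref{CTrays2} and \Cref{GJLLprop} (items 1 and 2), which say that $\fix_N(\hat f)$ decomposes into the non-isolated points, which lie in $\partial(\mathbb{F},\mathscr{A}|_{\mathbb{F}}) = \partial T_{\fix(\Phi)}$ (covered by the $\fix(\Phi)$-minimal subtree, which is inside $\tilde s(\tilde S)$ since $\pi_1(S) = \fix(\Phi)$), the attracting points, which are endpoints of eigenrays $\tilde R_{\tilde E}$, and the infinite-valence fixed points. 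So the limit set of $\tilde s(\tilde S)$ contains $\fix_N(\hat f)$. On the other hand, minimality follows because every edge of $\tilde S$ either lies on a lifted ray converging to a point of $\fix_N(\hat f)$, or lies on the tight path between two such points (an indivisible almost Nielsen path or almost fixed edge joining principal vertices, each of which by \Cref{fixedpointsfromfixeddirections} extends to rays landing in $\fix_N(\hat f)$), so no edge can be deleted without removing a point of the limit set. The main obstacle I expect is the global injectivity of $\tilde s$: the immersion property only gives local injectivity, and one must carefully rule out ``wrap-around'' identifications using the interaction between $\pi_1(S) = \fix(\Phi)$, the effectiveness of the $F$-action, and the fact established in the preceding lemmas that the only deck transformations of $\Gamma$ preserving $\fix(\tilde f)$ (and the attached rays) come from $\fix(\Phi)$ itself.
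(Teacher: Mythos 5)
Your overall plan matches the paper's: build the lift by propagating from $\tilde v_{[E]} \mapsto \tilde v$, then show the $\fix(\Phi)$-minimal subtree, the fixed points in $V_\infty$, and the rays $\tilde R_{\tilde E'}$ all land in $\tilde s(\tilde S)$, and finally show minimality by classifying the ends of $\tilde s(\tilde S)$. The point where you diverge, and where there is a genuine gap, is the embedding step.

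You flag global injectivity as ``the main obstacle'' and try to argue it from the structure of $\fix(\tilde f)$, claiming that ``the image is contained in $\fix(\tilde f)$ together with rays fixed setwise by $\tilde f$.'' That claim is false in general: when $\mathcal{S}_N(f)$ contains an edge mapping to an indivisible almost Nielsen path $\rho$ of exponentially growing height, the lift $\tilde\rho$ has an illegal turn in its interior, so the interior points of $\tilde\rho$ are \emph{not} fixed by $\tilde f$. Your backup argument (a failure of injectivity would produce an element of $F$ ``commuting appropriately with $\tilde f$'') is also not spelled out in a way that closes the gap, and it would need considerable work to make precise. None of this machinery is needed: $\tilde s$ is a locally injective simplicial map from a tree to a tree (local injectivity being exactly what the immersion property gives), and any such map is automatically injective. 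The paper simply cites \cite[Proposition 2.7]{Bass} for this. So your worry is misplaced, your substitute argument is broken, but the lemma is actually easy at this step via standard Bass--Serre theory.

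The remaining parts of your proof are essentially the same as the paper's, though stated more loosely. For the ``limit set contains $\fix_N(\hat f)$'' direction, the paper explicitly runs through the three kinds of points in $\fix_N(\hat f)$ --- points of $V_\infty$, non-isolated points of $\partial_\infty$ (in $\partial(\mathbb{F},\mathscr{A}|_\mathbb{F})$), and attractors in $\fix_+(\hat f)$ --- and uses the previous lemma (lifting almost Nielsen paths to $S$) and \Cref{CTrays} to land each in $\tilde s(\tilde S)$, which is what you do. For minimality, the paper argues the converse directly: any ray in $\tilde s(\tilde S)$ either decomposes as an almost Nielsen concatenation (hence ends in $\partial(\mathbb{F},\mathscr{A}|_\mathbb{F})$) or shares a terminal subray with some $R_E$ (hence ends in $\fix_+(\hat f)$). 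Your phrasing in terms of ``no edge can be deleted'' captures the same idea but would benefit from being restated as a classification of ends, since minimality of a subtree with a given limit set is precisely the assertion that every end of the subtree lies in that limit set.
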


\begin{proof}
    Since $s|_S$ is an immersion, any lift $\tilde s$ is an embedding by \cite[Proposition 2.7]{Bass}.
    By post-composing any lift with an element of $F$, 
    we may arrange that $\tilde E$ is in the image of $\tilde s$.
    Suppose that the lift $\tilde f$ corresponds to the automorphism $\Phi$.
    We show first that $\tilde s(\tilde S)$ contains the $\fix(\Phi)$-minimal subtree of $\Gamma$.
    Given $c \in \fix(\Phi)$, 
    we have that $\tilde w = T_c(\tilde v)$ is fixed by $\tilde f$ by \Cref{boundarybasics}.
    The image of the tight path from $\tilde v$ to $\tilde w$ in $\mathcal{G}$
    is an almost Nielsen path $\sigma$ in the almost Nielsen class determined by $v_{[E]}$.
    By the previous lemma, we can lift all such almost Nielsen paths to $S$.
    If $c$ is peripheral, it follows that the fixed point of $T_c$ is contained in the image of $\tilde s$.
    If $c$ is non-peripheral, it follows inductively 
    that the axis of $T_c$ is contained in the image of $\tilde s$.
    It follows that the $\fix(\Phi)$-minimal subtree of $\Gamma$ is contained in $\tilde s(\tilde S)$.

    If $P \in \fix_N(\hat f) \cap V_\infty(F,\mathscr{A})$,
    then the tight path $\tilde\sigma$ from $\tilde v$ to $P$ projects to an almost Nielsen path
    which we can lift to $S$ and hence $\tilde\sigma$ is contained in the image of $\tilde S$.
    If $P \in \fix_N(\hat f) \cap \partial_\infty(F,\mathscr{A})$ is not isolated,
    then it is in the boundary of the $\fix(\Phi)$-minimal subtree and thus the
    ray from $\tilde v$ to $P$ is contained in the image of $\tilde S$.
    Finally if $P \in \fix_+(\hat f)$, then by \Cref{CTrays},
    there is an edge $\tilde E'$ projecting into $\mathcal{E}$ such that the ray from $\tilde E'$ to $P$
    is fixed-point free.
    The unique tight path from $\tilde v$ to the initial vertex of $\tilde E'$ 
    projects to an almost Nielsen path
    which we can lift to $S$.
    Since the direction determined by $\tilde E'$ belongs to the almost Nielsen class 
    determined by $v_{[E]}$,
    the ray $\tilde R_{\tilde E'}$ is in the image of $\tilde s$.
    This shows that the limit set of the image of $\tilde s$ contains $\fix_N(\hat f)$.
    Conversely, suppose  $\tilde R$ is a ray in the image of $\tilde s$,
    and project it to a tight ray $R$ in $\mathcal{S}_N(f)$.
    If the ray $R$ does not have a common terminal subray with some $R_E$,
    then we may subdivide at principal vertices of $\mathcal{S}_N(f)$
    and write $R$ as an almost Nielsen concatenation of almost Nielsen paths,
    and we see that $\tilde R$ ends in the $\fix(\Phi)$-minimal subtree,
    so its endpoint is in the limit set of $\fix(\Phi)$.
    Otherwise we may write $R$ as the concatenation of an almost Nielsen path with some $R_E$,
    and we see that $\tilde R$ ends in $\fix_+(\hat f)$.
\end{proof}

\begin{cor}
    If $\psi \in \out(F,\mathscr{A})$ is rotationless and $f\colon \mathcal{G} \to \mathcal{G}$
    is a CT representing $\psi$,
    then $j(\mathcal{S}_N(f)) = j(\psi)$.\hfill\qedsymbol
\end{cor}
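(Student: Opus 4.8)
The plan is to combine the two lemmas immediately preceding the corollary — the one describing tight paths in $\mathcal{S}_N(f)$ and the one embedding each component's Bass--Serre tree into $\Gamma$ — to set up a bijection between the components of $\mathcal{S}_N(f)$ and the isogredience classes of principal automorphisms representing $\psi$, and then to match the four invariants $\rank$, $a$, $b$, $c$ componentwise.

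First I would establish the bijection. Every component $S$ of $\mathcal{S}_N(f)$ contains a principal vertex $v_{[E]}$ by construction. Choosing a lift $\tilde v$ of $v$ and the lift $\tilde f$ of $f$ fixing $\tilde v$ and the direction of a lift of the preferred representative of $[E]$, the set $\fix(\tilde f)$ contains $\tilde v$, consists of principal points, and projects to the non-exceptional almost Nielsen class $[E]$, so $\tilde f$ is a principal lift by \Cref{principalfixedprincipallift}. Distinct principal vertices of the same component are joined by tight paths that project to almost Nielsen paths (first preceding lemma), so they yield isogredient lifts; principal vertices of distinct components project to distinct almost Nielsen classes, so they yield non-isogredient lifts. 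Conversely, by \Cref{principalnonemptyfixed} and properties \hyperlink{Vertices}{(Vertices)} and \hyperlink{AlmostPeriodicEdges}{(Almost Periodic Edges)}, every principal lift fixes a vertex which may be taken principal, placing it in some component.

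Next I would fix a principal isogredience class, represented by $\Phi$ with corresponding component $S$, and invoke the second preceding lemma, which identifies $\tilde S$ with the smallest subtree $\mathcal{T}_\Phi \subset \Gamma$ whose limit set contains $\fix_N(\hat\Phi)$, equivariantly under the deck group of $\tilde S \to S$ and the $F$-stabilizer of $\mathcal{T}_\Phi$. The path lemma shows this stabilizer is exactly $\fix(\Phi)$, so $\pi_1(S) \cong \fix(\Phi)$ and $\rank(\pi_1(S)) = \rank(\fix(\Phi))$ in the convention $\rank(C_2*C_2)=1$ used on both sides. The attached rays of $\mathcal{S}_N(f)$ project to the rays $R_E$ for $E \in \mathcal{E}$; by \Cref{CTrays2} together with \Cref{GJLLprop} — which forces every attractor to be isolated and every non-isolated point of $\fix_N \cap \partial_\infty(F,\mathscr{A})$ into the boundary of the $\fix(\Phi)$-minimal subtree, i.e. into the ``filled-in'' core of $S$ rather than an attached ray — these rays, after passing to $\fix(\Phi)$-orbits and using \Cref{rayswithcommonendpoints} to account for the single identification of the two rays at the illegal turn of an indivisible almost Nielsen path of exponentially growing height, correspond bijectively to $\fix_+(\hat\Phi)/\fix(\Phi)$; the eigenray ones are precisely those with $E$ non-exponentially growing. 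This gives $a(S) = a(\Phi)$ and $b(S) = b(\Phi)$. The marked vertices of $S$ are the $v_{[E]}$ with $\mathcal{G}_v$ infinite and vertex group of $v_{[E]}$ trivial, which correspond to the $\fix(\Phi)$-orbits of fixed points in $V_\infty(F,\mathscr{A})$ whose stabilizer meets $\fix(\Phi)$ trivially, so $c(S) = c(\Phi)$. Hence the index of $S$ equals $\rank(\fix(\Phi)) + \tfrac12 a(\Phi) + \tfrac12 b(\Phi) + c(\Phi) - 1$.

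Finally I would check that this quantity is nonnegative for every principal $\Phi$, so that the maximum in the definition of $j(\Phi)$ is never active; summing over components then gives $j(\mathcal{S}_N(f)) = \sum_{\text{principal }[\Phi]} j(\Phi) = j(\psi)$. This is a short case analysis from the definition of principal: when $\fix(\Phi)$ is nontrivial one has $\rank(\fix(\Phi)) \ge 1$ and the remaining terms are nonnegative, and when $\fix(\Phi)$ is trivial the only configurations with $\tfrac12 a + \tfrac12 b + c < 1$ are exactly the endpoints of an axis or of a lift of a generic leaf, both excluded for principal automorphisms. I expect the main obstacle to be the orbit-counting of ends in the third step — keeping the shared-subray phenomenon of \Cref{rayswithcommonendpoints} and the distinction between filled-in ends of the $\fix(\Phi)$-minimal subtree and genuinely attached rays straight so that nothing is double counted — since everything else is a direct translation through the two preceding lemmas.
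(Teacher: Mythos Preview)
Your proposal is correct and follows exactly the route the paper intends: the corollary carries only a \qedsymbol because the two preceding lemmas are meant to do all the work, and you have unpacked precisely how. The bijection between components and isogredience classes, the identification $\pi_1(S)\cong\fix(\Phi)$ via the lifting lemma, the matching of rays and marked vertices with $a(\Phi)$, $b(\Phi)$, $c(\Phi)$, and the nonnegativity check are all what the paper leaves implicit.
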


\begin{ex}
    \label{CT1}
    Consider the following CT $f\colon \mathcal{G} \to \mathcal{G}$,
    where $\mathcal{G}$ is the graph of groups in \Cref{CTex1}.
    The map on the $C_2$ vertex groups is the unique isomorphism,
    and we write for example $\hat A$ to represent the path $\bar AgA$,
    where $g$ is the nontrivial element of $C_2$.
    Note that none of the $C_2$ vertices are principal.
    \[  f \begin{dcases}
        A \mapsto E\hat D\hat C\hat D \hat B \hat A \\
        B \mapsto A \hat B \\
        C \mapsto B\hat A\hat B\\
        D \mapsto C \hat D \\
        E \mapsto D \hat C \hat D \hat E.
    \end{dcases} \]
    \begin{figure}[ht!]
        \centering
	    \def\svgwidth{\columnwidth}
	        \import{./figures/}{CT1.pdf_tex}
	
        \caption{The CT $f\colon \mathcal{G} \to \mathcal{G}$ and the Stallings graph $\mathcal{S}_N(f)$.}
        \label{CTex1}
    \end{figure}
    The turn $(\bar B,\bar C)$ is the unique illegal turn of $\mathcal{G}$
    and there is an indivisible (almost) Nielsen path $\rho = \hat A\hat B\hat C \hat D\hat E$
    and the vertex with trivial vertex group is principal.
    To this vertex we attach the rays 
    $R_{\bar B} = \hat B\hat A\hat B\hat A \hat B \hat D\hat C \hat D\hat E\ldots$,
    $R_{\hat D} = \hat D\hat C\hat D\hat B \hat A \hat B\hat A \hat B\ldots$
    and the indivisible almost Nielsen path $\rho$, which forms a loop.
    We write $\alpha = \hat A\hat B$ and $\beta = \hat C\hat D\hat E$,
    subdivide $\rho$ accordingly
    and we attach the ray 
    $R_{\bar A,\bar E} = \hat D\hat C\hat D\hat E\hat D\hat C\hat D\hat B\hat A\ldots$
    to the added vertex.
    See \Cref{CTex1}
    We have the index formula $j(f) = \frac{3}{2} + 1 - 1 = \frac{3}{2}$.
\end{ex}

\begin{ex}
    \label{CT2}
    Consider the following CT $f\colon \mathcal{G} \to \mathcal{G}$,
    where $\mathcal{G}$ is the graph of groups in \Cref{CTex2}.
    The map on the $C_2$ vertex groups is again the unique isomorphism,
    and we continue to write $\hat A$ to represent the path $\bar AgA$,
    where $g$ is the nontrivial element of $C_2$.
    The $C_2$ vertex group incident to the edge $C$ is principal,
    while the edges $A$ and $B$ form a dihedral pair.
    \[  f \begin{dcases}
        A \mapsto B \\
        B \mapsto A \\
        C \mapsto C\hat A.
    \end{dcases}\]
    \begin{figure}[ht!]
        \centering
	    \def\svgwidth{\columnwidth}
	        \import{./figures/}{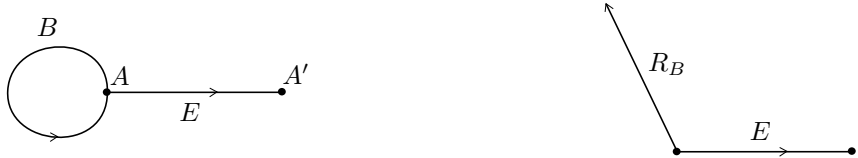}
	
        \caption{The CT $f\colon \mathcal{G} \to \mathcal{G}$ and the Stallings graph $\mathcal{S}_N(f)$.}
        \label{CTex2}
    \end{figure}
    The edge $C$ is a dihedral linear edge,
    so to its initial vertex (which has $C_2$ vertex group in $\mathcal{S}_N(f)$)
    we attach a lollipop.
    We have the index formula $j(f) = 2 - 1 = 1$.
\end{ex}

\begin{ex}
    \label{CT3}
    Consider the following CT $f\colon \mathcal{G} \to \mathcal{G}$
    where $\mathcal{G}$ is the graph of groups in \Cref{CTex3}.
    The groups $A$ and $A'$ are infinite (but somewhat arbitrary).
    Choose the automorphisms $f_A$ and $f_{A'}$ to have no periodic points.
    The corresponding vertices form a single almost Nielsen class.
    \[  f \begin{dcases}
        E \mapsto E \\
        B \mapsto BEg\bar E g'
    \end{dcases}\]
    \begin{figure}[ht!]
        \centering
	    \def\svgwidth{\columnwidth}
\begingroup%
  \makeatletter%
  \providecommand\color[2][]{%
    \errmessage{(Inkscape) Color is used for the text in Inkscape, but the package 'color.sty' is not loaded}%
    \renewcommand\color[2][]{}%
  }%
  \providecommand\transparent[1]{%
    \errmessage{(Inkscape) Transparency is used (non-zero) for the text in Inkscape, but the package 'transparent.sty' is not loaded}%
    \renewcommand\transparent[1]{}%
  }%
  \providecommand\rotatebox[2]{#2}%
  \newcommand*\fsize{\dimexpr\f@size pt\relax}%
  \newcommand*\lineheight[1]{\fontsize{\fsize}{#1\fsize}\selectfont}%
  \ifx\svgwidth\undefined%
    \setlength{\unitlength}{496.06299213bp}%
    \ifx\svgscale\undefined%
      \relax%
    \else%
      \setlength{\unitlength}{\unitlength * \real{\svgscale}}%
    \fi%
  \else%
    \setlength{\unitlength}{\svgwidth}%
  \fi%
  \global\let\svgwidth\undefined%
  \global\let\svgscale\undefined%
  \makeatother%
  \begin{picture}(1,0.2)%
    \lineheight{1}%
    \setlength\tabcolsep{0pt}%
    \put(0,0){\includegraphics[width=\unitlength,page=1]{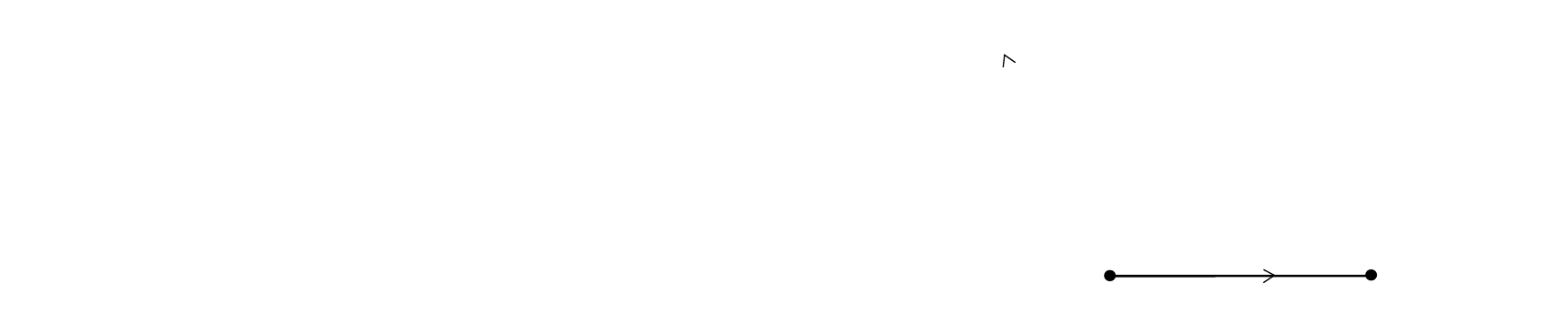}}%
    \put(0.77766262,0.03539451){\color[rgb]{0,0,0}\makebox(0,0)[lt]{\lineheight{1.25}\smash{\begin{tabular}[t]{l}$E$\end{tabular}}}}%
    \put(0.68104244,0.10103422){\color[rgb]{0,0,0}\makebox(0,0)[lt]{\lineheight{1.25}\smash{\begin{tabular}[t]{l}$R_B$\end{tabular}}}}%
    \put(0,0){\includegraphics[width=\unitlength,page=2]{CT3.pdf}}%
    \put(0.23485384,0.05348733){\color[rgb]{0,0,0}\makebox(0,0)[lt]{\lineheight{1.25}\smash{\begin{tabular}[t]{l}$E$\end{tabular}}}}%
    \put(0.09917113,0.13391533){\color[rgb]{0,0,0}\makebox(0,0)[lt]{\lineheight{1.25}\smash{\begin{tabular}[t]{l}$B$\end{tabular}}}}%
    \put(0,0){\includegraphics[width=\unitlength,page=3]{CT3.pdf}}%
    \put(0.33291624,0.08874927){\color[rgb]{0,0,0}\makebox(0,0)[lt]{\lineheight{1.25}\smash{\begin{tabular}[t]{l}$A'$\end{tabular}}}}%
    \put(0.16867254,0.08767071){\color[rgb]{0,0,0}\makebox(0,0)[lt]{\lineheight{1.25}\smash{\begin{tabular}[t]{l}$A$\end{tabular}}}}%
  \end{picture}%
\endgroup%

        \caption{The CT $f\colon \mathcal{G} \to \mathcal{G}$ and the Stallings graph $\mathcal{S}_N(f)$.}
        \label{CTex3}
    \end{figure}
    To them in $\mathcal{S}_N(f)$ we attach the fixed edge $E$.
    These vertices are marked.
    We also attach the eigenray $R_B = BEg\bar E g'E f_A(g) \bar E f_{A'}(g')\ldots$.
    We have the index formula $j(f) = 2 + 1 - 1 = 2$, which is the maximum possible.
\end{ex}

\begin{proof}[Proof of \Cref{preciseindexthm}]
    Let $f\colon \mathcal{G} \to \mathcal{G}$ be a CT representing $\psi \in \out(F,\mathscr{A})$.
    We construct $\mathcal{S}_N(f)$ by inducting up through the core filtration of $\mathcal{G}$,
    following the outline of \cite[Proposition 15.14]{FeighnHandelAlg}.
    For $i$ satisfying $0 \le i \le M$,
    let $\mathcal{S}_N(i)$ denote the subgraph of groups of $\mathcal{S}_N(f)$
    constructed by the $i$th term of the core filtration $G_{\ell_i}$.
    We shall prove that
    \[  j(\mathcal{S}_N(i)) \le \chi^-(G_{\ell_i}), \]
    from which the theorem follows.
    We prove  this inequality by induction.
    The case $i = 0$ holds trivially, since $\mathcal{S}_N(0)$ and $G_{\ell_0}$ are empty.
    Let $\Delta_k j = j(\mathcal{S}_N(k)) - j(\mathcal{S}_N(k-1))$.
    Suppose that the inequality holds for $k-1$.
    We prove that $\Delta_k j \le \Delta_k\chi^{-}$,
    so that the inequality holds for $k$ as well.
    Throughout we will in fact slightly overestimate $\Delta_kj$
    by not considering the special case of $C_2*C_2$.
    The proof has two cases.

    \paragraph{Case 1.}
    Suppose that $H^c_{\ell_k}$ contains no exponentially growing strata.
    We claim that one of the following occurs.
    \begin{enumerate}[label=(\alph*)]
        \item We have $\ell_k = \ell_{k-1} + 1$ or $\ell_{k-1} + 2$.
            The stratum $H^c_{\ell_k}$ is disjoint from $G_{\ell_{k-1}}$
            and forms a dihedral pair.
        \item We have $\ell_k = \ell_{k-1} + 1$.
            The unique edge in $H^c_{\ell_k}$ is disjoint from $G_{\ell_{k-1}}$ 
            and is fixed or almost fixed with principal endpoints.
            Either it forms a loop,
            or both incident vertices have nontrivial vertex group.
        \item We have $\ell_k = \ell_{k-1} + 1$.
            The unique edge in $H^c_{\ell_k}$ either has both endpoints contained in $G_{\ell_{k-1}}$
            or one endpoint contained in $G_{\ell_{k-1}}$ and the other has nontrivial vertex group.
        \item We have $\ell_k = \ell_{k-1}+ 2$.
            The two edges in $H^c_{\ell_k}$ share an initial endpoint,
            are not almost periodic, and have terminal endpoints in $G_{\ell_{k-1}}$.
            (Note that the initial endpoint has trivial vertex group,
            otherwise we would be in case (c).)
    \end{enumerate}
    The proof is similar to \cite[Lemma 8.3]{FeighnHandelAbelian}.
    First suppose that some edge in $H^c_{\ell_k}$ belongs to a dihedral pair.
    Then by \hyperlink{Filtration}{(Filtration)} if the dihedral pair is fixed,
    and by the definition of a dihedral pair otherwise, we see that we are in case (a).
    So suppose that no edge in $H^c_{\ell_k}$ belongs to a dihedral pair.
    Because $H^c_{\ell_k}$ contains no exponentially growing strata,
    it contains no zero strata by \hyperlink{ZeroStrata}{(Zero Strata)}.
    Since CTs satisfy the conclusions of \Cref{improvedrelativetraintrack},
    the initial endpoint of each non-almost periodic edge $E_j$ in $H^c_{\ell_k}$
    is principal, and thus $E_j$ is the only edge in $H_j$ for some $j$
    satisfying $\ell_{k-1} + 1 \le j \le \ell_k$.
    The same holds by \hyperlink{AlmostPeriodicEdges}{(Almost Periodic Edges)}
    for almost periodic (hence almost fixed) edges,
    so each  $H_j$ is a single edge $E_j$.
    If some $E_j$ is almost fixed, then either (b) or (c) holds
    by \hyperlink{AlmostPeriodicEdges}{(Almost Periodic Edges)}.
    If each $E_j$ is not fixed or almost fixed and (c) does not hold,
    then $E_j$ adds a valence-one vertex with trivial vertex group to $G_{\ell_{k-1}}$,
    and the terminal vertex of $E_j$ belongs to $G_{\ell_{k-1}}$ by \Cref{negctsplitting}.
    Each new valence-one vertex must be an endpoint of $E_{\ell_k}$, so we are in case (d).
    We analyze each subcase.

    \begin{enumerate}[label=(\alph*)]
        \item Nothing is added to $\mathcal{S}_N(k)$ because no vertex of the dihedral pair is principal,
            so $\Delta_kj = 0$ while $\Delta_k\chi^- = 1$.
        \item Suppose that $E_{\ell_k}$ forms a loop with trivial incident vertex group.
            Then $\Delta_kj = \Delta_k\chi^- = 0$.
            In all other cases $\Delta_k\xi^- = 1$.
            The nontrivial vertex group(s) contribute
            new principal vertices to $\mathcal{S}_N(k)$ that either have nontrivial vertex group
            or are marked. The unique new edge in $\mathcal{S}_N(k)$ either forms a loop or does not.
            In all cases we see that $\Delta_kj = 1$.
        \item In this case we always have $\Delta_k\chi^- = 1$ and $\Delta_kj = 1$.
            There are several possibilities for the change to $\mathcal{S}_N(k)$,
            but they may be summarized as follows:
            at most two new vertices are added to $\mathcal{S}_N(k)$,
            each of which is either marked or has nontrivial vertex group.
            If $E_{\ell_k}$ is almost fixed, one new edge is added to $\mathcal{S}_N(k)$;
            if there are any new vertices, then the new edge is incident to them.
            If $E_{\ell_k}$ is linear, then we add a lollipop;
            the valence-one vertex of the lollipop is added to the new vertex, if there is one.
            If $E_{\ell_k}$ is nonlinear,
            then we add the eigenray $E_{\ell_k}$,
            attached to the new vertex, if there is one.
            In all cases, we see that $\Delta_kj = 1$.
        \item In this case we have $\Delta_k\chi^- = 1$.
            The vertex in $\mathcal{G}_{\ell_k}$ not contained in $G_{\ell_{k-1}}$ is principal,
            so determines a new component of $\mathcal{S}_N(k)$.
            This vertex has trivial vertex group and is unmarked.
            To this new vertex we attach a lollipop for each  linear edge,
            and an eigenray for each nonlinear edge, for a total of three possible combinations.
            In all cases $\Delta_kj = 1$.
    \end{enumerate}

    \paragraph{Case  2.}
    The argument again is similar to \cite[Lemma 8.3]{FeighnHandelAlg}.
    Suppose that $H^c_{\ell_k}$ contains an exponentially growing stratum $H_r$.
    By the argument in Case 1, we see that $H^c_{\ell_k}$ contains no dihedral pairs.
    Therefore in this case the core of a filtration element is a filtration element,
    and because $G_{r-1}$ does not carry the unique attracting lamination $\Lambda^+$
    associated to $H_r$,
    we see that $G_r$ is its own core.
    Thus $H_{\ell_k}$ is the unique exponentially growing stratum in $H^c_{\ell_k}$.
    We claim that there exists some integer $u_k$ satisfying
    $\ell_{k-1} \le u_k < \ell_k$ such that the following hold.
    \begin{enumerate}[label=(\alph*)]
        \item For $j$ satisfying $\ell_{k-1} < j \le u_k$,
            the stratum $H_j$ is a single edge which is not almost fixed
            whose terminal vertex is in $G_{\ell_{k-1}}$ and whose initial vertex
            has trivial vertex group and valence one in $G_{u_k}$.
        \item For $j$ satisfying $u_k < j < \ell_k$, the stratum $H_j$ is a zero stratum.
    \end{enumerate}

    The existence $u_k$ and that (b) holds follows from \hyperlink{ZeroStrata}{(Zero Strata)}.
    That (a) holds follows from \hyperlink{AlmostPeriodicEdges}{(Almost Periodic  Edges)}
    and \Cref{negctsplitting}.

    For  $j$ as in item (a) above, the contribution to $\mathcal{S}_N(k)$
    is the addition of a new component, with new vertex unmarked and with trivial vertex group,
    to which we attach either a lollipop or an eigenray.
    Thus in each case $\Delta_kj = 0$, while $\Delta_k\chi^- = 0$.
    This completes the analysis up to $G_{u_k}$.

    To calculate $\Delta\chi^-$, it will be useful to note that $\chi^-(\mathcal{G})$ is equal to the sum
    over the vertices $v$ of $G$ of the quantity $\frac{1}{2}\operatorname{valence}(v) - 1$
    if $v$ has trivial vertex group and $\frac{1}{2}\operatorname{valence}(v)$ otherwise.
    Note that each edge contributes valence to two vertices (which may be equal).

    For each vertex  $v \in H_{\ell_k}$, let $\Delta_kj(v)$ and $\Delta_k\chi^-(v)$
    be the contributions to $\Delta_kj$ and $\Delta_k\chi$ from $v$ that are not already considered.
    If $v$ is principal, let $\kappa(v)$ denote the number of oriented edges of $H_{\ell_k}$
    incident to $v$ that do not determine almost fixed directions at $v$.
    If $v \in H_{\ell_k}$ is not principal,
    then it either has trivial vertex group or vertex group $C_2$.
    In the former case let $\kappa(v) = \operatorname{valence}(v) - 2$,
    and in the latter case let $\kappa(v) = \operatorname{valence}(v)$.

    Suppose first that there no indivisible almost Nielsen paths of height $\ell_k$.
    If the vertex $v$ is not principal, then $\Delta_kj(v) = 0$.
    Such a vertex has link in $G_{\ell_k}$ entirely contained in $H^z_{\ell_k}$ and is thus new.
    We have $\Delta_k\chi^-  = \frac{1}{2}\kappa(v)$
    and we have $\Delta_k\chi^-(v) - \Delta_kj(v) = \frac{1}{2}\kappa(v) > 0$.

    If instead the vertex $v$ is principal, let $L(v)$ denote the number of oriented edges of $H_{\ell_k}$
    based at $v$ and contained in $\mathcal{E}$.
    We add $L(v)$ rays to $\mathcal{S}_N(k)$, possibly to new vertices of $\mathcal{S}_N(k)$.
    If $v$ has nontrivial vertex group, then each $v_{[E]}$ in $\mathcal{S}_N(k)$
    has nontrivial vertex group or is marked,
    and we see that $\Delta_kj(v) = \frac{1}{2}L(v)$ and $\Delta_k\chi^- = \frac{1}{2}(L(v) + \kappa(v))$.
    The same calculation holds if $v$ has trivial vertex group 
    and has already been added to $\mathcal{S}_N(k)$.
    Finally if $v$ is new and has trivial vertex  group,
    then $\Delta_kj(v) = \frac{1}{2}L(v) - 1$ and $\Delta_k\chi^-(v) = \frac{1}{2}(L(v) + \kappa(v)) - 1$.
    In all cases we see that $\Delta_k\chi^-(v) - \Delta_kj(v) = \frac{1}{2}\kappa(v) \ge 0$.

    Suppose finally that there is an indivisible almost Nielsen path $\rho$ of height $\ell_k$.
    By \Cref{egalmostnielsenpathsfurtherproperties},
    we have $\ell_k = u_k + 1$.
    Let $w$ and $w'$ be the endpoints of $\rho$.
    By \Cref{egalmostnielsenpathsdistinctendpoints},
    if $w \ne w'$, then at least one of $w$ and $w'$ is new and has trivial vertex group.
    In both cases, the initial edges of $\rho$ and $\bar\rho$ are distinct, say $e$ and $e'$.

    Let $\mathcal{V}$ be the set of vertices of $H_{\ell_k}$ that are not endpoints of $\mu$.
    Each vertex of $\mathcal{V}$ may be handled as in the case without indivisible almost Nielsen paths,
    so we conclude
    \[  \sum_{v\in \mathcal{V}}\Delta_k\chi^-(v) - \sum_{v\in\mathcal{V}}\Delta_kj(v) =
    \sum_{v\in\mathcal{V}} \frac{1}{2}\kappa(v) \ge 0. \]

    Let $\Delta_kj(\rho)$ and $\Delta_k\chi^-(\rho)$ be the contributions 
    to $\Delta_kj$ and $\Delta_k\chi^-$ coming from the endpoints of $\rho$ and not already considered.
    There are two subcases to consider according to whether $w = w'$.
    \begin{enumerate}
        \item Suppose $\rho$ is a closed path based at the vertex $w$.
            In $\mathcal{S}_N(k)$ we possibly add a new vertex, a loop mapping to $\rho$,
            a ray for each $E \in \mathcal{E}$ incident to $w$ other than $e$ and $e'$,
            and then one ray corresponding to $e$ and $e'$.
            The real picture is in fact slightly more complicated:
            the rays contributing to $L(v)$ may lie in distinct almost Nielsen classes,
            in which case multiple vertices mapping to $w$ are added.
            In this case each such vertex either has nontrivial vertex group or is marked.
            Therefore the final calculation is that
            \[  \Delta_kj(\rho) = 1 + \frac{1}{2}(L(w) - 1) - 1 = \frac{1}{2}L(w) - \frac{1}{2} \]
            if $w$ is new and has trivial vertex group
            and $\Delta_kj(\mu) = \frac{1}{2}L(w) + \frac{1}{2}$ otherwise.
            Similarly we have
            \[  \Delta_k\chi^-(\mu) = \frac{1}{2}(L(w) + \kappa(w)) - 1 \]
            if $w$ is new and has trivial vertex group
            and $\Delta_k\chi^-(\mu) = \frac{1}{2}(L(w) + \kappa(w))$ otherwise.
            Thus
            \[  \Delta_k\chi^-(\mu) - \Delta_kj(\mu) \ge \frac{1}{2}\kappa(w) - \frac{1}{2}. \]
            Since there is always at least one illegal turn in $H_{\ell_k}$---for instance,
            the one in $\rho$---and since illegal but nondegenerate turns
            are determined by distinct edges,
            there must be at least one vertex of $H_{\ell_k}$ with $\kappa(v) \ne 0$,
            so we conclude that $\Delta_kj \le \Delta_k\chi^-$ as desired.
        \item Suppose that $w$ and $w'$ are distinct.
            By \Cref{egalmostnielsenpathsdistinctendpoints}, 
            at least one vertex, say $w$, is new and has trivial vertex group.
            In $\mathcal{S}_N(k)$ we add the new vertex $w$,
            an edge connecting $w$ to $w'$, one ray for each $E \in \mathcal{E}$
            based at $w$ or $w'$ other than $e$ and $e'$
            and one ray corresponding to $e$ and $e'$.
            We may add one or more vertices mapping to $w'$;
            if we add more than one, each said vertex has nontrivial vertex group or is marked.
            Thus we have
            \[  \Delta_kj(\rho) = \frac{1}{2}(L(w) + L(w')) - \frac{1}{2} \]
            if $w'$ is old or has nontrivial vertex group
            and $\Delta_kj(\rho) = \frac{1}{2}(L(w) + L(w')) - \frac{3}{2}$
            if $w'$ is new and has trivial vertex group.
            Similarly,
            \[  \Delta_k\chi^-(\rho) = \frac{1}{2}(L(w) + L(w') + \kappa(w) + \kappa(w')) - 1 \]
            if $w'$ is old or has  nontrivial vertex group
            and $\Delta_k\chi^-(\rho)   = \frac{1}{2}(L(w) + L(w') + \kappa(w) + \kappa(w')) - 2$
            if $w'$ is new and has trivial vertex group.
            The argument concludes as in the previous subcase.
    \end{enumerate}
\end{proof}

\begin{cor}
    \label{actuallynoneedtotreatC2differently}
    If in the definition of $j(\psi)$, we allowed $\rank(C_2 * C_2) = 2$,
    the conclusion
    \[  j(\psi) \le n + k - 1 \]
    holds, where now the sum defining $j(\psi)$ should be taken over
    the isogredience classes of all automorphisms $\Psi$ representing $\psi$.
\end{cor}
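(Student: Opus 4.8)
The plan is to retrace the proof of \Cref{preciseindexthm}, applying the weight-$2$ convention throughout and correcting the single place where the original argument deliberately overestimated the jump $\Delta_k j$. Write $j'$ for the index computed with $\rank(C_2*C_2)=2$, and observe the elementary identity: for any automorphism $\Psi$ representing $\psi$, one has $j'(\Psi)=j(\Psi)$ unless $\fix(\Psi)\cong C_2*C_2$, in which case $j'(\Psi)=j(\Psi)+1$ (because then $\rank(\fix(\Psi))+\tfrac12 a(\Psi)+\tfrac12 b(\Psi)+c(\Psi)-1=\tfrac12 a(\Psi)+\tfrac12 b(\Psi)+c(\Psi)\ge 0$ already, so the outer maximum is inactive for both conventions). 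Summing over all isogredience classes of a rotationless $\psi$ gives $j'(\psi)=j(\psi)+N(\psi)$, where $N(\psi)$ counts the isogredience classes with fixed subgroup isomorphic to $C_2*C_2$; one must sum over all classes here, not merely the principal ones, since the extra terms may have $j(\Psi)=0$. For the reduction to the rotationless case I would replace $\hat r$ by the quantity $\hat r'$ defined with $\rank(C_2*C_2)=2$: the proof of \Cref{Cullerprop} goes through verbatim and in fact already yields $\sum\hat r'(\Phi)\le\rank(F)-1$, because the bound $\hat r(\Phi)\le e(\mathcal{H})-v_0(\mathcal{H})$ there is an equality precisely when $\fix(\Phi)\cong C_2*C_2$ (then $\mathcal{H}$ is an interval with two $C_2$ endpoints and $e(\mathcal{H})-v_0(\mathcal{H})=1=\hat r'(\Phi)$), so the disjointness argument in that proof already bounds the sum of the $\hat r'(\Phi)$; feeding this into the proof of \Cref{hatrinequality} and its corollary gives $j'(\varphi)\le j'(\psi)$ for any rotationless iterate $\psi=\varphi^K$.

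For $\psi$ rotationless, fix a CT $f\colon\mathcal{G}\to\mathcal{G}$ representing it and enlarge $\mathcal{S}_N(f)$ to $\mathcal{S}'_N(f)$ by adjoining, as a separate component, the quotient graph of groups of each $f$-fixed dihedral pair --- an interval with $C_2$ vertex groups at its endpoints, with no ends and no marked vertices, hence of index $\rank(C_2*C_2)-1=1$ under the new convention, and whose corresponding subgraph of $\mathcal{G}$ has $\chi^-=1$. The key structural claim is that $j'(\psi)=j(\mathcal{S}'_N(f))$, computing the latter with the new convention. There are two types of isogredience class with $\fix(\Psi)\cong C_2*C_2$. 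A non-principal such $\Psi$ has $\fix_N(\hat\Psi)\cap\partial(F,\mathscr{A})=\partial(\langle a,b\rangle,\mathscr{A}|)=\{\hat T_c^{\pm}\}$ for $c=ab$, which is an axis endpoint set; so its lift $\tilde f$ pointwise fixes the axis $A_c$ (a larger fixed set would enlarge $\fix(\Psi)$ or, by \Cref{GJLLprop} and \Cref{boundarybasics}, make $\Psi$ principal), and one checks using \hyperlink{AlmostPeriodicEdges}{(Almost Periodic Edges)}, \hyperlink{Vertices}{(Vertices)} and \Cref{negctsplitting} that the pointwise-fixed circuit that $A_c$ projects to is exactly an $f$-fixed dihedral pair --- matched by one of the adjoined components; conversely each $f$-fixed dihedral pair is an isolated component of $\fix(f)$ with non-principal center and yields such a class. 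A principal such $\Psi$ is already represented by a component of $\mathcal{S}_N(f)$ with fundamental group $C_2*C_2$, whose index simply rises by $1$ when the new convention is applied. ($f$-swapped dihedral pairs contribute nothing here, since there $\psi$ only inverts the conjugacy class of $c$ and has no $C_2*C_2$ fixed subgroup; adjoining copies of them as well would do no harm.)

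Finally I would re-run the Euler-characteristic induction up the core filtration from the proof of \Cref{preciseindexthm} with $\mathcal{S}'_N(f)$ in place of $\mathcal{S}_N(f)$. The only stratum whose contribution changes is Case 1(a) with $H^c_{\ell_k}$ an $f$-fixed dihedral pair: it now contributes $\Delta_k j=1$, from the adjoined component, equal to $\Delta_k\chi^-=1$, rather than $\Delta_k j=0<\Delta_k\chi^-$ as before; all other cases are untouched because no other stratum meets the new components and the new convention alters $\Delta_k j$ only when a component's fundamental group is $C_2*C_2$, which happens only through a dihedral pair. Hence $j(\mathcal{S}'_N(f))\le\chi^-(\mathcal{G})=n+k-1$, so $j'(\psi)\le n+k-1$, and combined with the first paragraph $j'(\varphi)\le n+k-1$.

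The main obstacle will be the structural claim of the second paragraph: showing that $\fix(\Psi)\cong C_2*C_2$ with $\Psi$ non-principal forces the pointwise-fixed circuit carrying $c=ab$ to be an honest $f$-fixed dihedral pair rather than a longer fixed configuration (in particular ruling out that $\tilde f$ fixes a point of infinite valence or that the center of the dihedral pair carries a further fixed direction, either of which would already make $\Psi$ principal), and dually confirming that applying the new convention to $\mathcal{S}_N(f)$ exactly captures the principal classes with $C_2*C_2$ fixed subgroup --- so that $\mathcal{S}'_N(f)$ is neither short nor redundant. This amounts to a finite check on the local structure of CTs at $C_2$-vertices and infinite-vertex-group vertices, using the lemma identifying the image of $\tilde s\colon\tilde S\to\Gamma$ with the smallest subtree whose limit set contains $\fix_N(\hat f)$; once it is in place the rest is bookkeeping inside the framework already established for \Cref{preciseindexthm}.
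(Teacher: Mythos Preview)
Your proposal is correct and follows essentially the same route as the paper. Both arguments observe that in the Euler-characteristic induction of \Cref{preciseindexthm}, the only place with slack is Case~1(a) (dihedral pairs, where $\Delta_kj=0<1=\Delta_k\chi^-$), and both absorb that slack by adjoining each $f$-fixed dihedral pair as a new component of the Stallings graph, then check that the non-principal isogredience classes with $\fix(\Psi)\cong C_2*C_2$ are exactly those coming from $f$-fixed dihedral pairs. The paper's proof is terse (four sentences); yours spells out the same mechanism in more detail, correctly distinguishes fixed from swapped dihedral pairs, and also treats the reduction from general $\varphi$ to rotationless $\psi$ via a modified $\hat r'$---this last part goes beyond what the corollary as stated requires (it is phrased for the rotationless $\psi$ only), but is a natural and correct extension.

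One small clarification on your third paragraph: the reason ``all other cases are untouched'' is not that components with $\pi_1\cong C_2*C_2$ arise only through dihedral pairs (a principal $\Psi$ can have such a fixed subgroup), but rather that the paper's original $\Delta_kj$ computations already overestimate by using the rank-$2$ convention throughout (see the remark ``Throughout we will in fact slightly overestimate $\Delta_kj$ by not considering the special case of $C_2*C_2$'' at the start of the induction). So the existing bounds for principal components already accommodate the new convention, and the only genuine change is the adjoined dihedral-pair components in Case~1(a).
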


\begin{proof}
    In the proof of \Cref{preciseindexthm}, the only place where $C_2*C_2$
    is directly treated differently is the case of a dihedral pair,
    where we had $\Delta_kj = 0$ while $\Delta_k\chi^- = 1$.
    If we were to add the dihedral pair to $\mathcal{S}_N(k)$ 
    in the case where the $C_2$ factors are fixed,
    we would get $\Delta_kj = 1$.
    The rest of the proof shows that this new $j(\psi)$ satisfies $j(\psi) \le n + k -1$.
    If $\fix(\Psi) = C_2*C_2$ and $\Psi$ is not principal, but corresponds to the lift $\tilde f$
    of a CT $f\colon \mathcal{G} \to \mathcal{G}$ representing $\psi$,
    then $\fix(\tilde f)$ projects to the quotient of $\mathbb{R}$ by the standard action of $C_2*C_2$;
    this quotient must be a dihedral pair,
    so we see that there are finitely many isogredience classes of automorphisms $\Psi$
    with this new $j(\Psi) > 0$, and that each one which is not principal corresponds to a dihedral pair.
\end{proof}

To avoid confusion, we will continue to use the index $j(\psi)$ from \Cref{preciseindexthm}.
The following corollary is immediate from the proof of \Cref{preciseindexthm}.

\begin{cor}
    Suppose $f\colon \mathcal{G} \to \mathcal{G}$ is a CT representing $\psi \in \out(F,\mathscr{A})$.
    If $f$ has a dihedral pair, then $j(\psi) \le n + k - 2$.
    If $f$ has no dihedral pairs and no exponentially  growing strata,
    then $j(\psi) = n + k - 1$.
    If $f$ has an exponentially growing stratum  without an indivisible almost Nielsen path,
    then $j(\psi) < n + k - 1$.\hfill\qedsymbol
\end{cor}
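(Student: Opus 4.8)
The plan is to extract all three statements from the inductive argument in the proof of \Cref{preciseindexthm}. Recall that that proof runs up the core filtration $\varnothing = G_{\ell_0} \subset \dotsb \subset G_{\ell_M} = G$, sets $\Delta_k j = j(\mathcal{S}_N(k)) - j(\mathcal{S}_N(k-1))$ and $\Delta_k\chi^- = \chi^-(G_{\ell_k}) - \chi^-(G_{\ell_{k-1}})$, verifies $\Delta_k j \le \Delta_k\chi^-$ for every $k$ via the Case~1 / Case~2 analysis, and then telescopes: $j(\psi) = j(\mathcal{S}_N(f)) = \sum_k \Delta_k j \le \sum_k \Delta_k\chi^- = \chi^-(G)$. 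Since $\mathcal{G}$ is a Grushko $(F,\mathscr{A})$-graph of groups with $\pi_1(\mathcal{G}) = A_1*\dotsb*A_n*F_k$, we have $\chi^-(G) = n+k-1$. So each assertion of the corollary amounts to locating, inside that case analysis, a step $k$ at which the inequality $\Delta_k j \le \Delta_k\chi^-$ is \emph{strict} (yielding a lower bound on the total deficit $\sum_k(\Delta_k\chi^- - \Delta_k j)$), or to checking that every step is an equality.

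First I would treat the dihedral-pair case. If $f$ has a dihedral pair, let $H^c_{\ell_k}$ be the core-filtration stratum containing it; this is Case~1(a) of the proof, where no vertex is principal, so nothing is added to $\mathcal{S}_N(k)$ and $\Delta_k j = 0$, whereas $\Delta_k\chi^- = 1$. Since $\Delta_k j \le \Delta_k\chi^-$ at the remaining steps, telescoping gives $j(\psi) \le \chi^-(G) - 1 = n+k-2$. Next, the case of no dihedral pairs and no exponentially growing strata: by \hyperlink{ZeroStrata}{(Zero Strata)} there are then no zero strata either, so every core-filtration stratum falls under Case~1, and with no dihedral pairs only subcases (b), (c), (d) occur; inspecting the proof, each of these has $\Delta_k j = \Delta_k\chi^-$. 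The only place where the $C_2*C_2$ rank convention turns the displayed value of $\Delta_k j$ into a strict overcount is at a (fixed) dihedral pair, by \Cref{actuallynoneedtotreatC2differently}, so in the present situation the computation is exact, and telescoping yields $j(\psi) = \chi^-(G) = n+k-1$.

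For the third assertion I may assume $f$ has no dihedral pairs, since otherwise the first assertion already gives $j(\psi) \le n+k-2 < n+k-1$. Let $H_r$ be an exponentially growing stratum with no indivisible almost Nielsen path and $H^c_{\ell_k}$ the core-filtration stratum containing it; then $H_r$ is the unique exponentially growing stratum of $H^c_{\ell_k}$ and we are in Case~2, in the sub-case where there is no indivisible almost Nielsen path of height $\ell_k$. There the proof computes $\Delta_k\chi^-(v) - \Delta_k j(v) = \tfrac12\kappa(v) \ge 0$ for each vertex $v$ of $H_r$ (the contributions of the non-exponentially-growing single edges below $H_r$ in $H^c_{\ell_k}$ being zero on both sides), so $\Delta_k\chi^- - \Delta_k j = \sum_v \tfrac12\kappa(v)$; the task is to see this sum is positive. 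The stratum $H_r$ contains an illegal turn with both edges in $H_r$, and then, by the same reasoning used in Case~2 (illegal but nondegenerate turns are determined by distinct edges, a direction through such a turn is not almost fixed, and \Cref{egvalence} forbids an illegal turn at a valence-two trivial-vertex-group vertex), the vertex carrying that turn has $\kappa(v) \ge 1$; also, any non-principal vertex of $H_r$ with nontrivial vertex group already contributes a positive $\kappa$. Hence $\Delta_k\chi^- - \Delta_k j \ge \tfrac12$, and telescoping gives $j(\psi) \le \chi^-(G) - \tfrac12 < n+k-1$.

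The step I expect to be the main obstacle is the strictness in the third assertion, which rests on the fact that an exponentially growing stratum $H_r$ always contains an illegal turn with both edges in $H_r$. This is standard --- it underlies the analysis in \cite{BestvinaHandel} and is used implicitly throughout the present paper, e.g.\ whenever one refers to ``the illegal turn'' of an indivisible almost Nielsen path --- and can be argued directly: if $H_r$ had no such illegal turn, then $Df$ would restrict to a permutation of the directions in $H_r$, so after replacing $f$ by a power (harmless here since $j(\psi) \le j(\psi^N)$ by \Cref{abinequality} and \Cref{hatrinequality} and $\psi^N$ is represented by the CT $f^N$ via \Cref{powersofCTsareCTs}) every edge $E$ of $H_r$ would satisfy that $f(E)$ begins and ends with $E$, forcing every column sum of the transition matrix $M_r$ to be at least $2$, which is incompatible with $f$ being a homotopy equivalence. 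Once this is granted, the remainder of the corollary is a bookkeeping matter inside the proof of \Cref{preciseindexthm}, and I expect no further obstacle.
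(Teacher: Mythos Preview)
Your overall approach is exactly what the paper intends: the corollary carries a \qedsymbol because each assertion is meant to be read off directly from the case analysis in the proof of \Cref{preciseindexthm}, and your treatment of the dihedral-pair case and the no-dihedral-no-EG case does precisely this, correctly.

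There is, however, a genuine gap in your justification of the third assertion. You correctly reduce the problem to showing that an exponentially growing stratum $H_r$ always contains an illegal turn with both directions in $H_r$ (so that some $\kappa(v)$ is positive), but your argument for this fact is wrong. You argue that if $Df$ permuted the $H_r$-directions, then after passing to a power every edge $E$ of $H_r$ would satisfy that $f(E)$ begins and ends with $E$, ``forcing every column sum of the transition matrix $M_r$ to be at least $2$, which is incompatible with $f$ being a homotopy equivalence.'' The last clause is simply false: homotopy equivalences of graphs routinely have transition matrices with all column sums (or all diagonal entries) at least $2$; there is no such obstruction.

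A correct argument runs as follows. If there were no illegal turn in $H_r$, then $Df$ would be injective on $H_r$-directions, and by \hyperlink{EG-i}{(EG-i)} the map obtained from $f|_{G_r}$ by collapsing each component of $G_{r-1}$ to a point would be a locally injective map of graphs of groups — an immersion in the sense of Bass. It is still a homotopy equivalence, and the quotient graph of groups is its own core (since $G_r$ is). Lifting to the Bass--Serre tree, one obtains an equivariant embedding of a minimal tree into itself, which must be surjective; hence the collapsed map is an isomorphism of graphs of groups, so each edge of $H_r$ maps over exactly one edge of $H_r$. This contradicts $\lambda_r > 1$. With this in hand, the non-almost-fixed direction produced by the illegal turn contributes to some $\kappa(v)$, and the rest of your bookkeeping goes through.
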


\begin{cor}[cf.~Corollaries 15.17 and 15.18 of \cite{FeighnHandelAlg}]
    The following statements hold for a CT $f\colon \mathcal{G} \to \mathcal{G}$ 
    representing $\psi \in \out(F,\mathscr{A})$.
    \begin{enumerate}
        \item Each component of $\mathcal{S}_N(f)$ contributes at least $\frac{1}{2}$ to $j(\psi)$.
        \item $\mathcal{S}_N(f)$ has at most $2j(\psi)$ components,
            and thus $P(\psi)$ has at most $2j(\psi)$ isogredience classes.
        \item $\left|\bigcup_{\Psi\in P(\psi)}\fix_+(\hat\Psi)/F\right| \le 6n + 6k - 6$.
    \end{enumerate}
\end{cor}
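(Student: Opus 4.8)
The plan is to reduce all three statements to two ingredients already in hand: the identity $j(\mathcal{S}_N(f)) = j(\psi)$ (the corollary just before \Cref{CT1}), together with the correspondence between components of $\mathcal{S}_N(f)$ and isogredience classes of principal automorphisms representing $\psi$ (the $\tilde s$‑embedding lemma, the analogue of \cite[Lemma 12.4]{FeighnHandelAlg}); and the bound $j(\psi) \le n+k-1$ of \Cref{preciseindexthm} (for the rotationless $\psi$ represented by $f$, this bound is exactly the conclusion of the induction up the core filtration inside the proof of \Cref{preciseindexthm}, which does not invoke finite generation or \Cref{Cullerprop}). First I would record that if $S$ is a component of $\mathcal{S}_N(f)$ and $\Phi_S \in P(\psi)$ is the principal automorphism it corresponds to, then $j(S) = j(\Phi_S)$: the $\tilde s$‑lemma identifies the Bass--Serre tree of $S$ with the smallest subtree $T'$ of $\Gamma$ whose limit set contains $\fix_N(\hat f)$, so $\pi_1(S) = \fix(\Phi_S)$, the ends of $S$ are precisely the $\fix(\Phi_S)$‑orbits of attractors of $\hat\Phi_S$ (with eigenray ends matching eigenray orbits), and the marked vertices of $S$ are precisely the $\fix(\Phi_S)$‑orbits of $V_\infty$‑points fixed by $\hat\Phi_S$ whose stabilizer meets $\fix(\Phi_S)$ trivially. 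Hence $a(S)=a(\Phi_S)$, $b(S)=b(\Phi_S)$, $c(S)=c(\Phi_S)$, $\rank(\pi_1 S)=\rank(\fix\Phi_S)$, so $j(S)=j(\Phi_S)$, and $j(\mathcal{S}_N(f)) = j(\psi)$ holds term by term.

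Part 1 then amounts to showing $j(\Phi) \ge \tfrac12$ for each principal $\Phi \in P(\psi)$. Since $j(\Phi)$ is a half‑integer (each of $\rank(\fix\Phi)$, $c(\Phi)$ is an integer and $a(\Phi)$, $b(\Phi)$ are integers), it suffices to show $j(\Phi) > 0$, which I would get from the free‑product analogue of the ``tripod condition''
\[ 2\rank(\fix\Phi) + a(\Phi) + 2c(\Phi) \ge 3, \]
since then $j(\Phi) \ge \tfrac12\bigl(2\rank(\fix\Phi) + a(\Phi) + 2c(\Phi)\bigr) - 1 \ge \tfrac12$ after discarding $b(\Phi)\ge 0$. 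This inequality is proved by a case analysis on $\mathbb{F} = \fix(\Phi)$: it is immediate when $\rank(\mathbb{F}) \ge 2$, and when $\rank(\mathbb{F}) \le 1$ the definition of principal forces $\fix_N(\hat\Phi)$ to contain a point lying outside $\partial(\mathbb{F},\mathscr{A}|_\mathbb{F})$ (which is empty, a single $V_\infty$‑point, or a pair of axis endpoints according to $\mathbb{F}$), and by \Cref{GJLLprop} and \Cref{boundarybasics} such a point is an attractor or a $V_\infty$‑point with stabilizer meeting $\mathbb{F}$ trivially, which supplies the needed deficit in $a(\Phi)+2c(\Phi)$. Part 2 is then immediate: $j(\psi) = j(\mathcal{S}_N(f)) = \sum_S j(S) \ge \tfrac12\,\#\{\text{components of }\mathcal{S}_N(f)\}$, so $\mathcal{S}_N(f)$ has at most $2j(\psi)$ components, and by the correspondence $P(\psi)$ has at most $2j(\psi)$ isogredience classes.

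For Part 3, let $\Phi_1,\dots,\Phi_N$ represent the isogredience classes of $P(\psi)$. By the bijection $\coprod_{i=1}^N \fix_+(\hat\Phi_i)/\fix(\Phi_i) \to \mathcal{R}(\psi)/F$ (part of the lemma cf.\ \cite[Lemma 15.8]{FeighnHandelAlg}) together with $|\fix_+(\hat\Phi_i)/\fix(\Phi_i)| = a(\Phi_i)$, we have $\bigl|\bigcup_{\Phi\in P(\psi)}\fix_+(\hat\Phi)/F\bigr| = \sum_{i=1}^N a(\Phi_i)$. From $j(\Phi) = \rank(\fix\Phi) + \tfrac12\bigl(a(\Phi)+b(\Phi)\bigr) + c(\Phi) - 1 \ge \tfrac12 a(\Phi) - 1$ we get $a(\Phi_i) \le 2j(\Phi_i) + 2$, and summing while using Part 2 ($N \le 2j(\psi)$) and \Cref{preciseindexthm} ($j(\psi) \le n+k-1$) gives
\[ \sum_{i=1}^N a(\Phi_i) \le \sum_{i=1}^N \bigl(2j(\Phi_i)+2\bigr) = 2j(\psi) + 2N \le 6j(\psi) \le 6n + 6k - 6. \]
This completes all three parts.

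The main obstacle is the tripod inequality in Part 1, and within it the borderline case where $\fix_N(\hat\Phi)$ is a two‑point set consisting of two attractors with $\fix(\Phi)$ trivial (so $b$ need not help). There one must show the configuration is not actually principal, using the rigidity of CTs --- the uniqueness up to equivalence and orientation of indivisible almost Nielsen paths of exponentially growing height (\Cref{egalmostnielsenpathsproperties}) and \Cref{fixedpointstolaminations} --- to conclude that either both points are eigenrays, so $b(\Phi)=2$ and $j(\Phi)=1$, or the pair is the endpoint set of a lift of a generic leaf, contradicting principality. Everything else is bookkeeping with the definitions of $j(\Phi)$, the graph of groups $\mathcal{S}_N(f)$, and the correspondence and index theorems already proved.
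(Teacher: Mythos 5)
Your treatment of Parts 2 and 3 is essentially the paper's (Part 3 is reorganized algebraically but arrives at the same inequality $|\mathcal{R}(\psi)/F| \le 6j(\psi) \le 6n+6k-6$). Your route for Part 1, however, diverges genuinely from the paper. The paper stays inside the combinatorics of $\mathcal{S}_N(f)$: it enumerates the possible graph-of-groups shapes of a component $S$ with $j(S) < \tfrac12$ (concluding the only candidate is a line, no nontrivial vertex groups, no marked vertices, with both ends mapping to EG rays), and then rules that shape out by taking the lowest principal vertex $w$ with $w_{[E]}$ in $S$ and replaying the case analysis from the proof of \Cref{preciseindexthm} to show $w$ cannot exist. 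You instead propose the abstract inequality $j(\Phi) \ge \tfrac12$ for each principal $\Phi$ via the ``tripod'' bound $2\rank(\fix\Phi) + a(\Phi) + 2c(\Phi) \ge 3$, with a case analysis on $\mathbb{F} = \fix(\Phi)$ and a separate patch for the two-attractor borderline case. What your route buys is independence from the detailed CT combinatorics; what the paper's route buys is that it never needs the tripod inequality at all.

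The gap in your Part 1 is in the claim that a $\hat\Phi$-fixed point $P$ in $V_\infty(F,\mathscr{A})\setminus\partial(\mathbb{F},\mathscr{A}|_{\mathbb{F}})$ has $\stab(P)\cap\mathbb{F}$ trivial, so that it contributes to $c(\Phi)$. This is correct when $\mathbb{F}$ contains no element elliptic in $\Gamma$ (so in particular for $\mathbb{F}$ trivial or $\mathbb{F}=F_1$, where every nontrivial element is nonperipheral), but it fails when $\mathbb{F}$ has torsion meeting vertex groups. For $\mathbb{F}=C_2*C_2$, the $\mathbb{F}$-minimal subtree is a line whose vertices have $\mathbb{F}$-stabilizer only $C_2$, so $\partial(\mathbb{F},\mathscr{A}|_{\mathbb{F}})$ is just the two axis endpoints; a vertex $P$ on that line whose \emph{$F$}-stabilizer is an infinite conjugate of some $A_i$ lies in $V_\infty(F,\mathscr{A})$, is $\hat\Phi$-fixed, is outside $\partial(\mathbb{F},\mathscr{A}|_{\mathbb{F}})$, and yet has $\stab(P)\cap\mathbb{F}\cong C_2$, so it counts for nothing in $a$, $b$, or $c$. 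This is precisely the configuration the $\rank(C_2*C_2)=1$ convention was introduced to govern, and it gives $2\rank(\mathbb{F})+a+2c=2<3$. The analogous problem occurs for $\mathbb{F}$ a nontrivial finite subgroup of some $A_i$-conjugate with the extra fixed point being the vertex it stabilizes. Ruling these configurations out seems to require the CT/\Cref{legalturn}-type input that the paper's direct $\mathcal{S}_N(f)$ analysis is built around, so as written your Part 1 is incomplete; the two-attractor borderline case you flag is real but it is not the only one.
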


\begin{proof}
    The proof is similar to \cite[Corollaries 15.17 and 15.18]{FeighnHandelAlg}.
    Item 2 follows from item 1.
    The only possibility where a component of $\mathcal{S}_N(f)$
    could contribute less than $\frac{1}{2}$ to $j(\psi)$
    is the case where that component is topologically a line,
    no vertices of which have nontrivial vertex group and none of which are marked,
    and each end of the line is a ray in an exponentially growing stratum.
    It follows that if $v$ is a principal vertex such that $v_{[E]}$ lies in our component,
    then $v$ has trivial vertex group and is not the endpoint of an indivisible almost Nielsen path.
    Let $w$ be the lowest principal vertex such that $w_{[E]}$ belongs to our putative component;
    we rule out each case of the proof of \Cref{preciseindexthm} to rule out the existence of $w_{[E]}$.
    Since $w$ is principal we are not in Case (1a).
    Case (1b) is ruled out because $w$ has trivial vertex group but is principal,
    so must have at least three fixed directions.
    Since $w$ has trivial vertex group but is new in $H_{\ell_k}$, we are not in case (1c).
    If we were in case (1d), then both edges incident to $w$ must be non-linear,
    in contradiction to our assumption.
    In Case 2, if $w$ belongs to $H_{u_k}$, then an edge incident to $w$ is non-linear,
    in contradiction to our assumption.
    Therefore $w$ is a new vertex in the exponentially growing stratum $H_{\ell_k}$,
    and by \Cref{egvalence}, there are at least two fixed directions in $H_{\ell_k}$ based at $w$.
    Since $w$ is principal but is not the endpoint of an indivisible almost Nielsen path,
    there must be at least \emph{three} fixed directions in $H_{\ell_k}$ based at $w$;
    this contradiction completes the proof of item 1.

    For item 3, we have
    \[  j(\psi) \ge \frac{\left|\text{ends of $\mathcal{S}_N(f)$}\right|}{2} 
    - \left|\text{components of $\mathcal{S}_N(f)$}\right| \ge 
    \frac{1}{2}\left|\bigcup_{\Psi\in P(\psi)}\fix_+(\hat\Psi)/F\right| - 2j(\psi). \]
    So we see that $\left|\bigcup_{\Psi\in P(\psi)}\fix_+(\hat\Psi)/F\right| \le 6j(\psi)
    \le 6n + 6k - 6$.
\end{proof}

\bibliographystyle{alpha}
\bibliography{bib.bib}
\end{document}